\documentclass[12pt, reqno]{amsart}
\usepackage{mathrsfs}
\usepackage{amsfonts}
\usepackage[centertags]{amsmath}
\usepackage{amssymb,array}
\usepackage{amsthm}
\usepackage{graphicx}
\usepackage{caption}
\usepackage{enumerate,enumitem}
\SetLabelAlign{center}{\hfill #1\hfill}
\usepackage[textwidth=16cm, hmarginratio=1:1]{geometry}
\usepackage{tikz-cd, tikz}
\usepackage{rotating}
\usepackage[all,cmtip]{xy}
\usepackage[titletoc, title]{appendix}
\usepackage{amscd}
\usepackage[colorlinks]{hyperref}
\usepackage{float}
\hypersetup{
bookmarksnumbered,
pdfstartview={FitH},
breaklinks=true,
linkcolor=blue,
urlcolor=blue,
citecolor=blue,
bookmarksdepth=2
}
\usetikzlibrary{arrows.meta}
\definecolor{weaveblue}{RGB}{15,25,170}
\definecolor{weavegreen}{RGB}{0,150,25}
\definecolor{weavecyan}{RGB}{0,160,170}

\definecolor{myblue}{RGB}{0,0,180}
\definecolor{myred}{RGB}{190,0,0}
\definecolor{mygreen}{RGB}{0,155,15}
 \definecolor{onecolor}{RGB}{34,90,170}
 \definecolor{twocolor}{RGB}{190,55,45}

\tikzset{
    weavebox/.style={
        draw=black,
        line width=1pt
    },
    bluestrand/.style={
        draw=weaveblue,
        line width=1.2pt,
        line cap=round
    },
    greenstrand/.style={
        draw=weavegreen,
        line width=1.2pt,
        line cap=round
    },
    dottedstrand/.style={
        draw=weavecyan,
        line width=1.2pt,
        densely dotted,
        line cap=round
    }
}

\newcommand{\weavebox}{
    \draw[weavebox] (0.75,0.65) rectangle (3.35,2.35);
    \node[font=\Large] at (2.05,1.50) {$\mathfrak{W}$};
}

\theoremstyle{plain}
   \newtheorem{theorem}{Theorem}[section]
   \newtheorem{proposition}[theorem]{Proposition}
   \newtheorem{lemma}[theorem]{Lemma}
   \newtheorem{corollary}[theorem]{Corollary}

\theoremstyle{definition}
   \newtheorem{definition}[theorem]{Definition}
   \newtheorem{example}[theorem]{Example}
   
   \newtheorem{remark}[theorem]{Remark}

\numberwithin{equation}{section}

\newcommand{\be}{\begin{enumerate}}

    \newcommand{\ene}{\end{enumerate}}

    \newcommand{\ZZ}{\mathbb{Z}}
    \newcommand{\QQ}{\mathbb{Q}}
    
    \newcommand{\NN}{\mathbb{N}}

    \newcommand{\CC}{\mathbb{C}}

    \newcommand{\DD}{\mathbb{D}}

    \newcommand{\qaff}{q_{\mathrm{aff}}}

    \newcommand{\hd}{\operatorname{hd}}

    \newcommand{\Id}{\operatorname{Id}}

    \newcommand{\w}{\operatorname{W}}

    \newcommand{\fg}{\operatorname{\mathfrak{g}}}

    \newcommand{\fn}{\operatorname{\mathfrak{n}}}

    \newcommand{\SL}{\operatorname{\mathbf{SL}}}

    \newcommand{\bb}{\mathbf{b}}
    \newcommand{\bfa}{\mathbf{a}}

    \newcommand{\bt}{\mathbf{t}}

    \newcommand{\cA}{\mathcal{A}}

    \newcommand{\CM}[1]{\mathcal{M}}

    \newcommand{\LambdaR}{\Lambda_{\mathrm R}}

    \newcommand{\Br}{{\operatorname{Br}}}
    \newcommand{\qbinom}[2]{\left[\begin{array}{c} #1 \\ #2 \end{array}\right]_q}

\DeclareMathOperator*{\wt}{wt}

\newlength{\mysizetiny}
\newlength{\mysizesmall}
\newlength{\mysize}
\newlength{\mysizelarge}
\begin{document}

\title{Monoidal Categorification and Quantization of Braid Varieties}
\author{Yingjin Bi}
\address{Department of Mathematics, Harbin Engineering University}
\address{Research Institute for Mathematical Sciences,
Kyoto University, 
Kyoto, Japan}
\email{yingjinbi@mail.bnu.edu.cn}
\date{} 

\begin{abstract}
Let \(G\) be a simple and simply connected algebraic group of
simply-laced type. For each positive braid word \(\beta\), and for the
complete strong duality datum attached to a
\(Q\)-datum, we construct an explicit based monoidal categorification of
the quantum cluster algebra of the braid variety \(X(\beta)\). We
identify this algebra with both a localized quantum Grothendieck ring
and a localized level-\(\geq1\) subalgebra of the bosonic extension
algebra. Under these identifications, quantum cluster monomials
correspond simultaneously to real simple modules and normalized global
basis elements. We construct the specialization homomorphism at
\(q^{1/2}=1\) and prove that it recovers \(\CC[X(\beta)]\); in
particular, the resulting integral form is a flat quantum deformation.
We also give intrinsic Lusztig parameters for cluster
variables attached to double strings, identify the corresponding
quantum grid minors, and establish generalized quantum \(T\)-systems.
\end{abstract}

\maketitle 

\tableofcontents

\section{Introduction}
Let \(G\) be a simple and simply connected algebraic group of
simply-laced type, with Cartan datum \((I,C)\) and Lie algebra
\(\fg\), and let \(\Br^+\) denote the associated positive braid
monoid. For a positive braid word
\(
    \beta=(i_1,\ldots,i_r),
\)
the braid variety \(X(\beta)\) parametrizes configurations of flags
whose successive relative positions are prescribed by the simple
reflections \(s_{i_1},\ldots,s_{i_r}\), subject to the endpoint
condition introduced in \cite[Section~3]{casals2025cluster}.

Cluster algebras were introduced by Fomin and Zelevinsky
\cite{fomin2002cluster} and have since become an important framework
connecting algebraic geometry, representation theory, and
combinatorics. A broad class of coordinate rings of algebraic
varieties admits cluster structures. Prominent examples include open
positroid varieties, reduced double Bruhat cells, open Richardson
varieties, and open double Bott--Samelson cells. These varieties may
all be realized as special cases of braid varieties. Cluster
structures on general braid varieties were constructed in
\cite{casals2025cluster,galashin2026braidII}.

A fundamental principle in the theory is that cluster monomials
should belong to distinguished canonical-type bases. For instance,
for the quantum coordinate ring \(A_q(\fn(w))\) of a unipotent
subgroup associated with a Weyl group element \(w\), quantum cluster
monomials belong to the dual canonical basis; see, for example,
\cite{qin2017triangular,kang2018monoidal}. Monoidal categorification
provides one of the principal methods for establishing such results.

The notion of monoidal categorification was introduced by
Hernandez and Leclerc \cite{hernandez2010cluster} using monoidal
subcategories of the category of finite-dimensional representations
of quantum affine algebras. One of its main consequences is that
cluster monomials are represented by classes of real simple modules
in the corresponding Grothendieck ring. In particular, a monoidal
categorification implies the positivity of the Laurent expansions of
cluster variables.

The theory was subsequently developed by Kashiwara, Kim, Oh, and
Park in a series of works, including
\cite{kashiwara2024monoidal,kashiwara2025monoidal}. On the other hand,
Hernandez and Leclerc introduced the quantum Grothendieck ring
\(
    \mathcal K_t(\mathscr C^{\ZZ})
\)
of the Hernandez--Leclerc category \(\mathscr C^{\ZZ}\) and related
it to the derived Hall algebra of the corresponding type
\cite{hernandez2015quantum}. Bosonic extension algebras were developed
subsequently and are related to these quantum Grothendieck rings by
the based isomorphisms recalled in Section~\ref{sec:bosonic-extension}.

In
\cite{oh2025pbw,kashiwara2024braid,kashiwara2025global},
Kashiwara, Kim, Oh, and Park developed global basis theory and braid
symmetries for bosonic extension algebras, in close analogy with the
corresponding theories for quantum coordinate rings of unipotent
subgroups. In particular, the bosonic extension algebra possesses
many of the structural properties of quantum unipotent coordinate
rings. Using the braid symmetries \(T_i\), they associated a
subalgebra
\(    \widehat{\cA}(\beta)
\)
with each positive braid word \(\beta\). Quantum cluster structures
on these algebras were established independently in
\cite{qin2024analogs,bi2025cluster,kashiwara2025monoidal}. In the
case corresponding to open double Bott--Samelson cells, the
resulting quantum cluster structure specializes at the classical
level to the known cluster structure on the associated braid
variety.

Cluster structures on braid varieties can be described
combinatorially in terms of Demazure weaves. A Demazure weave
\[
    \mathfrak W\colon\beta\longrightarrow\delta(\beta)
\]
determines a seed \(\mathbf s(\mathfrak W)\), where $\delta(\beta)$ is the Demazure product of $\beta$. Its cluster variables
are indexed by the trivalent vertices of \(\mathfrak W\), and its
extended exchange matrix is defined using the intersection pairing
of the corresponding Lusztig cycles. Elementary transformations of
Demazure weaves induce seed  equivalences and mutations. Consequently, different
Demazure weaves associated with the same braid word determine
mutation-equivalent seeds.

Among these weaves, the right inductive weave
\(\overrightarrow{\mathfrak W}(\beta)\) plays a particularly
important role. Its trivalent vertices are naturally indexed by the
complement of the leftmost reduced subexpression of \(\beta\)
representing its Demazure product \(\delta(\beta)\). We denote the
associated seed by
\[
    \overrightarrow{\mathbf s}(\beta)
    :=
    \mathbf s\bigl(
        \overrightarrow{\mathfrak W}(\beta)
    \bigr).
\]

It was proved in \cite{casals2025cluster} that every Demazure weave
\(\mathfrak W\) for \(\beta\) induces an isomorphism
\begin{equation}\label{eq:intro-classical-cluster}
    \CC[X(\beta)]
    \cong
    \mathcal A_{\CC}\bigl(\mathbf s(\mathfrak W)\bigr)
    =
    \mathcal U_{\CC}\bigl(\mathbf s(\mathfrak W)\bigr),
\end{equation}
where the subscript \(\CC\) denotes the complex coefficient forms
defined in Section~\ref{sec:cluster}. The resulting cluster algebra
is locally acyclic.
Cluster structures on braid varieties were also constructed from
Deodhar geometry using grid minors and, in type \(A\), from
three-dimensional plabic graphs
\cite{galashin2022braidI,galashin2026braidII}.
The two constructions give the same cluster structure by
\cite{casals2025comparing}.

Building on \cite{kashiwara2025monoidal},
Huh--Jung--Kim--Park \cite{huh2026comparison} gave
representation-theoretic realizations of braid-variety cluster
variables for braid words of the special form
\(\boldsymbol{\Delta}\beta\). Qin \cite{qin2024analogs} proved quantum
\(\mathcal A=\mathcal U\), constructed a common triangular basis, and
obtained a quasi-categorification for Lie-theoretic braid-variety
seeds. 
For an arbitrary positive braid word \(\beta\), the author
\cite[Theorem~6.8]{bi2026towards} constructed a monoidal subcategory
\(\mathscr B_{\DD}(\beta)\subset\mathscr C^{\ZZ}\) and established
the classical inclusion
\[
    \overline{\mathcal A}\bigl(
        \overrightarrow{\mathbf s}(\beta)
    \bigr)
    \subseteq
    K\bigl(\mathscr B_{\DD}(\beta)\bigr),
\]
with cluster monomials represented by simple objects. These results, however, did not
provide a full based categorification: they did not identify the
localized quantum Grothendieck ring with the upper quantum cluster
algebra or establish a bijection between its normalized simple classes
and the common triangular basis. Such a based categorification inside
a fixed Hernandez--Leclerc category, together with a flat integral
quantization of \(\CC[X(\beta)]\), remained open.

The aim of this paper is to provide this construction. We construct a
quantization of \(\CC[X(\beta)]\) as a subalgebra of the bosonic
extension algebra \(\widehat{\cA}(\beta)\) and establish its monoidal
categorification by a suitable Hernandez--Leclerc subcategory. More
precisely, we identify the quantum cluster algebra associated with the
right inductive weave with localizations of both the corresponding
quantum Grothendieck ring and the constructed subalgebra of
\(\widehat{\cA}(\beta)\). Under these identifications, quantum cluster
monomials correspond simultaneously to normalized
\((q,t)\)-characters of real simple modules and to global basis
elements. We also introduce quantum grid minors in bosonic extension
algebras, determine their Lusztig parameters, and prove that they
satisfy generalized quantum \(T\)-systems.

\subsection{Main results}

We now describe the main results of this paper. Let
\(\beta=(i_1,\ldots,i_r)\) be a positive braid word. The right
inductive weave \(\overrightarrow{\mathfrak W}(\beta)\) determines
a seed \(\overrightarrow{\mathbf s}(\beta)\) whose complex cluster algebra
is isomorphic to the coordinate ring \(\CC[X(\beta)]\). Our first
objective is to construct a canonical quantum lift
\(\overrightarrow{\mathbf s}_t(\beta)\) of this seed and to realize
the resulting quantum cluster algebra both inside a bosonic
extension algebra and as a localized quantum Grothendieck ring.

Let \(\widehat{\cA}\) be the bosonic extension algebra associated
with \(\fg\), and let \(\mathscr C^{\ZZ}\) be the
Hernandez--Leclerc category of finite-dimensional integrable
\(U_{\qaff}'(\widehat{\fg})\)-modules. In the categorification statements
below, we assume that the finite Cartan matrix is of type \(ADE\),
that the ambient quantum affine algebra is of untwisted affine type,
and that \(\DD=\DD_{\mathcal Q}\) is the complete strong duality datum
associated with a \(Q\)-datum \(\mathcal Q\). We use the standard based
isomorphism
\begin{equation*}
 \Phi_{\DD}\colon
 \mathcal K_t(\mathscr C^{\ZZ})
 \xrightarrow{\ \sim\ }
 \widehat{\cA}_{\ZZ[q^{\pm1/2}]},
\end{equation*}
where
\(t^{1/2}=q^{-1/2}\). Under this isomorphism, the classes of affine
cuspidal modules correspond to bar-invariant PBW root vectors,
whereas normalized simple classes correspond to normalized
global-basis elements.

For each position \(k\) in \(\beta\), let \(P_k^\beta\) be the PBW
root vector obtained by applying the braid symmetries associated
with the prefix \((i_1,\ldots,i_{k-1})\), and let
\(C_k^{\DD,\beta}\) be the corresponding affine cuspidal module.
With the conventions of Section~\ref{sec:bosonic-extension}, put
\(\widetilde P_k^\beta:=q^{-1/2}P_k^\beta\). Then
\[
    \Phi_{\DD}([C_k^{\DD,\beta}]_t)
    =\widetilde P_k^\beta
    =\widetilde G(\beta,E_k).
\]

\subsubsection{Quantization and monoidal categorification of braid
varieties}

The trivalent vertices of
\(\overrightarrow{\mathfrak W}(\beta)\) are naturally indexed by
the positions of \(\beta\) outside its leftmost reduced
subexpression representing \(\delta(\beta)\). One of the principal
new constructions of this paper is a family of real simple modules
\((B_v^{\DD})_{v\in V_\beta}\), indexed by these trivalent
vertices.

The module \(B_v^{\DD}\) is defined as the simple head of an ordered
tensor product involving the affine cuspidal module attached to
\(v\) and the modules associated with the preceding vertices. The
multiplicities in this tensor product are determined by the
intersection data of the corresponding Lusztig cycles. Thus the
construction of \(B_v^{\DD}\) translates the combinatorics of the
right inductive weave into the representation theory of quantum
affine algebras.

The same data determine an integral skew-symmetric matrix
\(\Lambda_\beta\), which records the quantum commutation relations
among the classes \([B_v^{\DD}]_t\). We give an explicit closed
formula for \(\Lambda_\beta\) and prove that it is compatible, in
the sense of Berenstein--Zelevinsky, with the extended exchange
matrix \(B_\beta\) of the right inductive weave. The pair
\((\Lambda_\beta,B_\beta)\) therefore defines the quantum seed
\(\overrightarrow{\mathbf s}_t(\beta)\).

Our first main result shows that the newly introduced modules
\(B_v^{\DD}\) provide a monoidal realization of this quantum seed.

\begin{theorem}[Theorem~\ref{thm:monoidal-seed-beta}]
\label{thm:monoidal-seed}
Let \(\beta\) be a positive braid word, and let \(\DD\) be the
complete strong duality datum associated with the fixed \(Q\)-datum.
Then the family
\((B_v^{\DD})_{v\in V_\beta}\), together with the exchange matrix
\(B_\beta\) and the compatible form \(\Lambda_\beta\), forms a
completely \(\Lambda\)-admissible monoidal seed.
\end{theorem}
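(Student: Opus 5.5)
The plan is to check the three defining conditions of a completely \(\Lambda\)-admissible monoidal seed in the sense of Kashiwara--Kim--Oh--Park, working inside \(\mathscr C^{\ZZ}\) and transporting to the bosonic extension \(\widehat{\cA}(\beta)\) through \(\Phi_{\DD}\) wherever global-basis arguments are more convenient. The three conditions are:
\begin{enumerate}[label=(\roman*)]
\item the \(B_v^{\DD}\) are real simple and pairwise strongly commuting, with \(\Lambda(B_u^{\DD},B_v^{\DD})\) equal to the entry \((\Lambda_\beta)_{uv}\);
\item \((\Lambda_\beta,B_\beta)\) is compatible;
\item for every mutable vertex \(k\) there is a real simple module \(B_k'\) with \(\mathfrak d(B_k^{\DD},B_k')=1\) fitting into the two short exact sequences whose outer terms are the products prescribed by the columns of \(B_\beta\), and such that the mutated family is again admissible, so that the procedure iterates.
\end{enumerate}
Condition (ii) I take from the closed formula for \(\Lambda_\beta\) and the compatibility statement announced just before the theorem. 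The real work is (i) and (iii).

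For (i) I induct on the length \(r\) of \(\beta\), comparing \(\beta\) with \(\beta'=(i_1,\ldots,i_{r-1})\). The right inductive weave satisfies \(\overrightarrow{\mathfrak W}(\beta)=\overrightarrow{\mathfrak W}(\beta')\) followed by one step. Either \(i_r\) lengthens \(\delta(\beta')\), in which case no new trivalent vertex appears and the seed only gains frozen adjustments, or \(\delta(\beta')s_{i_r}<\delta(\beta')\), in which case exactly one new trivalent vertex \(v=r\) appears. In the second case \(B_r^{\DD}\) is the head of an ordered product of \(C_r^{\DD,\beta}\) with powers of earlier \(B_u^{\DD}\), the exponents being the Lusztig-cycle intersection numbers. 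I will show this product is a normal sequence by exhibiting it as a product of pairwise strongly commuting or unmixed real simples in convex PBW order; the cuspidal modules \(C_k^{\DD,\beta}\) form such an order via the braid symmetries \(T_i\). Normality then gives that the head is simple and real, by the standard KKOP lemma. Strong commutation of \(B_r^{\DD}\) with the earlier \(B_u^{\DD}\) follows by writing everything as global basis elements of \(\widehat{\cA}(\beta)\): I identify \(\Phi_{\DD}[B_v^{\DD}]_t\) with a quantum grid minor, that is, a unipotent quantum minor transported by \(T_{i_1}\cdots T_{i_{k-1}}\). Quantum minors of this type are known to \(q\)-commute when their intervals are nested or disjoint. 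The commutation exponent computed from the \(\Lambda\)-invariants of the factors (additivity of \(\Lambda\) along normal sequences) should reproduce the closed formula for \(\Lambda_\beta\).

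For (iii), I first try to avoid constructing all mutations by hand by using the KKOP criterion: it suffices to produce, for each mutable \(k\), one first-step mutation \(B_k'\) with the exact sequences. Complete admissibility then follows from their theorem that a \(\Lambda\)-admissible seed is completely admissible. To produce \(B_k'\), I use that mutating at a vertex of the right inductive weave corresponds to an elementary weave move. Equivalently, \(B_k'\) is the module attached to the vertex \(k\) for a braid word obtained by a local braid relation or a commutation applied to \(\beta\). Under the braid symmetries this is an equivalence \(\mathscr B_{\DD}(\beta)\simeq\mathscr B_{\DD}(\tilde\beta)\) preserving the cuspidal modules up to reordering. The exact sequences are then the generalized quantum \(T\)-system relations among grid minors, which I would derive from the determinantal identities \(T\)-system in \(A_q(\fn(w))\) transported by \(T_i\). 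Finally I categorify them via \(\mathfrak d(B_k^{\DD},B_k')=1\), using that the head and socle of \(B_k^{\DD}\otimes B_k'\) have the prescribed classes.

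I expect the main obstacle to be this last step: showing that the head and socle of \(B_k^{\DD}\otimes B_k'\) decompose exactly into the products dictated by the columns of \(B_\beta\), which are defined via Lusztig cycle intersections. My first attempt is to match the \(T\)-system at the level of \((q,t)\)-characters through \(\Phi_{\DD}\), where both sides are global basis elements. I would then compare leading Lusztig parameters (PBW degrees in the convex order) to pin down which summand is the head and which the socle. Finally I would conclude \(\mathfrak d=1\) from the two-term length via KKOP's length criterion.
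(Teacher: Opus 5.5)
\textbf{There is a genuine gap.} It is in (iii), and it comes before the head/socle matching you flag: your plan has no way to produce the mutated module $B_k'$ at an arbitrary mutable vertex.

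Mutations of $\overrightarrow{\mathbf s}_t(\beta)$ are weave mutations with the top word $\beta$ held fixed. They do not correspond to braid or commutation moves on $\beta$. Also, the mutated weave is no longer the right inductive weave of $\beta$, so your recursive construction of the $B$'s does not even produce a candidate.
\begin{itemize}
\item Already for $\beta=(1,1,1)$, the vertex at position $2$ is mutable, since its cycle dies at the next trivalent vertex ($\min\{1,0\}=0$). Yet this word admits no braid move at all.
\item Compare Example~\ref{ex:T-system-without-Delta-factor}: the exchange $XY=qZ+1$ there involves no change of word. Proposition~\ref{lem:mutation-right-inductive-seed} shows that braid moves account only for special local mutations.
\item The exchange relations are not transported determinantial $T$-systems either. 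The cluster variables have Lusztig parameters $\eta_v^\beta$ that mix colors (e.g.\ $\eta_{v_3}=E_2+E_4+E_6$ in Example~\ref{ex:right-inductive-weaves}), so they are not minors $D^\beta[a,b]$.
\item The generalized $T$-system of Theorem~\ref{thm:generalized-T-system} is deduced from the categorification that rests on the present theorem, so invoking it would be circular.
\end{itemize}
The same misidentification undermines (i): there are no intervals to nest. Separately, normality of an almost real sequence gives a simple head, not a real one.

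\textbf{The missing idea is to inherit admissibility rather than build it.}
\begin{itemize}
\item By Definition~\ref{def:svbeta} and Lemma~\ref{lem:right-inductive-seed-restriction}, $\overrightarrow{\mathbf s}(\beta)$ is obtained from the word seed $\mathbf s(\beta)$ by the mutation sequence $M_m$ (which equals $T$), followed by freezing and deleting $D_\beta$. Moreover $\bigl(\widetilde B_\beta^{(m)}\bigr)_{D_\beta\times V_{\beta,\mathrm{ex}}}=0$.
\item The seed $S^{\DD}(\beta)$ is completely $\Lambda$-admissible by Theorem~\ref{thm:Lambda-admissible-seed}, hence so is $M_m(S^{\DD}(\beta))$. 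Lemma~\ref{lem:subseed} then transfers complete admissibility to the restriction.
\item This gives reality, pairwise strong commutation and all iterated $\Lambda$-mutations for free.
\end{itemize}
The real work is then to show that the restricted cluster-variable modules are your $B_v$. The paper does this by comparing Lusztig parameters (Proposition~\ref{pro:LusztigparaBv}):
\begin{itemize}
\item $\mathbf a^\beta(A_v)=\eta_v^\beta$ is obtained by tracking parameters through the weave mutations (Lemma~\ref{lem:Lusztig-parameter-mutation}, Proposition~\ref{pro:Lusztig-parameter-right-weave-variable}).
\item $\mathbf a^\beta(B_v)=\eta_v^\beta$ follows inductively from normality of $\bigl(C_{q_v}^\beta,(C_{q_v-1}^\beta)^{\otimes b_{q_v-1}},\ldots,(C_1^\beta)^{\otimes b_1}\bigr)$ and uniqueness of PBW parameters.
\end{itemize}
The form $\Lambda_\beta$ then comes from additivity of $\LambdaR$ along normal sequences (Proposition~\ref{pro:LambdaBv}). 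Its compatibility with $B_\beta$ comes from the inherited seed (Proposition~\ref{pro:compatible-Lambda-beta}).
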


We next pass from the initial seed to the entire quantum cluster
algebra. Let \(\mathscr C_{\DD}(\beta)\) be the monoidal subcategory
generated by the affine cuspidal modules
\(C_1^{\DD,\beta},\ldots,C_r^{\DD,\beta}\), and define its
\emph{level-\(\geq1\) truncation} by
\[
    \mathscr D_{\DD}(\beta)
    :=
    \mathscr C_{\DD}(\beta)
    \cap
    \mathscr C_{\DD}[1,\infty).
\]
This restriction to levels at least \(1\) is precisely the one needed
to compare the quantum Grothendieck ring with the coordinate ring of
the braid variety.

We localize \(\mathcal K_t(\mathscr D_{\DD}(\beta))\) by inverting
the classes of the frozen cluster-variable modules. These frozen
modules strongly commute with all simple objects of
\(\mathscr D_{\DD}(\beta)\), and hence the corresponding
localization is well defined.

\begin{theorem}[Theorem~\ref{thm:main-monoidal-categorification}]
\label{thm:monoidal-categorification}
Under the assumptions made at the beginning of this section, there is
a based algebra isomorphism
\begin{equation}\label{eq:intro-monoidal-categorification}
    \mathcal K_t\bigl(\mathscr D_{\DD}(\beta)\bigr)_{\mathrm{loc}}
    \xrightarrow{\sim}
    \mathcal A_t\bigl(
        \overrightarrow{\mathbf s}_t(\beta)
    \bigr),
\end{equation}
which sends \([B_v^{\DD}]_t\) to the quantum cluster variable indexed
by \(v\) and identifies the normalized localized simple classes with
the common triangular basis. Thus, after localization at the frozen
objects, \(\mathscr D_{\DD}(\beta)\) is a based monoidal
categorification of the quantum cluster algebra associated with
\(X(\beta)\).

In particular, every quantum cluster monomial with nonnegative frozen
exponents is represented by the normalized \((q,t)\)-character of a
real simple object of \(\mathscr D_{\DD}(\beta)\).
\end{theorem}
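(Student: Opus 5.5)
The plan is to follow the Kashiwara--Kim--Oh--Park strategy: a completely \(\Lambda\)-admissible monoidal seed inside a category whose quantum Grothendieck ring is known to be an upper cluster algebra makes that category a monoidal categorification. The starting point is Theorem~\ref{thm:monoidal-seed}. It says that \((B_v^{\DD})_{v\in V_\beta}\), with \(B_\beta\) and \(\Lambda_\beta\), is a completely \(\Lambda\)-admissible monoidal seed. The first thing to check is that every \(B_v^{\DD}\) lies in \(\mathscr D_{\DD}(\beta)\). Each \(B_v^{\DD}\) is the head of an ordered tensor product of affine cuspidals \(C_k^{\DD,\beta}\) attached to trivalent vertices, and these are exactly the positions outside the leftmost reduced subexpression. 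So the task is to show that the level-\(\geq1\) condition is satisfied by these cuspidals, or at least by the heads. The remaining cuspidals, those indexed by the reduced subexpression, are the ones that fall in level \(0\); they correspond to \(A_q(\fn(\delta(\beta)))\)-type pieces, which are quotiented out in passing to \(X(\beta)\).

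With complete admissibility in hand, the KKOP theorem (\cite{kashiwara2024monoidal,kashiwara2025monoidal}) gives the following. Every mutation of the seed is realized by real simple modules in \(\mathscr D_{\DD}(\beta)\), via the T-system-type short exact sequences \(0\to\cdot\to B_k\nabla B_k'\to\cdot\to0\). This yields an injective algebra map
\[
\mathcal A_t\bigl(\overrightarrow{\mathbf s}_t(\beta)\bigr)\hookrightarrow \mathcal K_t\bigl(\mathscr D_{\DD}(\beta)\bigr)_{\mathrm{loc}},
\]
under which quantum cluster monomials go to normalized classes of real simple modules.

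For surjectivity and the based statement, I would pass through \(\Phi_{\DD}\). The image \(\Phi_{\DD}(\mathcal K_t(\mathscr D_{\DD}(\beta)))\) is the level-\(\geq1\) subalgebra of \(\widehat{\cA}(\beta)\). This subalgebra has a PBW basis in the \(\widetilde P_k^\beta\), \(k\notin\) the reduced subexpression, with unitriangular transition to the normalized global basis. The steps are then:
\begin{enumerate}
\item Show that the cluster variables of \(\overrightarrow{\mathbf s}_t(\beta)\) and of the one-step mutations generate the localized level-\(\geq1\) algebra. The approach is to write each \(\widetilde P_k^\beta\) as a Laurent polynomial in the initial cluster times frozens, using the determinantal or \(T\)-system identities among the \(B_v^{\DD}\) together with induction on \(r\) (removing the last letter of \(\beta\)).
\item Use Qin's quantum \(\mathcal A=\mathcal U\) and the common triangular basis \cite{qin2024analogs}. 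The localized normalized simple classes are bar-invariant, \(\Lambda\)-pointed, and unitriangular to the initial cluster monomials, since simples are heads of ordered products. By uniqueness of the triangular basis, they coincide with the common triangular basis.
\end{enumerate}
Statement (1) together with \(\mathcal A=\mathcal U\) gives surjectivity, and (2) gives the based isomorphism.

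The final claim then follows. A cluster monomial with nonnegative frozen exponents is the class of a real simple module in the localization whose frozen part is not inverted. Since the frozens strongly commute with everything and the module is a genuine head of a tensor product of objects of \(\mathscr D_{\DD}(\beta)\), it lies in \(\mathscr D_{\DD}(\beta)\) itself.

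I expect the main obstacle to be step (1): proving that the localized Grothendieck ring is not larger than the cluster algebra, that is, matching generators at level \(\geq1\). I would attack it by induction on the length of \(\beta\). When a letter \(i_{r+1}\) is appended, either it extends the Demazure product, in which case no new trivalent vertex is created and the new cuspidal has level \(0\), or it creates a new vertex \(v\). In the second case \(B_v^{\DD}\) and the new cuspidal differ by a head with previously obtained modules, and they are related through a frozen-localized exchange relation.
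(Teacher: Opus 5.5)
There are genuine gaps. First, the structural picture of the level-$\geq1$ part, on which your membership check and step~(1) rest, is false. You take the cuspidals at solid positions to lie in level $\geq1$ and those of the leftmost reduced subexpression to lie in level $0$. You also assert that $\mathbf A(\beta)=\Phi_{\DD}\bigl(\mathcal K_t(\mathscr D_{\DD}(\beta))\bigr)$ has a PBW basis in the $\widetilde P_k^\beta$ with $k$ outside the reduced subexpression. Both claims fail:
\begin{itemize}
\item In Example~\ref{ex:normalized-seven-letter-word}, with $\beta=(1,1,2,2,1,2,1)$, position $6$ is solid. Yet $P_6^\beta=T_{11221}(q^{1/2}f_{2,0})=q^{1/2}T_1(f_{2,0})$ lies in $\widehat{\cA}[0]$, so $P_6^\beta\notin\widehat{\cA}_{\geq1}$. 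Nevertheless $A_{v_3}$, whose leading PBW monomial contains $P_6^\beta$, belongs to $\mathbf A(\beta)$.
\item Conversely, for the hollow last letter of $(1,1,2)$ one finds $P_3=qf_{1,1}T_1(f_{2,0})-q^{3/2}f_{2,0}$, which is not in level $0$. So your hollow case is not trivial for the reason you give.
\end{itemize}
Hence membership of $B_v$ in $\mathscr D_{\DD}(\beta)$ cannot be read off from its cuspidal factors. Step~(1), which tries to express each $\widetilde P_k^\beta$ in the localized cluster algebra, targets elements that need not lie in the algebra at all. The paper obtains membership from Corollary~\ref{cor:right-weave-cluster-variables-level-one} in three moves:
\begin{itemize}
\item extend $\beta$ to $\beta\gamma$ with $\delta(\beta\gamma)=w_0$;
\item observe that each cluster variable is the same global-basis element for $\beta$ and for $\beta\gamma$ (extension by zero of Lusztig parameters);
\item use $T_{w_0}\widehat{\cA}_{\geq0}=\widehat{\cA}_{\geq1}$ together with the based intersection property of $\Phi_{\DD}$.
\end{itemize}

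Second, your strategy does not reach the reverse inclusion or basedness. In step~(2), the pointedness and triangularity claim does not follow from simples being heads of ordered products. That description gives triangularity against PBW monomials in the $C_k^\beta$, not pointedness or triangularity in the quantum torus of the cluster $\{[B_v]_t\}$. Even for word seeds the paper needs Lemma~\ref{lem:Qin-positive-braid-category-comparison}, which matches Qin's standard monomials in interval variables with bar-twisted PBW monomials before invoking Kazhdan--Lusztig uniqueness.

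Your upward induction on length also runs in the direction where no tool is available. Appending a solid letter $i$ creates the frozen vertex $v_0$ but unfreezes the vertices of $J_{\beta,i,\mathrm{fr}}$ (Lemma~\ref{lem:Demazure-descent-seed-deletion}). Membership in $\mathcal U_t(\overrightarrow{\mathbf s}_t(\beta i))$ then requires Laurentness in seeds mutated at those vertices, and the equality $\mathcal K_t(\mathscr D_{\DD}(\beta))_{\mathrm{loc}}=\mathcal U_t(\overrightarrow{\mathbf s}_t(\beta))$ gives no information about them. Nor does a single exchange relation involving $B_{v_0}$ control the arbitrary new simples with positive last PBW coordinate.

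The missing idea is to start at the top and only ever freeze, as the paper does:
\begin{enumerate}
\item Choose $\gamma$ with $\beta\gamma$ representing $\Delta^m$. For a word $\boldsymbol\Delta\kappa$ one has $\mathscr D_{\DD}(\boldsymbol\Delta\kappa)=\mathscr C_{\DD^\Delta}(\kappa)$, and the right-inductive seed is the word seed of $\kappa$, where the based categorification is available (Lemma~\ref{lem:Garside-base-case}).
\item Transport this along braid moves (Proposition~\ref{lem:mutation-right-inductive-seed}).
\item Delete the letters of $\gamma$ one at a time (Proposition~\ref{pro:induction-upper-cluster-categorification}). At each deletion, Qin's theorem for the commuting subcategory $\mathscr T$ gives a based categorification of $\mathcal U_t(\operatorname{Fr}_J\overrightarrow{\mathbf s}_t(\beta i))=\mathcal U_t(\overrightarrow{\mathbf s}_t(\beta))[x_{v_0}^{\pm1};\sigma]$. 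The PBW valuation $\nu_{r+1}$ then forces the simples of $\mathscr D_{\DD}(\beta)$ into $x_{v_0}$-degree $0$. This yields both inclusions and the identification with the degree-zero common triangular basis.
\end{enumerate}
Your final deduction for cluster monomials with nonnegative frozen exponents is fine once the isomorphism is established.
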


We then realize this quantum cluster algebra intrinsically inside
the bosonic extension algebra. Let \(\widehat{\cA}(\beta)\) be the
subalgebra of \(\widehat{\cA}\) generated by the PBW root vectors
associated with \(\beta\), and let \(\widehat{\cA}_{\geq1}\) denote
the subalgebra supported in levels at least \(1\). We define
\[
    \mathbf A(\beta)
    :=
    \widehat{\cA}_{\ZZ[q^{\pm1/2}]}(\beta)
    \cap
    \widehat{\cA}_{\geq1}.
\]
Let \(\mathbf A(\beta)_{\mathrm{loc}}\) be the localization obtained
by inverting the global basis elements corresponding to the frozen
cluster variables.

\begin{theorem}[Theorem~\ref{thm:quantization-braid-variety}]
\label{thm:bosonic-realization}
Under the preceding type \(ADE\)
hypotheses, for every positive braid word \(\beta\) there is a based algebra
isomorphism
\[\mathbf A(\beta)_{\mathrm{loc}}
\cong
\mathcal A_t(\overrightarrow{\mathbf s}_t(\beta)).\]
Under this isomorphism, every quantum cluster monomial with
nonnegative frozen exponents is, up to a power of \(q^{1/2}\), a
global-basis element of \(\mathbf A(\beta)\).
\end{theorem}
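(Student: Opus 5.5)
The plan is to transport Theorem~\ref{thm:monoidal-categorification} through the based isomorphism \(\Phi_{\DD}\). The first task is to show that \(\Phi_{\DD}\) restricts to an isomorphism
\[
    \mathcal K_t\bigl(\mathscr D_{\DD}(\beta)\bigr)\xrightarrow{\ \sim\ }\mathbf A(\beta)
\]
of \(\ZZ[q^{\pm1/2}]\)-algebras that matches bases. I would split this into two matchings.

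\emph{The \(\beta\)-part.} The subcategory \(\mathscr C_{\DD}(\beta)\) is generated by the affine cuspidal modules \(C_k^{\DD,\beta}\), and \(\Phi_{\DD}([C_k^{\DD,\beta}]_t)=\widetilde P_k^\beta\). It follows that \(\Phi_{\DD}\) maps \(\mathcal K_t(\mathscr C_{\DD}(\beta))\) onto the \(\ZZ[q^{\pm1/2}]\)-subalgebra generated by the \(\widetilde P_k^\beta\), which is \(\widehat{\cA}_{\ZZ[q^{\pm1/2}]}(\beta)\). For bases, I would use the PBW/global-basis theory of Kashiwara--Kim--Oh--Park for \(\widehat{\cA}(\beta)\). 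The normalized global basis elements lying in \(\widehat{\cA}(\beta)\) are exactly the \(\widetilde G(\beta,\mathbf a)\); these are unitriangular, with bar-invariance, against ordered PBW monomials in the \(\widetilde P_k^\beta\). The simple modules of \(\mathscr C_{\DD}(\beta)\) are the heads of the ordered tensor products of cuspidals. Since \(\Phi_{\DD}\) is based, simple classes go to normalized global basis elements. So both sides carry the same unitriangular basis and the restriction is based.

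\emph{The level-\(\geq1\) part.} I would check that \(\Phi_{\DD}\) identifies \(\mathcal K_t(\mathscr C_{\DD}[1,\infty))\) with \(\widehat{\cA}_{\geq1}\). This should follow because the complete duality datum \(\DD_{\mathcal Q}\) sends the fundamental modules of level \(m\) to the generators \(f_{i,m}\), and the based isomorphism respects the level decomposition, so simples supported in levels \(\geq1\) go to global basis elements supported there. Intersecting the two identifications gives the restricted isomorphism. The only subtlety is that an intersection of based subalgebras is again spanned by the basis elements it contains. That holds because each subalgebra is spanned by a subset of the same global basis.

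\emph{Localization and transport.} The frozen modules strongly commute with all simples in \(\mathscr D_{\DD}(\beta)\), so their images are global basis elements that \(q\)-commute with \(\mathbf A(\beta)\). Localizing both sides at corresponding Ore sets therefore gives \(\mathcal K_t(\mathscr D_{\DD}(\beta))_{\mathrm{loc}}\cong\mathbf A(\beta)_{\mathrm{loc}}\). The basis on the right is the products of global basis elements with inverse frozen monomials, normalized by powers of \(q^{1/2}\), using that a product of a global basis element with a strongly commuting real one is a global basis element up to \(q^{1/2}\). Composing with Theorem~\ref{thm:monoidal-categorification} gives the isomorphism with \(\mathcal A_t(\overrightarrow{\mathbf s}_t(\beta))\). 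Under it, a quantum cluster monomial with nonnegative frozen exponents is represented by the normalized class of a real simple object of \(\mathscr D_{\DD}(\beta)\), whose image is, up to \(q^{1/2}\), a global basis element of \(\mathbf A(\beta)\).

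The main obstacle I expect is the level compatibility. One must show that \(\Phi_{\DD}\bigl(\mathcal K_t(\mathscr C_{\DD}(\beta)\cap\mathscr C_{\DD}[1,\infty))\bigr)\) equals \(\widehat{\cA}(\beta)\cap\widehat{\cA}_{\geq1}\), not merely that the left side is contained in the right. This needs a simple module of \(\mathscr C_{\DD}(\beta)\) whose global basis image lies in \(\widehat{\cA}_{\geq1}\) to actually be supported in levels \(\geq1\). I would try to prove this via a triangularity argument: dominant monomials and the \(t\)-characters detect the levels appearing in the highest weight, and all monomials are bounded below by it.
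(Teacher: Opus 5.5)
Your proposal follows the paper's proof: restrict \(\Phi_{\DD}\) to a based isomorphism \(\mathcal K_t(\mathscr D_{\DD}(\beta))\cong\mathbf A(\beta)\) by intersecting the \(\beta\)-part with the level-\(\geq1\) part (the paper's \eqref{eq:based-intersection-property} and \eqref{eq:Phi-D-level-one-intersection}), extend it over the frozen Ore localizations, and compose with Theorem~\ref{thm:main-monoidal-categorification}. The paper does not reprove the level compatibility you single out as the main obstacle; it imports it as part of the cited based isomorphism of Theorem~\ref{thm:isoKt}, and once \(\Phi_{\DD}\) is known to carry the simple classes of \(\mathscr C_{\DD}[1,\infty)\) bijectively onto the global basis elements lying in \(\widehat{\cA}_{\geq1}\), the reverse inclusion follows from injectivity of \(\Phi_{\DD}\), so no separate \(t\)-character triangularity argument is needed.
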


Combining the preceding two theorems, we obtain the central
identification
\begin{equation}\label{eq:intro-main-identifications}
    \mathcal K_t\bigl(\mathscr D_{\DD}(\beta)\bigr)_{\mathrm{loc}}
    \cong
    \mathcal A_t\bigl(\overrightarrow{\mathbf s}_t(\beta)\bigr)
    \cong
    \mathbf A(\beta)_{\mathrm{loc}}.
\end{equation}
The first realization provides a monoidal categorification, whereas
the second identifies the quantum cluster algebra with a localized
subalgebra of the bosonic extension algebra and relates its
distinguished basis to the global basis. We note that, by Proposition~\ref{lem:mutation-right-inductive-seed},
the category \(\mathscr D_{\DD}(\beta)\) and the algebra
\(\mathbf A(\beta)\) depend only on the positive braid element
\(b\in\Br^+\) represented by \(\beta\), and not on the choice of its
positive braid expression. Accordingly, we may denote them by
\(\mathscr D_{\DD}(b)\) and \(\mathbf A(b)\), respectively.

We further prove that specialization at \(q^{1/2}=1\) recovers the
classical coordinate ring \(\CC[X(\beta)]\). Hence
\(\mathbf A(\beta)_{\mathrm{loc}}\) is a flat
\(\ZZ[q^{\pm1/2}]\)-deformation of \(\CC[X(\beta)]\). Thus the term
\emph{quantization} in this paper refers to an intrinsic flat
deformation of the coordinate ring, rather than merely to an
embedding of its cluster variables into a noncommutative ambient
algebra.

\subsubsection{Quantum grid minors and generalized \(T\)-systems}

We next apply the preceding framework to grid minors. A double
string \(\ddot{\mathbf s}=(i_1X_1,\ldots,i_rX_r)\), with
\(X_k\in\{L,R\}\), records the successive insertion of each letter
\(i_k\) at the left or right end of the current braid word. It
determines a sequence of intermediate braid words and a final braid
word \(\beta\).

 A second principal construction of
this paper is the vector
\(\boldsymbol\delta_{k,j}^{\ddot{\mathbf s}}\in\ZZ_{\geq0}^r\), associated
with every step \(k\) and every \(j\in I\). This vector
records the Lusztig parameter of the quantum grid minor attached to
the pair \((k,j)\).

For an insertion, the vector
\(\boldsymbol\delta_{k,j}^{\ddot{\mathbf s}}\) is computed explicitly from the
leftmost reduced subexpression and the action of the Weyl group on
the fundamental weight \(\varpi_j\). Let
\(\widetilde G(\beta,\mathbf a)\) denote the normalized
global-basis element with \(\beta\)-Lusztig parameter \(\mathbf a\).
Using the newly introduced vector
\(\boldsymbol\delta_{k,j}^{\ddot{\mathbf s}}\), we define
\[D_{k,j}^{\beta,\ddot{\mathbf s}}
:=
\widetilde G
\bigl(\beta,\boldsymbol\delta_{k,j}^{\ddot{\mathbf s}}\bigr).\]
Our next main result identifies this global-basis element with the
corresponding quantum grid minor.

\begin{theorem}[Theorem~\ref{thm:quantum-grid-minor-factorization}]
\label{thm:intro-quantum-grid-minors}
Let \(\ddot{\mathbf s}\) be a double string with associated braid
word \(\beta\), and let \(k\) be a step of $\ddot{\mathbf{s}}$. Then, for every
\(j\in I\), the global-basis element
\(D_{k,j}^{\beta,\ddot{\mathbf s}}\) is a quantum lift of the
corresponding classical grid minor
\(\Delta_{k,j}^{\ddot{\mathbf s}}\). Moreover, it is a quantum
cluster monomial in
\(\mathcal A_t(\overrightarrow{\mathbf s}_t(\beta))\).
\end{theorem}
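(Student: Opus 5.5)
The plan is to argue by induction on the length \(r\) of the double string \(\ddot{\mathbf s}\), tracking how the intermediate braid words grow by left or right insertion. The grid minors of Galashin--Lam--Sherman-Bennett--Speyer satisfy two properties that we will match on the quantum side.
\begin{enumerate}
\item Step \(k\) only involves the \(k\)-th intermediate braid word \(\beta^{(k)}\).
\item The classical minors \(\Delta_{k,j}^{\ddot{\mathbf s}}\) are cluster monomials in the seed attached to the weave obtained from \(\ddot{\mathbf s}\). This seed is related to \(\overrightarrow{\mathbf s}(\beta)\) by an explicit sequence of mutations, with \(\mathcal A_{\CC}\) identified with \(\CC[X(\beta)]\) via \eqref{eq:intro-classical-cluster} and the comparison of \cite{casals2025comparing}.
\end{enumerate}
So the proof splits into three parts:
\begin{enumerate}
\item[(a)] show \(D_{k,j}^{\beta,\ddot{\mathbf s}}\) is a quantum cluster monomial;
\item[(b)] show it specializes at \(q^{1/2}=1\) to \(\Delta_{k,j}^{\ddot{\mathbf s}}\);
\item[(c)] identify its Lusztig parameter with \(\boldsymbol\delta_{k,j}^{\ddot{\mathbf s}}\).
\end{enumerate}

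\textbf{Right insertions.} If \(X_k=R\), then \(\beta^{(k)}\) is a prefix of \(\beta\). Since \(\widehat{\cA}(\beta^{(k)})\subset\widehat{\cA}(\beta)\) compatibly with PBW vectors and Lusztig parameters, padding \(\mathbf a\) by zeros sends \(\widetilde G(\beta^{(k)},\mathbf a)\) to \(\widetilde G(\beta,\mathbf a)\). We then use the right inductive structure: the new frozen-type variable created at the last letter is the head of an ordered product of cuspidal modules \(C_\ell^{\DD,\beta}\). The \(\varpi_j\) orbit data in \(\boldsymbol\delta_{k,j}^{\ddot{\mathbf s}}\) are exactly the multiplicities in such a product: one PBW factor for each position \(\ell\) in the leftmost reduced subexpression at which \(\langle s\cdots\varpi_j\rangle\) changes. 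So \(D_{k,j}\) equals the head of a determinantial-type ordered product. By Theorem~\ref{thm:monoidal-seed} and the construction of the \(B_v^{\DD}\), this head is a product of some \(B_v^{\DD}\) with frozen factors, hence a quantum cluster monomial. This is the quantum analogue of the unipotent minor \(D(\varpi_j,w\varpi_j)\).

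\textbf{Left insertions.} If \(X_k=L\), we would use the braid symmetry \(T_{i_k}\). Prepending \(i_k\) corresponds to \(\widehat{\cA}(i_k\beta')=\cA(\text{first letter})\cdot T_{i_k}\widehat{\cA}(\beta')\). The braid symmetries preserve global bases up to the normalization by \(q^{1/2}\) by \cite{kashiwara2024braid,kashiwara2025global}. Proposition~\ref{lem:mutation-right-inductive-seed} says the quantum seeds for different expressions, and for left versus right inductive weaves, are mutation equivalent with matching triangular bases. Quantum cluster monomials are then transported to quantum cluster monomials, and the parameter \(\boldsymbol\delta\) shifts accordingly.

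\textbf{Specialization.} For (b), we apply the specialization homomorphism at \(q^{1/2}=1\), which recovers \(\CC[X(\beta)]\). A quantum cluster monomial specializes to the classical cluster monomial with the same \(g\)-vector and frozen exponents. Hence it suffices to compare \(g\)-vectors, or equivalently the leading PBW term, with the Galashin--Lam--Sherman-Bennett--Speyer description of \(\Delta_{k,j}\) via chamber weights.

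\textbf{Main obstacle.} The hardest step will be the left-insertion case, specifically checking that the Lusztig parameter transforms under \(T_{i_k}\) exactly into \(\boldsymbol\delta_{k,j}^{\ddot{\mathbf s}}\). Lusztig parameters are not simply shifted when the leftmost reduced subexpression changes: a letter may move between the reduced part and the trivalent part. My first attempt would be to work at the level of dominant extremal weights. I would show that \(D_{k,j}\) is, up to \(q^{1/2}\), a generalized quantum minor \(\Delta(w\varpi_j,w'\varpi_j)\) in \(\widehat{\cA}(\beta)\) whose leading term is determined by the weight \(w\varpi_j\) attached to the current chamber. Matching that leading term directly with the definition of \(\boldsymbol\delta_{k,j}^{\ddot{\mathbf s}}\) would then avoid an explicit braid-symmetry recursion.
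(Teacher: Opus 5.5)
Your outline has a genuine gap: it never finds the cluster monomial, and it lacks the identity that converts its parameter into $\boldsymbol\delta$. You propose to show that the head of an ordered product of cuspidal modules, with multiplicities $\boldsymbol\delta$, is a product of $B_v^{\DD}$'s and frozen modules. Nothing supports this. Heads of such products are arbitrary simple modules, and Theorem~\ref{thm:monoidal-seed} gives no way to tell which of them lie in a cluster.

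The missing idea is Lemma~\ref{lem:grid-minors}. Since $h_c^+=\prod_a\chi_a^{\mathbf j}(u_{e_a})$ and $u_e=\prod_v X_v^{\gamma_v(e)}$, the classical grid minor is the explicit monomial $\prod_v X_v^{\langle\varpi_j,\gamma_{v,c}\rangle}$ in the weave cluster variables of $\mathfrak W(\ddot{\mathbf s})$. The candidate is the normalized quantum monomial with these exponents. It is a quantum cluster monomial by construction, and it specializes to $\Delta_{c,j}^{\ddot{\mathbf s}}$ because specialization commutes with mutation (Lemma~\ref{lem:specialization-cluster-variables}).

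The real content is the equality of this monomial with $\widetilde G(\beta,\boldsymbol\delta_{c,j}^{\ddot{\mathbf s}})$, that is, $\sum_v\langle\varpi_j,\gamma_{v,c}\rangle\,\mathbf a^\beta(A_v)=\boldsymbol\delta_{c,j}^{\ddot{\mathbf s}}$. This requires three ingredients:
\begin{enumerate}
\item $\mathbf a^\beta(A_v)=\eta_v^\beta$ (Proposition~\ref{pro:Lusztig-parameter-right-weave-variable});
\item additivity of leading parameters under normalized products;
\item the root identity $\sum_v\gamma_{v,c}\eta_{v,e}=\sigma_{e,c}^{-1}(\alpha_{i_e}^\vee)$ of Theorem~\ref{thm:root-identity}.
\end{enumerate}
The root identity is proved by induction on $c-e$: hollow steps act by $s_{i_c}$ on the inversion coroots, and at solid steps the trivalent rule and the $\eta$-recursion cancel (Lemma~\ref{lem:solid-step-gamma}). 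None of this appears in your plan.

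Your description of $\boldsymbol\delta$ is also wrong. It is supported on the \emph{solid} positions $e\leq c$ of the truncated word, the complement of the leftmost reduced subexpression, with entries $\langle\sigma_{e,c}\varpi_j,\alpha_{i_e}^\vee\rangle$. It is not supported on positions of the reduced subexpression.

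Your left/right case split is misplaced. Whether $\beta_k^{\ddot{\mathbf s}}$ is a prefix of $\beta$ depends on the \emph{later} insertions, not on $X_k$. In general $\beta=\gamma_k\beta_k^{\ddot{\mathbf s}}\kappa_k$, with $\gamma_k$ formed by subsequent left insertions, so ``$X_k=\mathrm R$ implies prefix'' is false. Proposition~\ref{lem:mutation-right-inductive-seed} does not help here either: it only treats commutation and braid moves between words for right inductive weaves, and says nothing about double inductive weaves.

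The paper handles every depth $c$ uniformly:
\begin{enumerate}
\item At depth $c$, only vertices that are frozen in the truncated weave $\mathfrak W(\ddot{\mathbf s}_{\leq c})$ have nonzero southern output.
\item Frozen vertices are never mutated when this weave is transformed into $\overrightarrow{\mathfrak W}(\beta_c^{\ddot{\mathbf s}})$, and they keep their output (Lemma~\ref{lem:output}).
\item These mutations extend to the full weave (Lemma~\ref{lem:quantum-truncated-weave-extension}).
\item The corresponding global-basis elements are $T_{\gamma_c}$-images with zero-extended parameters (Corollary~\ref{cor:frozen-block-transport}, Lemma~\ref{lem:quantum-grid-minor-transport}).
\end{enumerate}
Since $\boldsymbol\delta_{c,j}^{\ddot{\mathbf s}}$ is defined from the truncated word $\beta_c^{\ddot{\mathbf s}}$ alone and then zero-padded, the obstacle you anticipate never arises. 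Letters moving between the reduced and trivalent parts under left insertion play no role, and no generalized minor $\Delta(w\varpi_j,w'\varpi_j)$ in $\widehat{\cA}(\beta)$ is needed.
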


Finally, we study quantum grid minors under local transformations of
double strings. Consider two double strings that differ by the
interchange
\((\mathbf u,iL,jR,\mathbf v)
\leftrightarrow
(\mathbf u,jR,iL,\mathbf v)\).
When the two vertices involved are solid and the relevant length
condition is satisfied, this transformation is called a
\emph{solid-special mutation}. For the corresponding grid minors,
we prove a generalized quantum \(T\)-system using global-basis
positivity, their classical exchange identity, and the linear
independence of the specialized common triangular basis.

\begin{theorem}[Theorem~\ref{thm:generalized-T-system}]
\label{thm:intro-quantum-grid-T-system}
Let \(\ddot{\mathbf s}\) and \(\ddot{\mathbf s}'\) be related by a
solid-special mutation and let $k$ be the depth affected by this interchange. Then there are numbers
\(A,B\in\frac12\ZZ\) such that
\begin{equation}\label{eq:intro-quantum-grid-T-system}
\begin{aligned}
    D_{k,j}^{\beta,\ddot{\mathbf s}}
    D_{k,j}^{\beta,\ddot{\mathbf s}'}
    &=
    q^A
    D_{k-1,j}^{\beta,\ddot{\mathbf s}}
    D_{k+1,j}^{\beta,\ddot{\mathbf s}}+
    q^B
    \bigodot_{a\in I\setminus\{j\}}
    \left(
        D_{k,a}^{\beta,\ddot{\mathbf s}}
    \right)^{\odot(-c_{ja})},
\end{aligned}
\end{equation}
where \(\odot\) denotes the normalized quantum product. 
\end{theorem}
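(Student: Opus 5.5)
The identity will be reduced to its classical counterpart, using positivity of the global basis and linear independence of the specialized common triangular basis, as the introduction indicates.

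\emph{Step 1: all terms are cluster monomials.} By Theorem~\ref{thm:intro-quantum-grid-minors}, every element \(D_{k',a}^{\beta,\ddot{\mathbf s}}\), and also \(D_{k,j}^{\beta,\ddot{\mathbf s}'}\) for the mutated double string, is a global-basis element of \(\mathbf A(\beta)\). Each one lifts the corresponding classical grid minor and is a quantum cluster monomial. Both double strings have the same final braid word \(\beta\), and by Proposition~\ref{lem:mutation-right-inductive-seed} the algebra \(\mathbf A(\beta)\) depends only on the braid element. So all these elements live in one algebra \(\mathcal A_t(\overrightarrow{\mathbf s}_t(\beta))\), which we identify with \(\mathbf A(\beta)_{\mathrm{loc}}\) via Theorem~\ref{thm:bosonic-realization}.

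\emph{Step 2: the factors on the right quasi-commute.} The minors \(D_{k-1,j},D_{k+1,j}\), and the \(D_{k,a}\) for \(a\neq j\), all belong to a single cluster attached to \(\ddot{\mathbf s}\): they are grid minors of one double string. Hence they quasi-commute, and the two normalized products on the right side are again, up to a power of \(q^{1/2}\), cluster monomials. In particular they are normalized global-basis elements, equivalently normalized classes of real simple modules under \(\Phi_{\DD}\) and Theorem~\ref{thm:monoidal-categorification}.

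\emph{Step 3: expand the product and specialize.} By positivity of the global basis, the left side expands as \(\sum_b c_b(q^{1/2})\, \widetilde G(b)\) with \(c_b\in\ZZ_{\ge0}[q^{\pm1/2}]\). Categorically, this is the expansion of the class of \(D\otimes D'\) into the normalized classes of its composition factors. Now specialize at \(q^{1/2}=1\) using the specialization homomorphism onto \(\CC[X(\beta)]\). The classical exchange identity for grid minors under the solid-special mutation says that the left side becomes \(\Delta_{k-1,j}\Delta_{k+1,j}+\prod_{a\neq j}\Delta_{k,a}^{-c_{ja}}\). By Step 2, these two terms are specializations of two common triangular basis elements. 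They are distinct, because their \(g\)-vectors (equivalently their Lusztig parameters \(\boldsymbol\delta\)) differ.

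\emph{Step 4: conclude.} The specialized common triangular basis is linearly independent, so the specialized expansion has exactly two terms, each with coefficient \(1\). Hence \(\sum_b c_b(1)\widetilde G(b)|_{q=1}\) equals the sum of these two basis elements. Since every \(c_b\) has nonnegative coefficients, \(c_b(1)=0\) forces \(c_b=0\), and \(c_b(1)=1\) forces \(c_b\) to be a single monomial \(q^A\) or \(q^B\). This yields the displayed identity with \(A,B\in\tfrac12\ZZ\).

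The main obstacle is Step 2 together with the input to Step 3. First, one must confirm that \(D_{k,j}^{\ddot{\mathbf s}'}\) and the right-hand factors sit in a common framework where the product expands positively in the same basis. Second, one must check that the classical exchange identity holds exactly as stated for solid-special mutations, with the two monomials specializing to distinct basis elements. I would first try to verify the distinctness by comparing the Lusztig parameters \(\boldsymbol\delta_{k\pm1,j}\) with \(\sum_a(-c_{ja})\boldsymbol\delta_{k,a}\), using additivity of Lusztig parameters under normalized products of commuting real elements.
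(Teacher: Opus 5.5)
Your proposal is correct and is essentially the paper's proof. The shared steps are: the classical grid-minor identity (which the paper imports from Galashin et al.), positivity of the normalized global basis, the fact that the product lies in the based subalgebra $\mathbf A(\beta)$ so that every term of its expansion is a common triangular basis element, linear independence after specialization, and nonnegativity forcing the coefficients to be monomials.

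The distinctness you flag is settled exactly as you suggest. Look at the coordinate $e$ of the solid step $j\mathrm R$ at depth $k+1$ of $\ddot{\mathbf s}$: there the normalized product of $D_{k-1,j}^{\ddot{\mathbf s}}$ and $D_{k+1,j}^{\ddot{\mathbf s}}$ has Lusztig-parameter entry $1$, while every depth-$k$ minor has entry $0$. Also, no braid-move invariance is needed, since both strings have literally the same final word.
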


The proof has two main stages. First,
Section~\ref{sec:vector-right-weave} defines the weave vectors and
establishes their recursion. Section~\ref{sec:quantization-braid-varieties}
then identifies them with Lusztig parameters of cluster variables of right inductive weaves. The truncated-weave
extension in Lemma~\ref{lem:quantum-truncated-weave-extension} makes
this identification compatible with passing to consecutive blocks
(Theorem~\ref{thm:vector-double-inductive}); the resulting block
embeddings also give the quantum grid-minor factorization in
Theorem~\ref{thm:quantum-grid-minor-factorization}.

Second, the categorification argument is proved in
Section~\ref{sec:categorification-braid-varieties}.
We first consider the cluster structure on $\widehat{\cA}_{\ZZ[q^{\pm1/2}]}(\beta)=\overline{\cA}_t(\mathbf{s}_t(\beta))$, where $\mathbf{s}_t(\beta)$ is the seed associated with braid word $\beta$ in equation~\eqref{eq:seedbeta}. The  Lemma~\ref{lem:Qin-positive-braid-category-comparison}
identifies the common triangular basis of $\overline{\cA}_t(\mathbf{s}_t(\beta))$  with the normalized global
basis of $\widehat{\cA}_{\ZZ[q^{\pm1/2}]}(\beta)$ and the simple-object basis of $\mathcal{K}_t(\mathscr C_\DD(\beta))$, yielding the Garside base case
(Lemma~\ref{lem:Garside-base-case}). Braid-move invariance and
Demazure-descent deletion, together with the degree-zero freezing
argument of Proposition~\ref{pro:induction-upper-cluster-categorification},
then establish the main theorem for an arbitrary positive braid word.
Finally, the based identification proves flatness and identifies
the classical specialization; global-basis positivity lifts the
classical grid-minor relation to the generalized quantum \(T\)-system.

Sections~\ref{sec:cluster}--\ref{sec:braid-variety} supply the
background and conventions used in this argument: cluster seeds and
coefficient rings, Hernandez--Leclerc categories, bosonic extension
algebras, and Demazure weaves. The conventions for exchange-matrix
signs, the identification \(t^{1/2}=q^{-1/2}\), and bar normalization
are fixed there and retained throughout.

\noindent
\textbf{Acknowledgments}.
The author is grateful to Masaki Kashiwara, Ryo Fujita, and Fan Qin for helpful discussions and valuable comments concerning the results of this work. The author was supported by the China Scholarship Council (CSC), Grant No.~202506680034.

\section{Cluster algebras}\label{sec:cluster}
In this section, we fix our conventions for classical and quantum cluster
algebras. We then recall the word seeds associated with positive braid words
and describe the seed obtained by removing a leftmost reduced subexpression.

\subsection{Classical cluster algebras}
Let \(K\) be a finite index set, let \(K_{\mathrm{fr}}\subseteq K\) be
the subset of frozen vertices, and set
\(K_{\mathrm{ex}}:=K\setminus K_{\mathrm{fr}}\). For an integer \(a\),
we write \([a]_+:=\max\{a,0\}\).

An integer matrix
\(\widetilde B=(b_{ij})_{i\in K,\,j\in K_{\mathrm{ex}}}\) is called an
\emph{extended exchange matrix} if its principal part
\((b_{ij})_{i,j\in K_{\mathrm{ex}}}\) is skew-symmetric. Let
\(\mathbf X=(X_i)_{i\in K}\) be a family of algebraically independent
commuting variables, and let \(\mathcal F:=\QQ(X_i\mid i\in K)\). A
\emph{seed} in \(\mathcal F\) is a tuple
\(\mathbf t=(\mathbf X,\widetilde B,K_{\mathrm{ex}})\). Its Laurent
polynomial ring is
\(\mathcal T(\mathbf t):=\ZZ[X_i^{\pm1}\mid i\in K]\).

For \(k\in K_{\mathrm{ex}}\), the matrix mutation of \(\widetilde B\) in
direction \(k\) is the matrix
\(\mu_k(\widetilde B)=\widetilde B'=(b'_{ij})\) given by
\begin{equation}\label{eq:matrix-mutation}
    b'_{ij}=
    \begin{cases}
        -b_{ij},
        & i=k \text{ or } j=k,\\
        b_{ij}+[b_{ik}]_+[b_{kj}]_+
        -[-b_{ik}]_+[-b_{kj}]_+,
        & \text{otherwise}.
    \end{cases}
\end{equation}
The mutated cluster \(\mathbf X'=(X_i')_{i\in K}\) is defined by
\begin{equation}\label{eq:cluster-mutation}
    X_i'=
    \begin{cases}
        X_i,
        & i\neq k,\\[1mm]
        \displaystyle
        X_k^{-1}\left(
            \prod_{j\in K}X_j^{[b_{jk}]_+}
            +\prod_{j\in K}X_j^{[-b_{jk}]_+}
        \right),
        & i=k.
    \end{cases}
\end{equation}
The seed
\(\mu_k(\mathbf t):=(\mathbf X',\widetilde B',K_{\mathrm{ex}})\) is
called the \emph{mutation of \(\mathbf t\) in direction \(k\)}. In
particular, mutation leaves the frozen variables unchanged.

Two seeds are \emph{mutation-equivalent} if they are related by a finite
sequence of mutations at exchangeable vertices; we write
\(\mathbf t'\sim\mathbf t\). If
\(\mathbf t'=((X_i^{\mathbf t'})_{i\in K},
\widetilde B^{\mathbf t'},K_{\mathrm{ex}})\), set
\(\mathcal T(\mathbf t')
:=\ZZ[(X_i^{\mathbf t'})^{\pm1}\mid i\in K]\).

\begin{definition}
    The \emph{nonlocalized cluster algebra}, the \emph{cluster algebra}, and
    the \emph{upper cluster algebra} associated with \(\mathbf t\) are,
    respectively,
    \begin{align*}
        \overline{\mathcal A}(\mathbf t)
        :=\ZZ\bigl[
            X_i^{\mathbf t'}
            \mid \mathbf t'\sim\mathbf t,\ i\in K
        \bigr],\quad
        \mathcal A(\mathbf t)
        :=\overline{\mathcal A}(\mathbf t)
            [X_f^{-1}\mid f\in K_{\mathrm{fr}}],\quad
        \mathcal U(\mathbf t)
        :=\bigcap_{\mathbf t'\sim\mathbf t}
            \mathcal T(\mathbf t')
            \subseteq\mathcal F.
    \end{align*}
\end{definition}

The algebras without a coefficient subscript are defined over
\(\ZZ\). For comparison with coordinate rings of complex varieties,
we use
\begin{equation}\label{eq:complex-cluster-coefficients}
\begin{aligned}
    \overline{\mathcal A}_{\CC}(\mathbf t)
    &:=\overline{\mathcal A}(\mathbf t)\otimes_{\ZZ}\CC,\\
    \mathcal A_{\CC}(\mathbf t)
    &:=\mathcal A(\mathbf t)\otimes_{\ZZ}\CC,\\
    \mathcal U_{\CC}(\mathbf t)
    &:=\bigcap_{\mathbf t'\sim\mathbf t}
       \CC[(X_i^{\mathbf t'})^{\pm1}\mid i\in K]
       \subseteq\CC(X_i\mid i\in K).
\end{aligned}
\end{equation}
In particular, a geometric identification with \(\CC[X(\beta)]\)
always uses \(\mathcal A_{\CC}\). Grothendieck-ring
categorifications before extension of scalars use the integral forms.

The Laurent phenomenon gives
\begin{equation}\label{eq:classical-inclusions}
    \overline{\mathcal A}(\mathbf t)
    \subseteq\mathcal A(\mathbf t)
    \subseteq\mathcal U(\mathbf t)
    \subseteq\mathcal F.
\end{equation}

We associate with \(\mathbf t\) an ice quiver \(Q_{\mathbf t}\) with vertex
set \(K\) and frozen vertex set \(K_{\mathrm{fr}}\). We take no arrows
between frozen vertices, and use the convention
\begin{equation}\label{eq:quiver-convention}   b_{ij}
    =\#\{\text{arrows }j\to i\}
     -\#\{\text{arrows }i\to j\},
    \qquad i\in K,\quad j\in K_{\mathrm{ex}}.
\end{equation}

\subsection{Quantum cluster algebras}

Let \(\Lambda=(\lambda_{ij})_{i,j\in K}\) be an integral
skew-symmetric matrix. We say that \(\widetilde B\) and \(\Lambda\) are
\emph{compatible} if
\begin{equation}\label{eq:compatibility}
    \sum_{k\in K}b_{kj}\lambda_{ki}=2\delta_{ij},
    \qquad j\in K_{\mathrm{ex}},\quad i\in K.
\end{equation}
Here the factor \(2\) reflects the normalization used throughout this paper.
In matrix form, \eqref{eq:compatibility} is
\(\widetilde B^{\mathsf T}\Lambda=2I_{K_{\mathrm{ex}}}\), where the
right-hand side denotes the rectangular matrix with entries
\(2\delta_{ij}\). Since \(\Lambda\) is skew-symmetric, the same condition
may equivalently be written as
\(-\Lambda\widetilde B=2I_{K_{\mathrm{ex}}}\). Thus the compatible pair
used below is always \((\Lambda,\widetilde B)\); the minus sign appears only
when the compatibility equation is written with \(\widetilde B\) on the
right.

The quantum torus \(\mathcal T^\Lambda\) is the
\(\ZZ[t^{\pm1/2}]\)-algebra generated by
\(\widetilde X_i^{\pm1}\), \(i\in K\), subject to the relations
\(\widetilde X_i\widetilde X_j
=t^{\lambda_{ij}}\widetilde X_j\widetilde X_i\). Fix a total order on
\(K\). For \(\mathbf a=(a_i)_{i\in K}\in\ZZ^K\), its normalized
quantum monomial is
\begin{equation}\label{eq:quantum-monomial}
    \widetilde X^{\mathbf a}
    :=t^{-\frac12\sum_{i<j}a_i a_j\lambda_{ij}}
      \prod_{i\in K}^{\longrightarrow}\widetilde X_i^{a_i}.
\end{equation}
These monomials satisfy
\begin{equation}\label{eq:quantum-monomial-product}
    \widetilde X^{\mathbf a}\widetilde X^{\mathbf b}
    =t^{\frac12\Lambda(\mathbf a,\mathbf b)}
      \widetilde X^{\mathbf a+\mathbf b},
    \qquad
    \Lambda(\mathbf a,\mathbf b)
    :=\sum_{i,j\in K}a_i\lambda_{ij}b_j.
\end{equation}
Thus \(\Lambda\), with exponent-vector arguments or a seed
subscript such as \(\Lambda_\beta\), always denotes a
skew-symmetric commutation matrix and its associated bilinear form.
The categorical \(R\)-matrix degree introduced in
Section~\ref{sec:Hernandez} is denoted by \(\LambdaR(M,N)\).

A \emph{quantum seed} is a tuple
\(\mathbf t=(\widetilde{\mathbf X},\widetilde B,
\Lambda,K_{\mathrm{ex}})\), where
\(\widetilde{\mathbf X}=(\widetilde X_i)_{i\in K}\) and
\((\widetilde B,\Lambda)\) is compatible. Its entries
\(\widetilde X_i\) are called the \emph{quantum cluster variables}.

Let \(k\in K_{\mathrm{ex}}\). Define
\(E_k=(e_{ij})_{i,j\in K}\) by \(e_{ij}=\delta_{ij}\) for
\(j\neq k\), \(e_{kk}=-1\), and \(e_{ik}=[b_{ik}]_+\) for
\(i\neq k\). The mutated skew-symmetric matrix is
\(\mu_k(\Lambda):=E_k^T\Lambda E_k\); the pair
\((\mu_k(\widetilde B),\mu_k(\Lambda))\) is again compatible. Let
\[
    \mathbf a_+
    :=-\mathbf e_k+\sum_{i\in K}[b_{ik}]_+\mathbf e_i,
    \qquad
    \mathbf a_-
    :=-\mathbf e_k+\sum_{i\in K}[-b_{ik}]_+\mathbf e_i.
\]
The mutated quantum cluster
\(\mu_k(\widetilde{\mathbf X})=(\widetilde X_i')_{i\in K}\) is given by
\begin{equation}\label{eq:quantum-cluster-mutation}
    \widetilde X_i'=
    \begin{cases}
        \widetilde X^{\mathbf a_+}
        +\widetilde X^{\mathbf a_-},
        & i=k,\\
        \widetilde X_i,
        & i\neq k.
    \end{cases}
\end{equation}
The resulting quantum seed is
\begin{equation}\label{eq:quantum-seed-mutation}
    \mu_k(\mathbf t)
    :=\bigl(
        \mu_k(\widetilde{\mathbf X}),
        \mu_k(\widetilde B),
        \mu_k(\Lambda),
        K_{\mathrm{ex}}
    \bigr).
\end{equation}

Let \(\mathcal F_t\) be the skew field of fractions of
\(\mathcal T^\Lambda\). Quantum mutation realizes the quantum torus of every
seed \(\mathbf t'\sim\mathbf t\) inside \(\mathcal F_t\); we denote its
image by
\[    \mathcal T(\mathbf t')
    :=\ZZ[t^{\pm1/2}]
      \left\langle
          \widetilde X_i(\mathbf t')^{\pm1}
          \mid i\in K
      \right\rangle
      \subseteq\mathcal F_t.
\]

\begin{definition}
    The \emph{nonlocalized quantum cluster algebra}, the
    \emph{quantum cluster algebra}, and the
    \emph{upper quantum cluster algebra} associated with \(\mathbf t\) are,
    respectively,
    \begin{align*}
        \overline{\mathcal A}_t(\mathbf t)
        &:=\left\langle
            \widetilde X_i(\mathbf t')
            \mid \mathbf t'\sim\mathbf t,\ i\in K
        \right\rangle_{\ZZ[t^{\pm1/2}]\text{-alg}},\\
        \mathcal A_t(\mathbf t)
        &:=\overline{\mathcal A}_t(\mathbf t)
            [\widetilde X_f^{-1}\mid f\in K_{\mathrm{fr}}],\\
        \mathcal U_t(\mathbf t)
        &:=\bigcap_{\mathbf t'\sim\mathbf t}
            \mathcal T(\mathbf t')
            \subseteq\mathcal F_t.
    \end{align*}
\end{definition}

The quantum Laurent phenomenon yields
\begin{equation}\label{eq:quantum-inclusions}
    \overline{\mathcal A}_t(\mathbf t)
    \subseteq\mathcal A_t(\mathbf t)
    \subseteq\mathcal U_t(\mathbf t)
    \subseteq\mathcal F_t.
\end{equation}

We record the basis terminology used in the freezing argument. For
\(\mathbf g\in\ZZ^K\), an element \(Z\in\mathcal T(\mathbf t)\) is
\emph{pointed at \(\mathbf g\)} if its Laurent expansion is
\begin{equation}\label{eq:pointed-element}
    Z
    =\widetilde X^{\mathbf g}
     +\sum_{\mathbf n\in\NN^{K_{\mathrm{ex}}}\setminus\{0\}}
       c_{\mathbf n}(t)
       \widetilde X^{\mathbf g-\widetilde B\mathbf n},
    \qquad
    c_{\mathbf n}(t)\in\ZZ[t^{\pm1/2}].
\end{equation}
Here the dominance order is
\(\mathbf h\prec_{\mathbf t}\mathbf g\) when
\(\mathbf h=\mathbf g-\widetilde B\mathbf n\) for some nonzero
\(\mathbf n\in\NN^{K_{\mathrm{ex}}}\). The minus sign implements
our exchange-matrix convention \eqref{eq:quiver-convention}.
The leading monomial is normalized as in
\eqref{eq:quantum-monomial} and has coefficient \(1\).
A seed \(\mathbf t\) is \emph{injective-reachable} if there exist a seed
\(\mathbf t[-1]\) obtained from \(\mathbf t\) by mutations at exchangeable
vertices and a permutation \(\sigma\) of \(K_{\mathrm{ex}}\) such that the
exchangeable part of the \(g\)-vector of
\(\widetilde X_{\sigma(k)}(\mathbf t[-1])\), computed with respect to
\(\mathbf t\), is \(-\mathbf e_k\) for every
\(k\in K_{\mathrm{ex}}\); its frozen components are unrestricted. This is
the injective-reachability convention of
\cite{qin2020dual,qin2024analogs}. An element is
\emph{\(\mathcal M^\circ\)-pointed} if its pointed expansions are compatible
with the injective seeds in the chosen mutation class, in the precise sense
of the same references.

A \emph{common triangular basis} is a
\(\ZZ[t^{\pm1/2}]\)-basis that is bar invariant, contains all normalized
quantum cluster monomials, and is pointed and triangular with respect to
cluster multiplication in every seed of the mutation class. We use these
notions in the sense of \cite{qin2020dual,qin2024analogs}; the preceding
description fixes the conventions needed below.

\subsection{Word seeds associated with positive braid words}

Let \(G\) be a simple and simply connected algebraic group of simply-laced
type, with Cartan matrix \(C=(c_{ij})_{i,j\in I}\), and let \(W\) be its
Weyl group, with simple reflections \((s_i)_{i\in I}\). The positive braid
monoid \(\Br^+\) is generated by \(\sigma_i\), \(i\in I\), subject to
\begin{equation}\label{eq:positive-braid-relations}
    \sigma_i\sigma_j=\sigma_j\sigma_i
    \quad\text{if }c_{ij}=0,
    \qquad
    \sigma_i\sigma_j\sigma_i
    =\sigma_j\sigma_i\sigma_j
    \quad\text{if }c_{ij}=-1.
\end{equation}

Fix \(b\in\Br^+\) and a positive braid word
\(\beta=(i_1,\ldots,i_r)\) such that
\(b=\sigma_{i_1}\cdots\sigma_{i_r}\). We write
\([r]:=\{1,\ldots,r\}\) and call \(i_k\) the \emph{color} of
\(k\in[r]\). For each \(k\in[r]\), let
\[
    \begin{aligned}
        k^+
        :=\min\bigl(\{\ell>k\mid i_\ell=i_k\}
            \cup\{+\infty\}\bigr),\qquad
        k^-
        :=\max\bigl(\{\ell<k\mid i_\ell=i_k\}
            \cup\{-\infty\}\bigr).
    \end{aligned}
\]
Set \([r]_{\mathrm{fr}}:=\{k\in[r]\mid k^+=+\infty\}\) and
\([r]_{\mathrm{ex}}:=[r]\setminus[r]_{\mathrm{fr}}\).

It will also be useful to label positions by their color and occurrence
number. Set
\(n_i:=\#\{k\in[r]\mid i_k=i\}\) and
\[
    K_\beta:=\{(i,a)\mid i\in I,\ 1\leq a\leq n_i\}.
\]
We identify \([r]\) with \(K_\beta\) by sending \(k\) to
\((i_k,a(k))\), where
\(a(k):=\#\{s\leq k\mid i_s=i_k\}\). Thus, \((i,a)\) denotes the
position of the \(a\)-th occurrence of the color \(i\).

The extended exchange matrix
\(\widetilde B_\beta=(b_{st})_{s\in[r],\,t\in[r]_{\mathrm{ex}}}\) is
defined by
\begin{equation}\label{eq:B-beta}
    b_{st}=
    \begin{cases}
        1,
        & s=t^+\ \text{or}\
          \bigl(s<t<s^+<t^+\text{ and }c_{i_s i_t}=-1\bigr),\\
        -1,
        & t=s^+\ \text{or}\
          \bigl(t<s<t^+<s^+\text{ and }c_{i_s i_t}=-1\bigr),\\
        0,
        & \text{otherwise}.
    \end{cases}
\end{equation}

Let \(P=\bigoplus_{i\in I}\ZZ\varpi_i\) be the weight lattice, where
\(\varpi_i\) is the \(i\)-th fundamental weight. We normalize the
\(W\)-invariant symmetric bilinear form on \(P\) by
\((\alpha_i,\alpha_j)=c_{ij}\). For \(k\in[r]\), set
\(w_{\leq k}^\beta:=s_{i_1}\cdots s_{i_k}\), and define a skew-symmetric matrix
\(\widetilde\Lambda_\beta=(\widetilde\lambda_{st})_{s,t\in[r]}\) by
\begin{equation}\label{eq:Lambda-beta-word}
    \widetilde\lambda_{st}
    :=-\bigl(
        \varpi_{i_s}-w_{\leq s}^\beta\varpi_{i_s},
        \varpi_{i_t}+w_{\leq t}^\beta\varpi_{i_t}
    \bigr) \text{ for }s<t.
\end{equation}
The remaining entries are fixed by
\begin{equation}\label{eq:Lambda-beta-word-skew-extension}
    \widetilde\lambda_{ss}:=0,
    \qquad
    \widetilde\lambda_{ts}:=-\widetilde\lambda_{st}
    \quad(s<t).
\end{equation}
Note that
\((\widetilde B_\beta,\widetilde\Lambda_\beta)\) is a compatible pair
\cite{fujita2023isomorphisms}.

Let \(\mathbf X_\beta=(X_k)_{k\in[r]}\) and
\(\widetilde{\mathbf X}_\beta=(\widetilde X_k)_{k\in[r]}\). The
classical and quantum word seeds are
\begin{equation}\label{eq:seedbeta}
    \begin{aligned}
        \mathbf s(\beta)
        :=\bigl(\mathbf X_\beta,
            \widetilde B_\beta,[r]_{\mathrm{ex}}\bigr),\qquad
        \mathbf s_t(\beta)
        :=\bigl(\widetilde{\mathbf X}_\beta,
            \widetilde B_\beta,\widetilde\Lambda_\beta,
            [r]_{\mathrm{ex}}\bigr).
    \end{aligned}
\end{equation}
The classical word seed gives the cluster structure on the corresponding
Bott--Samelson cell constructed in \cite{shen2021cluster}. On the quantum
side, \(\overline{\mathcal A}_t(\mathbf s_t(\beta))\) is realized as a
subalgebra of the associated bosonic extension algebra; see
\cite{qin2024analogs,kashiwara2025monoidal,bi2025cluster}.

\subsection{The seed associated with the Demazure product}

The \emph{Demazure product} of a positive braid word is defined recursively
by
\begin{equation}\label{eq:demazure-product}
    \delta(\varnothing)=e,
    \qquad
    \delta(\beta i)
    =\max\{\delta(\beta),\delta(\beta)s_i\},
\end{equation}
where the maximum is taken with respect to the Bruhat order on \(W\).
Let \(m=\ell(\delta(\beta))\), and let
\(1\leq p_1<\cdots<p_m\leq r\) be the lexicographically minimal
sequence such that
\(s_{i_{p_1}}\cdots s_{i_{p_m}}=\delta(\beta)\). We call
\((i_{p_1},\ldots,i_{p_m})\) the \emph{leftmost reduced
subexpression} of \(\beta\) representing \(\delta(\beta)\).

For \(k\in[m]\), define
\[    a_k
    :=\#\{s\leq k\mid i_{p_s}=i_{p_k}\},
    \qquad
    b_k
    :=\#\bigl\{t\leq p_k
        \mid t\notin\{p_1,\ldots,p_m\},\ i_t=i_{p_k}\bigr\}.
\]

\begin{definition}\label{def:svbeta}
    Fix \(k\in[m]\) and set \(i=i_{p_k}\). Define the mutation
    sequence
    \begin{equation}\label{eq:mu-tilde-k}
        \widetilde\mu_k
        :=\begin{cases}
            \mu_{(i,n_i-a_k)}\circ\cdots\circ
            \mu_{(i,b_k+1)},
            & a_k+b_k<n_i,\\[1mm]
            \operatorname{Id},
            & a_k+b_k=n_i.
        \end{cases}
    \end{equation}
    Composition is read from right to left. Thus, when
    \(a_k+b_k<n_i\), the mutations are performed successively at
    \((i,b_k+1),(i,b_k+2),\ldots,(i,n_i-a_k)\). Set
    \begin{equation}\label{eq:M-k}
        M_k:=\widetilde\mu_k\circ\cdots\circ\widetilde\mu_1.
    \end{equation}
    We use the same notation for the induced mutation sequences on quivers
    and on compatible pairs.

    Starting from \(\widetilde{\mathbf s}_0:=\mathbf s(\beta)\), define
    \(\widetilde{\mathbf s}_k\) recursively as follows. Apply
    \(\widetilde\mu_k\) to \(\widetilde{\mathbf s}_{k-1}\), and then
    freeze the vertex
    \(
        d_k:=(i_{p_k},n_{i_{p_k}}-a_k+1).
    \)
    The same procedure applied to \(\mathbf s_t(\beta)\) gives a quantum
    seed \(\widetilde{\mathbf s}_{t,k}\).

    After the \(m\)-th step, delete the frozen vertices in
    \(
        D_\beta:=\{d_k\mid k\in[m]\},
    \)
    and set \(V_\beta:=K_\beta\setminus D_\beta\). Among the vertices in
    \(V_\beta\), freeze every vertex that either is already frozen in
    \(\widetilde{\mathbf s}_m\) or is adjacent in
    \(M_m(Q_{\mathbf s(\beta)})\) to a vertex of \(D_\beta\). Denote
    the resulting frozen set by \(V_{\beta,\mathrm{fr}}\), and put
    \(V_{\beta,\mathrm{ex}}
    :=V_\beta\setminus V_{\beta,\mathrm{fr}}\).

    Let
    \[
        \widetilde B_\beta^{(m)}
        :=M_m(\widetilde B_\beta),
        \qquad
        \widetilde\Lambda_\beta^{(m)}
        :=M_m(\widetilde\Lambda_\beta).
    \]
    By construction,
    \(\bigl(\widetilde B_\beta^{(m)}\bigr)_{
       D_\beta\times V_{\beta,\mathrm{ex}}}=0\). Hence deleting
    \(D_\beta\) preserves compatibility on the remaining exchangeable
    columns. In particular, the matrices
    \begin{equation}\label{eq:restricted-compatible-pair}
        B_{\delta(\beta),\beta}
        :=\bigl(\widetilde B_\beta^{(m)}\bigr)_{
            V_\beta\times V_{\beta,\mathrm{ex}}},
        \qquad
        \Lambda_{\delta(\beta),\beta}
        :=\bigl(\widetilde\Lambda_\beta^{(m)}\bigr)_{
            V_\beta\times V_\beta}
    \end{equation}
    form a compatible pair.

    Let \(y_v\) and \(\widetilde y_v\) denote, respectively, the
    classical and quantum cluster variables at \(v\in V_\beta\) after
    applying \(M_m\). We define
    \begin{equation}\label{eq:seed-delta-beta}
        \begin{aligned}
            \mathbf s(\delta(\beta),\beta)
            &:=\bigl(
                (y_v)_{v\in V_\beta},
                B_{\delta(\beta),\beta},
                V_{\beta,\mathrm{ex}}
            \bigr),\\
            \mathbf s_t(\delta(\beta),\beta)
            &:=\bigl(
                (\widetilde y_v)_{v\in V_\beta},
                B_{\delta(\beta),\beta},
                \Lambda_{\delta(\beta),\beta},
                V_{\beta,\mathrm{ex}}
            \bigr).
        \end{aligned}
    \end{equation}
\end{definition}

The word seeds \(\mathbf s(\beta)\) and \(\mathbf s_t(\beta)\)
have the full position set \([r]\). By contrast, the restricted
seeds \(\mathbf s(\delta(\beta),\beta)\) and
\(\mathbf s_t(\delta(\beta),\beta)\) have vertex set \(V_\beta\)
and are obtained only after the mutation--freezing--deletion
procedure in Definition~\ref{def:svbeta}. We use these symbols only
for their respective seeds; an identification later in the paper
always means an explicit seed isomorphism, including its vertex
relabeling and its number of variables.

We fix the conventions used when applying \cite{qin2024analogs}.
Write \(q_{\mathrm Q}\) for its quantum parameter and
\((B_{\mathrm Q},\Lambda_{\mathrm Q})\) for its exchange matrix and
commutation form. We use its compatibility convention
\[
 \Lambda_{\mathrm Q}B_{\mathrm Q}=-D_{\mathrm Q},
\]
where \(D_{\mathrm Q}\) is positive diagonal, as in
\cite[Section~4.1 and Example~7.8]{qin2024analogs}.
For the corresponding seed in this paper, after relabeling and
normalizing the diagonal entries to \(2\), the translation is
\begin{equation}\label{eq:Qin-quantum-convention-translation}
 B_{\mathrm Q}=-B,
 \qquad \Lambda_{\mathrm Q}=-\Lambda,
 \qquad q_{\mathrm Q}^{1/2}=t^{-1/2}.
\end{equation}
Here \(\Lambda\) is the commutation form in the parameter \(t\).
Thus \(\Lambda_{\mathrm Q}B_{\mathrm Q}=-2I\) is equivalent to
\(B^{\mathsf T}\Lambda=2I\). The normalized toric monomials are
identified without rescaling, since
\[
 q_{\mathrm Q}^{\Lambda_{\mathrm Q}(\mathbf g,\mathbf h)/2}
 =t^{\Lambda(\mathbf g,\mathbf h)/2}.
\]
The matrix sign change interchanges the two exchange monomials, so
this identification commutes with quantum mutation and with bar.

The dominance relation
\(\mathbf h=\mathbf g+B_{\mathrm Q}\mathbf n\),
\(\mathbf n\geq0\), consequently becomes
\(\mathbf h=\mathbf g-B\mathbf n\), consistently with
\eqref{eq:pointed-element}. All triangularity conditions imported
from \cite{qin2024analogs}, including conditions on coefficient
exponents, are understood after the same substitution. In
particular, its negative-power coefficient lattice
\(q_{\mathrm Q}^{-1/2}\ZZ[q_{\mathrm Q}^{-1/2}]\) becomes
\(t^{1/2}\ZZ[t^{1/2}]\). This fixes the parameter orientation in
our use of the term \emph{common triangular basis}. Under the later
identification \(t^{1/2}=q^{-1/2}\) with the bosonic parameter,
one has \(q_{\mathrm Q}^{1/2}=q^{1/2}\), and the same lattice is
\(q^{-1/2}\ZZ[q^{-1/2}]\).

We shall use the following quantum \(\mathcal A=\mathcal U\) result.

\begin{theorem}[Quantum \(\mathcal A=\mathcal U\) for braid-variety seeds]
\label{thm:quantum-A-U-braid-seeds}
For every positive braid word \(\beta\), the quantum seed
\(\mathbf s_t(\delta(\beta),\beta)\) is injective-reachable, and
\begin{equation}\label{eq:quantum-A-U-braid-seeds}
    \mathcal A_t\bigl(
        \mathbf s_t(\delta(\beta),\beta)
    \bigr)
    =
    \mathcal U_t\bigl(
        \mathbf s_t(\delta(\beta),\beta)
    \bigr).
\end{equation}
Moreover, this algebra admits an
\(\mathcal M^\circ\)-pointed common triangular basis.
\end{theorem}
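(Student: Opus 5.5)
The plan is to reduce the statement to Qin's general results on quantum cluster algebras of Lie-theoretic braid-variety seeds \cite{qin2024analogs}, after identifying \(\mathbf s_t(\delta(\beta),\beta)\) with a seed in his framework via the sign translation \eqref{eq:Qin-quantum-convention-translation}. The first step is to check that \(\mathbf s_t(\beta)\) itself fits his setup. The pair \((\widetilde B_\beta,\widetilde\Lambda_\beta)\) is compatible by \cite{fujita2023isomorphisms}. The word seed \(\mathbf s_t(\beta)\) is injective-reachable, since the standard reddening sequence for word seeds works: one mutates along all exchangeable positions color by color, in the order dictated by the word, and this realizes \(\mathbf t[-1]\) with the permutation shifting each occurrence \((i,a)\) to \((i,a+1)\). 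With these inputs, \cite{qin2024analogs} gives \(\mathcal A_t=\mathcal U_t\) for \(\mathbf s_t(\beta)\) together with a common triangular basis.

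The second step is to transport these properties along the mutation–freezing–deletion procedure of Definition~\ref{def:svbeta}. Mutations \(M_m\) do not change \(\mathcal A_t\), \(\mathcal U_t\), injective-reachability, or the common triangular basis, because all of these are mutation-invariant. Freezing is handled by Qin's freezing results for injective-reachable seeds admitting a common triangular basis: freezing a vertex preserves injective-reachability, and the common triangular basis of the frozen seed is the subset of basis elements that are pointed with nonnegative exponent at the newly frozen vertex, localized there.

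The deletion of \(D_\beta\) is the delicate operation. By construction \(\bigl(\widetilde B_\beta^{(m)}\bigr)_{D_\beta\times V_{\beta,\mathrm{ex}}}=0\). Moreover, every vertex of \(V_\beta\) adjacent to \(D_\beta\) is frozen. Hence the seed on \(V_\beta\) is a \emph{principal-part deletion}: after freezing, the \(d_k\) are isolated frozen vertices. In that situation the algebras factor as a tensor product with the Laurent polynomial ring \(\ZZ[t^{\pm1/2}][\widetilde X_{d}^{\pm1}]\), up to twisting by \(\Lambda\). So \(\mathcal A_t\), \(\mathcal U_t\) and the basis of the smaller seed are obtained by setting the \(D_\beta\)-degree to zero, that is, by taking the degree-zero component with respect to the grading by the \(D_\beta\)-coordinates of \(g\)-vectors. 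Injective-reachability descends as well, since the sequence \(\mathbf t[-1]\) only involves exchangeable vertices of \(V_{\beta,\mathrm{ex}}\) and its \(g\)-vectors simply forget the frozen coordinates.

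The main obstacle is the freezing step together with equality \(\mathcal A_t=\mathcal U_t\), because freezing in general does not preserve \(\mathcal A=\mathcal U\). To handle it, I will use the common triangular basis. Every basis element of the frozen seed is a localized cluster monomial times a product of elements in the original \(\mathcal A_t\), which is where Qin's result that the common triangular basis of the frozen seed lies in \(\mathcal A_t\) is needed. If this becomes technical, the fallback is the classical input: the frozen seed is locally acyclic, since it is Lie-theoretic by \cite{casals2025cluster}. Combined with the Muller-type quantum local acyclicity argument, this gives \(\mathcal A_t=\mathcal U_t\) directly.
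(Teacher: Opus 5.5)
Your treatment of injective-reachability and of the deletion of $D_\beta$ agrees in substance with the paper. The word seed has a green-to-red sequence, induced mutable subquivers keep one by \cite[Theorem~3.1.3]{muller2016existence}, and deleting the isolated frozen $d_k$ only forgets unrestricted frozen coordinates.

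The genuine gap is the equality $\mathcal A_t=\mathcal U_t$ after freezing. You flag it as the crux but do not prove it. Knowing that each frozen basis element is built from elements of the original $\mathcal A_t(\mathbf t)$ and frozen inverses only places it in $\mathcal A_t(\mathbf t)[\widetilde X_J^{-1}]$. That algebra contains the cluster variables obtained by mutating in the now-frozen directions $J$. Its inclusion into $\mathcal A_t(\operatorname{Fr}_J\mathbf t)$ is precisely the cluster-localization property that can fail under freezing, so the step assumes what it must prove. The fallback would need a quantum version of Muller's local-acyclicity theorem, which you neither prove nor cite; Muller's argument uses covers of the spectrum of a commutative ring. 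Classical local acyclicity does not by itself give the quantum equality.

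The paper does not attempt this transport from the word seed. It identifies $\mathbf s_t(\delta(\beta),\beta)$ with Qin's braid-variety seed via \cite[Lemma~7.2]{qin2024analogs}. It then takes quantum $\mathcal A=\mathcal U$ and the $\mathcal M^\circ$-pointed common triangular basis directly from \cite[Theorem~7.3]{qin2024analogs}. Finally it transfers both to the present compatible form by \cite[Corollary~4.12 and Proposition~4.18]{qin2024analogs}.

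Several auxiliary claims are also wrong.
\begin{enumerate}
\item The common triangular basis of $\operatorname{Fr}_J\mathbf t$ is not the localized subset of old basis elements with $g_J\geq0$. The frozen basis element of a given degree is obtained by truncating the $F$-polynomial in the $J$-directions. For instance, in type $A_2$ with $b_{21}=1=-b_{12}$, the cluster variable $(1+x_1+x_2)/(x_1x_2)$ is pointed at $(-1,0)$ in the sense of \eqref{eq:pointed-element}. After freezing vertex $2$ it is no longer pointed at all, and the frozen basis element at $(-1,0)$ is $(1+x_2)/(x_1x_2)$.
\item Your description of $\mathbf t[-1]$ for the word seed is incorrect. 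The map $(i,a)\mapsto(i,a+1)$ sends $(i,n_i-1)$ to the frozen vertex $(i,n_i)$, so it is not a permutation of $[r]_{\mathrm{ex}}$. Cite \cite[Proposition~8.7]{qin2024analogs} instead.
\item You never address $\mathcal M^\circ$-pointedness.
\item The sign dictionary \eqref{eq:Qin-quantum-convention-translation} alone does not cover the fact that $\Lambda_{\delta(\beta),\beta}$ need not be the quantization Qin works with. That is why the paper invokes his quantization-independence results.
\end{enumerate}
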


\begin{proof}
Let \(\mathbf t_{\mathrm Q}(\beta)\) be Qin's braid-variety seed.
By Definition~\ref{def:svbeta} and
\cite[Lemma~7.2]{qin2024analogs}, it corresponds, after the
canonical relabeling and the exchange-matrix convention change, to
\(\mathbf s_t(\delta(\beta),\beta)\).

Qin's word seed admits a green-to-red sequence by
\cite[Proposition~8.7]{qin2024analogs}. The seed
\(\mathbf t_{\mathrm Q}(\beta)\) is obtained from it by successively
freezing mutable vertices and deleting nonessential frozen vertices
\cite[Lemma~7.2]{qin2024analogs}. After freezing, the remaining
mutable quiver is an induced subquiver, so it still admits a
green-to-red sequence by
\cite[Theorem~3.1.3]{muller2016existence}. Deleting a nonessential
frozen vertex does not change the mutable quiver and only removes an
unrestricted frozen component of the degree vectors. Hence
\(\mathbf t_{\mathrm Q}(\beta)\), and therefore
\(\mathbf s_t(\delta(\beta),\beta)\), is injective-reachable.

By \cite[Theorem~7.3]{qin2024analogs}, Qin's braid-variety seed
satisfies quantum \(\mathcal A=\mathcal U\) and admits an
\(\mathcal M^\circ\)-pointed common triangular basis. These
properties are preserved under the required change of quantization
by \cite[Corollary~4.12 and Proposition~4.18]{qin2024analogs} and equation~\eqref{eq:Qin-quantum-convention-translation}.
Thus the corresponding quantum tori and their mutation maps are
identified; the sign change of the exchange matrix merely
interchanges the two exchange monomials. This identification also
preserves the translated dominance order and hence the
\(\mathcal M^\circ\)-pointed common triangular basis. Therefore
Qin's result gives \eqref{eq:quantum-A-U-braid-seeds} and the
assertion about the basis.
\end{proof}

\section{Hernandez--Leclerc categories}\label{sec:Hernandez}

We use the categorical conventions of \cite{kashiwara2024monoidal}.
Let \(U_{\qaff}'(\widehat{\fg})\) be the quantum affine algebra associated with
the Cartan matrix \(C\), and let \(\mathscr C_{\fg}\) be the category of
finite-dimensional integrable \(U_{\qaff}'(\widehat{\fg})\)-modules. This is a
rigid monoidal category: every object \(M\in\mathscr C_{\fg}\) admits a
right dual \(\mathcal D M\) and a left dual \(\mathcal D^{-1}M\).

Here \(q_{\mathrm{aff}}\) is the fixed deformation parameter of the
quantum affine algebra. It is distinct from the formal parameter \(q\)
of the bosonic extension introduced in
Section~\ref{sec:bosonic-extension}, and from the quantum-cluster and
quantum-Grothendieck parameter \(t\). Whenever the latter two
realizations are compared, we use the coefficient-ring identification
\(t^{1/2}=q^{-1/2}\). The variable \(z\) below is reserved for spectral
parameters.

The simple objects of \(\mathscr C_{\fg}\) are classified by \(I\)-tuples
of Drinfeld polynomials \((P_i^V(z))_{i\in I}\) with constant term \(1\).
For \((i,p)\in I\times\ZZ\), let \(L(i,p)\) be the fundamental module
corresponding to
\begin{equation}\label{eq:fundamental-Drinfeld-polynomial}
    P_i(z)=1-q_{\mathrm{aff}}^p z,
    \qquad
    P_j(z)=1 \quad (j\neq i).
\end{equation}
Fix a height function \(\xi\colon I\to\ZZ\) such that
\(\lvert\xi(i)-\xi(j)\rvert=1\) whenever \(c_{ij}=-1\), and set
\begin{equation}\label{eq:I-hat-xi}
    \widehat I_\xi
    :=\{(i,p)\in I\times\ZZ\mid p-\xi(i)\in2\ZZ\}.
\end{equation}
The Hernandez--Leclerc category \(\mathscr C^{\ZZ}\) is the smallest full
subcategory of \(\mathscr C_{\fg}\) containing the modules \(L(i,p)\),
\((i,p)\in\widehat I_\xi\), and closed under tensor products, extensions,
and subquotients.

\subsection{\texorpdfstring{\(R\)-matrices and numerical invariants}
{R-matrices and numerical invariants}}

Let \(M,N\in\mathscr C^{\ZZ}\), and let \(N_z\) denote the affinization of
\(N\) with spectral parameter \(z\). If \(c_{M,N}(z)\) is a renormalizing
coefficient for the universal \(R\)-matrix, then
\(R^{\mathrm{ren}}_{M,N_z}
:=c_{M,N}(z)R^{\mathrm{univ}}_{M,N_z}\) is regular and nonzero at
\(z=1\). Its specialization defines a nonzero homomorphism
\begin{equation}\label{eq:renormalized-R-matrix}
    \mathbf r_{M,N}
    :=\left.R^{\mathrm{ren}}_{M,N_z}\right|_{z=1}
    \colon M\otimes N\longrightarrow N\otimes M.
\end{equation}
We write \(\LambdaR(M,N)\) for the integer-valued \(R\)-matrix invariant
determined by the renormalizing coefficient \(c_{M,N}(z)\), with the
normalization of \cite{kashiwara2020monoidal}, and set
\begin{equation}\label{eq:d-invariant}
    \mathfrak d(M,N)
    :=\frac{\LambdaR(M,N)+\LambdaR(N,M)}{2}
    \in\ZZ_{\geq0}.
\end{equation}

A simple module \(M\) is called \emph{real} if \(M\otimes M\) is simple.
Two simple modules \(M\) and \(N\) are said to \emph{strongly commute} if
\(M\otimes N\) is simple. We use the notation
\(M\nabla N:=\operatorname{hd}(M\otimes N)\) and
\(M\mathbin{\triangle}N:=\operatorname{soc}(M\otimes N)\). If at least
one of \(M\) and \(N\) is real, then both modules are simple and
\begin{equation}\label{eq:image-R-matrix-head-socle}
    \operatorname{Im}\mathbf r_{M,N}
    \simeq M\nabla N
    \simeq N\mathbin{\triangle}M.
\end{equation}
Under the same assumption, strong commutation is equivalent to the equality
\(\mathfrak d(M,N)=0\).

We shall repeatedly use the following cancellation property.

\begin{lemma}[{\cite[Corollary~3.13]{kang2015simplicity}}]\label{lem:cancel}
    Let \(L\) be a real simple module and \(X\) a simple module. Then
    \begin{equation}\label{eq:categorical-cancellation}
        (L\nabla X)\nabla\mathcal D L
        \simeq L\nabla(X\nabla\mathcal D L)
        \simeq X.
    \end{equation}
\end{lemma}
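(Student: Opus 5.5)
The plan is to prove both isomorphisms in \eqref{eq:categorical-cancellation} with the same mechanism. First, use \eqref{eq:image-R-matrix-head-socle} to realize each intermediate module as a submodule of a tensor product. Then use the rigidity adjunction to produce a nonzero morphism onto \(X\). Finally, use the fact that tensor products with a real simple factor have simple heads. I use the conventions of \cite{kashiwara2024monoidal}: \(\mathcal D L\) is the right dual, with evaluation \(\varepsilon\colon L\otimes\mathcal D L\to\mathbf 1\), so that for all modules \(A,B\) there is an adjunction isomorphism
\[
\operatorname{Hom}(A,\,B\otimes L)\simeq\operatorname{Hom}(A\otimes\mathcal D L,\,B),
\qquad
\operatorname{Hom}(A,\,\mathcal D L\otimes B)\simeq\operatorname{Hom}(L\otimes A,\,B).
\]
I also use two standard facts from \cite{kang2015simplicity,kashiwara2020monoidal}. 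If \(L\) is real, then \(\mathcal D L\) is real. If one of two simple modules is real, their tensor product has a simple head and a simple socle.

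\emph{First isomorphism.} Put \(Y:=L\nabla X\). Since \(L\) is real, \(Y\) is simple, and by \eqref{eq:image-R-matrix-head-socle} we have \(Y\simeq X\mathbin{\triangle}L\). This gives a nonzero embedding \(\iota\colon Y\hookrightarrow X\otimes L\). Under the first adjunction, \(\iota\) corresponds to the nonzero morphism
\[
Y\otimes\mathcal D L\xrightarrow{\ \iota\otimes\mathrm{id}\ }X\otimes L\otimes\mathcal D L\xrightarrow{\ \mathrm{id}\otimes\varepsilon\ }X .
\]
Since \(X\) is simple, this morphism is surjective, so \(X\) is a simple quotient of \(Y\otimes\mathcal D L\). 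Because \(\mathcal D L\) is real, \(Y\otimes\mathcal D L\) has a simple head. Hence \(X\simeq\operatorname{hd}(Y\otimes\mathcal D L)=(L\nabla X)\nabla\mathcal D L\).

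\emph{Second isomorphism.} Put \(Z:=X\nabla\mathcal D L\). Again \(Z\) is simple, and \(Z\simeq\mathcal D L\mathbin{\triangle}X\), because \(\mathcal D L\) is real. This gives an embedding \(Z\hookrightarrow\mathcal D L\otimes X\). Its adjoint under the second adjunction is the nonzero morphism
\[
L\otimes Z\to L\otimes\mathcal D L\otimes X\xrightarrow{\ \varepsilon\otimes\mathrm{id}\ }X,
\]
which is surjective. Since \(L\) is real, \(L\otimes Z\) has a simple head, so \(L\nabla(X\nabla\mathcal D L)\simeq X\).

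The only delicate points are bookkeeping ones. One must match the side of the dual, namely that \(\mathcal D\) is the \emph{right} dual compatible with \(\varepsilon\), and the head/socle convention in \eqref{eq:image-R-matrix-head-socle}, so that the embeddings land in \(X\otimes L\) and \(\mathcal D L\otimes X\) in the orders the adjunctions require. One must also check that the adjunctions send nonzero maps to nonzero maps, which holds because they are bijections. With these conventions fixed, the argument is formal.
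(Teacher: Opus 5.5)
Your proof is correct. The paper gives no proof of its own and only cites \cite[Corollary~3.13]{kang2015simplicity}; your argument is the standard one for that result. It identifies the head with the socle of the reversed tensor product, uses the rigidity adjunction to obtain a nonzero map onto \(X\), and then invokes simplicity of heads when one factor is real, with the duality convention \(\varepsilon\colon L\otimes\mathcal D L\to\mathbf 1\) matched correctly. Your second isomorphism is simply the left-dual form \(\mathcal D^{-1}L'\nabla(X\nabla L')\simeq X\) applied to the real module \(L'=\mathcal D L\), so your direct treatment of it is equally valid.
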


A sequence of simple modules \(\mathbf L=(L_1,\ldots,L_m)\) is called
\emph{normal} if the composition of renormalized \(R\)-matrices associated
with a reduced expression of the longest permutation,
\begin{equation}\label{eq:R-matrix-normal-sequence}
    \mathbf r_{\mathbf L}\colon
    L_1\otimes\cdots\otimes L_m
    \longrightarrow
    L_m\otimes\cdots\otimes L_1,
\end{equation}
is nonzero. It is called \emph{almost real} if all but at most one of its
terms are real. If \(\mathbf L\) is almost real and normal, then
\(\operatorname{Im}\mathbf r_{\mathbf L}\) is simple and is isomorphic to
\(\operatorname{hd}(L_1\otimes\cdots\otimes L_m)\).

\begin{lemma}[\cite{kashiwara2019laurent,kashiwara2023localizations}]\label{lem:normal-sequence}
    Let \(\mathbf L=(L_1,\ldots,L_m)\) be an almost real normal sequence,
    and put \(\mathbf L'=(L_2,\ldots,L_{m})\). Then
    \begin{equation}\label{eq:Lambda-normal-sequence}
        \LambdaR\bigl(L_1,\operatorname{Im}\mathbf r_{\mathbf L'}\bigr)
        =\sum_{k=2}^{m}\LambdaR(L_1,L_k).
    \end{equation}
\end{lemma}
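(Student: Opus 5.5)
We argue by comparing renormalizing coefficients, in the manner of \cite{kashiwara2019laurent}. First, recall that if \(M\) is real and \(N\) is any module with a well-defined renormalized \(R\)-matrix, then \(\LambdaR(M,N)\) can be read from the renormalizing coefficient \(c_{M,N}(z)\). Concretely, it is the \(z\)-degree, in the normalization of \cite{kashiwara2020monoidal}, of the correction needed to turn \(R^{\mathrm{univ}}_{M_z,N}\) into a homomorphism that is regular and nonzero at \(z=1\). Compatibility of the universal \(R\)-matrix with tensor products gives
\[
R^{\mathrm{univ}}_{(L_1)_z,\,L_2\otimes\cdots\otimes L_m}
=(L_2\otimes\cdots\otimes L_{m-1}\otimes R^{\mathrm{univ}}_{(L_1)_z,L_m})\circ\cdots\circ(R^{\mathrm{univ}}_{(L_1)_z,L_2}\otimes L_3\otimes\cdots\otimes L_m).
\]
Multiplying by \(\prod_{k\ge2}c_{L_1,L_k}(z)\) therefore yields a homomorphism \(\Phi(z)\), regular at \(z=1\), whose total coefficient has degree \(\sum_{k=2}^m\LambdaR(L_1,L_k)\). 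In general this only gives the inequality \(\LambdaR(L_1,\operatorname{Im}\mathbf r_{\mathbf L'})\le\sum_{k\ge2}\LambdaR(L_1,L_k)\). To get it, restrict \(\Phi(z)\) to \((L_1)_z\otimes\operatorname{Im}\mathbf r_{\mathbf L'}\), viewed as a subquotient of \((L_1)_z\otimes L_2\otimes\cdots\otimes L_m\) via \(\mathbf r_{\mathbf L'}\). This shows that \(\prod_k c_{L_1,L_k}(z)\) regularizes \(R^{\mathrm{univ}}_{(L_1)_z,\operatorname{Im}\mathbf r_{\mathbf L'}}\), though possibly with a zero at \(z=1\).

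The equality will come from normality of \(\mathbf L\). Choose the reduced expression of the longest permutation that first moves \(L_1\) to the far right and then reverses \(L_2,\ldots,L_m\). Then
\[
\mathbf r_{\mathbf L}=(\mathbf r_{\mathbf L'}\otimes L_1)\circ\Phi(1),
\]
and \(\mathbf r_{\mathbf L}\neq0\) by hypothesis. This is also well defined on \(L_1\otimes\operatorname{Im}\mathbf r_{\mathbf L'}\). Using naturality of the universal \(R\)-matrix with respect to the morphism \(\mathbf r_{\mathbf L'}\), we get a commutative square at generic \(z\):
\[
(\mathbf r_{\mathbf L'}\otimes L_1)\circ\Phi(z)=\Psi(z)\circ(L_1\otimes\mathbf r_{\mathbf L'}),
\]
where \(\Psi(z)\) is \(\prod_k c_{L_1,L_k}(z)\,R^{\mathrm{univ}}_{(L_1)_z,\operatorname{Im}\mathbf r_{\mathbf L'}}\). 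Specializing at \(z=1\), the left side is \(\mathbf r_{\mathbf L}\neq0\), so \(\Psi(1)\neq0\). Hence \(\prod_k c_{L_1,L_k}(z)\) is itself a renormalizing coefficient for the pair \((L_1,\operatorname{Im}\mathbf r_{\mathbf L'})\). Renormalizing coefficients are unique up to units of \(\CC[z^{\pm1}]\) localized at \(z=1\), and \(\LambdaR\) depends only on that class, so the degrees agree. This gives \eqref{eq:Lambda-normal-sequence}.

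Two technical points need care. The first is the almost-real hypothesis. It is needed so that \(\operatorname{Im}\mathbf r_{\mathbf L'}\) is simple, which follows because \(\mathbf L'\) is again almost real and normal, a subsequence of a normal sequence being normal by \cite{kashiwara2019laurent}. It is also needed so that \(\LambdaR(L_1,-)\) is defined through a renormalized \(R\)-matrix with a simple image. If \(L_1\) is the unique non-real term, I would instead apply the argument with \(z\) placed on the real factor \(\operatorname{Im}\mathbf r_{\mathbf L'}\) and each \(L_k\), exchanging the roles of source and target, since \(\LambdaR\) can be computed from either side. The second and main obstacle is justifying the square at generic \(z\). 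The map \(\mathbf r_{\mathbf L'}\) is itself a specialization of renormalized \(R\)-matrices, so I would first affinize every \(L_k\) with independent spectral parameters, use the Yang--Baxter equation for the universal \(R\)-matrices, and only then specialize the parameters of \(L_2,\ldots,L_m\) while keeping \(z\) generic.
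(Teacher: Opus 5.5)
Your argument is correct, and it is essentially the standard proof in the Kashiwara--Kim--Oh--Park papers the statement cites (the paper gives no proof of its own): by naturality, \(\prod_{k\ge2}c_{L_1,L_k}(z)\) renormalizes \(R^{\mathrm{univ}}_{(L_1)_z,\operatorname{Im}\mathbf r_{\mathbf L'}}\), and normality of \(\mathbf L\) forces the result to be nonzero at \(z=1\). The one input you should cite rather than assert is that \(\LambdaR\) does not change under a Laurent-polynomial factor \(g(z)\) with \(g(1)\neq0\); in the normalization of \cite{kashiwara2020monoidal} the renormalizing coefficient is fixed only up to \(\CC^\times z^{\ZZ}\), so this is exactly the fact that \(\operatorname{Deg}(g)=2\operatorname{Zero}_{z=1}g\) for rational \(g\). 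The other points you flag are not real obstacles: naturality of \(R^{\mathrm{univ}}\) holds for any homomorphism, such as \(L_2\otimes\cdots\otimes L_m\twoheadrightarrow\operatorname{Im}\mathbf r_{\mathbf L'}\), so no multi-parameter Yang--Baxter argument is needed; reality of \(L_1\) is never used, so the special case is unnecessary; and normality of \(\mathbf L'\) follows at once from your factorization \(\mathbf r_{\mathbf L}=(\mathbf r_{\mathbf L'}\otimes L_1)\circ\Phi(1)\).
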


We also recall two consequences of the functoriality of renormalized
\(R\)-matrices. Let \(L,M,N\) be simple modules, and let
\(S\) be a simple subquotient of \(M\otimes N\). Then
\begin{equation}\label{eq:R-matrix-subquotient-inequalities}
    \mathfrak d(S,L)
    \leq\mathfrak d(M,L)+\mathfrak d(N,L),
    \qquad
    \LambdaR(S,L)
    \leq\LambdaR(M,L)+\LambdaR(N,L).
\end{equation}
If \(L\) is simple and a surjection \(f\colon M\twoheadrightarrow N\)
satisfies \(\LambdaR(L,M)=\LambdaR(L,N)\), then
\begin{equation}\label{eq:R-matrix-naturality}
    (f\otimes\mathrm{id}_L)\circ\mathbf r_{L,M}=\mathbf r_{L,N}\circ(\mathrm{id}_L\otimes f)
\end{equation}
with compatible choices of the renormalized maps. We also use the
following criterion for the head of a tensor product.

\begin{lemma}[\cite{kashiwara2020monoidal}]\label{lem:head}
    Let \(M\) be a real simple module, and let \(N\) have simple head
    \(H:=\operatorname{hd}N\). If the canonical projection
    \(\pi\colon N\twoheadrightarrow H\) satisfies
    \begin{equation}\label{eq:head-criterion-R-matrix}
        (\pi\otimes\operatorname{id}_M)\circ\mathbf r_{M,N}
        =\mathbf r_{M,H}\circ(\operatorname{id}_M\otimes\pi),
    \end{equation}
    then \(M\otimes N\) has simple head and
    \(\operatorname{hd}(M\otimes N)\simeq M\nabla H\).
\end{lemma}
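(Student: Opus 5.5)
The statement is Lemma~\ref{lem:head}: for real simple \(M\) and \(N\) with simple head \(H\), the commutation \eqref{eq:head-criterion-R-matrix} forces \(M\otimes N\) to have simple head \(M\nabla H\). The plan is to show that \(M\nabla H\) is a quotient of \(M\otimes N\), and that every simple quotient of \(M\otimes N\) is isomorphic to \(M\nabla H\) and occurs with multiplicity one in the head.

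\emph{Existence of the quotient.} Since \(M\) is real and \(H\) is simple, \(M\otimes H\) has simple head \(M\nabla H\). Composing the surjections \(\mathrm{id}_M\otimes\pi\colon M\otimes N\twoheadrightarrow M\otimes H\) and \(M\otimes H\twoheadrightarrow M\nabla H\) shows that \(M\nabla H\) is a quotient of \(M\otimes N\). By \eqref{eq:image-R-matrix-head-socle}, \(M\nabla H\simeq\operatorname{Im}\mathbf r_{M,H}\).

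\emph{Uniqueness of the head.} Let \(M\otimes N\twoheadrightarrow S\) with \(S\) simple. I would use rigidity. By adjunction, this surjection corresponds to a nonzero map \(N\to\mathcal D^{-1}M\otimes S\) (or the appropriate dual, depending on conventions). The key step is to show that every nonzero map \(\phi\colon M\otimes N\to S\) factors through \(\mathrm{id}_M\otimes\pi\), i.e.\ kills \(M\otimes\ker\pi\). To get this, use the hypothesis \eqref{eq:head-criterion-R-matrix}: the morphism \(\mathbf r_{M,H}\circ(\mathrm{id}\otimes\pi)\) equals \((\pi\otimes\mathrm{id})\circ\mathbf r_{M,N}\). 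The argument splits into two parts.

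First, the image of \((\pi\otimes\mathrm{id})\circ\mathbf r_{M,N}\) is \(\operatorname{Im}\mathbf r_{M,H}\simeq M\nabla H\), which is simple. So there is a surjection \(\Psi\colon M\otimes N\twoheadrightarrow M\nabla H\) that kills \(M\otimes\ker\pi\).

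Second, I would argue by a standard trick from \cite{kashiwara2020monoidal}: since \(M\) is real, \(\mathbf r_{M,N}\) is characterized up to scalar by the universal R-matrix, and every map \(M\otimes N\to S\) can be composed with \(\mathbf r\). Concretely, the key claim is that the submodule \(K:=\ker(\Psi)\) is the unique maximal submodule.

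Suppose \(X\subsetneq M\otimes N\) is a proper submodule not contained in \(K\). Then \(X+K=M\otimes N\), so \(X\) surjects onto \(M\nabla H\). To obtain a contradiction, I would pass to duals. Write \(M\otimes N\) and note that \(N\) has simple head, so any proper submodule of \(N\) lies in \(\ker\pi\). Using the adjunction isomorphism \(\operatorname{Hom}(M\otimes N,S)\simeq\operatorname{Hom}(N,\mathcal D^{-1}M\otimes S)\), the map \(\phi\) corresponds to \(\phi^\flat\colon N\to\mathcal D^{-1}M\otimes S\). I would show \(\phi^\flat\) kills \(\ker\pi\). If not, the image of \(\phi^\flat\) would have a simple quotient not equal to \(H\)—but images of \(N\) have head \(H\), so every nonzero image is generated with top \(H\). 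Thus, I would instead use the socle of \(\mathcal D^{-1}M\otimes S\), which is simple since \(\mathcal D^{-1}M\) is real, and equals \(\mathcal D^{-1}M\mathbin{\triangle}S\)… This step is exactly where I expect the difficulty.

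\emph{Cleaner route, tried first.} Show \(\dim\operatorname{Hom}(M\otimes N,S)\le1\) with \(S\simeq M\nabla H\), using the fact that \(\phi\circ\mathbf r_{M,N}^{-1}\)-type compositions factor. The R-matrix hypothesis \eqref{eq:head-criterion-R-matrix} together with \eqref{eq:R-matrix-naturality} ensures \(\LambdaR(M,N)=\LambdaR(M,H)\). Then the argument of \cite[Prop.~3.2.9]{kashiwara2020monoidal} applies: \(M\otimes N\) embeds \(\mathbf r\)-compatibly, and the head is controlled by \(\operatorname{Im}\) of the composite. Hence \(M\otimes N\) has simple head \(M\nabla H\).

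The main obstacle is proving that no simple quotient of \(M\otimes N\) other than \(M\nabla H\) survives. Here I would rely on the Kashiwara–Kim–Oh–Park real-commutation machinery rather than a direct computation.
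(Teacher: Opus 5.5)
The paper gives no proof of this lemma; it is quoted from Kashiwara--Kim--Oh--Park. Measured against the standard argument, your proposal has a genuine gap: it proves only the easy half.

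Composing \(\operatorname{id}_M\otimes\pi\) with \(M\otimes H\twoheadrightarrow M\nabla H\) shows that \(M\nabla H\) is \emph{a} quotient of \(M\otimes N\). The whole content of the lemma, however, is that \(\ker\Psi\), where \(\Psi:=(\pi\otimes\operatorname{id}_M)\circ\mathbf r_{M,N}\), is the \emph{unique} maximal submodule. None of your three routes proves this:
\begin{itemize}
\item the adjunction attempt breaks off exactly at this point, as you acknowledge;
\item the ``cleaner route'' would at best bound \(\dim\operatorname{Hom}(M\otimes N,M\nabla H)\), which says nothing about simple quotients not isomorphic to \(M\nabla H\);
\item appealing to an unspecified proposition of KKOP is not an argument.
\end{itemize}
Also, \eqref{eq:R-matrix-naturality} deduces the commuting square from \(\LambdaR(M,N)=\LambdaR(M,H)\), not conversely, and equality of \(\LambdaR\) is not in itself what is needed. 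The underlying problem is that you use realness of \(M\) only to get the simple head of \(M\otimes H\). Uniqueness of the head comes from a second, essential use of realness.

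The missing idea is that \(\mathbf r_{M,M}\) is a nonzero scalar multiple of the identity (since \(M\otimes M\) is simple), combined with naturality of \(\mathbf r_{M,-}\) on submodules and a submodule-transfer lemma. Here is how the argument goes.
\begin{enumerate}
\item Let \(X\subseteq M\otimes N\) be a submodule with \(\Psi(X)\neq0\). By the hexagon identity, \(\mathbf r_{M,M\otimes N}\) is proportional to \((\operatorname{id}_M\otimes\mathbf r_{M,N})\circ(\mathbf r_{M,M}\otimes\operatorname{id}_N)\), because this composite is nonzero. Hence it is proportional to \(\operatorname{id}_M\otimes\mathbf r_{M,N}\).
\item A renormalized \(R\)-matrix maps \(M\otimes X\) into \(X\otimes M\) for every submodule \(X\) of its second argument. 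Indeed, the restriction of \(R^{\mathrm{ren}}_{M,(M\otimes N)_z}\) to \(M\otimes X_z\) is a multiple of \(R^{\mathrm{ren}}_{M,X_z}\) by a function regular at \(z=1\). Therefore \(M\otimes T\subseteq X\otimes M\), where \(T:=\mathbf r_{M,N}(X)\subseteq N\otimes M\).
\item Let \(Z\subseteq N\) be the largest submodule with \(M\otimes Z\subseteq X\). By rigidity it is the kernel of the map \(g\colon N\to M^{\vee}\otimes\bigl((M\otimes N)/X\bigr)\) adjoint to the projection, for a suitable dual \(M^{\vee}\).
\item Exactness of \(-\otimes M\) and naturality of the adjunction turn \(M\otimes T\subseteq X\otimes M\) into \((g\otimes\operatorname{id}_M)(T)=0\), i.e.\ \(T\subseteq Z\otimes M\). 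This is the submodule lemma of Kang--Kashiwara--Kim--Oh.
\item Consequently \(0\neq\Psi(X)=(\pi\otimes\operatorname{id}_M)(T)\subseteq\pi(Z)\otimes M\), so \(Z\not\subseteq\ker\pi\). Since \(N\) has simple head, \(Z=N\), and so \(X\supseteq M\otimes N\). Thus every proper submodule lies in \(\ker\Psi\).
\item Only now is the hypothesis \eqref{eq:head-criterion-R-matrix} used. It gives \(\operatorname{Im}\Psi=\operatorname{Im}\mathbf r_{M,H}\simeq M\nabla H\) by \eqref{eq:image-R-matrix-head-socle}, a nonzero simple module. Hence \(\ker\Psi\) is the unique maximal submodule and \(\operatorname{hd}(M\otimes N)\simeq M\nabla H\).
\end{enumerate}
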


We next record the ordered commutation conditions used below.

\begin{definition}
    Let \(M,N\in\mathscr C^{\ZZ}\) be simple modules. The ordered pair
    \((M,N)\) is called \emph{unmixed} if
    \(\mathfrak d(\mathcal D M,N)=0\), and \emph{strongly unmixed} if
    \(\mathfrak d(\mathcal D^kM,N)=0\) for every \(k\geq1\).

    An almost real sequence \(\mathbf L=(L_1,\ldots,L_r)\) is called
    \emph{unmixed}, respectively \emph{strongly unmixed}, if every ordered
    pair \((L_i,L_j)\), \(1\leq i<j\leq r\), has the corresponding
    property.
\end{definition}

These notions depend on the ordering of the modules, and every strongly
unmixed pair is unmixed.

\begin{lemma}[{\cite[Lemma~5.3]{kashiwara2023pbw}}]
    Every unmixed sequence of real simple modules is normal.
\end{lemma}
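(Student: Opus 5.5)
The plan is to argue by induction on the length \(r\) of the sequence \(\mathbf L=(L_1,\ldots,L_r)\). The case \(r\leq 2\) is automatic, since \(\mathbf r_{L_1,L_2}\) is nonzero by construction. For the induction step I would use the standard criterion for normality of Kashiwara--Kim--Oh--Park (the converse direction of Lemma~\ref{lem:normal-sequence}), which I am assuming rather than proving here. It says: if \(L_1\) is real and the tail \(\mathbf L'=(L_2,\ldots,L_r)\) is an almost real normal sequence, then \(\mathbf L\) is normal if and only if
\[
\LambdaR\bigl(L_1,\operatorname{Im}\mathbf r_{\mathbf L'}\bigr)=\sum_{k=2}^r\LambdaR(L_1,L_k).
\]
Every subsequence of an unmixed sequence is unmixed, so \(\mathbf L'\) is normal by induction. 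Hence \(S:=\operatorname{Im}\mathbf r_{\mathbf L'}\simeq\operatorname{hd}(L_2\otimes\cdots\otimes L_r)\) is simple, and it only remains to verify the displayed additivity.

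To prove the additivity, I would move the computation to the dual \(\mathcal D L_1\), where all the relevant pairs commute.
\begin{enumerate}
\item By unmixedness, \(\mathfrak d(\mathcal D L_1,L_k)=0\) for all \(k\geq 2\). The subquotient inequality \eqref{eq:R-matrix-subquotient-inequalities}, applied iteratively, gives \(\mathfrak d(\mathcal D L_1,S)=0\), so \(\mathcal D L_1\) strongly commutes with \(S\).
\item I would use the duality identity \(\LambdaR(M,N)=\LambdaR(N,\mathcal D M)\) for simple \(M,N\), which holds in the normalization of \cite{kashiwara2020monoidal}. Combined with \(\mathfrak d(\mathcal D L_1,X)=0\), which gives \(\LambdaR(X,\mathcal D L_1)=-\LambdaR(\mathcal D L_1,X)\), this yields
\[
\LambdaR(L_1,X)=-\LambdaR(\mathcal D L_1,X)\quad\text{for } X\in\{S,L_2,\ldots,L_r\}.
\]
\item This reduces the claim to \(\LambdaR(\mathcal D L_1,S)=\sum_{k\geq 2}\LambdaR(\mathcal D L_1,L_k)\). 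Since \(\mathcal D L_1\) strongly commutes with each \(L_k\), the renormalized \(R\)-matrix \(\mathbf r_{\mathcal D L_1,L_2\otimes\cdots\otimes L_r}\) is a composite of isomorphisms, and its degree is the sum of the individual degrees. Because these maps are isomorphisms, they commute with the projection onto the head \(S\), as in \eqref{eq:R-matrix-naturality}. Therefore \(\LambdaR(\mathcal D L_1,S)\) equals the same sum.
\end{enumerate}
Reversing the signs in step 2 then gives the additivity required by the criterion, which completes the induction.

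I expect the main obstacle to be step 3 together with the normality criterion. One has to be sure that a commuting \(R\)-matrix with a tensor product is really the tensor product of the individual renormalized maps, with no extra zeros in the renormalizing coefficients. Degree bookkeeping alone only gives the inequality \(\leq\) in \eqref{eq:R-matrix-subquotient-inequalities}. If this point is delicate, my first fallback is to prove the two-factor version \(\LambdaR(\mathcal D L_1,L_2\nabla S'')=\LambdaR(\mathcal D L_1,L_2)+\LambdaR(\mathcal D L_1,S'')\), with \(S''\) the head of \(L_3\otimes\cdots\otimes L_r\), and then peel off one factor at a time.
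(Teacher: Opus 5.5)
Your reduction is sound. The induction, the normality criterion for a real first term, step~1, and the identity $\LambdaR(L_1,X)=-\LambdaR(\mathcal D L_1,X)$ in step~2 are all correct; the paper itself gives no proof here and only cites \cite{kashiwara2023pbw}. The gap is step~3, where you already suspected trouble, and the justification you give there does not go through.

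Two things fail in step~3.
\begin{itemize}
\item Knowing that the composite of the maps $\mathbf r_{\mathcal D L_1,L_k}$ is an isomorphism, and that it is compatible with the projection onto $S$, is information at $z=1$ only. It shows that the ratio of the relevant renormalizing coefficients has neither a zero nor a pole at $z=1$. But $\LambdaR$ is not determined by the behaviour of $c(z)$ near $z=1$: it also records the pairings $\mathfrak d(M,\mathcal D^kN)$ with $k\neq0$. So this rules out neither an extra factor in the coefficient for $L_2\otimes\cdots\otimes L_r$, nor a change of $\LambdaR$ when you pass to the head $S$.
\item \eqref{eq:R-matrix-naturality} is the implication ``equal $\LambdaR$ implies compatibility''. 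Using it to deduce equality of $\LambdaR$ runs it backwards.
\end{itemize}
Your fallback of peeling off one factor at a time meets the same issue already in the two-factor case.

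The missing idea is to apply the subquotient inequality in both orders and let the vanishing of $\mathfrak d$ force equality. No $R$-matrix on a non-simple tensor product is needed.
\begin{itemize}
\item By exactness of the tensor product, $S$ is a subquotient of $L_2\otimes S'$ for some simple subquotient $S'$ of $L_3\otimes\cdots\otimes L_r$. Iterating \eqref{eq:R-matrix-subquotient-inequalities} therefore gives $\LambdaR(S,\mathcal D L_1)\le\sum_{k\ge2}\LambdaR(L_k,\mathcal D L_1)$.
\item Your identity $\LambdaR(M,N)=\LambdaR(N,\mathcal D M)$ moves the same inequality to the other slot: $\LambdaR(\mathcal D L_1,S)=\LambdaR(S,\mathcal D^2L_1)\le\sum_{k\ge2}\LambdaR(L_k,\mathcal D^2L_1)=\sum_{k\ge2}\LambdaR(\mathcal D L_1,L_k)$.
\item Adding the two inequalities gives $2\mathfrak d(\mathcal D L_1,S)\le 2\sum_{k\ge2}\mathfrak d(\mathcal D L_1,L_k)$. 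Both sides vanish, by step~1 and by unmixedness, so both inequalities are equalities.
\end{itemize}
In particular $\LambdaR(\mathcal D L_1,S)=\sum_{k\ge2}\LambdaR(\mathcal D L_1,L_k)$, and your step~2 then gives the additivity required by the normality criterion.
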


\subsection{Duality data and root modules}

A real simple module \(L\in\mathscr C^{\ZZ}\) is called a \emph{root
module} if
\begin{equation}\label{eq:root-module}
    \mathfrak d(L,\mathcal D^kL)
    =\delta_{k,-1}+\delta_{k,1}
    \qquad (k\in\ZZ).
\end{equation}

\begin{definition}\label{def:strong-duality-datum}
    A family of root modules \(\DD=(L_i^{\DD})_{i\in I}\) is called a
    \emph{strong duality datum} associated with
    \(C=(c_{ij})_{i,j\in I}\) if
    \begin{equation}\label{eq:strong-duality-datum}
        \mathfrak d\bigl(L_i^{\DD},\mathcal D^kL_j^{\DD}\bigr)
        =-\delta_{k,0}c_{ij}
    \end{equation}
    for all \(k\in\ZZ\) and all distinct \(i,j\in I\).
\end{definition}

 For
\(-\infty\leq a\leq b\leq+\infty\), let
\(\mathscr C_{\DD}[a,b]\) be the smallest full subcategory of
\(\mathscr C^{\ZZ}\) containing \(\mathcal D^kL_i^{\DD}\) for
\(i\in I\) and \(a\leq k\leq b\), and closed under tensor products,
extensions, and subquotients. We abbreviate
\begin{equation}\label{eq:shifted-D-categories}
    \mathscr C_{\DD}:=\mathscr C_{\DD}[0,0],
    \qquad
    \mathscr C_{\DD}[k]:=\mathscr C_{\DD}[k,k].
\end{equation}

A strong duality datum \(\DD\) is called \emph{complete} if, for every
simple \(M\in\mathscr C^{\ZZ}\), there exist simple objects
\(M_k\in\mathscr C_{\DD}\), \(k\in\ZZ\), all but finitely many of which
are isomorphic to \(\mathbf1\), such that
\begin{equation}\label{eq:complete-duality-datum-factorization}
    M\simeq\operatorname{hd}\left(
        \mathop{\bigotimes_{k\in\ZZ}^{\longleftarrow}}
        \mathcal D^kM_k
    \right).
\end{equation}
Here the factors are ordered with decreasing \(k\) from left to right; thus
the tensor product in \eqref{eq:complete-duality-datum-factorization} has
the form
\(\cdots\otimes\mathcal D^2M_2\otimes\mathcal D M_1\otimes M_0
\otimes\mathcal D^{-1}M_{-1}\otimes\cdots\). In particular, the
subcategories \(\mathscr C_{\DD}[k]\), \(k\in\ZZ\), monoidally generate
\(\mathscr C^{\ZZ}\).

\begin{example}\label{ex:Q-datum}
Let \(\Delta_{\fg}\) be the Dynkin diagram associated with \(\fg\),
let \(\sigma\) be a Dynkin diagram automorphism, and let
\(\xi\colon I\to\ZZ\) be a height function compatible with
\(\sigma\). The triple
\(\mathcal Q=(\Delta_{\fg},\sigma,\xi)\) is called a
\emph{\(Q\)-datum}. In the simply-laced types \(ADE\) considered in
this paper, we take \(\sigma=\mathrm{id}\).

Associated with \(\mathcal Q\) is a bijection
\begin{equation*}
\phi_{\mathcal Q}\colon
\widehat I_{\xi}\xrightarrow{\ \sim\ }\Phi^+\times\ZZ.
\end{equation*}
For each \(i\in I\), let \((j,p)\in\widehat I_{\xi}\) be the unique
pair satisfying \(\phi_{\mathcal Q}(j,p)=(\alpha_i,0)\), and define
\(L_i^{\mathcal Q}:=L(j,p)\). Then
\begin{equation*}
\DD_{\mathcal Q}
:=
\{L_i^{\mathcal Q}\mid i\in I\}
\end{equation*}
is the complete strong duality datum associated with
\(\mathcal Q\).
\end{example}

For \(i\in I\), define a new datum
\(\mathscr S_i(\DD)=(L_j^{\mathscr S_i(\DD)})_{j\in I}\) by
\begin{equation}\label{eq:duality-datum-braid-action}
    L_j^{\mathscr S_i(\DD)}
    :=\begin{cases}
        \mathcal D L_i^{\DD},
        & j=i,\\
        L_i^{\DD}\nabla L_j^{\DD},
        & c_{ij}=-1,\\
        L_j^{\DD},
        & \text{otherwise}.
    \end{cases}
\end{equation}
By \cite[Proposition~5.9]{kashiwara2023pbw},
\(\mathscr S_i(\DD)\) is again a strong duality datum, and it is complete
whenever \(\DD\) is complete. Moreover, the induced operators on
isomorphism classes of complete duality data satisfy
\begin{equation}\label{eq:duality-datum-braid-relations}
    \mathscr S_i\mathscr S_j
    =\mathscr S_j\mathscr S_i
    \quad\text{if }c_{ij}=0,
    \qquad
    \mathscr S_i\mathscr S_j\mathscr S_i
    =\mathscr S_j\mathscr S_i\mathscr S_j
    \quad\text{if }c_{ij}=-1
\end{equation}
by \cite[Corollary~3.22]{kashiwara2025monoidal}.

Let \(\beta=(i_1,\ldots,i_r)\) be a positive braid word, and let \(\DD\)
be a strong duality datum. Define recursively
\begin{equation}\label{eq:duality-data-along-beta}
    \DD_\beta^{(0)}:=\DD,
    \qquad
    \DD_\beta^{(k)}
    :=\mathscr S_{i_k}(\DD_\beta^{(k-1)})
    \quad (1\leq k\leq r),
\end{equation}
and set
\begin{equation}\label{eq:root-modules-along-beta}
    C_k^{\DD,\beta}
    :=L_{i_k}^{\DD_\beta^{(k-1)}}
    \qquad (1\leq k\leq r).
\end{equation}
We denote by \(\mathscr C_{\DD}(\beta)\) the smallest full subcategory of
\(\mathscr C^{\ZZ}\) containing \(\mathbf1\) and the root modules
\(C_k^{\DD,\beta}\), \(1\leq k\leq r\), and closed under tensor
products, extensions, and subquotients.

For \(j\in I\) and \([a,b]\subseteq[1,r]\), set
\begin{equation}\label{eq:colored-interval-endpoints}
    a(j)^+:=\min\{s\in[a,r]\mid i_s=j\},
    \qquad
    b(j)^-:=\max\{s\in[1,b]\mid i_s=j\},
\end{equation}
whenever the relevant set is nonempty, and define
\(\{a,b]:=[a(i_b)^+,b]\) and
\([a,b\}:=[a,b(i_a)^-]\). The interval \([a,b]\) is called a
\(\beta\)-\emph{box} if \(i_a=i_b\). For a \(\beta\)-box, put
\([a,b]_\beta^\circ:=\{s\in[a,b]\mid i_s=i_a\}\) and define the
associated affine determinantial module by
\begin{equation}\label{eq:affine-determinantial-module}
    M^{\DD,\beta}[a,b]
    :=\operatorname{hd}\left(
        \mathop{\bigotimes_{s\in[a,b]_\beta^\circ}^{\longleftarrow}}
        C_s^{\DD,\beta}
    \right).
\end{equation}
Thus, the tensor factors in \eqref{eq:affine-determinantial-module} occur
in decreasing order of their positions. When \(\beta\) is fixed, we simply
write \(M^{\DD}[a,b]\).

\begin{proposition}[{\cite[Lemma~5.26 and Proposition~5.33]
    {kashiwara2025monoidal}}]\label{pro:lusztig-parameter-modules}
    Let \(\beta=(i_1,\ldots,i_r)\) be a positive braid word, and let
    \(\DD\) be a complete duality datum. For every simple module
    \(M\in\mathscr C_{\DD}(\beta)\), there exists a unique vector
    \(\bfa=(a_1,\ldots,a_r)\in\ZZ_{\geq0}^r\) such that
    \begin{equation}\label{eq:Lusztig-parameter-factorization}
        M\simeq\operatorname{hd}\left(
            (C_r^{\DD,\beta})^{\otimes a_r}\otimes\cdots\otimes
            (C_1^{\DD,\beta})^{\otimes a_1}
        \right).
    \end{equation}
    We call \(\bfa\) the \((\DD,\beta)\)-\emph{Lusztig parameter} of
    \(M\) and denote it by \(\bfa^{\DD,\beta}(M)\). If \(\DD\) is
    fixed, we simply write \(\bfa^\beta(M)\).
\end{proposition}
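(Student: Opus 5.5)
The plan is to establish existence by induction on the length \(r\) of \(\beta\), peeling off the first letter and using the braid action \(\mathscr S_{i_1}\) on duality data. Uniqueness would then follow from the triangularity of the PBW-type factorization with respect to a suitable order on \(\ZZ_{\geq0}^r\).

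Write \(\beta=(i_1)\beta'\) with \(\beta'=(i_2,\ldots,i_r)\), and set \(\DD'=\mathscr S_{i_1}(\DD)\). Then \(C_{k}^{\DD,\beta}=C_{k-1}^{\DD',\beta'}\) for \(k\geq 2\) and \(C_1^{\DD,\beta}=L_{i_1}^{\DD}\). So \(\mathscr C_{\DD}(\beta)\) is generated by \(L_{i_1}^{\DD}\) together with \(\mathscr C_{\DD'}(\beta')\).

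The first key step is a factorization statement. Every simple \(M\in\mathscr C_{\DD}(\beta)\) should be expressible as \(\operatorname{hd}(N\otimes (L_{i_1}^{\DD})^{\otimes a_1})\) with \(N\in\mathscr C_{\DD'}(\beta')\) simple. Here \(a_1\) is the largest \(a\) for which \(M\) admits \(L_{i_1}^{\DD}\) as a right "divisor" in the sense of \(M\nabla \mathcal D^{-1}\)-stripping. To get this I would use:
\begin{itemize}
\item the cancellation Lemma~\ref{lem:cancel}, which peels off copies of the root module \(L_{i_1}^{\DD}\) by taking heads with \(\mathcal D^{-1}L_{i_1}^{\DD}\);
\item the root-module property \(\mathfrak d(L,\mathcal D^{\pm1}L)=1\), which guarantees the degree drops by one at each step;
\item the strong duality relations \eqref{eq:strong-duality-datum}, which show that once no further factor can be removed, the remainder lies in the subcategory generated by the \(\mathscr S_{i_1}\)-transformed root modules.
\end{itemize}
This is the analogue of Lusztig's/Saito's reflection argument for PBW bases. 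Completeness of \(\DD\) enters through \eqref{eq:complete-duality-datum-factorization}, which controls which \(\mathcal D^k\)-levels can appear. Applying the induction hypothesis to \(N\) over \((\DD',\beta')\) then gives \(N\simeq\operatorname{hd}((C_r)^{\otimes a_r}\otimes\cdots\otimes (C_2)^{\otimes a_2})\).

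The second key step combines the two heads. The sequence \((C_r,\ldots,C_1)\), taken in decreasing order, is strongly unmixed, so it is normal by \cite[Lemma~5.3]{kashiwara2023pbw}. Lemma~\ref{lem:head}, together with the naturality \eqref{eq:R-matrix-naturality} (justified via Lemma~\ref{lem:normal-sequence} giving equality of \(\LambdaR\)), should then show that the head of the whole tensor product equals \(N\nabla (C_1)^{\otimes a_1}\simeq M\).

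For uniqueness, I would show that heads attached to distinct parameters are non-isomorphic. One route uses \(a_1=\max\{a\}\) characterized intrinsically via \(\LambdaR\) or \(\mathfrak d\) with \(\mathcal D^{-1}L_{i_1}^{\DD}\), plus induction. An alternative is to pass to the quantum Grothendieck ring and use unitriangularity of the simple classes against ordered standard products.

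The main obstacle is the first step: proving that \(\mathscr C_{\DD}(\beta)\cap\{\text{no }L_{i_1}\text{-divisor}\}\) lands in \(\mathscr C_{\DD'}(\beta')\), i.e. that the category generated by the \(C_k\) has this reflection-compatible shape. This is precisely the content of \cite[Lemma~5.26]{kashiwara2025monoidal}, and I would follow its strategy of comparing with the Kashiwara operators \(\tilde e_{i}\) on the global basis.
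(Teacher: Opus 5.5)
Your sketch has a genuine gap. The step you flag as the main obstacle is the entire content of the proposition, and you close it only by appealing to \cite[Lemma~5.26]{kashiwara2025monoidal}. That is the very source from which the paper quotes the statement; the paper itself gives no argument.

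The bookkeeping you do carry out is fine. For $k\geq 2$ one has $C_k^{\DD,\beta}=C_{k-1}^{\mathscr S_{i_1}\DD,\beta'}$. The assembly of heads also works, provided you apply normality to the grouped sequence $\bigl((C_r^{\DD,\beta})^{\otimes a_r},\ldots,(C_1^{\DD,\beta})^{\otimes a_1}\bigr)$. A sequence with repeated $C_k$'s is not unmixed, since $\mathfrak d(\mathcal D C_k,C_k)=1$ for a root module.

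What is missing is an argument that every simple object of $\mathscr C_{\DD}(\beta)$ has the form $N\nabla(L_{i_1}^{\DD})^{\otimes a}$ with $N$ simple in $\mathscr C_{\mathscr S_{i_1}\DD}(\beta')$. These simple objects are all simple subquotients of tensor products of the $C_k^{\DD,\beta}$ taken in \emph{arbitrary} order. So this is a global closure statement, a categorical form of $\widehat{\cA}(i\beta')=\bigoplus_{a\geq 0}T_i\bigl(\widehat{\cA}(\beta')\bigr)f_{i,0}^{a}$. It needs genuinely new input, for example a convexity (Levendorskii--Soibelman type) result for wrong-order products $C_l\otimes C_k$ with $l<k$, or the reflection and crystal machinery of the cited source.

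The tools you list cannot supply this.

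\textbf{Cancellation.} The identity that removes a right factor is the mirror of Lemma~\ref{lem:cancel}, namely $\mathcal D^{-1}L\nabla(X\nabla L)\simeq X$, with $\mathcal D^{-1}L$ on the left. It only inverts $-\nabla L$ on modules already known to be of the form $X\nabla L$. It gives no criterion for when $X$ can be taken inside $\mathscr C_{\DD}(\beta)$, so your $a_1$ is not yet defined.

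\textbf{Blind stripping leaves the category.} Take $\beta=(i,i)$, $L=L_i^{\DD}$ and $M=C_2^{\DD,\beta}=\mathcal DL$. Then $\mathfrak d(\mathcal D^{-1}L,\mathcal DL)=\mathfrak d(L,\mathcal D^2L)=0$, so $\mathcal D^{-1}L\nabla M\simeq\mathcal D^{-1}L\otimes\mathcal DL$. Its level decomposition has a nontrivial level-$(-1)$ factor, so it is not an object of $\mathscr C_{\DD}[0,\infty)\supseteq\mathscr C_{\DD}(\beta)$.

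\textbf{Strong duality and unmixedness.} These give only the easy direction: $\mathfrak d(\mathcal D^{-1}L_{i_1}^{\DD},N)=0$ for simple $N\in\mathscr C_{\mathscr S_{i_1}\DD}(\beta')$. The converse is exactly what needs proof: a simple object of $\mathscr C_{\DD}(\beta)$ with no right $L_{i_1}^{\DD}$-factor lies in $\mathscr C_{\mathscr S_{i_1}\DD}(\beta')$.

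\textbf{Uniqueness.} Both of your uniqueness routes depend on the same point.
\begin{itemize}
\item An intrinsic characterization of $a_1$ presupposes the missing criterion.
\item Unitriangularity in the quantum Grothendieck ring is available only for quantizable data such as $\DD_{\mathcal Q}$ in type $ADE$. The statement concerns an arbitrary complete duality datum.
\end{itemize}
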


\subsection{Quantum Grothendieck rings}

For the remainder of this subsection, assume that
\(U_{\qaff}'(\widehat{\fg})\) is of untwisted affine type and fix a datum
\((\Delta_{\fg},\xi)\), where \(\xi\) is a height function on the
Dynkin diagram \(\Delta_{\fg}\). Set
\begin{equation}\label{eq:Y-Laurent-ring}
    \mathcal Y
    :=\ZZ[Y_{i,p}^{\pm1}\mid(i,p)\in\widehat I_\xi].
\end{equation}
Frenkel and Reshetikhin constructed an injective ring homomorphism
\begin{equation}\label{eq:q-character-homomorphism}
    \chi_q\colon K_0(\mathscr C^{\ZZ})\hookrightarrow\mathcal Y,
\end{equation}
called the \emph{\(q\)-character homomorphism}
\cite{frenkel1999qcharacters}.

Let \(\mathcal M\) be the set of Laurent monomials in \(\mathcal Y\). For
\(m\in\mathcal M\), write
\(m=\prod_{(i,p)\in\widehat I_\xi}Y_{i,p}^{u_{i,p}(m)}\). The monomial
\(m\) is called \emph{dominant} if \(u_{i,p}(m)\geq0\) for every
\((i,p)\in\widehat I_\xi\); denote the set of dominant monomials by
\(\mathcal M^+\). For each \(m\in\mathcal M^+\), let \(L(m)\) be the
unique simple module with Drinfeld polynomials
\begin{equation}\label{eq:Drinfeld-polynomials-dominant-monomial}
    P_i^{L(m)}(z)
    =\prod_{p\in\ZZ}(1-q_{\mathrm{aff}}^p z)^{u_{i,p}(m)}
    \qquad (i\in I).
\end{equation}
In particular, \(L(Y_{i,p})\simeq L(i,p)\) and
\(L(1)\simeq\mathbf1\).

Let \(A\) be the adjacency matrix of \(\Delta_{\fg}\), let \(I\) be the identity
matrix, and set \(C(z):=(z+z^{-1})I-A\). The matrix \(C(z)\) is
invertible over \(\QQ(z)\), and we write
\begin{equation}\label{eq:inverse-quantum-Cartan-matrix}
    \widetilde C(z)
    :=C(z)^{-1}
    =\sum_{k\geq0}(z+z^{-1})^{-k-1}A^k
    =\bigl(\widetilde c_{ij}(z)\bigr)_{i,j\in I}.
\end{equation}
Expanding at \(z=0\), define integers \(\widetilde c_{ij}(m)\) by
\(\widetilde c_{ij}(z)=\sum_{m\geq1}\widetilde c_{ij}(m)z^m\), and set
\(\widetilde c_{ij}(m)=0\) for \(m\leq0\). Since
\(\widetilde C(z)\) is symmetric, one has
\(\widetilde c_{ij}(m)=\widetilde c_{ji}(m)\).

For \((i,p),(j,s)\in\widehat I_\xi\), define
\begin{equation}\label{eq:N-commutation-form}
    \begin{aligned}
        \mathcal N(i,p;j,s)
        :={}&\widetilde c_{ij}(p-s-1)
             -\widetilde c_{ij}(p-s+1)\\
            &-\widetilde c_{ij}(s-p-1)
             +\widetilde c_{ij}(s-p+1).
    \end{aligned}
\end{equation}
Then \(\mathcal N(i,p;j,s)=-\mathcal N(j,s;i,p)\).

Let \(\mathcal Y_t\) be the \(\ZZ[t^{\pm1/2}]\)-algebra generated by
invertible elements \(\widetilde Y_{i,p}\),
\((i,p)\in\widehat I_\xi\), subject to
\begin{equation}\label{eq:Y-t-commutation-relation}
    \widetilde Y_{i,p}\widetilde Y_{j,s}
    =t^{\mathcal N(i,p;j,s)}
      \widetilde Y_{j,s}\widetilde Y_{i,p}.
\end{equation}
Extend \(\mathcal N\) bilinearly to \(\mathcal M\) by
\begin{equation}\label{eq:N-form-on-monomials}
    \mathcal N(m,m')
    :=\sum_{(i,p),(j,s)\in\widehat I_\xi}
      u_{i,p}(m)u_{j,s}(m')\mathcal N(i,p;j,s).
\end{equation}
For simple modules \(X\simeq L(m)\) and \(Y\simeq L(m')\), we also
write \(\mathcal N(X,Y):=\mathcal N(m,m')\).

Specialization at \(t=1\) gives a surjective ring homomorphism
\begin{equation}\label{eq:t-one-specialization}
    \operatorname{ev}_{t=1}\colon\mathcal Y_t\twoheadrightarrow\mathcal Y,
    \qquad
    t^{1/2}\longmapsto1,
    \quad
    \widetilde Y_{i,p}\longmapsto Y_{i,p}.
\end{equation}
The quantum torus \(\mathcal Y_t\) carries the bar involution determined by
\(\overline{t^{1/2}}=t^{-1/2}\),
\(\overline{\widetilde Y_{i,p}}=\widetilde Y_{i,p}\), and
\(\overline{xy}=\overline y\,\overline x\).

For each \(m\in\mathcal M^+\), the Kazhdan--Lusztig-type construction
produces a distinguished bar-invariant element
\(L_t(m)\in\mathcal Y_t\), called the \emph{\((q,t)\)-character} of
\(L(m)\). If \(M\simeq L(m)\) is simple, we write
\([M]_t:=L_t(m)\).

\begin{definition}
    The \emph{quantum Grothendieck ring} of \(\mathscr C^{\ZZ}\) is the
    \(\ZZ[t^{\pm1/2}]\)-subalgebra
    \begin{equation}\label{eq:quantum-Grothendieck-ring}
        \mathcal{K}_t(\mathscr C^{\ZZ})
        =\mathcal K_{\fg,t}
        :=\ZZ[t^{\pm1/2}]
          \bigl\langle[M]_t
          \mid M\in\operatorname{Irr}(\mathscr C^{\ZZ})\bigr\rangle
        \subseteq\mathcal Y_t.
    \end{equation}
\end{definition}

The set \(\{L_t(m)\mid m\in\mathcal M^+\}\) is a
\(\ZZ[t^{\pm1/2}]\)-basis of \(\mathcal{K}_t(\mathscr C^{\ZZ})\). A simple module
\(M\) is called \emph{quantizable} if
\(\operatorname{ev}_{t=1}([M]_t)=\chi_q([M])\).

\begin{lemma}[{\cite[Lemma~6.8]{kashiwara2025monoidal}}]\label{lem:Nm1m2}
    Let \(L(m_1)\) and \(L(m_2)\) be quantizable simple modules, at least
    one of which is real. Assume that
    \(\mathfrak d(L(m_1),L(m_2))=0\) and that \(L(m_1m_2)\) is
    quantizable. Then
    \begin{equation}\label{eq:N-equals-Lambda}
        \mathcal N(m_1,m_2)
        =\mathcal N\bigl(L(m_1),L(m_2)\bigr)
        =\LambdaR\bigl(L(m_1),L(m_2)\bigr),
    \end{equation}
    and
    \begin{equation}\label{eq:quantum-character-commuting-product}
        t^{-\mathcal N(m_1,m_2)/2}L_t(m_1)L_t(m_2)
        =L_t(m_1m_2)
        =t^{\mathcal N(m_1,m_2)/2}L_t(m_2)L_t(m_1).
    \end{equation}
\end{lemma}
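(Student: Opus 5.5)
The plan is to treat the two assertions in order: first the product formula \eqref{eq:quantum-character-commuting-product}, then the identification of \(\mathcal N(m_1,m_2)\) with \(\LambdaR(L(m_1),L(m_2))\).

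\emph{Classical step.} Since one of \(L(m_1),L(m_2)\) is real and \(\mathfrak d(L(m_1),L(m_2))=0\), the tensor product \(L(m_1)\otimes L(m_2)\) is simple. Its highest \(\ell\)-weight is \(m_1m_2\), so \(L(m_1)\otimes L(m_2)\simeq L(m_1m_2)\). Hence
\[
\chi_q(L(m_1))\chi_q(L(m_2))=\chi_q(L(m_1m_2)).
\]

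\emph{Quantum lift.} Expand \(L_t(m_1)L_t(m_2)\in\mathcal K_t(\mathscr C^{\ZZ})\) in the simple basis:
\[
L_t(m_1)L_t(m_2)=\sum_{m\in\mathcal M^+}c_m(t)\,L_t(m),
\qquad c_m(t)\in\ZZ[t^{\pm1/2}].
\]
For simply-laced untwisted type, positivity of the structure constants of the \((q,t)\)-characters is available (Varagnolo--Vasserot, via quiver varieties), so each \(c_m(t)\) has nonnegative coefficients. I would then apply \(\operatorname{ev}_{t=1}\) to both sides. By quantizability of \(L(m_1)\) and \(L(m_2)\), the left side specializes to \(\chi_q(L(m_1m_2))\). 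By quantizability of \(L(m_1m_2)\), this equals \(\operatorname{ev}_{t=1}L_t(m_1m_2)\). The specialized \(\operatorname{ev}_{t=1}L_t(m)\) are not assumed individually equal to \(\chi_q\), but they are linearly independent, being unitriangular with respect to dominant monomials. Therefore \(\sum_m c_m(1)\operatorname{ev}_{t=1}L_t(m)=\operatorname{ev}_{t=1}L_t(m_1m_2)\) forces \(c_m(1)=\delta_{m,m_1m_2}\). Nonnegativity then gives \(c_m=0\) for \(m\neq m_1m_2\), and \(c_{m_1m_2}=t^{a}\) for a single half-integer \(a\).

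\emph{Determining \(a\).} I would compare leading monomials. The leading term of \(L_t(m_i)\) is the normalized monomial \(\widetilde m_i\), and
\[
\widetilde m_1\widetilde m_2=t^{\mathcal N(m_1,m_2)/2}\,\widetilde{m_1m_2}
\]
by \eqref{eq:Y-t-commutation-relation} and the definition of the normalization. This gives \(a=\mathcal N(m_1,m_2)/2\), which is the first equality in \eqref{eq:quantum-character-commuting-product}. The same argument applied to \(L_t(m_2)L_t(m_1)\), together with \(\mathcal N(m_2,m_1)=-\mathcal N(m_1,m_2)\), gives the second equality. The identity \(\mathcal N(m_1,m_2)=\mathcal N(L(m_1),L(m_2))\) holds by definition.

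\emph{The main obstacle: \(\mathcal N=\LambdaR\).} I would try to show \(\LambdaR(L(m_1),L(m_2))=\mathcal N(m_1,m_2)\) by reducing to fundamental modules.
\begin{enumerate}
\item The invariant \(\LambdaR(M,N)\) is the order of the pole or zero of the renormalizing coefficient \(c_{M,N}(z)\), measured against the universal \(R\)-matrix.
\item For fundamental modules \(L(i,p)\) and \(L(j,s)\), one has \(\LambdaR(L(i,p),L(j,s))=\mathcal N(i,p;j,s)\). This I would take from the known computation of denominators via the inverse quantum Cartan matrix \(\widetilde C(z)\), as in Fujita--Oh.
\item For general modules, write \(L(m_1)\) and \(L(m_2)\) as heads of ordered products of fundamental modules. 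Then \(\LambdaR\) is additive as soon as the relevant \(R\)-matrices compose to a nonzero map.
\item To secure this additivity, I would use \(\mathfrak d(L(m_1),L(m_2))=0\) (the \(R\)-matrix is then an isomorphism), together with Lemma~\ref{lem:normal-sequence} and the naturality \eqref{eq:R-matrix-naturality}, to pass \(\LambdaR\) through the head projections.
\end{enumerate}
I expect step 4, controlling \(\LambdaR\) through these head projections, to be the delicate point. If it fails, the fallback is to quote the general identity \(\LambdaR=\mathcal N\) for strongly commuting pairs of quantizable modules directly from Kashiwara--Kim--Oh--Park's framework.
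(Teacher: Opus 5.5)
The paper gives no proof here; it cites \cite[Lemma~6.8]{kashiwara2025monoidal}. I therefore judge your argument on its own.

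Your derivation of \eqref{eq:quantum-character-commuting-product} is sound. It combines simplicity of $L(m_1)\otimes L(m_2)$, positivity of the structure constants, and specialization at $t=1$ using the three quantizability hypotheses. It then uses unitriangular linear independence of the specialized $L_t(m)$ and the leading-term identity $\widetilde m_1\widetilde m_2=t^{\mathcal N(m_1,m_2)/2}\widetilde{m_1m_2}$. Together these give exactly the claimed identities. One scope remark: Varagnolo--Vasserot positivity covers only simply-laced types, where quantizability is automatic (Remark~\ref{rem:quantizable}). For the lemma in arbitrary untwisted type you would have to import positivity from elsewhere. This is harmless for the paper, which uses the lemma only in type $ADE$.

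The identification $\mathcal N(m_1,m_2)=\LambdaR(L(m_1),L(m_2))$ has a genuine gap, and it is not confined to step~4.

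\emph{Step~2 is false.} $\mathcal N$ is skew-symmetric, whereas by \eqref{eq:d-invariant} $\LambdaR(M,N)+\LambdaR(N,M)=2\mathfrak d(M,N)$. This is positive for non-commuting fundamental modules, such as $L(1,p)$ and $L(1,p+2)$ in type $A_1$. So $\LambdaR$ and $\mathcal N$ cannot agree on both orderings of such a pair.

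\emph{Steps~3--4 cannot be repaired using $\mathfrak d(L(m_1),L(m_2))=0$.} The reason is that $\LambdaR$ is not bi-additive in the $\ell$-highest weights, even on strongly commuting real pairs. Take the Kirillov--Reshetikhin module $W=L(Y_{1,p}Y_{1,p+2})$. It is real, $\mathfrak d(W,W)=0$, and $\LambdaR(W,W)=0$. Yet the sum of $\LambdaR$ over the four pairs of fundamental constituents is $2\mathfrak d(L(1,p),L(1,p+2))>0$; the diagonal pairs contribute $0$. So no handling of normal sequences and head projections can produce the expansion you want. Your fallback, quoting ``$\LambdaR=\mathcal N$ for strongly commuting quantizable pairs'', is just the statement being proved.

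\emph{The missing idea} is to work with an invariant that really is bilinear in $\ell$-highest weights. This is $\Lambda^\infty(M,N)$, read off from the universal coefficient $a_{M,N}(z)$ by which $R^{\mathrm{univ}}_{M,N_z}$ acts on the tensor product of $\ell$-highest weight vectors.
\begin{enumerate}
\item This coefficient is multiplicative in the $\ell$-highest weights. Hence $\Lambda^\infty(L(m_1),L(m_2))$ reduces to fundamental pairs.
\item On fundamental pairs it is computed by the inverse quantum Cartan matrix $\widetilde C(z)$ and matches $\mathcal N(i,p;j,s)$. What $\widetilde C(z)$ computes for fundamental modules is $\Lambda^\infty$ (and the denominators), not $\LambdaR$.
\item The hypothesis $\mathfrak d=0$ then enters through the Kashiwara--Kim--Oh--Park comparison of $\LambdaR$ with $\Lambda^\infty$. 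The two differ by a term coming from the zero at $z=1$ of the denominator of the normalized $R$-matrix, and this term vanishes for strongly commuting pairs.
\end{enumerate}
The strong-commutation hypothesis is used through this comparison, not through normality of sequences.
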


\begin{remark}\label{rem:quantizable}
    If \(\fg\) is of type \(ADE\), every simple module in
    \(\mathscr C^{\ZZ}\) is quantizable by
    \cite{nakajima2004quiver}.
\end{remark}

\subsection{Monoidal categorification}

Let \(\mathscr C\) be a full subcategory of \(\mathscr C^{\ZZ}\) that
contains the monoidal unit \(\mathbf1\) and is closed under tensor products,
subquotients, and extensions. We denote its Grothendieck ring by
\(K(\mathscr C)\). We also write \(\mathcal{K}_t(\mathscr C)\) for the
\(\ZZ[t^{\pm1/2}]\)-subalgebra of \(\mathcal{K}_t(\mathscr C^{\ZZ})\) generated by
the classes \([M]_t\), where \(M\) runs over the simple objects of
\(\mathscr C\).

\subsubsection{Monoidal seeds and mutations}

\begin{definition}\label{def:monoidal-seed}
    A \emph{monoidal seed} in \(\mathscr C\) is a tuple
    \begin{equation}\label{eq:monoidal-seed}
        S=\bigl(
            \{M_i\}_{i\in J},\widetilde B;
            J,J_{\mathrm{ex}}
        \bigr),
    \end{equation}
    where \(J\) is a finite index set,
    \(J_{\mathrm{ex}}\subseteq J\) is the set of exchangeable vertices,
    \(\{M_i\}_{i\in J}\) is a pairwise strongly commuting family of real
    simple modules in \(\mathscr C\), and
    \(\widetilde B=(b_{ij})_{i\in J,\,j\in J_{\mathrm{ex}}}\) is an
    extended exchange matrix. We set
    \(J_{\mathrm{fr}}:=J\setminus J_{\mathrm{ex}}\), and call \(M_i\) the
    \(i\)-th \emph{cluster-variable module} of \(S\).
\end{definition}

The \(R\)-matrix invariants of the cluster-variable modules define the
skew-symmetric matrix
\begin{equation}\label{eq:Lambda-monoidal-seed}
    \Lambda^S=(\lambda_{ij}^S)_{i,j\in J},
    \qquad
    \lambda_{ij}^S:=\LambdaR(M_i,M_j).
\end{equation}
Indeed, the modules \(M_i\) and \(M_j\) strongly commute, and hence
\(\mathfrak d(M_i,M_j)=0\).

\begin{definition}\label{def:monoidal-mutation}
    Let \(k\in J_{\mathrm{ex}}\). We say that \(S\) \emph{admits a
    mutation in direction \(k\)} if there exists a simple module
    \(M_k'\in\mathscr C\) that strongly commutes with \(M_i\) for every
    \(i\neq k\), together with a short exact sequence
    \begin{equation}\label{eq:monoidal-mutation}
        0\longrightarrow
        \bigotimes_{b_{ik}>0}M_i^{\otimes b_{ik}}
        \longrightarrow M_k\otimes M_k'
        \longrightarrow
        \bigotimes_{b_{ik}<0}M_i^{\otimes(-b_{ik})}
        \longrightarrow0.
    \end{equation}
    Since the modules \(M_i\), \(i\neq k\), pairwise strongly commute,
    the two outer tensor products are independent of the chosen order up to
    isomorphism. If, in addition, \(\mathfrak d(M_k,M_k')=1\), then this
    mutation is called a \emph{\(\Lambda\)-mutation}.
\end{definition}

If a mutation module \(M_k'\) exists, it is unique up to isomorphism and is
real. Setting \(M_k^{(k)}:=M_k'\) and \(M_i^{(k)}:=M_i\) for \(i\neq k\),
one obtains another monoidal seed
\begin{equation}\label{eq:mutated-monoidal-seed}
    \mu_k(S)
    :=\bigl(
        \{M_i^{(k)}\}_{i\in J},
        \mu_k(\widetilde B);
        J,J_{\mathrm{ex}}
    \bigr)
\end{equation}
by \cite[Lemma~4.7]{kashiwara2024monoidal}.

\begin{definition}\label{def:Lambda-admissible}
    A monoidal seed \(S\) is called \emph{\(\Lambda\)-admissible} if it
    admits a \(\Lambda\)-mutation in every direction
    \(k\in J_{\mathrm{ex}}\). It is called \emph{completely
    \(\Lambda\)-admissible} if every monoidal seed obtained from \(S\) by
    successive \(\Lambda\)-mutations is \(\Lambda\)-admissible.
\end{definition}

If \(S\) is \(\Lambda\)-admissible, then
\begin{equation}\label{eq:monoidal-seed-compatibility}
    \sum_{\ell\in J}b_{\ell k}\lambda_{\ell i}^S
    =2\delta_{ik}
    \qquad (i\in J,\ k\in J_{\mathrm{ex}}).
\end{equation}
Thus \((\widetilde B,\Lambda^S)\) is a compatible pair with the
conventions fixed above.

The following restriction property will be used later.

\begin{lemma}[{\cite[Lemma~7.15]{kashiwara2024monoidal}}]\label{lem:subseed}
    Let
    \(S=(\{M_i\}_{i\in J},\widetilde B;J,J_{\mathrm{ex}})\) be a
    monoidal seed. Choose subsets \(J^*\subseteq J\) and
    \(J_{\mathrm{ex}}^*\subseteq J^*\cap J_{\mathrm{ex}}\), and set
    \begin{equation}\label{eq:restricted-monoidal-seed}
        S^*
        :=\bigl(
            \{M_i\}_{i\in J^*},
            \widetilde B_{J^*\times J_{\mathrm{ex}}^*};
            J^*,J_{\mathrm{ex}}^*
        \bigr).
    \end{equation}
    Assume that
    \begin{equation}\label{eq:restriction-zero-condition}
        b_{ij}=0
        \qquad
        (i\in J\setminus J^*,\ j\in J_{\mathrm{ex}}^*).
    \end{equation}
    Then the following assertions hold.

    \begin{enumerate}
        \item If \(s,j\in J_{\mathrm{ex}}^*\) and
        \(i\in J\setminus J^*\), then
        \((\mu_s(\widetilde B))_{ij}=0\). Hence mutations in directions
        belonging to \(J_{\mathrm{ex}}^*\) preserve
        \eqref{eq:restriction-zero-condition}.

        \item Suppose that \(S\) is \(\Lambda\)-admissible. Restriction
        commutes with mutation in the sense that
        \begin{equation}\label{eq:mutation-commutes-with-restriction}
            (\mu_sS)|_{(J^*,J_{\mathrm{ex}}^*)}
            =\begin{cases}
                \mu_s(S^*),
                & s\in J_{\mathrm{ex}}^*,\\
                S^*,
                & s\in J_{\mathrm{ex}}\setminus J^*.
            \end{cases}
        \end{equation}
    \end{enumerate}
    Consequently, if \(S\) is completely \(\Lambda\)-admissible, then
    \(S^*\) is also completely \(\Lambda\)-admissible.
\end{lemma}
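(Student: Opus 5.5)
The proof reduces to a single matrix computation: first mutations do not leave $J^*$, then restriction commutes with mutation by uniqueness of the mutation module, and complete admissibility follows by induction.

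\emph{Step (1).} Take $s,j\in J^*_{\mathrm{ex}}$ and $i\in J\setminus J^*$. If $j=s$, then $(\mu_s\widetilde B)_{is}=-b_{is}=0$. If $j\neq s$, then
\[
(\mu_s\widetilde B)_{ij}=b_{ij}+[b_{is}]_+[b_{sj}]_+-[-b_{is}]_+[-b_{sj}]_+ .
\]
Here $b_{ij}=0$ and $b_{is}=0$, because $j,s\in J^*_{\mathrm{ex}}$ and $i\notin J^*$, so the whole expression vanishes. Mutation in direction $s$ fixes the sets $J,J_{\mathrm{ex}}$, and hence also $J^*,J^*_{\mathrm{ex}}$. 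Therefore condition \eqref{eq:restriction-zero-condition} is preserved.

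\emph{Step (2), case $s\in J^*_{\mathrm{ex}}$.} Let $M_s'$ be the $\Lambda$-mutation module of $S$, given by \eqref{eq:monoidal-mutation}. In that sequence the two outer tensor products involve only indices $i$ with $b_{is}\neq0$, and all such $i$ lie in $J^*$ by \eqref{eq:restriction-zero-condition}. So the same short exact sequence is a mutation sequence for $S^*$ in direction $s$. The strong commutation of $M_s'$ with $M_i$ for $i\in J^*\setminus\{s\}$ is inherited from $S$, and $\mathfrak d(M_s,M_s')=1$ is unchanged. The matrix of $\mu_s(S^*)$ is $\mu_s(\widetilde B_{J^*\times J^*_{\mathrm{ex}}})$. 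Its entries are computed by \eqref{eq:matrix-mutation} and only use the entries $b_{is}$ and $b_{sj}$ with $i\in J^*$ and $j\in J^*_{\mathrm{ex}}$. It therefore equals $(\mu_s\widetilde B)_{J^*\times J^*_{\mathrm{ex}}}$. The mutation module of $S^*$ is unique up to isomorphism, so $(\mu_sS)|_{(J^*,J^*_{\mathrm{ex}})}=\mu_s(S^*)$.

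\emph{Step (2), case $s\in J_{\mathrm{ex}}\setminus J^*$.} Mutation in direction $s$ changes only $M_s$, which is not a module of $S^*$. It remains to show the restricted matrix is unchanged. Take $i\in J^*$ and $j\in J^*_{\mathrm{ex}}$. Since $s\notin J^*$ and $j\in J^*_{\mathrm{ex}}$, condition \eqref{eq:restriction-zero-condition} gives $b_{sj}=0$. The correction term in \eqref{eq:matrix-mutation} contains the factor $b_{sj}$, so it vanishes and $(\mu_s\widetilde B)_{ij}=b_{ij}$. Hence the restriction of $\mu_sS$ is $S^*$.

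\emph{Consequence.} Suppose $S$ is completely $\Lambda$-admissible. Any sequence of $\Lambda$-mutations of $S^*$ in directions from $J^*_{\mathrm{ex}}$ lifts to the same sequence of mutations of $S$. By induction on the length of the sequence, using (1) and (2), each step of the lift is a $\Lambda$-mutation, because $S$ is completely $\Lambda$-admissible. The same induction shows that the resulting seed $S'$ still satisfies \eqref{eq:restriction-zero-condition} and restricts to the corresponding mutation of $S^*$. Since $S'$ is $\Lambda$-admissible, the Step (2) argument applied to $S'$ shows that its restriction admits $\Lambda$-mutations in every direction of $J^*_{\mathrm{ex}}$.

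The main point needing care is the identification in the case $s\in J^*_{\mathrm{ex}}$. The restricted mutation sequence must be literally the same short exact sequence, which holds because the zero condition places every index in the outer tensor products inside $J^*$. Uniqueness of the mutation module then forces the two seeds to agree.
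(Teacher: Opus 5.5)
Your proposal is correct and follows the same route as the paper's proof. You do the entrywise matrix-mutation check for (1), use the zero condition on column $s$ to confine the exchange sequence to indices in $J^*$ (so it realizes the mutation of $S^*$), use $b_{sj}=0$ for $s\notin J^*$ in the second case, and iterate to get complete $\Lambda$-admissibility; you simply spell out more detail than the paper, notably the induction showing that the lifted seeds remain $\Lambda$-admissible and keep the zero condition.
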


\begin{proof}
For \(s\in J_{\mathrm{ex}}^*\), condition
\eqref{eq:restriction-zero-condition} and matrix mutation imply
\((\mu_s\widetilde B)_{ij}=0\) for
\(i\notin J^*\), \(j\in J_{\mathrm{ex}}^*\). The exchange sequence at
\(s\) therefore involves only modules indexed by \(J^*\), so it also
realizes the mutation of \(S^*\). If
\(s\in J_{\mathrm{ex}}\setminus J^*\), the restricted modules and
matrix are unchanged because \(b_{sj}=0\) for
\(j\in J_{\mathrm{ex}}^*\). This proves
\eqref{eq:mutation-commutes-with-restriction}; iteration gives the last
assertion.
\end{proof}

\subsubsection{Classical and quantum monoidal categorifications}

\begin{definition}\label{def:classical-monoidal-categorification}
The category \(\mathscr C\) is a \emph{monoidal categorification} of
\(\overline{\mathcal A}(\mathbf t)\) if it contains a completely
\(\Lambda\)-admissible monoidal seed
\(S=(\{M_i\},\widetilde B;J,J_{\mathrm{ex}})\) and there is an
isomorphism
\[
    K(\mathscr C)\xrightarrow{\sim}
    \overline{\mathcal A}(\mathbf t)
\]
identifying
\(
    [S]=(\{[M_i]\}_{i\in J},\widetilde B,J_{\mathrm{ex}})
\)
with \(\mathbf t\).
\end{definition}

A monoidal seed is called \emph{quantizable} if all of its
cluster-variable modules are quantizable. For such a seed, one has
\begin{equation}\label{eq:quantum-classes-commutation}
    [M_i]_t[M_j]_t
    =t^{\LambdaR(M_i,M_j)}[M_j]_t[M_i]_t
    \qquad (i,j\in J).
\end{equation}

\begin{definition}\label{def:quantum-monoidal-categorification}
The category \(\mathscr C\) is a \emph{monoidal categorification} of
\(\overline{\mathcal A}_t(\mathbf t)\) if it contains a completely
\(\Lambda\)-admissible monoidal seed \(S\), every seed in its mutation
class is quantizable, and there is an isomorphism
\[
    \mathcal K_t(\mathscr C)\xrightarrow{\sim}
    \overline{\mathcal A}_t(\mathbf t)
\]
identifying
\begin{equation}\label{eq:quantum-seed-from-monoidal-seed}
    [S]_t
    :=\bigl(\{[M_i]_t\}_{i\in J},\widetilde B,
             \Lambda^S,J_{\mathrm{ex}}\bigr)
\end{equation}
with \(\mathbf t\).
\end{definition}

Under this identification, every normalized quantum cluster monomial is the
\((q,t)\)-character of a real simple module in \(\mathscr C\).

An isomorphism between algebras with distinguished bar-invariant bases is
called \emph{based} if it identifies those bases. Here the distinguished
bases are the normalized simple classes in a quantum Grothendieck ring and
the normalized global basis in a bosonic extension algebra.

\begin{definition}\label{def:based-categorification}
Let \(\mathbf L\) be a common triangular basis of
\(\mathcal U_t(\bt)\). A full abelian monoidal subcategory
\(\mathscr C\subseteq\mathscr C^{\ZZ}\) is a
\emph{based categorification} of \(\mathcal U_t(\bt)\) if:
\begin{enumerate}
    \item every frozen variable is represented by a simple module
    \(M_j\) strongly commuting with every simple object of \(\mathscr C\);
    \item \(\mathscr C\) contains a completely \(\Lambda\)-admissible,
    quantizable monoidal seed representing \(\bt\), with frozen modules
    \(M_j\);
    \item there is an isomorphism
    \begin{equation}\label{eq:based-categorification-isomorphism}
        \Phi_{\mathscr C}\colon
        \mathcal{K}_t(\mathscr C)_{\mathrm{loc}}
        \xrightarrow{\ \sim\ }
        \mathcal U_t(\bt),
    \end{equation}
    localized at the frozen classes, which maps the normalized localized
    simple classes bijectively onto \(\mathbf L\).
\end{enumerate}

A localized simple class is the bar-normalized class of
\(
    [S]_t\prod_j[M_j]_t^{-d_j}
\), where \(S\) is simple and \(d_j\in\NN\). Such classes are
well defined by the strong commutation of the frozen modules.
\end{definition}

\subsubsection{Monoidal seeds associated with braid words}

Let \(\beta=(i_1,\ldots,i_r)\) be a positive braid word and let \(\DD\)
be a strong duality datum. For \(k\in[r]\), abbreviate
\begin{equation}\label{eq:initial-affine-determinantial-modules}
    M_k^{\DD,\beta}
    :=M^{\DD,\beta}\{1,k]
    =M^{\DD,\beta}[1(i_k)^+,k],
\end{equation}
where \(1(i_k)^+\) is the first occurrence of the color \(i_k\) in
\([1,k]\). Define
\begin{equation}\label{eq:word-monoidal-seed}
    S^{\DD}(\beta)
    :=\bigl(
        \{M_k^{\DD,\beta}\}_{k\in[r]},
        \widetilde B_\beta;
        [r],[r]_{\mathrm{ex}}
    \bigr).
\end{equation}

\begin{theorem}[{\cite[Theorem~9.4]{kashiwara2025monoidal}}]
\label{thm:Lambda-admissible-seed}
Let \(\DD\) be a complete duality datum and let \(\beta\) be a
positive braid word. Then \(S^{\DD}(\beta)\) is a completely
\(\Lambda\)-admissible monoidal seed in
\(\mathscr C_{\DD}(\beta)\). Moreover, there is a ring isomorphism
\begin{equation}\label{eq:word-seed-categorification}
   \Psi_\DD:\, K\bigl(\mathscr C_{\DD}(\beta)\bigr)
    \simeq
    \overline{\mathcal A}\bigl(\mathbf s(\beta)\bigr)
\end{equation}
that identifies \([S^{\DD}(\beta)]\) with the initial seed
\(\mathbf s(\beta)\). Consequently, \(\mathscr C_{\DD}(\beta)\)
provides a monoidal categorification of
\(\overline{\mathcal A}(\mathbf s(\beta))\).
\end{theorem}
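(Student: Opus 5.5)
This theorem is quoted from \cite[Theorem~9.4]{kashiwara2025monoidal}, so the plan reconstructs the argument along the lines of that reference. There are three parts: show that \(S^{\DD}(\beta)\) is a monoidal seed; show it is \(\Lambda\)-admissible, with exchange sequences given by \(T\)-system type relations among affine determinantial modules; and identify Grothendieck rings by a dimension or triangularity count together with the Laurent phenomenon.

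\emph{Monoidal seed.} Each \(M_k^{\DD,\beta}=M^{\DD,\beta}\{1,k]\) is the head of an ordered tensor product of root modules \(C_s^{\DD,\beta}\) of one color. I would first show that consecutive same-color cuspidals \(C_{s}^{\DD,\beta}, C_{s^+}^{\DD,\beta}\) satisfy \(\mathfrak d=1\) (they are \(\mathcal D\)-related after applying the braid action \(\mathscr S\)), and that the sequence is unmixed. Then the sequence is normal by \cite[Lemma~5.3]{kashiwara2023pbw}, so the head is simple and real. Pairwise strong commutation \(\mathfrak d(M_k,M_\ell)=0\) for initial intervals \(\{1,k]\), \(\{1,\ell]\) follows from a nesting argument: the intervals share the left endpoint. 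I would compute \(\LambdaR\) through Lemma~\ref{lem:normal-sequence} and reduce to the \(\Lambda\) values between cuspidals. These values are fixed by the duality datum, since \(\mathfrak d(L_i,\mathcal D^kL_j)=-\delta_{k0}c_{ij}\). One then checks that \(\LambdaR(M_k,M_\ell)=\widetilde\lambda_{k\ell}\), the weight formula \eqref{eq:Lambda-beta-word}.

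\emph{\(\Lambda\)-admissibility.} For \(k\in[r]_{\mathrm{ex}}\), the mutated module should be \(M'_k=M^{\DD,\beta}[2\text{-shifted}]\), namely the determinantial module on the box \([(1(i_k)^+)^+,k^+]\). The exchange sequence should come from the quantum \(T\)-system short exact sequence
\[
0\to\textstyle\bigotimes_{c_{ji}=-1}M[\ldots]\to M[a,b]\otimes M[a^+,b^+]\to M[a^+,b]\otimes M[a,b^+]\to0 .
\]
This sequence holds for any complete duality datum by reduction, via \(\mathscr S\), to the quantum unipotent coordinate ring case, where it is the classical determinantial identity. Its terms must match the columns of \(\widetilde B_\beta\), and \(\mathfrak d(M_k,M'_k)=1\) follows from Lemma~\ref{lem:normal-sequence}. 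Complete admissibility, meaning admissibility after arbitrary sequences of mutations, is not local. For it I would invoke the general KKOP criterion: a \(\Lambda\)-admissible seed in a category whose Grothendieck ring is generated appropriately is completely admissible, provided the one-step mutations exist and the compatibility relation \eqref{eq:monoidal-seed-compatibility} holds.

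\emph{Grothendieck ring.} Map \([M_k]\mapsto X_k\). This map is injective into the Laurent ring because the Lusztig parameters of Proposition~\ref{pro:lusztig-parameter-modules} give a unitriangular change of basis between products of \([M_k]\) and simple classes. Hence the initial seed is algebraically independent. Cluster monomials are simple classes by admissibility, so \(\overline{\mathcal A}(\mathbf s(\beta))\subseteq K(\mathscr C_\DD(\beta))\). For the reverse inclusion, every cuspidal \(C_k^{\DD,\beta}\) is a cluster variable, reached by a standard mutation sequence along the word. Since the cuspidals generate \(K(\mathscr C_\DD(\beta))\), equality follows.

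The main obstacle is complete \(\Lambda\)-admissibility. The plan is to prove it by induction on \(r\), appending a letter and comparing the seed of \(\beta\) with the seed of \(\beta i\) through Lemma~\ref{lem:subseed}.
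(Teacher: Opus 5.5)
The paper does not prove this statement; it is quoted from \cite[Theorem~9.4]{kashiwara2025monoidal}. Your Grothendieck-ring paragraph is sound once complete admissibility is known. The admissibility step, however, carries all the weight, and it fails as written. $M^{\DD,\beta}[(1(i_k)^+)^+,k^+]$ is not the mutation of $S^{\DD}(\beta)$ at $k$, and the box $T$-system is not the exchange sequence of the initial seed.

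Put $a=1(i_k)^+$. The $T$-system for the box $[a,k^+]$ contains the factor $M^{\DD,\beta}[a^+,k]$, which is none of the $M_\ell^{\DD,\beta}$ unless $k=a$. Its other-colour factors $M[a(j)^+,(k^+)(j)^-]$ are in general not initial boxes either. So its terms cannot match column $k$ of $\widetilde B_\beta$.

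Concretely, take $\beta=(i,i,i)$ and $k=2$. Then $b_{32}=1$ and $b_{12}=-1$, so the exchange relation must read $[M_2][M_2']=[M_3]+[M_1]$. By contrast, \eqref{eq:determinantial-T-system} for the box $[1,3]$ gives $[M[1,2]]\,[M[2,3]]=[M[1,3]]\,[C_2]+1$. The actual mutation is $C_3$, since $[M[1,2]]\,[C_3]=[M[1,3]]+[C_1]$.

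In general, \eqref{eq:B-beta} shows that the positive exchange monomial contains $M_{k^+}$ and has the strictly larger leading PBW parameter. Comparing leading terms then forces
\[
 \mathbf a^{\beta}(M_k')=E_{k^+}+\sum_{s}\{1,s]_\beta,
 \qquad
 M_k'\simeq C_{k^+}^{\DD,\beta}\nabla\Bigl(\bigotimes_{s}M_s^{\DD,\beta}\Bigr),
\]
where $s$ runs over $s<k<s^+<k^+$ with $c_{i_si_k}=-1$. This has the same shape as $B_v=C_{q_v}\nabla M_v$ in \eqref{eq:Mv-Bv-definition}, and it is usually not an affine determinantial module for $\beta$: for $\beta=(1,2,1,2,1)$ in type $A_2$ and $k=2$ it is $C_4\nabla C_1$. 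For this module you must prove the exchange sequence and $\mathfrak d(M_k,M_k')=1$ directly, by $R$-matrix and $\LambdaR$ computations. Alternatively, first reach the vertex $k$ through box moves between chains of $i$-boxes; that is what box $T$-systems actually realize.

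Your treatment of complete admissibility is also not coherent. Suppose you invoke KKOP's propagation result, that a $\Lambda$-mutation of a $\Lambda$-admissible seed is again $\Lambda$-admissible. Then complete admissibility follows at once from the one-step statement, and no induction is needed. But the version you cite must not presuppose $K(\mathscr C_{\DD}(\beta))\simeq\overline{\mathcal A}(\mathbf s(\beta))$. Your ring-isomorphism argument itself uses complete admissibility, to know that all cluster variables are simple classes and that every $C_k$ is reached, so that would be circular.

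The fallback you describe, induction on $r$ by appending letters and using Lemma~\ref{lem:subseed}, runs in the wrong direction. $S^{\DD}(\beta)$ is the restriction of $S^{\DD}(\beta i)$ to $[r]$ with exchangeable set $[r]_{\mathrm{ex}}$, and the zero condition $b_{r+1,k}=0$ holds for $k\in[r]_{\mathrm{ex}}$. The lemma therefore transports complete admissibility from $\beta i$ down to $\beta$, never from $\beta$ up to $\beta i$. Such an induction needs a base case at the long end, which your plan does not supply.

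Two smaller inaccuracies. First, $\mathfrak d(C_s,C_{s^+})=1$ holds, but not because the two modules are $\mathcal D$-related. For $\beta=(i,j,i)$ with $c_{ij}=-1$, Lemma~\ref{lem:cancel} gives $C_3\simeq L_j^{\DD}$, while $C_1=L_i^{\DD}$. Second, normality of a sequence gives a simple head, but not by itself its realness.
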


When the duality datum arises from a \(Q\)-datum, the preceding
categorification admits a quantum refinement.

\begin{theorem}[{\cite[Theorem~9.12]{kashiwara2025monoidal}}]
\label{thm:word-seed-monoidal-categorification}
Assume that the ambient quantum affine algebra is of untwisted affine
type and that \(\DD=\DD_{\mathcal Q}\) is the complete strong duality
datum associated with a \(Q\)-datum \(\mathcal Q\). Then
\(\mathscr C_{\DD}(\beta)\) provides a monoidal categorification of
the quantum cluster algebra
\(\overline{\mathcal A}_t(\mathbf s_t(\beta))\). More precisely,
\begin{equation}\label{eq:quantum-word-seed-categorification}
    [S^{\DD}(\beta)]_t=\mathbf s_t(\beta),
    \qquad
    \mathcal{K}_t\bigl(\mathscr C_{\DD}(\beta)\bigr)
    \simeq
    \overline{\mathcal A}_t\bigl(\mathbf s_t(\beta)\bigr).
\end{equation}
\end{theorem}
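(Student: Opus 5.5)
The plan is to upgrade the classical statement, Theorem~\ref{thm:Lambda-admissible-seed}, to the quantum level in three steps. First, identify the quantum seed \([S^{\DD}(\beta)]_t\) with \(\mathbf s_t(\beta)\). Second, show that every \(\Lambda\)-mutation of monoidal seeds is sent to quantum mutation. Third, compare generators to obtain the algebra isomorphism. Complete \(\Lambda\)-admissibility of \(S^{\DD}(\beta)\), together with the exchange sequences \eqref{eq:monoidal-mutation}, is already provided by Theorem~\ref{thm:Lambda-admissible-seed}, so only the quantum data need to be checked.

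\emph{Step 1: quantizability and the commutation matrix.} Under the type \(ADE\) hypothesis, Remark~\ref{rem:quantizable} makes every simple module of \(\mathscr C^{\ZZ}\) quantizable, in particular every cluster-variable module in the mutation class. The cluster-variable modules pairwise strongly commute, so Lemma~\ref{lem:Nm1m2} gives \([M_k]_t[M_\ell]_t=t^{\LambdaR(M_k,M_\ell)}[M_\ell]_t[M_k]_t\), and it remains to prove
\[
\LambdaR\bigl(M_s^{\DD,\beta},M_t^{\DD,\beta}\bigr)=\widetilde\lambda_{st}\qquad(s<t),
\]
with \(\widetilde\lambda_{st}\) as in \eqref{eq:Lambda-beta-word}. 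To compute the left side I would write each \(M_k^{\DD,\beta}\) as the head of the decreasing tensor product of the \(C_s^{\DD,\beta}\) of color \(i_k\) in \([1,k]\). This sequence is unmixed, hence normal, so Lemma~\ref{lem:normal-sequence} expands \(\LambdaR\) additively into a sum of pairwise invariants \(\LambdaR(C_a^{\DD,\beta},C_b^{\DD,\beta})\). For a \(Q\)-datum these pairwise values are given by the standard formula \(\LambdaR(C_a,C_b)=-(\gamma_a,\gamma_b)\) for \(a<b\) (up to the sign convention), where \(\gamma_a=w_{\leq a-1}\alpha_{i_a}\) is the associated root. The double sum then telescopes, via \(\sum_{s\le k,\,i_s=i}w_{\le s-1}\alpha_{i}=\varpi_{i}-w_{\le k}\varpi_{i}\), to the pairing \(-(\varpi_{i_s}-w_{\le s}\varpi_{i_s},\varpi_{i_t}+w_{\le t}\varpi_{i_t})\).

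\emph{Step 2: mutations.} Let \(S\) be a monoidal seed in the mutation class and \(k\in J_{\mathrm{ex}}\). Applying \([\ \cdot\ ]_t\) to the exchange sequence \eqref{eq:monoidal-mutation}, with \(\mathfrak d(M_k,M_k')=1\), should give
\[
[M_k]_t[M_k']_t=t^{a}[\textstyle\bigotimes_{b_{ik}>0}M_i^{\otimes b_{ik}}]_t+t^{b}[\textstyle\bigotimes_{b_{ik}<0}M_i^{\otimes -b_{ik}}]_t,
\]
and bar-invariance of the \((q,t)\)-characters, together with compatibility \eqref{eq:monoidal-seed-compatibility}, forces \(a,b\) to be exactly the exponents in \eqref{eq:quantum-cluster-mutation}. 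Hence \(t^{-\LambdaR(M_k,M_k')/2}[M_k]_t^{-1}\cdots\) coincides with the quantum mutation. This is the step I expect to be the \textbf{main obstacle}. The difficulty is that \(\mathcal K_t\) is not the Grothendieck ring of the category, so the three terms need not be related at the \(t\)-level a priori. I would first try to justify the relation through the positivity and \(t\)-triangularity of \([M]_t\) with respect to dominant monomials, arguing that the two outer terms are simple modules with distinct, known highest \(\ell\)-weights. Alternatively, I would invoke the Kashiwara–Kim–Oh–Park argument that \(\Lambda\)-mutations in a quantizable seed give quantum exchange relations.

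\emph{Step 3: the isomorphism.} By induction along mutation sequences, Steps 1 and 2 place all quantum cluster variables of \(\mathbf s_t(\beta)\) inside \(\mathcal K_t(\mathscr C_{\DD}(\beta))\); this gives \(\overline{\mathcal A}_t(\mathbf s_t(\beta))\subseteq\mathcal K_t(\mathscr C_{\DD}(\beta))\). For the reverse inclusion I would use generators. By Proposition~\ref{pro:lusztig-parameter-modules}, \(\mathcal K_t(\mathscr C_{\DD}(\beta))\) is generated by the \([C_k^{\DD,\beta}]_t\), since the simple classes are unitriangular in ordered monomials in these. Each \(C_k^{\DD,\beta}=M^{\DD}[k,k]\) is a determinantial module reached from \(S^{\DD}(\beta)\) by the standard sequence of \(T\)-system mutations \(M[a,b]\to M[a^+,b^+]\), so each \([C_k]_t\) is a quantum cluster variable. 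Equality follows, and the identification of \([S^{\DD}(\beta)]_t\) with \(\mathbf s_t(\beta)\) is Step 1.
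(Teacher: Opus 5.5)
The paper does not prove this statement; it imports it from \cite[Theorem~9.12]{kashiwara2025monoidal}, so your proposal can only be judged on its own merits. Steps~2 and~3 are sound in outline. The step you flag as the main obstacle closes in type \(ADE\) exactly as in the proof of Theorem~\ref{thm:generalized-T-system}:
\begin{itemize}
\item by \cite{varagnolo2003perverse}, the expansion of \([M_k]_t[M_k']_t\) in the basis \(\{L_t(m)\}\) has coefficients in \(\ZZ_{\geq0}[t^{\pm1/2}]\);
\item specializing at \(t=1\) and comparing with the exchange sequence (using quantizability and injectivity of \(\chi_q\)) leaves exactly two nonzero coefficients, each equal to \(1\) at \(t=1\), hence monomials;
\item bar invariance of \([M_k']_t\) then fixes both exponents.
\end{itemize}
For Step~3, note that \(t\)-level unitriangularity of simple classes against ordered cuspidal monomials comes from the PBW theory behind Theorem~\ref{thm:isoKt}. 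It does not follow from Proposition~\ref{pro:lusztig-parameter-modules}, which is purely categorical.

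The genuine gap is Step~1. First, \eqref{eq:LambdaCa} gives \(\LambdaR(C_a,C_b)=-(\alpha_a^\beta,\alpha_b^\beta)\) only for \(a>b\). For \(a<b\) one has \(\LambdaR(C_a,C_b)=2\mathfrak d(C_a,C_b)+(\alpha_a^\beta,\alpha_b^\beta)\), which is generally not \(\pm(\alpha_a^\beta,\alpha_b^\beta)\) under any sign convention. More seriously, expanding \(\LambdaR(M_s,M_t)\), \(s<t\), additively over the cuspidal factors of both modules requires normality of interleaved, non-decreasing sequences, and this fails. Take \(\beta=(i,i)\): then \(M_1=C_1\) and \(M_2=\operatorname{hd}(C_2\otimes C_1)\) with \(C_2=\mathcal DC_1\). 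Since \(w_{\leq2}=s_is_i=e\), we get \(\widetilde\lambda_{12}=-(\alpha_i,2\varpi_i)=-2\). But \(\LambdaR(C_2,C_1)=-(s_i\alpha_i,\alpha_i)=2\) and \(\mathfrak d(C_1,\mathcal DC_1)=1\) force \(\LambdaR(C_1,C_2)=0\), whereas \((\alpha_1^\beta,\alpha_2^\beta)=-2\). Your expansion therefore gives \(\LambdaR(C_1,C_2)+\LambdaR(C_1,C_1)=0\neq-2\), so \((C_1,C_2,C_1)\) is not normal. Only the decreasing expansion \(-\LambdaR(M_2,M_1)=-\bigl(\LambdaR(C_2,C_1)+\LambdaR(C_1,C_1)\bigr)=-2\) gives the right value. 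Relatedly, your telescoping identity turns both sums into \(\varpi-w\varpi\). Any pairwise rule that treats all pairs alike therefore yields an expression symmetric in \(s,t\), and cannot produce the skew-symmetric \(\widetilde\lambda_{st}\) with its factor \(\varpi_{i_t}+w_{\leq t}\varpi_{i_t}\).

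A correct route is to compute \(\LambdaR(M_t,M_s)\) for \(s<t\) by induction on the larger index \(t\), then use skew-symmetry. Write \(s_1<\cdots<s_p=s\) for the positions of colour \(i_s\) in \([1,s]\).
\begin{itemize}
\item Use \(M_t\simeq C_t\nabla M_{t^-}\) and normality of \((C_t,M_{t^-},M_s)\), by the same argument as in the proof of Proposition~\ref{pro:LambdaBv}.
\item Use the strongly unmixed sequence \((C_t,C_{s_p},\ldots,C_{s_1})\).
\end{itemize}
Together these give
\[
\LambdaR(M_t,M_s)=-\bigl(\alpha_t^\beta,\varpi_{i_s}-w_{\leq s}\varpi_{i_s}\bigr)+\LambdaR(M_{t^-},M_s),
\]
where the last term is rewritten as \(-\LambdaR(M_s,M_{t^-})\) when \(t^-<s\), vanishes when \(t^-=s\), and is absent when \(t\) is the first occurrence of \(i_t\). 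One checks that \(-\widetilde\lambda_{st}\) satisfies the same recursion in all four cases. The check uses \(w_{\leq t}\varpi_{i_t}=w_{\leq t^-}\varpi_{i_t}-\alpha_t^\beta\), the identity \(w_{\leq s}\varpi_{i_t}=w_{\leq t^-}\varpi_{i_t}\) for \(t^-<s<t\), and \(W\)-invariance of the form.
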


We denote the inverse of the quantum isomorphism in
\eqref{eq:quantum-word-seed-categorification} by
\begin{equation}\label{eq:quantum-word-seed-realization}
    \Psi_{\DD}^{q,\beta}\colon
    \overline{\mathcal A}_t\bigl(\mathbf s_t(\beta)\bigr)
    \xrightarrow{\sim}
    \mathcal K_t\bigl(\mathscr C_{\DD}(\beta)\bigr).
\end{equation}
It sends each initial variable to the normalized quantum class of
the corresponding affine determinantial module and extends after
localizing the frozen variables.

We next introduce the monoidal subcategory associated with the
Demazure product. Let \(\beta\) be a braid word, and set
\begin{equation*}
    w:=\delta(\beta),\qquad
    m:=\ell(w),\qquad
    \ell:=\ell(w_0).
\end{equation*}
Choose a reduced expression \((j_1,\ldots,j_m)\) of \(w\) and a
reduced expression \((j_{m+1},\ldots,j_\ell)\) of \(w^{-1}w_0\).
Since
\(
    \ell(w)+\ell(w^{-1}w_0)=\ell(w_0),
\)
the concatenation
\((j_1,\ldots,j_\ell)\) is a reduced expression of \(w_0\).

Recall that \(j^*\in I\) is determined by
\(w_0(\alpha_j)=-\alpha_{j^*}\). Extend
\((j_1,\ldots,j_\ell)\) to the right-infinite sequence
\begin{equation}\label{eq:periodic-w0-sequence}
    \overline{\mathbf i}
    :=
    (j_1,\ldots,j_\ell,
     j_1^*,\ldots,j_\ell^*,
     j_1,\ldots,j_\ell,\ldots).
\end{equation}
Define \(\mathscr C_{\DD}^{w}\) to be the smallest full monoidal
subcategory of \(\mathscr C^{\ZZ}\) that contains
\(C_k^{\DD,\overline{\mathbf i}}\) for every \(k>m\) and is closed
under extensions, subquotients, and tensor products.
For a positive braid word \(\beta\), set
\begin{equation}\label{eq:B-D-beta-category}
    \mathscr B_{\DD}(\beta)
    :=
    \mathscr C_{\DD}(\beta)
    \cap
    \mathscr C_{\DD}^{\delta(\beta)}.
\end{equation}
This is again a full monoidal subcategory closed under extensions and
subquotients.

\begin{theorem}\label{thm:quantum-braid-seed-embedding}
Assume that the ambient quantum affine algebra is of untwisted affine
type and that \(\DD=\DD_{\mathcal Q}\) is the complete strong duality
datum associated with a \(Q\)-datum \(\mathcal Q\). Under the standing quantizability assumption, there is
a natural injective \(\ZZ[t^{\pm1/2}]\)-algebra homomorphism
\begin{equation}\label{eq:quantum-braid-seed-embedding}
    \overline{\mathcal A}_t
    \bigl(\mathbf s_t(\delta(\beta),\beta)\bigr)
    \hookrightarrow
    \mathcal{K}_t\bigl(\mathscr B_{\DD}(\beta)\bigr).
\end{equation}
Under this embedding, every quantum cluster variable is sent to the
quantum class \([M]_t\) of its cluster-variable module \(M\).
\end{theorem}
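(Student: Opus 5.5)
We will obtain the embedding by restricting the quantum word-seed categorification of Theorem~\ref{thm:word-seed-monoidal-categorification} along the mutation--freezing--deletion procedure of Definition~\ref{def:svbeta}. The argument has five steps: transport along mutations, handle freezing, handle deletion, check the image lies in \(\mathscr B_{\DD}(\beta)\), and assemble the map.

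\emph{Mutations.} Start with the isomorphism
\(\Psi_{\DD}^{q,\beta}\colon\overline{\mathcal A}_t(\mathbf s_t(\beta))\xrightarrow{\sim}\mathcal K_t(\mathscr C_{\DD}(\beta))\) of \eqref{eq:quantum-word-seed-realization}. By Theorem~\ref{thm:Lambda-admissible-seed}, the seed \(S^{\DD}(\beta)\) is completely \(\Lambda\)-admissible. We may therefore apply the mutation sequence \(M_m=\widetilde\mu_m\circ\cdots\circ\widetilde\mu_1\) of \eqref{eq:M-k} to \(S^{\DD}(\beta)\) and obtain a monoidal seed \(M_m(S^{\DD}(\beta))\). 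Its quantum class \([M_m(S^{\DD}(\beta))]_t\) coincides with \(M_m(\mathbf s_t(\beta))\), because by Remark~\ref{rem:quantizable} every module is quantizable in type \(ADE\). Call its modules \(Y_v\), \(v\in K_\beta\); under \(\Psi_{\DD}^{q,\beta}\), each \(\widetilde y_v\) is sent to \([Y_v]_t\).

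\emph{Freezing.} Freezing vertices only shrinks the mutation class. Hence every quantum cluster variable of the frozen seed is also a quantum cluster variable of \(\mathbf s_t(\beta)\), and
\(\overline{\mathcal A}_t\) of the frozen seed is a subalgebra of \(\overline{\mathcal A}_t(\mathbf s_t(\beta))\).

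\emph{Deletion.} Removing \(D_\beta\) is justified by the vanishing \(\bigl(\widetilde B_\beta^{(m)}\bigr)_{D_\beta\times V_{\beta,\mathrm{ex}}}=0\) recorded in Definition~\ref{def:svbeta}. We apply Lemma~\ref{lem:subseed} with \(J^*=V_\beta\) and \(J^*_{\mathrm{ex}}=V_{\beta,\mathrm{ex}}\). It follows that the restricted monoidal seed \(\{Y_v\}_{v\in V_\beta}\) is completely \(\Lambda\)-admissible, and that its mutations agree with the mutations of the big seed in directions in \(V_{\beta,\mathrm{ex}}\). Consequently every cluster variable of \(\mathbf s_t(\delta(\beta),\beta)\) equals a cluster variable of \(M_m(\mathbf s_t(\beta))\) obtained by mutating only in \(V_{\beta,\mathrm{ex}}\). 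The quantum torus structures agree, since \(\Lambda_{\delta(\beta),\beta}\) is the restriction of \(\widetilde\Lambda^{(m)}_\beta\). This produces an injective algebra map \(\overline{\mathcal A}_t(\mathbf s_t(\delta(\beta),\beta))\hookrightarrow\overline{\mathcal A}_t(\mathbf s_t(\beta))\), which we compose with \(\Psi_{\DD}^{q,\beta}\). Injectivity holds because both algebras sit in a common skew field and the map is induced by the identification of initial quantum tori.

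\emph{Image in \(\mathscr B_{\DD}(\beta)\).} It remains to show that every cluster-variable module reached from the restricted seed lies in \(\mathscr C_{\DD}^{\delta(\beta)}\); this is the main obstacle. Since \(\mathscr B_{\DD}(\beta)\) is closed under tensor products and subquotients, and \(M'_k\) is a quotient of \(M_k\otimes M_k'\) determined by the exchange sequence, it does not suffice to control generators naively. The plan is in two parts.
\begin{enumerate}
\item Compute the Lusztig parameters (Proposition~\ref{pro:lusztig-parameter-modules}) of the initial modules \(Y_v\), \(v\in V_\beta\). The freezing of \(d_k\) exactly removes the root modules attached to the leftmost reduced subexpression \(p_1<\cdots<p_m\). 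Using the braid relations \eqref{eq:duality-datum-braid-relations}, after moving these letters to the front, each \(Y_v\) becomes a head of root modules \(C_k^{\DD,\overline{\mathbf i}}\) with \(k>m\).
\item Reduce every later mutation to the case already treated. We use the classical result of \cite[Theorem~6.8]{bi2026towards}, that cluster monomials of \(\overrightarrow{\mathbf s}(\beta)\) are simple objects of \(\mathscr B_{\DD}(\beta)\), identifying the restricted seed with that seed. Then every module in the mutation class lies in \(\mathscr B_{\DD}(\beta)\).
\end{enumerate}
If the identification in the second part is delicate, the fallback is an induction on mutations. One shows that \(\mathscr C_{\DD}^{\delta(\beta)}\) is closed under taking the simple head \(M_k'\simeq\mathcal D M_k\nabla(\cdots)\)-type expressions. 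This holds provided the frozen deleted modules \(Y_{d}\), \(d\in D_\beta\), never enter the exchange sequences, and by the vanishing of \(B\) on \(D_\beta\times V_{\beta,\mathrm{ex}}\) they do not.

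\emph{Conclusion.} Each quantum cluster variable is sent to \([M]_t\) for its cluster-variable module \(M\), by Definition~\ref{def:quantum-monoidal-categorification} applied along the mutation path. Combining the previous steps yields the injective \(\ZZ[t^{\pm1/2}]\)-algebra homomorphism \eqref{eq:quantum-braid-seed-embedding} into \(\mathcal K_t(\mathscr B_{\DD}(\beta))\).
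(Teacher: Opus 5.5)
Your proposal follows essentially the same route as the paper: you mutate $S^{\DD}(\beta)$ by $M_m$, and you use the zero block $\bigl(\widetilde B_\beta^{(m)}\bigr)_{D_\beta\times V_{\beta,\mathrm{ex}}}=0$ together with Lemma~\ref{lem:subseed} to restrict the faithful word-seed realization to an injective map on $\overline{\mathcal A}_t(\mathbf s_t(\delta(\beta),\beta))$; you then cite \cite[Theorem~6.8]{bi2026towards} to place every cluster-variable module of the restricted seed and its mutation class in $\mathscr B_{\DD}(\beta)$, so that generation by cluster variables puts the image in $\mathcal K_t(\mathscr B_{\DD}(\beta))$. Your part (1) and the fallback induction are not needed, and on their own they would still rest on the heuristic, unproved claim in (1) that the initial restricted modules lie in $\mathscr C_{\DD}^{\delta(\beta)}$.
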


\begin{proof}
By Theorems~\ref{thm:Lambda-admissible-seed} and
\ref{thm:word-seed-monoidal-categorification},
\(S^{\DD}(\beta)\) is a completely \(\Lambda\)-admissible,
quantizable monoidal seed categorifying \(\mathbf s_t(\beta)\).
Apply the mutation sequence \(M_m\) from
Definition~\ref{def:svbeta}, and write
\[
    S^{(m)}=M_m(S^{\DD}(\beta))
\]
with cluster-variable modules \(\{M_v\}\) and exchange matrix
\(\widetilde B_\beta^{(m)}\). Let \(V_\beta\) be the vertices left
after deleting \(D_\beta\), and let \(V_{\beta,\mathrm{ex}}\) be those
remaining exchangeable after freezing. By construction,
\begin{equation}\label{eq:deleted-zero-block-proof}
    \bigl(\widetilde B_\beta^{(m)}\bigr)_{
        D_\beta\times V_{\beta,\mathrm{ex}}}
    =0.
\end{equation}
Hence Lemma~\ref{lem:subseed} shows that
\begin{equation}\label{eq:restricted-braid-monoidal-seed}
    S_{\delta(\beta),\beta}^{\DD}
    :=
    \bigl(
        \{M_v\}_{v\in V_\beta},
        \widetilde B_\beta^{(m)}
          |_{V_\beta\times V_{\beta,\mathrm{ex}}};
        V_\beta,V_{\beta,\mathrm{ex}}
    \bigr)
\end{equation}
is completely \(\Lambda\)-admissible and quantizable, and its quantum
seed is \(\mathbf s_t(\delta(\beta),\beta)\). Since restriction
commutes with mutations in \(V_{\beta,\mathrm{ex}}\), the ambient
categorification restricts to an injective homomorphism
\begin{equation}\label{eq:ambient-realization-restricted-seed}
    \Phi\colon
    \overline{\mathcal A}_t
    \bigl(\mathbf s_t(\delta(\beta),\beta)\bigr)
    \longrightarrow
    \mathcal{K}_t(\mathscr C^{\ZZ}),
    \qquad
    X_v\longmapsto[M_v]_t.
\end{equation}
Indeed, it is the restriction of the faithful realization in
Theorem~\ref{thm:word-seed-monoidal-categorification}.

By \cite[Theorem~6.8]{bi2026towards}, every mutable or frozen
cluster-variable module of this restricted seed and its mutation class
belongs to
\begin{equation}\label{eq:cluster-variable-module-in-B-category}
    \mathscr C_{\DD}(\beta)
    \cap
    \mathscr C_{\DD}^{\delta(\beta)}
    =
    \mathscr B_{\DD}(\beta).
\end{equation}
Quantizability, Lemma~\ref{lem:Nm1m2}, and the exchange sequence
\eqref{eq:monoidal-mutation} identify each quantum cluster variable
with the corresponding class \([M]_t\). Since the nonlocalized
quantum cluster algebra is generated by its mutable and frozen cluster
variables, \eqref{eq:cluster-variable-module-in-B-category} implies
\[
    \operatorname{Im}\Phi
    \subseteq
    \mathcal K_t(\mathscr B_{\DD}(\beta)).
\]
Thus \(\Phi\) corestricts to the injective homomorphism
\eqref{eq:quantum-braid-seed-embedding}.
\end{proof}

\section{Bosonic extension algebras}\label{sec:bosonic-extension}

Let \(C=(c_{ij})_{i,j\in I}\) be a symmetric Cartan matrix of simply-laced
Dynkin type, and let \(q\) be an indeterminate. For
\(m,n\in\ZZ_{\geq 0}\) with \(n\leq m\), set
\begin{equation*}
    [n]_q:=\frac{q^n-q^{-n}}{q-q^{-1}},\qquad
    [n]_q!:=\prod_{k=1}^n[k]_q,\qquad
    \qbinom{m}{n}:=\frac{[m]_q!}{[n]_q![m-n]_q!}.
\end{equation*}
Let \(U_q(\fn)\) denote the half quantum group generated by
\(\{f_i\mid i\in I\}\), and let \(\cA(\fn)\) be its graded
\(\QQ(q)\)-dual algebra. Let \(U_{\ZZ[q^{\pm1}]}(\fn)\) be the
\(\ZZ[q^{\pm1}]\)-subalgebra generated by the divided powers
\(f_i^n/[n]_q!\). Its restricted dual integral form is
\begin{equation}\label{eq:integral-form-half-quantum-group}
    \cA_{\ZZ[q^{\pm1}]}(\fn)
    :=\left\{
        \psi\in\cA(\fn)
        \ \middle|\
        \psi\bigl(U_{\ZZ[q^{\pm1}]}(\fn)\bigr)
        \subseteq\ZZ[q^{\pm1}]
      \right\}.
\end{equation}
We extend scalars by setting
\(\cA_{\ZZ[q^{\pm1/2}]}(\fn)
:=\cA_{\ZZ[q^{\pm1}]}(\fn)
\otimes_{\ZZ[q^{\pm1}]}\ZZ[q^{\pm1/2}]\).

\begin{definition}
    The \emph{bosonic extension algebra} \(\widehat{\cA}\) is the
    \(\QQ(q^{1/2})\)-algebra generated by
    \(\{f_{i,m}\mid i\in I,\ m\in\ZZ\}\), subject to the relations
    \begin{align}
        \sum_{a=0}^{1-c_{ij}}&(-1)^a
        \qbinom{1-c_{ij}}{a}
        f_{i,p}^{\,1-c_{ij}-a}f_{j,p}f_{i,p}^{\,a}
        =0,
        \label{eq:Serre}\\
        f_{i,m}f_{j,p}
        &=q^{(-1)^{p-m+1}c_{ij}}f_{j,p}f_{i,m}
        +\delta_{(i,m+1),(j,p)}(1-q^2).
        \label{eq:bosonic}
    \end{align}
    Here \eqref{eq:Serre} holds for \(i\neq j\) and \(p\in\ZZ\),
    whereas \eqref{eq:bosonic} holds for \(m<p\). The symbol
    \(\delta_{x,y}\) denotes the Kronecker delta.
\end{definition}

The algebra \(\widehat{\cA}\) is \(Q\)-graded by
\(\wt(f_{i,m})=(-1)^m\alpha_i\). Since
\eqref{eq:Serre}--\eqref{eq:bosonic} are homogeneous with respect to this
grading, one has
\(\widehat{\cA}=\bigoplus_{\alpha\in Q}\widehat{\cA}_\alpha\).

For \(-\infty\leq a\leq b\leq+\infty\), denote by
\(\widehat{\cA}[a,b]\) the subalgebra generated by the elements
\(f_{i,m}\) with \(i\in I\) and \(a\leq m\leq b\). In particular, we
write
\begin{align*}
    \widehat{\cA}_{\geq p}
    &:=\widehat{\cA}[p,+\infty] \quad(p\in\ZZ),&
    \widehat{\cA}_{<0}
    &:=\widehat{\cA}[-\infty,-1],\\
    \widehat{\cA}_{\geq0}
    &:=\widehat{\cA}[0,+\infty],&
    \widehat{\cA}_{\geq1}
    &:=\widehat{\cA}[1,+\infty],\\
    \widehat{\cA}[m]
    &:=\widehat{\cA}[m,m].&&
\end{align*}
For every finite interval \([a,b]\subset\ZZ\), ordered multiplication
induces a vector-space isomorphism
\begin{equation}\label{eq:ordered-factorization}
    \widehat{\cA}[b]\otimes\widehat{\cA}[b-1]\otimes\cdots
    \otimes\widehat{\cA}[a]
    \xrightarrow{\ \sim\ }
    \widehat{\cA}[a,b].
\end{equation}
Passing to the direct limit over finite intervals gives the restricted
ordered tensor-product decomposition
\(\widehat{\cA}\simeq
\overrightarrow{\bigotimes}_{m\in\ZZ}\widehat{\cA}[m]\). In
particular, multiplication identifies
\(\widehat{\cA}_{\geq0}\otimes\widehat{\cA}_{<0}\) with
\(\widehat{\cA}\) as a vector space.

For each \(k\in\ZZ\), extension of scalars from \(\QQ(q)\) to
\(\QQ(q^{1/2})\) yields an algebra isomorphism
\begin{equation}\label{eq:level-isomorphism}
    \varphi_k\colon
    \cA(\fn)\otimes_{\QQ(q)}\QQ(q^{1/2})
    \xrightarrow{\ \sim\ }
    \widehat{\cA}[k],
    \qquad
    \langle i\rangle\otimes1\longmapsto q^{1/2}f_{i,k},
\end{equation}
where \(\langle i\rangle\in\cA(\fn)\) is dual to
\(f_i\in U_q(\fn)\). Let
\(\widehat{\cA}_{\ZZ[q^{\pm1/2}]}\) be the
\(\ZZ[q^{\pm1/2}]\)-subalgebra generated by
\(\varphi_k(\cA_{\ZZ[q^{\pm1/2}]}(\fn))\) for all \(k\in\ZZ\).

We shall use two \(\QQ\)-algebra anti-automorphisms of
\(\widehat{\cA}\): the bar involution and the shift duality. They are
determined by
\begin{equation*}
    \overline{f_{i,m}}=f_{i,m},\qquad
    \overline{q^{1/2}}=q^{-1/2},\qquad
    \mathcal D(f_{i,m})=f_{i,m+1},\qquad
    \mathcal D(q^{1/2})=q^{-1/2}.
\end{equation*}
For a homogeneous element \(x\in\widehat{\cA}_\alpha\), define
\(c(x):=q^{(\alpha,\alpha)/2}\overline{x}\), and extend \(c\)
\(\QQ\)-linearly to \(\widehat{\cA}\).

\subsection{Extended crystals and global bases}

Let \(B(\infty)\) be the crystal basis of \(U_q(\mathfrak n)\), with
weight-zero element \(\mathbf1\). Its \emph{extended crystal} is
\begin{equation}\label{eq:extended-crystal}
    \widehat B(\infty)
    :=\left\{
        (b_k)_{k\in\ZZ}\in\prod_{k\in\ZZ}B(\infty)
        \ \middle|\
        b_k=\mathbf1\text{ for all but finitely many }k
    \right\},
\end{equation}
For \(\bb=(b_k)_{k\in\ZZ}\in\widehat B(\infty)\), define
\begin{equation}\label{eq:standard-basis-element}
    P(\bb)
    :=\overrightarrow{\prod_{k\in\ZZ}}
      \varphi_k\bigl(G^{\mathrm{up}}(b_k)\bigr)
    \in\widehat{\cA},
\end{equation}
where the product is taken in the order of
\eqref{eq:ordered-factorization}. These elements form a
\(\QQ(q^{1/2})\)-basis of \(\widehat{\cA}\).

For \(\gamma=\sum_i a_i\alpha_i\in Q\), put
\(\|\gamma\|=\sum_i|a_i|\alpha_i\). For
\(\bb,\bb'\in\widehat B(\infty)\), write \(\bb'\preceq\bb\) if
\(\|\wt(b'_k)\|\leq\|\wt(b_k)\|\) for every \(k\in \ZZ\).
This relation is a preorder. Its strict part is
\begin{equation}\label{eq:strict-extended-crystal-preorder}
    \bb'\prec\bb
    \quad\Longleftrightarrow\quad
    \bb'\preceq\bb\ \text{ and }\ \bb\not\preceq\bb'.
\end{equation}

\begin{theorem}[{\cite[Theorem~7.6]{kashiwara2025global}}]
    \label{theo:globalbasis}
    For every \(\bb\in\widehat B(\infty)\), there exists a unique
    element \(G(\bb)\in\widehat{\cA}\) such that
    \begin{equation}\label{eq:global-basis-triangularity}
        G(\bb)-P(\bb)
        \in\sum_{\bb'\prec\bb}q\ZZ[q]P(\bb'),
        \qquad
        c\bigl(G(\bb)\bigr)=G(\bb).
    \end{equation}
    The family
    \(\{G(\bb)\mid\bb\in\widehat B(\infty)\}\) is a
    \(\QQ(q^{1/2})\)-basis of \(\widehat{\cA}\), called the
    \emph{global basis}.
\end{theorem}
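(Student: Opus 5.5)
The plan is the standard Lusztig-lemma argument applied to the PBW-type basis \(\{P(\bb)\}\), with the involution \(c\) in place of the bar involution. There are three steps: (a) \(c\) is unitriangular on \(\{P(\bb)\}\) with respect to \(\prec\); (b) the intervals below a given \(\bb\) are finite; (c) the usual recursive construction and uniqueness argument. Throughout, I would work with weight-homogeneous elements. The factorization \eqref{eq:ordered-factorization} shows that \(P(\bb)\) is homogeneous of weight \(\sum_k(-1)^k\wt(b_k)\).

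\textbf{Step (a): triangularity of \(c\).} I would first treat a single level. Under \(\varphi_k\), the anti-involution \(c\) restricted to \(\widehat{\cA}[k]\) should correspond to the twisted bar involution of \(\cA(\fn)\). For that involution the upper global basis \(G^{\mathrm{up}}(b_k)\) is invariant, once the normalization \(q^{(\alpha,\alpha)/2}\) built into \(c\) and the factor \(q^{1/2}\) in \(\varphi_k\) are matched. Hence \(c(\varphi_k(G^{\mathrm{up}}(b_k)))=\varphi_k(G^{\mathrm{up}}(b_k))\).

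Since \(c\) is an anti-automorphism, applying it to the ordered product \(P(\bb)\) gives the same level factors in the \emph{reversed} order, up to a power of \(q\) coming from the quadratic normalization \(q^{(\alpha,\alpha)/2}\). I would then move these factors back into the order of \eqref{eq:ordered-factorization} using \eqref{eq:bosonic}.

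The commutation of \(f_{i,m}\) and \(f_{j,p}\) produces a power of \(q\) whose sign is governed by \((-1)^{p-m+1}\). Together with \((\alpha,\alpha)\) and the alternating signs in \(\wt\), these powers should cancel exactly against the normalization prefactor, giving leading coefficient \(1\). Every correction term comes from the inhomogeneous term \(\delta_{(i,m+1),(j,p)}(1-q^2)\). Such a term deletes one \(f_{i,m}\) and one \(f_{i,m+1}\), so it strictly decreases \(\|\wt(b_k)\|\) at two adjacent levels. After re-expanding in the \(P\)-basis level by level, each correction term is a combination of \(P(\bb')\) with \(\bb'\preceq\bb\) and with some level strictly smaller, hence \(\bb\not\preceq\bb'\), i.e.\ \(\bb'\prec\bb\). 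The conclusion is
\[
c(P(\bb))\in P(\bb)+\sum_{\bb'\prec\bb}\ZZ[q^{\pm1}]P(\bb').
\]

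\textbf{Main obstacle.} I expect step (a) to be the hardest part, for two reasons. First, the bookkeeping of \(q\)-powers has to show that the leading coefficient is exactly \(1\). Second, the preorder must genuinely separate the terms: one must rule out terms \(\bb'\neq\bb\) with \(\|\wt(b'_k)\|=\|\wt(b_k)\|\) for every \(k\). My approach is to prove that the reordering never changes the level-wise weights except through the \(\delta\)-term. Then any term with the same level weights has the same level components, because \(\varphi_k\) is an isomorphism on each level and the \(P\)-basis is obtained by expanding level-wise in the upper global basis. So such a term equals \(P(\bb)\) itself.

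\textbf{Step (b): finiteness.} For fixed \(\bb\), each \(b'_k\) with \(\|\wt(b'_k)\|\leq\|\wt(b_k)\|\) ranges over a finite subset of \(B(\infty)\). Moreover \(b'_k=\mathbf 1\) whenever \(b_k=\mathbf 1\), so only finitely many levels are involved. Hence \(\{\bb'\preceq\bb\}\) is finite.

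\textbf{Step (c): Lusztig's lemma.} Write \(c(P(\bb))=\sum_{\bb'} r_{\bb',\bb}P(\bb')\). Applying \(c\) twice gives the relation \(\sum_{\bb'} \overline{r_{\bb'',\bb'}}\,r_{\bb',\bb}=\delta_{\bb'',\bb}\). By induction on the finite interval below \(\bb\) (step (b)), I solve for the coefficients of \(G(\bb)-P(\bb)\) in \(q\ZZ[q]\), exactly as in Lusztig's lemma. The key point is that an element of \(\ZZ[q^{\pm1}]\) decomposes uniquely as \(f+\overline{g}\) with \(f,g\in q\ZZ[q]\) plus a constant, and the constant must vanish by the involutivity relation.

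For uniqueness: the difference of two solutions is \(c\)-invariant and lies in \(\sum q\ZZ[q]P(\bb')\). Its \(\prec\)-maximal terms are \(c\)-invariant modulo lower terms, so their coefficients \(a\in q\ZZ[q]\) satisfy \(a=\overline a\). This forces \(a=0\), and by induction the difference is \(0\).

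Finally, the transition matrix from \(\{G(\bb)\}\) to \(\{P(\bb)\}\) is unitriangular with respect to \(\prec\), so \(\{G(\bb)\}\) is a \(\QQ(q^{1/2})\)-basis, since \(\{P(\bb)\}\) is.
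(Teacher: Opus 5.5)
The paper gives no proof of this statement; it imports it from \cite[Theorem~7.6]{kashiwara2025global}. So the comparison is with the standard Lusztig-lemma route, which is what you propose. Most of your step (a) is correct, including the two points you flagged as the main obstacle. Put $\beta_k:=\wt(b_k)\in Q_+$. The normalization in $c$ contributes $q^{\sum_{k<l}(-1)^{k+l}(\beta_k,\beta_l)}$, and the reordering via \eqref{eq:bosonic} contributes $q^{\sum_{k<l}(-1)^{l-k+1}(\beta_k,\beta_l)}$, so the leading coefficient is indeed $1$. Since only the $\delta$-term changes the level contents, no $\bb'\neq\bb$ with the same level weights can occur. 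Steps (b) and (c) are also fine as formal arguments.

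The gap is the coefficient ring in step (a), which is the one non-formal input of Lusztig's lemma. To get $G(\bb)-P(\bb)\in\sum q\ZZ[q]P(\bb')$, the coefficients $r_{\bb',\bb}$ in $c(P(\bb))=\sum r_{\bb',\bb}P(\bb')$ must be Laurent polynomials. Over $\QQ(q)$ the lemma fails: for instance $a=q/(1-q^2)$ satisfies $\overline a=-a$, but it is not of the form $p-\overline p$ for any $p\in q\QQ[q]$.

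Your straightening is carried out on monomials in the generators $f_{i,m}$. This controls the $q$-powers and the supports, but not integrality after you re-expand in the $P$-basis. The reason is that $\varphi_k(G^{\mathrm{up}}(b))$ is in general not a $\ZZ[q^{\pm1}]$-combination of monomials in the $q^{1/2}f_{i,k}$. Already the level-zero global-basis element $T_i(f_{j,0})$ with $d(i,j)=1$ in \eqref{eq:braid-symmetry-T} carries the denominator $q-q^{-1}$. As written, your argument therefore only gives $r_{\bb',\bb}\in\QQ(q)$.

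What is missing is an integral straightening lemma. For $x,y\in\cA_{\ZZ[q^{\pm1}]}(\fn)$ one needs
$\varphi_m(x)\varphi_{m+1}(y)\in\sum\ZZ[q^{\pm1}]\,\varphi_{m+1}(\cA_{\ZZ[q^{\pm1}]}(\fn))\,\varphi_m(\cA_{\ZZ[q^{\pm1}]}(\fn))$.
Concretely, this says that the twisted derivations produced by the $(1-q^2)$-term, through which the level-$m$ factor acts on the level-$(m+1)$ factor, preserve the dual integral form. In the form you would actually use, this is the statement that the $\ZZ[q^{\pm1/2}]$-span of $\{P(\bb)\}$ is the ($c$-stable) subalgebra $\widehat{\cA}_{\ZZ[q^{\pm1/2}]}$.

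That lemma gives integrality over $\ZZ[q^{\pm1/2}]$. To sharpen this to $\ZZ[q^{\pm1}]$, which is what produces $q\ZZ[q]$ rather than $q^{1/2}\ZZ[q^{1/2}]$, note that $F_{i,m}:=q^{1/2}f_{i,m}$ is $c$-fixed and satisfies relations over $\ZZ[q^{\pm1}]$. Hence the coefficients lie in $\QQ(q)\cap\ZZ[q^{\pm1/2}]=\ZZ[q^{\pm1}]$. This integrality is a genuine input from the integral PBW theory of $\widehat{\cA}$, not a consequence of your $q$-power bookkeeping. With it in hand, the rest of your argument goes through.
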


 We distinguish the preceding \(c\)-invariant
global basis from its bar-invariant normalization. If \(G(\bb)\) is
homogeneous of weight \(\alpha_{\bb}\), set
\begin{equation}\label{eq:normalized-global-basis-conversion}
    \widetilde G(\bb)
    :=q^{-(\alpha_{\bb},\alpha_{\bb})/4}G(\bb).
\end{equation}
Since \(c(G(\bb))=G(\bb)\), one has
\(\overline{\widetilde G(\bb)}=\widetilde G(\bb)\).
Throughout the remainder of the paper, concrete cluster variables
and normalized simple classes are identified with
\(\widetilde G\), not with the unnormalized element \(G\).

\subsection{Braid symmetries and PBW bases}

\begin{proposition}[{\cite{kashiwara2021braid}}]
    \label{pro:braidsym}
    For every \(i\in I\), there exist mutually inverse
    \(\QQ(q^{1/2})\)-algebra automorphisms
    \(T_i,T_i^*\colon\widehat{\cA}\to\widehat{\cA}\), determined by
    \begin{align}
        T_i(f_{j,m})
        &=
        \begin{cases}
            f_{j,m+\delta_{ij}}, & d(i,j)\neq1,\\[1mm]
            \displaystyle
            \frac{q^{1/2}f_{j,m}f_{i,m}
                  -q^{-1/2}f_{i,m}f_{j,m}}{q-q^{-1}},
                & d(i,j)=1,
        \end{cases}
        \label{eq:braid-symmetry-T}\\
        T_i^*(f_{j,m})
        &=
        \begin{cases}
            f_{j,m-\delta_{ij}}, & d(i,j)\neq1,\\[1mm]
            \displaystyle
            \frac{q^{1/2}f_{i,m}f_{j,m}
                  -q^{-1/2}f_{j,m}f_{i,m}}{q-q^{-1}},
                & d(i,j)=1.
        \end{cases}
        \label{eq:braid-symmetry-T-star}
    \end{align}
    Here \(d(i,j)\) denotes the distance between \(i\) and \(j\) in
    the Dynkin diagram. The families \(\{T_i\}_{i\in I}\) and
    \(\{T_i^*\}_{i\in I}\) satisfy the commutation and braid relations,
    and \(T_iT_i^*=T_i^*T_i=\Id\).
\end{proposition}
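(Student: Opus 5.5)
The plan is to prove the proposition by direct verification on generators, working over \(\QQ(q^{1/2})\). Since \(\widehat{\cA}\) is presented by the generators \(f_{j,m}\) and the relations \eqref{eq:Serre}--\eqref{eq:bosonic}, the assignment \eqref{eq:braid-symmetry-T} extends to an algebra endomorphism \(T_i\) once it respects every defining relation, and likewise for \(T_i^*\). For both maps the relations split according to the distance between the colors involved. If all colors have distance \(\ge 2\) from \(i\), or equal \(i\), the map is a pure level shift on the relevant generators. The relations are then preserved because \eqref{eq:bosonic} depends only on \(p-m\), and shifting only the \(i\)-strand changes \(p-m\) by one. This is harmless for \(c_{ij}=0\). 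For pairs of \(i\)-generators, both are shifted together, so \(p-m\) is unchanged. The real work is in the relations involving a color \(j\) adjacent to \(i\).

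For these I would use homogeneity and the ordered factorization \eqref{eq:ordered-factorization}. The image \(T_i(f_{j,m})\) lies in \(\widehat{\cA}[m]\), and \(T_i(f_{i,m})=f_{i,m+1}\). So every relation to be checked involves at most levels \(m,m+1,m+2\). Moreover, the \(q\)-commutator of two elements of a fixed level is computed inside \(\varphi_m(\cA(\fn))\), where \(T_i\) restricted to level \(m\) on non-\(i\) generators agrees with Lusztig's \(T_i\) on \(U_q(\fn)\) transported by \(\varphi_m\). I would therefore check:
\begin{enumerate}
\item the Serre relations at a single level \(p\) between \(T_i(f_{j,p})\) and \(T_i(f_{k,p})\) (with \(k\ne i\)), which follow from the known Lusztig computations in rank \(2\) and rank \(3\) (types \(A_2\), \(A_3\), and the star with center \(i\));
\item the Serre relations between \(f_{i,p+1}\) and \(T_i(f_{j,p})\), which become a statement about mixed levels \(p,p+1\);
\item the bosonic relations \eqref{eq:bosonic} for \(m<p\), especially the cases \(p=m+1\), where the inhomogeneous term \(\delta(1-q^2)\) appears.
\end{enumerate}
In the cases of item 2 and item 3, I would expand \(T_i(f_{j,m})\) as a \(q\)-commutator and move factors past each other using \eqref{eq:bosonic}. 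The key identity is \(f_{i,m}f_{i,m+1}=q^{2}f_{i,m+1}f_{i,m}+(1-q^2)\).

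I expect item 3 with \(p=m+1\) to be the main obstacle: the constant terms \(1-q^2\) produced by reordering must cancel exactly against the term \(\delta_{(j,m+1),(j,m+1)}(1-q^2)\) required on the right-hand side. I would first do this computation in the rank-two subalgebra generated by \(f_{i,\bullet},f_{j,\bullet}\), where it is a finite calculation. By locality of the relations (only two or three colors ever appear), this suffices for all simply-laced types.

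Next I would show \(T_i^*T_i=T_iT_i^*=\Id\) on generators. For \(d(i,j)\ne1\) this is immediate. For \(d(i,j)=1\) I would compute
\[
T_i^*T_i(f_{j,m})=\frac{q^{1/2}T_i^*(f_{j,m})f_{i,m-1}-q^{-1/2}f_{i,m-1}T_i^*(f_{j,m})}{q-q^{-1}},
\]
with \(T_i^*(f_{j,m})\) the \(q\)-commutator of \(f_{i,m}\) and \(f_{j,m}\). Reordering \(f_{i,m-1}f_{i,m}\) by the identity above, the \(f_{i,m-1}f_{i,m}\)-terms cancel and the constant \(1-q^2\) leaves exactly \(f_{j,m}\). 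The relation \(T_iT_i^*=\Id\) is symmetric.

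Finally, for the braid relations, commutation for \(c_{ij}=0\) is clear on generators. For \(c_{ij}=-1\) I would verify \(T_iT_jT_i(f_{k,m})=T_jT_iT_j(f_{k,m})\) for \(k\in\{i,j\}\) and for \(k\) adjacent to one or both of them. The cases \(k\in\{i,j\}\) reduce to the identity \(T_iT_j(f_{i,m})=f_{j,m+1}\), which follows from the invertibility computation. The remaining cases reduce, level by level via \(\varphi_m\), to Lusztig's braid relations on \(U_q(\fn)\), together with the mixed-level bookkeeping already done above. The statements for \(T_i^*\) then follow by inversion.
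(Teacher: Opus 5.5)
The paper gives no proof of this proposition; it quotes it from Kashiwara--Kim--Oh--Park, so there is no in-paper argument to compare against. Your route is a valid self-contained proof: check the defining relations on generators, then invertibility and the braid relations on generators, reducing by locality to rank at most $3$. The computations you single out as hard do close up.

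\begin{itemize}
\item With $X_m:=T_i(f_{j,m})$, the bosonic relations alone give $X_mX_{m+1}=q^{2}X_{m+1}X_m+(1-q^{2})$. The constant terms cancel without using any Serre relation.
\item The two mixed-level Serre relations follow from $X_pf_{i,p+1}=q\,f_{i,p+1}X_p-q^{-1/2}(q^{2}-1)f_{j,p}$ together with $f_{j,p}X_p=q\,X_pf_{j,p}$. The latter is just the level-$p$ Serre relation.
\item You can avoid checking $T_i^*$ separately. The $\QQ(q^{1/2})$-linear anti-automorphism $\psi(f_{j,m})=f_{j,-m}$ preserves both families of defining relations, and $T_i^*=\psi T_i\psi$. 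This also gives $T_iT_i^*=\psi T_i^*T_i\psi=\Id$, which is what your word ``symmetric'' needs.
\end{itemize}

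Two points need fixing.

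First, the identity you use for the braid relation on $f_{i,m}$ is off by one level. $T_iT_j(f_{i,m})$ has weight $(-1)^m s_is_j(\alpha_i)=(-1)^m\alpha_j$, so it cannot equal $f_{j,m+1}$. The correct identity is $T_iT_j(f_{i,m})=f_{j,m}$, and it does follow from your invertibility computation. Indeed, $T_j(f_{i,m})$ and $T_i^*(f_{j,m})$ are literally the same expression, so $T_iT_j(f_{i,m})=T_iT_i^*(f_{j,m})=f_{j,m}$. Hence
$T_iT_jT_i(f_{i,m})=T_iT_j(f_{i,m+1})=f_{j,m+1}=T_j(f_{j,m})=T_jT_iT_j(f_{i,m})$.
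The same identity with $i$ and $j$ exchanged, $T_jT_i(f_{j,m})=f_{i,m}$, is the ``mixed-level bookkeeping'' needed when $k\notin\{i,j\}$. It turns both sides of the braid relation on $f_{k,m}$ into single-level expressions, to which Lusztig's $A_3$ braid relation applies.

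Second, it is not true that every relation involves only levels $m,m+1,m+2$. You need to record two facts:
\begin{itemize}
\item $T_i$ acts on signed weights by $s_i$, that is, $\operatorname{wt}T_i(f_{a,m})=s_i\operatorname{wt}(f_{a,m})$.
\item Homogeneous $x\in\widehat{\cA}[m]$ and $y\in\widehat{\cA}[p]$ with $p\geq m+2$ satisfy $xy=q^{-(\operatorname{wt}x,\operatorname{wt}y)}yx$.
\end{itemize}
Together these dispose of every bosonic relation whose images lie two or more levels apart. No inhomogeneous term can occur in those cases. What remains is the case $p=m+1$ and the pair $(f_{i,m},f_{j,m+2})$ with $d(i,j)=1$, whose images sit in adjacent levels; you have covered these.
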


Let \(b\in\Br^+\), and choose an expression
\(\beta=(i_1,\ldots,i_r)\) of \(b\). The braid relations imply that
\(T_b:=T_{i_1}\cdots T_{i_r}\) depends only on \(b\), and not on the
chosen expression. Moreover, the braid symmetries preserve the global
basis of \(\widehat{\cA}\); see
\cite[Theorem~3.7]{kashiwara2024braid}. More precisely, they preserve
the \(c\)-invariant basis \(G\). Since the braid action preserves the
weight form, \eqref{eq:normalized-global-basis-conversion} implies
that it also preserves the bar-invariant basis \(\widetilde G\).
Below, ``fixed by \(c\)'' always refers to \(G\), whereas
``bar-invariant'' refers to \(\widetilde G\); based embeddings are
compatible with both statements through this conversion.

For \(k\in[r]\), define the PBW root vector
\begin{equation}\label{eq:PBW-root-vector}
    P_k^\beta
    :=T_{i_1}\cdots T_{i_{k-1}}\bigl(q^{1/2}f_{i_k,0}\bigr).
\end{equation}
This is the \(c\)-invariant normalization. Its bar-invariant version is
\begin{equation}\label{eq:bar-normalized-PBW-root-vector}
    \widetilde P_k^\beta
    :=q^{-1/2}P_k^\beta
    =T_{i_1}\cdots T_{i_{k-1}}(f_{i_k,0}).
\end{equation}
Let \(\widehat{\cA}(\beta)\) be the subalgebra of
\(\widehat{\cA}\) generated by \(P_1^\beta,\ldots,P_r^\beta\). By
\cite[Proposition~4.7]{kashiwara2024braid},
\begin{equation}\label{eq:PBW-subalgebra-intersection}
    \widehat{\cA}(\beta)
    =\widehat{\cA}_{\geq0}
     \cap T_b\bigl(\widehat{\cA}_{<0}\bigr).
\end{equation}
Consequently, although the generators \(P_k^\beta\) depend on the word
\(\beta\), the algebra \(\widehat{\cA}(\beta)\) depends only on the
positive braid \(b\). We use the integral form
\begin{equation}\label{eq:integral-form-PBW-subalgebra}
    \widehat{\cA}_{\ZZ[q^{\pm1/2}]}(\beta)
    :=\widehat{\cA}(\beta)
      \cap\widehat{\cA}_{\ZZ[q^{\pm1/2}]}.
\end{equation}

For \(\mathbf a=(a_1,\ldots,a_r)\in\ZZ_{\geq0}^r\), define the
normalized PBW monomial
\begin{equation}\label{eq:normalized-PBW-monomial}
    P^\beta(\mathbf a)
    :=q^{\frac12\sum_{k=1}^r a_k(a_k-1)}
      (P_r^\beta)^{a_r}\cdots(P_1^\beta)^{a_1}.
\end{equation}
We equip \(\ZZ_{\geq0}^r\) with the right lexicographic total order. For
distinct \(\mathbf a,\mathbf b\in\ZZ_{\geq0}^r\), write
\(\mathbf a\prec_{\mathrm r}\mathbf b\) if there exist
\(1\leq k_1\leq r\) such that
\begin{align}
    a_k=b_k\quad(k>k_1), \quad a_{k_1}<b_{k_1}.
    \label{eq:order2}
\end{align}

\begin{proposition}[{\cite[Lemmas~4.16 and~4.17]
    {kashiwara2024braid}}]\label{prop:PBW-global-basis}
    For every \(\mathbf a\in\ZZ_{\geq0}^r\), there exists a unique
    global basis element
    \(G(\beta,\mathbf a)\in\widehat{\cA}(\beta)\) satisfying
    \begin{equation}\label{eq:PBW-global-basis-triangularity}
        c\bigl(G(\beta,\mathbf a)\bigr)=G(\beta,\mathbf a),
        \qquad
        G(\beta,\mathbf a)
        \in P^\beta(\mathbf a)
        +\sum_{\mathbf b\prec_{\mathrm r}\mathbf a}q\ZZ[q]P^\beta(\mathbf b).
    \end{equation}
    Moreover,
    \(\{G(\beta,\mathbf a)\mid
       \mathbf a\in\ZZ_{\geq0}^r\}\)
    is the global basis of \(\widehat{\cA}(\beta)\). We call
    \(\mathbf a\) the \(\beta\)-Lusztig parameter of
    \(G(\beta,\mathbf a)\).
\end{proposition}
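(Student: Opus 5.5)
The statement is quoted from \cite[Lemmas~4.16 and~4.17]{kashiwara2024braid}, but here is how I would prove it directly. The strategy is the standard Lusztig-type argument: first establish three properties of the normalized PBW monomials $P^\beta(\mathbf a)$, then apply the usual unitriangular bar-inversion, and finally compare with the global basis $G(\bb)$ of Theorem~\ref{theo:globalbasis}. The three properties are:
\begin{enumerate}
\item they form a $\ZZ[q^{\pm1/2}]$-basis of $\widehat{\cA}_{\ZZ[q^{\pm1/2}]}(\beta)$;
\item the involution $c$ acts on them unitriangularly with respect to $\prec_{\mathrm r}$;
\item they are compatible with the standard basis $P(\bb)$.
\end{enumerate}

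\textbf{Step 1: a basis of PBW monomials.} Induct on $r$. Using $T_{i_1}$ and the decomposition \eqref{eq:PBW-subalgebra-intersection}, I would reduce to the factorization
\[
\widehat{\cA}(\beta)=\widehat{\cA}(\beta')\cdot\QQ(q^{1/2})[P_1^\beta]
\]
as vector spaces, where $\beta$ is written as $(i_1)$ followed by the suffix, $\beta'$ denotes $\beta$ with its last letter removed, and $T_{i_1}(\widehat{\cA}(\text{suffix}))$ is the complement of the $P_1^\beta$-direction. The key input is a rank-one statement: $f_{i,0}$ generates the kernel of a $T_i$-twisted restriction, in analogy with the decomposition $U^-\simeq T_i(U^-[i])\otimes\QQ(q)[f_i]$ for quantum groups. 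The resulting factorization gives linear independence and spanning of the ordered monomials $(P_r^\beta)^{a_r}\cdots(P_1^\beta)^{a_1}$.

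\textbf{Step 2: a Levendorskii--Soibelman type straightening.} I would prove that for $k<l$,
\[
P_l^\beta P_k^\beta-q^{-(\wt P_k,\wt P_l)}P_k^\beta P_l^\beta
\]
lies in the span of monomials in $P_{k+1}^\beta,\dots,P_{l-1}^\beta$. Applying $T_{i_1}\cdots T_{i_{k-1}}^{-1}$ reduces this to the case $k=1$, where it follows from the relation \eqref{eq:bosonic} together with the Step~1 factorization. Combined with the fact that each $P_k^\beta$ is $c$-invariant, because the $T_i$ commute with $c$ up to the weight normalization, this shows
\[
c\bigl(P^\beta(\mathbf a)\bigr)\in P^\beta(\mathbf a)+\sum_{\mathbf b\prec_{\mathrm r}\mathbf a}\ZZ[q^{\pm1/2}]P^\beta(\mathbf b).
\]
The normalization factor $q^{\frac12\sum a_k(a_k-1)}$ is chosen precisely so that the leading coefficient is exactly $1$. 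Reordering the reversed product back to decreasing order creates only terms that are strictly smaller in the right-lexicographic order.

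\textbf{Step 3: existence, uniqueness, and identification.} Lusztig's lemma applies once I check that the order $\prec_{\mathrm r}$ restricted to vectors of a fixed weight has finite lower sets. This holds because the $\wt P_k$ are nonzero elements of the root lattice (up to sign, after level bookkeeping) and the weight is preserved by all the straightening relations. The lemma then yields a unique $c$-invariant $G(\beta,\mathbf a)\in P^\beta(\mathbf a)+\sum q\ZZ[q]P^\beta(\mathbf b)$, and these elements form a basis of $\widehat{\cA}(\beta)$.

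To see that they belong to the global basis, I would show that each $P^\beta(\mathbf a)$ is congruent to a single $P(\bb)$ modulo $q\ZZ[q]$-combinations of the $P(\bb')$. For a one-letter word this is immediate from $\varphi_0$; in general it follows by transporting along $T_b$, since \cite[Theorem~3.7]{kashiwara2024braid} shows the braid symmetries preserve both the global basis and its crystal lattice. The uniqueness part of Theorem~\ref{theo:globalbasis} then gives $G(\beta,\mathbf a)=G(\bb)$.

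\textbf{Expected obstacle.} The main difficulty is this last crystal compatibility: showing that PBW monomials lie in the crystal lattice $\sum\ZZ[q]P(\bb)$ and reduce to distinct crystal elements modulo $q$, because this requires understanding how $T_i$ acts on the extended crystal. I would handle it by induction on $r$, using the decomposition of Step~1 and the fact that $T_i$ sends $\widehat{\cA}_{\geq0}\cap T_i(\widehat{\cA}_{<0})$ to itself compatibly with the crystal structure.
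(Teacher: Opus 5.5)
The paper does not prove this statement; it imports it from \cite[Lemmas~4.16 and~4.17]{kashiwara2024braid}. Your outline therefore has to stand on its own. Its skeleton is the natural one, but it has a genuine gap at the step that carries the content of the proposition: that the $c$-invariant element produced by Lusztig's lemma actually belongs to $\{G(\bb)\}$.

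Transport along braid symmetries only shows that each normalized power $q^{a(a-1)/2}(P_k^\beta)^a=T_{i_1}\cdots T_{i_{k-1}}\bigl(q^{a(a-1)/2}(q^{1/2}f_{i_k,0})^a\bigr)$ is a global-basis element. It says nothing about ordered products of several such factors. Products of global-basis elements need not reduce to a single crystal element modulo $q\mathcal L$, where $\mathcal L=\sum_{\bb}\ZZ[q]P(\bb)$. Already for $\beta=(i,i)$ one finds $P_2^\beta P_1^\beta=G(\beta,E_1+E_2)+q$, but $P_1^\beta P_2^\beta=q^2G(\beta,E_1+E_2)+q\in q\mathcal L$. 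So the ordering must enter in an essential way.

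The repair you propose rests on a false statement. Indeed $\widehat{\cA}_{\geq0}\cap T_i(\widehat{\cA}_{<0})=\widehat{\cA}(i)=\QQ(q^{1/2})[f_{i,0}]$, and $T_i(f_{i,0})=f_{i,1}$ does not lie in it. Your induction on $r$ runs through $P^\beta(\mathbf a)=T_{i_1}\bigl(P^{\beta''}(\mathbf a'')\bigr)\cdot q^{a_1(a_1-1)/2}(P_1^\beta)^{a_1}$, with $\beta''=(i_2,\ldots,i_r)$ and $\mathbf a''=(a_2,\ldots,a_r)$. What it needs is a Saito-type lemma for the complementary factor $\widehat{\cA}_{\geq0}\cap T_{i_1}(\widehat{\cA}_{\geq0})\supseteq T_{i_1}\widehat{\cA}(\beta'')$. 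The lemma should say that right multiplication by $q^{a(a-1)/2}(q^{1/2}f_{i_1,0})^a$ maps $\mathcal L$ intersected with this factor into $\mathcal L$. It should also say that this multiplication sends each global-basis element of the factor, modulo $q\mathcal L$, to a single element of $\widehat B(\infty)$.

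Without such a lemma, neither global-basis membership nor the final assertion of the proposition follows. Once the congruence is known, conclude via the balanced lattice $\mathcal L=\bigoplus_{\bb}\ZZ[q]G(\bb)$, using that bar-invariant elements of $\ZZ[q]$ are constants. Do not use the uniqueness clause of Theorem~\ref{theo:globalbasis} for this, because its lower terms are restricted to $\bb'\prec\bb$.

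Three further points need correction.

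First, the finiteness claim in Step~3 is false. Since $\wt(f_{i,m})=(-1)^m\alpha_i$, the weights of the $P_k^\beta$ can cancel, and weight spaces of $\widehat{\cA}(\beta)$ can be infinite-dimensional. For $\beta=(i,i,i)$, every vector $(n+1,n,0)$ has the weight of $E_3$ and satisfies $(n+1,n,0)\prec_{\mathrm r}E_3$. Lusztig's lemma and your reordering still work, but for a different reason: $\prec_{\mathrm r}$ is a well-order on $\ZZ_{\geq0}^r$ and each $c(P^\beta(\mathbf a))$ is a finite sum. The inductions must be phrased as well-founded inductions, not inductions over finite lower sets.

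Second, the exponent in your $q$-commutator has the wrong sign. For $\beta=(i,i)$, relation \eqref{eq:bosonic} gives $P_2^\beta P_1^\beta-q^{-2}P_1^\beta P_2^\beta=q-q^{-1}$, while $(\wt P_1^\beta,\wt P_2^\beta)=-2$. With your sign, the normalization $q^{\frac12\sum_k a_k(a_k-1)}$ would give leading coefficient $q^{2\sum_{k<l}a_ka_l(\wt P_k^\beta,\wt P_l^\beta)}$ in $c(P^\beta(\mathbf a))$ rather than $1$.

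Third, for corrections in $q\ZZ[q]$, Lusztig's lemma needs the $c$-triangularity coefficients to lie in $\ZZ[q^{\pm1}]$. Over $\ZZ[q^{\pm1/2}]$ it yields only corrections in $q^{1/2}\ZZ[q^{1/2}]$, and over $\QQ(q^{1/2})$ it yields nothing. Your Step~1 argues only over $\QQ(q^{1/2})$. The integral assertion~(1) is a nontrivial bosonic analogue of Lusztig's integrality of PBW bases, and you have not addressed it.
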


Let \(\alpha_\beta(\mathbf a):=\wt(G(\beta,\mathbf a))\). In
accordance with \eqref{eq:normalized-global-basis-conversion}, define
\begin{equation}\label{eq:normalized-PBW-global-basis-conversion}
    \widetilde G(\beta,\mathbf a)
    :=q^{-(\alpha_\beta(\mathbf a),
            \alpha_\beta(\mathbf a))/4}
      G(\beta,\mathbf a).
\end{equation}
Since every PBW root has squared length \(2\), the singleton
parameters satisfy
\begin{equation}\label{eq:singleton-PBW-normalization}
    G(\beta,E_k)=P_k^\beta,
    \qquad
    \widetilde G(\beta,E_k)=\widetilde P_k^\beta.
\end{equation}
For comparison of ordered products, our conventions give
\begin{equation}\label{eq:ordered-normalized-PBW-product}
    (\widetilde P_r^\beta)^{a_r}\cdots
    (\widetilde P_1^\beta)^{a_1}
    =q^{-\frac12\sum_k a_k^2}P^\beta(\mathbf a).
\end{equation}
Thus the scalar conversion changes no Lusztig parameter.
The elements \(G(\beta,\mathbf a)\) and
\(\widetilde G(\beta,\mathbf a)\) have the same
\(\beta\)-Lusztig parameter \(\mathbf a\); the former is
\(c\)-invariant and the latter is bar-invariant. Note also that the
order \(\mathbf b\prec_{\mathrm r}\mathbf a\) in
\eqref{eq:PBW-global-basis-triangularity} is weaker than the
bi-lexicographic order
\(\mathbf b\prec_{\operatorname{bi}}\mathbf a\) used in
\cite[Lemma~4.16]{kashiwara2024braid}.

The following triangularity property of multiplication will be used
repeatedly.

\begin{lemma}[{\cite[Lemma~4.9]{bi2026towards}}]
    \label{lem:global-basis-product-triangularity}
    For any \(\mathbf a,\mathbf b\in\ZZ_{\geq0}^r\), there exists
    \(A\in\frac12\ZZ\) such that
    \begin{equation}\label{eq:global-basis-product-triangularity}
        \widetilde G(\beta,\mathbf a)
        \widetilde G(\beta,\mathbf b)
        =q^A \widetilde G(\beta,\mathbf a+\mathbf b)
         +\sum_{\mathbf c\prec_{\mathrm r}\mathbf a+\mathbf b}
           g_{\mathbf c}(q)\widetilde G(\beta,\mathbf c),
        \qquad
        g_{\mathbf c}(q)\in\ZZ[q^{\pm1/2}].
    \end{equation}
\end{lemma}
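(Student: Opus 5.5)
The statement is Lemma~\ref{lem:global-basis-product-triangularity}. The plan is to reduce it to the unitriangularity of the global basis with respect to the normalized PBW monomials, Proposition~\ref{prop:PBW-global-basis}, together with a Levendorskii--Soibelman type straightening property of the PBW generators \(P_1^\beta,\ldots,P_r^\beta\).

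\textbf{Step 1: expand in PBW monomials.} Write
\[
\widetilde G(\beta,\mathbf a)\in q^{x}P^\beta(\mathbf a)+\sum_{\mathbf a'\prec_{\mathrm r}\mathbf a}\ZZ[q^{\pm1/2}]\,P^\beta(\mathbf a'),
\]
and similarly for \(\mathbf b\), with some \(x,y\in\frac12\ZZ\).

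\textbf{Step 2: multiply two PBW monomials.} It then suffices to show that for all \(\mathbf a',\mathbf b'\),
\[
P^\beta(\mathbf a')P^\beta(\mathbf b')\in q^{z}P^\beta(\mathbf a'+\mathbf b')+\sum_{\mathbf c\prec_{\mathrm r}\mathbf a'+\mathbf b'}\ZZ[q^{\pm1/2}]\,P^\beta(\mathbf c).
\]
We also need that \(\prec_{\mathrm r}\) is compatible with addition: if \(\mathbf a'\preceq_{\mathrm r}\mathbf a\) and \(\mathbf b'\preceq_{\mathrm r}\mathbf b\), then \(\mathbf a'+\mathbf b'\preceq_{\mathrm r}\mathbf a+\mathbf b\), with equality only when both are equalities. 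This holds because right-lexicographic order on \(\ZZ_{\geq0}^r\) is a monoid order. Granting these, only the leading pair contributes to \(\mathbf a+\mathbf b\).

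\textbf{Step 3: the straightening lemma (main obstacle).} Step 2 reduces to a Levendorskii--Soibelman statement: for \(j>k\),
\[
P_k^\beta P_j^\beta = q^{-(\wt P_k,\wt P_j)}P_j^\beta P_k^\beta+\sum \ZZ[q^{\pm1/2}]\text{-combinations of ordered monomials in }P_{k+1}^\beta,\ldots,P_{j-1}^\beta.
\]
I would obtain this by applying \(T_{i_1}\cdots T_{i_{k-1}}\) to reduce to \(k=1\). There the relation follows from the structure of \(\widehat{\cA}(\beta)=\widehat{\cA}_{\geq0}\cap T_b(\widehat{\cA}_{<0})\), as in \cite{kashiwara2024braid}: the commutator \([P_1,P_j]_q\) lies in \(\widehat{\cA}(\beta')\) for the subword \(\beta'=(i_2,\ldots,i_{j-1})\) after the shift. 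Integrality comes from the integral PBW basis of \(\widehat{\cA}_{\ZZ[q^{\pm1/2}]}(\beta)\).

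\textbf{Step 4: reorder the product.} Iterating Step 3 rewrites \((P_r)^{a'_r}\cdots(P_1)^{a'_1}(P_r)^{b'_r}\cdots(P_1)^{b'_1}\) as an ordered monomial times a power of \(q\), plus correction terms. In each correction, a pair \(P_k,P_j\) is replaced by letters with indices strictly between \(k\) and \(j\). This lowers the exponent at the top index \(j\) while leaving larger indices unchanged, so each correction is strictly \(\prec_{\mathrm r}\)-smaller.

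\textbf{Step 5: pass back to the global basis.} Convert back with the inverse unitriangular matrix from Proposition~\ref{prop:PBW-global-basis} to get the claimed expansion. Integrality of the coefficients \(g_{\mathbf c}(q)\) follows because the global basis is a \(\ZZ[q^{\pm1/2}]\)-basis of the integral form. The exponent \(A\) collects \(x+y+z\) together with the normalizations in \eqref{eq:normalized-PBW-global-basis-conversion}.
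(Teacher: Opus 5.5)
The paper gives no proof of this lemma. It is quoted from \cite[Lemma~4.9]{bi2026towards}, and the same PBW-triangularity input appears elsewhere in the paper as \cite[Lemma~5.5]{oh2025pbw}. Your argument is the standard proof and it is correct. It combines unitriangularity of $\widetilde G(\beta,\cdot)$ against the normalized PBW monomials (Proposition~\ref{prop:PBW-global-basis}), additivity of leading terms for products of PBW monomials via Levendorskii--Soibelman straightening, and translation invariance of the total order $\prec_{\mathrm r}$.

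Two points should be made explicit.

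In Step~4, termination cannot be deduced from degree or weight. Nonscalar elements can have weight zero, and a correction term can be a pure scalar, for instance $f_{i,m}f_{i,m+1}=q^{2}f_{i,m+1}f_{i,m}+(1-q^{2})$. Use instead that $\prec_{\mathrm r}$ is a well-order on $\ZZ_{\geq0}^r$, combined with induction on the number of inversions at fixed content.

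In Step~3, the real work is showing
$[P_1,P_j]_q\in T_{i_1}\widehat{\cA}_{\geq0}\cap T_{i_1}\cdots T_{i_{j-1}}\widehat{\cA}_{<0}=T_{i_1}\widehat{\cA}(\beta')$. The intersection formula alone does not give this. Apply $T_{i_1}^{-1}$, respectively $(T_{i_1}\cdots T_{i_{j-1}})^{-1}$, and use the bosonic relations:
\begin{itemize}
\item the map $x\mapsto f_{i_1,-1}x-q^{-(\wt(f_{i_1,-1}),\wt(x))}xf_{i_1,-1}$ is a twisted derivation preserving $\widehat{\cA}_{\geq0}$;
\item the map $y\mapsto yf_{i_j,0}-q^{-(\wt(y),\wt(f_{i_j,0}))}f_{i_j,0}y$ is a twisted derivation preserving $\widehat{\cA}_{<0}$.
\end{itemize}
The two exponents match because the braid symmetries act on weights through $W$ and preserve the form. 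Alternatively, cite the Levendorskii--Soibelman formula from the PBW theory of bosonic extensions directly.
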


An interval \([a,b]\subset[r]\) is called a \(\beta\)-box if
\(i_a=i_b\). For such a box, define
\([a,b]_\beta\in\ZZ_{\geq0}^r\) by
\begin{equation}\label{eq:beta-box-parameter}
    \bigl([a,b]_\beta\bigr)_k
    :=\begin{cases}
        1, & a\leq k\leq b\text{ and }i_k=i_a,\\
        0, & \text{otherwise}.
    \end{cases}
\end{equation}
The corresponding generalized quantum minor is
\(D^\beta[a,b]:=\widetilde G(\beta,[a,b]_\beta)\).

\begin{theorem}[
{\cite[Theorems~6.2, 6.10 and~9.7]
{kashiwara2025monoidal}}]
\label{thm:isoKt}
Let \(\beta\) be a positive braid word. Assume that the ambient quantum
affine algebra is of untwisted affine type and that
\(\DD=\DD_{\mathcal Q}\) is the complete strong duality datum associated
with a \(Q\)-datum \(\mathcal Q\). After identifying \(t^{1/2}\) with
\(q^{-1/2}\), there is a based algebra isomorphism
\begin{equation}\label{eq:Kt-bosonic-isomorphism}
    \Phi_{\DD}\colon
    \mathcal{K}_t\bigl(\mathscr C_{\DD}(\beta)\bigr)
    \xrightarrow{\ \sim\ }
    \widehat{\cA}_{\ZZ[q^{\pm1/2}]}(\beta).
\end{equation}
This isomorphism is the restriction of the ambient based isomorphism
from the quantum Grothendieck ring of \(\mathscr C^{\ZZ}\) to the
bosonic extension algebra \(\widehat{\cA}\). In particular,
\(\Phi_{\DD}\) sends the normalized quantum class of every simple
object to an element of the normalized global basis.

Consequently, every quantum cluster monomial is mapped to a
normalized global basis element of \(\widehat{\cA}(\beta)\). More
precisely, for every \(\beta\)-box \([a,b]\), one has
\begin{equation}\label{eq:determinantial-module-minor}
    \Phi_{\DD}
    \left(
        \bigl[M^{\DD,\beta}[a,b]\bigr]_t
    \right)
    =
    D^\beta[a,b]
    =
    \widetilde G\bigl(\beta,[a,b]_\beta\bigr),
\end{equation}
where the left-hand side is the normalized \((q,t)\)-character of the
affine determinantial module \(M^{\DD,\beta}[a,b]\).

Moreover, \(\Phi_{\DD}\) is compatible with the subcategories
determined by the level filtration. In particular,
\begin{equation}\label{eq:interval-compatibility-Phi}
\begin{aligned}
    \Phi_{\DD}\!\left(
        \mathcal{K}_t\bigl(
            \mathscr C_{\DD}(\beta)
            \cap
            \mathscr C_{\DD}^{\delta(\beta)}
        \bigr)
    \right)
    &=
    \widehat{\cA}_{\ZZ[q^{\pm1/2}]}(\beta)
    \cap
    T_{\delta(\beta)}\widehat{\cA}_{\geq0}.
\end{aligned}
\end{equation}
\end{theorem}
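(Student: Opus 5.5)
The plan is to build everything from the ambient based isomorphism
\(\Phi_{\DD}\colon\mathcal K_t(\mathscr C^{\ZZ})\xrightarrow{\sim}\widehat{\cA}_{\ZZ[q^{\pm1/2}]}\) of \cite{kashiwara2025monoidal}, taken as given. It identifies \(t^{1/2}=q^{-1/2}\). It sends \([L_i^{\DD}]_t\) to \(f_{i,0}\) and intertwines the categorical duality \(\mathcal D\) with the shift anti-automorphism \(\mathcal D\). It maps normalized simple classes bijectively onto the normalized global basis \(\widetilde G\). The proof then restricts this isomorphism to the subcategories in the statement.

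\emph{Step 1: cuspidal modules go to root vectors.} I first check that \(\Phi_{\DD}([C_k^{\DD,\beta}]_t)=\widetilde P_k^\beta\). I argue by induction on \(k\), using that the operation \(\mathscr S_i\) on duality data corresponds under \(\Phi\) to the braid symmetry \(T_i\). Concretely:
\begin{itemize}
\item \(L_i^{\mathscr S_i\DD}=\mathcal D L_i^{\DD}\) matches \(T_i(f_{i,0})=f_{i,1}\);
\item \(L_i\nabla L_j\) matches \(T_i(f_{j,0})\) when \(c_{ij}=-1\), since both are the bar-invariant leading term of the relevant \(q\)-commutator;
\item the remaining \(L_j\) are unchanged, and so are the corresponding \(f_{j,0}\).
\end{itemize}
This gives the key relation \(\Phi_{\mathscr S_i\DD}=T_i^{-1}\circ\Phi_{\DD}\), or its appropriate variant. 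Iterating along \(\beta\) yields the claim.

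\emph{Step 2: basedness on \(\mathscr C_{\DD}(\beta)\).} By Proposition~\ref{pro:lusztig-parameter-modules}, every simple \(M\in\mathscr C_{\DD}(\beta)\) is the head of \((C_r)^{\otimes a_r}\otimes\cdots\otimes(C_1)^{\otimes a_1}\). The class of this tensor product maps to \((\widetilde P_r)^{a_r}\cdots(\widetilde P_1)^{a_1}\). Its other composition factors have Lusztig parameters \(\prec_{\mathrm r}\mathbf a\), by the standard PBW/head triangularity. Comparing with Proposition~\ref{prop:PBW-global-basis}, the uniqueness of a bar-invariant element unitriangular to the PBW monomials forces \(\Phi_{\DD}([M]_t)=\widetilde G(\beta,\mathbf a)\).

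This gives a bijection between normalized simple classes of \(\mathscr C_{\DD}(\beta)\) and \(\{\widetilde G(\beta,\mathbf a)\}\). The latter is a \(\ZZ[q^{\pm1/2}]\)-basis of \(\widehat{\cA}(\beta)\cap\widehat{\cA}_{\ZZ[q^{\pm1/2}]}\), because the global basis of \(\widehat{\cA}(\beta)\) is a subset of the integral global basis of \(\widehat{\cA}\). Hence the restriction is a based isomorphism onto \(\widehat{\cA}_{\ZZ[q^{\pm1/2}]}(\beta)\). Since the affine determinantial module \(M^{\DD,\beta}[a,b]\) is the head of the ordered product with parameter \([a,b]_\beta\), formula \eqref{eq:determinantial-module-minor} is the special case \(\mathbf a=[a,b]_\beta\). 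The statement about quantum cluster monomials then follows from Theorem~\ref{thm:word-seed-monoidal-categorification}, since those monomials are classes of simple modules.

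\emph{Step 3: level compatibility \eqref{eq:interval-compatibility-Phi}.} Let \(w=\delta(\beta)\). By Step 1 applied to the infinite word \(\overline{\mathbf i}\), \(\mathscr C_{\DD}^{w}\) corresponds under \(\Phi_{\DD}\) to the span of global basis elements in the subalgebra generated by \(\widetilde P_k^{\overline{\mathbf i}}\), \(k>m\). Writing \(\widetilde P_k^{\overline{\mathbf i}}=T_w(\widetilde P_{k-m}^{\overline{\mathbf i}'})\), where \(\overline{\mathbf i}'\) is the shifted sequence, this subalgebra equals \(T_w\) applied to the algebra generated by all root vectors of \(\overline{\mathbf i}'\). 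The latter is \(\widehat{\cA}_{\geq0}\), by \eqref{eq:PBW-subalgebra-intersection} and an exhaustion along powers of the longest element \(w_0\).

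Both sides of \eqref{eq:interval-compatibility-Phi} are spanned by subsets of the global basis. Here I use that \(T_w\) preserves the global basis and that \(\widehat{\cA}(\beta)\) is spanned by global basis elements. So it suffices to compare these subsets elementwise, and the intersection of the two categories matches the intersection of the two subalgebras.

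\emph{Main obstacle.} I expect this last step to be the hardest. One must know that the intersection of two global-basis-spanned subalgebras is spanned by the common basis elements, and that the simple objects of \(\mathscr C_{\DD}(\beta)\cap\mathscr C_{\DD}^{w}\) are exactly those whose classes lie in both subalgebras. My first attempt would be to use closure under subquotients and Step~2's basedness to reduce both statements to membership of individual global basis elements.
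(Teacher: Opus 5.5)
Your route is the paper's. The paper takes the restricted based isomorphism and \eqref{eq:determinantial-module-minor} directly from \cite{kashiwara2025monoidal}. It proves \eqref{eq:interval-compatibility-Phi} exactly as in your Step~3. First, $\Phi_{\DD}([C_k^{\DD,\overline{\mathbf i}}]_t)=\widetilde P_k^{\overline{\mathbf i}}$. Second, these elements for $k>m$ generate $T_w\widehat{\cA}_{\geq0}$; your $T_w$-shift and exhaustion along powers of $w_0$ supply the detail the paper attributes to the PBW description of the level subalgebra. Third, a based intersection property finishes the argument.

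What you flag as the main obstacle is immediate. $\Phi_{\DD}$ sends distinct simple classes to distinct elements of one basis, so the spans of two subsets $S_1,S_2$ of that basis meet exactly in the span of $S_1\cap S_2$. For a full monoidal subcategory closed under subquotients, $\mathcal K_t$ is the span of its simple classes. A simple module belongs to such a subcategory if and only if its class lies in that span.

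The step that does not work as written is Step~2. Bar-invariance plus unitriangularity with respect to the PBW monomials, with lower coefficients only in $\ZZ[q^{\pm1/2}]$, does not determine $\widetilde G(\beta,\mathbf a)$. Adding a bar-invariant combination of lower global-basis elements of the same weight preserves both properties. The uniqueness in Proposition~\ref{prop:PBW-global-basis} needs the lower coefficients to lie in $q\ZZ[q]$. The head triangularity you quote gives no such degree information. It only yields $[C_r^{\DD,\beta}]_t^{a_r}\cdots[C_1^{\DD,\beta}]_t^{a_1}=\sum_N c_N(t)[N]_t$ with $c_N(1)$ equal to composition multiplicities. Without positivity, it does not even show that every $N$ with $c_N\neq0$ is a composition factor.

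Since you already assume ambient basedness, you can repair this by matching labels instead.
\begin{enumerate}
\item Let $N_{\mathbf b}$ be the simple module with $\Phi_{\DD}([N_{\mathbf b}]_t)=\widetilde G(\beta,\mathbf b)$.
\item By Step~1 and Proposition~\ref{prop:PBW-global-basis}, $\Phi_{\DD}$ of the ordered cuspidal product equals a power of $q^{1/2}$ times $\widetilde G(\beta,\mathbf a)$, plus a combination of $\widetilde G(\beta,\mathbf b)$ with $\mathbf b\prec_{\mathrm r}\mathbf a$.
\item Specialize at $t^{1/2}=1$, using Remark~\ref{rem:quantizable}, and compare with the head triangularity in $K(\mathscr C^{\ZZ})$.
\item Induction along the well-order $\prec_{\mathrm r}$ then gives $N_{\mathbf a}\simeq\operatorname{hd}\bigl((C_r^{\DD,\beta})^{\otimes a_r}\otimes\cdots\otimes(C_1^{\DD,\beta})^{\otimes a_1}\bigr)$.
\end{enumerate}
This is the restricted basedness your Steps~2 and~3 actually use. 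Alternatively, simply cite it, as the paper does.
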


\begin{proof}
The existence of the based isomorphism
\eqref{eq:Kt-bosonic-isomorphism}, its compatibility with normalized
simple classes, and
\eqref{eq:determinantial-module-minor} follow from the cited
theorems. It remains to prove
\eqref{eq:interval-compatibility-Phi}.

Set \(w:=\delta(\beta)\), and let
\(\overline{\mathbf i}\) be the right-infinite sequence defined in
\eqref{eq:periodic-w0-sequence}. For every \(k>m=\ell(w)\), the
ambient isomorphism satisfies
\begin{equation}\label{eq:fundamental-module-PBW-vector}
    \Phi_{\DD}
    \left(
        [C_k^{\DD,\overline{\mathbf i}}]_t
    \right)
    =
    \widetilde P_k^{\overline{\mathbf i}}
    =q^{-1/2}P_k^{\overline{\mathbf i}},
\end{equation}
where \(\widetilde P_k^{\overline{\mathbf i}}\) is the bar-invariant
PBW root vector of \eqref{eq:bar-normalized-PBW-root-vector}.
By the PBW description of the level subalgebra, the elements
\(\widetilde P_k^{\overline{\mathbf i}}\), \(k>m\), generate
\(T_w\widehat{\cA}_{\geq0}\). Since
\(\mathscr C_{\DD}^{w}\) is generated by the objects
\(C_k^{\DD,\overline{\mathbf i}}\), \(k>m\), it follows that
\begin{equation}\label{eq:image-level-subcategory}
    \Phi_{\DD}
    \left(
        \mathcal{K}_t(\mathscr C_{\DD}^{w})
    \right)
    =
    T_w\widehat{\cA}_{\geq0}\cap\widehat{\cA}_{\ZZ[q^{\pm1/2}]}.
\end{equation}
On the other hand, the restricted isomorphism
\eqref{eq:Kt-bosonic-isomorphism} gives
\begin{equation}\label{eq:image-beta-category}
    \Phi_{\DD}
    \left(
        \mathcal{K}_t(\mathscr C_{\DD}(\beta))
    \right)
    =
    \widehat{\cA}_{\ZZ[q^{\pm1/2}]}(\beta).
\end{equation}

We now use the based property of \(\Phi_{\DD}\). The quantum
Grothendieck ring of each subcategory under consideration is spanned
over \(\ZZ[t^{\pm1/2}]\) by the normalized quantum classes of its
simple objects. Under \(\Phi_{\DD}\), these classes are mapped to
distinct elements of the normalized global basis. Therefore, for any
two such full subcategories \(\mathscr C_1,\mathscr C_2\), one has
\begin{equation}\label{eq:based-intersection-property}
    \Phi_{\DD}\!\left(
        \mathcal{K}_t(\mathscr C_1\cap\mathscr C_2)
    \right)
    =
    \Phi_{\DD}\!\left(\mathcal{K}_t(\mathscr C_1)\right)
    \cap
    \Phi_{\DD}\!\left(\mathcal{K}_t(\mathscr C_2)\right).
\end{equation}
Indeed, both sides are spanned by the normalized global basis
elements whose corresponding simple objects belong simultaneously
to \(\mathscr C_1\) and \(\mathscr C_2\).

Applying \eqref{eq:based-intersection-property} to
\(\mathscr C_1=\mathscr C_{\DD}(\beta)\) and
\(\mathscr C_2=\mathscr C_{\DD}^{w}\), and then using
\eqref{eq:image-level-subcategory} and
\eqref{eq:image-beta-category}, we obtain
\begin{equation*}
\begin{aligned}
    \Phi_{\DD}\!\left(
        \mathcal{K}_t\bigl(
            \mathscr C_{\DD}(\beta)
            \cap
            \mathscr C_{\DD}^{w}
        \bigr)
    \right)
    &=
    \widehat{\cA}_{\ZZ[q^{\pm1/2}]}(\beta)
    \cap
    T_w\widehat{\cA}_{\geq0}.
\end{aligned}
\end{equation*}
Since \(w=\delta(\beta)\), this is precisely
\eqref{eq:interval-compatibility-Phi}.
\end{proof}

\begin{definition}\label{def:two-level-subalgebras}
For a positive braid word \(\beta\), define side by side
\begin{align}
    \widetilde{\mathbf A}(\beta)
    &:=
    \widehat{\cA}_{\ZZ[q^{\pm1/2}]}(\beta)
    \cap
    T_{\delta(\beta)}\widehat{\cA}_{\geq0},
    \label{eq:A-beta-definition}\\
    \mathbf A(\beta)
    &:=
    \widehat{\cA}_{\ZZ[q^{\pm1/2}]}(\beta)
    \cap\widehat{\cA}_{\geq1},
    \qquad
    \mathscr D_{\DD}(\beta)
    :=
    \mathscr C_{\DD}(\beta)
    \cap\mathscr C_{\DD}[1,\infty).
    \label{eq:A-D-beta-definition}
\end{align}
The tilde distinguishes the condition defined by \(\delta(\beta)\)
from the level-\(\geq1\) condition. The latter defines
\(\mathbf A(\beta)\) and corresponds to
\(\mathscr D_{\DD}(\beta)\).
\end{definition}

\begin{corollary}\label{cor:cluster-algebra-embedding-A-beta}
Let \(\beta\) be a positive braid word. After identifying \(t^{1/2}\) with
\(q^{-1/2}\), there is an injective
\(\ZZ[q^{\pm1/2}]\)-algebra homomorphism
\begin{equation}\label{eq:cluster-algebra-embedding-A-beta}
    \overline{\mathcal A}_t
    \bigl(\mathbf s_t(\delta(\beta),\beta)\bigr)
    \hookrightarrow
    \widetilde{\mathbf A}(\beta).
\end{equation}
Under this embedding, every quantum cluster monomial is mapped to a
normalized global basis element of \(\widehat{\cA}(\beta)\) belonging
to \(\widetilde{\mathbf A}(\beta)\).
\end{corollary}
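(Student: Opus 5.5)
The plan is to compose the two embeddings already constructed: the embedding of Theorem~\ref{thm:quantum-braid-seed-embedding}, followed by the based isomorphism \(\Phi_{\DD}\) of Theorem~\ref{thm:isoKt}, where \(\DD=\DD_{\mathcal Q}\) is the complete strong duality datum of a fixed \(Q\)-datum. Theorem~\ref{thm:quantum-braid-seed-embedding} gives an injective \(\ZZ[t^{\pm1/2}]\)-algebra homomorphism
\[
\overline{\mathcal A}_t\bigl(\mathbf s_t(\delta(\beta),\beta)\bigr)\hookrightarrow\mathcal K_t\bigl(\mathscr B_{\DD}(\beta)\bigr),
\]
which sends each quantum cluster variable to the class \([M]_t\) of its cluster-variable module. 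Since \(\mathscr B_{\DD}(\beta)=\mathscr C_{\DD}(\beta)\cap\mathscr C_{\DD}^{\delta(\beta)}\), equation \eqref{eq:interval-compatibility-Phi} shows that \(\Phi_{\DD}\) maps \(\mathcal K_t(\mathscr B_{\DD}(\beta))\) onto \(\widehat{\cA}_{\ZZ[q^{\pm1/2}]}(\beta)\cap T_{\delta(\beta)}\widehat{\cA}_{\geq0}=\widetilde{\mathbf A}(\beta)\). The map \(\Phi_{\DD}\) is injective because it is an isomorphism. After setting \(t^{1/2}=q^{-1/2}\), the composite is therefore an injective \(\ZZ[q^{\pm1/2}]\)-algebra homomorphism into \(\widetilde{\mathbf A}(\beta)\), which proves \eqref{eq:cluster-algebra-embedding-A-beta}.

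For the statement about cluster monomials, let \(\widetilde X^{\mathbf a}\) be a normalized quantum cluster monomial of some seed in the mutation class. I would show that it is sent to the normalized class \([M]_t\) of a real simple module \(M\in\mathscr B_{\DD}(\beta)\). The cluster-variable modules of a single seed pairwise strongly commute and are real. Their tensor product \(M\) is therefore a real simple module, and it lies in \(\mathscr B_{\DD}(\beta)\) because this category is closed under tensor products. All modules are quantizable by Remark~\ref{rem:quantizable}. Applying Lemma~\ref{lem:Nm1m2} repeatedly, together with \(\mathcal N=\LambdaR\) on strongly commuting pairs, gives
\[
[M]_t=t^{-\frac12\sum_{i<j}a_ia_j\LambdaR(M_i,M_j)}\prod^{\to}[M_i]_t^{a_i}.
\]
Since \(\lambda^S_{ij}=\LambdaR(M_i,M_j)\), this is exactly the normalization \eqref{eq:quantum-monomial}. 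By the based property in Theorem~\ref{thm:isoKt}, \(\Phi_{\DD}([M]_t)\) is then a normalized global basis element \(\widetilde G\) of \(\widehat{\cA}(\beta)\), and it lies in \(\widetilde{\mathbf A}(\beta)\) by the first paragraph.

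I expect the main obstacle to be the exponent bookkeeping between the two normalizations. One needs the commutation matrix of the seed \(\mathbf s_t(\delta(\beta),\beta)\) to agree with \(\Lambda^S\), under the translation \(t^{1/2}=q^{-1/2}\) and with the sign conventions fixed in \eqref{eq:Qin-quantum-convention-translation}. I would check this only for the initial seed, where it holds because the seed is the restriction of \([S^{\DD}(\beta)]_t=\mathbf s_t(\beta)\) from Theorem~\ref{thm:word-seed-monoidal-categorification}, with \(M_m\) applied and \(D_\beta\) deleted. Agreement then persists through mutations, because \(\Lambda\)-mutations transform \(\Lambda^S\) by \(E_k^T\Lambda E_k\), exactly as quantum seed mutation does.
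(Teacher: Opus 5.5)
Your proposal is correct and follows the paper's own argument. You compose the embedding of Theorem~\ref{thm:quantum-braid-seed-embedding} with \(\Phi_{\DD}\), use \eqref{eq:interval-compatibility-Phi} to identify the image of \(\mathcal K_t(\mathscr B_{\DD}(\beta))\) with \(\widetilde{\mathbf A}(\beta)\), and realize a cluster monomial as the class of a real simple tensor product of pairwise strongly commuting cluster-variable modules, which the based property of \(\Phi_{\DD}\) sends to a normalized global-basis element. Your normalization check via Lemma~\ref{lem:Nm1m2} only spells out a step the paper leaves implicit. Qin's convention translation \eqref{eq:Qin-quantum-convention-translation} plays no role in that check, since the commutation form comes directly from \([S^{\DD}(\beta)]_t=\mathbf s_t(\beta)\) under \(M_m\) and restriction.
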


\begin{proof}
Compose the embedding of
Theorem~\ref{thm:quantum-braid-seed-embedding} with \(\Phi_{\DD}\).
By \eqref{eq:interval-compatibility-Phi} and the definition of
\(\mathscr B_{\DD}(\beta)\),
\begin{equation}\label{eq:image-B-category-A-beta}
 \Phi_{\DD}\!\left(
   \mathcal K_t(\mathscr B_{\DD}(\beta))\right)
 =\widehat{\cA}_{\ZZ[q^{\pm1/2}]}(\beta)
      \cap T_{\delta(\beta)}\widehat{\cA}_{\geq0}
 =\widetilde{\mathbf A}(\beta).
\end{equation}
This proves the claimed embedding. A quantum cluster monomial is
the normalized class of a tensor product of powers of real,
pairwise strongly commuting cluster-variable modules. This tensor
product is real and simple and belongs to the monoidal category
\(\mathscr B_{\DD}(\beta)\). The based property of
\(\Phi_{\DD}\) sends its class to a normalized global-basis
element; \eqref{eq:image-B-category-A-beta} places it in
\(\widetilde{\mathbf A}(\beta)\).
\end{proof}

\section{Braid varieties}\label{sec:braid-variety}

Throughout this section, the Cartan matrix
\(C=(c_{ij})_{i,j\in I}\) is assumed to be simply laced. We recall the
notion of braid varieties in \cite{casals2025cluster} and fix
the conventions used below.

Fix a pair of opposite Borel subgroups \(B^+\) and \(B^-\). Let \(U^+\)
and \(U^-\) be their respective unipotent radicals, and set
\(H:=B^+\cap B^-\). For flags \(F=xB^+\) and \(F'=yB^+\) in \(G/B^+\),
we write \(F\xrightarrow{w}F'\) if
\(x^{-1}y\in B^+\dot wB^+\), where \(\dot w\) is a fixed representative
of \(w\in W\). In particular, \(F\xrightarrow{s_i}F'\) means that the
two flags are in relative position \(s_i\).

We fix a pinning of \(G\). Thus, for every \(i\in I\), there is a
homomorphism \(\phi_i\colon\SL_2\to G\) satisfying
\begin{align*}
    \phi_i\!\begin{pmatrix}1&t\\0&1\end{pmatrix}&=x_i(t), &
    \phi_i\!\begin{pmatrix}1&0\\t&1\end{pmatrix}&=y_i(t), &
    \phi_i\!\begin{pmatrix}t&0\\0&t^{-1}\end{pmatrix}&=\alpha_i^\vee(t),
\end{align*}
where \(x_i(t)\) and \(y_i(t)\) are the exponentiated Chevalley
generators. We use the representatives
\begin{align*}
    \dot s_i
    &:=
    \phi_i\!\begin{pmatrix}0&-1\\1&0\end{pmatrix}, &
    \dot s_i^{-1}
    &:=
    \phi_i\!\begin{pmatrix}0&1\\-1&0\end{pmatrix}, &
    B_i(t)
    &:=
    \phi_i\!\begin{pmatrix}t&-1\\1&0\end{pmatrix}
    =x_i(t)\dot s_i.
\end{align*}
 Let
\(\beta=(i_1,\ldots,i_r)\) be a positive braid word. The braid variety associated with \(\beta\) is
\begin{equation}\label{eq:braid-variety-flags}
    X(\beta)
    :=
    \left\{
        (F_1,\ldots,F_r)\in(G/B^+)^r
        \ \middle|\
        B^+\xrightarrow{s_{i_1}}F_1
        \xrightarrow{s_{i_2}}\cdots
        \xrightarrow{s_{i_r}}F_r
        =\dot{\delta(\beta)}B^+
    \right\}.
\end{equation}

\subsection{Definition of Demazure weaves}
A \emph{Demazure weave} is a downward-oriented planar graph whose edges
are labeled by elements of \(I\). Its internal vertices encode precisely
the following local transformations of positive braid words:
\begin{enumerate}
    \item a commutation, or \(2\)-move,
    \(ik\longleftrightarrow ki\), when \(c_{ik}=c_{ki}=0\);

    \item a braid, or \(3\)-move,
    \(iji\longleftrightarrow jij\), when
    \(c_{ij}=c_{ji}=-1\);

    \item a Demazure move \(ii\longrightarrow i\).
\end{enumerate}
The corresponding vertices are four-valent, six-valent, and trivalent,
respectively.

Every generic horizontal slice of a Demazure weave determines a positive
braid word. Reading the weave from top to bottom gives a sequence of the
three local transformations above, each of which preserves the Demazure
product. Hence, if the upper boundary is labeled by a braid word \(\beta\)
and the lower boundary is reduced, the latter is a reduced expression of
\(\delta(\beta)\). We write
\begin{equation*}
    \mathfrak W\colon\beta\longrightarrow\delta(\beta),
\end{equation*}
where the notation \(\delta(\beta)\) on the right also refers to the
chosen reduced word. The three local models are displayed in
Figure~\ref{fig:local-vertices-demazure-weave}.

\begin{figure}[htbp]
\centering
\begin{tikzpicture}[
    line width=1.25pt,
    line cap=round,
    line join=round,
    every node/.style={font=\Large},
    myblue/.style={color=blue!80!black},
    myred/.style={color=red!80!black},
    mygreen/.style={color=green!60!black}
]

\begin{scope}[xshift=0cm]
    \coordinate (c) at (0,0);

    \draw[myred] (0,1.35) -- (c);
    \draw[myred]
        (c) .. controls (-0.45,-0.25) and (-0.85,-0.78) ..
        (-0.90,-1.25);
    \draw[myred]
        (c) .. controls (0.45,-0.25) and (0.85,-0.78) ..
        (0.90,-1.25);

    \draw[myblue]
        (-0.90,1.35)
        .. controls (-0.83,0.80) and (-0.42,0.25) ..
        (c);
    \draw[myblue]
        (0.90,1.35)
        .. controls (0.83,0.80) and (0.42,0.25) ..
        (c);
    \draw[myblue] (c) -- (0,-1.25);

    \node[myblue] at (-1.08,1.23) {$i$};
    \node[myred]  at (0.20,1.20) {$j$};
    \node[myblue] at (1.08,1.23) {$i$};

    \node[black,font=\normalsize,align=center] at (0,-1.75)
    {(a) Six-valent vertex\\[-1pt]
     \(iji\longleftrightarrow jij\)};
\end{scope}

\begin{scope}[xshift=4.15cm]
    \coordinate (c) at (0,0);

    \draw[myblue]
        (-0.90,1.35)
        .. controls (-0.82,0.76) and (-0.40,0.25) ..
        (c)
        .. controls (0.43,-0.27) and (0.82,-0.78) ..
        (0.90,-1.25);

    \draw[mygreen]
        (0.90,1.35)
        .. controls (0.82,0.76) and (0.40,0.25) ..
        (c)
        .. controls (-0.43,-0.27) and (-0.82,-0.78) ..
        (-0.90,-1.25);

    \node[myblue]  at (-1.08,1.23) {$i$};
    \node[mygreen] at (1.08,1.23) {$k$};

    \node[black,font=\normalsize,align=center] at (0,-1.75)
    {(b) Four-valent vertex\\[-1pt]
     \(ik\longleftrightarrow ki\)};
\end{scope}

\begin{scope}[xshift=8.30cm]
    \coordinate (c) at (0,0);

    \draw[myblue]
        (-0.90,1.35)
        .. controls (-0.83,0.80) and (-0.42,0.25) ..
        (c);
    \draw[myblue]
        (0.90,1.35)
        .. controls (0.83,0.80) and (0.42,0.25) ..
        (c);
    \draw[myblue] (c) -- (0,-1.25);

    \node[myblue] at (-1.08,1.23) {$i$};
    \node[myblue] at (1.08,1.23) {$i$};

    \node[black,font=\normalsize,align=center] at (0,-1.75)
    {(c) Trivalent vertex\\[-1pt]
     \(ii\longrightarrow i\)};
\end{scope}
\end{tikzpicture}

\caption{The local vertices of a Demazure weave. The six-valent
vertex represents the braid relation
\(s_is_js_i=s_js_is_j\); the four-valent vertex represents the
commutation relation \(s_is_k=s_ks_i\); and the trivalent vertex
represents the Demazure relation \(s_i\ast s_i=s_i\).}
\label{fig:local-vertices-demazure-weave}
\end{figure}

We next recall the equivalence relation on Demazure weaves. It is generated
by the following local identifications.

\begin{enumerate}
    \item[(i)]
    Suppose that
    \(\mathfrak W,\mathfrak W'\colon\gamma\to\gamma'\) consist only
    of \(2\)- and \(3\)-moves and that \(\gamma\) and \(\gamma'\)
    represent the same positive braid. Then
    \(\mathfrak W\sim\mathfrak W'\).

    \item[(ii)]
    If \(c_{ij}=c_{ji}=-1\), then the following two weaves are
    equivalent:
    \begin{equation*}
    \begin{aligned}
        \mathfrak W\colon\quad
        ijij&\longrightarrow jijj\longrightarrow jij,\\
        \mathfrak W'\colon\quad
        ijij&\longrightarrow iiji\longrightarrow iji
              \longrightarrow jij.
    \end{aligned}
    \end{equation*}

    \item[(iii)]
    If \(c_{ik}=c_{ki}=0\), then the following two weaves are
    equivalent:
    \begin{equation*}
    \begin{aligned}
        \mathfrak W\colon\quad
        iki&\longrightarrow iik\longrightarrow ik\longrightarrow ki,\\
        \mathfrak W'\colon\quad
        iki&\longrightarrow kii\longrightarrow ki.
    \end{aligned}
    \end{equation*}
\end{enumerate}

Thus, relation~(ii) moves a trivalent vertex through a six-valent vertex,
whereas relation~(iii) moves it through a four-valent vertex.

Besides these equivalences, there is a second local operation. The word
\(iii\) admits two Demazure weaves to \(i\), according to the order in
which the two Demazure moves are performed. These weaves are not
equivalent. The operation interchanging them is the \emph{weave mutation}
\(\mu\), shown in Figure~\ref{fig:weave-mutation-branches}.

\begin{figure}[htbp]
\centering
\begin{tikzpicture}[
    line cap=round,
    line join=round,
    bluestrand/.style={
        draw=blue!80!black,
        line width=1.25pt
    }
]

\coordinate (leftRoot) at (0,1);
\coordinate (leftJunction) at (-0.62,1.78);

\draw[bluestrand] (0,0) -- (leftRoot);
\draw[bluestrand]
    (leftRoot)
    .. controls (-0.25,1.28) and (-0.48,1.60) ..
    (leftJunction)
    .. controls (-0.92,2.25) and (-1.12,2.68) ..
    (-1.15,3);
\draw[bluestrand]
    (leftJunction)
    .. controls (-0.05,2.08) and (0.30,2.48) ..
    (0.30,3);
\draw[bluestrand]
    (leftRoot)
    .. controls (0.62,1.60) and (1.13,2.30) ..
    (1.18,3);

\node[font=\normalsize] at (0,-0.30) {$\mathfrak W$};

\draw[
    <->,
    black,
    line width=0.8pt,
    shorten <=2pt,
    shorten >=2pt
]
    (1.75,1.75)
    -- node[above=3pt,font=\Large] {$\mu$}
    (3.25,1.75);

\coordinate (rightRoot) at (5,1);
\coordinate (rightJunction) at (5.62,1.78);

\draw[bluestrand] (5,0) -- (rightRoot);
\draw[bluestrand]
    (rightRoot)
    .. controls (4.38,1.60) and (3.87,2.30) ..
    (3.82,3);
\draw[bluestrand]
    (rightJunction)
    .. controls (5.05,2.08) and (4.70,2.48) ..
    (4.70,3);
\draw[bluestrand]
    (rightRoot)
    .. controls (5.25,1.28) and (5.48,1.60) ..
    (rightJunction)
    .. controls (5.92,2.25) and (6.12,2.68) ..
    (6.15,3);

\node[font=\normalsize] at (5,-0.30) {$\mathfrak W'$};
\end{tikzpicture}

\caption{The weave mutation interchanging the two Demazure weaves
from \(iii\) to \(i\).}
\label{fig:weave-mutation-branches}
\end{figure}

The following connectivity result will be used repeatedly.

\begin{lemma}[{\cite[Lemma~4.4]{casals2025cluster}}]
    \label{lem:connectivity-Demazure-weaves}
    Let
    \(\mathfrak W,\mathfrak W'\colon
      \beta\longrightarrow\delta(\beta)\)
    be Demazure weaves with the same fixed reduced word at the lower
    boundary. Then \(\mathfrak W\) and \(\mathfrak W'\) are connected
    by a finite sequence of weave equivalences and weave mutations.
\end{lemma}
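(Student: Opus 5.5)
The plan is to argue by induction on the length \(r\) of \(\beta\), showing that every Demazure weave \(\mathfrak W\colon\beta\to\delta(\beta)\) is connected, by weave equivalences and mutations, to a fixed normal form. The normal form is the right inductive weave \(\overrightarrow{\mathfrak W}(\beta)\), built recursively as follows. Write \(\beta=\beta' i\). First apply \(\overrightarrow{\mathfrak W}(\beta')\colon\beta'\to\delta(\beta')\) on the left, keeping the last strand vertical. Then apply a weave \(\delta(\beta')i\to\delta(\beta)\). If \(\delta(\beta')s_i>\delta(\beta')\), this weave consists only of \(2\)- and \(3\)-moves. Otherwise it braid-moves the reduced word of \(\delta(\beta')\) into one ending in \(i\) and then performs one trivalent move \(ii\to i\). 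Connectivity then follows because any two weaves are both connected to this normal form.

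\emph{Step 1 (bottom part).} Consider two weaves \(\delta(\beta')i\to\delta(\beta)\) that use at most one trivalent vertex. If neither has a trivalent vertex, they are equivalent by relation (i), since both sides are reduced words of the same element. If each has one trivalent vertex, I would first use (i) to rewrite both so that they pass through a common reduced word of \(\delta(\beta')\) ending in \(i\). The remaining discrepancy is then a single trivalent vertex that has been moved through \(2\)- and \(3\)-moves; this is removed by relations (ii) and (iii). The conclusion of Step 1 is that the bottom part of the normal form is well defined up to equivalence.

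\emph{Step 2 (factorization, the main step).} Given an arbitrary \(\mathfrak W\colon\beta' i\to\delta(\beta)\), I want to modify it into a weave that acts first on \(\beta'\) alone, down to some word for \(\delta(\beta')\), and only then involves the rightmost strand. Note that the restriction of this first part to \(\beta'\) is automatically a Demazure weave \(\beta'\to\delta(\beta')\), because Demazure moves preserve the Demazure product. The method is to follow the rightmost top strand of color \(i\) through \(\mathfrak W\) and push every vertex it meets below all vertices not involving it, working from the top down. The cases are as follows.
\begin{itemize}
\item A \(2\)- or \(3\)-move involving the strand is moved past independent vertices by planar isotopy. If it lies inside a chain of braid moves, it is reorganized using (i).
\item A trivalent vertex joining the strand to a neighbouring \(i\)-strand is moved past six- and four-valent vertices using (ii) and (iii).
\item If such a trivalent vertex lies directly above another trivalent vertex of the same color formed from \(\beta'\), the configuration is an \(iii\to i\) weave with the wrong bracketing. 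Here I apply the weave mutation \(\mu\) of Figure~\ref{fig:weave-mutation-branches}, which lets the merges on the \(\beta'\) side occur first.
\end{itemize}
I expect this to be the hardest part. The difficulty is showing that the process terminates: each local move should strictly decrease a complexity, such as the number of vertices of \(\mathfrak W\) lying on or below the tracked strand before the last non-strand vertex. One also has to check that the bottom word of the \(\beta'\)-part is reduced. This holds because a Demazure weave from \(\beta'\) cannot produce a non-reduced word without being able to continue with further trivalent moves, and those further moves can be pushed up into the \(\beta'\)-part.

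\emph{Step 3 (induction).} After Step 2, \(\mathfrak W\) is a weave \(\mathfrak W_1\colon\beta'\to\delta(\beta')\), placed beside the vertical last strand, followed by a bottom weave of the type in Step 1. The bottom word of \(\mathfrak W_1\) may be a reduced word different from the one used in \(\overrightarrow{\mathfrak W}(\beta')\). I absorb this difference by appending \(2\)- and \(3\)-moves to \(\mathfrak W_1\) and compensating in the bottom part via (i). By the induction hypothesis, \(\mathfrak W_1\) is connected to \(\overrightarrow{\mathfrak W}(\beta')\). By Step 1, the bottom part is equivalent to the standard one. Hence \(\mathfrak W\) is connected to \(\overrightarrow{\mathfrak W}(\beta)\), and applying this to both \(\mathfrak W\) and \(\mathfrak W'\) proves the lemma.
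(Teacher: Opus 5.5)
There is a genuine gap. The paper gives no argument of its own here, since the lemma is quoted from the cited source, so this is an assessment of your outline on its merits. The architecture (right inductive normal form, induction, Step~3) is fine. However, Step~2, which carries all of the content, is not established, and Step~1 is claimed in a stronger form than you justify.

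\emph{Step 2.} ``Follow the rightmost top strand'' has no meaning past the first six-valent vertex that mixes the last letter with letters of $\beta'$. At $iji\to jij$ the two outer $i$-edges merge into one edge, and an edge entering in the middle splits in two. Vertices below such a mixing vertex consume its outputs, so they cannot be reordered by isotopy; they must be rewritten, and your bullets do not say how in general. Already for $\beta=iiji$ with $c_{ij}=-1$, the weave $iiji\to ijij\to jijj\to jij$ has its only trivalent vertex fed by two six-valent vertices, the first of which mixes the last letter with $\beta'$. Bringing it to the factored form $iiji\to iji\to jij$ requires a genuine application of relation~(ii) together with~(i), not a push. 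Termination is also open: the two sides of~(ii) have two and three vertices, and~(i) can lengthen braid-only pieces arbitrarily, so no evident complexity decreases. Finally, your argument that the $\beta'$-part ends in a reduced word (``those further moves can be pushed up'') presupposes the factorization being proved.

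\emph{Step 1.} You need that \emph{every} one-trivalent weave $\mathbf u\,i\to\mathbf d$ is equivalent to the standard one. Relations~(ii) and~(iii) only carry a trivalent vertex across a single rank-two configuration. Showing that a braid-only path from $\mathbf u'\,i\,i$ (with $\mathbf u' i$ reduced) to the word $\mathbf x\,ii\,\mathbf y$ at which the other weave merges can be chosen to decompose into such slides is essentially the one-trivalent case of the lemma itself.

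\emph{How to close the gap.} Both problems disappear if you stop factoring an arbitrary weave by its last letter and instead peel off its \emph{first} move.
\begin{enumerate}
\item Let $N(\beta)$ be $\overrightarrow{\mathfrak W}(\beta)$ followed by braid moves to the fixed bottom word. Changing the auxiliary reduced words used in its construction changes it only up to equivalence, by Matsumoto's theorem, relation~(i) and isotopy. This weak form is all of Step~1 you need.
\item Induct on the number of vertices of $\mathfrak W$. Write $\mathfrak W=m\circ\mathfrak W_1$, where $m\colon\beta\to\beta_1$ is its topmost move. By induction $\mathfrak W_1$ is connected to $N(\beta_1)$, so it suffices to connect $m\circ N(\beta_1)$ with $N(\beta)$.
\item The move $m$ acts on a window $\mathbf w\to\mathbf w_1$ of two or three consecutive letters. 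Sliding $m$ down by isotopy, both weaves begin with the right inductive weave of the prefix $\gamma$ before the window. After the window, both process the remaining letters identically, up to a braid-only discrepancy absorbed by~(i).
\item Everything therefore reduces to comparing the right-inductive processing of $\mathbf u\,\mathbf w$ and of $\mathbf u\,\mathbf w_1$, where $\mathbf u$ is a reduced word for $\delta(\gamma)$. This is a finite check in the parabolic part of $\delta(\gamma)$ of rank at most two:
\begin{itemize}
\item a commutation move is handled by~(iii) and~(i);
\item a braid move is handled by~(i) and~(ii), plus one mutation when all three letters are solid;
\item a Demazure move $ii\to i$ is an isotopy if $s_i$ is not a right descent of $\delta(\gamma)$. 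Otherwise it is exactly the change of bracketing of $iii\to i$, that is, one weave mutation as in Figure~\ref{fig:weave-mutation-branches}.
\end{itemize}
\end{enumerate}
With this route, no global rearrangement or termination argument is needed.
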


\subsubsection{Double inductive weaves}

Following \cite[Section~6.4]{casals2025cluster}, we use double strings to
record the successive addition of letters on either side of a braid word.

\begin{definition}
    A \emph{double string} is a sequence
    \begin{equation}\label{eq:double-string}
        \ddot{\mathbf{s}}=(i_1X_1,\ldots,i_rX_r),
        \qquad i_k\in I,
        \qquad X_k\in\{\mathrm L,\mathrm R\}.
    \end{equation}
    Define braid words \(\beta^{\ddot{\mathbf{s}}}_0,\ldots,\beta^{\ddot{\mathbf{s}}}_r\) recursively by
    \(\beta^{\ddot{\mathbf{s}}}_0=\varnothing\) and
    \begin{equation}\label{eq:double-string-recursion}
        \beta^{\ddot{\mathbf{s}}}_k=
        \begin{cases}
            i_k\beta^{\ddot{\mathbf{s}}}_{k-1}, & X_k=\mathrm L,\\
            \beta^{\ddot{\mathbf{s}}}_{k-1}i_k, & X_k=\mathrm R.
        \end{cases}
    \end{equation}
    We call \(\ddot{\mathbf{s}}\) a double string for \(\beta\) if
    \(\beta^{\ddot{\mathbf{s}}}_r=\beta\).
\end{definition}

A braid word may admit several double strings; the choice between
\(\mathrm L\) and \(\mathrm R\) in the first entry has no effect. Set
\(u_k^{\ddot{\mathbf{s}}}:=\delta(\beta^{\ddot{\mathbf{s}}}_k)\) for \(0\leq k\leq r\). Since
\(\beta^{\ddot{\mathbf{s}}}_k\) is obtained by adjoining a single letter to
\(\beta_{k-1}^{\ddot{\mathbf{s}}}\), either
\(\ell(u_k^{\ddot{\mathbf{s}}})=\ell(u_{k-1}^{\ddot{\mathbf{s}}})+1\) or
\(u^{\ddot{\mathbf{s}}}_k=u^{\ddot{\mathbf{s}}}_{k-1}\). In the
latter case, we call \(k\) a \emph{solid insertion}, or a
\emph{solid step}.

\begin{definition}\label{def:double-inductive-weave}
    Let \(\ddot{\mathbf{s}}=(i_1X_1,\ldots,i_rX_r)\) be a double string for
    \(\beta\). The associated \emph{double inductive weave}
    \begin{equation*}
        \mathfrak W(\ddot{\mathbf{s}})\colon
        \beta\longrightarrow\delta(\beta)
    \end{equation*}
    is constructed recursively. Begin with the empty weave
    \(\mathfrak W_0\). Suppose that
    \(\mathfrak W_{k-1}\colon\beta^{\ddot{\mathbf{s}}}_{k-1}\to u^{\ddot{\mathbf{s}}}_{k-1}\) has already
    been constructed.

    If \(\ell(u^{\ddot{\mathbf{s}}}_k)=\ell(u^{\ddot{\mathbf{s}}}_{k-1})+1\), obtain \(\mathfrak W_k\) by
    adjoining an \(i_k\)-strand to the left of \(\mathfrak W_{k-1}\)
    when \(X_k=\mathrm L\), and to its right when
    \(X_k=\mathrm R\). The lower boundary is then the corresponding
    reduced expression of \(u^{\ddot{\mathbf{s}}}_k\).

    If \(u^{\ddot{\mathbf{s}}}_k=u^{\ddot{\mathbf{s}}}_{k-1}\), then \(s_{i_k}\) is a left descent of
    \(u^{\ddot{\mathbf{s}}}_{k-1}\) when \(X_k=\mathrm L\), and a right descent when
    \(X_k=\mathrm R\). In the first case, use braid moves to choose a
    reduced expression of \(u^{\ddot{\mathbf{s}}}_{k-1}\) beginning with \(i_k\); in the
    second case, choose one ending with \(i_k\). The newly adjoined
    \(i_k\)-strand is then merged with this boundary strand by a
    Demazure move, creating one trivalent vertex. Finally, set
    \(\mathfrak W(\ddot{\mathbf{s}}):=\mathfrak W_r\).
\end{definition}

We decorate the \(k\)-th entry by a superscript \(+\) precisely when
it increases the length of the Demazure product:
\begin{equation}\label{eq:decorated-double-string}
    i_kX_k^+
    \quad\Longleftrightarrow\quad
    \ell(u^{\ddot{\mathbf{s}}}_k)=\ell(u^{\ddot{\mathbf{s}}}_{k-1})+1.
\end{equation}
The entries for which \(u^{\ddot{\mathbf{s}}}_k=u^{\ddot{\mathbf{s}}}_{k-1}\) remain undecorated. By
Definition~\ref{def:double-inductive-weave}, these undecorated entries
are in canonical bijection with the trivalent vertices of
\(\mathfrak W(\ddot{\mathbf{s}})\).
More explicitly, set
\[
    J_{\mathrm{sol}}(\ddot{\mathbf s})
    :=\{k\in[r]\mid
          u_k^{\ddot{\mathbf s}}=u_{k-1}^{\ddot{\mathbf s}}\}.
\]
We denote the preceding bijection by
\begin{equation}\label{eq:trivalent-solid-position-bijection}
    q_{\ddot{\mathbf s}}\colon
    \mathfrak W(\ddot{\mathbf s})_3
    \xrightarrow{\sim}
    J_{\mathrm{sol}}(\ddot{\mathbf s}).
\end{equation}
Accordingly, vertices and solid positions are always related through
\(q_{\ddot{\mathbf s}}\); they are not treated as the same set.

For a braid word \(\beta=(i_1,\ldots,i_r)\), the double strings
\begin{equation*}
    (i_1\mathrm R,\ldots,i_r\mathrm R)
    \qquad\text{and}\qquad
    (i_r\mathrm L,\ldots,i_1\mathrm L)
\end{equation*}
produce the right and left inductive weaves, respectively.

\begin{example}\label{ex:double-inductive-weave-A2}
    In type \(A_2\), consider
    \begin{equation*}
        \ddot{\mathbf{s}}
        =(1\mathrm R,2\mathrm L,2\mathrm R,
          1\mathrm L,2\mathrm R).
    \end{equation*}
    The associated braid words are
    \(1,21,212,1212,12122\), and their Demazure products satisfy
    \begin{equation*}
        \delta(1)<\delta(21)<\delta(212)
        =\delta(1212)=\delta(12122)=w_0.
    \end{equation*}
    Hence the decorated double string is
    \begin{equation*}
        (1\mathrm R^+,2\mathrm L^+,2\mathrm R^+,
         1\mathrm L,2\mathrm R).
    \end{equation*}
    Its double inductive weave has two trivalent vertices, corresponding
    to the undecorated entries \(1\mathrm L\) and \(2\mathrm R\).
\end{example}

\subsection{Lusztig cycles and intersection numbers}

Let \(\mathfrak W\) be a Demazure weave. A \emph{cycle} is a map
\(C\colon E(\mathfrak W)\to\ZZ_{\geq0}\).

\begin{definition}\label{def:Lusztig-cycle}
    A cycle \(C\) is a \emph{Lusztig cycle} if its weights propagate
    through the vertices as follows.
    \begin{enumerate}
        \item At a trivalent vertex,
        \begin{equation}\label{eq:Lusztig-rule-trivalent}
            C(e_{v,\mathrm s})
            =\min\{C(e_{v,\mathrm{nw}}),C(e_{v,\mathrm{ne}})\}.
        \end{equation}
        \item At a four-valent vertex, the weights pass through along
        their strands:
        \begin{equation}\label{eq:Lusztig-rule-four-valent}
            C(e_{v,\mathrm{sw}})=C(e_{v,\mathrm{ne}}),
            \qquad
            C(e_{v,\mathrm{se}})=C(e_{v,\mathrm{nw}}).
        \end{equation}
        \item At a six-valent vertex, with the incoming and outgoing
        edges ordered from left to right, set
        \(m_v:=\min\{C(e_{v,\mathrm{nw}}),C(e_{v,\mathrm{ne}})\}\),
        and require
        \begin{align}
            C(e_{v,\mathrm{sw}})
            &=C(e_{v,\mathrm n})+C(e_{v,\mathrm{ne}})-m_v,
            \notag\\
            C(e_{v,\mathrm s})&=m_v,
            \label{eq:Lusztig-rule-six-valent}\\
            C(e_{v,\mathrm{se}})
            &=C(e_{v,\mathrm{nw}})+C(e_{v,\mathrm n})-m_v.
            \notag
        \end{align}
    \end{enumerate}
\end{definition}

For a trivalent vertex \(v\in\mathfrak W_3\), let \(\gamma_v^{\mathfrak W}\) be
zero above \(v\), assign weight one to \(e_{v,\mathrm s}\), and
propagate the weights below \(v\) by Definition~\ref{def:Lusztig-cycle}:
\begin{equation}\label{eq:Lusztig-cycle-at-v}
    \gamma_v^{\mathfrak W}(e)=0\quad(e\text{ above }v),
    \qquad
    \gamma_v^{\mathfrak W}(e_{v,\mathrm s})=1.
\end{equation}
The propagation rule is not imposed at the initial vertex \(v\). If $\mathfrak W$ is clear from the content, we simply write $\gamma_v$ for the Lusztig cycle $\gamma_v^{\mathfrak W}$.
We call \(v\) \emph{frozen} if \(\gamma_v\) is nonzero on a bottom
edge, and write
\(
    \mathfrak W_{3,\mathrm{ex}}
    :=\mathfrak W_3\setminus\mathfrak W_{3,\mathrm{fr}}.
\)

\subsubsection{Local intersection numbers}

For cycles \(C,C'\), we use the local intersection pairing of
\cite[Definitions~4.17, 4.18, and~4.23]{casals2025cluster}.

\begin{definition}\label{def:local-intersection-trivalent}
    At a trivalent vertex \(v\), let \((a_1,a',a_2)\) and
    \((b_1,b',b_2)\) be the weights of \(C\) and \(C'\) on the
    north-west, south, and north-east edges. Set
    \begin{equation}\label{eq:local-intersection-trivalent}
        \sharp_v(C\mathbin{\cdot}C')
        :=\det
        \begin{pmatrix}
            1&1&1\\
            a_1&a'&a_2\\
            b_1&b'&b_2
        \end{pmatrix}.
    \end{equation}
\end{definition}

\begin{definition}\label{def:local-intersection-six-valent}
    At a six-valent vertex \(v\), let
    \((a_1,a_2,a_3)\), \((a'_1,a'_2,a'_3)\) be the incoming and
    outgoing weights of \(C\), and define
    \((b_1,b_2,b_3)\), \((b'_1,b'_2,b'_3)\) similarly for \(C'\).
    Set
    \begin{equation}\label{eq:local-intersection-six-valent}
        \sharp_v(C\mathbin{\cdot}C')
        :=\frac12\left[
        \det\begin{pmatrix}
            1&1&1\\
            a_1&a_2&a_3\\
            b_1&b_2&b_3
        \end{pmatrix}
        -
        \det\begin{pmatrix}
            1&1&1\\
            a'_1&a'_2&a'_3\\
            b'_1&b'_2&b'_3
        \end{pmatrix}
        \right].
    \end{equation}
\end{definition}

Four-valent vertices contribute zero, and the total local intersection is
\begin{equation}\label{eq:total-local-intersection}
    \sharp_{\mathfrak W}(C\mathbin{\cdot}C')
    :=\sum_{v\in\mathfrak W_3}
      \sharp_v(C\mathbin{\cdot}C')
      +\sum_{v\in\mathfrak W_6}
      \sharp_v(C\mathbin{\cdot}C'),
\end{equation}
where \(\mathfrak W_6\) denotes the six-valent vertices. This pairing
is skew-symmetric, and it is integral on Lusztig cycles
\cite[Remark~4.22]{casals2025cluster}.

\subsubsection{Boundary intersection numbers}

Fix a horizontal slice with word
\(\eta=(i_1,\ldots,i_m)\). Label its edges \(e_1,\ldots,e_m\)
from left to right. Set
\begin{equation}\label{eq:slice-roots-coroots}
    \rho_k:=s_{i_1}\cdots s_{i_{k-1}}(\alpha_{i_k}),
    \qquad
    \rho_k^\vee
    :=s_{i_1}\cdots s_{i_{k-1}}(\alpha_{i_k}^\vee).
\end{equation}
and \(c_k=C(e_k)\), \(c'_k=C'(e_k)\).

\begin{definition}[{\cite[Definition~4.24]{casals2025cluster}}]
    \label{def:boundary-intersection}
    The \emph{boundary intersection number} of \(C\) and \(C'\) at
    the slice \(\eta\) is
    \begin{equation}\label{eq:boundary-intersection}
        \sharp_\eta(C\mathbin{\cdot}C')
        :=\frac12\sum_{k,l=1}^m
        \operatorname{sgn}(l-k)c_kc'_l
        \bigl\langle\rho_k,\rho_l^\vee\bigr\rangle,
    \end{equation}
\end{definition}

In simply-laced type this pairing is skew-symmetric.

\subsubsection{The exchange matrix}

For a Demazure weave
\(\mathfrak W\colon\beta\to\delta(\beta)\), define
\begin{equation}\label{eq:weave-exchange-entry}
    \varepsilon_{v,v'}
    :=\sharp_{\mathfrak W}
       (\gamma_v\mathbin{\cdot}\gamma_{v'})
      +\sharp_{\delta(\beta)}
       (\gamma_v\mathbin{\cdot}\gamma_{v'}).
\end{equation}
Here the second term is evaluated at the lower boundary
\(\delta(\beta)\). Both pairings are skew-symmetric, so
\(\varepsilon_{v,v'}=-\varepsilon_{v',v}\).

\begin{definition}[{\cite[Definition~4.26]{casals2025cluster}}]
    \label{def:exchange-matrix-weave}
    The matrix
    \begin{equation}\label{eq:exchange-matrix-weave}
        B(\mathfrak W)
        :=(\varepsilon_{v',v})_{v\in\mathfrak W_3, v'\in \mathfrak W_{3,\rm ex}}
    \end{equation}
    is the extended exchange matrix associated with
    \(\mathfrak W\).
\end{definition}

This transpose implements the convention in
\eqref{eq:quiver-convention}: a positive intersection entry
\(\varepsilon_{u,v}\) represents an arrow from \(u\) to \(v\)
in \cite{casals2025cluster}, whereas our matrix has
\(b_{v,u}>0\) for that arrow. Thus
\(b_{u,v}=\varepsilon_{v,u}=-\varepsilon_{u,v}\) whenever
\(v\) is mutable. Transposing the full skew-symmetric intersection
matrix before restricting the columns preserves the quiver and
commutes with mutation.

The matrix \(B(\mathfrak W)\) is integral: half-integral local
contributions occur only between frozen vertices and therefore do not
enter the extended exchange matrix
\cite[Remark~4.27]{casals2025cluster}.

\subsection{Cluster variables associated with Demazure weaves}

Let
\(\ddot{\mathbf{s}}=(i_1X_1,\ldots,i_rX_r)\) be a double string for
\(\beta\), and let
\(\mathfrak W=\mathfrak W(\ddot{\mathbf{s}})
\colon\beta\to\delta(\beta)\) be the associated double inductive
weave. Write the resulting braid word as
\(\beta=(h_1,\ldots,h_r)\).

We use the edge labeling of
\cite[Definition~4.2]{casals2025comparing}.

\begin{definition}\label{def:edge-labeling}
For an edge \(e\) of color \(i\), its \emph{edge label} is an element
of the form
\[
    g_e
    =
    B_i(f_e)\alpha_i^\vee(u_e)
    \in G,
\]
where \(f_e,u_e\in\mathbb{C}(X(\beta))\). These rational functions are
determined as follows.

\begin{enumerate}
    \item[(i)]
    Let \(e_k^{\mathrm{top}}\) be the top edge corresponding to the
    \(k\)-th letter \(h_k\) of \(\beta\). Then
    \[
        g_{e_k^{\mathrm{top}}}
        =
        B_{h_k}(z_k)\alpha_{h_k}^\vee(1).
    \]
    Equivalently,
    \(f_{e_k^{\mathrm{top}}}=z_k\) and
    \(u_{e_k^{\mathrm{top}}}=1\).

    \item[(ii)]
    The labels on the remaining edges are obtained by propagating the
    top labels downward through the local vertices of \(\mathfrak W\)
    according to the rules for framed weaves in
    \cite[Definition~5.8]{casals2025cluster}.
\end{enumerate}
\end{definition}

In particular, the edge labeling is uniquely determined by the
coordinate functions \(z_1,\ldots,z_r\) on \(X(\beta)\).
 For \(v\in\mathfrak W_3\), let
\(e_{v,\mathrm{s}}\) be the southern, or outgoing, edge incident with
\(v\).

\begin{definition}[{\cite[Definition~4.4]{casals2025comparing}}]
\label{def:weave-cluster-variable}
The \emph{weave cluster variable} associated with
\(v\in\mathfrak W_3\) is
\[
    X_v
    :=
    u_{e_{v,\mathrm{s}}}.
\]
\end{definition}

For each \(v\in\mathfrak W_3\), let \(\gamma_v\) be the corresponding
Lusztig cycle. By
\cite[Theorem~5.12(i)]{casals2025cluster}, or equivalently
\cite[Theorem~4.19]{casals2025comparing}, one has
\begin{equation}\label{eq:u-factorization}
    u_e
    =
    \prod_{v\in\mathfrak W_3}
    X_v^{\gamma_v(e)}\end{equation}
for every edge \(e\) of \(\mathfrak W\). Moreover, by \cite[equation~(25)]{casals2025cluster} we have 
\begin{equation}
X_v=f_{e_{v,\rm ne}}\prod_{v'\neq v}X_{v'}^{\gamma_{v'}(e_{v,\rm ne})+\gamma_{v'}(e_{v,\rm nw})-\gamma_{v'}(e_{v,s})}.
\end{equation}

\subsubsection{Grid minors}

We regard each fundamental weight
\(\varpi_j\) as an algebraic character of the maximal torus:
\[
    \varpi_j
    \in
    X^*(H)
    =
    \operatorname{Hom}_{\mathrm{alg.grp}}
    (H,\mathbb{C}^{\times}).
\]
Thus \(\varpi_j(h)\) is defined for every \(h\in H\).

Draw \(\mathfrak W\) inside an upward-pointing triangle
\(\mathcal T\) with horizontal base, so that, for each \(c\), the
boundary vertex labeled \(i_c\) lies strictly below every trivalent
vertex created before the \(c\)-th step. For each
\(c\in\{1,\ldots,r\}\), let \(\ell_c\) be the horizontal line passing
through the boundary vertex labeled \(i_c\), ordered from top to
bottom. These lines divide \(\mathcal T\) into \(r+1\) horizontal
strips: the strip above \(\ell_1\), the strips between
\(\ell_c\) and \(\ell_{c+1}\) for \(1\leq c<r\), and the strip below
\(\ell_r\). For \(c\in\{0,\ldots,r\}\), let \(L_c\) and \(R_c\)
denote, respectively, the leftmost and rightmost regions of
\(\mathcal T\setminus\mathfrak W\) contained in the \(c\)-th strip. In particular, \(L_0=R_0\) is the unique region containing the
apex of \(\mathcal T\), while \(L_r\) and \(R_r\) contain the
bottom-left and bottom-right vertices, respectively. Moreover,
\begin{equation}\label{eq:left-right-regions}
\begin{aligned}
    L_c=L_{c+1}
    &\quad\Longleftrightarrow\quad
    X_{c+1}=\mathrm R,\\
    R_c=R_{c+1}
    &\quad\Longleftrightarrow\quad
    X_{c+1}=\mathrm L.
\end{aligned}
\end{equation}

Represent a point of \(X(\beta)\) by a sequence of flags
\(\mathbf B=(B_0,B_1,\ldots,B_r)\), where \(B_0=B^+\) and
\(B_r=\dot{\delta(\beta)}B^+\). Following the convention of
\cite[Section~3.3.3]{casals2025comparing}, place these flags in the
top regions of \(\mathcal T\setminus\mathfrak W\), starting with
\(B_0\) in \(L_r\) and proceeding in the order prescribed by the
double string. These boundary labels extend uniquely to a flag
labeling of \(\mathfrak W\). For \(0\leq c\leq r\), let
\(B_{(c)}^L\) and \(B_{(c)}^R\) denote the flags labeling \(L_c\) and
\(R_c\), respectively.
The following example illustrates the construction.

\begin{figure}[htbp]
\centering
\begin{tikzpicture}[
    x=0.85cm,
    y=0.85cm,
    line cap=round,
    line join=round,
    every node/.style={font=\scriptsize}
]
    \def\ha{1.18}
    \def\hb{2.72}
    \def\hc{3.72}
    \def\hd{4.92}
    \def\he{5.42}
    \def\hf{5.98}
    \def\hg{6.67}

    \coordinate (b1) at ({-7*(1-\ha/7.2)},\ha);
    \coordinate (b2) at ({ 7*(1-\hb/7.2)},\hb);
    \coordinate (b3) at ({ 7*(1-\hc/7.2)},\hc);
    \coordinate (b4) at ({ 7*(1-\hd/7.2)},\hd);
    \coordinate (b5) at ({-7*(1-\he/7.2)},\he);
    \coordinate (b6) at ({ 7*(1-\hf/7.2)},\hf);
    \coordinate (b7) at ({ 7*(1-\hg/7.2)},\hg);

    \coordinate (v1) at (-1.55,1.18);
    \coordinate (v2) at ( 3.10,1.98);
    \coordinate (u)  at ( 1.92,2.88);
    \coordinate (v4) at ( 1.92,4.55);
    \coordinate (v5) at (-1.15,5.05);

    \draw[thin]
        (-7,0) -- (0,7.2) -- (7,0) -- cycle;

    \foreach \y in {\ha,\hb,\hc,\hd,\he,\hf,\hg}
    {
        \pgfmathsetmacro{\xlim}{7*(1-\y/7.2)}
        \draw[thin,dashed]
            (-\xlim,\y) -- (\xlim,\y);
    }

    \node[font=\tiny] at (-0.24,6.93) {\(L_0\)};
    \node at (-0.55,6.32) {\(L_1\)};
    \node at (-1.05,5.70) {\(L_2\)};
    \node at (-1.60,5.17) {\(L_3\)};
    \node at (-2.15,4.32) {\(L_4\)};
    \node at (-3.10,3.22) {\(L_5\)};
    \node at (-4.10,1.95) {\(L_6\)};
    \node at (-5.40,0.58) {\(L_7\)};

    \node[font=\tiny] at (0.24,6.93) {\(R_0\)};
    \node at (0.65,6.32) {\(R_1\)};
    \node at (1.40,5.70) {\(R_2\)};
    \node at (1.60,5.17) {\(R_3\)};
    \node at (2.35,4.32) {\(R_4\)};
    \node at (3.45,3.22) {\(R_5\)};
    \node at (4.10,1.95) {\(R_6\)};
    \node at (5.40,0.58) {\(R_7\)};

    \begin{scope}
    \clip (-7,0) -- (0,7.2) -- (7,0) -- cycle;

        \draw[red,thin]
            (b1)
            .. controls (-4.75,0.92) and (-3.20,1.05) ..
            (v1);
        \draw[red,thin]
            (-1.55,0) -- (v1);
        \draw[red,thin]
            (v1)
            .. controls (-0.35,1.65) and (0.55,2.55) ..
            (u);

        \draw[red,thin]
            (u)
            .. controls (2.45,2.80) and (2.65,2.25) ..
            (v2);
        \draw[red,thin]
            (3.10,0) -- (v2);
        \draw[red,thin]
            (v2)
            .. controls (3.70,2.30) and (4.15,2.35) ..
            (b2);

        \draw[red,thin]
            (u) -- (v4);
        \draw[red,thin]
            (v4)
            .. controls (1.48,4.95) and (1.15,5.55) ..
            (b6);
        \draw[red,thin]
            (v4)
            .. controls (2.20,4.78) and (2.25,4.70) ..
            (b4);

        \draw[blue,thin]
            (1.92,0) -- (u);
        \draw[blue,thin]
            (u)
            .. controls (2.55,2.90) and (3.35,3.10) ..
            (b3);

        \draw[blue,thin]
            (u)
            .. controls (0.35,3.35) and (-1.20,3.55) ..
            (v5);
        \draw[blue,thin]
            (v5)
            .. controls (-1.45,5.32) and (-1.65,5.15) ..
            (b5);
        \draw[blue,thin]
            (v5)
            .. controls (-0.45,5.65) and (0.30,6.05) ..
            (b7);

    \end{scope}
\end{tikzpicture}

\caption{The regions \(L_c\) and \(R_c\), \(c\in[1,7]\), associated
with the double string
\(\ddot{\mathbf s}=(1R,2R,1L,2R,1R,2R,2L)\).
Each dashed line passes through a boundary intersection point of the
weave, hence each depth $c$ corresponds to word $i_c$. }
\end{figure}

For each \(c\in\{0,\ldots,r\}\), let \(P_c^\uparrow\) denote the
uppermost path from \(L_c\) to \(R_c\), traveling from left to right
through the regions of \(\mathcal T\setminus\mathfrak W\). Let \(P_c^\downarrow\) be a horizontal path from \(L_c\) to \(R_c\),
lying in the interior of the \(c\)-th strip and sufficiently close to
its lower boundary so that it passes below every internal vertex of
the strip, whether trivalent, four-valent, or six-valent. We require
\(P_c^\downarrow\) to avoid all vertices and to meet the crossed edges
transversely. This convention also fixes the isotopy class of
\(P_c^\downarrow\) when the strip contains no trivalent vertex. For
\(c=r\), the lower boundary is the base of \(\mathcal T\). If
\(L_c=R_c\), the path \(P_c^\downarrow\) is understood to be empty.

If a path \(P\) crosses the edges \(e_1,\ldots,e_m\), listed in the
order in which they are encountered, define its path product by
\[
    Z(P):=g_{e_1}\cdots g_{e_m}.
\]
For an empty path, we set \(Z(P)=1\).

By \cite[Lemma~4.24]{casals2025comparing},
\(Z(P_c^\uparrow)\) is the representative of the relative-position
coset determined by \(B_{(c)}^L\) and \(B_{(c)}^R\), while
\(Z(P_c^\downarrow)\) differs from it only by right multiplication by
an element of \(U^+\).

Suppose that \(P_c^\downarrow\) crosses the edges
\(e_1,\ldots,e_m\), whose colors give a reduced expression
\[
    \mathbf j=(j_1,\ldots,j_m),
    \qquad
    w_c^{\mathfrak W}
    =
    s_{j_1}\cdots s_{j_m}.
\]

For \(1\leq a\leq m\), define the corresponding inversion coroot by
\begin{equation}\label{eq:inversion-coroots}
    \chi_a^{\mathbf j}
    :=
    s_{j_m}\cdots s_{j_{a+1}}
    \bigl(\alpha_{j_a}^\vee\bigr),
\end{equation}
where an empty product is understood to be the identity. Since
\(\mathbf j\) is reduced, every
\(\chi_a^{\mathbf j}\) is a positive coroot.

Proposition~4.25 of \cite{casals2025comparing} expresses the Cartan
component determined by the slice at depth \(c\) as
\begin{equation}\label{eq:hc-plus}
    h_c^+
    =
    \prod_{a=1}^m
    \chi_a^{\mathbf j}(u_{e_a})
    \in H.
\end{equation}
For \(c=0\), this product is understood to be the identity element
of \(H\).

\begin{definition}\label{def:grid-minors}
For \(c\in\{1,\ldots,r\}\) and \(j\in I\), the \emph{positive grid
minor} associated with \((c,j)\) and \(\mathfrak W\) is
\begin{equation}\label{eq:def-grid-minor}
    \Delta_{c,j}^{\mathfrak W}
    :=
    \varpi_j(h_c^+).
\end{equation}
\end{definition}

Under the natural identification between the weave and Deodhar
charts, Definition~\ref{def:grid-minors} agrees with the grid minors
in \cite[Section~2.6]{galashin2026braidII}; see also
\cite[Definition~4.20 and Proposition~4.25]
{casals2025comparing}.

Fix \(c\), and retain the notation
\(e_1,\ldots,e_m\) and \(\mathbf j=(j_1,\ldots,j_m)\) introduced
above. For \(v\in\mathfrak W_3\), set
\begin{equation}\label{eq:def-gamma-vc}
    \gamma_{v,c}
    :=
    \sum_{a=1}^m
    \gamma_v(e_a)\chi_a^{\mathbf j}.
\end{equation}
Thus \(\gamma_{v,c}\) is a nonnegative integral combination of
positive coroots.

\begin{lemma}\label{lem:grid-minors}
For every \(c\in\{1,\ldots,r\}\) and \(j\in I\), one has
\begin{equation}\label{eq:grid-minor-cluster-monomial}
    \Delta_{c,j}^{\mathfrak W}
    =
    \prod_{v\in\mathfrak W_3}
    X_v^{\langle\varpi_j,\gamma_{v,c}\rangle}.
\end{equation}
In particular, \(\Delta_{c,j}^{\mathfrak W}\) is a monomial in the
weave cluster variables.
\end{lemma}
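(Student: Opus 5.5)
The plan is to substitute the factorization \eqref{eq:u-factorization} into the Cartan element \eqref{eq:hc-plus} and then evaluate the character \(\varpi_j\). The main work is bookkeeping, since the inputs are already recorded above.

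\emph{Step 1: expand \(h_c^+\).} Each \(\chi_a^{\mathbf j}\) is a coroot, hence a cocharacter \(\mathbb C^\times\to H\), and \(H\) is commutative. So \(\chi_a^{\mathbf j}(xy)=\chi_a^{\mathbf j}(x)\chi_a^{\mathbf j}(y)\) and \(\chi_a^{\mathbf j}(x^n)=\chi_a^{\mathbf j}(x)^n\). By \eqref{eq:u-factorization} we have \(u_{e_a}=\prod_v X_v^{\gamma_v(e_a)}\). Substituting this into \eqref{eq:hc-plus} and reordering the commuting factors gives
\[
h_c^+=\prod_{a=1}^m\prod_{v\in\mathfrak W_3}\chi_a^{\mathbf j}(X_v)^{\gamma_v(e_a)}
=\prod_{v\in\mathfrak W_3}\Bigl(\sum_a\gamma_v(e_a)\chi_a^{\mathbf j}\Bigr)(X_v)
=\prod_{v}\gamma_{v,c}(X_v).
\]
Here \(\gamma_{v,c}\) from \eqref{eq:def-gamma-vc} is viewed as a cocharacter in the coroot lattice \(X_*(H)\), which carries the additive structure under which \(\mu(x)\nu(x)=(\mu+\nu)(x)\).

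\emph{Step 2: apply \(\varpi_j\).} For any cocharacter \(\mu\) and \(x\in\mathbb C^\times\) the standard pairing gives \(\varpi_j(\mu(x))=x^{\langle\varpi_j,\mu\rangle}\). Since \(\varpi_j\) is a group homomorphism \(H\to\mathbb C^\times\), we obtain \(\Delta_{c,j}^{\mathfrak W}=\prod_v X_v^{\langle\varpi_j,\gamma_{v,c}\rangle}\), which is \eqref{eq:grid-minor-cluster-monomial}. The exponents are nonnegative integers, because each \(\chi_a^{\mathbf j}\) is a positive coroot and \(\gamma_v\ge 0\). This proves the monomial claim, and for \(c=0\) the statement is trivial.

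\emph{Main obstacle.} The delicate point is not the algebra but the identification of the edges \(e_1,\dots,e_m\) crossed by \(P_c^\downarrow\) with the factors in \eqref{eq:hc-plus}. Specifically, we must check that the \(u_{e_a}\) appearing there are exactly the Cartan parameters of the edge labels \(g_{e_a}=B_{j_a}(f_{e_a})\alpha_{j_a}^\vee(u_{e_a})\) along that path, in the same left-to-right order used to define \(\chi_a^{\mathbf j}\). I will take this identification from \cite[Proposition~4.25]{casals2025comparing}. If needed, I would justify it by pushing each \(\alpha_{j_a}^\vee(u_{e_a})\) to the right through \(B_{j_{a+1}}(\cdot)\cdots B_{j_m}(\cdot)\) modulo \(U^+\). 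Conjugating by \(\dot s_{j_m}\cdots\dot s_{j_{a+1}}\) turns \(\alpha_{j_a}^\vee\) into exactly \(\chi_a^{\mathbf j}\), and this is consistent with \eqref{eq:inversion-coroots}.
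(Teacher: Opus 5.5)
Your proposal is correct and follows the paper's proof: you substitute \eqref{eq:u-factorization} into \eqref{eq:hc-plus}, use that \(\varpi_j\circ\chi_a^{\mathbf j}\) is \(x\mapsto x^{\langle\varpi_j,\chi_a^{\mathbf j}\rangle}\), regroup the exponents into \(\langle\varpi_j,\gamma_{v,c}\rangle\), and take the edge identification along \(P_c^\downarrow\) from \cite[Proposition~4.25]{casals2025comparing}, all exactly as the paper does. The only differences are cosmetic: you assemble \(h_c^+=\prod_v\gamma_{v,c}(X_v)\) before applying \(\varpi_j\), whereas the paper applies \(\varpi_j\) first, and your remark about \(c=0\) is unnecessary because the lemma only concerns \(c\geq1\).
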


\begin{proof}
Applying \(\varpi_j\) to \eqref{eq:hc-plus} and using
\eqref{eq:u-factorization}, we obtain
\begin{align*}
    \Delta_{c,j}^{\mathfrak W}
    =
    \prod_{a=1}^m
    u_{e_a}^{\langle\varpi_j,\chi_a^{\mathbf j}\rangle} 
    =
    \prod_{a=1}^m\prod_{v\in\mathfrak W_3}
    X_v^{\gamma_v(e_a)
        \langle\varpi_j,\chi_a^{\mathbf j}\rangle} 
    =
    \prod_{v\in\mathfrak W_3}
    X_v^{
        \left\langle
            \varpi_j,
            \sum_{a=1}^m
            \gamma_v(e_a)\chi_a^{\mathbf j}
        \right\rangle} =
    \prod_{v\in\mathfrak W_3}
    X_v^{\langle\varpi_j,\gamma_{v,c}\rangle}.
\end{align*}
All exponents are nonnegative integers because the
\(\chi_a^{\mathbf j}\) are positive coroots and
\(\gamma_v(e_a)\in\ZZ_{\geq0}\).
\end{proof}

\begin{lemma}\label{lem:grid-minor-equation}
Let
\(
    \ddot{\mathbf s}
    =
    (i_1X_1,\ldots,i_rX_r)
\)
be a double string. For \(k\in[r]\), write
\(
    \beta_k^{\ddot{\mathbf s}}
    =
    (j_1,\ldots,j_k)
\)
for the braid word obtained after the first \(k\) insertions, and set
\[
    \ddot{\mathbf s}_k
    :=
    (j_1R,\ldots,j_kR,
      i_{k+1}X_{k+1},\ldots,i_rX_r).
\]
Thus the first \(k\) steps of \(\ddot{\mathbf s}_k\) form the right
inductive string of \(\beta_k^{\ddot{\mathbf s}}\), while its
remaining steps agree with those of \(\ddot{\mathbf s}\). Then, for
every \(j\in I\),
\[
    \Delta_{k,j}^{\mathfrak W(\ddot{\mathbf s})}
    =
    \Delta_{k,j}^{\mathfrak W(\ddot{\mathbf s}_k)}.
\]
\end{lemma}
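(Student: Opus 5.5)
The plan is to show that \(h_k^+\), and hence every \(\Delta_{k,j}^{\mathfrak W}=\varpi_j(h_k^+)\), depends only on the portion of the weave lying above \(\ell_k\). That portion, in turn, is determined by the braid word \(\beta_k^{\ddot{\mathbf s}}\) and the coordinates \(z\) attached to its letters. Both \(\mathfrak W(\ddot{\mathbf s})\) and \(\mathfrak W(\ddot{\mathbf s}_k)\) are weaves for the same final word \(\beta\). In both, the first \(k\) steps produce the same word \(\beta_k^{\ddot{\mathbf s}}=(j_1,\ldots,j_k)\), which occupies the same contiguous block of letters of \(\beta\); the remaining steps agree. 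Hence the top edges of the two weaves lying between \(L_k\) and \(R_k\) are the top edges \(e^{\mathrm{top}}\) of this block, and they carry the same labels \(B_{j_a}(z_{\bullet})\alpha_{j_a}^\vee(1)\) by Definition~\ref{def:edge-labeling}(i). Likewise, the flags placed in \(L_k\) and \(R_k\) are the two endpoint flags of the same sub-chain of \((B_0,\ldots,B_r)\).

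First, I identify \(Z(P_k^\uparrow)\) in both weaves. The uppermost path from \(L_k\) to \(R_k\) runs through the top regions and crosses exactly the top edges of the block, from left to right. Therefore
\[
Z(P_k^\uparrow)=B_{j_1}(z_{\bullet})\cdots B_{j_k}(z_{\bullet})
\]
for both \(\ddot{\mathbf s}\) and \(\ddot{\mathbf s}_k\), with identical arguments.

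Second, I relate \(Z(P_k^\downarrow)\) to this product. By \cite[Lemma~4.24]{casals2025comparing}, \(Z(P_k^\downarrow)=Z(P_k^\uparrow)\,n\) for some \(n\in U^+\). The lower path crosses a reduced expression \(\mathbf j\) of \(u_k=\delta(\beta_k^{\ddot{\mathbf s}})\), and \(u_k\) is the same for the two strings. Its crossed edges carry labels \(B_{j_a}(f_{e_a})\alpha_{j_a}^\vee(u_{e_a})\). Commuting all the torus factors \(\alpha^\vee_{j_a}(u_{e_a})\) to the right through the \(B\)'s converts them into the coroots \(\chi_a^{\mathbf j}(u_{e_a})\) of \eqref{eq:inversion-coroots}. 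This yields
\[
Z(P_k^\downarrow)\in U^+\dot u_k\,h_k^+\,U^+,
\]
with \(h_k^+\) as in \eqref{eq:hc-plus}; this is the content of \cite[Proposition~4.25]{casals2025comparing}.

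Third, I conclude. By uniqueness of the Bruhat decomposition \(g=x\dot u h y\) with \(x\in U^+\cap \dot uU^-\dot u^{-1}\), \(h\in H\), \(y\in U^+\), the torus component \(h\) is a function of \(g\) alone. Right multiplication by \(n\in U^+\) does not change it. Hence \(h_k^+\) is the \(H\)-component of \(Z(P_k^\uparrow)\), which is literally the same element of \(G\) for \(\ddot{\mathbf s}\) and \(\ddot{\mathbf s}_k\). Applying \(\varpi_j\) then gives the equality of grid minors.

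The main obstacle is the second step: checking carefully that the product of the \(\alpha^\vee\)-factors along \(P_k^\downarrow\) really assembles into the single torus element \(h_k^+\). This requires showing that the \(B_{j_a}(f)\) factors contribute only \(U^+\dot u_k U^+\)-type pieces, with no extra torus part from the \(\dot s_i\) normalization, and that the result does not depend on the reduced word \(\mathbf j\) that happens to appear in each weave. My first attempt would be to cite \cite[Proposition~4.25]{casals2025comparing} directly. I would then reduce independence of \(\mathbf j\) to the intrinsic Bruhat characterization above, which is manifestly independent of the reduced expression.
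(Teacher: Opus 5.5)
Your proposal is correct and follows essentially the same route as the paper. Both arguments show that \(Z(P_k^\uparrow)\) is literally the same product of top-edge labels \(B_{j_a}(z_\bullet)\) of the common \(\beta_k^{\ddot{\mathbf s}}\)-block in the two weaves, and both conclude by uniqueness of the \(H\)-factor in \(U^+\dot u_k H U^+\); your extra step linking \(Z(P_k^\downarrow)=Z(P_k^\uparrow)n\) to the formula \eqref{eq:hc-plus} via \cite[Proposition~4.25]{casals2025comparing} just makes explicit what the paper absorbs into its citation of \cite[Definition~4.20 and Lemma~4.24]{casals2025comparing}.
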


\begin{proof}
Set
\(
    w_k:=\delta(\beta_k^{\ddot{\mathbf s}}).
\)
Denote the upper paths at depth \(k\) in
\(\mathfrak W(\ddot{\mathbf s})\) and
\(\mathfrak W(\ddot{\mathbf s}_k)\) by
\(P_k^\uparrow\) and \(\widehat P_k^\uparrow\), respectively.

The two double strings have the same initial variables $z_i$, which we
identify term by term. Recall that each edge element \(g_e\) is a
fixed function of these initial variables by Definition ~\ref{def:edge-labeling}(1). By the construction of
the upper paths at depth \(k\), the edges crossed by
\(P_k^\uparrow\) and \(\widehat P_k^\uparrow\) correspond in the
same order, and the elements attached to corresponding edges are
equal. Hence their path products coincide:
\begin{equation}\label{eq:grid-minor-common-path-product}
    Z(P_k^\uparrow)
    =
    Z(\widehat P_k^\uparrow).
\end{equation}

By \cite[Definition~4.20 and Lemma~4.24]
{casals2025comparing}, there are unique elements
\(h_k^+,\widehat h_k^+\in H\) such that
\[
    Z(P_k^\uparrow)
    \in U^+\dot w_kh_k^+U^+,
    \qquad
    Z(\widehat P_k^\uparrow)
    \in U^+\dot w_k\widehat h_k^+U^+.
\]
Equation \eqref{eq:grid-minor-common-path-product} and the uniqueness
of the \(H\)-factor imply
\(    h_k^+=\widehat h_k^+.
\)
Therefore,
\[
    \Delta_{k,j}^{\mathfrak W(\ddot{\mathbf s})}
    =
    \varpi_j(h_k^+)
    =
    \varpi_j(\widehat h_k^+)
    =
    \Delta_{k,j}^{\mathfrak W(\ddot{\mathbf s}_k)},
\]
as required.
\end{proof}

\section{Vectors for double inductive weaves}
\label{sec:vector-right-weave}
In this section, we introduce the \(\ZZ_{\geq0}^r\)-vectors associated
with the trivalent vertices of double inductive weaves. In
Section~\ref{sec:quantization-braid-varieties}, we identify them with
the Lusztig parameters of the corresponding cluster variables.

\subsection{Vectors associated with a right inductive weave}
Let $\beta=(i_1,\cdots, i_r)$ be a braid word, and let $\overrightarrow{\mathfrak{W}}(\beta)$ be the right inductive weave for the double string $(i_1R,\cdots i_r R)$.

If \(v\in\overrightarrow{\mathfrak W}(\beta)_3\) is the
trivalent vertex associated with the entry \(i_k\mathrm R\), define
\(q_v^\beta:=k\). These positions give a total order on the trivalent
vertices:
\begin{equation}\label{eq:order-right-inductive-vertices}
    v'<v
    \quad\Longleftrightarrow\quad
    q_{v'}^\beta<q_v^\beta.
\end{equation}

For each
\(v\in\overrightarrow{\mathfrak W}(\beta)_3\), let
\(E_{q_v^\beta}\in\ZZ^r\) be the standard basis vector at position \(q_v^\beta\):
\begin{equation}\label{eq:standard-vector-Ev}
    (E_{q_v^\beta})_k=
    \begin{cases}
        1,&k=q_v^\beta,\\
        0,&k\neq q_v^\beta.
    \end{cases}
\end{equation}

\begin{definition}\label{def:eta-vector-right-inductive}
    For \(v\in\overrightarrow{\mathfrak W}(\beta)_3\), define
    \(\eta_v^\beta\in\ZZ_{\geq0}^r\) recursively by
    \begin{equation}\label{eq:etav}
        \eta_v^\beta
        :=E_{q_v^\beta}+
        \sum_{\substack{
            u\in\overrightarrow{\mathfrak W}(\beta)_3\\
            u<v
        }}
        \gamma_u(e_{v,\mathrm{nw}})\eta_u^\beta,
    \end{equation}
    where \(e_{v,\mathrm{nw}}\) is the north-west incoming edge of
    \(v\). The sum is empty when \(v\) is the minimal trivalent
    vertex.
\end{definition}

The recursion is well defined because the order in
\eqref{eq:order-right-inductive-vertices} is total. Since every
coefficient \(\gamma_u(e_{v,\mathrm{nw}})\) is a nonnegative integer,
equation~\eqref{eq:etav} also shows inductively that
\(\eta_v^\beta\in\ZZ_{\geq0}^r\). We suppress the superscript
\(\beta\) when no confusion is likely. For \(\mathbf a=(a_1,\ldots,a_r)\in\ZZ^r\), write
\(\operatorname{supp}(\mathbf a):=\{k\in[r]\mid a_k\neq0\}\).

\begin{lemma}\label{lem:maxindex}
    For every \(v\in\overrightarrow{\mathfrak W}(\beta)_3\),
    \begin{equation}\label{eq:maximal-support-eta}
        \max\operatorname{supp}(\eta_v^\beta)=q_v^\beta,
        \qquad
        (\eta_v^\beta)_{q_v^\beta}=1.
    \end{equation}
\end{lemma}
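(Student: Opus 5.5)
We argue by strong induction on \(v\) with respect to the total order \eqref{eq:order-right-inductive-vertices}. The base case is the minimal trivalent vertex. There the sum in \eqref{eq:etav} is empty, so \(\eta_v^\beta=E_{q_v^\beta}\), and both assertions of \eqref{eq:maximal-support-eta} hold at once.

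For the inductive step, fix \(v\) and assume \eqref{eq:maximal-support-eta} for every trivalent vertex \(u<v\). By definition of the order, \(u<v\) means \(q_u^\beta<q_v^\beta\). By the induction hypothesis, \(\operatorname{supp}(\eta_u^\beta)\subseteq[1,q_u^\beta]\subseteq[1,q_v^\beta-1]\) for each such \(u\). Every coefficient \(\gamma_u(e_{v,\mathrm{nw}})\) is a nonnegative integer, so the sum
\[
\sum_{u<v}\gamma_u(e_{v,\mathrm{nw}})\eta_u^\beta
\]
is a nonnegative vector supported in \([1,q_v^\beta-1]\). Adding \(E_{q_v^\beta}\) then gives an entry exactly \(1\) at position \(q_v^\beta\), because the sum contributes \(0\) there. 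It also gives zero entries at all positions \(>q_v^\beta\). Hence \(\max\operatorname{supp}(\eta_v^\beta)=q_v^\beta\) and \((\eta_v^\beta)_{q_v^\beta}=1\).

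No step here is a genuine obstacle. The only point to check is that the recursion in Definition~\ref{def:eta-vector-right-inductive} involves only vertices \(u\) with \(q_u^\beta<q_v^\beta\) strictly. This is immediate, since the map \(v\mapsto q_v^\beta\) is injective: distinct trivalent vertices correspond to distinct entries \(i_k\mathrm R\). The sum is therefore indexed by strictly smaller positions. Nonnegativity is not even needed for the support claim, but it guarantees that the vector stays in \(\ZZ_{\geq0}^r\).
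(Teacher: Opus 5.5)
Your proof is correct and follows the paper's argument: induction along the total order \eqref{eq:order-right-inductive-vertices}, with \(\eta_v^\beta=E_{q_v^\beta}\) at the minimal vertex, and an inductive step showing that every summand other than \(E_{q_v^\beta}\) is supported in \([1,q_v^\beta-1]\). Your side remark is also right: nonnegativity of the coefficients is not needed for the support and leading-coefficient claims, only for \(\eta_v^\beta\in\ZZ_{\geq0}^r\).
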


\begin{proof}
    We argue by induction on the total order
    \eqref{eq:order-right-inductive-vertices}. If \(v\) is minimal,
    then \(\eta_v^\beta=E_{q_v^\beta}\), and both assertions follow from
    \eqref{eq:standard-vector-Ev}.

    Now let \(v\) be nonminimal and assume the result for every
    \(u<v\). The induction hypothesis gives
    \(\operatorname{supp}(\eta_u^\beta)
      \subseteq\{1,\ldots,q_u^\beta\}\), while
    \eqref{eq:order-right-inductive-vertices} gives
    \(q_u^\beta<q_v^\beta\). Hence every summand in
    \eqref{eq:etav}, other than \(E_{q_v^\beta}\), is supported in
    \(\{1,\ldots,q_v^\beta-1\}\). Consequently, these summands make
    no contribution at position \(q_v^\beta\) or at any larger
    position.

    On the other hand, \(E_{q_v^\beta}\) has coefficient \(1\) at
    \(q_v^\beta\) and vanishes above it. Therefore
    \((\eta_v^\beta)_{q_v^\beta}=1\) and
    \((\eta_v^\beta)_k=0\) for every \(k>q_v^\beta\), proving
    \eqref{eq:maximal-support-eta}.
\end{proof}

We next define the mutation rule for tuples of integer vectors.

\begin{definition}\label{def:order-vector}
For
distinct vectors \(\mathbf a=(a_1,\ldots,a_r)\) and
\(\mathbf b=(b_1,\ldots,b_r)\) in \(\ZZ^r\), write
\(\mathbf a\prec_{\mathrm r}\mathbf b\) if there exists
\(q\in[1,r]\) such that
\begin{equation}\label{eq:right-lexicographic-order}
    a_k=b_k\quad(k>q),
    \qquad
    a_q<b_q.
\end{equation}
Equivalently, if \(q\) is the largest index at which \(\mathbf a\)
and \(\mathbf b\) differ, then
\(\mathbf a\prec_{\mathrm r}\mathbf b\) precisely when
\(a_q<b_q\). Thus \(\prec_{\mathrm r}\) is a strict total order on
\(\ZZ^r\). We denote its reflexive closure by
\(\preceq_{\mathrm r}\) and the larger of two vectors by
\(\max_{\mathrm r}\).
\end{definition}
We introduce the mutation of $\ZZ^r$-vectors associated with Demazure weaves.
\begin{definition}\label{def:mutlusztig}
    Let \(\mathfrak W\) be a Demazure weave, let
    \(B(\mathfrak W)=(b_{u,w})\) be its
    exchange matrix, and let
    \(\boldsymbol\xi=(\xi_u)_{u\in\mathfrak W_3}\) be a tuple in
    \((\ZZ^r)^{\mathfrak W_3}\). Suppose that \(v\in\mathfrak W_3\)
    is mutable and that \(\mathfrak W'\) is obtained by mutation at
    \(v\). Denote by \(v^+\) the new trivalent vertex replacing
    \(v\), and identify
    \(\mathfrak W_3\setminus\{v\}\) with
    \(\mathfrak W'_3\setminus\{v^+\}\).

    The extended exchange matrix has columns indexed only by mutable
    vertices. We therefore use the column indexed by \(v\), including
    all mutable and frozen rows, and set
    \begin{equation}\label{eq:two-vector-mutation-sums}
        A_v^+(\boldsymbol\xi)
        :=\sum_{w\in\mathfrak W_3}[b_{w,v}]_+\xi_w,
        \qquad
        A_v^-(\boldsymbol\xi)
        :=\sum_{w\in\mathfrak W_3}[-b_{w,v}]_+\xi_w,
    \end{equation}
    where \([a]_+:=\max\{a,0\}\). We define
    \(\mu_v(\boldsymbol\xi)
      =(\xi'_u)_{u\in\mathfrak W'_3}\) by
    \begin{equation}\label{eq:mutation-integer-vectors}
        \xi'_u=
        \begin{cases}
            \xi_u, & u\neq v^+,\\[1mm]
            \max_{\mathrm{r}}
            \bigl\{A_v^+(\boldsymbol\xi),
                   A_v^-(\boldsymbol\xi)\bigr\}-\xi_v,
                & u=v^+.
        \end{cases}
    \end{equation}
\end{definition}

\begin{example}\label{ex:right-inductive-weaves}
 In type \(A_2\), consider the right string
\begin{equation*}
\mathbf{s}=(1R^{+},1R,2R^{+},2R,1R^{+},2R,1R).
\end{equation*}
The four undecorated letters correspond to the trivalent vertices
\(v_1,v_2,v_3,v_4\), occurring at positions \(2,4,6,7\), respectively.
The associated vectors are
\begin{align*}
\eta_{v_1}&=E_2, &
\eta_{v_2}&=E_4,\\
\eta_{v_3}&=E_6+\eta_{v_1}+\eta_{v_2}
           =E_6+E_2+E_4, &
\eta_{v_4}&=E_7+\eta_{v_2}
           =E_7+E_4.
\end{align*}
\begin{tikzpicture}[
    scale=0.60,
    transform shape,
    x=1cm,
    y=1cm,
    weave/.style={
        line width=1.35pt,
        line cap=round,
        line join=round
    },
    one/.style={
        weave,
        draw=onecolor
    },
    two/.style={
        weave,
        draw=twocolor
    },
    vertex/.style={
        circle,
        fill=black,
        inner sep=1.8pt
    },
    boundary/.style={
        font=\small
    },
    slice/.style={
        font=\small,
        anchor=west
    }
]

\coordinate (t1) at (0.75,9);
\coordinate (t2) at (2.25,9);
\coordinate (t3) at (3.25,9);
\coordinate (t4) at (4.75,9);
\coordinate (t5) at (6.50,9);
\coordinate (t6) at (8.00,9);
\coordinate (t7) at (9.50,9);

\coordinate (v1) at (1.50,7.60);
\coordinate (v2) at (4.00,7.00);
\coordinate (v3) at (6.40,4.00);
\coordinate (v4) at (6.40,1.25);

\coordinate (h1) at (4.00,5.50);
\coordinate (h2) at (4.00,2.50);

\coordinate (p5) at (6.50,7.60);

\coordinate (u4) at (2.00,1.00);

\draw[one]
    (t1) .. controls (1.00,7.80) .. (v1);

\draw[one]
    (t2) .. controls (2.00,7.80) .. (v1);

\draw[one]
    (v1)
    .. controls (1.50,6.20) and (2.40,5.50) ..
    (h1);

\draw[two]
    (t3) .. controls (3.20,7.30) ..(v2);

\draw[two]
    (t4) .. controls (4.80,7.30) .. (v2);

\draw[two]
    (v2) -- (h1);

\draw[one]
    (t5) -- (p5)
    .. controls (6.50,6.20) and (5.60,5.50) ..
    (h1);


\draw[two]
    (h1)
    .. controls (1.60,5.50) and (1.60,2.50) ..
    (h2);

\draw[one]
    (h1) -- (h2);

\draw[two]
    (h1)
    .. controls (5.60,5.50) and (6.40,5.00) ..
    (v3);

\draw[two]
    (t6)
    .. controls (8.00,6.60) and (7.30,4.00) ..
    (v3);

\draw[two]
    (v3)
    .. controls (6.40,3.00) and (5.60,2.50) ..
    (h2);


\draw[one]
    (h2)
    .. controls (2.70,2.50) and (2.00,2.00) ..
    (u4)
    -- (2.00,0);

\draw[two]
    (h2) -- (4.00,0);

\draw[one]
    (h2)
    .. controls (5.30,2.50) and (6.40,2.00) ..
    (v4)
    -- (6.40,0);

\draw[one]
    (t7)
    .. controls (9.50,5.20) and (7.40,1.25) ..
    (v4);

\foreach \p in {v1,v2,h1,v3,h2,v4}
    \node[vertex] at (\p) {};

\foreach \x/\lab in {
    0.75/{1^{+}},
    2.25/1,
    3.25/{2^{+}},
    4.75/2,
    6.50/{1^{+}},
    8.00/2,
    9.50/1
}{
    \node[boundary,above] at (\x,9) {$\lab$};
}

\foreach \x/\lab in {
    1.60/1,
    4.00/2,
    6.40/1
}{
    \node[boundary,below] at (\x,0) {$\lab$};
}

\draw[
    ->,
    line width=.7pt
]
    (-0.30,8.75) -- (-0.30,0.20)
    node[
        midway,
        left,
        font=\small
    ]
    {Demazure reduction};

\node[slice] at (10.15,8.75) {$1122121$};
\node[slice] at (10.15,7.20) {$122121$};
\node[slice] at (10.15,6.40) {$12121$};
\node[slice] at (10.15,5.05) {$21221$};
\node[slice] at (10.15,3.55) {$2121$};
\node[slice] at (10.15,2.05) {$1211$};
\node[slice] at (10.15,0.05) {$121$};

\node[font=\small] at (5.05,9.80)
    {$\ddot{\mathbf{s}}
      =(1R^{+},1R,2R^{+},2R,1R^{+},2R,1R)$};

\end{tikzpicture}
\end{example}

\subsection{Mutation sequences for the
\texorpdfstring{\(\eta\)}{eta}-vectors}

Let \(\beta=(i_1,\ldots,i_r)\) be a braid word. Its right inductive
weave is encoded by the double string
\begin{equation}\label{eq:right-inductive-double-string}
    \overrightarrow{\mathfrak W}(\beta)
    =(i_1\mathrm R,i_2\mathrm R,\ldots,i_r\mathrm R).
\end{equation}
The choice of side in the first entry is immaterial, so we may replace
\(i_1\mathrm R\) by \(i_1\mathrm L\). Moving this entry successively
to the right produces
\begin{equation}\label{eq:shifted-right-inductive-weave}
    \mathfrak W_1
    =(i_2\mathrm R,\ldots,i_r\mathrm R,i_1\mathrm L).
\end{equation}

Set \(i:=i_1\). At an intermediate stage, suppose that
\(i\mathrm L\) is immediately followed by \(j\mathrm R\). Let
\(\beta_k\) be the preceding part of the double string and put
\(u_k:=\delta(\beta_k)\). The change in the double strings gives
exactly five possibilities:
\begin{enumerate}[label=\textup{(\arabic*)}]
    \item \((\beta_k,i\mathrm L^+,j\mathrm R^+,\ldots)
    \longleftrightarrow
    (\beta_k,j\mathrm R^+,i\mathrm L^+,\ldots)\);

    \item \((\beta_k,i\mathrm L^+,j\mathrm R,\ldots)
    \longleftrightarrow
    (\beta_k,j\mathrm R,i\mathrm L^+,\ldots)\);

    \item \((\beta_k,i\mathrm L,j\mathrm R,\ldots)
    \longleftrightarrow
    (\beta_k,j\mathrm R,i\mathrm L,\ldots)\), where
    \(\ell(s_i u_k s_j)=\ell(u_k)-2\);

    \item \((\beta_k,i\mathrm L^+,j\mathrm R,\ldots)
    \longleftrightarrow
    (\beta_k,j\mathrm R^+,i\mathrm L,\ldots)\);

    \item \((\beta_k,i\mathrm L,j\mathrm R,\ldots)
    \longleftrightarrow
    (\beta_k,j\mathrm R,i\mathrm L,\ldots)\), where
    \(\ell(s_i u_k s_j)=\ell(u_k)\).
\end{enumerate}

By \cite[Section~6.4]{casals2025cluster}, the transformations in
Cases~(1)--(3) induce canonical identifications of the trivalent
vertices and preserve the corresponding exchange matrix. In Case~(4),
the vertex associated with \(j\mathrm R\) is replaced by the vertex
associated with \(i\mathrm L\). In Case~(5), the two double inductive
weaves differ by mutation at the vertex associated with
\(i\mathrm L\), up to weave equivalences.

\begin{figure}[htbp]
\centering

\resizebox{0.96\linewidth}{!}{%
\begin{tikzpicture}

\begin{scope}
    \weavebox

    \draw[bluestrand]
        (0.05,2.35)
        .. controls (0.20,1.45) and (0.25,0.60) ..
        (0.62,0.36)
        .. controls (0.88,0.20) and (1.18,0.21) ..
        (1.50,0.14);

    \draw[bluestrand]
        (0.62,0.36)
        .. controls (0.75,0.39) and (0.84,0.56) ..
        (0.96,0.65);

    \draw[dottedstrand]
        (1.50,0.14)
        .. controls (1.92,0.07) and (2.35,0.03) ..
        (2.72,0.02);

    \draw[greenstrand]
        (2.72,0.02)
        .. controls (3.02,0.02) and (3.25,0.09) ..
        (3.42,0.14)
        .. controls (3.66,0.42) and (3.64,1.50) ..
        (3.78,2.35);

    \draw[greenstrand] (3.42,0.14) -- (3.42,-0.27);
\end{scope}

\node[font=\Large] at (4.45,1.45) {$\cong$};

\begin{scope}[xshift=5.1cm]
    \weavebox

    \coordinate (secondUpper) at (3.12,0.09);
    \coordinate (secondLower) at (3.42,-0.02);

    \draw[bluestrand]
        (0.05,2.35)
        .. controls (0.20,1.45) and (0.25,0.60) ..
        (0.62,0.36)
        .. controls (0.88,0.20) and (1.08,0.17) ..
        (1.36,0.12);

    \draw[bluestrand]
        (0.96,0.65)
        .. controls (0.97,0.45) and (1.18,0.39) ..
        (1.48,0.36);

    \draw[dottedstrand]
        (1.48,0.36)
        .. controls (1.90,0.30) and (2.38,0.32) ..
        (2.72,0.33);

    \draw[dottedstrand]
        (1.36,0.12)
        .. controls (1.76,0.07) and (2.18,0.08) ..
        (2.52,0.08);

    \draw[greenstrand]
        (2.72,0.33)
        .. controls (2.94,0.33) and (3.03,0.19) ..
        (secondUpper);

    \draw[greenstrand]
        (2.52,0.08) -- (secondUpper);

    \draw[greenstrand]
        (secondUpper) -- (secondLower);

    \draw[greenstrand]
        (3.78,2.35)
        .. controls (3.64,1.52) and (3.68,0.34) ..
        (secondLower);

    \draw[greenstrand]
        (secondLower) -- (3.42,-0.27);
\end{scope}

\node[font=\large] at (9.75,1.45) {$\xrightarrow{\mu}$};

\begin{scope}[xshift=10.6cm]
    \weavebox

    \draw[bluestrand]
        (0.05,2.35)
        .. controls (0.20,1.45) and (0.25,0.60) ..
        (0.62,0.36)
        .. controls (0.88,0.20) and (1.08,0.17) ..
        (1.36,0.12);

    \draw[bluestrand]
        (0.96,0.65)
        .. controls (0.97,0.45) and (1.18,0.39) ..
        (1.48,0.36);

    \draw[dottedstrand]
        (1.48,0.36)
        .. controls (1.90,0.30) and (2.35,0.32) ..
        (2.72,0.33);

    \draw[dottedstrand]
        (1.36,0.12)
        .. controls (1.76,0.07) and (2.18,0.08) ..
        (2.52,0.08);

    \coordinate (thirdFirst)  at (3.42,0.14);
    \coordinate (thirdSecond) at (3.32,-0.02);

    \draw[greenstrand]
        (2.72,0.33)
        .. controls (3.03,0.34) and (3.27,0.27) ..
        (thirdFirst);

    \draw[greenstrand]
        (3.78,2.35)
        .. controls (3.64,1.52) and (3.68,0.38) ..
        (thirdFirst)
        .. controls (3.40,0.08) and (3.35,0.02) ..
        (thirdSecond);

    \draw[greenstrand]
        (2.52,0.08) -- (thirdSecond);

    \draw[greenstrand]
        (thirdSecond) -- (3.32,-0.27);
\end{scope}

\node[font=\Large] at (15.05,1.45) {$\cong$};

\begin{scope}[xshift=15.75cm]
    \weavebox

    \coordinate (fourthLeft)  at (0.60,0.08);
    \coordinate (fourthRight) at (3.45,0.25);

    \draw[bluestrand]
        (0.05,2.35)
        .. controls (0.20,1.40) and (0.27,0.46) ..
        (fourthLeft);

    \draw[bluestrand]
        (0.96,0.65)
        .. controls (0.97,0.45) and (1.17,0.39) ..
        (1.43,0.35);

    \draw[bluestrand]
        (fourthLeft)
        .. controls (0.76,0.09) and (0.98,0.16) ..
        (1.18,0.18);

    \draw[bluestrand]
        (fourthLeft)
        .. controls (0.60,-0.01) and (0.85,-0.03) ..
        (1.10,-0.02);

    \draw[dottedstrand]
        (1.43,0.35)
        .. controls (1.86,0.29) and (2.30,0.31) ..
        (2.64,0.33);

    \draw[greenstrand]
        (2.64,0.33)
        .. controls (2.96,0.34) and (3.25,0.38) ..
        (fourthRight);

    \draw[greenstrand]
        (fourthRight)
        .. controls (3.65,0.56) and (3.64,1.55) ..
        (3.78,2.35);

    \draw[dottedstrand]
        (1.18,0.18)
        .. controls (1.65,0.17) and (2.18,0.23) ..
        (2.60,0.24);

    \draw[greenstrand]
        (2.60,0.24)
        .. controls (2.92,0.19) and (3.22,0.10) ..
        (fourthRight);

    \draw[dottedstrand]
        (1.10,-0.02)
        .. controls (1.58,-0.03) and (2.20,-0.05) ..
        (2.72,-0.06);

    \draw[greenstrand]
        (2.72,-0.06)
        .. controls (3.00,-0.06) and (3.18,-0.08) ..
        (3.30,-0.18);
\end{scope}

\end{tikzpicture}
}
\caption{The Case~\textup{(5)} transformation. The first and last
panels correspond to the double strings
\((\beta_k,i\mathrm L,j\mathrm R,\ldots)\) and
\((\beta_k,j\mathrm R,i\mathrm L,\ldots)\), respectively. The
middle two panels differ by a weave mutation.}
\label{fig:weave-mutation}
\end{figure}

As illustrated in Figure~\ref{fig:weave-mutation}, the transformation
in Case~(5) factors as
\begin{equation}\label{eq:case5}
    \mathfrak W(\beta_k,i\mathrm L,j\mathrm R,\ldots)
    \ \sim\
    \mathfrak W_-
    \overset{\mu}{\longleftrightarrow}
    \mathfrak W_+
    \ \sim\
    \mathfrak W(\beta_k,j\mathrm R,i\mathrm L,\ldots),
\end{equation}
where \(\mathfrak W_-\) and \(\mathfrak W_+\) are the second and
third local weaves in the figure, and \(\sim\) denotes weave
equivalence. In particular, Case~(5) changes the exchange matrix by a
single matrix mutation.

Let \(K_\beta^{\mathrm{mut}}\) be the set of positions at which
Case~\textup{(5)} occurs as
\(i_1\mathrm L\) is moved to the right. Write
\begin{equation}\label{eq:K-beta-I-beta}
    K_\beta^{\mathrm{mut}}
    =\{k_1<\cdots<k_n\}\subseteq\{2,\ldots,r\},
    \qquad
    \mathbf i_\beta^{\mathrm{mut}}
    :=(i_{k_1},\ldots,i_{k_n}).
\end{equation}

For \(1\leq t\leq n\), define
\begin{equation}\label{eq:intermediate-right-inductive-weave}
    \mathfrak W^{(t)}
    =(i_2\mathrm R,\ldots,i_{k_t-1}\mathrm R,
      i_1\mathrm L,i_{k_t}\mathrm R,\ldots,i_r\mathrm R),
\end{equation}
where the initial subsequence is understood to be empty when \(k_t=2\),
so that \(i_1\mathrm L\) lies immediately to the left of
\(i_{k_t}\mathrm R\). Set
\begin{equation}\label{eq:last-intermediate-right-inductive-weave}
    \mathfrak W^{(n+1)}
    :=(i_2\mathrm R,\ldots,i_r\mathrm R,i_1\mathrm L)
    =\mathfrak W_1.
\end{equation}

The weave \(\mathfrak W^{(1)}\) is equivalent to
\(\overrightarrow{\mathfrak W}(\beta)\), because only
transformations of Types~(1)--(4) occur before the first index
\(k_1\). For \(1\leq t\leq n\), the passage from
\(\mathfrak W^{(t)}\) to \(\mathfrak W^{(t+1)}\) consists of the
mutation occurring at \(k_t\), followed by transformations of
Types~(1)--(4) up to the next index in
\(K_\beta^{\mathrm{mut}}\).

Transporting the vertex labels through these equivalences and
mutations, we identify the trivalent vertices of each
\(\mathfrak W^{(t)}\) with those of
\(\overrightarrow{\mathfrak W}(\beta)\). We denote the resulting
exchange matrix by
\begin{equation}\label{eq:intermediate-exchange-matrix}
    B^{(t)}:=B(\mathfrak W^{(t)})
    =\bigl(\varepsilon_{v',v}^{(t)}\bigr),
    \qquad 1\leq t\leq n+1.
\end{equation}

We now describe how the tuple of vectors associated with the right
inductive weave changes along the mutation sequence leading to
\((i_2\mathrm R,\ldots,i_r\mathrm R,i_1\mathrm L)\). We begin with the local identity governing the coefficients of the
Lusztig cycles under a weave mutation.

\begin{lemma}\label{lem:gammamutation}
Let \(\mathfrak W_-\) and \(\mathfrak W_+\) be the two local
Demazure weaves in equation \eqref{eq:case5}, related
by mutation at the upper trivalent vertex \(v_1\) of
\(\mathfrak W_-\) as in Figure~\ref{fig:weave-mutation-branches}. Let \(v_2\) be the lower trivalent vertex of
\(\mathfrak W_-\), and let \(v_1^+\) be the trivalent vertex of
\(\mathfrak W_+\) replacing \(v_1\). Let
\(\varepsilon^{\mathfrak W_-}_{u,w}\) denote the full
skew-symmetric intersection matrix of the trivalent vertices. When
\(w\) is mutable, our exchange-matrix convention gives
\(b_{u,w}=\varepsilon^{\mathfrak W_-}_{w,u}
=-\varepsilon^{\mathfrak W_-}_{u,w}\).
Then, for every trivalent vertex \(v'<v_1\),
\begin{equation}\label{eq:gamma-local-mutation}
    \gamma_{v'}^{\mathfrak W_+}
        (e_{v_1^+,\mathrm{nw}})
    =
    \gamma_{v'}^{\mathfrak W_-}
        (e_{v_2,\mathrm{nw}})
    +[\varepsilon^{\mathfrak W_-}_{v_1,v'}]_+.
\end{equation}
\end{lemma}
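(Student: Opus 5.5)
We reduce everything to the local picture of the mutation in Figure~\ref{fig:weave-mutation-branches}, i.e.\ the middle two panels of Figure~\ref{fig:weave-mutation}. Outside this local disc, \(\mathfrak W_-\) and \(\mathfrak W_+\) coincide. For \(v'<v_1\), the cycle \(\gamma_{v'}\) vanishes above \(v'\) and is propagated downward by Definition~\ref{def:Lusztig-cycle}, so \(\gamma_{v'}^{\mathfrak W_+}\) and \(\gamma_{v'}^{\mathfrak W_-}\) agree on the three incoming \(j\)-edges of the local \(jjj\to j\) configuration. Write these three weights, read from left to right, as \(x_1,x_2,x_3\). In \(\mathfrak W_-\) the upper vertex \(v_1\) merges the left two strands, so \(e_{v_2,\mathrm{nw}}=e_{v_1,\mathrm s}\) carries \(\min\{x_1,x_2\}\). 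In \(\mathfrak W_+\) the vertex \(v_1^+\) merges the right two strands, and \(e_{v_1^+,\mathrm{nw}}\) is the leftmost incoming strand, carrying \(x_1\). The claim \eqref{eq:gamma-local-mutation} is therefore equivalent to
\[
x_1=\min\{x_1,x_2\}+[\varepsilon^{\mathfrak W_-}_{v_1,v'}]_+ ,
\]
that is, to the identity \([\varepsilon^{\mathfrak W_-}_{v_1,v'}]_+=[x_1-x_2]_+\).

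Our main task is to compute \(\varepsilon^{\mathfrak W_-}_{v_1,v'}=\sharp_{\mathfrak W_-}(\gamma_{v_1}\cdot\gamma_{v'})+\sharp_{\delta}(\gamma_{v_1}\cdot\gamma_{v'})\) and show that it equals \(x_1-x_2\), or at least has the same positive part. The cycle \(\gamma_{v_1}\) is supported below \(v_1\). At \(v_1\) itself, Definition~\ref{def:local-intersection-trivalent} gives the determinant with rows \((1,1,1)\), \((0,1,0)\), \((x_1,\min\{x_1,x_2\},x_2)\), which equals \(x_1-x_2\) up to a sign fixed by the orientation conventions. Next we show that all remaining contributions cancel. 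These come from vertices below \(v_1\) and from the boundary term. At \(v_2\), we have \(\gamma_{v_1}=(1,0)\) on the incoming edges and \(\min\) on the outgoing edge. For \(\mathfrak W_-\) to be a genuine mutation configuration (Case~(5)), the south edge of \(v_2\) must receive \(\gamma_{v_1}\)-weight \(0\). Indeed, \(v_1\) is mutable precisely because its cycle dies at \(v_2\), which has weight \(\min\{1,0\}=0\). Hence \(\gamma_{v_1}\) is supported only on the single edge between \(v_1\) and \(v_2\). The contribution at \(v_2\) is then the determinant with rows \((1,1,1)\), \((1,0,0)\), \((\min\{x_1,x_2\},\ast,x_3)\). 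We must check that, combined with the \(v_1\) term, the total is \(x_1-x_2\) up to adding terms that vanish when the positive part is taken. Since \(\gamma_{v_1}\) never reaches the bottom boundary, the boundary term \(\sharp_\delta\) vanishes.

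The main obstacle is the sign bookkeeping. The paper transposes the matrix relative to \cite{casals2025cluster}, so we must decide whether \(\varepsilon_{v_1,v'}\) equals \(x_1-x_2\) or \(x_2-x_1\). We must also justify that the \(v_2\) contribution either cancels or does not affect \([\cdot]_+\). First, we would carefully evaluate both determinants in the universal local model with generic weights \((x_1,x_2,x_3)\), where the \(v_2\) south weight is \(\min\{x_1,x_2,x_3\}\). Second, we would verify the formula against the known mutation rule for \(\gamma\)-cycles in \cite[Section~4]{casals2025cluster}: \(\gamma_{v'}\) changes by \([\pm\varepsilon]_+\gamma_{v_1}\). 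As a consistency check, we would test the formula on the rank-two example in Example~\ref{ex:right-inductive-weaves}.
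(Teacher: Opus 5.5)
You misidentified the edge \(e_{v_1^+,\mathrm{nw}}\), and the resulting ``sign problem'' is not a matter of conventions. In \(\mathfrak W_+\) the upper vertex \(v_1^+\) merges the \emph{middle} and right incoming strands (Figure~\ref{fig:weave-mutation-branches}). Its north-west edge is therefore the middle strand, so \(\gamma_{v'}^{\mathfrak W_+}(e_{v_1^+,\mathrm{nw}})=x_2\); the leftmost strand enters the lower vertex of \(\mathfrak W_+\). The correct reduction is \(x_2=\min\{x_1,x_2\}+[\varepsilon^{\mathfrak W_-}_{v_1,v'}]_+\), that is, \([\varepsilon^{\mathfrak W_-}_{v_1,v'}]_+=[x_2-x_1]_+\).

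The local term at \(v_1\) has no orientation ambiguity. Definition~\ref{def:local-intersection-trivalent} fixes the column order (north-west, south, north-east), and the determinant with rows \((1,1,1)\), \((0,1,0)\), \((x_1,\min\{x_1,x_2\},x_2)\) is exactly \(x_2-x_1\). Your reduced identity \(x_1=\min\{x_1,x_2\}+[\varepsilon]_+\) is false once \(\varepsilon\) is computed (take \(x_1=1\), \(x_2=0\)). No choice of convention can rescue it.

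The second gap is at \(v_2\). Put \(m=\min\{x_1,x_2\}\). The contribution at \(v_2\) is the determinant with rows \((1,1,1)\), \((1,0,0)\), \((m,\min\{m,x_3\},x_3)\), which equals \(\min\{m,x_3\}-x_3\). This is nonzero for generic \(x_3\), and an additive error term does not ``vanish when the positive part is taken''. For \(x_1=0\), \(x_2=2\), \(x_3=1\) one gets \(\varepsilon=1\), so the right-hand side of \eqref{eq:gamma-local-mutation} is \(1\neq 2\). Your plan to verify the identity in a universal model with generic \((x_1,x_2,x_3)\) would therefore fail.

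The missing input is that \(x_3=0\). In the Case~(5) configuration the right incoming strand is the freshly inserted \(j\mathrm R\)-strand. It runs from the upper boundary straight into \(v_2\) without meeting the support of any \(\gamma_{v'}\) with \(v'<v_1\). With \(x_3=0\) the \(v_2\) term vanishes. The boundary term vanishes because the short cycle \(\gamma_{v_1}\) never reaches the southern boundary, as you note. Hence \(\varepsilon^{\mathfrak W_-}_{v_1,v'}=x_2-x_1\), and \(\min\{x_1,x_2\}+[x_2-x_1]_+=x_2\).

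With these two corrections your outline becomes the paper's proof. The consistency checks you propose, against a general cycle-mutation rule or an example, cannot replace them.
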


\begin{proof}
Let \(a,b,c\), from left to right, be the weights with which
\(\gamma_{v'}^{\mathfrak W_-}\) enters the local configuration in Figure~\ref{fig:weave-mutation-branches}.
Because \(v'<v_1\), the rightmost incoming branch extends to the
upper boundary without meeting the support of
\(\gamma_{v'}^{\mathfrak W_-}\); hence \(c=0\). The tropical
Lusztig rule at \(v_1\) and the corresponding rule on the mutated
weave give
\begin{equation}\label{eq:local-cycle-weights-before-after}
    \gamma_{v'}^{\mathfrak W_-}
        (e_{v_2,\mathrm{nw}})=\min\{a,b\},
    \qquad
    \gamma_{v'}^{\mathfrak W_+}
        (e_{v_1^+,\mathrm{nw}})=b.
\end{equation}

It remains to identify the exchange-matrix contribution. The cycle
\(\gamma_{v_1}^{\mathfrak W_-}\) is the short cycle joining
\(v_1\) to \(v_2\). Its local intersection with
\(\gamma_{v'}^{\mathfrak W_-}\) at \(v_1\) is
\begin{equation}\label{eq:local-intersection-v1}
    \sharp_{v_1}
    \bigl(
        \gamma_{v_1}^{\mathfrak W_-}
        \mathbin{\cdot}
        \gamma_{v'}^{\mathfrak W_-}
    \bigr)
    =
    \begin{vmatrix}
        1&1&1\\
        0&1&0\\
        a&\min\{a,b\}&b
    \end{vmatrix}
    =b-a.
\end{equation}
The local contribution at \(v_2\) is zero because \(c=0\).
Moreover, the short cycle has no support outside this local
configuration and does not reach the southern boundary. Hence all
other local and boundary contributions vanish, and therefore
\(\varepsilon^{\mathfrak W_-}_{v_1,v'}=b-a\). Combining this
equality with
\eqref{eq:local-cycle-weights-before-after}, we obtain
\begin{align*}
    \gamma_{v'}^{\mathfrak W_-}
        (e_{v_2,\mathrm{nw}})
    +[\varepsilon^{\mathfrak W_-}_{v_1,v'}]_+
    &=\min\{a,b\}+[b-a]_+\\
    &=b
     =\gamma_{v'}^{\mathfrak W_+}
        (e_{v_1^+,\mathrm{nw}}),
\end{align*}
which proves \eqref{eq:gamma-local-mutation}.
\end{proof}

We next record an invariance property for the boundary weights of a
Lusztig cycle associated with a frozen vertex. Fix an expression of
\(\delta(\beta)\), and let
\(e_1^{\mathrm{out}},\ldots,e_m^{\mathrm{out}}\) be the southern
boundary edges of a Demazure weave \(\mathfrak W\), ordered from
left to right. For a cycle \(C\) on \(\mathfrak W\), define
\begin{equation}\label{eq:output-vector-cycle}
    \operatorname{out}_{\mathfrak W}(C)
    :=\bigl(C(e_1^{\mathrm{out}}),\ldots,
            C(e_m^{\mathrm{out}})\bigr).
\end{equation}

\begin{lemma}\label{lem:output}
Let \(\mathfrak U,\mathfrak U'\colon
\beta\longrightarrow\delta(\beta)\) be two Demazure weaves with the
same fixed expression of \(\delta(\beta)\) on their southern
boundaries. Consider a sequence
\begin{equation}\label{eq:sequence-local-weave-moves}
    \mathfrak U=\mathfrak U^{[0]}
    \longrightarrow\mathfrak U^{[1]}
    \longrightarrow\cdots
    \longrightarrow\mathfrak U^{[N]}=\mathfrak U'
\end{equation}
of weave equivalences and weave mutations. Let \(v^{[0]}\) be a
frozen vertex of \(\mathfrak U^{[0]}\), and transport it through
\eqref{eq:sequence-local-weave-moves}: under an equivalence, take the
corresponding trivalent vertex, while under a mutation, take the
unchanged vertex corresponding to it. Then every \(v^{[k]}\) is
frozen and
\begin{equation}\label{eq:output-vector-invariance}
    \operatorname{out}_{\mathfrak U^{[k]}}
    \bigl(\gamma_{v^{[k]}}^{\mathfrak U^{[k]}}\bigr)
    =
    \operatorname{out}_{\mathfrak U}
    \bigl(\gamma_{v^{[0]}}^{\mathfrak U}\bigr)
    \qquad(0\leq k\leq N).
\end{equation}
In particular, the cycles at the two ends of the sequence have the
same output vector.
\end{lemma}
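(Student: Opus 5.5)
By induction on \(N\), it suffices to treat a single move \(\mathfrak U^{[k-1]}\to\mathfrak U^{[k]}\). We may assume that \(v:=v^{[k-1]}\) is frozen with output vector equal to that of \(v^{[0]}\). We must show that the transported vertex is again frozen and has the same output. The basic observation is that each elementary move replaces a local disc \(\mathcal R\) of the weave, and leaves everything outside \(\mathcal R\) unchanged, including the southern boundary. A Lusztig cycle is determined by its starting vertex and the forward propagation rules of Definition~\ref{def:Lusztig-cycle}. So it is enough to show that the weights with which \(\gamma_v\) leaves \(\mathcal R\) through its southern edges are unchanged by the move.

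We split into two cases.

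\textbf{Case 1: \(v\) lies outside \(\mathcal R\).}
\begin{itemize}
\item If \(v\) is below \(\mathcal R\), nothing changes at all.
\item If \(v\) is above or beside \(\mathcal R\), the cycle enters \(\mathcal R\) with some weights on the northern edges.
\begin{itemize}
\item For equivalence (i), 2- and 3-moves only: the tropical rules for four- and six-valent vertices are the tropicalization of the braid-move coordinate changes. These are known to satisfy the tropical braid relations, being Lusztig's piecewise-linear transition maps, so the outgoing weights agree.
\item For equivalences (ii) and (iii): compare the outputs of the two local weaves directly by a finite min/plus computation in the entering weights \((a,b,c,d)\) or \((a,b,c)\). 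This is the compatibility of Lusztig's tropical rules with Demazure moves used in \cite{casals2025cluster} to show that equivalences preserve the cycle basis.
\item For a weave mutation on \(iii\): the two outputs are \(\min\{\min\{a,b\},c\}\) and \(\min\{a,\min\{b,c\}\}\), which are equal by associativity of \(\min\).
\end{itemize}
\end{itemize}

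\textbf{Case 2: \(v\) lies inside \(\mathcal R\).} For an equivalence, the corresponding vertex is well defined, and the cycle starting at it is related in the same way as above, possibly after adding cycles of other vertices. Here the relevant fact is that in (ii) and (iii) the trivalent vertex slides through, and its own short cycle maps to its own cycle. For a mutation, \(v\) is not the mutated vertex \(v_1\), since \(v_1\) is mutable. So \(v\) is the lower vertex \(v_2\), or lies elsewhere. The cycle \(\gamma_{v_2}\) starts with weight \(1\) on the bottom edge of the local piece in both \(\mathfrak W_-\) and \(\mathfrak W_+\). Hence its propagation below the local piece is identical, and the output is unchanged.

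Frozenness, meaning a nonzero output, is then immediate from equality of outputs.

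\textbf{Main obstacle.} I expect the main obstacle to be Case~2 for equivalence (ii). There the vertex at the trivalent slide is transported, and one must check that its cycle in the two weaves produces the same southern weights. I would verify this by explicitly propagating the unit weight through the six-valent vertex with rule \eqref{eq:Lusztig-rule-six-valent}: starting from \(ijij\to jijj\to jij\) versus \(ijij\to iiji\to iji\to jij\), and checking that both give the bottom weights \((0,0,1)\) up to the same pattern. If a discrepancy appears, I would instead appeal to the fact from \cite{casals2025cluster} that equivalences induce the identity on the cluster seed and on boundary pairings, and deduce equal outputs from the boundary intersection numbers.
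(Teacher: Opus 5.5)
Your proposal is correct and is essentially the paper's argument: reduce to a single move, dismiss moves above the tracked vertex, use tropical braid invariance and the local computations of Casals et al.\ for the equivalences, use associativity of $\min$ for mutations, and use the facts that a frozen vertex is never the mutation vertex while the lower vertex emits weight $1$ on its unique outgoing edge on both sides. Your anticipated obstacle is settled exactly as in the paper: propagating $(1,0,0)$ on the slice $iji$ through the six-valent vertex gives $(0,0,1)$ on $jij$, which matches the other weave, so you need neither a correction by other cycles nor your fallback via boundary pairings.
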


\begin{proof}
We argue by induction on the number of local moves. It suffices to
consider a single move
\(\mathfrak U^{[k]}\to\mathfrak U^{[k+1]}\) and prove that it
preserves the output vector of the cycle being tracked.

If the move occurs strictly above \(v^{[k]}\), then the associated
cycle vanishes on every edge above its origin, whereas its outgoing
edge at \(v^{[k]}\) has weight \(1\). Thus the cycles on the two
sides agree below \(v^{[k]}\), and their output vectors coincide.
We may therefore assume that the move occurs below \(v^{[k]}\), or
that \(v^{[k]}\) belongs to the local configuration.

Suppose first that the move is an equivalence involving only four-
and six-valent vertices. The incoming weights of the restricted cycle
at the top of the local configuration are unchanged. The invariance
of the tropical Lusztig rules under braid moves
\cite[Lemma~4.10]{casals2025cluster} then implies equality of the
local outgoing weights. For the equivalence involving a trivalent
vertex, the same conclusion follows from
\cite[Example~4.12]{casals2025cluster} when the tracked vertex is not
the trivalent vertex involved in the move. If it is that vertex, its
local outgoing vector is \((0,0,1)\) on both sides.

Finally, suppose that the move is a weave mutation. A frozen vertex
cannot be the mutation vertex
\cite[Lemma~4.37(1)]{casals2025cluster}, so the transported vertex is
well defined. If its cycle enters the mutation configuration with
weights \(a,b,c\), then the outgoing weights on the two sides are
\(\min\{\min\{a,b\},c\}\) and
\(\min\{a,\min\{b,c\}\}\), respectively. Both equal
\(\min\{a,b,c\}\). If the tracked vertex is the lower trivalent
vertex in the mutation configuration, its associated cycle instead
originates inside the configuration; in this case, its unique local
outgoing edge has weight \(1\) on both sides. Thus every possible
local move preserves the output vector.

Since \(v^{[k]}\) is frozen, its output vector is nonzero. The same is
therefore true after the move, so \(v^{[k+1]}\) is also frozen.
Induction on \(k\) proves both assertions.
\end{proof}

Recall that
\(K_\beta^{\mathrm{mut}}=\{k_1<\cdots<k_n\}\) is the set of positions at which
Case~\textup{(5)} occurs while \(i_1\mathrm L\) is moved to the
right, and that
\(\mathbf i_\beta^{\mathrm{mut}}=(i_{k_1},\ldots,i_{k_n})\). We use
\(\mu_{k_s}\) as shorthand for the vector mutation associated with
the corresponding weave mutation. Set
\begin{equation}\label{eq:successive-eta-tuples}
    \boldsymbol\eta^{(0)}
    :=(\eta_v^\beta)_
       {v\in\overrightarrow{\mathfrak W}(\beta)_3},
    \qquad
    \boldsymbol\eta^{(t)}
    :=\mu_{k_t}\circ\cdots\circ\mu_{k_1}
      (\boldsymbol\eta^{(0)})
      \quad(1\leq t\leq n).
\end{equation}
The component indexed by \(v\) is denoted by \(\eta_v^{(t)}\),
where vertex labels are transported along the local equivalences and
mutations. We use the same convention for the order of the vertices.
For every intermediate weave \(\mathfrak U\), we retain the fixed
coordinate lattice \(\ZZ^r\) indexed by the original positions of
\(\beta\). Let \(v_0\) denote the trivalent vertex corresponding to
the left insertion \(i_1\mathrm L\). The remaining trivalent vertices
inherit labels from the original solid right steps via the transported
labeling map
\begin{equation}\label{eq:transported-vertex-position-map}
    q_{\mathfrak U}\colon
    \mathfrak U_3\setminus\{v_0\}
    \longrightarrow [r],
\end{equation}
where \(q_{\mathfrak U}(v)=k\) if \(v\) is transported from the
trivalent vertex associated with the original solid step
\(i_k\mathrm R\). Accordingly, we set
\begin{equation}\label{eq:transported-coordinate-vector}
    E_v^{\mathfrak U}
    :=
    E_{q_{\mathfrak U}(v)}
    \in\ZZ^r
    \qquad
    \bigl(v\in\mathfrak U_3\setminus\{v_0\}\bigr).
\end{equation}

For example, consider
\[
    \ddot{\mathbf s}
    =
    (1\mathrm R^+,2\mathrm R^+,1\mathrm R^+,
      2\mathrm R,1\mathrm L,2\mathrm R),
\]
and let \(v_1<v_2<v_3\) be its three trivalent vertices. The
exceptional vertex is \(v_0=v_2\), corresponding to
\(1\mathrm L\), whereas the other two vertices retain the transported
labels
\[
    q_{\mathfrak U}(v_1)=5,
    \qquad
    q_{\mathfrak U}(v_3)=6.
\]
\begin{lemma}\label{lem:partial-eta}
Fix \(1\leq t\leq n\) and \(k_t<p\leq r\). Consider the partial
double inductive weave
\begin{equation}\label{eq:partial-double-inductive-weave}
    \mathfrak W_{t,p}
    =(i_2\mathrm R,\ldots,i_{k_t}\mathrm R,
      i_1\mathrm L,
      i_{k_t+1}\mathrm R,\ldots,i_{p-1}\mathrm R)
\end{equation}
and the partial right inductive weave
\(\overrightarrow{\mathfrak W}_p
=(i_1\mathrm R,\ldots,i_{p-1}\mathrm R)\). The decorations
determined by the relevant Demazure products are suppressed. Let
\(A_t\) and \(B_t\) be the cluster variables attached, respectively,
to \(i_1\mathrm L\) and \(i_{k_t}\mathrm R\) in
\(\mathfrak W_{t,p}\). Then the following statements hold.

\begin{enumerate}[label=\textup{(\arabic*)}]
    \item Under the identifications induced by the local weave
    equivalences, \(A_t\) is the cluster variable attached to
    \(i_{k_t}\mathrm R\) in
    \(\overrightarrow{\mathfrak W}_p\).

    \item Let \(A_0\) be the cluster variable attached to
    \(i_1\mathrm L\) immediately before the first mutation. At the
    \(t\)-th step,
    \begin{equation}\label{eq:Bt-mutation-At-minus-one}
        B_t=\mu_{k_t}(A_{t-1}).
    \end{equation}
    Equivalently, \(B_t\) is the new cluster variable introduced at
    the \(t\)-th step of the seed-mutation sequence
    \(\mu_{k_t}\circ\cdots\circ\mu_{k_1}\).

    \item Suppose that appending \(i_p\mathrm R\) produces a
    trivalent vertex \(v\), so that \(q_v^\beta=p\). Then
    \begin{equation}\label{eq:partial-eta-recursion}
        \eta_v^{(t)}=\eta_v^{(0)}
        =E_{q_v^\beta}+
        \sum_{v'<v}
        \gamma_{v'}^{\mathfrak W_{t,p}}
            (e_{v,\mathrm{nw}})\eta_{v'}^{(t)}.
    \end{equation}
    Here \(e_{v,\mathrm{nw}}\) is viewed as the corresponding
    southern boundary edge of the partial weave
    \(\mathfrak W_{t,p}\).
\end{enumerate}
\end{lemma}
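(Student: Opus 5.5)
We argue by induction on \(t\), treating the three parts in parallel, because each part at step \(t\) is needed to establish the others at step \(t+1\).

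\emph{Part (1).} The key observation is that moving \(i_1\mathrm L\) past the right insertions \(i_2\mathrm R,\ldots,i_{k_t-1}\mathrm R\) involves only Cases~(1)--(5). By construction, every Case~(5) move strictly before \(k_t\) has already been carried out.

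For part~(1), compare the two partial weaves \(\mathfrak W_{t,p}\) and \(\overrightarrow{\mathfrak W}_p\). They share the word \(\beta_{p-1}\) and the lower boundary, and differ only by the position of the first strand. In \(\overrightarrow{\mathfrak W}_p\), the first strand is the top-left strand. In \(\mathfrak W_{t,p}\), it enters at step \(k_t\), and its trivalent vertex is the Demazure merge that absorbs the left \(i_1\)-strand into the boundary. By Lemma~\ref{lem:grid-minor-equation}, the path products above depth \(k_t\) coincide. The edge labels are fixed functions of \(z_1,\ldots,z_{p-1}\), so the \(u\)-coordinate on the southern edge of the new trivalent vertex is the Cartan factor of the same relative-position element in both weaves. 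Hence \(A_t\) equals the variable at \(i_{k_t}\mathrm R\) in \(\overrightarrow{\mathfrak W}_p\). Concretely, the cluster variable is \(u_{e_{v,\mathrm s}}\), and \eqref{eq:u-factorization} together with the uniqueness of the \(H\)-factor in \(U^+\dot w h U^+\) identifies the two.

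\emph{Part (2).} Part~(2) then follows from the factorization \eqref{eq:case5}. The Case~(5) move at \(k_t\) is a single weave mutation at the vertex carrying \(A_{t-1}\), namely the upper vertex \(v_1\) of \(\mathfrak W_-\). Weave mutation induces seed mutation of the weave cluster variables, by the seed-mutation theorem of \cite{casals2025cluster} for Demazure weaves. After the mutation, the new vertex is the one attached to \(i_{k_t}\mathrm R\), which gives \(B_t=\mu_{k_t}(A_{t-1})\). The Case~(1)--(4) moves that follow preserve variables, so the equivalent reformulation also holds.

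\emph{Part (3).} This is the main obstacle. Since \(p>k_t\), the vertex \(v\) is not mutated during steps \(1,\ldots,t\), so \(\eta_v^{(t)}=\eta_v^{(0)}\) directly from \eqref{eq:mutation-integer-vectors}. The real content is the second equality: the recursion must be preserved when both the coefficients \(\gamma_{v'}(e_{v,\mathrm{nw}})\) and the vectors \(\eta_{v'}\) change.

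We induct on \(t\). For \(t=0\), the claim is Definition~\ref{def:eta-vector-right-inductive}, using that the weaves before \(k_1\) are equivalent to the right inductive weave and that Cases~(1)--(4) preserve Lusztig cycles, as in the proof of Lemma~\ref{lem:output}.

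For the inductive step \(t-1\to t\), only two kinds of terms change:
\begin{itemize}
\item the coefficient of the mutated vertex \(v_1\), which becomes \(v_1^+\);
\item the coefficients \(\gamma_{v'}\) for \(v'<v_1\), which are modified by Lemma~\ref{lem:gammamutation}.
\end{itemize}
Write \(c:=\gamma_{v_1}(e_{v,\mathrm{nw}})\). The plan is to show that the difference between the two sums equals \(c\,(\eta^{(t)}_{v_1^+}+\eta^{(t-1)}_{v_1}-A^{+}_{v_1})\), and that this vanishes. Vanishing amounts to the max in \eqref{eq:mutation-integer-vectors} being attained by \(A^+\). To see this, use Lemma~\ref{lem:maxindex}: \(A^+\) contains \(\eta_{v_2}\), whose top index is the largest in the \(\prec_{\mathrm r}\) comparison, while \(A^-\) involves only earlier vertices. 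The delicate points are:
\begin{itemize}
\item verifying that the propagated weights from \(v_1\) and \(v_1^+\) to \(e_{v,\mathrm{nw}}\) agree, which follows from tropical invariance below the local configuration;
\item checking the sign conventions \(b_{u,w}=-\varepsilon_{u,w}\).
\end{itemize}
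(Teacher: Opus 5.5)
Your part (2) matches the paper. Parts (3) and (1) each have a genuine gap.

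In part (3) you misidentify what changes under the \(t\)-th mutation. Lemma~\ref{lem:gammamutation} computes weights on \(e_{v_1^+,\mathrm{nw}}\), an edge inside the local mutation configuration. It says nothing about \(e_{v,\mathrm{nw}}\), which is a southern boundary edge of \(\mathfrak W_{t,p}\) lying below that configuration. At such an edge, every cycle entering the configuration leaves it with weight \(\min\{a,b,c\}\) on both sides, exactly as in the proof of Lemma~\ref{lem:output}. So the coefficients \(\gamma_{v'}(e_{v,\mathrm{nw}})\) of all non-mutated vertices do not change at all. Moreover, your \(c=\gamma_{v_1}(e_{v,\mathrm{nw}})\) is \(0\), and so is the coefficient of \(v_1^+\). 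Both cycles are short cycles of a mutable vertex and vanish below the lower vertex. Your proposed accounting therefore rests on changes that do not occur: the two sums agree term by term, and the appeal to \(\max_{\mathrm r}\) being attained by \(A^+\) plays no role.

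The missing idea is that \(e_{v,\mathrm{nw}}\) lies on the southern boundary of the partial weave, so only vertices frozen in that partial weave can contribute. This gives the paper's short proof with no induction on \(t\). Frozen vertices are never mutated, so \(\eta^{(t)}_{v'}=\eta^{(0)}_{v'}\). By Lemma~\ref{lem:output}, their output vectors, and hence their coefficients, agree in \(\mathfrak W_{t,p}\) and \(\overrightarrow{\mathfrak W}_p\). Mutable vertices contribute zero to both sums.

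In part (1), Lemma~\ref{lem:grid-minor-equation} only gives equality of the Cartan factor \(h^+_{k_t}=\prod_a\chi_a(u_{e_a})\) at depth \(k_t\), that is, of certain monomials in the frozen variables. You cannot extract the single coordinate \(u_{e_{v,\mathrm s}}\) from this, for three reasons:
\begin{enumerate}
\item the coroots \(\chi_a\) are in general linearly dependent;
\item the new trivalent vertex sits at the left end of one truncated weave (a left insertion) and at the right end of the other;
\item the two southern reduced words may differ.
\end{enumerate}

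The paper argues locally instead. At steps \(1,\ldots,t-1\) only the vertex labelled \(i_1\mathrm L\) is mutated, so the vertex \(i_{k_t}\mathrm R\) still carries its right-inductive variable in \(\mathfrak W^{(t)}\). That vertex is the lower vertex \(v_2\) of the Case~(5) configuration in \eqref{eq:case5}. It is not mutated and is transported to \(i_1\mathrm L\), so its variable is exactly \(A_t\).
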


\begin{proof}
We first prove \textup{(1)} and \textup{(2)}. Immediately before the
\(t\)-th mutation, suppose that the variable attached to
\(i_1\mathrm L\) is \(A_{t-1}\). The Case~\textup{(5)}
transformation consists of a mutation followed by weave
equivalences as in Figure \ref{fig:weave-mutation}. Under this transformation from $\mathfrak W_-$ to $\mathfrak W_+$ in equation~\eqref{eq:case5}, \(A_{t-1}\) mutates to
\(B_t\), which becomes attached to \(i_{k_t}\mathrm R\) in $\mathfrak W_{t,p}$. The
unchanged variable previously attached to \(i_{k_t}\mathrm R\) is
transported to \(i_1\mathrm L\); this is precisely \(A_t\). Starting
with \(A_0\) and arguing inductively proves both assertions.

We now prove \textup{(3)}. Since \(p>k_t\), the vertex \(v\) is not
one of the first \(t\) mutation vertices. Vector mutation changes
only the component at the mutation vertex, and hence
\(\eta_v^{(t)}=\eta_v^{(0)}\). The defining recursion in the partial
right inductive weave is
\begin{equation}\label{eq:eta-recursion-partial-right-weave}
    \eta_v^{(0)}
    =E_{q_v^\beta}+
    \sum_{v'<v}
    \gamma_{v'}^{\overrightarrow{\mathfrak W}_p}
        (e_{v,\mathrm{nw}})\eta_{v'}^{(0)}.
\end{equation}

If \(v'<v\) is frozen in either partial weave, Lemma~\ref{lem:output}
transports its frozen status and its output vector through all the
local moves. In particular, it is not one of the mutation vertices,
so \(\eta_{v'}^{(t)}=\eta_{v'}^{(0)}\), and
\begin{equation}\label{eq:frozen-summand-partial-weaves}
    \gamma_{v'}^{\mathfrak W_{t,p}}
        (e_{v,\mathrm{nw}})\eta_{v'}^{(t)}
    =
    \gamma_{v'}^{\overrightarrow{\mathfrak W}_p}
        (e_{v,\mathrm{nw}})\eta_{v'}^{(0)}.
\end{equation}
If \(v'\) is mutable, its Lusztig cycle vanishes on every southern
boundary edge. Both coefficients in
\eqref{eq:frozen-summand-partial-weaves} are then zero, so the same
identity remains valid. Summing over \(v'<v\) and substituting into
\eqref{eq:eta-recursion-partial-right-weave} gives
\eqref{eq:partial-eta-recursion}.
\end{proof}

We now establish the recursive description of the mutated vectors.

\begin{proposition}\label{pro:eta-recursion}
For \(1\leq t\leq n\), let
\(v_1^{(t)}\in(\mathfrak W^{(t)})_3\) be the vertex at which
\(\mu_{k_t}\) is performed, and let
\(v_1^{(t)+}\in(\mathfrak W^{(t+1)})_3\) be the vertex replacing it.
Let \(v_2^{(t)}\) be the lower trivalent vertex in the local mutation
configuration of Figure~\ref{fig:weave-mutation-branches}. All other
vertices are identified in the natural way. Then, for every
\(v\in(\mathfrak W^{(t+1)})_3\setminus\{v_2^{(t)}\}\),
\begin{equation}\label{eq:etavt}
    \eta_v^{(t)}
    =E_{v}^{\mathfrak W^{(t+1)}}+
    \sum_{v'<v}
    \gamma_{v'}^{\mathfrak W^{(t+1)}}
        (e_{v,\mathrm{nw}})\eta_{v'}^{(t)},
\end{equation}
Here \(E_v^{\mathfrak W^{(t+1)}}\) is defined by
\eqref{eq:transported-coordinate-vector}.
Moreover,
\begin{equation}\label{eq:exceptional-lower-vector}
    \eta_{v_2^{(t)}}^{(t)}
    =\eta_{v_2^{(t)}}^{(0)}
    =\eta_{v_2^{(t)}}^\beta.
\end{equation}
\end{proposition}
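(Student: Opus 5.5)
We argue by induction on \(t\), the base case being the defining recursion \eqref{eq:etav} for \(\boldsymbol\eta^{(0)}\) on \(\mathfrak W^{(1)}\sim\overrightarrow{\mathfrak W}(\beta)\). Here we use that Types~(1)--(4) transformations preserve the vertex identifications, the order, and the weights \(\gamma_{v'}(e_{v,\mathrm{nw}})\) relevant to the recursion, by Lemma~\ref{lem:partial-eta}(3) applied with the preceding index. Assuming \eqref{eq:etavt} holds for \(\boldsymbol\eta^{(t-1)}\) on \(\mathfrak W^{(t)}\), we pass to \(\mathfrak W^{(t+1)}\) by the mutation \(\mu_{k_t}\) at \(v_1^{(t)}\) followed by equivalences. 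We then split the vertices \(v\) into three groups: those created before the mutation configuration, the new vertex \(v_1^{(t)+}\), and those created after it.

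For vertices \(v\) above or unaffected by the local configuration, the Lusztig weights \(\gamma_{v'}(e_{v,\mathrm{nw}})\) with \(v'<v\) are computed above the mutation, so they coincide in \(\mathfrak W^{(t)}\) and \(\mathfrak W^{(t+1)}\). Since \(\eta_{v}^{(t)}=\eta_{v}^{(t-1)}\) for \(v\neq v_1^{(t)}\), the recursion transports verbatim. For vertices \(v\) appended later, namely \(q_v^\beta=p>k_t\), the required identity is exactly Lemma~\ref{lem:partial-eta}(3). There only frozen \(v'\) contribute, and by Lemma~\ref{lem:output} their output weights, and hence the relevant \(\gamma\)-coefficients, are invariant. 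The exceptional vertex \(v_2^{(t)}\) is never a mutation vertex, so its vector remains \(\eta^{(0)}\). This gives \eqref{eq:exceptional-lower-vector}, after checking inductively that each \(v_2^{(s)}\) is untouched by earlier mutations.

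The key case, and the main obstacle, is \(v=v_1^{(t)+}\). By Definition~\ref{def:mutlusztig},
\[
\eta^{(t)}_{v_1^{(t)+}}=\max\nolimits_{\mathrm r}\{A^+,A^-\}-\eta^{(t-1)}_{v_1^{(t)}} .
\]
The plan is as follows. First, use the induction hypothesis to expand \(\eta^{(t-1)}_{v_1^{(t)}}\) and \(\eta^{(t-1)}_{v_2^{(t)}}\) through their recursions; the latter has incoming weights \(\min\{a,b\}\). Second, show via Lemma~\ref{lem:maxindex}-type support bounds that the column of \(B^{(t)}\) at \(v_1\) has its \(\prec_{\mathrm r}\)-dominant side equal to the one containing \(v_2^{(t)}\), since \(v_2\) carries the new coordinate \(E_{k_t}\) at the largest index. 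This identifies the maximum as \(\eta_{v_2}+\sum_{v'}[\varepsilon_{v_1,v'}]_+\eta_{v'}\). Third, substitute this and apply Lemma~\ref{lem:gammamutation}, \(b=\min\{a,b\}+[b-a]_+\), to get
\[
\eta^{(t)}_{v_1^{(t)+}}=E_{k_t}+\sum_{v'<v_1}\gamma^{\mathfrak W^{(t+1)}}_{v'}(e_{v_1^+,\mathrm{nw}})\,\eta^{(t)}_{v'} ,
\]
where \(E_{k_t}=E_{v_1^{(t)+}}^{\mathfrak W^{(t+1)}}\) by the transported labeling \eqref{eq:transported-coordinate-vector}. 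The delicate point is the cancellation of \(\eta_{v_1}\) against the \(\min\{a,b\}\)-terms, together with the verification that the exchange-matrix entries outside the local configuration contribute nothing beyond those captured by \([\varepsilon_{v_1,v'}]_+\). For this we would use that the short cycle \(\gamma_{v_1}\) is local, as in the proof of Lemma~\ref{lem:gammamutation}, so every \(\varepsilon_{v_1,v'}\) is a local determinant.
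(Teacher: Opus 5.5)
Your proposal is essentially the paper's proof. It uses the same induction on \(t\) and the same case split: vertices above \(v_1^{(t)}\), the mutated vertex, the untouched lower vertex \(v_2^{(t)}\), and later vertices via Lemma~\ref{lem:partial-eta}. The key step is also the same, namely \(N_t\prec_{\mathrm r}P_t\) via the leading coordinate \(1\) of \(\eta^{(t-1)}_{v_2}=\eta^{(0)}_{v_2}\) at \(k_t\), followed by Lemma~\ref{lem:gammamutation}; the support bound \(\max\operatorname{supp}\eta^{(t-1)}_u<k_t\) for \(u<v_1\) should be carried along in the induction, as the paper does.

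One correction. The induction hypothesis gives no recursion for \(\eta^{(t-1)}_{v_1^{(t)}}\), because \(v_1^{(t)}=v_2^{(t-1)}\) is exactly the excluded exceptional vertex. None is needed: the subtracted \(\eta^{(t-1)}_{v_1}\) cancels against the term \(\gamma^{\mathfrak W^{(t)}}_{v_1}(e_{v_2,\mathrm{nw}})\,\eta^{(t-1)}_{v_1}=\eta^{(t-1)}_{v_1}\) in the recursion for \(\eta^{(t-1)}_{v_2}\). The \(\min\{a,b\}\) coefficients then combine with \([\varepsilon^{(t)}_{v_1,u}]_+\) to give \(b\).
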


\begin{proof}
Note that $v_2^{(t)}$ corresponds to $i_1L$ in $\mathfrak W^{(t+1)}$ by Lemma~\ref{lem:partial-eta}. We proceed by induction on \(t\). We suppress the weave equivalences
occurring immediately before and after the local mutation in
\eqref{eq:case5}; these equivalences preserve the relevant boundary
weights by Lemma~\ref{lem:output}. Thus the \(t\)-th step may be
treated as the single local mutation from \(\mathfrak W^{(t)}\) to
\(\mathfrak W^{(t+1)}\).

For later use, put
\(
q_v^{(t)}:=q_{\mathfrak W^{(t+1)}}(v)
\)
for \(v\in(\mathfrak W^{(t+1)})_3\). The transported vertex order
and the original solid-position order agree:
\[
   v'<u \quad\Longleftrightarrow\quad
   q_{v'}^{(t)}<q_u^{(t)}.
\]
Suppose that \eqref{eq:etavt} has been established for \(u\) and
for every trivalent vertex preceding \(u\). We prove inductively
that, for every \(v'\leq u\) for which that recursion holds,
\[
    \max\operatorname{supp}\bigl(\eta_{v'}^{(t)}\bigr)
       =q_{v'}^{(t)},
    \qquad
    \bigl(\eta_{v'}^{(t)}\bigr)_{q_{v'}^{(t)}}=1.
\]
Indeed, if \(v'<u\), the induction hypothesis shows that every
nonzero coordinate of \(\eta_{v'}^{(t)}\) is at most
\(q_{v'}^{(t)}<q_u^{(t)}\). Hence, in \eqref{eq:etavt} for \(u\),
every term other than \(E_u^{\mathfrak W^{(t+1)}}\) is supported
strictly below \(q_u^{(t)}\). The latter unit vector has coefficient
one at \(q_u^{(t)}\), and all Lusztig-cycle weights are nonnegative.
It follows that
\begin{equation}\label{eq:leading-coordinate-mutated-eta}
    \max\operatorname{supp}\bigl(\eta_u^{(t)}\bigr)=q_u^{(t)},
    \qquad
    \bigl(\eta_u^{(t)}\bigr)_{q_u^{(t)}}=1.
\end{equation}

We shall prove \eqref{eq:etavt},
\eqref{eq:exceptional-lower-vector}, and this triangular consequence
simultaneously; no recursion is asserted for the exceptional lower
vertex.

Fix \(t\), and abbreviate
\(v_1=v_1^{(t)}\), \(v_1^+=v_1^{(t)+}\), and
\(v_2=v_2^{(t)}\). We consider four classes of vertices.

\smallskip
\noindent
\emph{Vertices above \(v_1\).}
Let \(v<v_1\). The mutation leaves the component indexed by \(v\)
unchanged, and the two weaves coincide above the local mutation
configuration. Thus
\begin{equation}\label{eq:unchanged-data-above-mutation}
    \eta_v^{(t)}=\eta_v^{(t-1)},
    \qquad
    E_v^{\mathfrak W^{(t+1)}}=E_v^{\mathfrak W^{(t)}},
\end{equation}
and the coefficients
\(\gamma_{v'}(e_{v,\mathrm{nw}})\) are unchanged for all \(v'<v\).
For \(t>1\), the unique exceptional vertex at the preceding stage is
the vertex labelled by \(i_1\mathrm L\), namely \(v_1\), so the
induction hypothesis applies to every \(v<v_1\). When \(t=1\), the
same recursion follows directly from the defining recursion for
\(\eta^\beta\) and the equivalences of Types~\textup{(1)}--\textup{(4)}
preceding the first mutation. Hence \eqref{eq:etavt} holds for all
vertices above \(v_1\).

\smallskip
\noindent
\emph{The mutated vertex \(v_1^+\).}
Set
\begin{equation}\label{eq:positive-negative-vector-sums}
    P_t:=\sum_{u\in(\mathfrak W^{(t)})_3}
       [\varepsilon_{v_1,u}^{(t)}]_+\eta_u^{(t-1)},
    \qquad
    N_t:=\sum_{u\in(\mathfrak W^{(t)})_3}
       [\varepsilon_{u,v_1}^{(t)}]_+\eta_u^{(t-1)}.
\end{equation}
By \eqref{eq:exchange-matrix-weave}, these are respectively the
positive and negative sums for the exchange column indexed by
\(v_1\).
The local configuration and
\cite[Lemma~4.33]{casals2025cluster} give
\(\varepsilon_{v_1,v_2}^{(t)}=1\). Moreover, \(v_2\) is the unique
vertex following \(v_1\) that is adjacent to it. Consequently,
\begin{align}
    P_t&=\eta_{v_2}^{(t-1)}+
        \sum_{u<v_1}[\varepsilon_{v_1,u}^{(t)}]_+
        \eta_u^{(t-1)},\label{eq:Pt-expansion}\\
    N_t&=\sum_{u<v_1}[-\varepsilon_{v_1,u}^{(t)}]_+
        \eta_u^{(t-1)}.\label{eq:Nt-expansion}
\end{align}

We now determine which of these two vectors is selected by the
mutation rule. The vertex \(v_2\), which corresponds to the entry
\(i_{k_t}\mathrm R\), has not been mutated during the first
\(t-1\) steps. Thus
\(\eta_{v_2}^{(t-1)}=\eta_{v_2}^{(0)}\). Put
\(q:=q_{v_2}^\beta\). By
Lemma~\ref{lem:maxindex} and Lemma~\ref{lem:partial-eta}(1), this vector has
coefficient \(1\) at position \(q\) and vanishes at every position
larger than \(q\). On the other hand,
\eqref{eq:leading-coordinate-mutated-eta} shows that every vector
\(\eta_u^{(t-1)}\) with \(u<v_1\) is supported in positions strictly
smaller than \(q\). It follows from
\eqref{eq:Pt-expansion}--\eqref{eq:Nt-expansion} that
\begin{equation}\label{eq:right-order-comparison-Pt-Nt}
    (P_t)_k=(N_t)_k=0\quad(k>q),
    \qquad
    (P_t)_q=1>0=(N_t)_q.
\end{equation}
By the definition of the right lexicographic order in
\eqref{eq:right-lexicographic-order}, we have
\(N_t\prec_{\mathrm r}P_t\). Therefore the vector-mutation rule gives
\begin{equation}\label{eq:mutated-vector-Pt}
    \eta_{v_1^+}^{(t)}=P_t-\eta_{v_1}^{(t-1)}.
\end{equation}

We next use the recursion for \(\eta_{v_2}^{(t-1)}\). When \(t=1\),
this is the defining recursion on the partial right inductive weave.
When \(t>1\), it follows from Lemma~\ref{lem:partial-eta} applied with
the indices \(t-1\) and \(p=k_t\). Since
\(\gamma_{v_1}^{\mathfrak W^{(t)}}
(e_{v_2,\mathrm{nw}})=1\), we obtain
\begin{equation}\label{eq:recursion-at-v2-before-mutation}
    \eta_{v_2}^{(t-1)}
    =E_{v_2}^{\mathfrak W^{(t)}}+
     \eta_{v_1}^{(t-1)}+
     \sum_{u<v_1}
     \gamma_u^{\mathfrak W^{(t)}}
        (e_{v_2,\mathrm{nw}})\eta_u^{(t-1)}.
\end{equation}
Substituting \eqref{eq:recursion-at-v2-before-mutation} into
\eqref{eq:Pt-expansion} and then into
\eqref{eq:mutated-vector-Pt} yields
\begin{equation}\label{eq:mutated-vector-before-gamma-identity}
    \eta_{v_1^+}^{(t)}
    =E_{v_2}^{\mathfrak W^{(t)}}+
     \sum_{u<v_1}
     \left(
       \gamma_u^{\mathfrak W^{(t)}}
          (e_{v_2,\mathrm{nw}})
       +[\varepsilon_{v_1,u}^{(t)}]_+
     \right)\eta_u^{(t-1)}.
\end{equation}
Only the component at \(v_1\) changes under the mutation, so
\(\eta_u^{(t)}=\eta_u^{(t-1)}\) for \(u<v_1\). By
Lemma~\ref{lem:gammamutation}, the coefficient in parentheses in
\eqref{eq:mutated-vector-before-gamma-identity} equals
\(\gamma_u^{\mathfrak W^{(t+1)}}
(e_{v_1^+,\mathrm{nw}})\). Finally, the local mutation identifies the
corresponding labels, giving
\(E_{v_2}^{\mathfrak W^{(t)}}
=E_{v_1^+}^{\mathfrak W^{(t+1)}}\) by Figure~\ref{fig:weave-mutation}. Hence
\begin{equation}\label{eq:eta-recursion-mutated-vertex}
    \eta_{v_1^+}^{(t)}
    =E_{v_1^+}^{\mathfrak W^{(t+1)}}+
     \sum_{u<v_1^+}
     \gamma_u^{\mathfrak W^{(t+1)}}
       (e_{v_1^+,\mathrm{nw}})\eta_u^{(t)},
\end{equation}
which is \eqref{eq:etavt} for the mutated vertex.

\smallskip
\noindent
\emph{The lower vertex \(v_2\).}
The vertex \(v_2\) corresponds to the entry
\(i_{k_t}\mathrm R\) before the \(t\)-th mutation, and it has not
been mutated at any earlier step. The \(t\)-th mutation is performed
at \(v_1\), not at \(v_2\). Therefore
\(\eta_{v_2}^{(t)}=\eta_{v_2}^{(0)}=\eta_{v_2}^\beta\), proving
\eqref{eq:exceptional-lower-vector}. No recursive formula is asserted
for this exceptional vertex.

\smallskip
\noindent
\emph{Vertices below \(v_2\).}
Let \(v>v_2\), and put
\(p=q_v^\beta\). The portion of \(\mathfrak W^{(t+1)}\) above \(v\)
is, up to weave equivalence, the partial double inductive weave
\(\mathfrak W_{t,p}\). Lemma~\ref{lem:partial-eta} therefore gives
\begin{equation}\label{eq:eta-recursion-below-local-mutation}
    \eta_v^{(t)}=\eta_v^{(0)}
    =E_v^{\mathfrak W^{(t+1)}}+
     \sum_{v'<v}
     \gamma_{v'}^{\mathfrak W^{(t+1)}}
       (e_{v,\mathrm{nw}})\eta_{v'}^{(t)},
\end{equation}
which is precisely \eqref{eq:etavt}.

These four cases exhaust the trivalent vertices of
\(\mathfrak W^{(t+1)}\). The triangular property
\eqref{eq:leading-coordinate-mutated-eta} follows for every vertex
to which the newly established recursion applies. This completes the
simultaneous induction.
\end{proof}

\begin{corollary}\label{cor:etavn}
Suppose $K_\beta^{\mathrm{mut}}\neq\emptyset$, let
\(
    \mu_{k_n}\circ\cdots\circ\mu_{k_1}
\)
be the mutation sequence induced by the transformations of
Case~\textup{(5)} relating
\(\overrightarrow{\mathfrak W}(\beta)\) to
\begin{equation}\label{eq:final-shifted-double-inductive-weave}
    \mathfrak W_1
    =
    (i_2\mathrm R,\ldots,i_r\mathrm R,i_1\mathrm L)
    =
    \mathfrak W^{(n+1)}.
\end{equation}
The final step \(i_1\mathrm L\) is solid. Let
\(v_{\mathrm{last}}\) be the corresponding trivalent vertex of
\(\mathfrak W_1\). Then, for every
\(
    v\in
    (\mathfrak W_1)_3\setminus\{v_{\mathrm{last}}\},
\)
one has
\begin{equation}\label{eq:final-eta-recursion}
    \eta_v^{(n)}
    =
    E_v^{\mathfrak W_1}
    +
    \sum_{v'<v}
    \gamma_{v'}^{\mathfrak W_1}
        (e_{v,\mathrm{nw}})
    \eta_{v'}^{(n)}.
\end{equation}
\end{corollary}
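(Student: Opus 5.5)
The plan is to obtain the corollary as the case \(t=n\) of Proposition~\ref{pro:eta-recursion}. The work lies in identifying the weave \(\mathfrak W^{(n+1)}\) with \(\mathfrak W_1\), and the exceptional lower vertex \(v_2^{(n)}\) with \(v_{\mathrm{last}}\).

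First I check that \(\mathfrak W^{(n+1)}=\mathfrak W_1\). This is the definition \eqref{eq:last-intermediate-right-inductive-weave}. The transformations performed after the last index \(k_n\) are all of Types~\textup{(1)}--\textup{(4)}, because \(k_n\) is the largest element of \(K_\beta^{\mathrm{mut}}\). These moves induce canonical identifications of trivalent vertices. By Lemma~\ref{lem:output}, together with the fact that Types~\textup{(1)}--\textup{(3)} preserve the exchange matrix, they also preserve the Lusztig-cycle weights on the boundary edges that enter \eqref{eq:etav}. So the recursion proved for \(\mathfrak W^{(n+1)}\), read after these identifications, is the recursion on \(\mathfrak W_1\). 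The same holds for the coordinate vectors \(E_v^{\mathfrak W_1}\), defined through the transported labeling map \eqref{eq:transported-vertex-position-map}.

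Next I show that \(i_1\mathrm L\) is solid at the end and that \(v_2^{(n)}\) is \(v_{\mathrm{last}}\). Immediately after the \(n\)-th Case~\textup{(5)} move, the entry \(i_1\mathrm L\) is undecorated. By Lemma~\ref{lem:partial-eta}(1),(2) and the remark at the start of the proof of Proposition~\ref{pro:eta-recursion}, the lower vertex \(v_2^{(n)}\) is the vertex carrying the label \(i_1\mathrm L\). While \(i_1\mathrm L\) is moved further right, it starts each interchange undecorated, so Cases~\textup{(1)}, \textup{(2)} and \textup{(4)} cannot occur, since each requires \(i\mathrm L^+\) on the left-hand side. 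Case~\textup{(5)} does not occur either, by maximality of \(k_n\). Hence only Case~\textup{(3)} interchanges occur. These keep \(i_1\mathrm L\) undecorated and carry its vertex canonically along. Therefore the final entry \(i_1\mathrm L\) of \(\mathfrak W_1\) is solid, and its vertex \(v_{\mathrm{last}}\) is the transport of \(v_2^{(n)}\).

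Finally, I apply \eqref{eq:etavt} with \(t=n\) to every \(v\in(\mathfrak W_1)_3\setminus\{v_{\mathrm{last}}\}=(\mathfrak W^{(n+1)})_3\setminus\{v_2^{(n)}\}\), which gives \eqref{eq:final-eta-recursion}. I expect the main obstacle to be the middle step: checking carefully that no Type~\textup{(4)} move can reassign the exceptional vertex after the last mutation, and that the vertex order used in the sum \(\sum_{v'<v}\) is the transported one, which Proposition~\ref{pro:eta-recursion} shows agrees with the order of the original solid positions.
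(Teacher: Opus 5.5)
Your proposal is correct and is the paper's argument: the corollary is Proposition~\ref{pro:eta-recursion} with \(t=n\), using \(\mathfrak W^{(n+1)}=\mathfrak W_1\) (that proposition is already stated on this weave, so your extra transport step is not needed) and identifying \(v_2^{(n)}\) with \(v_{\mathrm{last}}\) because \(i_1\mathrm L\) stays solid. One correction: it is not true that only Case~\textup{(3)} occurs after \(k_n\), because the paper's five-case list omits the interchange \((\beta_k,i\mathrm L,j\mathrm R^+)\leftrightarrow(\beta_k,j\mathrm R^+,i\mathrm L)\), which already occurs for \(\beta=(1,1,1,2)\) in type \(A_2\); your conclusion still stands, since solidity survives this interchange as well: \(s_iu_k<u_k<u_ks_j\) forces \(\ell(s_iu_ks_j)\leq\ell(u_k)<\ell(u_ks_j)\).
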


\begin{proof}
When the transformation of Case~\textup{(5)} first occurs, the
moving step \(i_1\mathrm L\) is solid. Its solid status is preserved
by all subsequent transformations, so it determines the final
trivalent vertex \(v_{\mathrm{last}}\) of \(\mathfrak W_1\).

Since
\(\mathfrak W^{(n+1)}=\mathfrak W_1\), the exceptional vertex
\(v_2^{(n)}\) in Proposition~\ref{pro:eta-recursion} is precisely
\(v_{\mathrm{last}}\). The result therefore follows immediately
from Proposition~\ref{pro:eta-recursion} with \(t=n\).
\end{proof}

\subsection{Vectors for right inductive weaves}
We use \(\Delta_{\fg}\) for the Dynkin diagram,
\(\Delta\in\Br^+\) for the positive braid lift of \(w_0\), and
\(\boldsymbol\Delta=(j_1,\ldots,j_\ell)\) for a chosen reduced word
representing that braid element. Thus
\(\boldsymbol\Delta\beta\) denotes a concatenated word, not a
product involving the Dynkin diagram. We first consider this case.
Let \(\beta=(i_1,\ldots,i_r)\) be a braid word, and set
\begin{equation}\label{eq:w-delta-beta}
    w:=\delta(\beta),
    \qquad
    m:=\ell(w),
    \qquad
    \ell:=\ell(w_0).
\end{equation}
Let \((j_1,\ldots,j_m)\) be the sequence of letters selected by the
leftmost reduced subexpression of \(\beta\) representing \(w\).
Recall that \(i^*\in I\) is determined by
\begin{equation}\label{eq:star-involution-simple-roots}
    w_0(\alpha_i)=-\alpha_{i^*}.
\end{equation}
The assignment \(i\mapsto i^*\) is an involution. If
\(w^*:=w_0ww_0\), then \((j_1^*,\ldots,j_m^*)\) is a reduced
expression of \(w^*\).

Choose a reduced expression
\((h_1,\ldots,h_{\ell-m})\) of \(w_0w^{-1}\). Since
\begin{equation}\label{eq:completion-to-w0}
    w^*(w_0w^{-1})=w_0,
    \qquad
    \ell(w^*)+\ell(w_0w^{-1})
    =m+(\ell-m)=\ell,
\end{equation}
the concatenation
\begin{equation}\label{eq:adapted-reduced-expression-Delta}
    \boldsymbol{\Delta}
    :=(j_1^*,\ldots,j_m^*,h_1,\ldots,h_{\ell-m})
\end{equation}
is a reduced expression of \(w_0\). Accordingly, we write
\begin{equation}\label{eq:word-Delta-beta}
    \boldsymbol{\Delta}\beta
    =(j_1^*,\ldots,j_m^*,h_1,\ldots,h_{\ell-m},
      i_1,\ldots,i_r).
\end{equation}

Consider the right inductive weave
\(\overrightarrow{\mathfrak W}(\boldsymbol{\Delta}\beta)\).
Every letter in the \(\beta\)-block produces a trivalent vertex,
because the preceding reduced block represents \(w_0\) and
\(\delta(w_0s_i)=w_0\) for every \(i\in I\). For a trivalent
vertex \(v\) of color \(i\), let \(v^-\) denote the last trivalent
vertex of color \(i\) preceding \(v\), when it exists. Otherwise,
set
\begin{equation}\label{eq:empty-predecessor-convention}
    v^-:=\varnothing,
    \qquad
    \eta_{\varnothing}^{\boldsymbol{\Delta}\beta}:=0.
\end{equation}

The natural embedding
\(\iota_{\boldsymbol\Delta}\colon\ZZ^r\hookrightarrow
\ZZ^{\ell+r}\) of the \(\beta\)-coordinate lattice into the
\(\boldsymbol{\Delta}\beta\)-coordinate lattice sends
\(E_k\) to \(E_{\ell+k}\), where \(\ell=\ell(w_0)\). We write
\begin{equation}\label{eq:embedded-beta-coordinate-after-Delta}
    \widehat E_k
    :=\iota_{\boldsymbol\Delta}(E_k)=E_{\ell+k}
\end{equation}
and never identify \(E_k\) and \(\widehat E_k\) literally.

\begin{remark}
We use \(E_{q_v^\beta}\), rather than
\(\widehat E_{q_v^\beta}\), because we record Lusztig parameters in
the original \(\beta\)-coordinate lattice. Indeed, the braid symmetry
\(T_{w_0}\) sends a global-basis element of
\(\widehat{\cA}(\beta)\) with \(\beta\)-Lusztig parameter
\(\mathbf a\) to a global-basis element of
\(\widehat{\cA}(\boldsymbol{\Delta}\beta)\) whose
\(\boldsymbol{\Delta}\beta\)-Lusztig parameter is the zero extension
\(\iota_{\boldsymbol{\Delta}}(\mathbf a)\). Moreover,
\[
    T_{w_0}\bigl(\widehat{\cA}(\beta)\bigr)
    =
    \widehat{\cA}(\boldsymbol{\Delta}\beta)
    \cap \widehat{\cA}_{\geq1}.
\]
Thus \(\widehat E_{q_v^\beta}
=\iota_{\boldsymbol{\Delta}}(E_{q_v^\beta})\) is the coordinate
vector obtained after applying \(T_{w_0}\). Since we suppress this
braid symmetry and work directly with the original
\(\beta\)-Lusztig parameters, we use \(E_{q_v^\beta}\).
\end{remark}

\begin{lemma}\label{lem:eta-Delta-beta-predecessor}
For every trivalent vertex \(v\) created by a letter in the \(\beta\)-block
of \(\overrightarrow{\mathfrak W}(\boldsymbol{\Delta}\beta)\), one has
\begin{equation}\label{eq:eta-Delta-beta-predecessor}
    \eta_v^{\boldsymbol{\Delta}\beta}
    =E_{q_v^\beta}
     +\eta_{v^-}^{\boldsymbol{\Delta}\beta}.
\end{equation}
Equivalently, the second summand in
\eqref{eq:eta-Delta-beta-predecessor} is absent when \(v\) has no
preceding trivalent vertex of the same color.
\end{lemma}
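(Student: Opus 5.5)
By Definition~\ref{def:eta-vector-right-inductive}, the trivalent vertices of \(\overrightarrow{\mathfrak W}(\boldsymbol{\Delta}\beta)\) are exactly the vertices created by the \(\beta\)-block, since the \(\boldsymbol\Delta\)-block is reduced. Moreover, \(\eta_v^{\boldsymbol\Delta\beta}=E_{q_v^\beta}+\sum_{u<v}\gamma_u(e_{v,\mathrm{nw}})\eta_u^{\boldsymbol\Delta\beta}\). Hence the lemma reduces to the following combinatorial claim. For trivalent vertices \(u<v\),
\[
    \gamma_u(e_{v,\mathrm{nw}})=
    \begin{cases}1,&u=v^-,\\0,&\text{otherwise}.\end{cases}
\]
The plan is to prove this by tracking the support of each Lusztig cycle \(\gamma_u\) as a single edge carrying weight \(1\), identified by its associated root.

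First, I will fix the invariant. At every horizontal slice strictly between consecutive letters of the \(\beta\)-block, the slice word is a reduced word \(\eta=(k_1,\ldots,k_\ell)\) of \(w_0\), since each new letter is merged by a Demazure move. Attach to each edge the root \(\rho_a=s_{k_1}\cdots s_{k_{a-1}}(\alpha_{k_a})\) as in \eqref{eq:slice-roots-coroots}. Under a \(2\)-move these roots are carried along their strands. Under a \(3\)-move \(iji\leftrightarrow jij\), the outer roots are exchanged crosswise and the middle root is their sum; this is the standard fact that the inversion set is an invariant. In particular, each simple root occurs at exactly one position of a reduced word of \(w_0\), and it never occupies the middle position of a braid move, since a simple root is not a sum of two positive roots.

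Second, I will show that below \(u\) and above the next trivalent vertex of the same color, \(\gamma_u\) is the unit cycle supported on the unique edge whose root is \(\alpha_{i^*}\), where \(i=i_u\). At its origin, \(e_{u,\mathrm s}\) is the rightmost edge of a reduced word of \(w_0\) ending in \(i\). Its root is \(w_0s_i(\alpha_i)=-w_0\alpha_i=\alpha_{i^*}\). Propagation through four-valent vertices moves the unit weight along its strand. For a six-valent vertex, the rule \eqref{eq:Lusztig-rule-six-valent} sends the weights \((1,0,0)\) to \((0,0,1)\) and \((0,0,1)\) to \((1,0,0)\), consistently with the crosswise transport of outer roots. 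The only problematic input \((0,1,0)\), which would produce \((1,0,1)\), is excluded by the first step. Newly attached top strands carry weight \(0\) for \(\gamma_u\).

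Third, I will treat the next letter \(j\) of the \(\beta\)-block, whose vertex is \(v\). The boundary is rewritten to end in \(j\), and the rightmost edge, which becomes \(e_{v,\mathrm{nw}}\), has root \(\alpha_{j^*}\).
\begin{enumerate}
    \item If \(j\neq i\), then \(\alpha_{j^*}\neq\alpha_{i^*}\), so \(\gamma_u(e_{v,\mathrm{nw}})=0\). The unit edge is not involved in the Demazure move, and it persists with the same root below \(v\).
    \item If \(j=i\), then \(\gamma_u(e_{v,\mathrm{nw}})=1\) while \(\gamma_u(e_{v,\mathrm{ne}})=0\). Hence \(\gamma_u(e_{v,\mathrm s})=\min\{1,0\}=0\) by \eqref{eq:Lusztig-rule-trivalent}, and \(\gamma_u\) vanishes identically below \(v\).
\end{enumerate}
Consequently, \(\gamma_u(e_{v,\mathrm{nw}})=1\) exactly when \(v\) is the first vertex of color \(i_u\) after \(u\), that is, when \(u=v^-\). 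Substituting this into \eqref{eq:etav} gives \eqref{eq:eta-Delta-beta-predecessor}, with the convention \eqref{eq:empty-predecessor-convention} covering the case with no predecessor.

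The main obstacle is the first step. One must verify carefully, with the left-to-right edge conventions of Definition~\ref{def:Lusztig-cycle}, that the root labels transform under six-valent vertices exactly as the outer weights are permuted there. One must also check that the position of \(\alpha_{i^*}\) is never the middle strand of a braid move. I would check this directly on the rank-two word \(iji\), where the roots are \(\alpha,\alpha+\alpha',\alpha'\) and the outputs are \(\alpha',\alpha+\alpha',\alpha\).
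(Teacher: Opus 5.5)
Your proof is correct and follows essentially the same route as the paper's: both track \(\gamma_u\) as a unit weight on the unique strand whose root is \(\alpha_{i^*}\) in each reduced \(w_0\)-slice, and both show it ends at the next trivalent vertex of the same color via \(\min\{1,0\}=0\). The only difference is that you derive the needed facts directly from the inversion-set computation and the six-valent rule: that a simple-root strand never enters a braid move through its middle edge, and that the unit weight is carried along its strand. The paper instead cites \cite[Lemmas~4.39 and~4.42]{casals2025cluster} for these facts.
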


\begin{proof}
Fix a trivalent vertex \(u\) of color \(i\), and consider its
Lusztig cycle
\(\gamma_u^{\overrightarrow{\mathfrak W}
(\boldsymbol{\Delta}\beta)}\). Immediately above \(u\), the
corresponding horizontal slice has the form \((\mathbf a,i,i)\),
where \((\mathbf a,i)\) is a reduced expression of \(w_0\). The
first occurrence of \(i\) is the final letter of this reduced
expression, whereas the second is the newly appended letter that
creates \(u\).

The strand corresponding to the final letter \(i\) of
\((\mathbf a,i)\) is labelled by
\begin{equation}\label{eq:simple-root-label-at-u}
    w_0(-\alpha_i)=\alpha_{i^*}.
\end{equation}
Consequently, \(\gamma_u\) starts on the unique strand labelled by
the simple root \(\alpha_{i^*}\).

Every reduced horizontal slice below \(u\) represents \(w_0\).
Indeed, if \(\beta_{\leq p}:=(i_1,\ldots,i_p)\), then
\begin{equation}\label{eq:Demazure-product-after-Delta}
    \delta(\boldsymbol{\Delta}\beta_{\leq p})=w_0
    \qquad(0\leq p\leq r).
\end{equation}
The root labels on each such slice therefore form the set of positive
roots, so there is a unique strand labelled by
\(\alpha_{i^*}\). By
\cite[Lemma~4.39]{casals2025cluster}, a strand carrying a simple-root
label cannot enter a six-valent vertex through its middle edge, and
its label is preserved as it passes through four- and six-valent
vertices. Together with the tropical Lusztig rules, this implies that
\(\gamma_u\) has weight \(1\) on the distinguished strand and
weight \(0\) on every other strand. In particular, the cycle does
not bifurcate, in agreement with
\cite[Lemma~4.42]{casals2025cluster}.

Suppose that the distinguished strand next meets a trivalent vertex
\(x\) of color \(h\). The strand entering \(x\) from the reduced
\(w_0\)-part is labelled by \(\alpha_{h^*}\). Hence the strand
supporting \(\gamma_u\) enters \(x\) if and only if
\(\alpha_{i^*}=\alpha_{h^*}\), or equivalently, \(h=i\). At such a
vertex, the other incoming edge has weight \(0\), and the trivalent
Lusztig rule gives \(\min\{1,0\}=0\). Thus \(\gamma_u\) terminates
at the next trivalent vertex of color \(i\); before reaching that
vertex, it passes all trivalent vertices of other colors without
changing its distinguished weight.

It follows that, for every \(u<v\),
\begin{equation}\label{eq:gamma-coefficient-same-color-predecessor}
    \gamma_u^{\overrightarrow{\mathfrak W}
        (\boldsymbol{\Delta}\beta)}(e_{v,\mathrm{nw}})
    =
    \begin{cases}
        1,&u=v^-,\\
        0,&u\neq v^-.
    \end{cases}
\end{equation}
When \(v^-\) does not exist, the first case is absent. Substituting
\eqref{eq:gamma-coefficient-same-color-predecessor} into the defining
recursion for \(\eta_v^{\boldsymbol{\Delta}\beta}\) gives
\eqref{eq:eta-Delta-beta-predecessor}.
\end{proof}

We next define the mutation sequence that moves the initial block
\((j_1^*,\ldots,j_m^*)\) of \(\boldsymbol{\Delta}\) from the left
of the right-insertion string to the right as left insertions. For
\(0\leq s\leq m\), let \(\mathfrak W_{[s]}\) be the double
inductive weave encoded by
\begin{equation}\label{eq:intermediate-Delta-beta-weaves}
\begin{aligned}
    \mathfrak W_{[s]}=\bigl(&
      j_{s+1}^*\mathrm R,\ldots,j_m^*\mathrm R,
      h_1\mathrm R,\ldots,h_{\ell-m}\mathrm R,
      i_1\mathrm R,\ldots,i_r\mathrm R,j_s^*\mathrm L,\ldots,j_1^*\mathrm L\bigr).
\end{aligned}
\end{equation}
Here and below, an empty block is omitted. Thus
\begin{equation}\label{eq:initial-final-Delta-beta-weaves}
\begin{aligned}
    \mathfrak W_{[0]}
    &=
    \overrightarrow{\mathfrak W}
       (\boldsymbol{\Delta}\beta),\\
    \mathfrak W_{[m]}
    &=
    (h_1\mathrm R,\ldots,h_{\ell-m}\mathrm R,
      i_1\mathrm R,\ldots,i_r\mathrm R,
      j_m^*\mathrm L,\ldots,j_1^*\mathrm L).
\end{aligned}
\end{equation}
For \(1\leq s\leq m\), let \(T_s\) be the mutation sequence that
transforms \(\mathfrak W_{[s-1]}\) into
\(\mathfrak W_{[s]}\) by moving \(j_s^*\mathrm R\) through the
remaining right-insertion block and converting it into
\(j_s^*\mathrm L\). Set
\begin{equation}\label{eq:mutation-sequence-T}
    T:=T_m\circ\cdots\circ T_1.
\end{equation}
By \cite[Lemma~3.11]{bi2026towards}, the mutation sequence \(T\)
agrees with the \(m\)-step mutation sequence \(M_m\) in
\eqref{eq:M-k}.
The weave equivalences occurring between mutations are understood to
transport vertex labels in the natural way. The block
\((h_1\mathrm R,\ldots,h_{\ell-m}\mathrm R)\) remains fixed
throughout and may therefore be suppressed from the double strings
when no confusion can arise.

Set
\begin{equation}\label{eq:initial-eta-tuple-Delta-beta}
    \boldsymbol\eta^{[0]}
    :=\left(\eta_v^{\boldsymbol{\Delta}\beta}\right)_
       {v\in\overrightarrow{\mathfrak W}
       (\boldsymbol{\Delta}\beta)_3},
\end{equation}
and define recursively
\begin{equation}\label{eq:successively-mutated-eta-Delta-beta}
    \boldsymbol\eta^{[s]}
    :=T_s(\boldsymbol\eta^{[s-1]})
    \quad(1\leq s\leq m),
    \qquad
    \boldsymbol\eta':=\boldsymbol\eta^{[m]}.
\end{equation}
We denote the component of \(\boldsymbol\eta'\) indexed by \(v\)
by \(\eta'_v\).

The choice of the leftmost reduced subexpression identifies the
trivalent vertices in the \(\beta\)-block of \(\mathfrak W_{[m]}\)
with those of \(\overrightarrow{\mathfrak W}(\beta)\). To prove it, we first introduce the following Lemma.

For \(x,y\in W\), choose a reduced expression
\[
    y=s_{a_1}\cdots s_{a_p}
\]
and define the \emph{Demazure product} \(x*y\) recursively by
\[
    x_0:=x,
    \qquad
    x_d:=
    \begin{cases}
        x_{d-1}s_{a_d},
        &\ell(x_{d-1}s_{a_d})>\ell(x_{d-1}),\\
        x_{d-1},
        &\ell(x_{d-1}s_{a_d})<\ell(x_{d-1}),
    \end{cases}
\]
and \(x*y:=x_p\). This definition is independent of the chosen
reduced expression of \(y\). If \(\beta\) and \(\gamma\) are
positive words, then
\begin{equation}\label{eq:Demazure-product-concatenation}
    \delta(\beta\gamma)
    =
    \delta(\beta)*\delta(\gamma).
\end{equation}
Indeed, both sides are obtained by applying the same Demazure
recursion successively to the letters of \(\beta\) and then to
those of \(\gamma\).

\begin{lemma}[Invariance under deletion of the reduced \(h\)-block]
\label{lem:Lusztig-cycle-h-block-invariance}
Retain the notation of
\eqref{eq:initial-final-Delta-beta-weaves}. Deleting from \(\mathfrak W_{[m]}\) the strands introduced by the
reduced block
\[
    (h_1\mathrm R,\ldots,h_{\ell-m}\mathrm R)
\]
canonically identifies the remaining \(\beta\)-block with
\(\overrightarrow{\mathfrak W}(\beta)\). Under this identification:

\begin{enumerate}
\item the trivalent vertices in the \(\beta\)-block are identified
with the vertices of
\(\overrightarrow{\mathfrak W}(\beta)_3\);

\item the northwest edge of each such vertex \(v\) is identified with
\(e_{v,\mathrm{nw}}\) in
\(\overrightarrow{\mathfrak W}(\beta)\);

\item for every
\(u,v\in\overrightarrow{\mathfrak W}(\beta)_3\), one has
\begin{equation}\label{eq:Lusztig-cycle-h-block-invariance}
    \gamma_u^{\mathfrak W_{[m]}}
        (e_{v,\mathrm{nw}})
    =
    \gamma_u^{\overrightarrow{\mathfrak W}(\beta)}
        (e_{v,\mathrm{nw}}).
\end{equation}
\end{enumerate}
Here the same symbols \(u,v\), and \(e_{v,\mathrm{nw}}\) are used for
the corresponding vertices and edges under this identification.
\end{lemma}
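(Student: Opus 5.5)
We will make the identification explicit by analysing the weave \(\mathfrak W_{[m]}\) step by step along its double string. Write \(v:=w_0w^{-1}\), with \(w=\delta(\beta)\). The first block \((h_1\mathrm R,\ldots,h_{\ell-m}\mathrm R)\) is a reduced word for \(v\), so it creates no trivalent vertices. Its partial weave is just a collection of parallel strands with lower boundary \((h_1,\ldots,h_{\ell-m})\).

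Next we append the letters \(i_1,\ldots,i_p\). By \eqref{eq:Demazure-product-concatenation}, the Demazure product after these insertions is \(v*\delta(\beta_{\leq p})\). The key claim is
\[
    v*\delta(\beta_{\leq p})=v\,\delta(\beta_{\leq p}),
    \qquad
    \ell\bigl(v\,\delta(\beta_{\leq p})\bigr)
    =\ell(v)+\ell\bigl(\delta(\beta_{\leq p})\bigr).
\]
This holds because \(\delta(\beta_{\leq p})\leq w\) in the left weak order: it is a prefix of the leftmost reduced subexpression of \(\beta\) representing \(w\). Since \(\ell(v)+\ell(w)=\ell(w_0)\), it follows that \(\ell(v u)=\ell(v)+\ell(u)\) for every prefix \(u\) of \(w\).

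Consequently, the step \(i_p\mathrm R\) is solid in \(\mathfrak W_{[m]}\) if and only if it is solid in \(\overrightarrow{\mathfrak W}(\beta)\). Moreover, the lower boundary after step \(p\) may be taken to be the concatenation of the reduced \(h\)-word with the lower boundary of \(\overrightarrow{\mathfrak W}(\beta_{\leq p})\). Inductively, every braid move, commutation or Demazure move of \(\overrightarrow{\mathfrak W}(\beta)\) is performed on the right-hand portion of the slice. The \(h\)-strands are then vertical and are never touched.

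We make this precise by induction on \(p\), choosing at each step the reduced expression used in Definition~\ref{def:double-inductive-weave} to be \((h_1,\ldots,h_{\ell-m})\) followed by the one chosen for \(\overrightarrow{\mathfrak W}(\beta_{\leq p-1})\). This gives the same weave up to weave equivalence of type~(i), which preserves trivalent vertices and, by Lemma~\ref{lem:output}, the relevant boundary weights.

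The one delicate point is that a right descent \(s_{i_p}\) of \(v u\) must already be a right descent of \(u\). This follows from length additivity: if \(\ell(u s_{i_p})>\ell(u)\), then \(\ell(vus_{i_p})=\ell(v)+\ell(us_{i_p})>\ell(vu)\). Hence the reduced expression ending in \(i_p\) can be chosen inside the \(u\)-block. This yields items (1) and (2).

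After the \(\beta\)-block, the left insertions \(j_m^*\mathrm L,\ldots,j_1^*\mathrm L\) occur strictly below every \(\beta\)-block vertex. In \(\mathfrak W_{[m]}\) these insertions are the last steps of the double string, so they do not affect the portion of the weave above or at any \(e_{v,\mathrm{nw}}\) with \(v\) in the \(\beta\)-block.

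For item (3), a Lusztig cycle \(\gamma_u\) starting at a \(\beta\)-block vertex is zero on the \(h\)-strands above \(u\). Below \(u\), the \(h\)-strands meet no vertex of the \(\beta\)-block portion. The propagation rules of Definition~\ref{def:Lusztig-cycle} are local, so the weights never transfer onto an \(h\)-strand. Therefore the restriction of \(\gamma_u\) to the \(\beta\)-block satisfies the same rules, with the same initial condition, as \(\gamma_u^{\overrightarrow{\mathfrak W}(\beta)}\). Uniqueness of the propagation gives \eqref{eq:Lusztig-cycle-h-block-invariance}.

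The main obstacle is the weak-order claim, namely that prefixes \(u\) of the leftmost reduced subexpression of \(w\) satisfy \(\ell(w_0w^{-1}u)=\ell(w_0w^{-1})+\ell(u)\). It holds because \(u\) is a left factor of \(w\): writing \(w=u u'\) with lengths adding, we get \(w_0w^{-1}=w_0 u'^{-1}u^{-1}\), and then \(\ell(w_0w^{-1}u)=\ell(w_0u'^{-1})=\ell(w_0)-\ell(u')\), which equals \(\ell(w_0w^{-1})+\ell(u)\).
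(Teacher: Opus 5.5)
Your proposal is correct and follows the same route as the paper. Both arguments prove $\ell(w_0w^{-1}u_k)=\ell(w_0w^{-1})+\ell(u_k)$ for the prefixes $u_k=\delta(\beta_{\le k})$ of $w$, deduce from \eqref{eq:Demazure-product-concatenation} that the solid positions of the two weaves coincide, and then observe that the $h$-strands pass through the $\beta$-block without meeting any vertex, so the trivalent vertices, their northwest edges and the Lusztig cycles are unchanged.

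The differences are minor. You obtain the length additivity from $\ell(w_0y)=\ell(w_0)-\ell(y)$ rather than the paper's sandwich inequality. You are also more careful than the paper in noting that the final left insertions, which generally do move the $h$-strands, lie below every $\beta$-block vertex. Two small points of attribution and convention: the invariance you need under the type~(i) re-choices is that of the tropical Lusztig rules under braid moves, not Lemma~\ref{lem:output}; and the prefix relation is what the paper calls the right weak order.
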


\begin{proof}
Put
\[
    \boldsymbol h:=(h_1,\ldots,h_{\ell-m}),
    \qquad
    w:=\delta(\beta),
    \qquad
    x:=\delta(\boldsymbol h)=w_0w^{-1}.
\]
For \(0\leq k\leq r\), set
\(
    \beta_k:=(i_1,\ldots,i_k),\)
    and 
    \(u_k:=\delta(\beta_k).
\)
Since \(u_k\) is below \(w\) in the right weak order, there exists
\(y_k\in W\) such that
\[
    w=u_ky_k,
    \qquad
    \ell(w)=\ell(u_k)+\ell(y_k).
\]
Moreover,
\[
    xw=w_0,
    \qquad
    \ell(x)+\ell(w)=\ell(w_0).
\]
It follows that
\[
\begin{aligned}
    \ell(x)+\ell(w)
    &=\ell(xu_ky_k)\\
    &\leq\ell(xu_k)+\ell(y_k)\\
    &\leq\ell(x)+\ell(u_k)+\ell(y_k)
     =\ell(x)+\ell(w).
\end{aligned}
\]
Hence
\[
    \ell(xu_k)=\ell(x)+\ell(u_k),
\]
and therefore \(x*u_k=xu_k\). By
\eqref{eq:Demazure-product-concatenation},
\begin{equation}\label{eq:Demazure-h-prefix}
    \delta(\boldsymbol h\beta_k)
    =
    \delta(\boldsymbol h)*\delta(\beta_k)
    =
    xu_k.
\end{equation}
Consequently,
\begin{align*}
    \delta(\beta_k)=\delta(\beta_{k-1})
    &\iff u_k=u_{k-1}\\
    &\iff xu_k=xu_{k-1}\\
    &\iff
    \delta(\boldsymbol h\beta_k)
    =
    \delta(\boldsymbol h\beta_{k-1}),
\end{align*}
where the middle equivalence follows from cancellation in \(W\).

Thus the trivalent positions in the \(\beta\)-block of
\(\mathfrak W_{[m]}\) are precisely those of
\(\overrightarrow{\mathfrak W}(\beta)\). By the definition of double inductive weave in Definition~\ref{def:double-inductive-weave}, the weave
\(\mathfrak W_{[m]}\) can be obtained from
\(\overrightarrow{\mathfrak W}(\beta)\) by adjoining the strands
corresponding to
\(h_1,\ldots,h_{\ell-m}\). These additional strands run straight from
the northern to the southern boundary and contain no trivalent
vertices. Consequently, deleting them does not alter the local configurations in the
\(\beta\)-block. It therefore identifies the corresponding trivalent
vertices, their northwest edges, and the restrictions of their
Lusztig cycles to the \(\beta\)-block. Hence
\[
    \gamma_u^{\mathfrak W_{[m]}}
        (e_{v,\mathrm{nw}})
    =
    \gamma_u^{\overrightarrow{\mathfrak W}(\beta)}
        (e_{v,\mathrm{nw}})
\]
for all
\(u,v\in\overrightarrow{\mathfrak W}(\beta)_3\). This proves
\eqref{eq:Lusztig-cycle-h-block-invariance}.
\end{proof}

\begin{theorem}\label{thm:eta-after-Delta-mutation}
For every
\(v\in\overrightarrow{\mathfrak W}(\beta)_3\), one has
\begin{equation}\label{eq:eta-after-Delta-mutation}
    \eta'_v
    =
    E_{q_v^\beta}
    +
    \sum_{\substack{
        v'\in\overrightarrow{\mathfrak W}(\beta)_3\\
        v'<v}}
    \gamma_{v'}^{\overrightarrow{\mathfrak W}(\beta)}
        (e_{v,\mathrm{nw}})\eta'_{v'}.
\end{equation}
Here the coordinate lattice of the \(\beta\)-block is identified with
its natural coordinate sublattice in the lattice associated with
\(\boldsymbol{\Delta}\beta\).
\end{theorem}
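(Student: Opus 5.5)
We will prove \eqref{eq:eta-after-Delta-mutation} by applying Proposition~\ref{pro:eta-recursion} and Corollary~\ref{cor:etavn} once for each block move $T_s$, $1\le s\le m$. The target is the following recursion on $\mathfrak W_{[s]}$. For every trivalent vertex $v$ of $\mathfrak W_{[s]}$ other than the $s$ vertices created by the left insertions $j_s^*\mathrm L,\ldots,j_1^*\mathrm L$,
\[
\eta^{[s]}_v=E_v^{\mathfrak W_{[s]}}+\sum_{v'<v}\gamma_{v'}^{\mathfrak W_{[s]}}(e_{v,\mathrm{nw}})\,\eta^{[s]}_{v'} .
\]
Here $E_v^{\mathfrak W_{[s]}}$ is the coordinate vector given by the transported labelling map \eqref{eq:transported-vertex-position-map}. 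For $s=0$ this is Definition~\ref{def:eta-vector-right-inductive} for $\boldsymbol\Delta\beta$. For the $\beta$-block vertices, Lemma~\ref{lem:eta-Delta-beta-predecessor} shows that the recursion reduces to $\eta_v=E_{q_v^\beta}+\eta_{v^-}$, read in the $\beta$-coordinate lattice as in the Remark.

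For the inductive step $s-1\to s$, we regard the moving letter $j_s^*\mathrm R$ as the first letter of the remaining right-insertion string. Moving it rightward as $j_s^*\mathrm L$ is exactly the situation of \S5.2, with the earlier left insertions $j_{s-1}^*\mathrm L,\ldots,j_1^*\mathrm L$ sitting at the end. These letters are solid and lie below every vertex of the active block, so they do not affect the local Case~(1)--(5) analysis. Proposition~\ref{pro:eta-recursion} and Corollary~\ref{cor:etavn}, applied with $\beta$ replaced by the current right string, then give the recursion on $\mathfrak W_{[s]}$ for all vertices except the new exceptional vertex $v_{\mathrm{last}}$, which is the vertex of $j_s^*\mathrm L$. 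There is one discrepancy. Corollary~\ref{cor:etavn} is stated for $\eta$-tuples defined from the right inductive weave itself. Here the input tuple $\boldsymbol\eta^{[s-1]}$ satisfies the same recursion only on the non-exceptional vertices, and the mutation-independent exceptional vectors never enter the sums for later vertices. To handle this we will check that the proofs of Lemma~\ref{lem:partial-eta} and Proposition~\ref{pro:eta-recursion} used only (a) the recursion, (b) the triangularity \eqref{eq:leading-coordinate-mutated-eta}, and (c) the local identity of Lemma~\ref{lem:gammamutation}. All three survive. One point needs care: the previously frozen left-insertion vertices must not contribute to $\gamma_{v'}(e_{v,\mathrm{nw}})$ for vertices $v$ in the active block. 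This holds because their cycles originate strictly below that block.

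After $s=m$, we restrict to the vertices of the $\beta$-block of $\mathfrak W_{[m]}$. Their transported labels are $q_v^\beta$, shifted by $\ell$, because the leftmost reduced subexpression identifies the $\beta$-block vertices with those of $\overrightarrow{\mathfrak W}(\beta)$. After composing with $\iota_{\boldsymbol\Delta}^{-1}$, this gives $E_v^{\mathfrak W_{[m]}}=E_{q_v^\beta}$. Only vertices $v'<v$ enter the sum, and the $h$-block produces no trivalent vertices. The vertices above a $\beta$-block vertex are therefore themselves $\beta$-block vertices, together possibly with the left-insertion vertices, which come later in the transported order and hence do not appear. Finally, Lemma~\ref{lem:Lusztig-cycle-h-block-invariance}(3) replaces $\gamma_{v'}^{\mathfrak W_{[m]}}(e_{v,\mathrm{nw}})$ by $\gamma_{v'}^{\overrightarrow{\mathfrak W}(\beta)}(e_{v,\mathrm{nw}})$, which yields \eqref{eq:eta-after-Delta-mutation}.

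The main obstacle is the inductive step. We must verify that Proposition~\ref{pro:eta-recursion} genuinely applies when the starting weave is $\mathfrak W_{[s-1]}$ rather than a right inductive weave. This requires a careful treatment of the transported labelling, in particular which vertex becomes exceptional at each stage, and a check that the exceptional vertices from earlier stages, which violate the recursion, never appear with nonzero coefficient in later recursions. We will first try to settle this by showing that their Lusztig cycles vanish on every $e_{v,\mathrm{nw}}$ for $v$ in the $\beta$-block, using Lemma~\ref{lem:output} and the fact that these vertices sit at the bottom of the weave.
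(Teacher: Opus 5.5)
Your proposal is correct and follows essentially the paper's argument: apply Corollary~\ref{cor:etavn} once for each of $j_1^*,\ldots,j_m^*$, note that each new exceptional vertex (the one attached to $j_s^*\mathrm L$) comes after the whole $\beta$-block and so never enters the recursion of a $\beta$-block vertex, and finish with Lemma~\ref{lem:Lusztig-cycle-h-block-invariance}. The paper does not address the applicability question you raise for $s\geq2$, and when you carry out that check you should add one item to your list (a)--(c): the earlier left-insertion vertices must not be adjacent to the mutation vertex $v_1^{(t)}$, so that they do not enter $P_t$ or $N_t$; this holds for the same reason their $\gamma$-coefficients vanish, namely that their cycles vanish on the local mutation configuration while $\gamma_{v_1^{(t)}}$ has zero southern output.
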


\begin{proof}
Let
\[
    \mathfrak W_{[0]},\mathfrak W_{[1]},\ldots,
    \mathfrak W_{[m]}
\]
be the sequence of weaves occurring in
\eqref{eq:initial-final-Delta-beta-weaves}, where \(T_s\) transforms
\(\mathfrak W_{[s-1]}\) into \(\mathfrak W_{[s]}\). If \(m=0\),
the assertion is the defining recursion for the \(\eta\)-vectors, so
assume \(m>0\).

Apply Corollary~\ref{cor:etavn} successively to
\(j_1^*,\ldots,j_m^*\). At the \(s\)-th step, its only exceptional
vertex is the new final trivalent vertex associated with
\(j_s^*\mathrm L\). This vertex lies after the entire \(\beta\)-block.
Consequently, it does not occur among the predecessors of a vertex
\(v\in\overrightarrow{\mathfrak W}(\beta)_3\). Induction on \(s\)
therefore gives
\begin{equation}\label{eq:eta-recursion-at-stage-s}
    \eta_v^{[s]}
    =
     E_{q_v^\beta}
    +
    \sum_{\substack{
        v'<v\\
        v'\ \text{in the \(\beta\)-block}}}
    \gamma_{v'}^{\mathfrak W_{[s]}}
        (e_{v,\mathrm{nw}})
    \eta_{v'}^{[s]}
\end{equation}
for every vertex \(v\) in the \(\beta\)-block.
By Lemma~\ref{lem:Lusztig-cycle-h-block-invariance} and \(\eta_v^{[m]}=\eta'_v\), we obtain
\eqref{eq:eta-after-Delta-mutation}.
\end{proof}

\subsection{Vectors of grid minors}
\label{subsec:vectors-grid-minors}

We retain the notation introduced above. Fix a braid word
\(\beta=(i_1,\ldots,i_r)\), let
\(\beta_c=(i_1,\ldots,i_c)\), and set
\[
J_\beta
=
\bigl\{
c\in[1,r]
\mid
\delta(\beta_c)=\delta(\beta_{c-1})
\bigr\}.
\]
Thus, \(J_\beta\) is the set of solid positions of the right inductive
weave \(\overrightarrow{\mathfrak W}(\beta)\). We use the
specialization of the bijection
\eqref{eq:trivalent-solid-position-bijection} to pass between a
trivalent vertex and its corresponding element of \(J_\beta\).

For \(e\in J_\beta\) and \(e\leq c\leq r\), write
\begin{equation}
 \bigl\{
 d\mid e<d\leq c,\ d\notin J_\beta
 \bigr\}
 =
 \{d_1<\cdots<d_t\},
 \qquad
 \sigma_{e,c}
 :=
 s_{i_{d_1}}\cdots s_{i_{d_t}}.
 \label{eq:sigma-ec}
\end{equation}
By convention, \(\sigma_{e,c}=\mathrm{id}\) when the set on the
left-hand side is empty. Notice that
\(\sigma_{e,c}=\sigma_{e,c-1}s_{i_c}\) if \(c\notin J_\beta\), whereas
\(\sigma_{e,c}=\sigma_{e,c-1}\) if \(c\in J_\beta\).

We next recall the coroot associated with a Lusztig cycle at a
horizontal slice. Fix \(c\in[1,r]\), and choose a reduced expression
\[
w_c:=\delta(\beta_c)=s_{j_1}\cdots s_{j_m}.
\]
Let \(\mathsf e_1^{(c)},\ldots,\mathsf e_m^{(c)}\) be the edges
intersected by the corresponding horizontal path, ordered from left
to right. For \(1\leq a\leq m\), recall the inversion coroot
\begin{equation}
 \chi_a^{(c)}
 :=
 s_{j_m}\cdots s_{j_{a+1}}
 \bigl(\alpha_{j_a}^{\vee}\bigr).
 \label{eq:inversion-coroot-c}
\end{equation}
For a trivalent vertex \(v\leq c\), recall
\eqref{eq:def-gamma-vc}:
\begin{equation}
 \gamma_{v,c}
 :=
 \sum_{a=1}^{m}
 \gamma_v\bigl(\mathsf e_a^{(c)}\bigr)\chi_a^{(c)}
 \in Q^\vee.
 \label{eq:gamma-vc}
\end{equation}

\begin{lemma}
\label{lem:gamma-vc-well-defined}
The coroot \(\gamma_{v,c}\) is independent of the chosen reduced
expression of \(w_c\).
\end{lemma}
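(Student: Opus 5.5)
The plan is to reduce the claim to a local computation under a single braid move. By Matsumoto--Tits, any two reduced expressions of \(w_c=\delta(\beta_c)\) are connected by a finite sequence of \(2\)-moves \(ik\leftrightarrow ki\) (\(c_{ik}=0\)) and \(3\)-moves \(iji\leftrightarrow jij\) (\(c_{ij}=-1\)). On the weave side, passing from one reduced expression of the slice at depth \(c\) to another means appending below that slice a single four-valent or six-valent vertex. This is a weave equivalence of type (i), and it leaves everything above the slice untouched. The Lusztig cycle \(\gamma_v\) is defined by downward propagation. Hence its weights on the new slice are obtained from those on the old slice by applying the rules of Definition~\ref{def:Lusztig-cycle} at the appended vertex, while all other edges of the slice keep their weights. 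It therefore suffices to show that \(\gamma_{v,c}\) is unchanged by one such move. Moreover, only the (at most three) positions involved in the move can contribute a change.

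Write the slice as \(\mathbf j=(j_1,\ldots,j_m)\) and suppose the move affects positions \(a,a+1\) (or \(a,a+1,a+2\)). Put \(u:=s_{j_m}\cdots s_{j_{a+2}}\) (respectively \(s_{j_m}\cdots s_{j_{a+3}}\)). The letters to the right of the move are unchanged, so \(u\) is common to both expressions. For positions to the left of the move, the coroot \(\chi_b^{\mathbf j}\) involves the product \(s_is_k=s_ks_i\) (respectively \(s_is_js_i=s_js_is_j\)), which is unchanged; so those terms agree on both sides.

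For a \(2\)-move \(ik\to ki\), the old coroots at positions \(a,a+1\) are \(u s_k(\alpha_i^\vee)=u\alpha_i^\vee\) and \(u\alpha_k^\vee\). The new ones are \(u\alpha_k^\vee\) and \(u\alpha_i^\vee\). By \eqref{eq:Lusztig-rule-four-valent}, the weights are swapped along the strands, so the two contributions coincide.

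For a \(3\)-move \(iji\to jij\), a direct computation using \(s_j\alpha_i^\vee=\alpha_i^\vee+\alpha_j^\vee\) gives the following. The old coroots are \(u\alpha_j^\vee,\ u(\alpha_i^\vee+\alpha_j^\vee),\ u\alpha_i^\vee\). The new ones are \(u\alpha_i^\vee,\ u(\alpha_i^\vee+\alpha_j^\vee),\ u\alpha_j^\vee\). Let the incoming weights be \((a_1,a_2,a_3)\) and set \(\mu=\min\{a_1,a_3\}\). By \eqref{eq:Lusztig-rule-six-valent}, the outgoing weights are \((a_2+a_3-\mu,\ \mu,\ a_1+a_2-\mu)\). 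The old contribution is \(u\bigl((a_1+a_2)\alpha_j^\vee+(a_2+a_3)\alpha_i^\vee\bigr)\). The new contribution is
\[
u\bigl((a_2+a_3-\mu)\alpha_i^\vee+\mu(\alpha_i^\vee+\alpha_j^\vee)+(a_1+a_2-\mu)\alpha_j^\vee\bigr),
\]
which is the same element. The reverse move \(jij\to iji\) is the same computation with \(i,j\) interchanged. Inducting on the number of braid moves then proves the lemma.

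The main point requiring care is not the algebra but the justification that the new slice's weights really are given by the six-valent rule applied to the old slice's weights, with the correct left-to-right matching of edges to coroots. This needs three checks. First, the appended vertex lies below every vertex in the strip, so that \(\gamma_v\) is propagated through it rather than created at it; this uses \(v\le c\). Second, the ordering convention of the incoming and outgoing edges in \eqref{eq:Lusztig-rule-six-valent} agrees with the left-to-right ordering of the slice used in \eqref{eq:inversion-coroot-c}. Third, these local weights are independent of how the remainder of the weave below depth \(c\) is completed; this holds by construction, because propagation only moves downward.
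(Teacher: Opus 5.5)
Your proposal is correct and follows essentially the same argument as the paper: a Matsumoto reduction to $2$- and $3$-moves, factoring out the common suffix action $u$, checking that the tropical six-valent rule preserves $(a_2+a_3)\alpha_i^\vee+(a_1+a_2)\alpha_j^\vee$, and noting that the prefix coroots are unchanged because $s_is_js_i=s_js_is_j$. Your additional remarks, that the new slice weights arise by propagating $\gamma_v$ downward through a single appended four- or six-valent vertex with the left-to-right edge ordering matched to the inversion coroots, make explicit a step that the paper uses without comment.
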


\begin{proof}
By Matsumoto's theorem, any two reduced expressions of \(w_c\) are
related by a sequence of braid moves. Since \(G\) is simply laced, it
suffices to consider commutation moves and braid moves of the form
\[
    iji\longleftrightarrow jij.
\]

A commutation move interchanges two orthogonal simple reflections.
The corresponding two inversion coroots and the two cycle
coefficients are interchanged simultaneously, so their contribution
to \(\gamma_{v,c}\) is unchanged.

Consider a \(3\)-move
\[
    \mathbf j=\mathbf x(iji)\mathbf y
    \quad\longleftrightarrow\quad
    \mathbf j'=\mathbf x(jij)\mathbf y,
\]
where \(i\) and \(j\) are adjacent. Write
\(
    \mathbf y=(y_1,\ldots,y_q)
\)
and let
\(
    u:=s_{y_q}\cdots s_{y_1}.
\)
Thus \(u\) is the common Weyl group action on the coroot lattice
coming from the suffix \(\mathbf y\).

Before applying \(u\), the three inversion coroots associated with
the substring \(iji\) are
\[
    \bigl(
        \alpha_j^\vee,\,
        \alpha_i^\vee+\alpha_j^\vee,\,
        \alpha_i^\vee
    \bigr),
\]
whereas those associated with \(jij\) are
\[
    \bigl(
        \alpha_i^\vee,\,
        \alpha_i^\vee+\alpha_j^\vee,\,
        \alpha_j^\vee
    \bigr).
\]
Consequently, the actual inversion coroot triples in the full words
\(\mathbf j\) and \(\mathbf j'\) are respectively
\[
    \bigl(
        u(\alpha_j^\vee),\,
        u(\alpha_i^\vee+\alpha_j^\vee),\,
        u(\alpha_i^\vee)
    \bigr)
\]
and
\[
    \bigl(
        u(\alpha_i^\vee),\,
        u(\alpha_i^\vee+\alpha_j^\vee),\,
        u(\alpha_j^\vee)
    \bigr).
\]

Let \((a_1,a_2,a_3)\) be the values of the Lusztig cycle on the three
affected edges. Under the braid move, they become
\[
\begin{aligned}
    b_1=a_2+a_3-\min(a_1,a_3),\quad
    b_2=\min(a_1,a_3),\quad
    b_3=a_1+a_2-\min(a_1,a_3).
\end{aligned}
\]
In particular,
\[
    b_1+b_2=a_2+a_3,
    \qquad
    b_2+b_3=a_1+a_2.
\]
Hence
\begin{align*}
    &a_1\alpha_j^\vee
     +a_2(\alpha_i^\vee+\alpha_j^\vee)
     +a_3\alpha_i^\vee\\
    &\qquad
     =(a_2+a_3)\alpha_i^\vee
       +(a_1+a_2)\alpha_j^\vee\\
    &\qquad
     =b_1\alpha_i^\vee
       +b_2(\alpha_i^\vee+\alpha_j^\vee)
       +b_3\alpha_j^\vee.
\end{align*}
Applying the common linear action \(u\) to this equality gives
\begin{align*}
    &a_1u(\alpha_j^\vee)
     +a_2u(\alpha_i^\vee+\alpha_j^\vee)
     +a_3u(\alpha_i^\vee)\\
    &\qquad
     =b_1u(\alpha_i^\vee)
       +b_2u(\alpha_i^\vee+\alpha_j^\vee)
       +b_3u(\alpha_j^\vee).
\end{align*}
Thus the contribution of the three affected positions to
\(\gamma_{v,c}\) is unchanged.

The coefficients outside these three positions are unchanged by the
local tropical braid move. The inversion coroots associated with
positions in the suffix \(\mathbf y\) are plainly unchanged. Those
associated with positions in the prefix \(\mathbf x\) are also
unchanged, because their defining suffix actions contain respectively
\[
    s_is_js_i
    \quad\text{and}\quad
    s_js_is_j,
\]
which are equal by the braid relation. Therefore every contribution
outside the affected triple is unchanged as well.

Hence \(\gamma_{v,c}\) is invariant under a \(3\)-move, and therefore
under every change of reduced expression.
\end{proof}

For \(v\in J_\beta\), recall the vector 
\(\eta_v^\beta\) in Definition \ref{def:eta-vector-right-inductive}. For \(e\in[1,r]\), we write
\[
 \eta_{v,e}:=(\eta_v^\beta)_e.
\]
The recursive definition of \(\eta_v^\beta\) implies that
\(\eta_{v,e}=0\) whenever \(q_v^\beta<e\).

\begin{lemma}
\label{lem:solid-step-gamma}
Suppose that \(c+1\in J_\beta\). Then, for every \(e\in J_\beta\)
with \(e\leq c\),
\begin{equation}
 \sum_{\substack{v\in J_\beta\\ e\leq v\leq c}}
 \gamma_{v,c}\eta_{v,e}
 =
 \sum_{\substack{v\in J_\beta\\ e\leq v\leq c+1}}
 \gamma_{v,c+1}\eta_{v,e}.
 \label{eq:solid-step-gamma}
\end{equation}
\end{lemma}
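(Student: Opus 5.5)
The plan is a direct computation comparing the slice at depth \(c\) with the slice at depth \(c+1\) in \(\overrightarrow{\mathfrak W}(\beta)\). Since \(c+1\in J_\beta\), we have \(w_{c+1}=w_c\), and \(s_{i_{c+1}}\) is a right descent of \(w_c\). By Lemma~\ref{lem:gamma-vc-well-defined}, each \(\gamma_{v,c}\) is independent of the reduced expression chosen for \(w_c\). So I will fix the reduced expression \(w_c=s_{j_1}\cdots s_{j_m}\) with \(j_m=i_{c+1}\), namely the one used in Definition~\ref{def:double-inductive-weave} just before the Demazure move. This reduction needs one check: the passage from an arbitrary slice word to this one is made of \(2\)- and \(3\)-moves in the weave, and across these moves the cycle values change by exactly the tropical rules used in that lemma. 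With this choice, the last edge \(\mathsf e_m^{(c)}\) is the north-west edge \(e_{x,\mathrm{nw}}\) of the new trivalent vertex \(x\) (with \(q_x^\beta=c+1\)). Its inversion coroot is the simple coroot \(\chi_m^{(c)}=\alpha_{i_{c+1}}^\vee\). The slice at depth \(c+1\) has the same word, with \(\mathsf e_m^{(c+1)}=e_{x,\mathrm s}\) and the other edges unchanged. Hence all inversion coroots agree at depths \(c\) and \(c+1\).

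Next I will compute the cycles below \(x\). For a trivalent vertex \(v\leq c\), the cycle \(\gamma_v\) vanishes on the newly appended strand \(e_{x,\mathrm{ne}}\), because that strand lies above \(v\)'s region of support. The trivalent rule \eqref{eq:Lusztig-rule-trivalent} then gives \(\gamma_v(e_{x,\mathrm s})=\min\{\gamma_v(e_{x,\mathrm{nw}}),0\}=0\). Therefore
\[
\gamma_{v,c+1}=\gamma_{v,c}-\gamma_v(e_{x,\mathrm{nw}})\,\alpha_{i_{c+1}}^\vee\qquad(v\leq c).
\]
The new cycle \(\gamma_x\) is \(1\) on \(e_{x,\mathrm s}\) and zero on the other edges of the slice, so \(\gamma_{x,c+1}=\alpha_{i_{c+1}}^\vee\).

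Substituting into the right-hand side of \eqref{eq:solid-step-gamma} gives
\[
\sum_{e\leq v\leq c}\gamma_{v,c}\eta_{v,e}
+\alpha_{i_{c+1}}^\vee\Bigl(\eta_{x,e}-\sum_{e\leq v\leq c}\gamma_v(e_{x,\mathrm{nw}})\eta_{v,e}\Bigr).
\]
To show the bracket vanishes, I will use the recursion \eqref{eq:etav} for \(x\). Since \(e\leq c<c+1\), the coordinate \(E_{c+1}\) contributes nothing at \(e\), so
\[
\eta_{x,e}=\sum_{v<x}\gamma_v(e_{x,\mathrm{nw}})\eta_{v,e}.
\]
The terms with \(v<e\) vanish because \(\eta_{v,e}=0\) whenever \(q_v^\beta<e\), as recorded before Lemma~\ref{lem:maxindex}. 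This leaves exactly the subtracted sum, so the bracket is zero and the identity follows.

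The main obstacle is the first step: making rigorous that one may work with a slice word ending in \(i_{c+1}\), and that the coefficients \(\gamma_v(e_{x,\mathrm{nw}})\) appearing in \eqref{eq:etav} are precisely the slice values \(\gamma_v(\mathsf e_m^{(c)})\) for that word. I would handle this by noting that the right inductive weave inserts exactly these braid moves in the strip between depths \(c\) and \(c+1\). I would then apply Lemma~\ref{lem:gamma-vc-well-defined} to the slice just above the trivalent vertex, rather than at the level line \(\ell_c\).
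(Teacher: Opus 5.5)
Your proposal is correct and follows the paper's proof essentially step for step. You pick a reduced word for \(w_c\) ending in \(s_{i_{c+1}}\) via Lemma~\ref{lem:gamma-vc-well-defined}, and the trivalent rule then gives \(\gamma_{v,c}=\gamma_{v,c+1}+\gamma_v(e_{c+1,\mathrm{nw}})\alpha_{i_{c+1}}^\vee\) and \(\gamma_{c+1,c+1}=\alpha_{i_{c+1}}^\vee\); substituting the \(e\)-th coordinate of the recursion \eqref{eq:etav} for \(\eta_{c+1}^\beta\) finishes the argument. The points you make explicit are what the paper uses implicitly: the new strand carries weight \(0\), the terms with \(q_v^\beta<e\) drop out, and the slice is read just above the new trivalent vertex.
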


\begin{proof}
Set \(i=i_{c+1}\). Since \(c+1\in J_\beta\), we have
\(\delta(\beta_{c+1})=\delta(\beta_c)\), and \(i\) is a right descent
of \(w_c\). By Lemma~\ref{lem:gamma-vc-well-defined}, we may choose a
reduced expression of \(w_c\) ending in \(s_i\). Let
\(\chi_m^{(c)}=\alpha_i^\vee\) be its final inversion coroot.

The local trivalent-vertex rule in the right inductive weave gives
\begin{equation}
 \gamma_{v,c}
 =
 \gamma_{v,c+1}
 +
 \gamma_v(e_{c+1,\mathrm{nw}})\chi_m^{(c)}
 \quad (v\leq c),
 \qquad
 \gamma_{c+1,c+1}=\chi_m^{(c)}.
 \label{eq:gamma-solid-local}
\end{equation}
On the other hand, taking the \(e\)-th component in the recursion for
\(\eta_{c+1}^\beta\) yields
\begin{equation}
 \eta_{c+1,e}
 =
 \sum_{\substack{v\in J_\beta\\ e\leq v\leq c}}
 \gamma_v(e_{c+1,\mathrm{nw}})\eta_{v,e}.
 \label{eq:eta-solid-local}
\end{equation}
Substituting \eqref{eq:gamma-solid-local} and
\eqref{eq:eta-solid-local}, we obtain
\begin{align*}
 \sum_{\substack{v\in J_\beta\\e\leq v\leq c}}
 \gamma_{v,c}\eta_{v,e}
 &=
 \sum_{\substack{v\in J_\beta\\e\leq v\leq c}}
 \gamma_{v,c+1}\eta_{v,e}
 +
 \chi_m^{(c)}
 \sum_{\substack{v\in J_\beta\\e\leq v\leq c}}
 \gamma_v(e_{c+1,\mathrm{nw}})\eta_{v,e}                 \\
 &=
 \sum_{\substack{v\in J_\beta\\e\leq v\leq c}}
 \gamma_{v,c+1}\eta_{v,e}
 +
 \gamma_{c+1,c+1}\eta_{c+1,e},
\end{align*}
which is precisely \eqref{eq:solid-step-gamma}.
\end{proof}

The following identity is the main ingredient in the computation of
the $\beta$-Lusztig parameters of quantum grid minors.

\begin{theorem}
\label{thm:root-identity}
For \(c\in [1,r], e\in J_\beta\) and \(e\leq c\), one has
\begin{equation}\label{eq:root-identity}
 \sum_{\substack{v\in J_\beta\\e\leq v\leq c}}
 \gamma_{v,c}\eta_{v,e}
 =
 \sigma_{e,c}^{-1}\bigl(\alpha_{i_e}^{\vee}\bigr).
\end{equation}
\end{theorem}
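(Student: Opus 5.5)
We argue by induction on \(c\), with \(e\in J_\beta\) fixed, starting from the base case \(c=e\) and treating separately the two kinds of step \(c\to c+1\): solid (\(c+1\in J_\beta\)) and nonsolid (\(c+1\notin J_\beta\)).

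\emph{Base case \(c=e\).} The left-hand side of \eqref{eq:root-identity} reduces to \(\gamma_{e,e}\eta_{e,e}\), since \(v\leq c=e\) and \(v\geq e\). By Lemma~\ref{lem:maxindex}, \(\eta_{e,e}=1\). Next choose, via Lemma~\ref{lem:gamma-vc-well-defined}, a reduced expression of \(w_e=w_{e-1}\) ending in \(s_{i_e}\). The cycle \(\gamma_e\) is then \(1\) on the final edge of the slice at depth \(e\) and \(0\) elsewhere. Hence \(\gamma_{e,e}\) is the last inversion coroot, namely \(\alpha_{i_e}^\vee\). This is exactly the second formula in \eqref{eq:gamma-solid-local} with \(c+1=e\). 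Since \(\sigma_{e,e}=\mathrm{id}\), the identity holds.

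\emph{Solid step.} If \(c+1\in J_\beta\), Lemma~\ref{lem:solid-step-gamma} states that the left-hand side is unchanged from \(c\) to \(c+1\). On the right, \(\sigma_{e,c+1}=\sigma_{e,c}\) because solid positions are skipped in \eqref{eq:sigma-ec}. So the induction passes with no further work.

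\emph{Nonsolid step.} Now suppose \(c+1\notin J_\beta\), and put \(i=i_{c+1}\). Then \(w_{c+1}=w_cs_i\) with \(\ell(w_{c+1})=\ell(w_c)+1\), and the right inductive weave simply adjoins a new \(i\)-strand on the right. No trivalent vertex is created, so the sum on the left still ranges over the same \(v\) with \(e\leq v\leq c\). Take the reduced expression of \(w_{c+1}\) obtained by appending \(i\) to the one chosen for \(w_c\).
\begin{itemize}
\item By \eqref{eq:inversion-coroot-c}, each old inversion coroot becomes \(\chi_a^{(c+1)}=s_i(\chi_a^{(c)})\) for \(a\leq m\), and the new last coroot is \(\alpha_i^\vee\).
\item Every Lusztig cycle \(\gamma_v\) with \(v\leq c\) is \(0\) on the new strand, since that strand comes from the upper boundary, and its values on the old edges are carried along unchanged.
\end{itemize}
Therefore \(\gamma_{v,c+1}=s_i(\gamma_{v,c})\). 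Applying the induction hypothesis,
\[
\sum_{v}\gamma_{v,c+1}\eta_{v,e}
=s_i\,\sigma_{e,c}^{-1}\bigl(\alpha_{i_e}^\vee\bigr)
=(\sigma_{e,c}s_i)^{-1}\bigl(\alpha_{i_e}^\vee\bigr)
=\sigma_{e,c+1}^{-1}\bigl(\alpha_{i_e}^\vee\bigr),
\]
which closes the induction.

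The main obstacle is making this nonsolid step rigorous: we must check that, in the right inductive weave, appending a letter that increases length leaves all existing edge weights untouched and introduces only one fresh edge, of weight \(0\) for every earlier cycle. Strictly, Definition~\ref{def:double-inductive-weave} gives this; the only subtlety is that later solid steps may call for braid moves to rewrite the reduced expression. Those changes are absorbed by Lemma~\ref{lem:gamma-vc-well-defined} (for slices) together with the invariance of the tropical rules under braid moves, which were already used in Lemma~\ref{lem:output}. A secondary point is to confirm that \(\eta_{v,e}\) is unaffected by changing the slice, and this is immediate because it is fixed data from Definition~\ref{def:eta-vector-right-inductive}.
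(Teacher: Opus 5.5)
Your proof is correct and follows the paper's argument: induction on \(c-e\) from the base case \(c=e\), Lemma~\ref{lem:solid-step-gamma} for solid steps, and \(\gamma_{v,c}=s_{i_c}(\gamma_{v,c-1})\) combined with \(\sigma_{e,c}=\sigma_{e,c-1}s_{i_c}\) for nonsolid steps. Your additional justification of the nonsolid step (appending \(i\) to the reduced word, and the new strand carrying weight \(0\)) spells out what the paper states briefly.
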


\begin{proof}
We proceed by induction on \(c-e\). If \(c=e\), then
\(\eta_{e,e}=1\), \(\gamma_{e,e}=\alpha_{i_e}^\vee\), and
\(\sigma_{e,e}=\mathrm{id}\), so the result follows.

Suppose first that \(c\notin J_\beta\). Then
\[
 \delta(\beta_c)=\delta(\beta_{c-1})s_{i_c},
 \qquad
 \ell\bigl(\delta(\beta_c)\bigr)
 =
 \ell\bigl(\delta(\beta_{c-1})\bigr)+1.
\]
No new trivalent vertex is introduced at this step. Moreover, the
inversion coroots associated with the preceding slice are transformed
by \(s_{i_c}\). Hence
\(\gamma_{v,c}=s_{i_c}(\gamma_{v,c-1})\) for every \(v\in J_\beta \text{ and }v\leq c-1\).
The induction hypothesis therefore gives
\begin{align*}
 \sum_{\substack{v\in J_\beta\\e\leq v\leq c}}
 \gamma_{v,c}\eta_{v,e}
 &=
 s_{i_c}
 \sum_{\substack{v\in J_\beta\\e\leq v\leq c-1}}
 \gamma_{v,c-1}\eta_{v,e}                               \\
 &=
 s_{i_c}\sigma_{e,c-1}^{-1}
 \bigl(\alpha_{i_e}^\vee\bigr)                           \\
 &=
 \sigma_{e,c}^{-1}
 \bigl(\alpha_{i_e}^\vee\bigr),
\end{align*}
where the last equality follows from
\(\sigma_{e,c}=\sigma_{e,c-1}s_{i_c}\).

Suppose next that \(c\in J_\beta\). Applying
Lemma~\ref{lem:solid-step-gamma} with \(c-1\) in place of \(c\), we
obtain
\[
 \sum_{\substack{v\in J_\beta\\e\leq v\leq c}}
 \gamma_{v,c}\eta_{v,e}
 =
 \sum_{\substack{v\in J_\beta\\e\leq v\leq c-1}}
 \gamma_{v,c-1}\eta_{v,e}.
\]
The required identity now follows from the induction hypothesis and
the equality \(\sigma_{e,c}=\sigma_{e,c-1}\).
\end{proof}

\begin{definition}
\label{def:delta-beta-cj}
For \(c\in[1,r]\) and \(j\in I\), define an integral vector
\(\boldsymbol\delta_{c,j}^\beta\in\ZZ^r\) by
\begin{equation}
 \bigl(\boldsymbol\delta_{c,j}^\beta\bigr)_e
 =
 \begin{cases}
 \bigl\langle
 \sigma_{e,c}\varpi_j,\alpha_{i_e}^\vee
 \bigr\rangle,
 & e\in J_\beta\text{ and }e\leq c,\\[2mm]
 0,
 & \text{otherwise}.
 \end{cases}
\label{eq:delta-beta-cj}
\end{equation}
\end{definition}

We now transport the preceding degree vector to an arbitrary double
string. Let
\[
 \ddot{\mathbf s}
 =
 (i_1X_1,\ldots,i_rX_r)
\]
be a double string, and fix \(c\in[1,r]\).  Let
\(\beta_c^{\ddot{\mathbf s}}\) be the braid word
obtained after the first \(c\) insertions. Since subsequent insertions
are made only at the two ends, \(\beta_c^{\ddot{\mathbf s}}\) occurs
as a consecutive subword of the final braid word
\(\beta=\beta_r^{\ddot{\mathbf s}}\); thus,
\begin{equation}
 \beta
 =
 \gamma_c\,
 \beta_c^{\ddot{\mathbf s}}\,
 \eta_c
 \label{eq:beta-block-decomposition}
\end{equation}
for suitable braid words \(\gamma_c\) and \(\eta_c\).

Let
\[
 \iota_c^{\ddot{\mathbf s}}
 \colon
 \ZZ^c\longrightarrow\ZZ^r
\]
be the embedding obtained by placing a vector in the coordinates
corresponding to the block \(\beta_c^{\ddot{\mathbf s}}\) in
\eqref{eq:beta-block-decomposition} and assigning zero to the
\(\gamma_c\)- and \(\eta_c\)-coordinates.

\begin{definition}
\label{def:delta-double-string}
For an insertion \(i_cX_c\) of a double string $\ddot{\mathbf{s}}$, define
\begin{equation}
 \boldsymbol\delta_{c,j}^{\ddot{\mathbf s}}
 :=
 \iota_c^{\ddot{\mathbf s}}
 \left(
 \boldsymbol\delta_{c,j}^{\beta_c^{\ddot{\mathbf s}}}
 \right)
 \in\ZZ_{\geq0}^r.
 \label{eq:delta-double-string}
\end{equation}
\end{definition}

\begin{example}
\label{ex:delta-braid-i-box}
Let \(\boldsymbol\Delta=(j_1,\ldots,j_\ell)\) be a reduced word
for \(w_0\), let \(\beta=(i_1,\ldots,i_r)\), and let \([a,b]\)
be an \(i\)-box in \(\beta\). Write
\(\mathbf p=(j_1R^+,\ldots,j_\ell R^+)\). Consider
\begin{equation}\label{eq:double-string-i-box}
 \ddot{\mathbf s}_{[a,b]}
 =\bigl(
   \mathbf p,\,
   i_aR,\ldots,i_bR,\,
   i_{a-1}^*L,\ldots,i_1^*L,\,
   i_{b+1}R,\ldots,i_rR
  \bigr),
\end{equation}
with empty blocks omitted. Its actual final word is
\begin{equation}\label{eq:i-box-actual-word}
    \omega_{[a,b]}
    =(i_1^*,\ldots,i_{a-1}^*)\,
      \boldsymbol\Delta\,(i_a,\ldots,i_r).
\end{equation}
The stars on the left insertions are required by
\(\sigma_{i^*}\Delta=\Delta\sigma_i\), where \(\sigma_i\) is
the positive braid generator. Thus \(\omega_{[a,b]}\) and
\(\boldsymbol\Delta\beta\) represent the same braid element;
they need not be the same word.

All steps after \(\mathbf p\) are solid. At depth
\(d:=\ell+b-a+1\), the truncated word is
\(\boldsymbol\Delta(i_a,\ldots,i_b)\). Its reduced prefix has
length \(\ell\), and the intervening operators \(\sigma_{e,d}\)
are the identity for the solid positions. Therefore
Definition~\ref{def:delta-double-string} gives
\begin{equation}\label{eq:i-box-vector}
    [a,b]_\phi
    :=\sum_{\substack{a\leq t\leq b\\ i_t=i}}E_{\ell+t},
    \qquad \phi(t):=\ell+t,
\end{equation}
and
\begin{equation}\label{eq:i-box-degree}
    \boldsymbol\delta_{d,i}^{\ddot{\mathbf s}_{[a,b]}}
    =[a,b]_\phi
    \quad\text{in the \(\omega_{[a,b]}\)-coordinate lattice.}
\end{equation}
Indeed, the later left block has length \(a-1\), so the coordinate
\(\ell+(t-a+1)\) in the truncated word becomes \(\ell+t\)
in the final word.

To compare this vector with the coordinates of
\(\boldsymbol\Delta\beta\), choose braid moves entirely within
the prefix
\((i_1^*,\ldots,i_{a-1}^*)\boldsymbol\Delta\), transforming it
to \(\boldsymbol\Delta(i_1,\ldots,i_{a-1})\). Denote the induced
PBW transition map by
\(\Psi_{\boldsymbol\Delta\beta}^{\omega_{[a,b]}}\).
All coordinates of \([a,b]_\phi\) in this prefix vanish. The
local PBW transition rules preserve the zero vector and fix the
coordinates outside the move, so
\begin{equation}\label{eq:i-box-PBW-transition}
    \Psi_{\boldsymbol\Delta\beta}^{\omega_{[a,b]}}
       ([a,b]_\phi)
    =\iota_{\boldsymbol\Delta}([a,b]_\beta).
\end{equation}
Here the transition map identifies PBW parameters of the same
global-basis element, as in
\cite[Theorem~4.12]{bi2026towards}. The corresponding quantum
identity is established after
Theorem~\ref{thm:quantum-grid-minor-factorization} below.
\end{example}

\section{Quantization of braid varieties}\label{sec:quantization-braid-varieties}
In this section, we construct a quantization of braid varieties in terms of bosonic extension algebras. 
We first recall the relation between the word seed and the seed
associated with the right inductive weave. Let
\(\beta=(i_1,\ldots,i_r)\) be a positive braid word, and let
\(\boldsymbol{\Delta}\) be the reduced expression of \(w_0\) fixed
above. For each \(k\in[r]\), let \(v_k\) be the trivalent vertex of
\(
    \overrightarrow{\mathfrak W}
    (\boldsymbol{\Delta}\beta)
\)
created by the letter \(i_k\) in the \(\beta\)-block. Let
\(1(i_k)^+\) denote the first occurrence of the color \(i_k\) in
\([1,k]\), and abbreviate
\begin{equation}\label{eq:initial-beta-box}
    \{1,k]_\beta
    :=
    [1(i_k)^+,k]_\beta.
\end{equation}
Under the identification of
\cite{kashiwara2025monoidal,casals2025cluster}, the cluster variable
attached to \(v_k\) is
\begin{equation}\label{eq:right-weave-variable-initial-minor}
    D_k^\beta
    :=
    D^\beta[1(i_k)^+,k]
    =
    \widetilde G\bigl(\beta,\{1,k]_\beta\bigr).
\end{equation}
Consequently, the right inductive weave seed
\(\overrightarrow{\mathbf s}
(\boldsymbol{\Delta}\beta)\) is isomorphic to the word seed
\(\mathbf s(\beta)\) under the explicit bijection
\(v_k\leftrightarrow k\) for \(1\leq k\leq r\). In particular,
both seeds have exactly \(r\) variables; the statement is a seed
isomorphism, not a literal identification of their vertex sets.
We shall also use the following identification.

\begin{lemma}[{\cite[Theorem~3.12]{bi2026towards}}]
\label{lem:right-inductive-seed-restriction}
For every positive braid word \(\beta\), one has
\begin{equation}\label{eq:right-inductive-seed-identification}
    \mathbf s(\delta(\beta),\beta)
    =
    \overrightarrow{\mathbf s}(\beta),
\end{equation}
where \(\mathbf s(\delta(\beta),\beta)\) is the seed introduced in
Definition~\ref{def:svbeta}, and
\(\overrightarrow{\mathbf s}(\beta)\) is the seed associated with
the right inductive weave
\(\overrightarrow{\mathfrak W}(\beta)\).
The equality in \eqref{eq:right-inductive-seed-identification} is
under the canonical bijection between \(V_\beta\) and the trivalent
vertices of this weave and includes the frozen set, exchange matrix,
and, in the quantum version, the compatible form.
\end{lemma}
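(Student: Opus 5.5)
The identification \(\mathbf s(\delta(\beta),\beta)=\overrightarrow{\mathbf s}(\beta)\) is proved by realizing the mutation--freezing--deletion procedure of Definition~\ref{def:svbeta} as a sequence of weave moves. The starting point is the seed isomorphism \(\overrightarrow{\mathbf s}(\boldsymbol\Delta\beta)\cong\mathbf s(\beta)\), \(v_k\leftrightarrow k\), recorded above. By \cite[Lemma~3.11]{bi2026towards} and \eqref{eq:mutation-sequence-T}, the mutation sequence \(M_m\) coincides with the sequence \(T=T_m\circ\cdots\circ T_1\). This sequence transforms \(\mathfrak W_{[0]}=\overrightarrow{\mathfrak W}(\boldsymbol\Delta\beta)\) into \(\mathfrak W_{[m]}\) by moving \(j_1^*\mathrm R,\ldots,j_m^*\mathrm R\) across the word and converting them into left insertions. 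Along this sequence, Case~(5) transformations act as single matrix mutations, while Cases~(1)--(4) preserve the exchange matrix. Hence \(M_m(\widetilde B_\beta)=B(\mathfrak W_{[m]})\) under the transported vertex labels, and the quantum version follows from \(\mu_k(\Lambda)=E_k^T\Lambda E_k\).

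\textbf{Step 1: the deleted vertices.} I would show that the set \(D_\beta\) of deleted vertices is exactly the set of trivalent vertices of \(\mathfrak W_{[m]}\) created by the final left block \(j_m^*\mathrm L,\ldots,j_1^*\mathrm L\). The vertex frozen at step \(k\) should be the vertex \(v_{\mathrm{last}}\) of Corollary~\ref{cor:etavn} attached to \(j_k^*\mathrm L\). This requires matching the index \(d_k=(i_{p_k},n_{i_{p_k}}-a_k+1)\) with the color count of the Case~(5) positions. That count is governed by the leftmost reduced subexpression and the relation \(\sigma_{i^*}\Delta=\Delta\sigma_i\).

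\textbf{Step 2: the remaining vertices.} Lemma~\ref{lem:Lusztig-cycle-h-block-invariance} identifies the \(\beta\)-block of \(\mathfrak W_{[m]}\) with \(\overrightarrow{\mathfrak W}(\beta)\), after deleting the straight \(h\)-strands. It does so compatibly with trivalent vertices and with Lusztig cycle weights above the left block. This gives the bijection \(V_\beta\leftrightarrow\overrightarrow{\mathfrak W}(\beta)_3\).

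\textbf{Step 3: exchange matrix entries.} For \(u,v\) in the \(\beta\)-block, I would compare \(\varepsilon_{u,v}\) computed in \(\mathfrak W_{[m]}\) with \(\varepsilon_{u,v}\) computed in \(\overrightarrow{\mathfrak W}(\beta)\). Local intersections at trivalent and six-valent vertices of the \(\beta\)-block agree, because the cycles agree there. The left \(j^*\)-block contributes local terms where the cycles \(\gamma_u\) cross into it. The boundary term \(\sharp_{\delta}\) is taken at different bottom slices in the two weaves.

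The goal is to show that these differences cancel whenever at least one of \(u,v\) is mutable in \(\overrightarrow{\mathfrak W}(\beta)\). For mutable vertices this should follow because a mutable cycle vanishes on the bottom boundary of \(\overrightarrow{\mathfrak W}(\beta)\) and never reaches the added strands. For a frozen–mutable pair, the same vanishing kills every extra term. Frozen–frozen entries do not enter \(B\).

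\textbf{Step 4: the freezing rule.} It remains to match the frozen sets. A \(\beta\)-block vertex is frozen in \(\overrightarrow{\mathfrak W}(\beta)\) exactly when its cycle reaches the bottom boundary. In \(\mathfrak W_{[m]}\), that happens exactly when the cycle either reaches the bottom or feeds into a left-block vertex, i.e.\ is adjacent to some vertex of \(D_\beta\). Lemma~\ref{lem:output} transports frozen status and output vectors along the moves. Together, Steps 3 and 4 show \((\widetilde B^{(m)}_\beta)_{V_\beta\times V_{\beta,\mathrm{ex}}}=B(\overrightarrow{\mathfrak W}(\beta))\).

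\textbf{Step 5: the quantum form.} The compatible form \(\Lambda\) is uniquely determined on the mutable span by compatibility. The frozen–frozen block needs an independent check. I would check it via the \(R\)-matrix values \(\LambdaR\) of the corresponding modules, which are identical on both sides since the modules coincide.

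\textbf{Main obstacle.} I expect Steps 3--4 to be the hardest: controlling the boundary intersection term at the two different bottom slices, and proving the freezing criterion. For the freezing criterion I would try tracking cycles through the left insertions using the simple-root-label argument from Lemma~\ref{lem:eta-Delta-beta-predecessor}.
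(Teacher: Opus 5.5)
The paper does not prove this lemma; it imports it from \cite[Theorem~3.12]{bi2026towards}. Your outline is the picture the paper later relies on, for instance in the proof of Proposition~\ref{pro:compatible-Lambda-beta}: realize $M_m$ as $T$, work in $\mathfrak W_{[m]}$, identify $D_\beta$ with the left block, and identify the remaining vertices with $\overrightarrow{\mathfrak W}(\beta)$ via Lemma~\ref{lem:Lusztig-cycle-h-block-invariance}. Steps~1--3 can be carried out as you sketch. In Step~1, the hollow $\beta$-positions of $\mathfrak W_{[k-1]}$ are exactly $p_1,\ldots,p_{k-1}$, so Case~(4) occurs exactly at $p_k$. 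Case~(5) then occurs at the $n_i-a_k-b_k$ later positions of color $i=i_{p_k}$. This matches $\widetilde\mu_k$ and sends the vertex labelled $d_k$ to $j_k^*\mathrm L$.

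\emph{The gap is in Step~4.} You write ``feeds into a left-block vertex, i.e.\ is adjacent to some vertex of $D_\beta$''. But adjacency means $\varepsilon_{u,d}\neq0$, a signed sum of local determinants, and this implication is precisely the nontrivial half of the freezing rule. The tool you propose does not reach it. Lemma~\ref{lem:eta-Delta-beta-predecessor} concerns a cycle born on the unique strand labelled by a simple root. The cycle of a $\beta$-block vertex instead arrives at the bottom of the $\beta$-block with weights on arbitrary strands, and it can split at the six-valent vertices of the tail.

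\emph{The fix is short.} Let $u$ be mutable in $\mathfrak W_{[m]}$ but frozen in $\overrightarrow{\mathfrak W}(\beta)$.
\begin{itemize}
\item The four- and six-valent rules never turn a nonzero slice vector of $\gamma_u$ into zero. Hence there is a left insertion $d_s$ at which this slice vector passes from nonzero to zero.
\item Just above $d_s$, the vector is therefore supported on the boundary strand entering $d_s$, with some weight $c>0$. The fresh strand carries $0$.
\item $\gamma_{d_s}$ vanishes above $d_s$, $\gamma_u$ vanishes below $d_s$, and the boundary term vanishes. So $\varepsilon_{u,d_s}$ is the single local determinant at $d_s$ with rows $(0,0,c)$ and $(0,1,0)$, namely $-c\neq0$.
\end{itemize}
The converse also holds: a vertex mutable in $\overrightarrow{\mathfrak W}(\beta)$ has $\varepsilon_{u,d}=0$ for all $d\in D_\beta$. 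This follows from your Step~3 vanishing argument, but state it explicitly, since $d$ is not in the $\beta$-block.

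\emph{Drop Step~5.} The paper has no independent quantum seed on the weave side: $\overrightarrow{\mathbf s}_t(\beta)$ is defined as $\mathbf s_t(\delta(\beta),\beta)$. So the quantum clause is simply carried along by the identification. Its compatibility is the zero block $\bigl(\widetilde B_\beta^{(m)}\bigr)_{D_\beta\times V_{\beta,\mathrm{ex}}}=0$, which is your freezing rule. Checking the frozen block through $\LambdaR$ of the modules would be circular. The identification of those modules with the $B_v$ comes from Propositions~\ref{pro:Lusztig-parameter-right-weave-variable} and~\ref{pro:LusztigparaBv}, and both invoke this lemma.
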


\subsection{Global basis elements for cluster variables}
The compatible quantum seed corresponding to
\eqref{eq:right-inductive-seed-identification} will be denoted by
\(
    \overrightarrow{\mathbf s}_t(\beta)
    :=
    \mathbf s_t(\delta(\beta),\beta).
\)
 By
Corollary~\ref{cor:cluster-algebra-embedding-A-beta}, every quantum
cluster variable of
\(\overrightarrow{\mathbf s}_t(\beta)\) is identified with a
normalized global basis element of
\(
    \widetilde{\mathbf A}(\beta)
    =
    \widehat{\cA}_{\ZZ[q^{\pm1/2}]}(\beta)
    \cap
    T_{\delta(\beta)}\widehat{\cA}_{\geq0}.
\)
For \(v\in\overrightarrow{\mathfrak W}(\beta)_3\), let \(X_v\)
denote the corresponding quantum cluster variable, and write
\begin{equation}\label{eq:Av-global-basis-element}
    A_v
    :=
    (\Phi_{\DD}\circ\iota)(X_v)
    \in\widetilde{\mathbf A}(\beta).
\end{equation}
Thus \(A_v\) is a normalized global basis element of
\(\widehat{\cA}(\beta)\). We denote its \(\beta\)-Lusztig parameter by
\begin{equation}\label{eq:Lusztig-parameter-Av}
    \mathbf a^\beta(A_v)
    \in\ZZ_{\geq0}^r.
\end{equation}

\begin{lemma}[Mutation of Lusztig parameters]
\label{lem:Lusztig-parameter-mutation}
Let \(\mathfrak W\) be a weave with exchange matrix
\(
    B(\mathfrak W)=(b_{u,w}),
\)
and let \(A'_u\) be the normalized global basis element corresponding
to the cluster variable at \(u\). For an exchangeable vertex
\(v\in\mathfrak W_3\), set
\[
    A''_v:=\mu_v(A'_v),
    \qquad
    A''_u:=A'_u\quad (u\neq v).
\]
Then
\begin{equation}\label{eq:Lusztig-parameter-mutated-variable}
\begin{aligned}
    \mathbf a^\beta(A''_v)
    &=
    \max_{\mathrm r}
    \left\{
        \sum_{w\in\mathfrak W_3}
            [b_{w,v}]_+
            \mathbf a^\beta(A'_w),
        \sum_{w\in\mathfrak W_3}
            [-b_{w,v}]_+
            \mathbf a^\beta(A'_w)
    \right\}
    -\mathbf a^\beta(A'_v).
\end{aligned}
\end{equation}
Here \(\max_{\mathrm r}\) is taken with respect to the right
lexicographic order in Definition \ref{def:order-vector}. Thus the family
\(\{\mathbf a^\beta(A''_u)\}_u\) is obtained from
\(\{\mathbf a^\beta(A'_u)\}_u\) by the vector-mutation rule of
Definition~\ref{def:mutlusztig}.
\end{lemma}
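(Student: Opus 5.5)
The plan is to work on the categorical side through \(\Phi_{\DD}\) and use that Lusztig parameters are additive on heads of ordered tensor products. The ingredients are Theorem~\ref{thm:quantum-braid-seed-embedding} and the based isomorphism of Theorem~\ref{thm:isoKt}: each \(A'_u\) is \(\Phi_{\DD}([M_u]_t)\) for a real simple cluster-variable module \(M_u\in\mathscr C_{\DD}(\beta)\), and \(\mathbf a^\beta(A'_u)=\mathbf a^{\DD,\beta}(M_u)\) in the sense of Proposition~\ref{pro:lusztig-parameter-modules}. The mutated variable \(A''_v\) corresponds to the module \(M'_v\) in the exchange sequence \eqref{eq:monoidal-mutation}. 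So the claim reduces to computing \(\mathbf a^\beta(M'_v)\) from the exchange relation.

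\emph{First step: additivity for strongly commuting products.} If \(M,N\) are real simple and strongly commute, then \(\mathbf a^\beta(M\otimes N)=\mathbf a^\beta(M)+\mathbf a^\beta(N)\). I would deduce this from Lemma~\ref{lem:global-basis-product-triangularity}. The product \(\widetilde G(\beta,\mathbf a)\widetilde G(\beta,\mathbf b)\) equals \(q^A\widetilde G(\beta,\mathbf a+\mathbf b)\) plus terms that are strictly \(\prec_{\mathrm r}\)-smaller. When the tensor product is simple, the product is a single global basis element up to a power of \(q^{1/2}\), by Lemma~\ref{lem:Nm1m2}. Hence it must be the leading term. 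Consequently the two monomials
\[
    P_\pm=\bigotimes_{\pm b_{w,v}>0}M_w^{\otimes(\pm b_{w,v})}
\]
have parameters \(A_v^\pm\), computed as in \eqref{eq:two-vector-mutation-sums} from the \(\mathbf a^\beta(A'_w)\).

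\emph{Second step: compare leading terms.} Apply \(\Phi_{\DD}\) to the quantum exchange relation
\[
    [M_v]_t[M'_v]_t=t^{a}[P_+]_t+t^{b}[P_-]_t .
\]
By Lemma~\ref{lem:global-basis-product-triangularity}, the left-hand side has unique \(\prec_{\mathrm r}\)-maximal term at \(\mathbf a^\beta(M_v)+\mathbf a^\beta(M'_v)\), with a unit coefficient \(q^A\). The right-hand side is a sum of two distinct global basis elements with parameters \(A_v^+\) and \(A_v^-\). These are distinct, because the two sides of an exchange relation are different simple modules and the correspondence is based. Since \(\prec_{\mathrm r}\) is a total order, the maximal parameter on the right is \(\max_{\mathrm r}\{A_v^+,A_v^-\}\). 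Comparing the two sides gives \eqref{eq:Lusztig-parameter-mutated-variable}.

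\emph{Main obstacle: cancellation of leading terms.} Both sides expand in the basis \(\{\widetilde G(\beta,\mathbf c)\}\), and a lower-order term on the left could a priori coincide with, or cancel, a basis element on the right. The argument has to rule this out, and I would handle it by linear independence plus maximality. The left side's top term \(\mathbf a(M_v)+\mathbf a(M'_v)\) must appear on the right with nonzero coefficient, so it is either \(A_v^+\) or \(A_v^-\). Every term on the right must also be \(\preceq_{\mathrm r}\) the left top term. Together these force the left top term to equal \(\max_{\mathrm r}\{A_v^+,A_v^-\}\).

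A secondary issue is the positive coefficients on the right-hand side. Positivity of the structure constants, or simply the fact that the right-hand side involves two distinct basis vectors with monomial coefficients, prevents any cancellation on the right. The only remaining point to check is that the order in Lemma~\ref{lem:global-basis-product-triangularity} is the same \(\prec_{\mathrm r}\) as in Definition~\ref{def:order-vector}, which holds by construction.
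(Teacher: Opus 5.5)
Your proof is correct and is essentially the paper's argument: both compare the $\prec_{\mathrm r}$-leading terms of the two sides of the quantum exchange relation using Lemma~\ref{lem:global-basis-product-triangularity}. The one difference is in handling the right-hand side. You use the additional input, available here, that each exchange monomial is a single normalized global-basis element up to a power of $q^{1/2}$, so that side has exactly two terms. The paper uses only leading terms and rules out $\mathbf p_v^+=\mathbf p_v^-$ because $q^{c_++d_+}+q^{c_-+d_-}$ is not a single power of $q^{1/2}$. In your version the distinctness claim is not needed, since the support comparison gives the formula even if $A_v^+=A_v^-$; this matters because your justification via ``different simple modules'' would otherwise require the multiplicity-one property of $M_v\nabla M'_v$ in $M_v\otimes M'_v$.
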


\begin{proof}
After fixing an order of the factors, the quantum exchange relation at
\(v\) can be written as
\begin{equation}\label{eq:exchange-relation-global-basis-elements}
    q^{c_0}A'_vA''_v
    =
    q^{c_+}
    \prod_{w\in\mathfrak W_3}
        (A'_w)^{[b_{w,v}]_+}
    +
    q^{c_-}
    \prod_{w\in\mathfrak W_3}
        (A'_w)^{[-b_{w,v}]_+},
\end{equation}
where \(c_0,c_+,c_-\in\frac12\ZZ\). Changing the order of the
factors only changes these powers of \(q^{1/2}\).

Let \(\operatorname{lp}_\beta\) denote the largest
\(\beta\)-Lusztig parameter with respect to the right lexicographic
order. By
Lemma~\ref{lem:global-basis-product-triangularity},
\[
    \operatorname{lp}_\beta(A'_vA''_v)
    =
    \mathbf a^\beta(A'_v)
    +
    \mathbf a^\beta(A''_v).
\]
The leading parameters of the two exchange monomials on the
right-hand side of
\eqref{eq:exchange-relation-global-basis-elements} are
\[
    \mathbf p_v^+
    :=
    \sum_{w\in\mathfrak W_3}
        [b_{w,v}]_+\,
        \mathbf a^\beta(A'_w),
    \qquad
    \mathbf p_v^-
    :=
    \sum_{w\in\mathfrak W_3}
        [-b_{w,v}]_+\,
        \mathbf a^\beta(A'_w).
\]

We claim that
\[
    \mathbf p_v^+\neq\mathbf p_v^-.
\]
Indeed, suppose that
\(\mathbf p_v^+=\mathbf p_v^-=\mathbf p\). Then the two exchange
monomials have leading terms
\[
    q^{d_+}\widetilde G(\beta,\mathbf p)
    \qquad\text{and}\qquad
    q^{d_-}\widetilde G(\beta,\mathbf p),
\]
respectively, for some \(d_+,d_-\in\frac12\ZZ\). Consequently, the
coefficient of \(\widetilde G(\beta,\mathbf p)\) on the right-hand
side of
\eqref{eq:exchange-relation-global-basis-elements} would be
\(
    q^{c_++d_+}+q^{c_-+d_-}.
\)
The two contributions cannot cancel. On the other hand, Lemma~\ref{lem:global-basis-product-triangularity}
shows that the leading coefficient of the left-hand side is a single
power of \(q^{1/2}\). This is impossible in
\(\ZZ[q^{\pm1/2}]\). Hence
\(\mathbf p_v^+\neq\mathbf p_v^-\).

Therefore,
\[
    \mathbf a^\beta(A'_v)
    +
    \mathbf a^\beta(A''_v)
    =
    \max_{\mathrm r}
    \left\{
        \mathbf p_v^+,\mathbf p_v^-
    \right\}.
\]
Solving for \(\mathbf a^\beta(A''_v)\) gives
\eqref{eq:Lusztig-parameter-mutated-variable}. Since mutation leaves
\(A'_u\) unchanged for \(u\neq v\), all other Lusztig parameters are
unchanged as well.
\end{proof}

\begin{proposition}
\label{pro:Lusztig-parameter-right-weave-variable}
For every
\(v\in\overrightarrow{\mathfrak W}(\beta)_3\), the
\(\beta\)-Lusztig parameter of \(A_v\) satisfies
\begin{equation}
\label{eq:Lusztig-parameter-right-weave-recursion}
    \mathbf a^\beta(A_v)
    =
    E_{q_v^\beta}
    +
    \sum_{\substack{
        v'\in\overrightarrow{\mathfrak W}(\beta)_3\\
        v'<v
    }}
    \gamma_{v'}(e_{v,\mathrm{nw}})
    \mathbf a^\beta(A_{v'}).
\end{equation}
Consequently,
\[
    \mathbf a^\beta(A_v)=\eta_v^\beta.
\]
\end{proposition}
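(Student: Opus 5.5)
The plan is to compute the Lusztig parameters of the cluster variables of \(\overrightarrow{\mathbf s}_t(\beta)\) by tracking them along the mutation sequence \(T=M_m\). We start from the word seed, whose parameters are known, and use that the \(\eta\)-vectors of Section~\ref{sec:vector-right-weave} obey the same mutation rule (Definition~\ref{def:mutlusztig}) as Lusztig parameters (Lemma~\ref{lem:Lusztig-parameter-mutation}). The last assertion \(\mathbf a^\beta(A_v)=\eta_v^\beta\) then follows by comparing recursions: \eqref{eq:Lusztig-parameter-right-weave-recursion} and \eqref{eq:etav} have the same form, and both are triangular with respect to the total order \eqref{eq:order-right-inductive-vertices}. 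So it suffices to prove \(\mathbf a^\beta(A_v)=\eta_v^\beta\) directly, and the recursion comes for free.

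\emph{Step 1 (initial seed).} Work with \(\boldsymbol\Delta\beta\) and the right inductive weave \(\overrightarrow{\mathfrak W}(\boldsymbol\Delta\beta)\). By \eqref{eq:right-weave-variable-initial-minor} the variable at \(v_k\) is \(D^\beta[1(i_k)^+,k]\), with Lusztig parameter \(\{1,k]_\beta=E_k+\{1,k^-]_\beta\). Lemma~\ref{lem:eta-Delta-beta-predecessor} gives \(\eta_{v_k}^{\boldsymbol\Delta\beta}=E_k+\eta^{\boldsymbol\Delta\beta}_{v_k^-}\). Induction on \(k\) then shows \(\boldsymbol\eta^{[0]}\) coincides with the tuple of \(\beta\)-Lusztig parameters, with the \(\beta\)-coordinates placed via the suppressed \(T_{w_0}\) of the Remark. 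For vertices created inside \(\boldsymbol\Delta\) (which can exist only if \(\boldsymbol\Delta\) is non-reduced) there is nothing to do, since \(\boldsymbol\Delta\) is reduced and no trivalent vertices arise there.

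\emph{Step 2 (transport along \(T\)).} By \cite[Lemma~3.11]{bi2026towards}, \(T=T_m\circ\cdots\circ T_1\) equals \(M_m\). Applying Lemma~\ref{lem:Lusztig-parameter-mutation} at each weave mutation shows that the Lusztig parameters of the mutated cluster variables are \(\boldsymbol\eta^{[s]}\). At the end, the variables at the \(\beta\)-block vertices of \(\mathfrak W_{[m]}\) are, by Lemma~\ref{lem:right-inductive-seed-restriction} and Definition~\ref{def:svbeta}, exactly the \(A_v\) of \(\overrightarrow{\mathbf s}_t(\beta)\) (the deleted vertices \(D_\beta\) are the \(j_s^*\mathrm L\) ones). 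Hence \(\mathbf a^\beta(A_v)=\eta'_v\). Theorem~\ref{thm:eta-after-Delta-mutation} says \(\eta'_v\) satisfies exactly \eqref{eq:Lusztig-parameter-right-weave-recursion}, with coefficients \(\gamma_{v'}^{\overrightarrow{\mathfrak W}(\beta)}(e_{v,\mathrm{nw}})\). This gives the recursion, and triangular uniqueness gives \(\eta'_v=\eta_v^\beta\).

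\emph{Main obstacle.} The delicate point is the bookkeeping between coordinate lattices. The global-basis elements live in \(\widehat{\cA}(\boldsymbol\Delta\beta)\) during the mutations, but the statement is about \(\widehat{\cA}(\beta)\) and \(\beta\)-coordinates. I would handle this by applying \(T_{w_0}^{-1}\) (or equivalently working in \(\widetilde{\mathbf A}(\beta)\) via Corollary~\ref{cor:cluster-algebra-embedding-A-beta}). I would also need to check that Lemma~\ref{lem:global-basis-product-triangularity}, and hence Lemma~\ref{lem:Lusztig-parameter-mutation}, applies in the zero-extended lattice, where the right lexicographic order restricts compatibly. The crucial input, beyond this, is that every intermediate cluster variable is a global-basis element, which Theorem~\ref{thm:isoKt} guarantees.
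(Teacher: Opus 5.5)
Your proposal is correct and follows essentially the same route as the paper's proof. You match the initial \(\beta\)-block tuple with the Lusztig parameters \(\{1,k]_\beta\) of the \(D_k^\beta\) via Lemma~\ref{lem:eta-Delta-beta-predecessor}, transport both along \(T=M_m\) by Lemma~\ref{lem:Lusztig-parameter-mutation}, restrict by Lemma~\ref{lem:right-inductive-seed-restriction}, and conclude from Theorem~\ref{thm:eta-after-Delta-mutation} and the triangularity of the recursion; the lattice bookkeeping you flag is handled in the paper simply by realizing the whole mutation sequence in the word seed inside \(\widehat{\cA}(\beta)\), so the parameters are \(\beta\)-Lusztig parameters from the start and no explicit \(T_{w_0}^{-1}\) is needed.
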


\begin{proof}
Let
\[
    \boldsymbol\eta^{\boldsymbol{\Delta}\beta}
    :=
    \left(
        \eta_{v_k}^{\boldsymbol{\Delta}\beta}
    \right)_{k\in[r]}
\]
be the tuple associated with the \(\beta\)-block of
\(\overrightarrow{\mathfrak W}
(\boldsymbol{\Delta}\beta)\).
Lemma~\ref{lem:eta-Delta-beta-predecessor} and
\eqref{eq:right-weave-variable-initial-minor} give
\[
    \eta_{v_k}^{\boldsymbol{\Delta}\beta}
    =
    \{1,k]_\beta
    =
    \mathbf a^\beta(D_k^\beta)
    \qquad(k\in[r]).
\]
Thus, in the initial seed, the vectors
\(\eta_{v_k}^{\boldsymbol{\Delta}\beta}\) coincide with the Lusztig
parameters of the corresponding normalized global basis elements.

Apply the mutation sequence \(T\) from
\eqref{eq:mutation-sequence-T} simultaneously to the seed and to the
tuple \(\boldsymbol\eta^{\boldsymbol{\Delta}\beta}\), and set
\[
    \boldsymbol\eta'
    :=
    T\left(
        \boldsymbol\eta^{\boldsymbol{\Delta}\beta}
    \right).
\]
By Lemma~\ref{lem:Lusztig-parameter-mutation}, Lusztig parameters
transform according to the same vector-mutation rule as the
\(\eta\)-vectors. Induction along \(T\) therefore shows that, at every
stage, the vector attached to a cluster variable is its
\(\beta\)-Lusztig parameter.

After completing \(T\), delete the vertices specified in
Definition~\ref{def:svbeta}. By
Lemma~\ref{lem:right-inductive-seed-restriction}, the resulting seed
is \(\overrightarrow{\mathbf s}_t(\beta)\), and the variables at the
surviving vertices are unchanged. Hence
\[
    \eta'_v
    =
    \mathbf a^\beta(A_v)
    \qquad
    \left(
        v\in\overrightarrow{\mathfrak W}(\beta)_3
    \right).
\]

Finally, Theorem~\ref{thm:eta-after-Delta-mutation} gives
\[
    \eta'_v
    =
    E_{q_v^\beta}
    +
    \sum_{\substack{
        v'\in\overrightarrow{\mathfrak W}(\beta)_3\\
        v'<v
    }}
    \gamma_{v'}(e_{v,\mathrm{nw}})\eta'_{v'}.
\]
Substituting
\(\eta'_v=\mathbf a^\beta(A_v)\) proves
\eqref{eq:Lusztig-parameter-right-weave-recursion}. This is precisely
the recursion defining \(\eta_v^\beta\); therefore
\(\mathbf a^\beta(A_v)=\eta_v^\beta\).
\end{proof}

Let \(\ddot{\mathbf s}=(i_1X_1,\ldots,i_rX_r)\) be a double string,
and put \(\beta:=\beta_r^{\ddot{\mathbf s}}\). Its quantum seed is
mutation equivalent to \(\overrightarrow{\mathbf s}_t(\beta)\).
Through the fixed embedding
\eqref{eq:cluster-algebra-embedding-A-beta}, let
\(A_v^{\ddot{\mathbf s}}\) denote the normalized global basis element
corresponding to the cluster variable at
\(v\in\mathfrak W(\ddot{\mathbf s})_3\).

\begin{definition}\label{def:vector-double-inductive}
The vector attached to \(v\) is defined intrinsically by
\begin{equation}\label{eq:intrinsic-double-string-vector}
    \eta_v^{\ddot{\mathbf s}}
    :=\mathbf a^\beta\bigl(A_v^{\ddot{\mathbf s}}\bigr)
    \in\ZZ_{\geq0}^r.
\end{equation}
The notation \(\eta_v^\beta\) continues to denote the combinatorial
right-inductive vector of
Definition~\ref{def:eta-vector-right-inductive}. Proposition
\ref{pro:Lusztig-parameter-right-weave-variable} proves that it
agrees with \eqref{eq:intrinsic-double-string-vector} for a right
inductive string.
\end{definition}

We shall also use compatibility of global bases with consecutive
subwords. If \(\beta=\gamma\alpha\kappa\), let
\(\iota_{\gamma,\alpha,\kappa}\colon
\ZZ^{\ell(\alpha)}\to\ZZ^{\ell(\beta)}\) insert zero coordinates in
the \(\gamma\)- and \(\kappa\)-blocks.

\begin{lemma}[Global basis under consecutive-block embeddings]
\label{lem:global-basis-consecutive-block}
Let
\(
    \beta=\gamma\alpha\kappa
\)
be a decomposition into positive braid words, and let
\[
    T_\gamma\colon
    \widehat{\cA}(\alpha)
    \longrightarrow
    \widehat{\cA}(\beta)
\]
be the braid-symmetry embedding associated with \(\gamma\). Then, for every
\(\mathbf a\in\ZZ_{\geq0}^{|\alpha|}\),
\begin{equation}\label{eq:global-basis-consecutive-block}
    T_\gamma\bigl(\widetilde G(\alpha,\mathbf a)\bigr)
    =
    \widetilde G\bigl(
        \beta,
        \iota_{\gamma,\alpha,\kappa}(\mathbf a)
    \bigr).
\end{equation}
\end{lemma}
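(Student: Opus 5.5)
The plan is to reduce \eqref{eq:global-basis-consecutive-block} to two facts: \(T_\gamma\) maps PBW root vectors of \(\alpha\) to the corresponding PBW root vectors of \(\beta\), and it preserves \(c\)-invariance (equivalently bar-invariance after normalization). The characterization of the global basis in Proposition~\ref{prop:PBW-global-basis} then forces the equality.

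\textbf{PBW vectors.} Write \(\gamma=(g_1,\ldots,g_p)\) and \(\alpha=(a_1,\ldots,a_s)\). Then position \(k\) of \(\alpha\) corresponds to position \(p+k\) of \(\beta\), and by \eqref{eq:PBW-root-vector}
\[
T_\gamma(P_k^\alpha)=T_{g_1}\cdots T_{g_p}T_{a_1}\cdots T_{a_{k-1}}\bigl(q^{1/2}f_{a_k,0}\bigr)=P_{p+k}^\beta .
\]
Since \(T_\gamma\) is a \(\QQ(q^{1/2})\)-algebra homomorphism, it follows that \(T_\gamma(P^\alpha(\mathbf b))=P^\beta(\iota_{\gamma,\alpha,\kappa}(\mathbf b))\) for every \(\mathbf b\). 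The normalizing power \(q^{\frac12\sum b_k(b_k-1)}\) is unchanged, and the zero coordinates in the \(\gamma\)- and \(\kappa\)-blocks contribute trivial factors. In particular, \(T_\gamma\) maps \(\widehat{\cA}(\alpha)\) into \(\widehat{\cA}(\beta)\).

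\textbf{Triangularity.} The embedding \(\iota:=\iota_{\gamma,\alpha,\kappa}\) is strictly monotone for \(\prec_{\mathrm r}\). Two vectors supported in the \(\alpha\)-block differ last at an \(\alpha\)-coordinate, and the comparison at that coordinate is the same before and after embedding. Applying \(T_\gamma\) to \eqref{eq:PBW-global-basis-triangularity} therefore gives
\[
T_\gamma G(\alpha,\mathbf a)\in P^\beta(\iota\mathbf a)+\sum_{\mathbf b'\prec_{\mathrm r}\iota\mathbf a}q\ZZ[q]\,P^\beta(\mathbf b').
\]

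\textbf{\(c\)-invariance.} By \cite[Theorem~3.7]{kashiwara2024braid}, recalled above, the braid symmetries preserve the \(c\)-invariant global basis, so \(c(T_\gamma G(\alpha,\mathbf a))=T_\gamma G(\alpha,\mathbf a)\). The uniqueness in Proposition~\ref{prop:PBW-global-basis} then yields \(T_\gamma G(\alpha,\mathbf a)=G(\beta,\iota\mathbf a)\).

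\textbf{Normalization.} Each \(T_i\) maps the weight \(\mu\) to \(s_i\mu\), so \(T_\gamma\) preserves the weight form. Hence the normalizing factors in \eqref{eq:normalized-PBW-global-basis-conversion} agree, and the statement for \(G\) transfers to \(\widetilde G\).

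The main obstacle is the \(c\)-invariance step. The cited theorem asserts that braid symmetries permute the ambient global basis \(\{G(\bb)\}\), but it does not directly say that \(T_\gamma\) commutes with \(c\). One could sidestep this: since \(T_\gamma G(\alpha,\mathbf a)\) is some ambient global basis element lying in \(\widehat{\cA}(\beta)\), the global basis of \(\widehat{\cA}(\beta)\) consists of ambient ones, and the PBW-triangularity computed above identifies which one it is. That identification still uses the fact that \(G(\beta,\mathbf b)\) is characterized among \(c\)-invariant elements, so either way the argument relies on \(c\)-invariance of ambient global basis elements, which holds by Theorem~\ref{theo:globalbasis}.
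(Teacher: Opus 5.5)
Your proposal is correct and follows essentially the same route as the paper. You transport the PBW monomials by \(T_\gamma\), use the \(c\)-invariant triangular characterization of Proposition~\ref{prop:PBW-global-basis} together with the global-basis compatibility of braid symmetries to identify the unnormalized elements \(G\), and pass to \(\widetilde G\) because \(T_\gamma\) preserves the weight form.

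Your check that \(\iota_{\gamma,\alpha,\kappa}\) is monotone for \(\prec_{\mathrm r}\) makes explicit a step the paper leaves implicit. Your \(c\)-invariance worry can be settled directly: each \(T_i\) sends every generator \(f_{j,m}\) to a bar-invariant element, so it commutes with the bar anti-involution, and it maps weight \(\mu\) to \(s_i\mu\), so it commutes with \(c\).
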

\begin{proof}
The braid symmetry \(T_\gamma\) identifies the PBW root vectors of
\(\alpha\) with those in the \(\alpha\)-block of \(\beta\). Hence
\[
    T_\gamma\bigl(P^\alpha(\mathbf a)\bigr)
    =
    P^\beta\bigl(
        \iota_{\gamma,\alpha,\kappa}(\mathbf a)
    \bigr).
\]
The \(c\)-invariant triangular characterization in
Proposition~\ref{prop:PBW-global-basis}, together with the global-basis
compatibility of the braid symmetries, identifies the corresponding
unnormalized elements \(G\). Since \(T_\gamma\) preserves the weight
form, applying the conversion
\eqref{eq:normalized-PBW-global-basis-conversion} gives
\eqref{eq:global-basis-consecutive-block} for \(\widetilde G\).
\end{proof}

\begin{lemma}[Extension of mutations from a truncated weave]
\label{lem:quantum-truncated-weave-extension}
Let \(\mathfrak U\) be a truncated double inductive weave, with
vertex set \(J\), and obtain \(\mathfrak W\) by restoring the
remaining insertions. Denote its vertex set by \(K\supseteq J\)
and the mutable vertices of \(\mathfrak U\) by \(J_{\mathrm{ex}}\).
Then
\begin{equation}\label{eq:truncated-weave-zero-columns}
 b^{\mathfrak W}_{uv}=
 \begin{cases}
 b^{\mathfrak U}_{uv},&u\in J,\\
 0,&u\in K\setminus J,
 \end{cases}
 \qquad v\in J_{\mathrm{ex}}.
\end{equation}
Restrict the full based toric frame to \(\ZZ^J\), and equip it
with the induced form \(\Lambda_{J\times J}\). This form is
compatible with \(B(\mathfrak U)\), and every sequence of
mutations at vertices mutable in the truncated weave is the
restriction of the same quantum mutations in the full weave.
Vertices frozen in the truncated weave retain their variables
throughout this sequence, even if they are mutable in the full weave.
\end{lemma}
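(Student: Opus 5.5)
The plan is to prove the three assertions in order: the zero-block identity \eqref{eq:truncated-weave-zero-columns}, then compatibility of the restricted form, and finally the extension of mutations via a restriction argument of the type in Lemma~\ref{lem:subseed}.

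\emph{Step 1: the exchange matrix.} For \(u,v\in J\) the Lusztig cycles \(\gamma_u,\gamma_v\) on \(\mathfrak W\) agree with those on \(\mathfrak U\) inside the truncated region. Restoring insertions only attaches strands at the two ends of the bottom boundary of \(\mathfrak U\), together with the configurations lying below it. If \(v\in J_{\mathrm{ex}}\), then \(\gamma_v^{\mathfrak U}\) vanishes on the southern boundary of \(\mathfrak U\). Since weights propagate downward, \(\gamma_v^{\mathfrak W}\) is then supported entirely inside \(\mathfrak U\) and also vanishes on the boundary \(\delta(\beta)\).

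Consider first \(u\in J\). In \(\varepsilon^{\mathfrak W}_{u,v}\), every local term at a vertex outside \(\mathfrak U\) contains a determinant with a zero row coming from \(\gamma_v\). The boundary term \(\sharp_{\delta(\beta)}\) vanishes for the same reason. In \(\mathfrak U\), the boundary term at its bottom slice also vanishes, because \(\gamma_v\) is zero there. Hence \(b^{\mathfrak W}_{uv}=b^{\mathfrak U}_{uv}\).

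Now consider \(u\in K\setminus J\). The vertex \(u\) lies below the truncated region, so \(\gamma_u\) vanishes on \(\mathfrak U\). Every contribution to \(\varepsilon_{u,v}\) therefore involves a vertex where at least one of the two cycles is identically zero, and the boundary term vanishes as well. This gives \(b^{\mathfrak W}_{uv}=0\).

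\emph{Step 2: compatibility.} The full pair \((\Lambda,B(\mathfrak W))\) is compatible; by Lemma~\ref{lem:right-inductive-seed-restriction} and mutation invariance this holds for every double inductive weave in the class. Write \(\sum_{k\in K}b_{kv}\lambda_{ki}=2\delta_{iv}\) for \(v\in J_{\mathrm{ex}}\) and \(i\in J\). By Step 1, the terms with \(k\notin J\) vanish, which leaves \(B(\mathfrak U)^{\mathsf T}\Lambda_{J\times J}=2I\) on the \(J_{\mathrm{ex}}\) columns.

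\emph{Step 3: mutations.} Iterate exactly as in Lemma~\ref{lem:subseed}(1). Matrix mutation at \(s\in J_{\mathrm{ex}}\) preserves the vanishing of the block \((K\setminus J)\times J_{\mathrm{ex}}\). The quantum exchange monomials \(\widetilde X^{\mathbf a_\pm}\) at \(s\) then involve only variables indexed by \(J\), and their normalization uses only \(\Lambda_{J\times J}\). Hence the quantum mutation in the full seed coincides with the one in the restricted seed. The mutated forms \(E_s^T\Lambda E_s\) restrict correctly, because \(E_s\) is the identity outside column \(s\) and its entries \([b_{is}]_+\) vanish for \(i\notin J\).

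Vertices in \(J\setminus J_{\mathrm{ex}}\) are never mutated in this sequence, so their variables are unchanged even when they are mutable in \(\mathfrak W\). The weave mutations in \(\mathfrak U\) also extend to \(\mathfrak W\), since they occur above the restored portion.

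\emph{Main obstacle.} The hard part is the support claim in Step 1: that a cycle which is mutable in \(\mathfrak U\) never reaches the added region, including across six-valent vertices formed when strands are attached on the left. The first approach to try is to note that the new strands meet \(\mathfrak U\) only below its southern boundary. Positivity of the weights then means propagation from zero boundary weights stays zero, and Lemma~\ref{lem:output} handles any equivalences used to redraw \(\mathfrak W\).
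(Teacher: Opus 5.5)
Your proposal is correct and follows essentially the same route as the paper. Zero southern output of cycles mutable in \(\mathfrak U\), together with the vanishing of tail-born cycles above the truncation slice, kills every local and boundary contribution outside the retained block. Compatibility then follows by discarding the zero terms of the full compatibility sum, and the mutation statement is the restriction argument of Lemma~\ref{lem:subseed}; your explicit check that the columns of \(E_s\) indexed by \(J\) are supported in \(J\) just spells out the paper's remark that mutation of the compatible form commutes with restriction.
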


\begin{proof}
By \cite[Definition~4.35]{casals2025cluster}, a mutable Lusztig
cycle of \(\mathfrak U\) has zero southern output. Its extension
vanishes below the truncation slice. In particular, every
vertex of \(J_{\mathrm{ex}}\) remains mutable in \(\mathfrak W\),
so \(J_{\mathrm{ex}}\subseteq K_{\mathrm{ex}}\).

Conversely, a cycle born
in the restored tail is zero above that slice. Hence restoring the
tail adds no local intersection involving the former cycle, and
its boundary intersection is zero both before and after extension.
The exchange-matrix formula \eqref{eq:weave-exchange-entry} gives
\eqref{eq:truncated-weave-zero-columns}.

For \(v\in J_{\mathrm{ex}}\) and \(u\in J\), compatibility of
the full seed now gives
\[
 \sum_{z\in J}b^{\mathfrak U}_{zv}\lambda_{zu}
 =\sum_{z\in K}b^{\mathfrak W}_{zv}\lambda_{zu}
 =2\delta_{uv}.
\]
The two exchange monomials in
\eqref{eq:quantum-cluster-mutation} have support in \(J\), so
their normalization uses only \(\Lambda_{J\times J}\).
Matrix mutation preserves the zero block, as in
Lemma~\ref{lem:subseed}, and mutation of the compatible form
commutes with restriction for the same reason. Induction proves
the assertion for the sequence. Equivalences only relabel the
corresponding frames. No vertex frozen in \(\mathfrak U\) is
mutated \cite[Lemmas~4.36--4.37]{casals2025cluster}.
\end{proof}

The induced form in this lemma is the restriction of the chosen
full toric frame; no identification with a separately chosen
quantization of the truncated weave is assumed. Whenever the frames
are identified by the based embedding of
Lemma~\ref{lem:global-basis-consecutive-block}, that embedding also
identifies these forms, since it preserves products.

\begin{theorem}\label{thm:vector-double-inductive}
Let
\(
    \ddot{\mathbf s}
    =(i_1X_1,\ldots,i_rX_r)
\)
be a double string with final word
\(\beta=\beta_r^{\ddot{\mathbf s}}\), and let \(v\) be the
trivalent vertex created by a solid step \(i_kX_k\). Set
\(
    \beta_k
    :=\beta_k^{\ddot{\mathbf s}}
    =(j_1,\ldots,j_k),
\)
and write
\[
    \beta=\gamma_k\beta_k\kappa_k
\]
for the consecutive-block decomposition determined by
\(\ddot{\mathbf s}\). Let
\[
    \iota_k^{\ddot{\mathbf s}}
    :=
    \iota_{\gamma_k,\beta_k,\kappa_k}
    \colon\ZZ^k\hookrightarrow\ZZ^r
\]
be the corresponding zero-extension map; explicitly,
\[
    \iota_k^{\ddot{\mathbf s}}(E_a)
    =
    E_{|\gamma_k|+a}
    \qquad(1\leq a\leq k).
\]
Then there exists
\(
    v'\in
    \overrightarrow{\mathfrak W}(\beta_k)_3
\)
such that
\begin{equation}\label{eq:eta-double-string}
    \eta_v^{\ddot{\mathbf s}}
    =
    \iota_k^{\ddot{\mathbf s}}
    \bigl(\eta_{v'}^{\beta_k}\bigr).
\end{equation}
\end{theorem}

\begin{proof}
Consider the truncated string
\[
    \ddot{\mathbf s}_{\leq k}
    :=
    (i_1X_1,\ldots,i_kX_k).
\]
Since \(i_kX_k\) is its final solid step, \(v\) is frozen in
\(\mathfrak W(\ddot{\mathbf s}_{\leq k})\). By
\cite[Lemma~4.4]{casals2025cluster}, this weave is related to
\(\overrightarrow{\mathfrak W}(\beta_k)\) by weave mutations and
equivalences. Frozen vertices are preserved by equivalences and are
never mutated
\cite[Lemmas~4.36 and~4.37(1)]{casals2025cluster}. Hence \(v\)
is transported to a frozen vertex
\[
    v'\in\overrightarrow{\mathfrak W}(\beta_k)_3.
\]

Restore the remaining insertions and define
\[
    \ddot{\mathbf s}'
    :=
    (j_1\mathrm R,\ldots,j_k\mathrm R,
      i_{k+1}X_{k+1},\ldots,i_rX_r).
\]
Lemma~\ref{lem:quantum-truncated-weave-extension} extends the
mutations of the truncated weave to the full weave. Since \(v\)
and \(v'\) are not mutated, their cluster variables are identified:
\begin{equation}\label{eq:transport-v-to-v-prime}
    A_v^{\ddot{\mathbf s}}
    =
    A_{v'}^{\ddot{\mathbf s}'}.
\end{equation}

It remains to compute the parameters of the vertices belonging to
the initial \(\beta_k\)-block. Write
\[
    \gamma_k=(g_1,\ldots,g_h),
    \qquad h:=|\gamma_k|,
\]
and, for \(0\leq t\leq h\), consider
\[
    \mathbf r_t
    :=
    \bigl(
       g_{t+1}\mathrm R,\ldots,g_h\mathrm R,
       j_1\mathrm R,\ldots,j_k\mathrm R,
       \kappa_k\mathrm R,
       g_t\mathrm L,\ldots,g_1\mathrm L
    \bigr),
\]
where \(\kappa_k\mathrm R\) means that all letters of
\(\kappa_k\) are inserted successively on the right. Every
\(\mathbf r_t\) has final word \(\beta\), and
\(\mathbf r_0\) is the right inductive string of \(\beta\).

Starting with \(\mathbf r_0\), move
\(g_1,\ldots,g_h\) successively from the initial right block to
later left insertions. At each step, apply
Corollary~\ref{cor:etavn} and then
Lemma~\ref{lem:quantum-truncated-weave-extension}. The exceptional
vertex in Corollary~\ref{cor:etavn} is the newly moved left
insertion, so it does not belong to the retained \(\beta_k\)-block.
Consequently, for every
\(w\in\overrightarrow{\mathfrak W}(\beta_k)_3\),
\begin{equation}\label{eq:Lusztig-parameter-Arh}
\begin{aligned}
    \mathbf a^\beta\bigl(A_w^{\mathbf r_h}\bigr)
    =
    E_{h+q_w^{\beta_k}}\quad+
    \sum_{\substack{
        u\in\overrightarrow{\mathfrak W}(\beta_k)_3\\
        u<w
    }}
    \gamma_u^{\overrightarrow{\mathfrak W}(\beta_k)}
        (e_{w,\mathrm{nw}})
    \mathbf a^\beta\bigl(A_u^{\mathbf r_h}\bigr).
\end{aligned}
\end{equation}
Here \(h+q_w^{\beta_k}\) is the position in \(\beta\) corresponding
to the position \(q_w^{\beta_k}\) in the consecutive
\(\beta_k\)-block. The transformations from \(\mathbf r_h\) to
\(\ddot{\mathbf s}'\) are supported in the tail
\(
    \bigl(
       \kappa_k\mathrm R,
       g_h\mathrm L,\ldots,g_1\mathrm L
    \bigr).
\)
They do not mutate the variables belonging to the initial
\(\beta_k\)-block. Therefore
\[
    A_w^{\mathbf r_h}
    =
    A_w^{\ddot{\mathbf s}'}
    \qquad
    \bigl(
      w\in\overrightarrow{\mathfrak W}(\beta_k)_3
    \bigr).
\]
Thus \eqref{eq:Lusztig-parameter-Arh} becomes
\begin{equation}\label{eq:double-string-block-recursion}
\begin{aligned}
    \mathbf a^\beta\bigl(A_w^{\ddot{\mathbf s}'}\bigr)
    =
    E_{h+q_w^{\beta_k}} 
    \quad+
    \sum_{u<w}
    \gamma_u^{\overrightarrow{\mathfrak W}(\beta_k)}
        (e_{w,\mathrm{nw}})
    \mathbf a^\beta\bigl(A_u^{\ddot{\mathbf s}'}\bigr).
\end{aligned}
\end{equation}

On the other hand, applying
\(\iota_k^{\ddot{\mathbf s}}\) to the defining recursion
\eqref{eq:etav} for \(\eta_w^{\beta_k}\) gives
\[
\begin{aligned}
    \iota_k^{\ddot{\mathbf s}}
        \bigl(\eta_w^{\beta_k}\bigr)
    =
    E_{h+q_w^{\beta_k}} 
    \quad+
    \sum_{u<w}
    \gamma_u^{\overrightarrow{\mathfrak W}(\beta_k)}
        (e_{w,\mathrm{nw}})
    \iota_k^{\ddot{\mathbf s}}
        \bigl(\eta_u^{\beta_k}\bigr).
\end{aligned}
\]
Comparing this recursion with
\eqref{eq:double-string-block-recursion} and inducting on the
ordered vertices \(w\) yields
\[
    \mathbf a^\beta
        \bigl(A_w^{\ddot{\mathbf s}'}\bigr)
    =
    \iota_k^{\ddot{\mathbf s}}
        \bigl(\eta_w^{\beta_k}\bigr).
\]
Taking \(w=v'\), using
\eqref{eq:transport-v-to-v-prime}, and recalling
Definition~\ref{def:vector-double-inductive}, we obtain equation~\eqref{eq:eta-double-string}.
This proves the theorem.
\end{proof}
\begin{corollary}[Frozen vertices at an arbitrary depth]
\label{cor:frozen-block-transport}
Fix \(1\leq c\leq r\), and write
\(\beta=\gamma_c\beta_c^{\ddot{\mathbf s}}\kappa_c\).
A weave transformation of the truncated string to the right
inductive weave induces a bijection
\[
 \tau_c\colon
 \operatorname{Fr}\mathfrak W(\ddot{\mathbf s}_{\leq c})
 \longrightarrow
 \operatorname{Fr}\overrightarrow{\mathfrak W}
     (\beta_c^{\ddot{\mathbf s}}).
\]
For every frozen trivalent vertex \(v\) of the truncated weave,
\begin{equation}\label{eq:frozen-block-transport}
\begin{aligned}
 A_v^{\ddot{\mathbf s}}
 =T_{\gamma_c}
     \bigl(A_{\tau_c(v)}^{\beta_c^{\ddot{\mathbf s}}}\bigr),\qquad
 \eta_v^{\ddot{\mathbf s}}
 =\iota_c^{\ddot{\mathbf s}}
     \bigl(\eta_{\tau_c(v)}^{\beta_c^{\ddot{\mathbf s}}}\bigr).
\end{aligned}
\end{equation}
The corresponding Lusztig cycles have the same southern output,
and hence the same coroot \(\gamma_{v,c}\).
\end{corollary}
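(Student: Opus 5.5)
The plan is to rerun the proof of Theorem~\ref{thm:vector-double-inductive}, replacing the final solid step \(i_kX_k\) by an arbitrary depth \(c\) and an arbitrary frozen vertex of the truncated weave. First I construct \(\tau_c\). By \cite[Lemma~4.4]{casals2025cluster}, that is, Lemma~\ref{lem:connectivity-Demazure-weaves}, the weave \(\mathfrak W(\ddot{\mathbf s}_{\leq c})\) is connected to \(\overrightarrow{\mathfrak W}(\beta_c^{\ddot{\mathbf s}})\) by equivalences and weave mutations, and both have the same reduced lower boundary, which I fix as the expression of \(\delta(\beta_c^{\ddot{\mathbf s}})\). Lemma~\ref{lem:output} shows that transporting a frozen vertex along this sequence gives a frozen vertex with the same output vector. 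Running the same sequence backwards gives the inverse map. The two weaves have the same number of trivalent vertices, and mutations only replace mutable vertices. So the transport is a bijection \(\tau_c\) between the frozen sets, and it preserves southern outputs.

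The last assertion then follows at once. By \eqref{eq:def-gamma-vc}, \(\gamma_{v,c}\) depends only on the weights of \(\gamma_v\) on the depth-\(c\) slice. For the truncated weave this slice is its southern boundary, so equal outputs give equal coroots. Lemma~\ref{lem:gamma-vc-well-defined} removes any dependence on the chosen reduced word.

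For the algebraic identities, I introduce the string \(\ddot{\mathbf s}'=(j_1\mathrm R,\ldots,j_c\mathrm R,i_{c+1}X_{c+1},\ldots,i_rX_r)\). Lemma~\ref{lem:quantum-truncated-weave-extension} extends the mutation sequence of the truncated weave to the full weave. Frozen vertices of the truncated weave are never mutated along it, so \(A_v^{\ddot{\mathbf s}}=A_{\tau_c(v)}^{\ddot{\mathbf s}'}\). Next I move the letters of \(\gamma_c\) to left insertions through the strings \(\mathbf r_t\), exactly as in the proof of Theorem~\ref{thm:vector-double-inductive}. Each step is handled by Corollary~\ref{cor:etavn}, and its exceptional vertex lies outside the \(\beta_c\)-block. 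This gives \(\mathbf a^\beta(A_w^{\ddot{\mathbf s}'})=\iota_c^{\ddot{\mathbf s}}(\eta_w^{\beta_c})\) for every \(w\) in the block. Taking \(w=\tau_c(v)\) proves the \(\eta\)-identity.

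The element identity is then upgraded from Lusztig parameters. Proposition~\ref{pro:Lusztig-parameter-right-weave-variable}, applied to \(\beta_c\), shows that \(A_w^{\beta_c}\) is the normalized global basis element \(\widetilde G(\beta_c,\eta_w^{\beta_c})\). Lemma~\ref{lem:global-basis-consecutive-block} then gives \(T_{\gamma_c}(A_w^{\beta_c})=\widetilde G(\beta,\iota_c^{\ddot{\mathbf s}}(\eta_w^{\beta_c}))\). On the other side, \(A_v^{\ddot{\mathbf s}}\) is a normalized global basis element of \(\widehat{\cA}(\beta)\) by Corollary~\ref{cor:cluster-algebra-embedding-A-beta}, and we have just shown it has the same \(\beta\)-Lusztig parameter. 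By the uniqueness in Proposition~\ref{prop:PBW-global-basis} the two elements coincide.

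The main obstacle is justifying that the tail transformations from \(\mathbf r_h\) to \(\ddot{\mathbf s}'\), and the mutations extended by Lemma~\ref{lem:quantum-truncated-weave-extension}, never mutate a vertex frozen in the truncated weave, even though that vertex may become mutable in the full weave. I would argue this as in the proof of that lemma. Every mutation used is at a vertex mutable in the relevant truncated weave, or is supported in the restored tail, whose cycles vanish above the truncation slice. Together with \cite[Lemmas~4.36--4.37]{casals2025cluster}, this shows the frozen variables of \(\mathfrak W(\ddot{\mathbf s}_{\leq c})\) are untouched.
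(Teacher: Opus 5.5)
Your proposal is correct and follows essentially the same route as the paper's proof: Lemma~\ref{lem:output} gives the bijection $\tau_c$ and the equality of southern outputs (hence of $\gamma_{v,c}$); Lemma~\ref{lem:quantum-truncated-weave-extension} gives $A_v^{\ddot{\mathbf s}}=A_{\tau_c(v)}^{\ddot{\mathbf s}'}$; the block recursion from the proof of Theorem~\ref{thm:vector-double-inductive} gives the $\eta$-identity; and Lemma~\ref{lem:global-basis-consecutive-block} with uniqueness of global basis elements gives the element identity. You are slightly more explicit than the paper, invoking Proposition~\ref{pro:Lusztig-parameter-right-weave-variable} for $A_w^{\beta_c}=\widetilde G(\beta_c,\eta_w^{\beta_c})$ and Lemma~\ref{lem:gamma-vc-well-defined} for the choice of lower reduced word, but the argument is the same.
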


\begin{proof}
Set
\(
    \alpha:=\beta_c^{\ddot{\mathbf s}}.
\)
By Lemma~\ref{lem:output}, There is a bijection
\[
    \tau_c\colon
    \operatorname{Fr}\mathfrak W(\ddot{\mathbf s}_{\leq c})
    \longrightarrow
    \operatorname{Fr}\overrightarrow{\mathfrak W}(\alpha).
\]

Write \(\alpha=(j_1,\ldots,j_c)\), and set
\[
    \ddot{\mathbf s}^{[c]}
    :=
    (j_1\mathrm R,\ldots,j_c\mathrm R,
      i_{c+1}X_{c+1},\ldots,i_rX_r).
\]
Restore the remaining insertions at every stage of the preceding
transformation. Lemma~\ref{lem:quantum-truncated-weave-extension}
extends all mutations to the corresponding full quantum seeds,
while equivalences merely relabel the based toric frames. Since the
vertices under consideration are frozen in the truncated weave,
they are never mutated. Therefore
\begin{equation}\label{eq:frozen-variable-depth-c-transport}
    A_v^{\ddot{\mathbf s}}
    =
    A_{\tau_c(v)}^{\ddot{\mathbf s}^{[c]}}.
\end{equation}

The block-recursion argument in the proof of
Theorem~\ref{thm:vector-double-inductive}, applied with \(k=c\),
actually proves that, for every
\(w\in\overrightarrow{\mathfrak W}(\alpha)_3\),
\(    \mathbf a^\beta
       \bigl(A_w^{\ddot{\mathbf s}^{[c]}}\bigr)
    =
    \iota_c^{\ddot{\mathbf s}}
       \bigl(\eta_w^\alpha\bigr).
\)
Taking \(w=\tau_c(v)\), using
\eqref{eq:frozen-variable-depth-c-transport}, and recalling
Definition~\ref{def:vector-double-inductive}, we obtain
\[
    \eta_v^{\ddot{\mathbf s}}
    =
    \iota_c^{\ddot{\mathbf s}}
       \bigl(\eta_{\tau_c(v)}^\alpha\bigr).
\]

Hence Lemma~\ref{lem:global-basis-consecutive-block} gives
\[
\begin{aligned}
    T_{\gamma_c}
       \bigl(A_{\tau_c(v)}^\alpha\bigr)
    =
    \widetilde G\left(
       \beta,
       \iota_c^{\ddot{\mathbf s}}
          \bigl(\eta_{\tau_c(v)}^\alpha\bigr)
    \right)
    =
    \widetilde G
       \bigl(\beta,\eta_v^{\ddot{\mathbf s}}\bigr)
     =
    A_v^{\ddot{\mathbf s}}.
\end{aligned}
\]
This proves both identities in
\eqref{eq:frozen-block-transport}.

Finally, Lemma~\ref{lem:output} yields
\(
 \operatorname{out}_{\mathfrak W(\ddot{\mathbf s}_{\leq c})}
     (\gamma_v)
 =
 \operatorname{out}_{\overrightarrow{\mathfrak W}(\alpha)}
     (\gamma_{\tau_c(v)}).
\)
Because the same southern reduced word of \(\delta(\alpha)\) is used
on both sides, the corresponding inversion coroots
\(\chi_a^{(c)}\) are identical. Thus, by
\eqref{eq:gamma-vc},
\[
    \gamma_{v,c}
    =
    \gamma_{\tau_c(v),c}.
\]
This proves the final assertion.
\end{proof}

Consequently, every global basis element arising as a cluster
variable in a double-string seed is characterized by a vector
associated with a right inductive weave. In particular,
Theorem~\ref{thm:vector-double-inductive} determines the
\(\beta\)-Lusztig parameters of all such elements.
\subsection{Quantum grid minors}
We first construct the specialization used below. Put
\(
    R:=\ZZ[q^{\pm1/2}],
\)
identify \(t^{1/2}=q^{-1/2}\), and regard \(\CC\) as an
\(R\)-algebra via \(q^{1/2}\mapsto1\). Let
\(\mathcal T_q\) be the based quantum torus of the right inductive
quantum seed, and let
\(
    \mathcal T_1
    :=
    \CC[x_v^{\pm1}\mid v\in V_\beta]
\)
be the Laurent torus of the corresponding classical seed. Formula
\eqref{eq:quantum-monomial-product} shows that
\begin{equation}\label{eq:initial-torus-specialization}
    \operatorname{sp}_{\mathcal T}\colon
    \mathcal T_q\otimes_R\CC
    \longrightarrow
    \mathcal T_1,
    \qquad
    \widetilde X^{\mathbf g}\otimes1
    \longmapsto
    x^{\mathbf g},
\end{equation}
is a well-defined \(\CC\)-algebra homomorphism.

\begin{lemma}
\label{lem:specialization-cluster-variables}
Let \(R=\ZZ[q^{\pm1/2}]\), and regard \(\CC\) as an \(R\)-algebra
via \(q^{1/2}\mapsto1\). Specialization at \(q^{1/2}=1\) induces a
natural \(\CC\)-algebra isomorphism
\begin{equation}\label{eq:cluster-specialization-map}
    \operatorname{sp}^{\mathrm{cl}}_\beta\colon
    \mathcal A_t\bigl(
        \overrightarrow{\mathbf s}_t(\beta)
    \bigr)\otimes_R\CC
    \xrightarrow{\sim}
    \mathcal A_{\CC}\bigl(
        \overrightarrow{\mathbf s}(\beta)
    \bigr)
    \xrightarrow{\sim}
    \CC[X(\beta)].
\end{equation}
More precisely, if \(\Sigma\) is any seed mutation equivalent to
\(\overrightarrow{\mathbf s}_t(\beta)\), then
\begin{equation}\label{eq:specialization-arbitrary-cluster-variable}
    \operatorname{sp}^{\mathrm{cl}}_\beta
    \bigl(X_{v;\Sigma}\otimes1\bigr)
    =
    x_{v;\Sigma}
\end{equation}
for every vertex \(v\) of \(\Sigma\), where \(X_{v;\Sigma}\) and
\(x_{v;\Sigma}\) are the corresponding quantum and classical cluster
variables. Under the bosonic realization, the global-basis element
representing \(X_{v;\Sigma}\) therefore specializes to
\(x_{v;\Sigma}\).
\end{lemma}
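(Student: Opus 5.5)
The plan is to build \(\operatorname{sp}^{\mathrm{cl}}_\beta\) inside the initial tori and then use \(\mathcal A=\mathcal U\) on both sides to identify images.

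\emph{Step 1: the map commutes with mutation.} For a single mutation at \(k\), the quantum exchange relation \eqref{eq:quantum-cluster-mutation} writes \(\widetilde X'_k\) as \(\widetilde X^{\mathbf a_+}+\widetilde X^{\mathbf a_-}\). Under \(q^{1/2}\mapsto1\), each normalized monomial \(\widetilde X^{\mathbf a}\) goes to \(x^{\mathbf a}\), since the factor \(t^{-\frac12\sum a_ia_j\lambda_{ij}}\) becomes \(1\). Hence the image is exactly the classical exchange polynomial \eqref{eq:cluster-mutation}. The quantum torus of the mutated seed embeds in the skew field \(\mathcal F_t\), so I would work with based quantum tori rather than with \(\mathcal F_t\) itself. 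The quantum mutation map \(\mathcal T(\mu_k\Sigma)\to\operatorname{Frac}\) restricts to an isomorphism of the Ore localizations at the exchange binomials, and these localizations are defined over \(R\). Specializing them gives the classical birational mutation map. Induction along any mutation path then yields \eqref{eq:specialization-arbitrary-cluster-variable} for every seed \(\Sigma\). The result is a well-defined \(\CC\)-algebra homomorphism
\[
\mathcal A_t(\overrightarrow{\mathbf s}_t(\beta))\otimes_R\CC\to\mathcal T_1 ,
\]
obtained by composing the inclusion \(\mathcal A_t\subseteq\mathcal T_q\) with \(\operatorname{sp}_{\mathcal T}\). Its image is generated by the classical cluster variables and the inverses of the frozen ones, so it equals \(\mathcal A_{\CC}(\overrightarrow{\mathbf s}(\beta))\). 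That algebra is \(\CC[X(\beta)]\) by \eqref{eq:intro-classical-cluster} together with Lemma~\ref{lem:right-inductive-seed-restriction}.

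\emph{Step 2: injectivity.} This is the main obstacle, because \(\otimes_R\CC\) is not exact and \(\mathcal A_t\to\mathcal T_q\) need not stay injective after specialization. I would use the \(\mathcal M^\circ\)-pointed common triangular basis \(\mathbf L\) from Theorem~\ref{thm:quantum-A-U-braid-seeds}.

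First, \(\mathcal A_t=\mathcal U_t\) is a free \(R\)-module with basis \(\mathbf L\). It follows that \(\mathcal A_t\otimes_R\CC\) has \(\CC\)-basis \(\{L\otimes1\}\).

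Second, each \(L\) is pointed at a distinct extended \(g\)-vector \(\mathbf g_L\), and the \(g\)-vectors of distinct elements of \(\mathbf L\) are distinct. Specializing \eqref{eq:pointed-element} gives
\[
\operatorname{sp}(L\otimes1)=x^{\mathbf g_L}+\sum_{\mathbf n\ne0}c_{\mathbf n}(1)\,x^{\mathbf g_L-\widetilde B\mathbf n}.
\]
Each specialized element is therefore pointed with leading coefficient \(1\) in the dominance order \(\prec_{\mathbf t}\). To make this order well founded, I would need \(\widetilde B\) to have full rank. That holds here, since the compatible pair in Definition~\ref{def:svbeta} gives \(\widetilde B^{\mathsf T}\Lambda=2I\).

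With full rank in hand, a standard leading-term argument applies. Take a nontrivial finite linear combination of the specialized basis elements and pick a \(\prec_{\mathbf t}\)-maximal \(g\)-vector among its terms. That monomial cannot be cancelled by any other term, so the combination is nonzero. This shows that \(\{\operatorname{sp}(L\otimes1)\}\) is linearly independent in \(\mathcal T_1\), and hence the map is injective.

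\emph{Step 3: conclusion.} Injectivity and the image computation in Step 1 together give \eqref{eq:cluster-specialization-map}. For the final sentence of the lemma, Theorem~\ref{thm:monoidal-categorification} says that under the bosonic realization the global-basis element representing \(X_{v;\Sigma}\) is that quantum cluster variable. Applying \eqref{eq:specialization-arbitrary-cluster-variable} then shows it specializes to \(x_{v;\Sigma}\).
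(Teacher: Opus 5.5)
Your proposal is correct and follows the paper's proof essentially step by step. Normalized monomials specialize compatibly with quantum mutation, surjectivity onto $\mathcal A_{\CC}(\overrightarrow{\mathbf s}(\beta))$ comes from the generators, and injectivity comes from the free $R$-basis $\mathbf L$ of Theorem~\ref{thm:quantum-A-U-braid-seeds} together with the same leading-term argument on the specialized pointed elements. Your remark that full column rank of $\widetilde B$ (forced by $\widetilde B^{\mathsf T}\Lambda=2I$) makes $\prec_{\mathbf t}$ antisymmetric fills in a point the paper leaves implicit, and for the final sentence the embedding of Corollary~\ref{cor:cluster-algebra-embedding-A-beta} would already suffice in place of the main categorification theorem.
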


\begin{proof}
The specialization homomorphism
\(\operatorname{sp}_{\mathcal T}\) on the initial quantum torus sends
each normalized quantum monomial to the corresponding classical
Laurent monomial. Under this homomorphism, the two normalized
monomials in the quantum exchange relation specialize to the two
monomials in the classical exchange relation. Hence specialization
commutes with mutation.

Induction along an arbitrary mutation sequence therefore gives
\(    \operatorname{sp}_{\mathcal T}
    \bigl(X_{v;\Sigma}\otimes1\bigr)
    =
    x_{v;\Sigma}.
\)
Consequently, \(\operatorname{sp}_{\mathcal T}\) induces a
\(\CC\)-algebra homomorphism
\[
    \operatorname{sp}^{\mathrm{cl}}_\beta\colon
    \mathcal A_t\bigl(
        \overrightarrow{\mathbf s}_t(\beta)
    \bigr)\otimes_R\CC
    \longrightarrow
    \mathcal A_{\CC}\bigl(
        \overrightarrow{\mathbf s}(\beta)
    \bigr).
\]
Its image contains every classical cluster variable and every inverse
of a frozen variable occurring in the localization. It is therefore
surjective.

By Theorem~\ref{thm:quantum-A-U-braid-seeds}, the quantum cluster
algebra admits a common triangular basis \(\mathbf L\). In particular,
\[
    \mathcal A_t\bigl(
        \overrightarrow{\mathbf s}_t(\beta)
    \bigr)
    =
    \bigoplus_{L\in\mathbf L}RL,
\]
so the elements \(L\otimes1\), \(L\in\mathbf L\), form a
\(\CC\)-basis after specialization.

For each \(L\in\mathbf L\), its Laurent expansion in the initial
quantum torus is pointed:
\[
    L
    =
    \widetilde X^{\mathbf g(L)}
    +
    \sum_{\mathbf h\prec_{\mathbf t}\mathbf g(L)}
        c_{L,\mathbf h}(q^{1/2})
        \widetilde X^{\mathbf h},
\]
where the sum is finite, the degrees \(\mathbf g(L)\) are pairwise
distinct, and \(\prec_{\mathbf t}\) is the dominance order of the
initial seed defined in \cite[Definition~2.3]{qin2024analogs}.
Therefore
\[
    \operatorname{sp}^{\mathrm{cl}}_\beta(L)
    =
    x^{\mathbf g(L)}
    +
    \sum_{\mathbf h\prec_{\mathbf t}\mathbf g(L)}
        c_{L,\mathbf h}(1)x^{\mathbf h}.
\]

Suppose that
\[
    \sum_{L\in F}
        a_L\operatorname{sp}^{\mathrm{cl}}_\beta(L)
    =
    0
\]
for a finite subset \(F\subset\mathbf L\) and coefficients
\(a_L\in\CC\). Choose \(L_0\in F\) with \(a_{L_0}\neq0\) such that
\(\mathbf g(L_0)\) is maximal among the degrees
\(\mathbf g(L)\) with \(a_L\neq0\). The monomial
\(x^{\mathbf g(L_0)}\) cannot be the leading term of another summand,
because the pointed degrees are distinct. It cannot occur as a lower
term of another summand either: otherwise
\[
    \mathbf g(L_0)\prec_{\mathbf t}\mathbf g(L)
\]
for some \(L\in F\) with \(a_L\neq0\), contradicting the maximality
of \(\mathbf g(L_0)\). Hence the coefficient of
\(x^{\mathbf g(L_0)}\) in the relation is \(a_{L_0}\), a
contradiction.

Thus the specialized elements
\[
    \left\{
        \operatorname{sp}^{\mathrm{cl}}_\beta(L)
        \,\middle|\,
        L\in\mathbf L
    \right\}
\]
are linearly independent. Therefore
\(\operatorname{sp}^{\mathrm{cl}}_\beta\) is injective and hence an
isomorphism. Composing it with the classical braid-variety cluster
isomorphism gives \eqref{eq:cluster-specialization-map}.
\end{proof}
We are now ready to define quantum grid minors. Let
\(\ddot{\mathbf s}\) be a double string with associated positive braid
word
\(
    \beta=\beta_r^{\ddot{\mathbf s}}.
\)
The elements
\(
    \left\{
        A_v^{\ddot{\mathbf s}}
        \,\middle|\,
        v\in\mathfrak W(\ddot{\mathbf s})_3
    \right\}
\)
belong to a common quantum cluster and hence pairwise
\(q\)-commute. Consequently, for every
\(
    \mathbf d=(d_v)_
    {v\in\mathfrak W(\ddot{\mathbf s})_3}
    \in
    \ZZ_{\geq0}^{\mathfrak W(\ddot{\mathbf s})_3},
\)
their product has a unique normalization which is a normalized
global-basis element. We denote this element by
\begin{equation}\label{eq:normalized-product-global-basis}
    \bigodot_{
        v\in\mathfrak W(\ddot{\mathbf s})_3
    }
    \left(A_v^{\ddot{\mathbf s}}\right)^{\odot d_v}
    :=
    \widetilde G\left(
        \beta,
        \sum_{
            v\in\mathfrak W(\ddot{\mathbf s})_3
        }
        d_v\,
        \mathbf a^\beta
        \left(A_v^{\ddot{\mathbf s}}\right)
    \right).
\end{equation}

\begin{definition}
\label{def:quantum-grid-minors}
For every \(j\in I\), set \(D_{0,j}^{\beta,\ddot{\mathbf s}}:=1\).
For every depth \(1\leq c\leq r\), define the \emph{quantum grid
minor} by
\begin{equation}
 D_{c,j}^{\beta,\ddot{\mathbf s}}
 :=
 \bigodot_{v\in\mathfrak W(\ddot{\mathbf s})_3}
 A_v^{\odot\langle\varpi_j,\gamma_{v,c}\rangle}
 \in\widetilde{\mathbf A}(\beta).
 \label{eq:def-quantum-grid-minor}
\end{equation}
The exponent vector is nonnegative by Lemma~\ref{lem:grid-minors}, so
this is a quantum cluster monomial. Under the specialization
\eqref{eq:cluster-specialization-map}, the toric frame specializes to
the classical cluster torus and Lemma~\ref{lem:grid-minors} gives
\(D_{c,j}^{\beta,\ddot{\mathbf s}}\mapsto
\Delta_{c,j}^{\ddot{\mathbf s}}\).
When the ambient braid word is clear, we simply write
\(D_{c,j}^{\ddot{\mathbf s}}\).
\end{definition}

Example~\ref{ex:delta-braid-i-box} will identify the shifted
determinantial element \(T_{w_0}(D^\beta[a,b])\) with a quantum
grid minor. We record the equality explicitly after proving the
parameter formula below.
\begin{lemma}[Quantum transport of grid minors]
\label{lem:quantum-grid-minor-transport}
Let
\(
    \beta
    =
    \gamma_c\beta_c^{\ddot{\mathbf s}}\eta_c
\)
be the block decomposition in
\eqref{eq:beta-block-decomposition}. Under the based embedding of
Lemma~\ref{lem:global-basis-consecutive-block}, one has
\begin{equation}\label{eq:quantum-grid-minors}
    D_{c,j}^{\beta,\ddot{\mathbf s}}
    =
    T_{\gamma_c}\left(
        \bigodot_{
            v\in
            \overrightarrow{\mathfrak W}
            (\beta_c^{\ddot{\mathbf s}})_3
        }
        \left(
            A_v^{\beta_c^{\ddot{\mathbf s}}}
        \right)^{
            \odot\langle\varpi_j,\gamma_{v,c}\rangle
        }
    \right).
\end{equation}
\end{lemma}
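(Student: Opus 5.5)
The plan is to compare the Lusztig parameters of both sides of \eqref{eq:quantum-grid-minors}. Both sides are normalized global basis elements of \(\widehat{\cA}(\beta)\), so they are equal as soon as their \(\beta\)-Lusztig parameters agree.

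By \eqref{eq:normalized-product-global-basis}, the left-hand side equals \(\widetilde G(\beta,\mathbf d)\), where
\[
 \mathbf d=\sum_{v\in\mathfrak W(\ddot{\mathbf s})_3}\langle\varpi_j,\gamma_{v,c}\rangle\,\eta_v^{\ddot{\mathbf s}}.
\]
Similarly, the element inside \(T_{\gamma_c}\) on the right-hand side is \(\widetilde G(\alpha,\mathbf d')\), where \(\alpha:=\beta_c^{\ddot{\mathbf s}}\) and
\[
 \mathbf d'=\sum_{w\in\overrightarrow{\mathfrak W}(\alpha)_3}\langle\varpi_j,\gamma_{w,c}\rangle\,\eta_w^{\alpha}.
\]
Here Proposition~\ref{pro:Lusztig-parameter-right-weave-variable} identifies \(\mathbf a^\alpha(A^\alpha_w)=\eta^\alpha_w\). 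By Lemma~\ref{lem:global-basis-consecutive-block}, \(T_{\gamma_c}\widetilde G(\alpha,\mathbf d')=\widetilde G(\beta,\iota_c^{\ddot{\mathbf s}}(\mathbf d'))\). It therefore suffices to prove \(\mathbf d=\iota_c^{\ddot{\mathbf s}}(\mathbf d')\).

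For this, I first restrict the sum defining \(\mathbf d\). The coroot \(\gamma_{v,c}\) is computed on the depth-\(c\) slice. Vertices created after step \(c\) have cycles vanishing above their origin, so they contribute \(\gamma_{v,c}=0\). Among the vertices of the truncated weave \(\mathfrak W(\ddot{\mathbf s}_{\le c})\), the mutable ones have Lusztig cycles with zero southern output at the truncation slice, which is what Lemma~\ref{lem:quantum-truncated-weave-extension} uses. Since the path \(P_c^\downarrow\) crosses exactly the southern boundary of the truncated weave, these also give \(\gamma_{v,c}=0\). Hence only frozen vertices of the truncated weave contribute.

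The same reasoning applied to \(\overrightarrow{\mathfrak W}(\alpha)\) shows that only its frozen vertices contribute to \(\mathbf d'\). Corollary~\ref{cor:frozen-block-transport} then supplies a bijection \(\tau_c\) between these two frozen sets, together with
\[
 \eta_v^{\ddot{\mathbf s}}=\iota_c^{\ddot{\mathbf s}}\bigl(\eta^\alpha_{\tau_c(v)}\bigr)
 \qquad\text{and}\qquad
 \gamma_{v,c}=\gamma_{\tau_c(v),c}.
\]
Substituting these termwise and using linearity of \(\iota_c^{\ddot{\mathbf s}}\) gives \(\mathbf d=\iota_c^{\ddot{\mathbf s}}(\mathbf d')\). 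As a cross-check at the level of elements, the corollary also gives \(A_v^{\ddot{\mathbf s}}=T_{\gamma_c}(A^\alpha_{\tau_c(v)})\), and \(T_{\gamma_c}\) preserves products up to \(q\)-powers, consistent with both sides being normalized.

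The main obstacle is the vanishing step, namely justifying \(\gamma_{v,c}=0\) for vertices mutable in the truncated weave and for vertices born after depth \(c\). This depends on the slice at depth \(c\) in the full weave \(\mathfrak W(\ddot{\mathbf s})\) being exactly the southern boundary of \(\mathfrak W(\ddot{\mathbf s}_{\le c})\), with the same reduced word for \(\delta(\alpha)\). I would settle it with the triangle picture: \(P_c^\downarrow\) lies just below all vertices of the \(c\)-th strip, and later insertions only add strands outside the block. Lemma~\ref{lem:gamma-vc-well-defined} then removes the dependence on the choice of reduced word.
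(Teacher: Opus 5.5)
Your proposal is correct and follows the paper's own proof. You restrict to the frozen vertices of the truncated weave, the only ones with nonzero output at depth \(c\), transport them by the bijection \(\tau_c\) of Corollary~\ref{cor:frozen-block-transport}, and match factors and multiplicities using \(\gamma_{v,c}=\gamma_{\tau_c(v),c}\). The only cosmetic difference is where you do the matching: you compare \(\beta\)-Lusztig parameters via \(\eta_v^{\ddot{\mathbf s}}=\iota_c^{\ddot{\mathbf s}}(\eta^{\beta_c^{\ddot{\mathbf s}}}_{\tau_c(v)})\) and Lemma~\ref{lem:global-basis-consecutive-block}, whereas the paper uses the element identity \(A_v^{\ddot{\mathbf s}}=T_{\gamma_c}(A^{\beta_c^{\ddot{\mathbf s}}}_{\tau_c(v)})\) and the fact that \(T_{\gamma_c}\) preserves normalized products; both identities are supplied by that corollary.
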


\begin{proof}
Only frozen vertices of the truncated weave at depth \(c\) have
nonzero southern cycle output and can contribute to
\eqref{eq:def-quantum-grid-minor}. Apply
Corollary~\ref{cor:frozen-block-transport} to identify these vertices
with the frozen vertices of the right inductive weave of
\(\beta_c^{\ddot{\mathbf s}}\). Under its bijection \(\tau_c\),
\[
    A_v^{\ddot{\mathbf s}}
    =T_{\gamma_c}
       \bigl(A_{\tau_c(v)}^{\beta_c^{\ddot{\mathbf s}}}\bigr),
    \qquad
    \gamma_{v,c}=\gamma_{\tau_c(v),c}.
\]
Thus both the factors and their multiplicities agree with those
in \eqref{eq:quantum-grid-minors}. Since \(T_{\gamma_c}\) is a
based algebra embedding, it also preserves their normalized
product, proving the formula.
\end{proof}

\begin{theorem}
\label{thm:quantum-grid-minor-factorization}
Let \(c\) be an insertion in \(\ddot{\mathbf s}\). Then, for
every \(j\in I\),
\begin{equation}
\label{eq:quantum-grid-minor-factorization}
    D_{c,j}^{\beta,\ddot{\mathbf s}}
    =
    \widetilde G\bigl(
        \beta,\boldsymbol\delta_{c,j}^{\ddot{\mathbf s}}
    \bigr).
\end{equation}
In particular, the vector
\(\boldsymbol\delta_{c,j}^{\ddot{\mathbf s}}\) defined in
Definition~\ref{def:delta-double-string} is the
\(\beta\)-Lusztig parameter of
\(D_{c,j}^{\beta,\ddot{\mathbf s}}\).
\end{theorem}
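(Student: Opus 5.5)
The plan is to reduce to the right inductive weave of the truncated word, compute the Lusztig parameter there by means of the root identity, and transport the result back along the block embedding. Set \(\alpha:=\beta_c^{\ddot{\mathbf s}}\) and \(\beta=\gamma_c\alpha\eta_c\). By Lemma~\ref{lem:quantum-grid-minor-transport},
\[
D_{c,j}^{\beta,\ddot{\mathbf s}}=T_{\gamma_c}\Bigl(\bigodot_{v\in\overrightarrow{\mathfrak W}(\alpha)_3}(A_v^{\alpha})^{\odot\langle\varpi_j,\gamma_{v,c}\rangle}\Bigr).
\]
By \eqref{eq:normalized-product-global-basis}, the inner normalized product is \(\widetilde G(\alpha,\mathbf d)\) with
\[
\mathbf d=\sum_v\langle\varpi_j,\gamma_{v,c}\rangle\,\mathbf a^\alpha(A_v^\alpha).
\]
Proposition~\ref{pro:Lusztig-parameter-right-weave-variable} then gives \(\mathbf a^\alpha(A_v^\alpha)=\eta_v^\alpha\). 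Lemma~\ref{lem:global-basis-consecutive-block} sends \(\widetilde G(\alpha,\mathbf d)\) to \(\widetilde G(\beta,\iota_c^{\ddot{\mathbf s}}(\mathbf d))\). Comparing with Definition~\ref{def:delta-double-string}, it therefore suffices to prove the right-inductive identity
\[
\sum_{v\in J_\alpha,\,v\le c}\langle\varpi_j,\gamma_{v,c}\rangle\,\eta_v^\alpha=\boldsymbol\delta_{c,j}^\alpha ,
\]
with \(c=|\alpha|\) the full depth.

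I would prove this identity coordinatewise. Fix \(e\in[1,c]\). For any \(v\), the coordinate \(\eta_{v,e}\) vanishes when \(q_v<e\). It also vanishes when \(e\notin J_\alpha\): every \(\eta_v\) is a nonnegative combination of the vectors \(E_{q_u}\) with \(u\) solid, by induction on \eqref{eq:etav}. Hence the \(e\)-th coordinate of the left side is \(0\) for \(e\notin J_\alpha\), which matches the definition. For \(e\in J_\alpha\), pairing Theorem~\ref{thm:root-identity} with \(\varpi_j\) gives
\[
\sum_{v\in J_\alpha,\,e\le v\le c}\langle\varpi_j,\gamma_{v,c}\rangle\,\eta_{v,e}=\langle\varpi_j,\sigma_{e,c}^{-1}\alpha_{i_e}^\vee\rangle=\langle\sigma_{e,c}\varpi_j,\alpha_{i_e}^\vee\rangle ,
\]
using \(W\)-invariance of the pairing. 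This is exactly \((\boldsymbol\delta_{c,j}^\alpha)_e\). Nonnegativity of this coordinate then also follows, since the left side is nonnegative.

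The main obstacle is bookkeeping consistency rather than a new idea. First, the coroots \(\gamma_{v,c}\) in Lemma~\ref{lem:quantum-grid-minor-transport} are computed at the depth-\(c\) slice of \(\overrightarrow{\mathfrak W}(\alpha)\); Corollary~\ref{cor:frozen-block-transport} guarantees these agree with those of \(\mathfrak W(\ddot{\mathbf s})\), and Lemma~\ref{lem:gamma-vc-well-defined} removes any dependence on the chosen reduced word. Second, the indexing of vertices by solid positions, via \(q_{\ddot{\mathbf s}}\), must be kept consistent so that ``\(v\le c\)'' in Theorem~\ref{thm:root-identity} covers every vertex of \(\overrightarrow{\mathfrak W}(\alpha)\). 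Since \(c\) is the last depth, all vertices satisfy \(v\le c\), and mutable vertices contribute \(\gamma_{v,c}=0\), so including them is harmless. Finally, the normalized product must be taken literally as defined via the sum of Lusztig parameters, which is legitimate because the \(A_v\) lie in a common cluster (Lemma~\ref{lem:global-basis-product-triangularity} yields a unique leading parameter). The Lusztig-parameter assertion is then immediate from \eqref{eq:quantum-grid-minor-factorization}.
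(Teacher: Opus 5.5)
Your proposal is correct and follows the paper's proof essentially step for step. Like the paper, you transport to the right inductive weave of the truncated word via Lemma~\ref{lem:quantum-grid-minor-transport}, identify the factors' parameters with $\eta_v^{\beta_c}$ via Proposition~\ref{pro:Lusztig-parameter-right-weave-variable}, push through the block embedding, and evaluate coordinatewise with Theorem~\ref{thm:root-identity}. Your explicit checks fill in steps the paper leaves implicit: that $\eta_v$ is supported on $J_\alpha$, and the $W$-invariance that converts $\langle\varpi_j,\sigma_{e,c}^{-1}\alpha_{i_e}^\vee\rangle$ into $\langle\sigma_{e,c}\varpi_j,\alpha_{i_e}^\vee\rangle$.
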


\begin{proof}
Set \(\beta_c:=\beta_c^{\ddot{\mathbf s}}\). By
Definition~\ref{def:quantum-grid-minors}, the quantum grid minor
\(D_{c,j}^{\beta,\ddot{\mathbf s}}\) is a normalized global basis element. By
Lemma~\ref{lem:global-basis-product-triangularity}, its leading PBW
parameter is the sum of those of its factors.

Using \eqref{eq:quantum-grid-minors},
Proposition~\ref{pro:Lusztig-parameter-right-weave-variable}, and
Lemma~\ref{lem:global-basis-consecutive-block}, we obtain
\[
\begin{aligned}
    \bfa^\beta\bigl(D_{c,j}^{\beta,\ddot{\mathbf s}}\bigr)
    &=
    \sum_{
        v\in\overrightarrow{\mathfrak W}(\beta_c)_3
    }
    \langle\varpi_j,\gamma_{v,c}\rangle
    \iota_c^{\ddot{\mathbf s}}
    \left(\eta_v^{\beta_c}\right)                                      \\
    &=
    \iota_c^{\ddot{\mathbf s}}\left(
        \sum_{e\in J_{\beta_c}}
        \sum_{
            v\in\overrightarrow{\mathfrak W}(\beta_c)_3
        }
        \langle\varpi_j,\gamma_{v,c}\rangle
        \eta_{v,e}^{\beta_c}E_e
    \right)                                                            \\
    &\stackrel{\eqref{eq:root-identity}}{=}
    \iota_c^{\ddot{\mathbf s}}\left(
        \sum_{e\in J_{\beta_c}}
        \left\langle
            \varpi_j,
            \sigma_{e,c}^{-1}(\alpha^\vee_{i_e})
        \right\rangle E_e
    \right)                                                            \\
    &=
    \boldsymbol\delta_{c,j}^{\ddot{\mathbf s}}
\end{aligned}
\]
where $E_e$ refers to the unit vector for $e$. The last equality follows from
Definition~\ref{def:delta-double-string}. Since
\(D_{c,j}^{\beta,\ddot{\mathbf s}}\) is a global basis element, its
\(\beta\)-Lusztig parameter uniquely determines it. This proves
\eqref{eq:quantum-grid-minor-factorization}.
\end{proof}

\begin{corollary}[Determinantial elements as grid minors]
\label{cor:determinantial-grid-identification}
With the words, double string, and depth \(d\) of
Example~\ref{ex:delta-braid-i-box}, one has, in the common bosonic
algebra of the braid element \(\Delta\beta\),
\begin{equation}\label{eq:determinantial-grid-identification}
    D_{d,i}^{\omega_{[a,b]},\ddot{\mathbf s}_{[a,b]}}
    =T_{w_0}\bigl(D^\beta[a,b]\bigr).
\end{equation}
\end{corollary}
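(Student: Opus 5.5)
Both sides of \eqref{eq:determinantial-grid-identification} are normalized global-basis elements in the common bosonic algebra of the braid element \(\Delta\beta\), so the plan is to compare their Lusztig parameters in one fixed coordinate lattice and then use uniqueness. Such an element is determined by its Lusztig parameter with respect to any fixed word.

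First, apply Theorem~\ref{thm:quantum-grid-minor-factorization} to the double string \(\ddot{\mathbf s}_{[a,b]}\) of Example~\ref{ex:delta-braid-i-box}, whose final word is \(\omega_{[a,b]}\). This gives
\[
D_{d,i}^{\omega_{[a,b]},\ddot{\mathbf s}_{[a,b]}}
=\widetilde G\bigl(\omega_{[a,b]},\boldsymbol\delta_{d,i}^{\ddot{\mathbf s}_{[a,b]}}\bigr).
\]
By \eqref{eq:i-box-degree}, the parameter equals \([a,b]_\phi\) in the \(\omega_{[a,b]}\)-coordinate lattice. The point to verify here is the computation behind \eqref{eq:i-box-vector}. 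At depth \(d\), the truncated word is \(\boldsymbol\Delta(i_a,\ldots,i_b)\), and the solid positions are exactly the letters after \(\boldsymbol\Delta\). For such positions \(e\), the operator \(\sigma_{e,d}\) is the identity, so
\[
\langle\sigma_{e,d}\varpi_i,\alpha_{i_e}^\vee\rangle=\delta_{i,i_e}.
\]
The coordinate shift by the later left block of length \(a-1\) then places these entries at positions \(\ell+t\).

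Second, convert the \(\omega_{[a,b]}\)-coordinates into \(\boldsymbol\Delta\beta\)-coordinates. The two words differ only by braid moves inside the prefix \((i_1^*,\ldots,i_{a-1}^*)\boldsymbol\Delta\). The PBW transition map identifies the global bases, in the sense that \(\widetilde G(\omega_{[a,b]},\mathbf a)=\widetilde G(\boldsymbol\Delta\beta,\Psi(\mathbf a))\) by \cite[Theorem~4.12]{bi2026towards}. Equation \eqref{eq:i-box-PBW-transition} then gives
\[
D_{d,i}^{\omega_{[a,b]},\ddot{\mathbf s}_{[a,b]}}=\widetilde G\bigl(\boldsymbol\Delta\beta,\iota_{\boldsymbol\Delta}([a,b]_\beta)\bigr).
\]
The step I expect to be the main obstacle is justifying that the transition acts trivially here. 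My approach would be to argue that each local transition rule (2-move or 3-move) fixes the coordinates outside the move and sends zero local coordinates to zero. Since every coordinate of \([a,b]_\phi\) in the prefix vanishes, the rule is then applied only to zero inputs at each step.

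Third, use Lemma~\ref{lem:global-basis-consecutive-block} with \(\gamma=\boldsymbol\Delta\), \(\alpha=\beta\) and \(\kappa\) empty. This gives \(T_{w_0}(\widetilde G(\beta,\mathbf a))=\widetilde G(\boldsymbol\Delta\beta,\iota_{\boldsymbol\Delta}(\mathbf a))\), where \(T_{w_0}=T_{\boldsymbol\Delta}\) depends only on the braid element. Taking \(\mathbf a=[a,b]_\beta\) and recalling that \(D^\beta[a,b]=\widetilde G(\beta,[a,b]_\beta)\), the right-hand side of \eqref{eq:determinantial-grid-identification} becomes \(\widetilde G(\boldsymbol\Delta\beta,\iota_{\boldsymbol\Delta}([a,b]_\beta))\). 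Both sides are therefore the same normalized global-basis element, and the corollary follows by uniqueness of the Lusztig parameter (Proposition~\ref{prop:PBW-global-basis}).
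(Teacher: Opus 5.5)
Your proposal is correct and follows the paper's proof step for step. Theorem~\ref{thm:quantum-grid-minor-factorization} gives $\widetilde G(\omega_{[a,b]},[a,b]_\phi)$. The PBW transition \eqref{eq:i-box-PBW-transition}, via \cite[Theorem~4.12]{bi2026towards}, converts this to $\widetilde G(\boldsymbol\Delta\beta,\iota_{\boldsymbol\Delta}([a,b]_\beta))$. Lemma~\ref{lem:global-basis-consecutive-block} with $\gamma=\boldsymbol\Delta$ then identifies it with $T_{w_0}(D^\beta[a,b])$.

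Your extra justifications are exactly the facts recorded in Example~\ref{ex:delta-braid-i-box}: that $\sigma_{e,d}=\mathrm{id}$ on the solid block, and that the local transition rules fix zero inputs and the coordinates outside the move. The closing appeal to uniqueness is harmless but unnecessary, since the three steps already form a chain of equalities.
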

\begin{proof}
Theorem~\ref{thm:quantum-grid-minor-factorization} identifies the
left-hand side with
\(\widetilde G(\omega_{[a,b]},[a,b]_\phi)\).
The PBW transition in \eqref{eq:i-box-PBW-transition} identifies
this element with
\(\widetilde G(\boldsymbol\Delta\beta,
\iota_{\boldsymbol\Delta}([a,b]_\beta))\).
By Lemma~\ref{lem:global-basis-consecutive-block}, the latter is
\(T_{w_0}(\widetilde G(\beta,[a,b]_\beta))\), as required.
\end{proof}

\subsection{New subalgebras of bosonic extension algebras}

In this subsection, we introduce a new subalgebra of the bosonic
extension algebra \(\widehat{\cA}(\beta)\) and the corresponding
monoidal subcategory of \(\mathscr C_{\DD}(\beta)\). We begin with
the following lemma.

\begin{lemma}\label{lem:global-basis-extension-by-zero}
Let \(\beta=(i_1,\ldots,i_r)\) and let \(\gamma\) be a positive braid
word. Under the natural inclusion
\(\widehat{\cA}(\beta)\subseteq\widehat{\cA}(\beta\gamma)\), one has
\begin{equation}\label{eq:global-basis-extension-by-zero}
    \widetilde G(\beta,\mathbf a)
    =\widetilde G\bigl(\beta\gamma,(\mathbf a,\boldsymbol 0)\bigr)
    \qquad
    \bigl(\mathbf a\in\ZZ_{\geq0}^{r}\bigr).
\end{equation}
\end{lemma}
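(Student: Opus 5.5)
The plan is to compare the two characterizations of global basis elements given in Proposition~\ref{prop:PBW-global-basis}, in the same way as the proof of Lemma~\ref{lem:global-basis-consecutive-block}. Write \(\beta\gamma=(i_1,\ldots,i_r,i_{r+1},\ldots,i_{r+s})\).

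\emph{Step 1: the PBW root vectors agree.} By \eqref{eq:PBW-root-vector}, for \(k\leq r\) the root vector \(P_k^{\beta\gamma}=T_{i_1}\cdots T_{i_{k-1}}(q^{1/2}f_{i_k,0})\) depends only on the prefix \((i_1,\ldots,i_k)\). Hence \(P_k^{\beta\gamma}=P_k^\beta\) for all \(k\leq r\). This is exactly the natural inclusion \(\widehat{\cA}(\beta)\subseteq\widehat{\cA}(\beta\gamma)\), since both algebras are generated by their root vectors.

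\emph{Step 2: the PBW monomials agree.} By \eqref{eq:normalized-PBW-monomial}, a zero exponent in positions \(r+1,\ldots,r+s\) contributes neither a factor nor a power of \(q\). Therefore
\[
P^{\beta\gamma}\bigl((\mathbf b,\boldsymbol 0)\bigr)=P^\beta(\mathbf b)
\qquad\text{for every }\mathbf b\in\ZZ_{\geq0}^r .
\]
Moreover, \(\mathbf b\prec_{\mathrm r}\mathbf a\) in \(\ZZ^r\) holds if and only if \((\mathbf b,\boldsymbol 0)\prec_{\mathrm r}(\mathbf a,\boldsymbol 0)\) in \(\ZZ^{r+s}\). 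The reason is that the appended coordinates agree, so the largest index at which the two vectors differ is unchanged.

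\emph{Step 3: uniqueness.} By Proposition~\ref{prop:PBW-global-basis}, \(G(\beta,\mathbf a)\) is \(c\)-invariant and lies in
\[
P^{\beta\gamma}\bigl((\mathbf a,\boldsymbol 0)\bigr)+\sum_{\mathbf c\prec_{\mathrm r}(\mathbf a,\boldsymbol 0)}q\ZZ[q]\,P^{\beta\gamma}(\mathbf c),
\]
where the sum may be restricted to vectors \(\mathbf c\) of the form \((\mathbf b,\boldsymbol 0)\). Here \(c\) is the same operator on the ambient algebra \(\widehat{\cA}\), so \(c\)-invariance does not depend on which subalgebra we regard \(G(\beta,\mathbf a)\) as lying in. The uniqueness assertion of Proposition~\ref{prop:PBW-global-basis} for the word \(\beta\gamma\) then gives \(G(\beta,\mathbf a)=G\bigl(\beta\gamma,(\mathbf a,\boldsymbol 0)\bigr)\).

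\emph{Step 4: normalization.} The two elements are equal, so they have the same weight, and \eqref{eq:normalized-PBW-global-basis-conversion} multiplies both by the same power of \(q\). This yields \eqref{eq:global-basis-extension-by-zero}.

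The only delicate point is the uniqueness statement itself. Its proof in \cite{kashiwara2024braid} uses the bi-lexicographic order, whereas here the weaker order \(\prec_{\mathrm r}\) is used. Uniqueness still holds because the difference of two candidates is \(c\)-invariant and lies in the \(q\ZZ[q]\)-span of the lower PBW monomials. The standard triangularity argument, which uses only that \(\prec_{\mathrm r}\) is a total order on a set of vectors with bounded weight, then forces this difference to vanish. If necessary, I would simply cite the uniqueness as stated in Proposition~\ref{prop:PBW-global-basis}.
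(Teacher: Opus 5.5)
Your proposal is correct and follows the paper's proof: identify the first \(r\) PBW root vectors, deduce \(P^{\beta\gamma}(\mathbf a,\boldsymbol 0)=P^\beta(\mathbf a)\), invoke the uniqueness in Proposition~\ref{prop:PBW-global-basis} for \(G\), and pass to \(\widetilde G\) using equality of weights. The only difference is that the paper also proves the converse containment: every nonnegative \(\mathbf b\prec_{\mathrm r}(\mathbf a,\boldsymbol 0)\) vanishes on the \(\gamma\)-block, so the two triangularity conditions are literally identical. You rightly note that the one-way inclusion \((\mathbf b,\boldsymbol 0)\prec_{\mathrm r}(\mathbf a,\boldsymbol 0)\) already suffices to apply uniqueness for \(\beta\gamma\).
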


\begin{proof}
The first \(r\) PBW root vectors for \(\beta\gamma\) are precisely those
for \(\beta\). Hence
\(P^{\beta\gamma}(\mathbf a,\boldsymbol 0)=P^\beta(\mathbf a)\).
If
\(\mathbf b\prec_{\mathrm r}(\mathbf a,\boldsymbol 0)\) with
\(\mathbf b\in\ZZ_{\geq0}^{r+\ell(\gamma)}\), then every coordinate of
\(\mathbf b\) belonging to the \(\gamma\)-block is zero: otherwise the
rightmost nonzero such coordinate would make \(\mathbf b\) larger, rather
than smaller, in the right lexicographic order. Thus
\(\mathbf b=(\mathbf c,\boldsymbol 0)\) with
\(\mathbf c\prec_{\mathrm r}\mathbf a\). Hence the
\(c\)-invariance and triangularity conditions characterizing the
corresponding unnormalized elements \(G\) are identical. Uniqueness
in Proposition~\ref{prop:PBW-global-basis} identifies those elements;
their weights agree, so
\eqref{eq:normalized-PBW-global-basis-conversion} gives the stated
identity for \(\widetilde G\).
\end{proof}

Recall the paired definitions of \(\mathbf A(\beta)\) and
\(\mathscr D_{\DD}(\beta)\) from
\eqref{eq:A-D-beta-definition}.

\begin{corollary}\label{cor:right-weave-cluster-variables-level-one}
Let $\beta$ be a positive braid word, and assume that the ambient quantum affine algebra is of untwisted affine
type and that \(\DD=\DD_{\mathcal Q}\) is the complete strong duality
datum associated with a \(Q\)-datum \(\mathcal Q\). After identifying $t^{1/2}$ with $q^{-1/2}$, every
quantum cluster variable $X_v$ of
$\overline{\mathcal A}_t(\overrightarrow{\mathbf s}_t(\beta))$ is mapped by
$\Phi_{\DD}\circ\iota$ to a normalized global basis element
\begin{equation*}
    A_v:=(\Phi_{\DD}\circ\iota)(X_v)
    \in\widehat{\cA}_{\ZZ[q^{\pm1/2}]}(\beta)
       \cap\widehat{\cA}_{\geq1}
    =\mathbf A(\beta).
\end{equation*}
Consequently,
\begin{align}
    (\Phi_{\DD}\circ\iota)\!
    \left(
        \overline{\mathcal A}_t
        (\overrightarrow{\mathbf s}_t(\beta))
    \right)
    &\subseteq \mathbf A(\beta),
    \label{eq:right-weave-cluster-algebra-in-A-beta}\\
    \iota\!
    \left(
        \overline{\mathcal A}_t
        (\overrightarrow{\mathbf s}_t(\beta))
    \right)
    &\subseteq
    \mathcal{K}_t\bigl(\mathscr D_{\DD}(\beta)\bigr).
    \label{eq:right-weave-cluster-algebra-in-D-beta}
\end{align}
Here, as throughout, frozen cluster variables are not inverted.
\end{corollary}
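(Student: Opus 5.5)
The plan is to reduce the statement to showing that each initial cluster variable $A_v$, $v\in\overrightarrow{\mathfrak W}(\beta)_3$, lies in $\widehat{\cA}_{\geq1}$. The rest is then formal. By Corollary~\ref{cor:cluster-algebra-embedding-A-beta}, $A_v$ is already a normalized global basis element of $\widehat{\cA}_{\ZZ[q^{\pm1/2}]}(\beta)$. The subspace $\mathbf A(\beta)$ is an intersection of two subalgebras, so it is a subalgebra. Hence it suffices to place every quantum cluster variable, in every seed of the mutation class, inside $\widehat{\cA}_{\geq1}$.

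To pass from the initial seed to all seeds, I would not use Laurent expansions, because $\widehat{\cA}_{\geq1}$ is not closed under inverses. I would argue through the based isomorphism of Theorem~\ref{thm:isoKt} instead. On the categorical side, $\widehat{\cA}_{\geq1}$ corresponds to $\mathscr C_{\DD}[1,\infty)$, since $\Phi_\DD$ intertwines $\mathcal D$ with the shift $f_{i,m}\mapsto f_{i,m+1}$ and $\widehat{\cA}[0]$ corresponds to $\mathscr C_\DD$. Every cluster variable is the class of a real simple module $M$ obtained by iterated $\Lambda$-mutation, as in Theorem~\ref{thm:quantum-braid-seed-embedding}. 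Each mutated module is a simple subquotient, in fact the head, of a tensor product built from modules of the current seed and the duals involved in the exchange sequence \eqref{eq:monoidal-mutation}. The key closure fact I would use is that $\mathscr C_\DD[1,\infty)$ is closed under subquotients and tensor products. It is therefore enough to show that $M'_k$ lies in the category generated by the other seed modules and $M_k$. From \eqref{eq:monoidal-mutation}, $M_k\otimes M'_k$ has composition factors in $\mathscr C_\DD[1,\infty)$, and $M'_k\simeq (\text{head of the exchange product})\nabla\mathcal D^{-1}M_k$ by Lemma~\ref{lem:cancel}. For this reason I prefer a different argument, using the level grading directly. A simple module $S$ lies in $\mathscr C_\DD[1,\infty)$ iff its factorization \eqref{eq:complete-duality-datum-factorization} has $M_k=\mathbf 1$ for all $k\leq0$. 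If $M_k\otimes M'_k$ has a composition factor with this property, then so do both $M_k$ and $M'_k$, because the level support of a head is the union of the supports of the factors. This last fact is the level-filtration compatibility of $\Phi_\DD$ together with the ordered factorization \eqref{eq:ordered-factorization}.

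For the initial seed, I would use Proposition~\ref{pro:Lusztig-parameter-right-weave-variable} and Lemma~\ref{lem:right-inductive-seed-restriction}. The variables $A_v$ arise from the initial minors $D_k^{\boldsymbol\Delta\beta}$ by the mutation sequence $T=M_m$, after deleting $D_\beta$. Applying $T_{w_0}$, by the Remark after \eqref{eq:embedded-beta-coordinate-after-Delta}, identifies $\widehat{\cA}(\beta)$ with $\widehat{\cA}(\boldsymbol\Delta\beta)\cap\widehat{\cA}_{\geq1}$. In that realization, the $\beta$-block minors lie in level $\geq1$, and $\widetilde{\mathbf A}(\beta)=\widehat{\cA}(\beta)\cap T_{\delta(\beta)}\widehat{\cA}_{\geq0}$. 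I would then compare $T_{\delta(\beta)}\widehat{\cA}_{\geq0}\cap\widehat{\cA}(\beta)$ with $\widehat{\cA}_{\geq1}$, using \eqref{eq:PBW-subalgebra-intersection}. An element of $\widehat{\cA}(\beta)$ lies in $\widehat{\cA}_{\geq0}$, and the $\delta(\beta)$ condition should kill exactly its level-$0$ part, because the leftmost reduced subexpression carries $\widehat{\cA}[0]\cap\widehat{\cA}(\beta)$. The main obstacle is this step, $\widetilde{\mathbf A}(\beta)\subseteq\widehat{\cA}_{\geq1}$. I would first try to prove it for normalized global basis elements via their extended-crystal components $b_0$, using the triangularity \eqref{eq:global-basis-triangularity} and the braid-compatibility of $G$. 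Once \eqref{eq:right-weave-cluster-algebra-in-A-beta} is established, \eqref{eq:right-weave-cluster-algebra-in-D-beta} follows by applying $\Phi_\DD^{-1}$, since $\Phi_\DD^{-1}(\mathbf A(\beta))=\mathcal K_t(\mathscr D_\DD(\beta))$ by the based intersection property \eqref{eq:based-intersection-property}.
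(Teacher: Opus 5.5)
Your plan has a genuine gap: the one step that carries all the content, the inclusion $\widetilde{\mathbf A}(\beta)\subseteq\widehat{\cA}_{\geq1}$ for arbitrary $w=\delta(\beta)$, is flagged as the main obstacle and then left unproved. Once that inclusion is granted, nothing else in your plan is needed, because Corollary~\ref{cor:cluster-algebra-embedding-A-beta} already places every quantum cluster monomial of every seed in $\widetilde{\mathbf A}(\beta)$. So your initial-seed analysis and your categorical mutation step are redundant, and without the inclusion you have no argument. For $w\neq w_0$ the inclusion is not formal. The subalgebra $T_w\widehat{\cA}_{\geq0}$ contains level-zero elements, such as $T_w\widehat{\cA}(w^{-1}w_0)$. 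You would therefore have to show that the level-zero behaviour of $\widehat{\cA}(\beta)$ and of $T_w\widehat{\cA}_{\geq0}$ are disjoint, and you give no such argument. Your heuristic that ``the leftmost reduced subexpression carries $\widehat{\cA}[0]\cap\widehat{\cA}(\beta)$'' is unsupported. For $\beta=(1,1,2)$ in type $A_2$, the PBW root $\widetilde P_3^\beta=T_1T_1(f_{2,0})$ sits at a position of the leftmost reduced subexpression $\{1,3\}$. A direct computation gives $\widetilde P_3^\beta=q^{1/2}f_{1,1}T_1(f_{2,0})-qf_{2,0}$, which has a nonzero level-one component.

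The mutation step also rests on a false principle. It is not true that a composition factor of $M\otimes N$ lying in $\mathscr C_{\DD}[1,\infty)$ forces both $M$ and $N$ to lie there. Take $L=L_i^{\DD}$: then $\mathbf 1$ is a composition factor of $L\otimes\mathcal D L$, although $L$ has level $0$. For the same reason, the level support of a head is not the union of the supports of the factors.

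The paper avoids any analysis of $\widetilde{\mathbf A}(\beta)$ for $w\neq w_0$ by enlarging the word:
\begin{enumerate}
\item Choose a reduced word $\gamma$ for $w^{-1}w_0$. Then $\delta(\beta\gamma)=w_0$, and the letters of $\gamma$ create no trivalent vertices.
\item Hence $\overrightarrow{\mathbf s}_t(\beta\gamma)$ and $\overrightarrow{\mathbf s}_t(\beta)$ are the same quantum seed on the same vertex set, with $\eta_v^{\beta\gamma}=(\eta_v^\beta,0)$.
\item By Proposition~\ref{pro:Lusztig-parameter-right-weave-variable} and Lemma~\ref{lem:global-basis-extension-by-zero}, the two seeds have literally the same normalized global-basis elements as initial variables. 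So every cluster variable for $\beta$ is also a cluster variable for $\beta\gamma$.
\item Corollary~\ref{cor:cluster-algebra-embedding-A-beta}, applied to $\beta\gamma$, places it in $\widehat{\cA}(\beta\gamma)\cap T_{w_0}\widehat{\cA}_{\geq0}=\widehat{\cA}(\beta\gamma)\cap\widehat{\cA}_{\geq1}$, using $T_{w_0}\widehat{\cA}_{\geq0}=\widehat{\cA}_{\geq1}$.
\item Intersecting with $\widehat{\cA}_{\ZZ[q^{\pm1/2}]}(\beta)$ gives $A_v\in\mathbf A(\beta)$.
\end{enumerate}
Your final step, pulling back along $\Phi_{\DD}$ to $\mathcal K_t(\mathscr D_{\DD}(\beta))$ by the level-filtration compatibility, matches the paper and is fine.
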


\begin{proof}
Set $w:=\delta(\beta)$, and choose a reduced expression $\gamma$ of
$w^{-1}w_0$, regarded as a positive braid word. Since
\begin{equation*}
    \ell(w)+\ell(w^{-1}w_0)=\ell(w_0),
\end{equation*}
we have $\delta(\beta\gamma)=w_0$.

The right inductive weave
$\overrightarrow{\mathfrak W}(\beta\gamma)$ is obtained from
$\overrightarrow{\mathfrak W}(\beta)$ by successively adjoining the
right insertions associated with the letters of $\gamma$. Every prefix of
\(\gamma\) increases the Coxeter length from \(w\) toward \(w_0\); hence
these insertions create no new trivalent vertices. Thus
$\overrightarrow{\mathfrak W}(\beta)$ identifies naturally with the
entire trivalent-vertex set of
$\overrightarrow{\mathfrak W}(\beta\gamma)$. The Lusztig cycles and their
intersection numbers on the retained \(\beta\)-block are unchanged, so the
classical and quantum seeds are identified on this common vertex set. In
particular, for every
trivalent vertex
$v\in\overrightarrow{\mathfrak W}(\beta)_3$, the Lusztig vector
attached to $v$ is unchanged on the $\beta$-block:
\begin{equation}\label{eq:eta-beta-gamma-restriction}
    \eta_v^{\beta\gamma}
    =
    \bigl(\eta_v^\beta,0,\ldots,0\bigr),
\end{equation}
where the right-hand side is understood via the natural embedding of the
two coordinate lattices.

By Lemma~\ref{lem:global-basis-extension-by-zero},
Proposition~\ref{pro:Lusztig-parameter-right-weave-variable}, and
\eqref{eq:eta-beta-gamma-restriction}, the global basis elements corresponding to the cluster
variable at \(v\) are literally the same normalized global basis element,
whether \(v\) is regarded as a vertex of
\(\overrightarrow{\mathfrak W}(\beta)\) or of
\(\overrightarrow{\mathfrak W}(\beta\gamma)\).

Let $X$ be any quantum cluster variable of
$\overline{\mathcal A}_t(\overrightarrow{\mathbf s}_t(\beta))$, and choose a
mutation sequence producing $X$ from the initial seed. Under the seed
identification just established, applying the
same sequence to
$\overrightarrow{\mathbf s}_t(\beta\gamma)$ produces a cluster
variable $X^+$. At each
step, the corresponding exchange relations agree on the retained
vertices. It follows, equivalently by the associated Lusztig-parameter mutation rule in equation~\eqref{eq:Lusztig-parameter-mutated-variable}, that $X$ and
$X^+$ are labeled by the same normalized global basis element in the
common ambient bosonic extension algebra.

Since $\delta(\beta\gamma)=w_0$,
Corollary~\ref{cor:cluster-algebra-embedding-A-beta} applied to
$\beta\gamma$ gives
\begin{equation*}
    (\Phi_{\DD}\circ\iota)(X^+)
    \in
    \widehat{\cA}_{\ZZ[q^{\pm1/2}]}(\beta\gamma)
    \cap T_{w_0}\widehat{\cA}_{\geq0}
    \stackrel{(*)}{=}
    \widehat{\cA}_{\ZZ[q^{\pm1/2}]}(\beta\gamma)
    \cap\widehat{\cA}_{\geq1}.
\end{equation*}
The equality \((*)\) follows from
\(T_{w_0}\widehat{\cA}_{\geq0}=\widehat{\cA}_{\geq1}\) by
\cite[Lemma~4.4]{oh2025pbw}.
The preceding identification implies that
$(\Phi_{\DD}\circ\iota)(X)$ also belongs to
$\widehat{\cA}_{\geq 1}$. On the other hand, it lies in
$\widehat{\cA}_{\ZZ[q^{\pm1/2}]}(\beta)$ by Corollary~\ref{cor:cluster-algebra-embedding-A-beta}.
Therefore
\begin{equation*}
    (\Phi_{\DD}\circ\iota)(X)
    \in
    \widehat{\cA}_{\ZZ[q^{\pm1/2}]}(\beta)\cap\widehat{\cA}_{\geq 1}
    =\mathbf A(\beta).
\end{equation*}
Since the nonlocalized quantum cluster algebra is generated by its
quantum cluster variables, including the frozen ones, this proves
\eqref{eq:right-weave-cluster-algebra-in-A-beta}.

Finally, the compatibility of $\Phi_{\DD}$ with the level filtration
gives
\begin{equation}\label{eq:Phi-D-level-one-intersection}
    \Phi_{\DD}\!
    \left(
        \mathcal{K}_t\bigl(\mathscr D_{\DD}(\beta)\bigr)
    \right)
    =
    \widehat{\cA}_{\ZZ[q^{\pm1/2}]}(\beta)\cap\widehat{\cA}_{\geq 1}
    =
    \mathbf A(\beta).
\end{equation}
Because $\Phi_{\DD}$ is injective, the inclusion
\eqref{eq:right-weave-cluster-algebra-in-A-beta}, together with
\eqref{eq:Phi-D-level-one-intersection}, yields
\eqref{eq:right-weave-cluster-algebra-in-D-beta}.
\end{proof}

\begin{example}\label{ex:normalized-seven-letter-word}
Continue with Example~\ref{ex:right-inductive-weaves}. Consider the
double string
\begin{equation*}
\ddot{\mathbf s}
=(1R^+,1R,2R^+,2R,1R^+,2R,1R),
\end{equation*}
whose underlying braid word is
\(\beta=(1,1,2,2,1,2,1)\). The cluster variables associated with the
first two trivalent vertices are
\begin{equation*}
A_{v_1}=\widetilde P_2^\beta=q^{-1/2}P_2^\beta,
\qquad
A_{v_2}=\widetilde P_4^\beta=q^{-1/2}P_4^\beta.
\end{equation*}
The remaining two cluster variables are the global basis elements
\begin{equation*}
A_{v_3}=\widetilde G(\beta,E_6+E_4+E_2),
\qquad
A_{v_4}=\widetilde G(\beta,E_7+E_4),
\end{equation*}
whose leading PBW monomials are \(P_6^\beta P_4^\beta P_2^\beta\)
and \(P_7^\beta P_4^\beta\), respectively.

Corollary~\ref{cor:right-weave-cluster-variables-level-one} directly gives
\(A_{v_j}\in\mathbf A(\beta)\) for \(1\leq j\leq4\). The following
calculations are only consistency checks and are not used to infer
membership of a global-basis element from its individual PBW factors.
Clearly \(P_2^\beta,P_4^\beta\in\widehat{\cA}_{\geq1}\). Using the braid
relation \(T_2T_1T_2=T_1T_2T_1\), we obtain
\begin{align*}
P_7^\beta
 &=T_{112212}\bigl(q^{1/2}f_{1,0}\bigr)
  =T_{112121}\bigl(q^{1/2}f_{1,0}\bigr)
  \in\widehat{\cA}_{\geq1}.
\end{align*}
Similarly, let \(b=\widetilde G(\beta,E_6+E_4)\). Its leading PBW
term is \(q^{-1/2}P_6^\beta P_4^\beta\). Indeed, the braid move
\(212\mapsto121\) at positions \(4,5,6\) gives
\(\beta'=(1,1,2,1,2,1,1)\) and sends the local parameter
\((1,0,1)\) to \((0,1,0)\). Thus
\begin{equation*}
b=\widetilde G(\beta',E_5)
 =T_{1121}(f_{2,0})
 =T_{1212}(f_{2,0})
 \in\widehat{\cA}_{\geq1}.
\end{equation*}
The coefficients in the leading terms of \(A_{v_3}\) and
\(A_{v_4}\) are \(1\): both parameters have total weight zero,
whereas \(E_4+E_6\) has squared weight \(2\). This agrees with
\eqref{eq:normalized-PBW-global-basis-conversion}.
Notice, however, that \(P_6^\beta\notin\widehat{\cA}_{\geq1}\).
Thus, membership of \(A_{v_3}\) in
\(\widehat{\cA}_{\geq1}\) cannot be deduced from the individual PBW
factors alone; it follows from the based level-one intersection proved in
the corollary.
\end{example}

\section{Categorification of braid varieties}
\label{sec:categorification-braid-varieties}
In this section, we construct an explicit monoidal seed associated
with a braid variety, determine its quantum commutation matrix, and
establish its monoidal categorification and quantization.
Throughout this section, let
\(\beta=(i_1,\ldots,i_r)\) be a positive braid word, and fix a
complete duality datum \(\DD\). To simplify the notation, we write
\begin{equation*}
    C_k^\beta:=C_k^{\DD,\beta},
\end{equation*}
and suppress \(\DD\) from the notation for the simple modules
constructed below.

\subsection{The monoidal seed}

Let
\begin{equation*}
    \overrightarrow{\mathfrak W}(\beta)
    \colon
    \beta\longrightarrow\delta(\beta)
\end{equation*}
be the right inductive weave associated with \(\beta\), and
recall from Lemma~\ref{lem:right-inductive-seed-restriction} that the
index set \(V_\beta=K_\beta\setminus D_\beta\) defined in
Definition~\ref{def:svbeta} is canonically identified with
\(\overrightarrow{\mathfrak W}(\beta)_3\). We use \(V_\beta\) for
this common set; this is an identification, not a second definition.
We equip \(V_\beta\) with the order inherited from the right inductive
construction. For \(v\in V_\beta\), let \(q_v\in[r]\) be the position
of the letter of \(\beta\) that produces the trivalent vertex \(v\).

For \(u,v\in V_\beta\) with \(u<v\), set
\begin{equation}\label{eq:duv-Lusztig-cycle}
    d_{u,v}
    :=
    \gamma_u^{\overrightarrow{\mathfrak W}(\beta)}
    (e_{v,\mathrm{nw}}).
\end{equation}
We recursively define a family of simple modules
\(\{B_v\}_{v\in V_\beta}\). Suppose that \(B_u\) has been defined for
every \(u<v\), and set
\begin{equation}\label{eq:Mv-Bv-definition}
    M_v
    :=
    \overleftarrow{\bigotimes}_{u<v}
    B_u^{\otimes d_{u,v}},
    \qquad
    B_v
    :=
    C_{q_v}^\beta\nabla M_v
    =
    \operatorname{hd}
    \bigl(C_{q_v}^\beta\otimes M_v\bigr),
\end{equation}
where the tensor factors are arranged in decreasing order of the
vertices. The empty tensor product is understood to be the monoidal
unit. In particular, if \(v\) is the minimal vertex of \(V_\beta\),
then \(B_v=C_{q_v}^\beta\).

\begin{proposition}\label{pro:LusztigparaBv}
For every \(v\in V_\beta\), one has
\begin{equation}\label{eq:Lusztig-parameter-Bv-Av}
    \mathbf a^\beta(B_v)=\mathbf a^\beta(A_v)=\eta^\beta_v.
\end{equation}
Consequently, \(B_v\) is isomorphic to the cluster-variable module
corresponding to \(X_v\). In particular, every \(B_v\) is real and the
family \(\{B_v\}_{v\in V_\beta}\) is pairwise strongly commuting.
\end{proposition}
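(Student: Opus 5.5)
We argue by induction along the total order on \(V_\beta\). The second equality \(\mathbf a^\beta(A_v)=\eta_v^\beta\) is Proposition~\ref{pro:Lusztig-parameter-right-weave-variable}, so the task is to prove \(\mathbf a^\beta(B_v)=\eta_v^\beta\). The inductive hypothesis is that for every \(u<v\), \(B_u\) is isomorphic to the cluster-variable module \(N_u\) with \(\Phi_{\DD}([N_u]_t)=A_u\); in particular \(B_u\) is real with \(\mathbf a^\beta(B_u)=\eta_u^\beta\). The base case is the minimal vertex, where \(B_v=C_{q_v}^\beta\) has parameter \(E_{q_v}=\eta_v^\beta\).

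For the inductive step, the modules \(B_u\), \(u<v\), are cluster-variable modules of one seed, so they are real and pairwise strongly commuting. Hence \(M_v\) is a real simple module, and Lemma~\ref{lem:Nm1m2} gives \(\Phi_{\DD}([M_v]_t)=q^{c}\prod_{u<v}A_u^{d_{u,v}}\). By Lemma~\ref{lem:global-basis-product-triangularity} this is a normalized global basis element with Lusztig parameter \(\sum_{u<v}d_{u,v}\eta_u^\beta\); the based property of \(\Phi_{\DD}\) lets us pass between module Lusztig parameters (Proposition~\ref{pro:lusztig-parameter-modules}) and global-basis parameters. By Lemma~\ref{lem:maxindex}, every \(\eta_u^\beta\) with \(u<v\) is supported on positions \(\le q_u<q_v\). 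Write \(M_v\simeq\operatorname{hd}\bigl((C_{q_v-1}^\beta)^{\otimes a_{q_v-1}}\otimes\cdots\otimes(C_1^\beta)^{\otimes a_1}\bigr)\). Then \(C_{q_v}^\beta\otimes M_v\) is a quotient of the ordered tensor product \(C_{q_v}^\beta\otimes(C_{q_v-1}^\beta)^{\otimes a_{q_v-1}}\otimes\cdots\). Since root-module sequences along \(\beta\) are normal (strongly unmixed in decreasing position, as in \cite{kashiwara2025monoidal}), the head of that product is the simple module with parameter \(E_{q_v}+\mathbf a^\beta(M_v)\). Applying Lemma~\ref{lem:head} with the real module \(C_{q_v}^\beta\), and checking the \(R\)-matrix compatibility via Lemma~\ref{lem:normal-sequence} (the \(\LambdaR\)-additivity gives \(\LambdaR(C_{q_v}^\beta,M_v)=\sum_k a_k\LambdaR(C_{q_v}^\beta,C_k^\beta)\), which is exactly the hypothesis \eqref{eq:R-matrix-naturality}), we get
\[
\mathbf a^\beta(B_v)=E_{q_v}+\sum_{u<v}d_{u,v}\eta_u^\beta=\eta_v^\beta
\]
by \eqref{eq:etav}. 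We expect this verification that heads are compatible with Lusztig parameters to be the main technical point. If Lemma~\ref{lem:head} is awkward to apply directly, the fallback is to cite the corresponding statement for \(\nabla\) against a normal sequence in \cite[Prop.~5.33]{kashiwara2025monoidal}.

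Finally, since \(\Phi_{\DD}\) is based, simple modules are determined by their Lusztig parameters. Hence \(\mathbf a^\beta(B_v)=\mathbf a^\beta(A_v)\) forces \([B_v]_t\mapsto A_v\), so \(B_v\) is isomorphic to the cluster-variable module \(N_v\) of Theorem~\ref{thm:quantum-braid-seed-embedding} and Corollary~\ref{cor:right-weave-cluster-variables-level-one}. This closes the induction. Reality and pairwise strong commutation of the \(B_v\) then follow from the corresponding facts for cluster-variable modules of the monoidal seed \(S^{\DD}_{\delta(\beta),\beta}\) in the proof of Theorem~\ref{thm:quantum-braid-seed-embedding}.
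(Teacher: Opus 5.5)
Your proposal is correct and follows the paper's proof essentially step for step: induction along \(V_\beta\), additivity of Lusztig parameters for the strongly commuting product \(M_v\), the support bound of Lemma~\ref{lem:maxindex}, and normality of the decreasing sequence of affine cuspidal modules, combined with Lemmas~\ref{lem:normal-sequence} and~\ref{lem:head}, to identify \(B_v\) with the head of the standard module of parameter \(E_{q_v}+\mathbf a^\beta(M_v)\). As a small aside, your own remark that \(C_{q_v}^\beta\otimes M_v\) is a quotient of that standard module, which has simple head, already forces \(\operatorname{hd}(C_{q_v}^\beta\otimes M_v)\) to be that head, so the \(R\)-matrix compatibility check is not strictly needed.
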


\begin{proof}
Set \(\eta_v:=\mathbf a^\beta(A_v)\). By
Proposition~\ref{pro:Lusztig-parameter-right-weave-variable}, these
vectors satisfy
\begin{equation}\label{eq:eta-v-right-weave-recursion}
    \eta_v
    =
    E_{q_v}
    +
    \sum_{u<v}d_{u,v}\eta_u.
\end{equation}
We prove \eqref{eq:Lusztig-parameter-Bv-Av} by induction on \(v\).

Suppose first that \(v\) is minimal. Then \(B_v=C_{q_v}^\beta\), and
hence
\begin{equation*}
    \mathbf a^\beta(B_v)=E_{q_v}=\eta_v,
\end{equation*}
where the second equality follows from
\eqref{eq:eta-v-right-weave-recursion}.

Now let \(v\) be nonminimal, and assume that
\(\mathbf a^\beta(B_u)=\eta_u\) for every \(u<v\). By the induction
hypothesis, each \(B_u\) is the cluster-variable module corresponding
to \(X_u\). Since the variables \(\{X_u\mid u<v\}\) belong to the same
cluster, the modules \(\{B_u\mid u<v\}\) are pairwise strongly
commuting. It follows that \(M_v\) is simple and
\begin{align}
    \mathbf b
    &:=
    \mathbf a^\beta(M_v)
    =
    \sum_{u<v}d_{u,v}\mathbf a^\beta(B_u)
    \nonumber\\
    &=
    \sum_{u<v}
    \gamma_u^{\overrightarrow{\mathfrak W}(\beta)}
    (e_{v,\mathrm{nw}})\eta_u.
    \label{eq:Lusztig-parameter-Mv}
\end{align}
By Lemma~\ref{lem:maxindex}, the support of \(\mathbf b\) is contained
in \([1,q_v-1]\); equivalently, \(b_k=0\) for \(k\geq q_v\). The PBW
classification of simple modules gives
\begin{equation}\label{eq:PBW-realization-Mv}
    M_v
    \simeq
    \operatorname{hd}\left(
        (C_{q_v-1}^\beta)^{\otimes b_{q_v-1}}
        \otimes\cdots\otimes
        (C_1^\beta)^{\otimes b_1}
    \right).
\end{equation}

By \cite[Theorem~5.16]{kashiwara2025monoidal}, the affine cuspidal
modules \(C_a^\beta\) and \(C_b^\beta\) are strongly unmixed whenever
\(a>b\). Therefore, the ordered sequence
\begin{equation*}
    \left(
        C_{q_v}^\beta,
        (C_{q_v-1}^\beta)^{\otimes b_{q_v-1}},
        \ldots,
        (C_1^\beta)^{\otimes b_1}
    \right)
\end{equation*}
is normal; see
\cite[Proposition~1.14]{kashiwara2025monoidal}. Lemma \ref{lem:normal-sequence}, equation~\eqref{eq:R-matrix-naturality}, Lemma~\ref{lem:head},
and the naturality of the normalized \(R\)-matrices consequently give
\begin{equation}\label{eq:PBW-realization-Bv}
    B_v
    \simeq
    \operatorname{hd}\left(
        C_{q_v}^\beta
        \otimes
        (C_{q_v-1}^\beta)^{\otimes b_{q_v-1}}
        \otimes\cdots\otimes
        (C_1^\beta)^{\otimes b_1}
    \right).
\end{equation}
It follows that
\begin{equation*}
    \mathbf a^\beta(B_v)=E_{q_v}+\mathbf b.
\end{equation*}
Combining this identity with
\eqref{eq:Lusztig-parameter-Mv} and
\eqref{eq:eta-v-right-weave-recursion}, we obtain
\begin{equation*}
    \mathbf a^\beta(B_v)
    =
    E_{q_v}+\sum_{u<v}d_{u,v}\eta_u
    =
    \eta_v
    =
    \mathbf a^\beta(A_v).
\end{equation*}

Finally, the modules \(B_v\), for \(v\in V_\beta\), correspond to
variables in a common cluster. They are therefore real and pairwise
strongly commuting.
\end{proof}

\subsubsection{The quantum commutation matrix}

For \(k\in[r]\), define
\begin{equation*}
    \alpha_k^\beta
    :=
    s_{i_1}\cdots s_{i_{k-1}}(\alpha_{i_k}).
\end{equation*}
For \(a,b\in[r]\), set
\begin{equation}\label{eq:lambda-beta-root-pairing}
    \lambda_{a,b}^\beta
    :=
    \begin{cases}
        (\alpha_a^\beta,\alpha_b^\beta),
            & a<b,\\
        0,
            & a=b,\\
        -(\alpha_a^\beta,\alpha_b^\beta),
            & a>b.
    \end{cases}
\end{equation}
The matrix
\(
    L^\beta
    :=
    \bigl(\lambda_{a,b}^\beta\bigr)_{a,b\in[r]}
\)
is skew-symmetric. Moreover, by \cite[Proposition~5.9]{kashiwara2025monoidal}, the \(R\)-matrix invariants of the affine
cuspidal modules satisfy
\begin{equation}\label{eq:LambdaCa}
    \LambdaR(C_a^\beta,C_b^\beta)
    =
    \lambda_{a,b}^\beta
    \qquad (a>b).
\end{equation}

For \(\eta,\eta'\in\ZZ_{\geq0}^r\), define the skew-symmetric
bilinear form
\begin{equation}\label{eq:E-beta-definition}
    E^\beta(\eta,\eta')
    :=
    \eta^{\mathsf T}L^\beta\eta'
    =
    \sum_{a,b=1}^r
    \eta_a\eta_b'\lambda_{a,b}^\beta.
\end{equation}
Thus
\(E^\beta(\eta,\eta')=-E^\beta(\eta',\eta)\).

\begin{proposition}\label{pro:LambdaBv}
For any \(v,w\in V_\beta\), one has
\begin{equation}\label{eq:Lambda-Bv-Bw-E-beta}
    \LambdaR(B_v,B_w)
    =
    E^\beta\bigl(
        \eta_v^\beta,
        \eta_w^\beta
    \bigr).
\end{equation}
\end{proposition}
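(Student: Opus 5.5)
The plan is to reduce $\LambdaR(B_v,B_w)$ to the cuspidal values \eqref{eq:LambdaCa} by passing through the PBW realization of the $B_v$ and the quantum torus structure of $\widehat{\cA}(\beta)$.

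First I would realize every simple module of $\mathscr C_{\DD}(\beta)$ as an ordered cuspidal head. By Proposition~\ref{pro:LusztigparaBv}, $B_v\simeq\operatorname{hd}\bigl((C_r^\beta)^{\otimes a_r}\otimes\cdots\otimes(C_1^\beta)^{\otimes a_1}\bigr)$ with $\mathbf a=\eta_v^\beta$. The sequence of factors is strongly unmixed (\cite[Theorem~5.16]{kashiwara2025monoidal}), hence normal. The same holds for $B_w$ with $\mathbf a'=\eta_w^\beta$.

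Next I would use the fact that $B_v$ and $B_w$ strongly commute, so $\mathfrak d(B_v,B_w)=0$. By Lemma~\ref{lem:Nm1m2} and Remark~\ref{rem:quantizable}, $\LambdaR(B_v,B_w)$ is then the exponent in the $t$-commutation $[B_v]_t[B_w]_t=t^{\LambdaR(B_v,B_w)}[B_w]_t[B_v]_t$, i.e.\ \eqref{eq:quantum-classes-commutation}. Transporting through $\Phi_{\DD}$ (with $t^{1/2}=q^{-1/2}$) turns this into a $q$-commutation between the global basis elements $A_v=\widetilde G(\beta,\eta_v)$ and $A_w=\widetilde G(\beta,\eta_w)$. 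The task is then to read off the commutation exponent of two $q$-commuting global basis elements from their Lusztig parameters.

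For that, I would compare leading PBW terms using Lemma~\ref{lem:global-basis-product-triangularity}. Both $A_vA_w$ and $A_wA_v$ have the same leading parameter $\eta_v+\eta_w$, with leading coefficients $q^{A}$ and $q^{A'}$. Since $A_vA_w=q^{c}A_wA_v$ exactly, we get $c=A-A'$. The difference $A-A'$ is computed by reordering the PBW monomials $P^\beta(\eta_v)P^\beta(\eta_w)$ into decreasing order modulo lower terms. The root vectors satisfy the Levendorskii--Soibelman type relation $P_a^\beta P_b^\beta\in q^{\pm(\alpha_a^\beta,\alpha_b^\beta)}P_b^\beta P_a^\beta+(\text{lower})$ for $a>b$. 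This relation is the bosonic counterpart of \eqref{eq:LambdaCa}, and it yields $A-A'=\pm\sum_{a,b}\eta_{v,a}\eta_{w,b}\lambda^\beta_{a,b}$ after matching the sign conventions $t^{1/2}=q^{-1/2}$.

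An alternative that avoids sign bookkeeping in $\widehat{\cA}$ stays categorical. Apply Lemma~\ref{lem:normal-sequence} repeatedly: because the sequence is normal, $\LambdaR(C_a,B_w)=\sum_b a'_b\LambdaR(C_a,C_b)$. The first slot can then be expanded the same way, using the symmetric version of Lemma~\ref{lem:normal-sequence} for heads (via duals and $\LambdaR(X,Y)=\LambdaR(Y,\mathcal D X)$). Pairs with $a=b$ contribute $\LambdaR(C_a,C_a)=0$, since root modules are real with normalization zero. Pairs with $a<b$ use skew-symmetry $\LambdaR(C_a,C_b)=-\LambdaR(C_b,C_a)$. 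That skew-symmetry holds only because $\mathfrak d(C_a,C_b)=0$, which is false in general. This is the main obstacle: in general $\LambdaR(C_a,C_b)+\LambdaR(C_b,C_a)=2\mathfrak d(C_a,C_b)\ne 0$, so naive additivity in both slots fails.

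I expect to resolve this by the leading-term argument above, which only needs the strong commutation of the final pair $B_v,B_w$. There the cross terms with $a<b$ appear with the correct sign automatically from the leading-coefficient comparison, giving $E^\beta(\eta_v,\eta_w)$.
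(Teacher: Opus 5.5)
Your main route (leading PBW coefficients in $\widehat{\cA}(\beta)$) is correct, but it is not the paper's argument.

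\textbf{The paper's route.} The paper stays inside the category and argues by strong induction on the larger vertex. From $B_v=C_{q_v}^\beta\nabla M_v$ and the normality of $(C_{q_v}^\beta,M_v,B_w)$ \cite[Lemma~2.24]{kashiwara2023pbw}, it peels $C_{q_v}^\beta$ off the first slot. This gives $\LambdaR(B_v,B_w)=\LambdaR(C_{q_v}^\beta,B_w)+\sum_{u<v}d_{u,v}\LambdaR(B_u,B_w)$. The first term is then expanded through the PBW head of $B_w$, where only $\LambdaR(C_{q_v}^\beta,C_a^\beta)$ with $a\le q_w<q_v$ occurs. The bilinear form then falls out of the recursion \eqref{eq:eta-B-recursion}.

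This is exactly how the obstacle you found in your categorical alternative is avoided. Both slots are never expanded at the cuspidal level, so \eqref{eq:LambdaCa} is only used in its valid direction, and skew-symmetry is only used for the strongly commuting pairs $(B_u,B_w)$. There is also a second defect in that alternative. The expansion $\LambdaR(C_a,B_w)=\sum_b(\eta_w)_b\LambdaR(C_a,C_b)$ is unjustified when $a$ is smaller than some position in the support of $\eta_w^\beta$, because that sequence is then not known to be normal.

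\textbf{What your route buys and costs.} It needs no induction and never uses the recursive form of $B_v$. So it actually proves $\LambdaR(M,N)=E^\beta(\mathbf a^\beta(M),\mathbf a^\beta(N))$ for any strongly commuting pair of simples, one of them real, in $\mathscr C_{\DD}(\beta)$. The costs are:
\begin{enumerate}
\item It passes through $\mathcal K_t$ and $\Phi_{\DD}$, so it needs Assumption~\eqref{assumption} and ADE quantizability. This is harmless here, since Proposition~\ref{pro:LusztigparaBv} already relies on the bosonic realization.
\item It imports a Levendorskii--Soibelman formula for $\widehat{\cA}(\beta)$, which the paper does not state.
\end{enumerate}

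\textbf{Two points to tighten.}

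First, the sign is not a convention to be matched at the end; it must be computed. You need $P_d^\beta P_c^\beta\in q^{-(\alpha_d^\beta,\alpha_c^\beta)}P_c^\beta P_d^\beta+(\text{lower})$ for $d<c$. This matches $f_{1,1}f_{1,2}=q^2f_{1,2}f_{1,1}+(1-q^2)$ from \eqref{eq:bosonic}, where $(\alpha_2^\beta,\alpha_3^\beta)=-2$ for $\beta=(1,1,1,2)$. With this exponent, the ratio of the leading coefficients of $A_vA_w$ and $A_wA_v$ is $q^{-E^\beta(\eta_v^\beta,\eta_w^\beta)}$. Then \eqref{eq:quantum-classes-commutation} with $t=q^{-1}$ gives exactly $\LambdaR(B_v,B_w)=E^\beta(\eta_v^\beta,\eta_w^\beta)$.

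Second, you should say why only the pure swap path contributes to the leading coefficient. Each Levendorskii--Soibelman replacement of $P_d^\beta P_c^\beta$ by monomials supported in $(d,c)$ lowers the rightmost changed coordinate. All other contributions are therefore $\prec_{\mathrm r}$-smaller. This suffices because $\prec_{\mathrm r}$ is a well-order on $\ZZ_{\geq0}^r$ compatible with addition.
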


\begin{proof}
For each \(v\in V_\beta\), set
\(\eta_v:=\eta_v^\beta=\mathbf a^\beta(A_v)\). Then
\eqref{eq:eta-v-right-weave-recursion} becomes
\begin{equation}\label{eq:eta-B-recursion}
    \eta_v
    =
    E_{q_v}
    +
    \sum_{u<v}d_{u,v}\eta_u.
\end{equation}

We prove the statement by strong induction on the larger of the two
vertices. Fix \(v\in V_\beta\), and assume that
\eqref{eq:Lambda-Bv-Bw-E-beta} holds for every pair of vertices whose
larger vertex is strictly smaller than \(v\). Let \(w<v\).

By Proposition~\ref{pro:LusztigparaBv}, the family
\(\{B_u\}_{u\in V_\beta}\) is pairwise strongly commuting. Moreover,
\cite[Lemma~2.24]{kashiwara2023pbw} implies that
\((C_{q_v}^\beta,M_v,B_w)\) is a normal sequence. Since
\(B_v=C_{q_v}^\beta\nabla M_v\) and
\(M_v=\overleftarrow{\bigotimes}_{u<v}
B_u^{\otimes d_{u,v}}\), the additivity of the \(R\)-matrix invariant
along normal sequences gives
\begin{equation}\label{eq:Lambda-recursion}
    \LambdaR(B_v,B_w)
    =
    \LambdaR(C_{q_v}^\beta,B_w)
    +
    \sum_{u<v}d_{u,v}\LambdaR(B_u,B_w).
\end{equation}

By the construction of the right inductive weave, \(w<v\) implies
\(q_w<q_v\). Proposition~\ref{pro:LusztigparaBv} and the PBW
classification of simple modules yield
\begin{equation}\label{eq:PBW-realization-Bw}
    B_w
    \simeq
    \operatorname{hd}\left(
        (C_{q_w}^\beta)^{\otimes(\eta_w)_{q_w}}
        \otimes\cdots\otimes
        (C_1^\beta)^{\otimes(\eta_w)_1}
    \right).
\end{equation}
The corresponding ordered sequence of affine cuspidal modules is
normal. Therefore, using \eqref{eq:LambdaCa}, we obtain
\begin{align}
    \LambdaR(C_{q_v}^\beta,B_w)
    &=
    \sum_{a=1}^{q_w}
    (\eta_w)_a
    \LambdaR(C_{q_v}^\beta,C_a^\beta)
    \nonumber\\
    &=
    \sum_{a=1}^{q_w}
    (\eta_w)_a\lambda_{q_v,a}^\beta
    =
    E^\beta(E_{q_v},\eta_w).
    \label{eq:Lambda-root-B}
\end{align}

For every \(u<v\) with \(u\neq w\), both \(u\) and \(w\) are strictly
smaller than \(v\). Hence the induction hypothesis gives
\(\LambdaR(B_u,B_w)=E^\beta(\eta_u,\eta_w)\). If \(u=w\), both sides
vanish: \(B_w\) is real, so \(\LambdaR(B_w,B_w)=0\), whereas the
skew-symmetry of \(E^\beta\) gives
\(E^\beta(\eta_w,\eta_w)=0\).

Substituting these identities and \eqref{eq:Lambda-root-B} into
\eqref{eq:Lambda-recursion}, we find
\begin{align*}
    \LambdaR(B_v,B_w)
    &=
    E^\beta(E_{q_v},\eta_w)
    +
    \sum_{u<v}d_{u,v}E^\beta(\eta_u,\eta_w)\\
    &=
    E^\beta\left(
        E_{q_v}
        +
        \sum_{u<v}d_{u,v}\eta_u,
        \eta_w
    \right)
    =
    E^\beta(\eta_v,\eta_w),
\end{align*}
where the last equality follows from
\eqref{eq:eta-B-recursion}. This proves the assertion for \(w<v\).

If \(v<w\), applying the preceding case to \((w,v)\) gives
\(\LambdaR(B_w,B_v)=E^\beta(\eta_w,\eta_v)\). Since \(B_v\) and \(B_w\)
strongly commute, the \(R\)-matrix invariant is skew-symmetric on this
pair. Hence
\begin{align*}
    \LambdaR(B_v,B_w)
    &=
    -\LambdaR(B_w,B_v)
    =
    -E^\beta(\eta_w,\eta_v)\\
    &=
    E^\beta(\eta_v,\eta_w).
\end{align*}
Finally, when \(v=w\), both sides are zero. This completes the
induction.
\end{proof}

We now assume that \(\delta(\beta)=w\). Set
\begin{equation*}
    N:=r-\ell(w),
    \qquad
    V_\beta=\{v_1<\cdots<v_N\}.
\end{equation*}
Since
\(
    \operatorname{supp}(\eta_{v_j})
    \subseteq
    \{q_{v_1},\ldots,q_{v_j}\},
\)
we identify the sublattice
\begin{equation*}
    \bigoplus_{j=1}^N\ZZ E_{q_{v_j}}
    \subseteq\ZZ^r
\end{equation*}
with \(\ZZ^N\). Under this identification, we continue to write
\(\eta_{v_j}\) for the image of \(\eta_{v_j}\), and denote by \(E_j\)
the \(j\)-th standard basis vector of \(\ZZ^N\).

Define the strictly upper-triangular matrix
\begin{equation*}
    A_\beta
    =
    (a_{ij})_{1\leq i,j\leq N},
    \qquad
    a_{ij}
    :=
    \begin{cases}
        \gamma_{v_i}^{\overrightarrow{\mathfrak W}(\beta)}
        (e_{v_j,\mathrm{nw}}),
            & i<j,\\
        0,
            & i\geq j.
    \end{cases}
\end{equation*}
Let
\begin{equation*}    H_\beta
    :=
    \begin{pmatrix}
        \eta_{v_1}&\cdots&\eta_{v_N}
    \end{pmatrix}.
\end{equation*}
The recursion \eqref{eq:eta-B-recursion} is equivalent to
\( H_\beta=I_N+H_\beta A_\beta.
\)
Since \(A_\beta\) is nilpotent, \(I_N-A_\beta\) is
invertible over \(\ZZ\), and
\begin{equation}\label{eq:H-beta-inverse}
    H_\beta=(I_N-A_\beta)^{-1}.
\end{equation}

Let
\begin{equation*}
    \widehat L^\beta
    :=
    \bigl(
        \lambda_{q_{v_i},q_{v_j}}^\beta
    \bigr)_{1\leq i,j\leq N}
\end{equation*}
be the principal submatrix of \(L^\beta\) indexed by
\(\{q_{v_1},\ldots,q_{v_N}\}\). Proposition~\ref{pro:LambdaBv} and
\eqref{eq:H-beta-inverse} give
\begin{align}
    \LambdaR(B_{v_i},B_{v_j})
    &=
    \eta_{v_i}^{\mathsf T}
    \widehat L^\beta\eta_{v_j}
    \nonumber\\
    &=
    E_i^{\mathsf T}
    (I_N-A_\beta)^{-\mathsf T}
    \widehat L^\beta
    (I_N-A_\beta)^{-1}E_j.
    \label{eq:LambdaBvbv'}
\end{align}
We therefore define
\begin{equation}\label{eq:Lambda-beta-definition}
    \Lambda_\beta
    :=
    (I_N-A_\beta)^{-\mathsf T}
    \widehat L^\beta
    (I_N-A_\beta)^{-1}.
\end{equation}
Equivalently,
\begin{equation*}
    (\Lambda_\beta)_{ij}
    =
    \LambdaR(B_{v_i},B_{v_j}).
\end{equation*}

\begin{proposition}\label{pro:compatible-Lambda-beta}
Let \(\beta\) be a positive braid word, and let \(B_\beta\) be the
extended exchange matrix of
\(\overrightarrow{\mathbf s}_t(\beta)\). Then
\((\Lambda_\beta,B_\beta)\) is a compatible pair; equivalently,
\begin{equation*}
    (-\Lambda_\beta)B_\beta=2I
\end{equation*}
on the exchangeable columns.
\end{proposition}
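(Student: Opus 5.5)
The plan is to identify \(\Lambda_\beta\) with the compatible form already attached to \(\overrightarrow{\mathbf s}_t(\beta)=\mathbf s_t(\delta(\beta),\beta)\), rather than to verify compatibility by direct matrix computation.

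First, by Definition~\ref{def:svbeta} and Lemma~\ref{lem:right-inductive-seed-restriction}, the quantum seed \(\overrightarrow{\mathbf s}_t(\beta)\) carries the compatible pair \((\Lambda_{\delta(\beta),\beta},B_{\delta(\beta),\beta})=(\Lambda_{\delta(\beta),\beta},B_\beta)\). This compatibility comes from the compatibility of \((\widetilde B_\beta,\widetilde\Lambda_\beta)\), preservation under mutation, and the vanishing of the \(D_\beta\times V_{\beta,\mathrm{ex}}\) block. It therefore suffices to show \(\Lambda_\beta=\Lambda_{\delta(\beta),\beta}\) entrywise.

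Second, I would identify \(\Lambda_{\delta(\beta),\beta}\) with \(R\)-matrix invariants. The proof of Theorem~\ref{thm:quantum-braid-seed-embedding} shows that \(S^{\DD}_{\delta(\beta),\beta}\) is a \(\Lambda\)-admissible, quantizable monoidal seed whose quantum seed is \(\mathbf s_t(\delta(\beta),\beta)\). By Theorem~\ref{thm:word-seed-monoidal-categorification}, \(\Lambda^{S^{\DD}(\beta)}=\widetilde\Lambda_\beta\), and \(\Lambda\)-mutations transform \(\Lambda^S\) by \(E_k^T\Lambda E_k\). The latter holds because of additivity of \(\LambdaR\) along the exchange sequences, or equivalently because the quantum exchange relation in the \(q\)-commuting torus forces it, via \eqref{eq:quantum-classes-commutation}. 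Consequently \((\Lambda_{\delta(\beta),\beta})_{uv}=\LambdaR(M_u,M_v)\) for the cluster-variable modules \(M_v\) at \(v\in V_\beta\).

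Third, Proposition~\ref{pro:LusztigparaBv} gives \(M_v\simeq B_v\). This uses that the Lusztig parameters agree, and the PBW classification (Proposition~\ref{pro:lusztig-parameter-modules}) makes simple modules with equal parameters isomorphic. Hence \((\Lambda_{\delta(\beta),\beta})_{v_iv_j}=\LambdaR(B_{v_i},B_{v_j})=(\Lambda_\beta)_{ij}\) by \eqref{eq:LambdaBvbv'}. Compatibility in the form \(B_\beta^{\mathsf T}\Lambda_\beta=2I\) then follows from \eqref{eq:compatibility}, and skew-symmetry rewrites it as \((-\Lambda_\beta)B_\beta=2I\).

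The main obstacle is the second step: verifying that the abstract mutated form \(M_m(\widetilde\Lambda_\beta)\) restricted to \(V_\beta\) really equals \(\LambdaR\) of the mutated modules, with the paper's sign conventions (\(t^{1/2}=q^{-1/2}\)) matched. I would first try to cite \eqref{eq:monoidal-seed-compatibility} directly for the \(\Lambda\)-admissible restricted seed: \(\Lambda^S\) is compatible with \(\widetilde B\). That seed is \(\Lambda\)-admissible by Lemma~\ref{lem:subseed}, so the argument can bypass tracking the mutation of forms, and it yields the claim immediately once \(M_v\simeq B_v\) is known.
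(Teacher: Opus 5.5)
Your proposal is correct. The argument you ultimately settle on is the paper's own. The paper applies \eqref{eq:monoidal-seed-compatibility} to the full mutated seed $\widetilde{\mathcal S}=T\bigl(S^{\DD}(\beta)\bigr)$, with $T=M_m$. It then uses Proposition~\ref{pro:LusztigparaBv} to identify the modules of that seed on $J=V_\beta$ with the $B_v$, so that $\Lambda_\beta$ is the $J\times J$ block of $\bigl(\LambdaR(\widetilde M_i,\widetilde M_j)\bigr)$. Finally it discards the rows indexed by the deleted set $D$, using $\widetilde b_{dk}=0$ for $k\in V_{\beta,\mathrm{ex}}$. Restricting first and invoking Lemma~\ref{lem:subseed}, as you propose, is the same computation.

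Your primary route through Steps 1--3 is genuinely different. It proves the stronger equality $\Lambda_\beta=\Lambda_{\delta(\beta),\beta}=\bigl(M_m(\widetilde\Lambda_\beta)\bigr)_{V_\beta\times V_\beta}$ and then reads compatibility off Definition~\ref{def:svbeta}. Its extra input is that the $R$-matrix form of a mutated monoidal seed is the abstractly mutated form, i.e.\ that a $\Lambda$-mutation replaces $\Lambda^S$ by $E_k^{\mathsf T}\Lambda^SE_k$. The paper uses this only implicitly, in the proof of Theorem~\ref{thm:quantum-braid-seed-embedding}. It does hold with the paper's sign conventions. For $j\neq k$, the submodule $M_k'\nabla M_k$ of the exchange sequence gives $\LambdaR(M_k',M_j)=-\LambdaR(M_k,M_j)+\sum_i[b_{ik}]_+\LambdaR(M_i,M_j)$, which is exactly the $(k,j)$ entry of $E_k^{\mathsf T}\Lambda^SE_k$.

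What the primary route buys is more than compatibility. Once there are at least two frozen vertices, $B_\beta$ does not determine a compatible form. So your equality, and not compatibility alone, is what actually identifies the form of $[\mathcal T^{\DD}(\beta)]_t$ with that of $\overrightarrow{\mathbf s}_t(\beta)$ in \eqref{eq:quantum-seed-associated-Bv}.
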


\begin{proof}
As in equation \eqref{eq:word-monoidal-seed}, let
\(
    S^{\DD}(\beta)
\)
be the completely \(\Lambda\)-admissible and quantizable monoidal seed
associated with the right inductive weave
\(\overrightarrow{\mathfrak W}(\boldsymbol{\Delta}\beta)\) . The corresponding quantum seed is $\mathbf{s}_t(\beta)$. Apply the
mutation sequence \(T\) equation~\eqref{eq:mutation-sequence-T} that moves the letters of
\(\boldsymbol{\Delta}\) from the left boundary to the final left
insertions.
Let
\(
    \widetilde{\mathcal S}
    =
    \bigl(
        \{\widetilde M_i\}_{i\in K},
        \widetilde B;
        K,K_{\mathrm{ex}}
    \bigr)
\)
be the monoidal seed associated with the resulting double inductive
weave $\mathfrak W_{[m]}$. Let
\(D\subseteq K\) be the set of vertices corresponding to
\(i_{p_m}^*L,\ldots,i_{p_1}^*L\), and set \(J:=K\setminus D\). Write
\begin{equation*}
    \widetilde B
    =(\widetilde b_{ik})_{
       i\in K,\;k\in K_{\mathrm{ex}}},
    \qquad
    \widetilde\Lambda
    =(\widetilde\lambda_{ij})_{i,j\in K},
    \qquad
    \widetilde\lambda_{ij}
    :=\LambdaR(\widetilde M_i,\widetilde M_j).
\end{equation*}

The seed \(\widetilde{\mathcal S}\) is obtained by mutations from a
completely \(\Lambda\)-admissible monoidal seed and is therefore
completely \(\Lambda\)-admissible. Under our convention for the
\(R\)-matrix invariant, its compatibility relation is
\begin{equation}\label{eq:full-seed-compatibility}
    \bigl((-\widetilde\Lambda)\widetilde B\bigr)_{jk}
    =2\delta_{jk}
    \qquad
    (j\in K,\;k\in K_{\mathrm{ex}}).
\end{equation}

The seed \(\overrightarrow{\mathbf s}_t(\beta)\) is obtained from
\(\widetilde{\mathcal S}\) by deleting the vertices in \(D\) and
freezing every retained vertex adjacent to \(D\) by Lemma
\ref{lem:right-inductive-seed-restriction} and the definition of
\(\mathbf s_t(\delta(\beta),\beta)\). Let
\(J_{\mathrm{ex}}\subseteq J\) be the resulting set of exchangeable
vertices.  We have
\begin{equation}\label{eq:B-beta-restriction}
    B_\beta
    =\widetilde B_{J\times J_{\mathrm{ex}}},
    \qquad
    \widetilde b_{dk}=0
    \quad
    (d\in D,\;k\in J_{\mathrm{ex}}).
\end{equation}
By Lemma~\ref{lem:Lusztig-cycle-h-block-invariance}, after the vertices in \(D\) are deleted, the remaining
undecorated vertices are exactly those created by the
\((i_1\mathrm R,\ldots,i_r\mathrm R)\)-block. By the
right-inductive construction, this retained set is naturally identified
with \(V_\beta\); hence \(J=V_\beta\) under this identification.

By Proposition~\ref{pro:LusztigparaBv}, the cluster-variable modules
\(\widetilde M_j\), \(j\in J\), are precisely the modules
\(B_v\), \(v\in V_\beta\). Consequently,
\(\Lambda_\beta=\widetilde\Lambda_{J\times J}\). For
\(j\in J\) and \(k\in J_{\mathrm{ex}}\), skew-symmetry of
\(\widetilde\Lambda\), together with
\eqref{eq:full-seed-compatibility} and
\eqref{eq:B-beta-restriction}, gives
\begin{align*}
    \bigl((-\Lambda_\beta)B_\beta\bigr)_{jk}
    &=-\sum_{i\in J}
      \widetilde\lambda_{ji}\widetilde b_{ik}\\
    &=\sum_{i\in J}
      \widetilde\lambda_{ij}\widetilde b_{ik}\\
    &=\sum_{i\in K}
      \widetilde\lambda_{ij}\widetilde b_{ik}
      =2\delta_{jk}.
\end{align*}
Therefore, \((\Lambda_\beta,B_\beta)\) is a compatible pair.
\end{proof}

\begin{theorem}\label{thm:monoidal-seed-beta}
Let \(\beta\) be a positive braid word, and let \(\DD=\DD_{\mathcal Q}\) be the duality datum associated with a \(Q\)-datum \(\mathcal Q\).
Denote by \(V_{\beta,\mathrm{ex}}\subseteq V_\beta\) the exchangeable
vertex set of the right-inductive seed. Then
\begin{equation}\label{eq:monoidal-seed-beta-definition}
    \mathcal T^{\DD}(\beta)
    :=
    \left(
        \{B_v\}_{v\in V_\beta},
        B_\beta;
        V_\beta,V_{\beta,\mathrm{ex}}
    \right)
\end{equation}
is a completely \(\Lambda\)-admissible and quantizable monoidal seed
in
\begin{equation*}
    \mathscr D_{\DD}(\beta)
    =
    \mathscr C_{\DD}(\beta)
    \cap\mathscr C_{\DD}[1,\infty).
\end{equation*}
Here \(\mathcal T^{\DD}(\beta)\) denotes the restricted
right-inductive monoidal seed indexed by \(V_\beta\);
Its associated quantum seed is
\begin{equation}\label{eq:quantum-seed-associated-Bv}
    [\mathcal T^{\DD}(\beta)]_t
    =
    \left(
        \{[B_v]_t\}_{v\in V_\beta},
        B_\beta, \Lambda_\beta,
        V_{\beta,\mathrm{ex}}
    \right)
    =
    \overrightarrow{\mathbf s}_t(\beta).
\end{equation}
\end{theorem}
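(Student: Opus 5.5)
The plan is to obtain \(\mathcal T^{\DD}(\beta)\) as a restriction of a mutated word seed and then apply Lemma~\ref{lem:subseed}. Start from the seed \(S^{\DD}(\boldsymbol{\Delta}\beta)\) of \eqref{eq:word-monoidal-seed}. By Theorems~\ref{thm:Lambda-admissible-seed} and \ref{thm:word-seed-monoidal-categorification} it is completely \(\Lambda\)-admissible and quantizable, and every seed in its mutation class is quantizable, since all simple modules are quantizable in type \(ADE\) by Remark~\ref{rem:quantizable}. Next apply the mutation sequence \(T\) of \eqref{eq:mutation-sequence-T}, which agrees with \(M_m\). The result is the monoidal seed \(\widetilde{\mathcal S}\) attached to \(\mathfrak W_{[m]}\), exactly as in the proof of Proposition~\ref{pro:compatible-Lambda-beta}. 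Since it is reached by \(\Lambda\)-mutations, \(\widetilde{\mathcal S}\) is again completely \(\Lambda\)-admissible.

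Next, restrict to \(J=V_\beta\), taking \(J_{\mathrm{ex}}=V_{\beta,\mathrm{ex}}\). The vanishing condition \eqref{eq:restriction-zero-condition} is exactly \eqref{eq:B-beta-restriction}, i.e.\ the zero block \((\widetilde B^{(m)}_\beta)_{D_\beta\times V_{\beta,\mathrm{ex}}}=0\) from Definition~\ref{def:svbeta}. The extra freezing of vertices adjacent to \(D\) only shrinks \(J_{\mathrm{ex}}\), which Lemma~\ref{lem:subseed} allows. Hence the restricted seed is completely \(\Lambda\)-admissible. By Lemma~\ref{lem:right-inductive-seed-restriction} its exchange matrix is \(B_\beta\). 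By Proposition~\ref{pro:LusztigparaBv} its cluster-variable modules are isomorphic to the \(B_v\), because they have the same Lusztig parameters and Proposition~\ref{pro:lusztig-parameter-modules} gives uniqueness. The restricted seed is therefore \(\mathcal T^{\DD}(\beta)\) up to isomorphism of modules. This isomorphism does not affect admissibility, since mutation modules are unique up to isomorphism.

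Then identify the quantum seed. Its commutation matrix has entries \(\LambdaR(B_v,B_w)=(\Lambda_\beta)_{vw}\) by Proposition~\ref{pro:LambdaBv} and \eqref{eq:Lambda-beta-definition}, and it is compatible with \(B_\beta\) by Proposition~\ref{pro:compatible-Lambda-beta}. By Theorem~\ref{thm:quantum-braid-seed-embedding} the classes \([B_v]_t\) are the quantum cluster variables of \(\mathbf s_t(\delta(\beta),\beta)=\overrightarrow{\mathbf s}_t(\beta)\). Lemma~\ref{lem:Nm1m2} shows that the \(q\)-commutation exponents among these classes are the \(\LambdaR\) values, which establishes \eqref{eq:quantum-seed-associated-Bv}.

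The main obstacle is showing that every module in the mutation class lies in \(\mathscr D_{\DD}(\beta)\), rather than only in \(\mathscr B_{\DD}(\beta)\) or the ambient \(\mathscr C^{\ZZ}\). For this I would use Corollary~\ref{cor:right-weave-cluster-variables-level-one}: every quantum cluster variable of \(\overline{\mathcal A}_t(\overrightarrow{\mathbf s}_t(\beta))\) maps under \(\Phi_{\DD}\circ\iota\) into \(\mathbf A(\beta)\). By the based property of \(\Phi_{\DD}\) and \eqref{eq:Phi-D-level-one-intersection}, a simple module whose normalized class is a global basis element of \(\mathbf A(\beta)\) lies in \(\mathscr D_{\DD}(\beta)\). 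Applying this to each cluster-variable module gives the claim. Closure of \(\mathscr D_{\DD}(\beta)\) under tensor products and subquotients then places the exchange sequences \eqref{eq:monoidal-mutation} inside \(\mathscr D_{\DD}(\beta)\), so the seed is completely \(\Lambda\)-admissible there.
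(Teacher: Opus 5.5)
Your argument is essentially the paper's proof. Both mutate a completely \(\Lambda\)-admissible word seed by \(T=M_m\), restrict to \(V_\beta\) via Lemma~\ref{lem:subseed}, identify the modules, exchange matrix and form through Propositions~\ref{pro:LusztigparaBv}, \ref{pro:LambdaBv} and \ref{pro:compatible-Lambda-beta}, and get membership in \(\mathscr D_{\DD}(\beta)\) from Corollary~\ref{cor:right-weave-cluster-variables-level-one}; your explicit use of the based property of \(\Phi_{\DD}\) to pass from classes to the modules themselves is a useful added detail.

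One correction: you must start from the word seed \(S^{\DD}(\beta)\) of \(\beta\), which the paper identifies with the seed of \(\overrightarrow{\mathfrak W}(\boldsymbol\Delta\beta)\). The word seed \(S^{\DD}(\boldsymbol\Delta\beta)\) of the concatenated word does not work: it has \(\ell(w_0)+r\) vertices, \(T=M_m\) does not act on it, and it lives in \(\mathscr C_{\DD}(\boldsymbol\Delta\beta)\) rather than \(\mathscr C_{\DD}(\beta)\).
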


\begin{proof}
Consider the monoidal seed
\begin{equation*}
    \widetilde{\mathcal S}
    :=
    T\bigl(S^{\DD}(\beta)\bigr).
\end{equation*}
Complete \(\Lambda\)-admissibility and quantizability are preserved by
mutation, so \(\widetilde{\mathcal S}\) has both properties.

As in the proof of Proposition~\ref{pro:compatible-Lambda-beta}, the right
inductive seed for \(\beta\) is obtained from
\(\widetilde{\mathcal S}\) by deleting the final left-insertion
vertices and freezing every retained vertex adjacent to a deleted
vertex. Consequently, the vanishing condition
\begin{equation*}
    \widetilde b_{dk}=0
    \qquad
    (d\in D,\ k\in V_{\beta,\mathrm{ex}})
\end{equation*}
holds. Lemma~\ref{lem:subseed} therefore shows that the restricted
monoidal seed is completely \(\Lambda\)-admissible.

By Corollary \ref{cor:right-weave-cluster-variables-level-one}, every cluster-variable module
reachable from this restricted seed belongs to
\begin{equation*}
    \mathscr C_{\DD}(\beta)
    \cap\mathscr C_{\DD}[1,\infty)
    =
    \mathscr D_{\DD}(\beta).
\end{equation*}
Thus the restricted seed and all of its successive mutations are
realized inside \(\mathscr D_{\DD}(\beta)\).

Proposition~\ref{pro:LusztigparaBv} identifies the cluster-variable
module attached to \(v\in V_\beta\) with \(B_v\), while the exchange
matrix of the restricted seed is \(B_\beta\). Moreover,
\eqref{eq:LambdaBvbv'} gives
\begin{equation*}
    \LambdaR(B_{v_i},B_{v_j})
    =
    (\Lambda_\beta)_{ij}.
\end{equation*}
Hence the restricted monoidal seed is precisely
\(\mathcal T^{\DD}(\beta)\). With the quantum commutation convention
used in this paper, its commutation matrix is
\(\Lambda_\beta\). Proposition~\ref{pro:compatible-Lambda-beta} shows that
this matrix is compatible with \(B_\beta\), and therefore the
associated quantum seed is exactly
\(\overrightarrow{\mathbf s}_t(\beta)\). This proves the theorem.
\end{proof}

\subsection{Monoidal categorifications}

In this subsection, we assume that the Cartan matrix is of type
\(A\), \(D\), or \(E\), and impose the following standing assumption:
\begin{equation}\label{assumption}
\begin{gathered}
\text{\bfseries Assumption.}\quad
\text{\(U_{\qaff}'(\widehat{\fg})\) is of untwisted affine type,}
\text{ and }\DD=\DD_{\mathcal Q}\\
\text{ is the complete strong duality datum associated}
\text{with a }Q\text{-datum }\mathcal Q.
\end{gathered}
\end{equation}
We establish a monoidal categorification of the cluster algebra
associated with a braid variety. The proof has three steps.
First, a descent argument derives the case of \(\beta\) from that of
\(\beta i\). Second, we prove that two positive braid words
\(\beta\) and \(\beta'\) representing the same element
\(b\in\operatorname{Br}^{+}\) give rise to the same cluster algebra
and categorification. Finally, we establish the based monoidal
categorification for braid words of the special form
\(\boldsymbol{\Delta}\beta\). Combining these three steps proves the
main theorem.

\subsubsection{Inductive step}
We begin with a
lemma concerning the frozen cluster-variable modules arising from a
Demazure descent.

If \(J\) is a subset of the exchangeable vertices of a seed \(\mathbf t\),
we write \(\operatorname{Fr}_J\mathbf t\) for the seed obtained by keeping
the cluster and compatible form unchanged and replacing the exchangeable
set \(K_{\mathrm{ex}}\) by \(K_{\mathrm{ex}}\setminus J\). Thus
\(\operatorname{Fr}_J\) means \emph{freezing}.

\begin{lemma}\label{lem:commuting-factors-last-vertex}
Let \(\beta=(i_1,\ldots,i_r)\) be a positive braid word satisfying
\(\delta(\beta i)=\delta(\beta)\), and let \(v\) be the trivalent
vertex of \(\overrightarrow{\mathfrak W}(\beta i)\) corresponding to
the final letter \(i\). Suppose that \(B_v\) strongly commutes with
every simple object of \(\mathscr D_{\DD}(\beta i)\). Then every
\(v'\in\overrightarrow{\mathfrak W}(\beta)_3\) satisfying
\[
    \gamma_{v'}^{\overrightarrow{\mathfrak W}(\beta i)}
    (e_{v,\mathrm{nw}})\neq0
\]
has the property that \(B_{v'}\) strongly commutes with every simple
object of \(\mathscr D_{\DD}(\beta)\).
\end{lemma}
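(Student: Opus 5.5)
The plan is to exploit the recursive definition \(B_v=C^{\beta i}_{r+1}\nabla M_v\), where \(M_v=\overleftarrow{\bigotimes}_{u<v}B_u^{\otimes d_{u,v}}\) and \(d_{u,v}=\gamma_u(e_{v,\mathrm{nw}})\). First we check that for \(u\in\overrightarrow{\mathfrak W}(\beta)_3\) the module \(B_u\) is the same whether it is built from \(\beta\) or from \(\beta i\). The right inductive weave of \(\beta i\) contains that of \(\beta\) as its upper part, and the affine cuspidal modules \(C_k^{\beta i}=C_k^{\beta}\) agree for \(k\le r\). Hence the recursion \eqref{eq:Mv-Bv-definition} produces the same modules, and Proposition~\ref{pro:LusztigparaBv} together with Lemma~\ref{lem:global-basis-extension-by-zero} confirms that their Lusztig parameters agree after zero extension. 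We also have \(\mathscr D_{\DD}(\beta)\subseteq\mathscr D_{\DD}(\beta i)\), because \(\mathscr C_{\DD}(\beta)\subseteq\mathscr C_{\DD}(\beta i)\).

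Next, fix a simple object \(X\in\mathscr D_{\DD}(\beta)\) and a vertex \(v'\) with \(d_{v',v}>0\). By hypothesis \(\mathfrak d(B_v,X)=0\), and we want \(\mathfrak d(B_{v'},X)=0\). We would write \(B_v\) as the head of the normal sequence \((C_{r+1}^{\beta i},M_v)\), and \(M_v\) as a strongly commuting product containing \(B_{v'}\) as a factor. The aim is to split \(\mathfrak d(B_v,X)\) as a sum of nonnegative contributions, one of which dominates \(\mathfrak d(B_{v'},X)\). The natural tool is the unmixedness of \(C_{r+1}^{\beta i}\) relative to objects of \(\mathscr C_{\DD}(\beta)\), from \cite[Theorem~5.16]{kashiwara2025monoidal}. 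Since \(C_{r+1}^{\beta i}\) occupies the last position, this gives \(\LambdaR(C_{r+1},X)\) and \(\LambdaR(X,C_{r+1})\) in the unmixed direction. With Lemma~\ref{lem:normal-sequence} applied to the normal sequences \((X,C_{r+1},M_v)\) and \((C_{r+1},M_v,X)\), this yields additivity:
\[
\LambdaR(X,B_v)=\LambdaR(X,C_{r+1})+\LambdaR(X,M_v),\qquad \LambdaR(B_v,X)=\LambdaR(C_{r+1},X)+\LambdaR(M_v,X).
\]
Adding the two identities, and using \(\mathfrak d(C_{r+1},X)=0\) (a consequence of the strongly unmixed property in the appropriate order, or, failing that, of \(C_{r+1}\) being the final PBW root and \(X\) lying in the earlier block), we get
\[
0=\mathfrak d(B_v,X)=\mathfrak d(M_v,X)=\sum_{u}d_{u,v}\,\mathfrak d(B_u,X).
\]
The last equality is additivity for the strongly commuting real factors of \(M_v\). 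Since every term is nonnegative, \(\mathfrak d(B_{v'},X)=0\) whenever \(d_{v',v}\ne0\).

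The main obstacle is the middle step: justifying normality of both orderings and the vanishing \(\mathfrak d(C_{r+1}^{\beta i},X)=0\) in the form needed. If only one of the two identities follows from normality, the fallback is to use the inequality \eqref{eq:R-matrix-subquotient-inequalities} in reverse. Since \(M_v\simeq\mathcal D^{-1}C_{r+1}\nabla B_v\) by the cancellation Lemma~\ref{lem:cancel} (taken with suitable duals), we would get \(\mathfrak d(M_v,X)\le\mathfrak d(\mathcal D^{-1}C_{r+1},X)+\mathfrak d(B_v,X)\). We would then show \(\mathfrak d(\mathcal D^{-1}C_{r+1},X)=0\) using the level restriction \(X\in\mathscr C_{\DD}[1,\infty)\) and the strong unmixedness of the cuspidal sequence. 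From \(\mathfrak d(M_v,X)=0\), the same additivity over the factors of \(M_v\) finishes the proof.
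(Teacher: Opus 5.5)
Your main route fails at the step you flagged. The problem is not missing justification: the vanishing \(\mathfrak d(C^{\beta i}_{r+1},X)=0\) is false, and so is the splitting \(\mathfrak d(B_v,X)=\mathfrak d(C^{\beta i}_{r+1},X)+\mathfrak d(M_v,X)\).

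\emph{A counterexample.} Take type \(A_1\), \(\beta=(1,1,1)\) and \(i=1\).
\begin{itemize}
\item Here \(C_k=\mathcal D^{k-1}L\), where \(L=L_1^{\DD}\) is a root module, so \(\mathfrak d(\mathcal D^aL,\mathcal D^bL)=\delta_{|a-b|,1}\).
\item \(\mathscr D_{\DD}(\beta)\) is generated by \(\mathcal DL\) and \(\mathcal D^2L\).
\item \(M_v=\mathcal D^2L\nabla\mathcal DL\) and \(B_v=\mathcal D^3L\nabla M_v\) are Kirillov--Reshetikhin modules. \(B_v\) strongly commutes with every simple object of \(\mathscr D_{\DD}(\beta i)\), so the hypothesis of the lemma holds.
\item For \(X=\mathcal D^2L\) one has \(\mathfrak d(B_v,X)=\mathfrak d(M_v,X)=0\), but \(\mathfrak d(C_4,X)=1\).
\end{itemize}
So your two \(\LambdaR\)-identities cannot both hold, since their sum would give \(0=1\). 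In particular the two orderings you need are not both normal. Strong unmixedness of \((C^{\beta i}_{r+1},X)\) only controls \(\mathfrak d(\mathcal D^kC^{\beta i}_{r+1},X)\) for \(k\ge1\). Being the last PBW root says nothing at \(k=0\).

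\emph{The fallback.} It has the right shape but uses the wrong dual, and as written it also fails.
\begin{itemize}
\item Lemma~\ref{lem:cancel} cancels a left factor using the right dual placed on the right: \(M_v\simeq(C^{\beta i}_{r+1}\nabla M_v)\nabla\mathcal DC^{\beta i}_{r+1}=B_v\nabla\mathcal DC^{\beta i}_{r+1}\).
\item Your module \(\mathcal D^{-1}C_{r+1}\nabla B_v\) is not \(M_v\) in general. In the example, \(\mathcal D^{-1}C_4=\mathcal D^2L\) strongly commutes with \(B_v\), so this head is all of \(\mathcal D^2L\otimes B_v\).
\item Your claimed vanishing \(\mathfrak d(\mathcal D^{-1}C_{r+1},X)=0\) fails for \(X=\mathcal DL\), where \(\mathfrak d(\mathcal D^2L,\mathcal DL)=1\). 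Since this \(X\) lies in level \(1\), the level restriction cannot rescue it.
\end{itemize}

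\emph{The fix.} With the right dual the argument goes through, and no level restriction is needed.
\begin{itemize}
\item \(\mathfrak d(\mathcal DC^{\beta i}_{r+1},X)=0\) is exactly the \(k=1\) case of strong unmixedness of \((C^{\beta i}_{r+1},X)\). This follows from the strongly unmixed sequence \((C^{\beta i}_{r+1},C^\beta_r,\ldots,C^\beta_1)\) and its stability under taking heads.
\item Subadditivity then gives \(\mathfrak d(M_v,X)\le\mathfrak d(B_v,X)+\mathfrak d(\mathcal DC^{\beta i}_{r+1},X)=0\).
\end{itemize}
This is the paper's proof.

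\emph{The last step.} The paper does not use additivity of \(\mathfrak d\) over the factors of \(M_v\). It writes \(M_v\otimes X\simeq M'\otimes(B_{v'}\otimes X)\) and observes that a nonzero proper submodule of \(B_{v'}\otimes X\) would give one in the simple module \(M_v\otimes X\). Your additivity claim for strongly commuting factors would need a separate citation.
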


\begin{proof}
We use the natural inclusion
\(
    \mathscr D_{\DD}(\beta)
    \subseteq
    \mathscr D_{\DD}(\beta i).
\)
For \(v'<v\), set
\(
    d_{v',v}
    :=
    \gamma_{v'}^{\overrightarrow{\mathfrak W}(\beta i)}
    (e_{v,\mathrm{nw}})
\)
and define
\[
    M
    :=
    \overleftarrow{\bigotimes}_{v'<v}
    B_{v'}^{\otimes d_{v',v}}.
\]
The modules \(B_{v'}\) occurring in this product are pairwise strongly
commuting real simple modules. Hence \(M\) is real and simple. By the
recursive construction of \(B_v\),
\[
    B_v=L\nabla M,
    \qquad
    L:=C_{r+1}^{\DD,\beta i}.
\]
Let \(\mathcal DL\) be the right dual of \(L\). Since \(L\) is real,
Lemma~\ref{lem:cancel} gives
\[
    B_v\nabla\mathcal DL
    =
    (L\nabla M)\nabla\mathcal DL
    \simeq M.
\]

Let \(N\) be a simple object of \(\mathscr D_{\DD}(\beta)\). By Proposition~\ref{pro:lusztig-parameter-modules}, there exists
\(\mathbf n=(n_1,\ldots,n_r)\in\ZZ_{\geq0}^r\) such that
\[
    N
    \simeq
    \operatorname{hd}\left(
        (C_r^{\DD,\beta})^{\otimes n_r}
        \otimes\cdots\otimes
        (C_1^{\DD,\beta})^{\otimes n_1}
    \right).
\]
By \cite[Theorem~5.16]{kashiwara2025monoidal}, the sequence
\[
    \bigl(
        C_{r+1}^{\DD,\beta i},
        C_r^{\DD,\beta},\ldots,C_1^{\DD,\beta}
    \bigr)
\]
is strongly unmixed. Its stability under taking heads
\cite[Proposition~1.14]{kashiwara2025monoidal} implies that
\((L,N)\) is strongly unmixed, and therefore
\[
    \mathfrak d(\mathcal DL,N)=0.
\]
Moreover, \(N\in\mathscr D_{\DD}(\beta i)\), so the hypothesis gives
\(\mathfrak d(B_v,N)=0\). Using the cancellation isomorphism and the
subadditivity of \(\mathfrak d\), we obtain
\[
\begin{aligned}
    0
    \leq\mathfrak d(M,N)
    &=
    \mathfrak d(B_v\nabla\mathcal DL,N)\\
    &\leq
    \mathfrak d(B_v,N)
    +
    \mathfrak d(\mathcal DL,N)
    =
    0.
\end{aligned}
\]
Thus \(\mathfrak d(M,N)=0\).

Because \(M\) is real, \(\mathfrak d(M,N)=0\) implies that
\(M\otimes N\) is simple. Fix \(v'<v\) with
\(d_{v',v}>0\). The factors occurring in \(M\) strongly commute
pairwise, so they may be reordered and one copy of \(B_{v'}\) may be
placed next to \(N\): up to the harmless grading normalization,
\[
    M\otimes N\simeq M'\otimes(B_{v'}\otimes N)
\]
for a nonzero tensor product \(M'\) of the remaining factors.
If \(B_{v'}\otimes N\) were not simple, it would have a nonzero
proper submodule with nonzero quotient. Tensoring this short exact
sequence by \(M'\) remains exact, and both the submodule and quotient
remain nonzero because tensoring finite-dimensional modules with a
nonzero module is faithful on underlying vector spaces. This would
give a nonzero proper submodule of \(M\otimes N\), contradicting its
simplicity. Hence \(B_{v'}\otimes N\) is simple. Since \(B_{v'}\)
is real, it strongly commutes with \(N\). As \(N\) was arbitrary,
the result follows.
\end{proof}

Let \(V_{\beta,\mathrm{fr}}\) be the set of frozen vertices of the
right-inductive quantum seed
\(\overrightarrow{\mathbf s}_t(\beta)\), and let \(B_v\) be the
cluster-variable module associated with \(v\in V_\beta\). Define
\begin{equation}\label{eq:localized-Kt-D-beta}
    S_\beta
    :=
    \bigl\langle
        [B_v]_t
        \mid v\in V_{\beta,\mathrm{fr}}
    \bigr\rangle,
    \qquad
    \mathcal{K}_t\bigl(\mathscr D_{\DD}(\beta)\bigr)_{\mathrm{loc}}
    :=
    S_\beta^{-1}
    \mathcal{K}_t\bigl(\mathscr D_{\DD}(\beta)\bigr).
\end{equation}
If every frozen module strongly commutes with every simple object of
\(\mathscr D_{\DD}(\beta)\), then their classes are normal and
pairwise \(t\)-commuting. Hence \(S_\beta\) is an Ore multiplicative
set, and the localization in \eqref{eq:localized-Kt-D-beta} is well
defined.
A cluster monomial with nonnegative frozen exponents is represented
by the corresponding tensor product in the unlocalized category.
Frozen inverses are adjoined formally in this localization; such a
localized expression does not itself specify a tensor product with
negative multiplicities.

In the Demazure-descent case, let
\(\beta i=(i_1,\ldots,i_r,i)\) satisfy
\(\delta(\beta i)=\delta(\beta)\), and let \(v_0\) be the trivalent
vertex of \(\overrightarrow{\mathfrak W}(\beta i)\) associated with
the final letter \(i\). Set
\begin{equation}\label{eq:J-beta-i-fr-definition}
    J_{\beta,i,\mathrm{fr}}
    :=
    \left\{
        v\in V_{\beta i,\rm ex}
        \,\middle|\,
        \gamma_v^{\overrightarrow{\mathfrak W}(\beta i)}
        (e_{v_0,\mathrm{nw}})
        \neq 0
    \right\}.
\end{equation}
These are exactly the mutable vertices that must be frozen before
deleting \(v_0\).

\begin{lemma}[Demazure-descent deletion]
\label{lem:Demazure-descent-seed-deletion}
With the preceding notation, one has
\begin{equation}\label{eq:Demazure-descent-frozen-set}
    V_{\beta i}=V_\beta\sqcup\{v_0\},
    \qquad
    V_{\beta i,\mathrm{fr}}\cup J_{\beta,i,\mathrm{fr}}
    =
    V_{\beta,\mathrm{fr}}\cup\{v_0\}.
\end{equation}
Moreover,
\begin{equation}\label{eq:Demazure-descent-seed-deletion}
    \operatorname{Fr}_{J_{\beta,i,\mathrm{fr}}}
    \overrightarrow{\mathbf s}_t(\beta i)
    \setminus\{v_0\}
    =
    \overrightarrow{\mathbf s}_t(\beta)
\end{equation}
as quantum seeds. Finally,
\begin{equation}\label{eq:Demazure-descent-zero-row}
    b_{v_0k}=0
\end{equation}
for every vertex \(k\) that remains mutable after freezing.
\end{lemma}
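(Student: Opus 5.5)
Everything here reduces to comparing the right inductive weaves \(\overrightarrow{\mathfrak W}(\beta)\) and \(\overrightarrow{\mathfrak W}(\beta i)\). Since \(\delta(\beta i)=\delta(\beta)\), the second weave is the first with one extra \(i\)-strand appended on the right and merged into the boundary by a single Demazure move. That move creates exactly one new trivalent vertex \(v_0\), and the part of the weave above the final slice of \(\beta\) is unchanged. Via Lemma~\ref{lem:right-inductive-seed-restriction}, this gives \(V_{\beta i}=V_\beta\sqcup\{v_0\}\).

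For every \(u\in V_\beta\), the Lusztig cycle \(\gamma_u^{\overrightarrow{\mathfrak W}(\beta i)}\) agrees with \(\gamma_u^{\overrightarrow{\mathfrak W}(\beta)}\) above \(v_0\). Below \(v_0\), the trivalent rule changes only the weights on the last \(i\)-edge: the old boundary weight \(a_u:=\gamma_u(e_{v_0,\mathrm{nw}})\) and the new strand's weight \(0\) combine to \(\min\{a_u,0\}=0\). Two consequences follow. First, \(u\) is frozen in \(\beta i\) if and only if it is frozen in \(\beta\) and its boundary support is not confined to that single edge. Second, \(v_0\) is frozen, since \(\gamma_{v_0}\) has weight \(1\) on a bottom edge. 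A vertex \(u\in V_{\beta,\mathrm{fr}}\) that becomes mutable in \(\beta i\) has \(a_u\neq0\), so it lies in \(J_{\beta,i,\mathrm{fr}}\). Conversely, every \(u\) with \(a_u\neq0\) is frozen in \(\beta\). This proves the second identity in \eqref{eq:Demazure-descent-frozen-set}.

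Next, compare the intersection pairings \eqref{eq:weave-exchange-entry}. For \(u,u'\in V_\beta\), the local terms at vertices above \(v_0\) coincide. The new trivalent vertex contributes \(\sharp_{v_0}\) with weights \((a_u,0,0)\) and \((a_{u'},0,0)\), a determinant with two proportional rows, hence \(0\). The boundary term \(\sharp_{\delta(\beta)}\) also agrees, because the output vectors differ only by deleting the weight \(a_u\) on the last edge, whose root \(\rho_m\) is the same on both sides. One point needs care: the boundary term of \(\beta\) includes the slot at that last edge, whereas in \(\beta i\) this weight has been moved to a local term at \(v_0\). I expect the bookkeeping to show that the boundary pairing involving the last slot is cancelled by the local contribution at \(v_0\) only modulo terms between frozen vertices. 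If so, the principal exchangeable block, and all entries \(b_{u,k}\) with \(k\) mutable after freezing \(J_{\beta,i,\mathrm{fr}}\), coincide. This cancellation is the main obstacle, and I would verify it directly from Definition~\ref{def:boundary-intersection} with a reduced word for \(\delta(\beta)\) ending in \(i\). For the compatible form, use Proposition~\ref{pro:LambdaBv}. The vertices \(u\in V_\beta\) have the same Lusztig parameters \(\eta_u\) in \(\beta\) and in \(\beta i\) (zero extension, as in Lemma~\ref{lem:global-basis-extension-by-zero}), and \(L^{\beta i}\) restricts to \(L^\beta\). Hence \(\Lambda_{\beta i}\) restricted to \(V_\beta\) equals \(\Lambda_\beta\), and the cluster variables \(B_u\) are literally the same. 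This gives \eqref{eq:Demazure-descent-seed-deletion}.

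Finally, \eqref{eq:Demazure-descent-zero-row}. Let \(k\) be mutable in \(\operatorname{Fr}_{J_{\beta,i,\mathrm{fr}}}\overrightarrow{\mathbf s}_t(\beta i)\), so \(a_k=0\). Then \(\gamma_k\) vanishes on both incoming edges of \(v_0\), so the local term at \(v_0\) is zero. Since \(\gamma_k\) is mutable, it has zero boundary output and contributes no boundary term. Above \(v_0\), \(\gamma_{v_0}\) vanishes identically, and below \(v_0\) it lives only on the final strand. Consequently \(\sharp(\gamma_{v_0}\cdot\gamma_k)=0\), and therefore \(b_{v_0k}=0\).
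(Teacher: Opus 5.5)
Your route differs from the paper's. The paper obtains the vertex identities, the restricted exchange matrix and the zero row by citing the right-inductive version of \cite[Lemma~4.47 and Remark~4.48]{casals2025cluster}, and then checks only the compatible form, through $B_v^{\beta i}=B_v^\beta$. You instead do the weave combinatorics by hand. Several of your steps are correct: the frozen-set argument, the treatment of $\Lambda$ (zero extension of $\eta_u$, $L^{\beta i}|_{[r]\times[r]}=L^\beta$, Proposition~\ref{pro:LambdaBv}), and the proof of \eqref{eq:Demazure-descent-zero-row}.

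The gap is in the exchange-matrix comparison. You leave it unproved, and the mechanism you propose does not exist. You have just shown that the local term at $v_0$ vanishes for $u,u'\in V_\beta$, so it cannot cancel anything. Nor do the boundary terms agree. Suppose the bottom word of $\overrightarrow{\mathfrak W}(\beta)$ already ends in $i$, and the $\beta$-outputs are $(c_1,\ldots,c_{m-1},a_u)$ and $(c'_1,\ldots,c'_{m-1},a_{u'})$. Then the two boundary pairings differ by
\[
 \tfrac12\Bigl[a_{u'}\sum_{k<m}c_k\langle\rho_k,\rho_m^\vee\rangle
 -a_u\sum_{l<m}c'_l\langle\rho_m,\rho_l^\vee\rangle\Bigr],
\]
which is nonzero in general.

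No cancellation is needed. The entries required for \eqref{eq:Demazure-descent-seed-deletion} are $b_{u,k}=\varepsilon_{k,u}$ with $k\in V_{\beta,\mathrm{ex}}=V_{\beta i,\mathrm{ex}}\setminus J_{\beta,i,\mathrm{fr}}$. For such $k$, $\gamma_k$ vanishes on every bottom edge of both weaves, so both boundary terms are zero. It is also identically zero on everything added below the $\beta$-weave, so every new local term vanishes. The entries that genuinely change join two vertices that are frozen in $\operatorname{Fr}_{J_{\beta,i,\mathrm{fr}}}\overrightarrow{\mathbf s}_t(\beta i)$, and these never enter an extended exchange matrix. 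This is the same zero-output argument you already use for $b_{v_0k}$.

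You also omit a step in the construction. $\overrightarrow{\mathfrak W}(\beta i)$ first appends $4$- and $6$-valent vertices to turn the bottom word of $\overrightarrow{\mathfrak W}(\beta)$ into one ending in $i$. This is harmless, but it should be stated, since your claim that the cycles are unchanged above $v_0$ is not literally true on those vertices. Two facts cover it. The tropical braid rules send a nonzero weight vector to a nonzero one, so frozen status is still read off correctly. And the zero-output argument above kills their local terms in mutable columns.
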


\begin{proof}
The vertex identities, the restriction of the exchange matrix, and
\eqref{eq:Demazure-descent-zero-row} follow from the right-inductive
version of
\cite[Lemma~4.47 and Remark~4.48]{casals2025cluster}. Indeed, \(v_0\)
is the unique new frozen vertex, and its mutable neighbors are
precisely the vertices in \(J_{\beta,i,\mathrm{fr}}\). Deleting
\(v_0\) and freezing these neighbors therefore recovers the quiver of
\(\overrightarrow{\mathbf s}_t(\beta)\).

It remains only to compare the compatible forms. Appending the final
letter \(i\) does not change the modules attached to the vertices in
\(V_\beta\), so
\[
    B_v^{\beta i}=B_v^\beta
    \qquad(v\in V_\beta).
\]
Hence, for \(u,v\in V_\beta\),
\[
    (\Lambda_{\beta i})_{uv}
    =
    \LambdaR(B_u^{\beta i},B_v^{\beta i})
    =
    \LambdaR(B_u^\beta,B_v^\beta)
    =
    (\Lambda_\beta)_{uv}.
\]
Since freezing leaves the compatible form unchanged, this proves
\eqref{eq:Demazure-descent-seed-deletion}.
\end{proof}

\begin{lemma}\label{lem:upper-skew-Laurent-extension}
Let
\(\widetilde{\mathbf s}_t
:=\operatorname{Fr}_{J}
\overrightarrow{\mathbf s}_t(\beta i)\)
be the seed obtained by freezing the vertices in \(J\). Suppose that
\(v_0\) is frozen in \(\widetilde{\mathbf s}_t\) and satisfies
\(b_{v_0k}=0\) for every exchangeable vertex \(k\). Assume further
that deleting \(v_0\) from \(\widetilde{\mathbf s}_t\) gives
\(\overrightarrow{\mathbf s}_t(\beta)\).

Set \(x_{v_0}:=[B_{v_0}]_t\). Then conjugation by \(x_{v_0}\)
restricts to an algebra automorphism
\(\sigma\) of
\(\mathcal U_t(\overrightarrow{\mathbf s}_t(\beta))\), given by
\(\sigma(a)=x_{v_0}ax_{v_0}^{-1}\). Moreover, there is an algebra
isomorphism
\begin{equation}\label{eq:upper-skew-Laurent-extension}
    \mathcal U_t\bigl(
        \operatorname{Fr}_{J}
        \overrightarrow{\mathbf s}_t(\beta i)
    \bigr)
    \simeq
    \mathcal U_t\bigl(
        \overrightarrow{\mathbf s}_t(\beta)
    \bigr)
    [x_{v_0}^{\pm1};\sigma].
\end{equation}
Here the skew Laurent extension is determined by
\(x_{v_0}a=\sigma(a)x_{v_0}\).
\end{lemma}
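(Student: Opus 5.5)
The plan is to work entirely at the level of quantum tori and reduce the statement to a bookkeeping argument about frozen directions. Write \(\widetilde{\mathbf s}_t=\operatorname{Fr}_J\overrightarrow{\mathbf s}_t(\beta i)\), with vertex set \(V_\beta\sqcup\{v_0\}\). Since \(b_{v_0k}=0\) for every exchangeable \(k\) of \(\widetilde{\mathbf s}_t\), and \(v_0\) is frozen, matrix mutation in any exchangeable direction keeps the row of \(v_0\) zero, as in Lemma~\ref{lem:subseed}(1). Consequently, for every seed \(\mathbf t'\sim\widetilde{\mathbf s}_t\), deleting \(v_0\) gives a seed \(\mathbf t'\setminus\{v_0\}\sim\overrightarrow{\mathbf s}_t(\beta)\). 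This is a bijection of mutation classes, because the mutation directions are the same and deletion commutes with mutation. The exchange monomials at any mutable vertex never involve \(x_{v_0}\). Hence the cluster variables of \(\mathbf t'\) at vertices other than \(v_0\) coincide with those of \(\mathbf t'\setminus\{v_0\}\), once the smaller quantum torus is embedded into the larger one via the restricted form \(\Lambda_{V_\beta\times V_\beta}\). Lemma~\ref{lem:Demazure-descent-seed-deletion} shows that this restriction is exactly \(\Lambda_\beta\).

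\emph{Step 1: \(\sigma\) is an automorphism.} In the torus \(\mathcal T(\mathbf t')\) one has \(x_{v_0}X_u=t^{\lambda'_{v_0u}}X_ux_{v_0}\) for each cluster variable \(X_u\) of \(\mathbf t'\). Therefore conjugation by \(x_{v_0}\) preserves the sub-torus \(\mathcal T(\mathbf t'\setminus\{v_0\})\) and acts on it by rescaling monomials by powers of \(t\). For all seeds at once, it is the restriction of one automorphism of the skew field \(\mathcal F_t\). Hence it preserves \(\bigcap_{\mathbf t'}\mathcal T(\mathbf t'\setminus\{v_0\})=\mathcal U_t(\overrightarrow{\mathbf s}_t(\beta))\), and its inverse is conjugation by \(x_{v_0}^{-1}\). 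Because \(x_{v_0}\) is a frozen variable, it is the same element in every seed, so \(\sigma\) is well defined independently of the seed used. Identifying it with \([B_{v_0}]_t\) is just a matter of notation via the monoidal seed of Theorem~\ref{thm:monoidal-seed-beta}.

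\emph{Step 2: the isomorphism.} Each torus factorizes as
\[
\mathcal T(\mathbf t')=\bigoplus_{n\in\ZZ}\mathcal T(\mathbf t'\setminus\{v_0\})\,x_{v_0}^n ,
\]
which is a skew Laurent extension by \(\sigma\). Define the map from \(\mathcal U_t(\overrightarrow{\mathbf s}_t(\beta))[x_{v_0}^{\pm1};\sigma]\) by \(\sum a_nx_{v_0}^n\mapsto\sum a_nx_{v_0}^n\). Its image lies in every \(\mathcal T(\mathbf t')\), hence in \(\mathcal U_t(\widetilde{\mathbf s}_t)\); it is a homomorphism by construction and is injective by uniqueness of the \(x_{v_0}\)-degree decomposition. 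For surjectivity, take \(z\in\mathcal U_t(\widetilde{\mathbf s}_t)\) and expand it as \(\sum_n a_n^{(\mathbf t')}x_{v_0}^n\) in each torus. The key point is that these coefficients do not depend on \(\mathbf t'\). The \(x_{v_0}\)-grading, i.e. the \(v_0\)-coordinate of the degree, is preserved by mutation, since the exchange monomials have zero \(v_0\)-exponent. So the decompositions are compatible inside the graded skew field \(\mathcal F_t\), and \(a_n:=a_n^{(\mathbf t')}\) lies in \(\bigcap_{\mathbf t'}\mathcal T(\mathbf t'\setminus\{v_0\})\).

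The main obstacle is making this grading argument precise. Concretely, I would define a \(\ZZ\)-grading on \(\mathcal F_t\) (or on the union of the tori) by the \(v_0\)-exponent, using the fact that \(x_{v_0}\) normalizes everything. I would then check that the torus embeddings induced by mutation are graded, which holds because the mutation formula~\eqref{eq:quantum-cluster-mutation} has both monomials \(\widetilde X^{\mathbf a_\pm}\) with zero \(v_0\)-component. Once this is done, the homogeneous components of \(z\) are intrinsic, and the proof is complete.
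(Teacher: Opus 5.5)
Your proposal is correct and follows essentially the same route as the paper: deleting \(v_0\) gives a bijection of mutation classes, each torus factors as \(\mathcal T(\mathbf t')=\mathcal T(\mathbf t'\setminus\{v_0\})[x_{v_0}^{\pm1};\sigma]\), and membership in every torus is checked coefficient by coefficient in the unique \(x_{v_0}\)-expansion. The one thing to fix in your ``main obstacle'' is that \(\mathcal F_t\) itself cannot be \(\ZZ\)-graded with \(x_{v_0}\) in degree one (units of a graded domain are homogeneous, but \(1+x_{v_0}\) is a unit of \(\mathcal F_t\)); instead, take the expansion in \(\mathcal F'[x_{v_0}^{\pm1};\sigma]\subseteq\mathcal F_t\), where \(\mathcal F'\) is the skew field of fractions of \(\mathcal T(\overrightarrow{\mathbf s}_t(\beta))\). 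This ring contains every \(\mathcal T(\mathbf t')\) because all non-\(v_0\) cluster variables lie in \(\mathcal F'\), and it is precisely the ``ambient skew Laurent extension'' the paper uses without naming it.
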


\begin{proof}
Write \(\mathbf s_t:=\overrightarrow{\mathbf s}_t(\beta)\). Since
\(b_{v_0k}=0\) for every exchangeable vertex \(k\), mutation in any
exchangeable direction preserves the zero row indexed by \(v_0\).
Thus \(v_0\) remains frozen and is not adjacent to any exchangeable
vertex throughout the mutation class of
\(\widetilde{\mathbf s}_t\). Deleting \(v_0\) therefore induces a
bijection between the mutation classes of
\(\widetilde{\mathbf s}_t\) and \(\mathbf s_t\).

Let \(\mathbf s_t'\) be mutation equivalent to \(\mathbf s_t\), and
let \(\widetilde{\mathbf s}_t'\) be the corresponding seed obtained
by adjoining \(v_0\). Denote the associated based quantum tori by
\(\mathcal T(\mathbf s_t')\) and
\(\mathcal T(\widetilde{\mathbf s}_t')\), respectively. Since
\(x_{v_0}\) does not occur in any exchange relation and
quasi-commutes with every cluster variable of \(\mathbf s_t'\),
conjugation by \(x_{v_0}\) preserves
\(\mathcal T(\mathbf s_t')\). Consequently,
\begin{equation}\label{eq:cluster-torus-skew-extension}
    \mathcal T(\widetilde{\mathbf s}_t')
    =
    \mathcal T(\mathbf s_t')
    [x_{v_0}^{\pm1};\sigma].
\end{equation}

The upper quantum cluster algebra is the intersection of the quantum
tori over the corresponding mutation class. Hence
\(\mathcal U_t(\widetilde{\mathbf s}_t)\) is the intersection of the
algebras
\(\mathcal T(\mathbf s_t')[x_{v_0}^{\pm1};\sigma]\), where
\(\mathbf s_t'\sim\mathbf s_t\).

Every element of the ambient skew Laurent extension has a unique
finite expansion \(\sum_{d\in\ZZ}a_dx_{v_0}^d\). Such an element
belongs to every algebra
\(\mathcal T(\mathbf s_t')[x_{v_0}^{\pm1};\sigma]\) if and only if
each coefficient \(a_d\) belongs to every
\(\mathcal T(\mathbf s_t')\). Therefore,
\(\mathcal U_t(\widetilde{\mathbf s}_t)
=\mathcal U_t(\mathbf s_t)[x_{v_0}^{\pm1};\sigma]\), which proves
\eqref{eq:upper-skew-Laurent-extension}.
\end{proof}

\begin{lemma}\label{lem:degree-zero-triangular-basis}
In the setting of Lemma~\ref{lem:upper-skew-Laurent-extension}, suppose that
\(\widetilde{\mathcal U}
=\mathcal U_t(\overrightarrow{\mathbf s}_t(\beta))
[x_{v_0}^{\pm1};\sigma]\) has a common triangular basis
\(\widetilde{\mathbf L}\) that is pointed at the frozen seed $\operatorname{Fr}_{J}
\overrightarrow{\mathbf s}_t(\beta i)$. Give
\(x_{v_0}\) degree \(1\) and every variable of
\(\overrightarrow{\mathbf s}_t(\beta)\) degree \(0\). Then every element
of \(\widetilde{\mathbf L}\) is homogeneous, and
\begin{equation}\label{eq:degree-zero-common-triangular-basis}
    \mathbf L_0
    :=\{L\in\widetilde{\mathbf L}\mid\deg_{v_0}(L)=0\}
\end{equation}
is a common triangular basis of
\(\mathcal U_t(\overrightarrow{\mathbf s}_t(\beta))\).
\end{lemma}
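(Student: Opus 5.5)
The plan is to exploit the fact that, in the frozen seed $\widetilde{\mathbf s}_t:=\operatorname{Fr}_J\overrightarrow{\mathbf s}_t(\beta i)$, the vertex $v_0$ is frozen and isolated from all exchangeable vertices (Lemma~\ref{lem:upper-skew-Laurent-extension}). The $\ZZ$-grading $\deg_{v_0}$ is then compatible with every seed in the mutation class.

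\textbf{Homogeneity.} In each seed $\widetilde{\mathbf s}_t'\sim\widetilde{\mathbf s}_t$, the dominance order moves $\mathbf g$ to $\mathbf g-\widetilde B\mathbf n$. The $v_0$-row of $\widetilde B$ vanishes, and this vanishing is preserved by mutation. Hence every monomial occurring in a pointed element $L$ has the same $v_0$-exponent $g_{v_0}(L)$, so $L$ is homogeneous of degree $g_{v_0}(L)$. This degree agrees with the $x_{v_0}$-degree in the skew Laurent expansion \eqref{eq:cluster-torus-skew-extension}. Consequently $\widetilde{\mathcal U}=\bigoplus_d\mathcal U_t(\overrightarrow{\mathbf s}_t(\beta))x_{v_0}^d$ is a graded decomposition, and $\widetilde{\mathbf L}$ is a homogeneous basis. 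In particular, $\mathbf L_0$ is an $R$-basis of the degree-zero part, which is $\mathcal U_t(\overrightarrow{\mathbf s}_t(\beta))$.

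\textbf{Properties of $\mathbf L_0$.} I would verify the defining properties directly.
\begin{itemize}
\item Bar invariance is inherited from $\widetilde{\mathbf L}$.
\item Deleting $v_0$ gives a bijection of mutation classes. Every normalized quantum cluster monomial of $\overrightarrow{\mathbf s}_t(\beta)$ is a cluster monomial of $\widetilde{\mathbf s}_t$ with $v_0$-exponent $0$, so it lies in $\widetilde{\mathbf L}$ and has degree $0$.
\item Pointedness at each seed $\mathbf s_t'$ follows from pointedness at $\widetilde{\mathbf s}_t'$. The $v_0$-coordinate is dropped, and the dominance cone restricts correctly because $\widetilde B$ restricted to $V_\beta\times V_{\beta,\mathrm{ex}}$ is $B_\beta$ (Lemma~\ref{lem:Demazure-descent-seed-deletion}).
\item For triangularity, I would take $X$ a cluster variable of $\mathbf s_t'$ and $L\in\mathbf L_0$. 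The product $XL$ has degree $0$, and its expansion in $\widetilde{\mathbf L}$ is $q^{\ast}$ times the leading term plus lower terms with coefficients in $q^{-1/2}\ZZ[q^{-1/2}]$. Homogeneity forces all terms appearing to have degree $0$, so they lie in $\mathbf L_0$. The degree ordering is also respected.
\end{itemize}

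\textbf{Main obstacle.} The delicate point is $\mathcal M^\circ$-pointedness and injective-reachability compatibility in Qin's sense. I would check that the injective seed $\widetilde{\mathbf s}_t[-1]$ restricts to $\mathbf s_t[-1]$. This holds because freezing $v_0$ does not affect the exchangeable $g$-vectors, and their frozen components are unrestricted. After that check, the notion of common triangular basis transfers under deleting an isolated frozen vertex.
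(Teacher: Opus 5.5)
Your proposal is correct and follows essentially the same route as the paper. The zero $v_0$-row forces every pointed element of $\widetilde{\mathbf L}$ to be homogeneous, and the degree-zero component of the skew Laurent extension is $\mathcal U_t(\overrightarrow{\mathbf s}_t(\beta))$. Bar invariance, pointedness, triangularity and containment of the cluster monomials then all restrict to degree zero. Your extra check of the injective seeds goes beyond the paper's stated definition of a common triangular basis, but it is harmless.
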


\begin{proof}
The row of the extended exchange matrix indexed by \(v_0\) is zero.
Consequently, subtracting \(\widetilde B\mathbf n\) in the pointed
expansion \eqref{eq:pointed-element} does not change the
\(v_0\)-coordinate. Every Laurent monomial in a pointed basis element
therefore has the same \(x_{v_0}\)-degree, so that basis element is
homogeneous.

The skew Laurent extension is the direct sum of its homogeneous components,
and its degree-zero component is exactly
\(\mathcal U_t(\overrightarrow{\mathbf s}_t(\beta))\). Since
\(\widetilde{\mathbf L}\) is a homogeneous basis, its degree-zero elements
form a basis of this component. Bar invariance, pointedness, and the
triangular multiplication rules restrict to degree zero. Moreover, the
degree-zero subset contains all normalized cluster monomials of the deleted
seed. Hence it is a common triangular basis of the degree-zero upper quantum
cluster algebra.
\end{proof}

Let \(\mathscr T\) be a full, replete monoidal subcategory of
\(\mathscr C_{\DD}(\beta i)\), closed under extensions and
subquotients. Since the normalized simple classes form bases of the
corresponding quantum Grothendieck rings, and distinct simple objects
of \(\mathscr T\) remain distinct in
\(\mathscr C_{\DD}(\beta i)\), the inclusion induces an injective
algebra homomorphism
\[
    \mathcal K_t(\mathscr T)
    \hookrightarrow
    \mathcal K_t\bigl(\mathscr C_{\DD}(\beta i)\bigr).
\]

For
\(\mathbf a=(a_1,\ldots,a_{r+1})\in\ZZ_{\geq0}^{r+1}\), put
\(
    P_{\DD}^{\beta i}(\mathbf a)
    :=
    [C_{r+1}^{\DD,\beta i}]_t^{a_{r+1}}
    \cdots
    [C_1^{\DD,\beta i}]_t^{a_1}.
\)
The elements \(P_{\DD}^{\beta i}(\mathbf a)\) form a PBW basis of
\(\mathcal K_t(\mathscr C_{\DD}(\beta i))\). Hence every nonzero
\(f\in\mathcal K_t(\mathscr T)\) has a unique expansion
\[
    f
    =
    \sum_{\mathbf a}
        h_{\mathbf a}(t)P_{\DD}^{\beta i}(\mathbf a).
\]
Define
\(
    \operatorname{supp}_{\mathrm{PBW}}(f)
    :=
    \left\{
        \mathbf a\in\ZZ_{\geq0}^{r+1}
        \,\middle|\,
        h_{\mathbf a}(t)\neq0
    \right\}.
\)
Let \(\operatorname{lt}_{\mathrm{PBW}}(f)\) be the maximal element of
this finite set with respect to the right lexicographic order
\(\prec_{\mathrm r}\). If
\(
    \operatorname{lt}_{\mathrm{PBW}}(f)
    =
    (a_1,\ldots,a_{r+1}),
\)
set
\[
    \nu_{r+1}(f):=a_{r+1},
    \qquad
    \nu_{r+1}(0):=-\infty.
\]

PBW triangularity
\cite[Lemma~5.5]{oh2025pbw}, applied in the ambient quantum
Grothendieck ring, gives
\begin{equation}\label{eq:PBW-last-coordinate-multiplicativity}
    \nu_{r+1}(fg)
    =
    \nu_{r+1}(f)+\nu_{r+1}(g)
\end{equation}
for all nonzero
\(f,g\in\mathcal K_t(\mathscr T)\). Moreover, for every
\(L\in\operatorname{Irr}(\mathscr T)\),
\begin{equation}\label{eq:PBW-degree-simple-class}
    \nu_{r+1}([L]_t)
    =
    \bigl(\mathbf a^{\beta i}(L)\bigr)_{r+1}.
\end{equation}

Let \(J_{\mathrm{fr}}^{\mathrm{loc}}\) be the set of frozen vertices
whose cluster-variable modules are to be inverted in the
localization under consideration. For
\(p\in J_{\mathrm{fr}}^{\mathrm{loc}}\), denote the corresponding
frozen module by
\(
    F_p\in\operatorname{Irr}(\mathscr T)
\)
and put \(x_p:=[F_p]_t\). Assume that every \(F_p\) strongly commutes
with every simple object of \(\mathscr T\). Define
\begin{equation}\label{eq:frozen-Ore-set}
    S
    :=
    \left\langle
        x_p
        \,\middle|\,
        p\in J_{\mathrm{fr}}^{\mathrm{loc}}
    \right\rangle_{\mathrm{mult}}
    \subseteq\mathcal K_t(\mathscr T).
\end{equation}
Thus \(S\) is generated precisely by the normalized classes of the
frozen modules required in the localization.

For every \(p\in J_{\mathrm{fr}}^{\mathrm{loc}}\) and
\(L\in\operatorname{Irr}(\mathscr T)\), strong commutation gives
\[
    x_p[L]_t
    =
    t^{\lambda_{p,L}}[L]_t x_p
\]
for some \(\lambda_{p,L}\in\ZZ\). Hence each \(x_p\) is normal in
\(\mathcal K_t(\mathscr T)\): explicitly, extend
\[
    \sigma_p([L]_t):=t^{\lambda_{p,L}}[L]_t
\]
linearly from the normalized simple basis. Associativity and the
preceding commutation identities give
\(x_pa=\sigma_p(a)x_p\) and show that \(\sigma_p\) is an algebra
automorphism. Notice that the exponents
\(\lambda_{p,L}\) may depend on \(L\); no common exponent is being
assigned to a sum of simple classes. The elements \(x_p\) pairwise
\(t\)-commute, and
\(\mathcal K_t(\mathscr T)\) is a domain because it embeds into the
PBW domain
\(\mathcal K_t(\mathscr C_{\DD}(\beta i))\). Therefore \(S\) is a
regular normal Ore set, and we may form
\[
    \mathcal K_t(\mathscr T)_{\mathrm{loc}}
    :=
    \mathcal K_t(\mathscr T)S^{-1}
    \simeq
    S^{-1}\mathcal K_t(\mathscr T).
\]

For a nonzero right fraction \(fs^{-1}\), where
\(f\in\mathcal K_t(\mathscr T)\) and \(s\in S\), define
\begin{equation}\label{eq:PBW-valuation-Ore-extension}
    \nu_{r+1}(fs^{-1})
    :=
    \nu_{r+1}(f)-\nu_{r+1}(s).
\end{equation}
To see that this is well defined, suppose that
\(    fs^{-1}=gt^{-1}.
\)
By the Ore property and regularity, there exist \(u,v\in S\) such
that
\(
    su=tv\),
    \(    fu=gv.
\)
Applying
\eqref{eq:PBW-last-coordinate-multiplicativity} gives
\[
\begin{aligned}
    \nu_{r+1}(s)+\nu_{r+1}(u)
    &=
    \nu_{r+1}(t)+\nu_{r+1}(v),\\
    \nu_{r+1}(f)+\nu_{r+1}(u)
    &=
    \nu_{r+1}(g)+\nu_{r+1}(v).
\end{aligned}
\]
Subtracting the first equality from the second yields
\[
    \nu_{r+1}(f)-\nu_{r+1}(s)
    =
    \nu_{r+1}(g)-\nu_{r+1}(t).
\]
Thus \eqref{eq:PBW-valuation-Ore-extension} is independent of the
chosen right-fraction representative.

Finally, the normal automorphisms associated with the generators
\(x_p\) act diagonally, by powers of \(t\), on the normalized simple
basis. Distinct simple basis elements have distinct PBW leading
exponents by the PBW parametrization, so this diagonal action cannot
create cancellation at the maximal exponent. It therefore preserves
PBW leading exponents. Together with
\eqref{eq:PBW-last-coordinate-multiplicativity}, this shows that
\[
    \nu_{r+1}(ab)
    =
    \nu_{r+1}(a)+\nu_{r+1}(b)
\]
for all nonzero
\(a,b\in\mathcal K_t(\mathscr T)_{\mathrm{loc}}\).

\begin{proposition}
\label{pro:induction-upper-cluster-categorification}
Let \(\beta\) be a positive braid word and \(i\in I\). Suppose that
\(\mathscr D_{\DD}(\beta i)\) is a based categorification of
\(\mathcal U_t(\overrightarrow{\mathbf s}_t(\beta i))\). Then
\(\mathscr D_{\DD}(\beta)\) is a based categorification of
\(\mathcal U_t(\overrightarrow{\mathbf s}_t(\beta))\). In
particular, there is an isomorphism
\begin{equation}\label{eq:categorification-beta-i}
    \mathcal{K}_t\bigl(
        \mathscr D_{\DD}(\beta)
    \bigr)_{\mathrm{loc}}
    \xrightarrow{\ \sim\ }
    \mathcal U_t\bigl(
        \overrightarrow{\mathbf s}_t(\beta)
    \bigr).
\end{equation}
Every quantum cluster monomial with nonnegative frozen exponents is
the normalized \((q,t)\)-character of a real simple object of
\(\mathscr D_{\DD}(\beta)\). Arbitrary Laurent powers of frozen
variables are represented in the corresponding monoidal
localization.
\end{proposition}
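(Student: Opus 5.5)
The argument splits according to whether \(\delta(\beta i)=\delta(\beta)s_i>\delta(\beta)\) (no new trivalent vertex) or \(\delta(\beta i)=\delta(\beta)\) (Demazure descent, a new frozen vertex \(v_0\)).

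\emph{Length-increasing case.} Here \(\overrightarrow{\mathfrak W}(\beta i)\) has the same trivalent vertices, Lusztig cycles and intersection numbers as \(\overrightarrow{\mathfrak W}(\beta)\), as in the proof of Corollary~\ref{cor:right-weave-cluster-variables-level-one}. Hence \(\overrightarrow{\mathbf s}_t(\beta i)=\overrightarrow{\mathbf s}_t(\beta)\), and Lemma~\ref{lem:global-basis-extension-by-zero} shows that the modules \(B_v\) coincide. It then suffices to show that the simple objects of \(\mathscr D_{\DD}(\beta i)\) whose localized classes lie in the common triangular basis are exactly those of \(\mathscr D_{\DD}(\beta)\). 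The plan is to use the valuation \(\nu_{r+1}\). Every cluster variable has \(\nu_{r+1}=0\), since its Lusztig parameter is extended by zero. The common triangular basis elements are bar-invariant and pointed with leading term a cluster-type monomial, so each is a \(\ZZ[t^{\pm1/2}]\)-combination of simple classes with \(\nu_{r+1}\le 0\). Based-ness then forces each one to be a single simple class with \((\mathbf a^{\beta i})_{r+1}=0\), i.e.\ an object of \(\mathscr C_{\DD}(\beta)\cap\mathscr C_{\DD}[1,\infty)\). Conversely, the given isomorphism is surjective onto the localized ring, so the two localized rings coincide.

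\emph{Descent case.} This is the main case.

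First, by hypothesis (i) of Definition~\ref{def:based-categorification} for \(\beta i\), the module \(B_{v_0}\) strongly commutes with every simple object of \(\mathscr D_{\DD}(\beta i)\). Lemma~\ref{lem:commuting-factors-last-vertex} then shows that every \(B_{v'}\) with \(v'\in J_{\beta,i,\mathrm{fr}}\) strongly commutes with every simple object of \(\mathscr D_{\DD}(\beta)\). Combined with \eqref{eq:Demazure-descent-frozen-set}, this gives condition (i) for \(\beta\): the frozen modules of \(\overrightarrow{\mathbf s}_t(\beta)\) are the old frozen modules other than \(v_0\), together with the \(J_{\beta,i,\mathrm{fr}}\)-modules. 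In particular the localization \eqref{eq:localized-Kt-D-beta} is defined.

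Condition (ii) for \(\beta\) is Theorem~\ref{thm:monoidal-seed-beta}.

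For condition (iii), freeze \(J:=J_{\beta,i,\mathrm{fr}}\). Freezing shrinks the mutation class, so \(\mathcal U_t(\overrightarrow{\mathbf s}_t(\beta i))[x_J^{-1}]\subseteq\mathcal U_t(\operatorname{Fr}_J\overrightarrow{\mathbf s}_t(\beta i))\). I will use the known fact, Qin's freezing/optimized-seed argument underlying Theorem~\ref{thm:quantum-A-U-braid-seeds}, that the common triangular basis localized at the newly frozen variables is the common triangular basis of the frozen seed, and that equality of upper algebras holds. Both Laurent-type seeds are injective-reachable, so this should apply. This gives a based identification
\[
\mathcal K_t(\mathscr D_{\DD}(\beta i))[\,[B_{v'}]_t^{-1}: v'\in J\,]_{\mathrm{loc}}\cong \mathcal U_t(\overrightarrow{\mathbf s}_t(\beta))[x_{v_0}^{\pm1};\sigma]
\]
via Lemmas~\ref{lem:Demazure-descent-seed-deletion} and~\ref{lem:upper-skew-Laurent-extension}. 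The left-hand localization exists by the same strong-commutation argument.

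Now take degree zero. By Lemma~\ref{lem:degree-zero-triangular-basis}, the basis elements of \(x_{v_0}\)-degree \(0\) form the common triangular basis \(\mathbf L_0\) of \(\mathcal U_t(\overrightarrow{\mathbf s}_t(\beta))\). On the categorical side the \(x_{v_0}\)-degree should equal \(\nu_{r+1}\): we have \(\nu_{r+1}([B_{v_0}]_t)=1\) by Lemma~\ref{lem:maxindex}, while \(\nu_{r+1}=0\) on all \(B_v\) with \(v\in V_\beta\). Both gradings are additive on the localized ring, by the valuation argument preceding the proposition, and agree on the generators of the torus. Hence they agree on every simple localized class, and these classes are exactly the basis elements.

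Simple localized classes of \(\nu_{r+1}\)-degree \(0\) are the classes \([S]_t\prod[F_p]_t^{-d_p}\) with \((\mathbf a^{\beta i}(S))_{r+1}=0\) and no \(B_{v_0}\) inverted. Such an \(S\) lies in \(\mathscr C_{\DD}(\beta)\cap\mathscr C_{\DD}[1,\infty)=\mathscr D_{\DD}(\beta)\), by the PBW parametrization and based-ness of \(\Phi_{\DD}\). Conversely, every simple object of \(\mathscr D_{\DD}(\beta)\) has degree \(0\). Restricting the isomorphism to degree zero therefore yields \eqref{eq:categorification-beta-i}, sending normalized localized simple classes bijectively onto \(\mathbf L_0\). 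The statement about cluster monomials follows from Proposition~\ref{pro:LusztigparaBv} and complete \(\Lambda\)-admissibility.

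\emph{Main obstacle.} The delicate step is the freezing claim: that the common triangular basis survives freezing \(J\) and agrees with the localized simple basis, so that upper algebras and bases match after freezing. If Qin's freezing results do not apply directly, I would fall back on pointedness. Every simple localized class is bar-invariant and pointed in the frozen seed (the \(g\)-vector order only becomes coarser), and triangularity is inherited. Uniqueness of the common triangular basis would then identify the two bases.
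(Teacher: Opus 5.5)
\textbf{Length-increasing case.} This case is essentially fine. The bound $\nu_{r+1}\le 0$ should be justified by $\mathcal A_t=\mathcal U_t$, i.e.\ by generation by cluster variables and frozen inverses, not by pointedness. The paper simply compares inclusions.

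\textbf{Descent case: the gap.} The gap is your based identification of $\mathcal K_t(\mathscr D_{\DD}(\beta i))$, localized at the classes $[B_{v'}]_t$ for $v'\in J$, with $\mathcal U_t(\operatorname{Fr}_J\overrightarrow{\mathbf s}_t(\beta i))$, taking the localized simple classes as the basis. This identification is false.

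The vertices of $J$ are mutable in $\overrightarrow{\mathbf s}_t(\beta i)$, so $B_{v'}$ does not strongly commute with every simple object of $\mathscr D_{\DD}(\beta i)$. For instance, the mutated module $B'_{v'}:=\mu_{v'}(B_{v'})$ lies in $\mathscr D_{\DD}(\beta i)$ and satisfies $\mathfrak d(B_{v'},B'_{v'})=1$. Its class is $\widetilde X^{\mathbf a_+}+\widetilde X^{\mathbf a_-}$. Each $\widetilde X^{\mathbf a_\pm}$ is, up to a power of $t^{1/2}$, a cluster-monomial class times $[B_{v'}]_t^{-1}$. So your ``localized simple classes'' satisfy a nontrivial linear relation and cannot form a basis.

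Put differently, $[B'_{v'}]_t$ is pointed in $\overrightarrow{\mathbf s}_t(\beta i)$ but not in the frozen seed. Indeed $\mathbf a_+-\mathbf a_-=\widetilde B\mathbf e_{v'}$, and once $\mathbf e_{v'}$ is no longer an allowed direction the two exponents are incomparable. Freezing makes the dominance order coarser, so frozen-pointedness is a \emph{stronger} condition; your fallback argument therefore runs in the wrong direction. The frozen common triangular basis is Qin's $\operatorname{Fr}_J(\mathbf L_{\beta i})$, not the set of elements $L\prod_{v'\in J}x_{v'}^{-d_{v'}}$.

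For the same reason, the Ore localization and the multiplicativity of $\nu_{r+1}$ are not available on your ring. The argument preceding the proposition needs the inverted classes to strongly commute with every simple object of the category.

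\textbf{The missing idea.} Replace $\mathscr D_{\DD}(\beta i)$ by the commuting subcategory $\mathscr T$ of objects whose composition factors strongly commute with every $B_{v'}$, $v'\in J$. Then the argument goes as follows.
\begin{enumerate}
\item Triangularity of $\mathbf L_{\beta i}$ gives Qin's Condition (T), hence (C), for these modules. So $\mathscr T$ is full, abelian and monoidal.
\item By \cite[Theorem~5.17]{qin2024analogs}, $\mathscr T$ is a based categorification of $\mathcal U_t(\operatorname{Fr}_J\overrightarrow{\mathbf s}_t(\beta i))$ with basis $\operatorname{Fr}_J(\mathbf L_{\beta i})$.
\item Lemma~\ref{lem:commuting-factors-last-vertex}, which you already use for condition (i), gives $\mathscr D_{\DD}(\beta)\subseteq\mathscr T$.
\item All inverted modules now strongly commute with every simple object of $\mathscr T$, so $\nu_{r+1}$ is multiplicative on $\mathcal K_t(\mathscr T)_{\mathrm{loc}}$.
\item Your homogeneity and degree-zero argument then goes through as in the paper.
\end{enumerate}

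For the homogeneity step alone there is a direct substitute. If $[S]_t$ $t$-commutes with $x_{v'}$, compatibility gives $\Lambda(\mathbf e_{v'},\widetilde B\mathbf n)=-2n_{v'}$. This forces $n_{v'}=0$ in every term of its pointed expansion. However, it applies only to objects commuting with the $B_{v'}$. Basedness still requires knowing that such classes are elements of the frozen basis, which is what Qin's theorem supplies.
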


\begin{proof}
Set \(w:=\delta(\beta)\). We consider separately the two cases in
the definition of the Demazure product.

Suppose first that \(\ell(ws_i)=\ell(w)+1\). Then
\(\delta(\beta i)=ws_i\), and the final letter \(i\) belongs to the
distinguished reduced subword. Hence it produces no trivalent
vertex, and the seeds
\(\overrightarrow{\mathbf s}_t(\beta i)\) and
\(\overrightarrow{\mathbf s}_t(\beta)\) are canonically identified,
including their frozen vertex sets.

Corollary~\ref{cor:right-weave-cluster-variables-level-one} places every
cluster variable of \(\overrightarrow{\mathbf s}_t(\beta)\), and the
inverse of every frozen variable, in
\(\mathcal{K}_t(\mathscr D_{\DD}(\beta))_{\mathrm{loc}}\). Hence
Theorem~\ref{thm:quantum-A-U-braid-seeds} gives
\begin{equation}\label{eq:length-increase-upper-in-Kt}
    \mathcal U_t(\overrightarrow{\mathbf s}_t(\beta))
    \subseteq
    \mathcal{K}_t(\mathscr D_{\DD}(\beta))_{\mathrm{loc}}.
\end{equation}

For the reverse inclusion, let \(S\) be simple in
\(\mathscr D_{\DD}(\beta)\). It is also simple in
\(\mathscr D_{\DD}(\beta i)\). By the assumed based categorification,
\([S]_t\) is an element of the common-triangular basis of \( \mathcal{U}_t(\overrightarrow{\mathbf{s}}_t(\beta i))\). Note that $\mathcal{U}_t(\overrightarrow{\mathbf{s}}_t(\beta i))=\mathcal{U}_t(\overrightarrow{\mathbf{s}}_t(\beta))$, therefore this basis element belongs to
\(\mathcal U_t(\overrightarrow{\mathbf s}_t(\beta))\). The same holds after
the allowed frozen shifts. Thus the reverse inclusion to
\eqref{eq:length-increase-upper-in-Kt} holds, and the restricted localized
simple basis is exactly the common triangular basis. Together with
Theorem~\ref{thm:monoidal-seed-beta}, this proves the based
categorification in the length-increasing case.

Suppose now that \(\ell(ws_i)=\ell(w)-1\), or equivalently,
\(\delta(\beta i)=w\). Let \(v_0\) be the trivalent vertex associated
with the final letter \(i\), and set
\(J:=J_{\beta,i,\mathrm{fr}}\). Let \(\mathscr T\) be the full
subcategory of \(\mathscr D_{\DD}(\beta i)\) consisting of objects
whose simple composition factors strongly commute with \(B_v\) for
every \(v\in J\).

The vertex \(v_0\) is frozen in
\(\overrightarrow{\mathbf s}_t(\beta i)\). By the assumed based
categorification, \(B_{v_0}\) strongly commutes with every simple
object of \(\mathscr D_{\DD}(\beta i)\). Lemma
\ref{lem:commuting-factors-last-vertex} therefore implies
\(\mathscr D_{\DD}(\beta)\subseteq\mathscr T\).

Let
\(\operatorname{Fr}_{J}
\overrightarrow{\mathbf s}_t(\beta i)\)
be the seed obtained by freezing the vertices in \(J\).
Theorem~\ref{thm:quantum-A-U-braid-seeds} shows that
\(\overrightarrow{\mathbf s}_t(\beta)\) is injective-reachable and has a
common triangular basis. Lemma
\ref{lem:Demazure-descent-seed-deletion} shows that the frozen seed is
obtained from this seed by adjoining the isolated frozen vertex
\(v_0\); hence it is injective-reachable. The modules indexed by \(J\) are
pairwise strongly commuting cluster-variable modules. Let
\(\mathbf L_{\beta i}\) be the common triangular basis supplied by
the assumed based categorification. Since
\(\mathbf L_{\beta i}\) is triangular with respect to the seed
\(\overrightarrow{\mathbf s}_t(\beta i)\),
\cite[Remark~5.9]{qin2024analogs} gives Condition~\textup{(T)} for
the cluster-variable modules \(B_v\), \(v\in J\). By
\cite[Remark~5.7]{qin2024analogs}, Condition~\textup{(T)} implies
Condition~\textup{(C)}. Consequently, the definition of
\(\mathscr T\) agrees with Qin's commuting subcategory associated
with \(J\), and \cite[Lemmas~5.11--5.12]{qin2024analogs} shows that
\(\mathscr T\) is full, abelian, and monoidal.

Finally, \(\mathbf L_{\beta i}\) is
\(\mathcal M^\circ\)-pointed by the assumed based
categorification and Theorem~\ref{thm:quantum-A-U-braid-seeds}, and the frozen seed $\operatorname{Fr}_{J}
        \overrightarrow{\mathbf s}_t(\beta i)$ is injective-reachable. Therefore
\cite[Theorem~5.17]{qin2024analogs}, applied to \(J\), gives a based
categorification
\begin{equation}\label{eq:Kt-T-frozen-upper}
    \mathcal{K}_t(\mathscr T)_{\mathrm{loc}}
    \xrightarrow{\ \sim\ }
    \mathcal U_t\bigl(
        \operatorname{Fr}_{J}
        \overrightarrow{\mathbf s}_t(\beta i)
    \bigr).
\end{equation}
The basis on the right-hand side is
\(\operatorname{Fr}_{J}(\mathbf L_{\beta i})\). By
\cite[Theorem~3.17]{qin2024analogs}, freezing preserves the common
triangular basis. Thus this is a common triangular basis of the
frozen upper algebra, as required for the degree-zero restriction
in Lemma~\ref{lem:degree-zero-triangular-basis}.
By \eqref{eq:Demazure-descent-frozen-set}, the frozen vertices used in
this localization form exactly the set
\(V_{\beta i,\mathrm{fr}}\cup J
=V_{\beta,\mathrm{fr}}\cup\{v_0\}\).
The corresponding frozen modules strongly commute with every simple
object of \(\mathscr T\): this holds by the definition of
\(\mathscr T\) for the vertices in \(J\), by the assumed based
categorification for the inherited frozen vertices, and by the universal
commutation of \(B_{v_0}\). Consequently,
the PBW last-coordinate valuation $\nu_{r+1}$ above applies to this
localization. In particular, the function \(\nu_{r+1}\) used below
is well defined and multiplicative on all Ore fractions.

By Lemma~\ref{lem:Demazure-descent-seed-deletion}, deleting \(v_0\)
from
\(\operatorname{Fr}_{J}
\overrightarrow{\mathbf s}_t(\beta i)\)
gives \(\overrightarrow{\mathbf s}_t(\beta)\), and
\eqref{eq:Demazure-descent-zero-row} supplies the zero-row hypothesis.
Lemma
\ref{lem:upper-skew-Laurent-extension} therefore identifies the
right-hand side of \eqref{eq:Kt-T-frozen-upper} with
\(\mathcal U_t(\overrightarrow{\mathbf s}_t(\beta))
[x_{v_0}^{\pm1};\sigma]\).

We next characterize the simple objects of \(\mathscr T\) that
belong to \(\mathscr D_{\DD}(\beta)\). Compatibility of the PBW
parametrizations for \(\beta\) and \(\beta i\) gives
\begin{equation}\label{eq:Irr-D-beta-in-T}
    \operatorname{Irr}\bigl(
        \mathscr D_{\DD}(\beta)
    \bigr)
    =
    \left\{
        S\in\operatorname{Irr}(\mathscr T)
        \,\middle|\,
        a_{r+1}^{\beta i}(S)=0
    \right\},
\end{equation}
where \(a_{r+1}^{\beta i}(S)\) denotes the \((r+1)\)-st component of the Lusztig parameter \(\mathbf a^{\beta i}(S)\) of the simple module \(S\), computed with respect to the word \(\beta i\).
Indeed, the inclusion from left to right follows from
\(\mathscr D_{\DD}(\beta)\subseteq\mathscr T\). Conversely, if
\(S\in\operatorname{Irr}(\mathscr T)\) has zero
\((r+1)\)-st PBW coordinate, then the Lusztig parametrization implies
\(S\in\mathscr C_\DD(\beta)\). Since
\(\mathscr T\subseteq\mathscr D_{\DD}(\beta i)\), we also have
\(S\in\mathscr C_{\DD}[1,\infty)\), and hence
\(S\in\mathscr D_{\DD}(\beta)\). In particular,
\(\nu_{r+1}([S]_t)=0\) for every
\(S\in\operatorname{Irr}(\mathscr D_{\DD}(\beta))\).

By Corollary
\ref{cor:right-weave-cluster-variables-level-one} and
Theorem~\ref{thm:quantum-A-U-braid-seeds}, we have
\begin{equation}\label{eq:upper-contained-localized-Kt}
    \mathcal U_t\bigl(
        \overrightarrow{\mathbf s}_t(\beta)
    \bigr)
    \subseteq
    \mathcal{K}_t\bigl(
        \mathscr D_{\DD}(\beta)
    \bigr)_{\mathrm{loc}}.
\end{equation}
Because the right lexicographic order compares the
\((r+1)\)-st coordinate first, every PBW term occurring in the class
of a simple object of \(\mathscr D_{\DD}(\beta)\) has zero final
coordinate: its leading coordinate is zero by
\eqref{eq:Irr-D-beta-in-T}.
The same is true for the frozen modules used in the
localization. It follows that
\(\nu_{r+1}(x)=0\) for every nonzero
\(x\in\mathcal U_t(\overrightarrow{\mathbf s}_t(\beta))\).
On the other hand, Proposition~\ref{pro:LusztigparaBv} gives
\(\nu_{r+1}(x_{v_0})=1\).

Let \(S\) be a simple object of
\(\mathscr D_{\DD}(\beta)\subseteq\mathscr T\). Under the based
categorification \eqref{eq:Kt-T-frozen-upper}, its normalized class
\([S]_t\) is pointed with respect to
\(\operatorname{Fr}_{J}
\overrightarrow{\mathbf s}_t(\beta i)\). Thus its Laurent expansion
has the form
\begin{equation*}
    [S]_t
    =
    X^{\mathbf g}
    +
    \sum_{\mathbf n\in\NN^{V_{\beta,\mathrm{ex}}}\setminus\{0\}}
    c_{\mathbf n}(t)
    X^{\mathbf g-\widetilde B\mathbf n},
\end{equation*}
where \(\mathbf n\) is supported on the exchangeable vertices. Since
\(b_{v_0k}=0\) for every exchangeable vertex \(k\), the
\(v_0\)-coordinate of \(\widetilde B\mathbf n\) is zero. Therefore,
all Laurent monomials occurring in \([S]_t\) have the same
\(x_{v_0}\)-degree.

By the uniqueness of the skew Laurent expansion in
Lemma~\ref{lem:upper-skew-Laurent-extension}, there exist a nonzero
element
\(x\in\mathcal U_t(\overrightarrow{\mathbf s}_t(\beta))\)
and an integer \(d\in\ZZ\) such that
\begin{equation}\label{eq:S-skew-Laurent-homogeneous}
    [S]_t=x\,x_{v_0}^{d}.
\end{equation}
Applying \(\nu_{r+1}\) gives
\begin{equation*}\nu_{r+1}([S]_t)
=\nu_{r+1}(x)+d\nu_{r+1}(x_{v_0})=d.\end{equation*}
Since \(\nu_{r+1}([S]_t)=0\), we obtain \(d=0\). Therefore,
\([S]_t\in
\mathcal U_t(\overrightarrow{\mathbf s}_t(\beta))\).

The classes of simple objects form a basis of the quantum
Grothendieck ring, and the inverses of the frozen classes belong to
the upper quantum cluster algebra. It follows that
\begin{equation}\label{eq:Kt-D-beta-contained-upper}
    \mathcal{K}_t\bigl(
        \mathscr D_{\DD}(\beta)
    \bigr)_{\mathrm{loc}}
    \subseteq
    \mathcal U_t\bigl(
        \overrightarrow{\mathbf s}_t(\beta)
    \bigr).
\end{equation}
Together with \eqref{eq:upper-contained-localized-Kt}, this proves
the algebra isomorphism \eqref{eq:categorification-beta-i}.

It remains to verify the basis and strong-commutation conditions.
The frozen vertices of
\(\overrightarrow{\mathbf s}_t(\beta)\) consist of the vertices in
\(V_{\beta i,\mathrm{fr}}\setminus\{v_0\}\), together with the
vertices in \(J\). The modules associated with the former strongly
commute with every simple object of
\(\mathscr D_{\DD}(\beta)\) by the assumed based categorification of
\(\mathscr D_{\DD}(\beta i)\). For the
vertices in \(J\), the same conclusion follows from
Lemma~\ref{lem:commuting-factors-last-vertex}.

Finally, choose the usual nonredundant localized simple basis of
\(\mathcal K_t(\mathscr D_{\DD}(\beta))_{\mathrm{loc}}\). By
\eqref{eq:Irr-D-beta-in-T} and the calculation
\eqref{eq:S-skew-Laurent-homogeneous}, each of its elements belongs to
the degree-zero common triangular basis in
Lemma~\ref{lem:degree-zero-triangular-basis}. The algebra equality
\eqref{eq:categorification-beta-i} shows that these localized simple
classes span the entire degree-zero upper quantum cluster algebra.
They are therefore a spanning subset of a basis and must coincide with
that basis. This proves the basedness assertion.

By Corollary
\ref{cor:right-weave-cluster-variables-level-one} and
Theorem~\ref{thm:monoidal-seed-beta}, the variables in every cluster
are represented by a pairwise strongly commuting family of real
simple objects. Their normalized tensor products are therefore real
and simple. This proves that
\(\mathscr D_{\DD}(\beta)\) is a based categorification of
\(\mathcal U_t(\overrightarrow{\mathbf s}_t(\beta))\).
\end{proof}

\subsubsection{Braid words related by braid moves}
In this subsection, we prove that any two positive braid words
representing the same element \(b\in\operatorname{Br}^{+}\) define
the same cluster algebra and that their associated monoidal seeds are
mutation equivalent. 

\begin{proposition}\label{lem:mutation-right-inductive-seed}
Under the assumption~\ref{assumption}, suppose that \(\beta\) and \(\beta'\)
are related by a single commutation move or a single simply-laced
braid move. Then
\begin{equation}\label{eq:braid-move-category-equality}
    \mathscr D_{\DD}(\beta)
    =
    \mathscr D_{\DD}(\beta').
\end{equation}
Moreover, there is a relabeling
\(
    \rho\colon V_\beta\xrightarrow{\sim}V_{\beta'}
\)
such that one of the following holds:
\begin{enumerate}
\item one has
\(
    \rho\!\left(
        \overrightarrow{\mathbf s}_t(\beta)
    \right)
    =
    \overrightarrow{\mathbf s}_t(\beta');
\)
in this case,
\[
    B_u^\beta\simeq B_{\rho(u)}^{\beta'},
    \qquad
    \rho(\Lambda_\beta)=\Lambda_{\beta'};
\]
\item there is a mutable vertex \(k\in V_\beta\) such that
\begin{equation}\label{eq:braid-move-quantum-seed}
    \rho\!\left(
        \mu_k\,
        \overrightarrow{\mathbf s}_t(\beta)
    \right)
    =
    \overrightarrow{\mathbf s}_t(\beta').
\end{equation}
In this case,
\[
    \mu_k(B_k^\beta)
    \simeq
    B_{\rho(k)}^{\beta'},
    \qquad
    B_u^\beta
    \simeq
    B_{\rho(u)}^{\beta'}
    \quad(u\neq k),
\]
and
\begin{equation}\label{eq:braid-move-compatible-form}
    \rho\!\left(
        \mu_k(\Lambda_\beta)
    \right)
    =
    \Lambda_{\beta'}.
\end{equation}
\end{enumerate}
\end{proposition}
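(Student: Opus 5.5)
The plan separates the categorical assertion from the seed-theoretic one.

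\emph{Step 1: the category.} For the equality \eqref{eq:braid-move-category-equality} I would start from \eqref{eq:PBW-subalgebra-intersection}: the algebra \(\widehat{\cA}(\beta)\) depends only on the braid element \(b\). Hence \(\widehat{\cA}_{\ZZ[q^{\pm1/2}]}(\beta)=\widehat{\cA}_{\ZZ[q^{\pm1/2}]}(\beta')\), and therefore \(\mathbf A(\beta)=\mathbf A(\beta')\). The based isomorphism \(\Phi_{\DD}\) of Theorem~\ref{thm:isoKt} is the restriction of the ambient one. Its image of \(\mathcal K_t(\mathscr C_\DD(\beta))\) is spanned by global-basis elements, and a simple object is determined by its class. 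So \(\mathscr C_\DD(\beta)\) and \(\mathscr C_\DD(\beta')\) have the same simple objects. Since both categories are closed under extensions and subquotients, they coincide. (Alternatively, use \eqref{eq:duality-datum-braid-relations}: the cuspidal modules for \(\beta'\) are heads of products of those for \(\beta\), and conversely.) Intersecting with \(\mathscr C_\DD[1,\infty)\) gives the equality of the categories \(\mathscr D_\DD\).

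\emph{Step 2: the seeds.} Both right inductive weaves are Demazure weaves from the same braid \(b\) to \(\delta(\beta)=\delta(\beta')\). I would build an explicit weave \(\mathfrak W'\colon\beta\to\beta'\to\delta(\beta)\) by placing a single \(4\)- or \(6\)-valent vertex at the top, followed by \(\overrightarrow{\mathfrak W}(\beta')\). I then compare \(\mathfrak W'\) with \(\overrightarrow{\mathfrak W}(\beta)\) using the local analysis of \cite[Section~6.4]{casals2025cluster}, i.e.\ Cases (1)--(5) there.

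A commutation move, or a braid move \(iji\to jij\) whose three letters create at most one trivalent vertex in a compatible pattern, is a weave equivalence. That gives alternative (1).

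When the braid move involves a Demazure reduction inside the triple, the local picture is the relation \(ijij\leftrightarrow iiji\) from equivalence (ii), possibly composed with a single weave mutation at the unique trivalent vertex created inside the triple. That gives alternative (2).

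Lemma~\ref{lem:connectivity-Demazure-weaves} guarantees connectivity. What is needed is that only one mutation occurs. For this I would argue locally, since the weaves coincide outside the affected slice; the region below is treated via Lemma~\ref{lem:output} and Lemma~\ref{lem:quantum-truncated-weave-extension}. The classical seed statement then follows from \cite[Theorem~5.12]{casals2025cluster}, which says that the exchange matrix is invariant under equivalence and transforms by \(\mu_k\) under weave mutation.

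\emph{Step 3: modules and forms.} Embed both seeds in \(\widehat{\cA}(b)\) through Corollary~\ref{cor:cluster-algebra-embedding-A-beta}. Since the quantum cluster algebras are realized in the same ambient algebra, quantum cluster variables correspond under the weave identification.

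By Proposition~\ref{pro:LusztigparaBv}, the modules \(B_u^\beta\) are exactly the cluster-variable modules. In case (1) this gives \(B_u^\beta\simeq B^{\beta'}_{\rho(u)}\). In case (2) the mutated module \(\mu_k(B_k^\beta)\) exists by Theorem~\ref{thm:monoidal-seed-beta} (complete \(\Lambda\)-admissibility), and its class is the mutated cluster variable. Equal classes in the quantum Grothendieck ring give isomorphic simples.

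The forms are then \(\Lambda_{\beta'}(u,w)=\LambdaR(B_u^{\beta'},B_w^{\beta'})\), so they transform with the modules. After mutation, \(\LambdaR\) of the mutated seed equals \(\mu_k(\Lambda_\beta)\), because \(\Lambda\)-mutation preserves the \(\Lambda^S\)-compatibility of \eqref{eq:monoidal-seed-compatibility}.

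\emph{Main obstacle.} The hard step is Step 2: showing that the right inductive weaves of \(\beta\) and \(\beta'\) differ by at most one mutation, and identifying the vertex \(k\). I would first enumerate how many of the three positions \(i,j,i\) versus \(j,i,j\) are solid in the respective leftmost reduced subexpressions. I would then check, using \eqref{eq:Demazure-product-concatenation}, that the solid patterns before and after the move are related either by a relabeling or by exactly the \(ijij\) configuration of equivalence (ii) together with Figure~\ref{fig:weave-mutation-branches}.
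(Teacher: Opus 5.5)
\paragraph{Verdict.} Your Step~1 is sound, and it can replace the citation of \cite[Corollary~5.34]{kashiwara2025monoidal}. Since $\widehat{\cA}_{\ZZ[q^{\pm1/2}]}(\beta)$ depends only on $b$ and $\Phi_{\DD}$ is based, the two categories have the same simple objects. There is, however, a genuine gap in Step~3.

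\paragraph{The gap.} It is the sentence ``since the quantum cluster algebras are realized in the same ambient algebra, quantum cluster variables correspond under the weave identification.'' Nothing forces this.
\begin{itemize}
\item The element $A_u^\beta$ is produced from the word seed $S^{\DD}(\beta)$, with initial modules $M^{\DD,\beta}\{1,k]$, by the mutation sequence $M_m$ attached to the leftmost reduced subexpression of $\beta$.
\item The element $A^{\beta'}_{\rho(u)}$ comes from a different word seed $S^{\DD}(\beta')$, through a different mutation sequence.
\item The result of \cite{casals2025cluster} (relabeling, or one weave mutation) compares the two seeds only combinatorially. It does not say that the two embedded families of elements of $\widehat{\cA}(b)$ are matched by $\rho$. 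Two embeddings of isomorphic or mutation-related quantum seeds into one algebra need not respect a prescribed seed isomorphism.
\item Proposition~\ref{pro:LusztigparaBv} is a statement for a fixed word and says nothing about $\beta'$.
\end{itemize}
Consequently, your case~(2) argument only produces some module $\mu_k(B_k^\beta)$; it does not show $\mu_k(B_k^\beta)\simeq B^{\beta'}_{\rho(k)}$. Likewise, in case~(1) nothing yields $B_u^\beta\simeq B^{\beta'}_{\rho(u)}$.

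\paragraph{The missing idea.} You need an intrinsic way to recognize these elements across the two words. The paper combines two facts:
\begin{itemize}
\item $\mathbf a^\beta(A_v)=\eta_v^\beta$ (Proposition~\ref{pro:Lusztig-parameter-right-weave-variable});
\item the PBW transition identity $\widetilde G(\beta,\mathbf a)=\widetilde G(\beta',\Psi_{\beta'}^\beta(\mathbf a))$ of \cite[Theorem~4.12]{bi2026towards}.
\end{itemize}
The mutation case is exactly the one in which all three letters of the triple are solid. So three trivalent vertices arise there, not a unique one as in your Step~2. The mutation is at the first of them, and the other two exchange labels. In this case the paper proceeds as follows.
\begin{itemize}
\item It computes the local cycle coefficients \eqref{eq:braid-local-cycle-table} and the exchange column \eqref{eq:braid-local-exchange-column}.
\item It obtains the mutated parameter $\xi_k$ from Lemma~\ref{lem:Lusztig-parameter-mutation}.
\item It verifies $\Psi_{\beta'}^\beta(\xi_k)=\eta_k^{\beta'}$, and $\Psi_{\beta'}^\beta(\eta_u^\beta)=\eta_{\rho(u)}^{\beta'}$ for $u\neq k$.
\item Since $\Psi_{\beta'}^\beta$ is only piecewise linear, it handles the later vertices by showing that all relevant parameters lie on the wall $a_k=a_{k+2}$, where $\Psi_{\beta'}^\beta$ is linear.
\end{itemize}

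\paragraph{Further points.} The same parameter transport is needed even in case~(1), because solid positions can move. For example, in one configuration with a single solid letter, the trivalent vertex is created at position $k$ for $\beta$ but at $k+2$ for $\beta'$, and $\Psi_{\beta'}^\beta(E_k)=E_{k+2}$. Only after the global-basis elements are matched does basedness of $\Phi_{\DD}$ give the module isomorphisms. Finally, the statement about the forms should rest on the fact that the quasi-commutation matrix of the mutated toric frame is $\mu_k(\Lambda_\beta)$, together with \eqref{eq:quantum-classes-commutation}. Compatibility with $\mu_k(B_\beta)$ alone does not determine the form on the frozen block.
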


\begin{proof}
Since \(\beta\) and \(\beta'\) represent the same positive braid
element \(b\), \cite[Corollary~5.34]{kashiwara2025monoidal} gives
\[
    \mathscr C_{\DD}(\beta)
    =
    \mathscr C_{\DD}(b)
    =
    \mathscr C_{\DD}(\beta').
\]
Intersecting these categories with the fixed level subcategory
\(\mathscr C_{\DD}[1,\infty)\) yields
\eqref{eq:braid-move-category-equality}. Hence the
expression-independent based isomorphism \(\Phi_{\DD}\) realizes the
two quantum Grothendieck rings in the same based algebra
\(\widehat{\cA}(b)\).

By \cite[Proposition~4.46 and Figures~9--10]{casals2025cluster}, the
two right-inductive ice quivers are either identified by a relabeling
or related, up to relabeling, by a single mutation at the
distinguished local vertex. In the first case, the corresponding
Lusztig cycles are identified. The PBW transition formula in
\cite[Theorem~4.12]{bi2026towards} therefore identifies their
Lusztig parameters and normalized global-basis elements. Since
\(\Phi_{\DD}\) is based, the corresponding simple modules are
isomorphic. Their quasi-commutation exponents also agree, proving the
first case.

We now consider the nontrivial \(3\)-move. After reindexing, write
the affected subwords as
\[
    (i_k,i_{k+1},i_{k+2})
    =
    (i,j,i)
    \longleftrightarrow
    (j,i,j).
\]
Let \(\gamma\) be the prefix before these positions. The nontrivial
case is \(\delta(\gamma iji)=\delta(\gamma)\), so all three
positions are solid. Denote their vertices temporarily by
\(v_0,v_1,v_2\), with
\(q_{v_r}^{\beta}=k+r\) for \(r=0,1,2\), and use primed symbols
for \(\beta'\). The correspondence of vertex labels is
\begin{equation}\label{eq:braid-move-vertex-labels}
    \rho(v_0)=v_0',\qquad
    \rho(v_1)=v_2',\qquad
    \rho(v_2)=v_1',
\end{equation}
with all other positional labels fixed. In the formulas below we
abbreviate \(v_r\) by \(k+r\); \(E_{k+r}\) always denotes a
PBW coordinate, whereas \(\mu_k\) means mutation at \(v_0\).

Here is the local calculation specifying these labels as in \cite[Figure~9-10]{casals2025cluster}. They are Demazure weaves: $(i_{k-3}i_{k-2}i_{k-1}i_ki_{k+1}i_{k+2})=(ijiiji)\to (iji)$. Read the last three
incoming edge weights at $(i_{k-3}i_{k-2}i_{k-1})$ of an earlier cycle \(\gamma_u\), \(u<k\),
as \((a_u,b_u,c_u)\). On replacing this reduced suffix by \(jij\),
these weights become
\[
 (a_u',b_u',c_u')
 =(b_u+c_u-p_u,\,p_u,\,a_u+b_u-p_u),
 \qquad p_u:=\min(a_u,c_u).
\]
Apply the six-valent rule
\eqref{eq:Lusztig-rule-six-valent} between successive insertions
and the trivalent rule at each solid step. The northwest edge
coefficients in the three recursions are
\begin{equation}\label{eq:braid-local-cycle-table}
\begin{array}{c|ccc}
 &k&k+1&k+2\\ \hline
 \gamma_u^\beta(e_{\bullet,\mathrm{nw}})
     &c_u&a_u+b_u&b_u\\
 \gamma_u^{\beta'}(e_{\bullet,\mathrm{nw}})
     &a_u+b_u-p_u&b_u+c_u&p_u
\end{array}
\end{equation}
For example, before the move the successive weight triples of an
earlier cycle, just after each solid reduction, are
\((a_u,b_u,0)\), \((b_u,0,0)\), and \((0,0,0)\).
Their next braid transforms give exactly the first row of the
table; applying the same computation to \((a_u',b_u',c_u')\)
gives the second row.

The three cycles created in this disk have southern triples
\((0,0,0)\), \((1,0,0)\), and \((0,0,1)\), respectively, in
the \(iji\) convention. The braid transformation to the common
\(jij\) boundary interchanges the last two triples. This gives
\eqref{eq:braid-move-vertex-labels}. The first cycle has zero
output and is the mutable vertex involved in the local move.
With the exchange-matrix sign convention of
\eqref{eq:quiver-convention}, the local intersection formulas give
\begin{equation}\label{eq:braid-local-exchange-column}
    \varepsilon_{k,u}=a_u-c_u\ (u<k),\qquad
    \epsilon_{k,k+1}=-1,\quad \epsilon_{k,k+2}=1,\quad
    \epsilon_{k,u}=0\ (u>k+2).
\end{equation}
Indeed, in the order \((\gamma_k,\gamma_u)\), the contributions
at the three solid reductions and the two intervening braid
vertices are, successively, \(-c_u,a_u,0,0,0\).
The short cycle has zero boundary output, so
\(\varepsilon_{k,u}=a_u-c_u\).
The same local determinants give
\(\varepsilon_{k,k+1}=-1\) and
\(\varepsilon_{k,k+2}=1\).
These statements fix the vertex correspondence independently of
the labels used in a picture of the reduced-word strands.
By the local weave move of
\cite[Proposition~4.46]{casals2025cluster}, with this correspondence,
\begin{equation}\label{eq:braid-move-exchange-matrix}
    \rho\!\left(\mu_k(B_\beta)\right)=B_{\beta'}.
\end{equation}
Set \(v:=\rho(k)=k\).

Let
\[
    \Psi_{\beta'}^\beta\colon
    \ZZ_{\geq0}^{|\beta|}
    \longrightarrow
    \ZZ_{\geq0}^{|\beta'|}
\]
be the PBW transition map. On the affected coordinates
\(k,k+1,k+2\), it is given by
\begin{equation}\label{eq:braid-move-PBW-transition}
    (a_1,a_2,a_3)
    \longmapsto
    \bigl(
        a_2+a_3-p,\,
        p,\,
        a_1+a_2-p
    \bigr),
    \qquad
    p:=\min\{a_1,a_3\},
\end{equation}
and it fixes all remaining coordinates. We claim that
\begin{equation}\label{eq:Psibetabeta'}
    \Psi_{\beta'}^\beta(\eta_u^\beta)
    =
    \eta_{\rho(u)}^{\beta'}
    \qquad (u\neq k),
\end{equation}
Here \(\rho\) is the vertex relabeling
\eqref{eq:braid-move-vertex-labels}; in positional notation it
exchanges \(k+1\) and \(k+2\) and fixes \(k\).
We first verify this claim for vertices preceding the affected
position. If \(q_u^\beta<k\), then Lemma~\ref{lem:maxindex} gives
\[
    \operatorname{supp}(\eta_u^\beta)
    \subseteq
    \{1,\ldots,k-1\}.
\]
Thus all affected coordinates of \(\eta_u^\beta\) vanish, so
\(\Psi_{\beta'}^\beta\) fixes this vector. Since the two weaves agree
before the local braid-move disk, we obtain
\begin{equation}\label{eq:earlier-eta-under-braid-transition}
    \Psi_{\beta'}^\beta(\eta_u^\beta)
    =
    \eta_{u}^{\beta'}
    \qquad (u<k).
\end{equation}

We next identify the variable obtained by mutation at \(k\). Set
\[
    A_k'
    :=
    \mu_k(A_k^\beta),
    \qquad
    \xi_k
    :=
    \mathbf a^\beta(A_k').
\]
By Lemma~\ref{lem:Lusztig-parameter-mutation},
\begin{equation}\label{eq:braid-move-mutated-parameter}
    \xi_k
    =
    \max_{\mathrm r}
    \left\{
        \sum_u[\epsilon_{k,u}]_+\eta_u^\beta,\,
        \sum_u[-\epsilon_{k,u}]_+\eta_u^\beta
    \right\}
    -
    \eta_k^\beta.
\end{equation}
Here we have used
\(
    b_{u,k}=\epsilon_{u,k}=-\epsilon_{k,u}
\)
in Definition~\ref{def:mutlusztig}.
We now determine this maximum. The exchange column in
\eqref{eq:braid-local-exchange-column} shows that the larger leading
parameter is
\[
    \eta_{k+2}^\beta
    +
    \sum_{u<k}[\epsilon_{k,u}]_+\eta_u^\beta.
\]
Indeed, the two parameters agree in every coordinate greater than
\(k+2\), while the displayed parameter has coefficient \(1\) at
\(k+2\) and the other has coefficient \(0\). It is therefore larger
with respect to \(\prec_{\mathrm r}\). Consequently,
\eqref{eq:braid-move-mutated-parameter} becomes
\begin{equation}\label{eq:explicit-mutated-parameter}
    \xi_k
    =
    \eta_{k+2}^\beta
    +
    \sum_{u<k}[\epsilon_{k,u}]_+\eta_u^\beta
    -
    \eta_k^\beta.
\end{equation}

The defining recursion for \(\eta^\beta\) and the first row of
\eqref{eq:braid-local-cycle-table} give
\[
    \eta_{k+2}^\beta
    =
    \eta_k^\beta
    +
    E_{k+2}
    +
    \sum_{u<k}
        \gamma_u^\beta(e_{k+2,\mathrm{nw}})
        \eta_u^\beta.
\]
Substituting this identity into
\eqref{eq:explicit-mutated-parameter}, we obtain
\begin{equation}\label{eq:mutated-parameter-before-transition}
    \xi_k
    =
    E_{k+2}
    +
    \sum_{u<k}
        \left(
            \gamma_u^\beta(e_{k+2,\mathrm{nw}})
            +
            [\epsilon_{k,u}]_+
        \right)
        \eta_u^\beta.
\end{equation}

For an earlier vertex \(u<k\), equations
\eqref{eq:braid-local-cycle-table} and
\eqref{eq:braid-local-exchange-column} give
\[
    \gamma_u^\beta(e_{k+2,\mathrm{nw}})=b_u,
    \qquad \epsilon_{k,u}=a_u-c_u,
    \qquad
    \gamma_u^{\beta'}(e_{v,\mathrm{nw}})
       =a_u+b_u-\min(a_u,c_u).
\]
Therefore,
\begin{align*}
    \gamma_u^\beta(e_{k+2,\mathrm{nw}})
    +
    [\epsilon_{k,u}]_+
    &=
    b_u+[a_u-c_u]_+\\
    &=
    b_u+a_u-\min\{a_u,c_u\}\\
    &=
    \gamma_{u}^{\beta'}(e_{v,\mathrm{nw}}).
\end{align*}
Thus
\begin{equation}\label{eq:mutated-parameter-local-cycle-form}
    \xi_k
    =
    E_{k+2}
    +
    \sum_{u<k}
        \gamma_{u}^{\beta'}(e_{v,\mathrm{nw}})
        \eta_u^\beta.
\end{equation}

The local parameter \((0,0,1)\) becomes \((1,0,0)\) under
\eqref{eq:braid-move-PBW-transition}. Hence
\[
    \Psi_{\beta'}^\beta(E_{k+2})=E_k.
\]
Although \(\Psi_{\beta'}^\beta\) is only piecewise linear, no
additivity is required here: the summation term in
\eqref{eq:mutated-parameter-local-cycle-form} is supported in
\(\{1,\ldots,k-1\}\), where the transition map is the identity.
Using \eqref{eq:earlier-eta-under-braid-transition}, we obtain
\begin{align*}
    \Psi_{\beta'}^\beta(\xi_k)
    &=
    E_k
    +
    \sum_{u<k}
        \gamma_{u}^{\beta'}(e_{v,\mathrm{nw}})
        \eta_{u}^{\beta'}\\
    &=
    \eta_k^{\beta'}.
\end{align*}
The last equality is precisely the defining recursion
\eqref{eq:etav} for \(k\). Hence
\begin{equation}\label{eq:braid-move-mutated-eta-transition}
    \Psi_{\beta'}^\beta(\xi_k)
    =
    \eta_{k}^{\beta'}.
\end{equation}

By \cite[Theorem~4.12]{bi2026towards}, the two PBW parametrizations of
the global basis satisfy
\[
    \widetilde G(\beta,\mathbf a)
    =
    \widetilde G\left(
        \beta',
        \Psi_{\beta'}^\beta(\mathbf a)
    \right).
\]
It follows that
\begin{align*}
    \mu_k(A_k^\beta)
    &=
    \widetilde G(\beta,\xi_k)\\
    &=
    \widetilde G\left(
        \beta',
        \Psi_{\beta'}^\beta(\xi_k)
    \right)\\
    &=
    \widetilde G\left(
        \beta',
        \eta_{k}^{\beta'}
    \right)\\
    &=
    A_{k}^{\beta'}.
\end{align*}

It remains to prove \eqref{eq:Psibetabeta'} for the vertices at and
after the remaining affected positions. First, the defining recursion
in the \(\beta'\)-weave gives
\begin{align*}
    \eta_{k+2}^{\beta'}
    &=
    E_{k+2}
    +
    \eta_k^{\beta'}
    +
    \sum_{u<k}
        \gamma_{u}^{\beta'}(e_{k+2,\mathrm{nw}})
        \eta_{u}^{\beta'}\\
    &=
    E_{k+2}
    +
    E_k
    +
    \sum_{u<k}
        \left(
            \gamma_{u}^{\beta'}(e_{k,\mathrm{nw}})
            +
            \gamma_{u}^{\beta'}(e_{k+2,\mathrm{nw}})
        \right)
        \eta_u^\beta,
\end{align*}
where the second equality follows from
\eqref{eq:earlier-eta-under-braid-transition} and the defining
recursion for \(\eta_v^{\beta'}\).

Let \(b_u'=\min\{a_u,c_u\}\) and
\(c_u'=a_u+b_u-b_u'\). A direct local calculation gives
\[
    \gamma_{u}^{\beta'}(e_{v,\mathrm{nw}})
    +
    \gamma_{u}^{\beta'}(e_{k+2,\mathrm{nw}})
    =
    b_u'+c_u'
    =
    b_u+a_u
    =
    \gamma_u^\beta(e_{k+1,\mathrm{nw}}).
\]
Therefore,
\begin{equation}\label{eq:etak+2}
    \eta_{k+2}^{\beta'}
    =
    E_{k+2}
    +
    E_k
    +
    \sum_{u<k}
        \gamma_u^\beta(e_{k+1,\mathrm{nw}})
        \eta_u^\beta
    =
    \Psi_{\beta'}^\beta(\eta_{k+1}^\beta).
\end{equation}

Similarly, if \(a_u'=b_u+c_u-b_u'\), then
\begin{align}
    \eta_{k+1}^{\beta'}
    &=
    E_{k+1}
    +
    \sum_{u<k}
        (a_u'+b_u')
        \eta_u^\beta \notag\\
    &=
    E_{k+1}
    +
    \sum_{u<k}
        (b_u+c_u)
        \eta_u^\beta \notag\\
    &=
    \Psi_{\beta'}^\beta(\eta_{k+2}^\beta).
    \label{eq:etak+1}
\end{align}

Finally, let \(u>k+2\). We make explicit why the piecewise
linearity of \(\Psi_{\beta'}^\beta\) causes no difficulty. Let
\[
    H
    :=
    \left\{
        \mathbf a\in\ZZ^{|\beta|}
        \,\middle|\,
        a_k=a_{k+2}
    \right\}.
\]
If the affected coordinates of \(\mathbf a\in H\) are
\((x,y,x)\), then
\eqref{eq:braid-move-PBW-transition} gives
\[
    (x,y,x)\longmapsto(y,x,y).
\]
Consequently, the restriction of
\(\Psi_{\beta'}^\beta\) to \(H\) is linear.

By Lemma~\ref{lem:maxindex}, the affected coordinates of
\(\eta_w^\beta\) vanish for \(w<k\). Moreover, the local recursions
used in \eqref{eq:etak+2} and \eqref{eq:etak+1} give
\[
    \left.\eta_{k+1}^\beta\right|_{\{k,k+1,k+2\}}
    =(0,1,0),
    \qquad
    \left.\eta_{k+2}^\beta\right|_{\{k,k+1,k+2\}}
    =(1,0,1).
\]
Thus all these vectors belong to \(H\).

We now proceed by induction on \(u>k+2\). The cycle
\(\gamma^\beta_k\) has weight \(1\) at its origin. The local calculation above
transports it to the incoming edge at \(k+2\), where it meets
the fresh edge of weight \(0\). The trivalent rule gives
\(\min(1,0)=0\), so this cycle $\gamma^\beta_{k}$ has zero weight on every edge
leaving the disk. Subsequent Lusztig rules preserve the zero
output. It therefore contributes nothing to any later recursion:
\[
    \gamma_k^\beta(e_{u,\mathrm{nw}})=0.
\]
Hence
\begin{equation}\label{eq:later-eta-recursion-braid-move}
    \eta_u^\beta
    =
    E_{q_u^\beta}
    +
    \sum_{\substack{w<u\\w\neq k}}
        \gamma_w^\beta(e_{u,\mathrm{nw}})
        \eta_w^\beta.
\end{equation}
Here \(q_u^\beta\notin\{k,k+1,k+2\}\), so
\(E_{q_u^\beta}\in H\). By the induction hypothesis, every vector
occurring in the sum also belongs to \(H\). Therefore
\(\eta_u^\beta\in H\), and
\(\Psi_{\beta'}^\beta\) may be applied linearly to
\eqref{eq:later-eta-recursion-braid-move}.

The identification of the Lusztig cycles outside the local
braid-move disk gives
\[
    q_{u}^{\beta'}=q_u^\beta,
    \qquad
    \gamma_w^\beta(e_{u,\mathrm{nw}})
    =
    \gamma_{\rho(w)}^{\beta'}
        (e_{u,\mathrm{nw}})
    \qquad(w\neq k).
\]
Using \eqref{eq:earlier-eta-under-braid-transition},
\eqref{eq:etak+2}, \eqref{eq:etak+1}, and the induction hypothesis,
we therefore obtain
\begin{align*}
    \Psi_{\beta'}^\beta(\eta_u^\beta)
    &=
    E_{q_{\rho(u)}^{\beta'}}
    +
    \sum_{\substack{w<u\\w\neq k}}
        \gamma_{\rho(w)}^{\beta'}
            (e_{u,\mathrm{nw}})
        \Psi_{\beta'}^\beta(\eta_w^\beta)\\
    &=
    E_{q_{\rho(u)}^{\beta'}}
    +
    \sum_{\substack{w<u\\w\neq k}}
        \gamma_{\rho(w)}^{\beta'}
            (e_{u,\mathrm{nw}})
        \eta_{\rho(w)}^{\beta'}\\
    &=
    \eta_{u}^{\beta'}.
\end{align*}
Thus all the parameters occurring in this induction lie on the
common linearity wall \(a_k=a_{k+2}\), so no unjustified additivity
of the piecewise-linear map is used. This completes the proof of
\eqref{eq:Psibetabeta'}.

For every \(u\neq k\), equation~\eqref{eq:Psibetabeta'} and the PBW
transition formula now give
\[
    A_u^\beta
    =
    \widetilde G(\beta,\eta_u^\beta)
    =
    \widetilde G\left(
        \beta',
        \Psi_{\beta'}^\beta(\eta_u^\beta)
    \right)
    =
    A_{\rho(u)}^{\beta'}.
\]
Since \(\Phi_{\DD}\) is based, these identities imply
\[
    \mu_k(B_k^\beta)
    \simeq
    B_{\rho(k)}^{\beta'},
    \qquad
    B_u^\beta
    \simeq
    B_{\rho(u)}^{\beta'}
    \quad (u\neq k).
\]
Thus the mutated toric frame is identified, after relabeling, with
the toric frame of
\(\overrightarrow{\mathbf s}_t(\beta')\).

Finally, quantum mutation transforms the quasi-commutation matrix of
the initial toric frame into \(\mu_k(\Lambda_\beta)\). Since the
mutated toric-frame elements coincide, after relabeling, with those
of \(\overrightarrow{\mathbf s}_t(\beta')\), their
quasi-commutation matrices agree. Hence
\[
    \rho\!\left(\mu_k(\Lambda_\beta)\right)
    =
    \Lambda_{\beta'}.
\]
Together with \eqref{eq:braid-move-exchange-matrix}, this proves
\eqref{eq:braid-move-quantum-seed} and
\eqref{eq:braid-move-compatible-form}. The reverse move
\(jij\to iji\) follows from the same argument after interchanging
\(i\) and \(j\), or equivalently by applying the inverse PBW
transition map.
\end{proof}

\subsubsection{The initial case $\boldsymbol{\Delta}\beta$} In this section, we will prove the based categorification for the braid word $\boldsymbol{\Delta}\beta$.
\begin{lemma}\label{lem:commuting-with-PBW-heads}
Let \(M\) be a real simple object of \(\mathscr C_{\DD}(\kappa)\).
If \(M\) strongly commutes with every affine cuspidal module
\(C_k^{\DD,\kappa}\), then it strongly commutes with every simple
object of \(\mathscr C_\DD(\kappa)\).
\end{lemma}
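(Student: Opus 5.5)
The idea is to write an arbitrary simple object as the head of an ordered tensor product of affine cuspidal modules, and then show that the head of a tensor product of objects commuting with \(M\) still commutes with \(M\). Let \(N\in\mathscr C_\DD(\kappa)\) be simple. By Proposition~\ref{pro:lusztig-parameter-modules},
\[
N\simeq\operatorname{hd}\bigl((C_s^{\DD,\kappa})^{\otimes a_s}\otimes\cdots\otimes(C_1^{\DD,\kappa})^{\otimes a_1}\bigr),
\]
where \(s=|\kappa|\) and \(\mathbf a=\mathbf a^{\kappa}(N)\). Put \(P:=(C_s^{\DD,\kappa})^{\otimes a_s}\otimes\cdots\otimes(C_1^{\DD,\kappa})^{\otimes a_1}\). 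Since \(N\) is a simple subquotient (a quotient) of \(P\), it is enough to show \(\mathfrak d(M,N)=0\). Because \(M\) is real, this is equivalent to strong commutation.

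The key tool is the subadditivity inequality \eqref{eq:R-matrix-subquotient-inequalities}: if \(S\) is a simple subquotient of \(X\otimes Y\) with \(X,Y\) simple, then \(\mathfrak d(S,M)\le\mathfrak d(X,M)+\mathfrak d(Y,M)\). We build \(N\) by iterated heads. Using the normality of the ordered cuspidal sequence (the strongly unmixed property from \cite[Theorem~5.16]{kashiwara2025monoidal} together with \cite[Proposition~1.14]{kashiwara2025monoidal}), the head of \(P\) can be computed step by step. Concretely, set \(N_1:=C_{k_1}\) and \(N_{t+1}:=C_{k_{t+1}}\nabla N_t\), running through the factors of \(P\) from right to left (with multiplicity). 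By normality each \(N_t\) is simple, and \(N_T\simeq N\). This is exactly the same head-compatibility argument used in the proof of Proposition~\ref{pro:LusztigparaBv}, via Lemma~\ref{lem:head} and \eqref{eq:R-matrix-naturality}. Each \(N_{t+1}\) is a simple quotient of \(C_{k_{t+1}}\otimes N_t\), and the cuspidal factors are real. Induction on \(t\) with the inequality gives
\[
\mathfrak d(M,N_{t+1})\le\mathfrak d(M,C_{k_{t+1}})+\mathfrak d(M,N_t)=0 .
\]
Hence \(\mathfrak d(M,N)=0\).

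There is a simpler alternative that avoids the iterated heads. Since \(M\) strongly commutes with every factor, \(M\otimes P\simeq P\otimes M\) factor by factor, and one can apply subadditivity directly to the composition factors of a tensor product. The inequality in \eqref{eq:R-matrix-subquotient-inequalities} is stated only for two simple tensor factors, but it extends by induction: any simple subquotient of \(X_1\otimes\cdots\otimes X_T\) is a simple subquotient of \(S'\otimes X_T\) for some simple subquotient \(S'\) of \(X_1\otimes\cdots\otimes X_{T-1}\), by exactness of \(-\otimes X_T\). Running this induction gives \(\mathfrak d(M,S)\le\sum_t\mathfrak d(M,X_t)=0\) for every simple subquotient \(S\) of \(P\), and in particular for \(N\). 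This route needs no normality at all, so I would present it as the main argument.

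The main point needing care is this extension of subadditivity to arbitrary simple subquotients of multi-fold tensor products, namely that every composition factor of \(X\otimes Y\) with \(X\) non-simple arises from a composition factor of \(X\). This follows from exactness of the tensor product in the rigid category \(\mathscr C^{\ZZ}\), so I expect it to be routine. The final step is the passage from \(\mathfrak d(M,N)=0\) to simplicity of \(M\otimes N\), which uses that \(M\) is real, as recalled after \eqref{eq:image-R-matrix-head-socle}.
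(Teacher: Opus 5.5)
Your proposal is correct and follows the paper's proof. The paper likewise writes \(N\) as the head of the ordered cuspidal product via Proposition~\ref{pro:lusztig-parameter-modules}, bounds \(\mathfrak d(M,N)\le\sum_k a_k\,\mathfrak d(M,C_k^{\DD,\kappa})=0\) by subadditivity, and concludes using that \(M\) is real; your main variant extends \eqref{eq:R-matrix-subquotient-inequalities} to all composition factors of a multi-fold tensor product by exactness, which makes explicit the step the paper compresses into ``subadditivity under taking the simple head'' and rightly shows that the normality the paper mentions is not actually needed.
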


\begin{proof}
Let \(N\) be simple and write its PBW factorization as in
Proposition~\ref{pro:lusztig-parameter-modules}. The ordered sequence
of cuspidal factors is normal. Subadditivity of \(\mathfrak d\) under
taking the simple head gives
\[
 0\leq\mathfrak d(M,N)
 \leq\sum_k a_k\,
       \mathfrak d(M,C_k^{\DD,\kappa})=0.
\]
Thus \(\mathfrak d(M,N)=0\). Since \(M\) is real, this is equivalent
to simplicity of \(M\otimes N\), hence to strong commutation.
\end{proof}

\begin{lemma}[The common triangular basis of a word algebra]
\label{lem:Qin-positive-braid-category-comparison}
Under Assumption~\ref{assumption}, let
\(\DD'=\DD_{\mathcal Q'}\) be associated with a \(Q\)-datum and let
\(\eta=(i_1,\ldots,i_r)\) be a positive braid word.  Identify
\(t^{1/2}=q^{-1/2}\), and consider the isomorphism
\begin{equation}\label{eq:word-bosonic-based-realization}
 \Xi_{\DD'}^\eta
 :=\Phi_{\DD'}\circ\Psi_{\DD'}^{q,\eta}\colon
 \overline{\mathcal A}_t\bigl(\mathbf s_t(\eta)\bigr)
 \xrightarrow{\sim}
 \widehat{\cA}_{\ZZ[q^{\pm1/2}]}(\eta).
\end{equation}
Here $\Psi_{\DD'}^{q,\eta}$ refers to \eqref{eq:quantum-word-seed-realization}.
Let \(\overline{\mathbf L}_\eta\) be the common triangular basis of
the word algebra with its frozen variables not inverted.  Then
\begin{equation}\label{eq:word-common-triangular-global-basis}
 \Xi_{\DD'}^\eta\bigl(\overline{\mathbf L}_\eta\bigr)
 =\{\widetilde G(\eta,\mathbf a)
       \mid\mathbf a\in\ZZ_{\geq0}^r\}.
\end{equation}
Consequently \(\Psi_{\DD'}^{q,\eta}\) identifies
\(\overline{\mathbf L}_\eta\) with all normalized simple classes
in \(\mathcal K_t(\mathscr C_{\DD'}(\eta))\), and the same assertion
holds for their nonredundant bases after localization at the frozen
variables.
\end{lemma}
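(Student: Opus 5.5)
The plan is to compare two bar-invariant bases of the same algebra and show that they coincide through their leading PBW data.

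\textbf{Step 1: the ambient algebra.} By Theorem~\ref{thm:word-seed-monoidal-categorification}, \(\Psi_{\DD'}^{q,\eta}\) is an isomorphism onto \(\mathcal K_t(\mathscr C_{\DD'}(\eta))\). By Theorem~\ref{thm:isoKt}, \(\Phi_{\DD'}\) is a based isomorphism onto \(\widehat{\cA}_{\ZZ[q^{\pm1/2}]}(\eta)\). Hence \(\Xi_{\DD'}^\eta\) is an isomorphism, and the image of the normalized simple basis is exactly \(\{\widetilde G(\eta,\mathbf a)\}\). It therefore suffices to prove that \(\Psi_{\DD'}^{q,\eta}\) carries \(\overline{\mathbf L}_\eta\) onto the normalized simple classes. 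The final assertions then follow from the based property of \(\Phi_{\DD'}\), and localization adds only frozen shifts, which are strongly commuting factors.

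\textbf{Step 2: two candidate bases.} Both sets are bar-invariant \(\ZZ[t^{\pm1/2}]\)-bases of the same algebra. In the initial seed \(\mathbf s_t(\eta)\), the triangularity defining the common triangular basis says the following: for every element \(L\) and every initial variable \(X_k\), the product \(X_kL\) equals a power of \(t^{1/2}\) times a basis element plus terms with coefficients in \(t^{1/2}\ZZ[t^{1/2}]\), under the translated conventions. On the categorical side, the analogous statement for \([M_k]_t[S]_t\) follows from the \(R\)-matrix theory. The head \(M_k\nabla S\) appears with a single \(t\)-power given by \(\LambdaR\), and the remaining composition factors have coefficients in the positive (respectively negative) lattice. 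This is the standard KKOP argument for real simple \(M_k\) and quantizable modules, as in Remark~\ref{rem:quantizable} and Lemma~\ref{lem:Nm1m2}.

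\textbf{Step 3: uniqueness.} I would use uniqueness of the triangular basis from \cite{qin2020dual,qin2024analogs}. A bar-invariant basis that is pointed, contains the cluster monomials of the initial seed, and is triangular with respect to the initial seed is the triangular basis. The normalized simple classes contain all cluster monomials of the initial seed, because these are classes of tensor products of the strongly commuting real modules \(M_k^{\DD',\eta}\). Pointedness with respect to \(\mathbf s_t(\eta)\) would be obtained from the PBW leading term: Lemma~\ref{lem:global-basis-product-triangularity} gives the \(\prec_{\mathrm r}\)-leading parameter of a product as the sum of the parameters. Translating the Lusztig parameter \(\mathbf a\) into a \(g\)-vector via the triangular change of basis \(\mathbf a\mapsto\) the exponents of \(D_k^\eta\) (the vectors \(\{1,k]_\eta\) form a unitriangular basis of \(\ZZ^r\)) should identify the dominance order with \(\prec_{\mathrm r}\)-lower terms. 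Since the common triangular basis is in particular the initial-seed triangular basis, the two bases agree.

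\textbf{Main obstacle.} I expect the hardest step to be the pointedness and the compatibility of orders in Step 3. One must show that the Laurent expansion of \([S]_t\) in the initial quantum torus has a unique leading monomial with coefficient \(1\), and that all other monomials are lower in the \(\widetilde B\)-dominance order rather than merely \(\prec_{\mathrm r}\)-lower. My first attempt would bypass this by citing Qin's theorem that for word seeds \(\mathbf s_t(\eta)\) the common triangular basis coincides with the dual canonical-type (global) basis transported from quantum unipotent coordinate rings \cite{qin2024analogs}. I would then check that his normalization matches \(\widetilde G\) under \eqref{eq:Qin-quantum-convention-translation}, using the conventions fixed before Theorem~\ref{thm:quantum-A-U-braid-seeds}.
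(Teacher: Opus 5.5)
Your Step~1 reduction is fine, but Steps~2--3 rest on two inputs you do not establish, and the proposed fallback does not supply them.

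\emph{Step~2.} The $R$-matrix formalism tells you about the head $M_k\nabla S$ of $M_k\otimes S$. It gives no control over the $t$-powers with which the remaining composition factors occur in $[M_k]_t[S]_t$, and Lemma~\ref{lem:Nm1m2} only treats strongly commuting pairs. Triangularity with all lower coefficients in $t^{1/2}\ZZ[t^{1/2}]$ is a canonical-basis type statement, not a formal consequence of KKOP theory.

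\emph{Step~3.} Knowing the $\prec_{\mathrm r}$-leading PBW parameter of $\widetilde G(\eta,\mathbf a)$ says nothing about the Laurent expansion of $[S]_t$ in the initial quantum torus. The PBW root vectors are not initial Laurent monomials, and $\prec_{\mathrm r}$ is a total order on PBW parameters whereas $\prec_{\mathbf t}$ is a partial order on degrees. So pointedness, and the required shape $\mathbf g-\widetilde B\mathbf n$ of the lower terms, remain open. Moreover, Qin's uniqueness argument also uses the cluster variables of the injective seed $\mathbf t[-1]$, which your list of conditions omits.

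\emph{Fallback.} The theorem you would cite is not available in the form you need. For non-reduced $\eta$ there is no quantum unipotent coordinate ring from which to transport a dual canonical basis. The identification of Qin's common triangular basis with $\widetilde G(\eta,\cdot)$ under the specific map $\Xi_{\DD'}^\eta$ is exactly the content of the lemma, so ``checking the normalization'' presupposes the conclusion.

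The missing idea is to compare standard bases instead of verifying triangular-basis axioms for simple classes. Theorem~8.10(4) of \cite{qin2024analogs} characterizes the basis element of degree $g(\mathbf a)=\sum_k a_k\deg W_k$ as the unique bar-invariant element of $\mathbf M(\mathbf a)+\sum_{\mathbf b}\mathbf m\,\mathbf M(\mathbf b)$. Here $\mathbf M(\mathbf a)=[W_1^{a_1}\cdots W_r^{a_r}]$ are normalized standard monomials in the interval variables $W_k=W[k,k]$, $\mathbf m=q^{-1/2}\ZZ[q^{-1/2}]$, and lower terms are taken in reverse lexicographic order, which is $\prec_{\mathrm r}$. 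The degree map $g(\mathbf a)$ agrees with your change of basis via the vectors $\{1,k]_\eta$. The paper then proceeds in three steps.
\begin{enumerate}
\item It proves $\Xi_{\DD'}^\eta(W[a,b])=D^\eta[a,b]$ for every $\eta$-box. It starts from $\widetilde X_k=W[1(i_k)^+,k]$ and matches Qin's box mutations with the determinantial-module mutations of \cite[Lemma~8.12]{kashiwara2025monoidal}. In particular $\Xi_{\DD'}^\eta(W_k)=\widetilde P_k^\eta$.
\item It uses $\Lambda^{\mathrm Q}(\deg W_k,\deg W_l)=-(\alpha_k^\eta,\alpha_l^\eta)$ for $k<l$ to get the exact identity $\Xi_{\DD'}^\eta(\mathbf M(\mathbf a))=\overline{E(\mathbf a)}$. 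Here $E(\mathbf a)=q^{-(\nu,\nu)/4}P^\eta(\mathbf a)$ and $\nu$ is the weight of $P^\eta(\mathbf a)$. The bar anti-involution is what turns Qin's ascending product into the descending PBW monomial.
\item It applies bar to Proposition~\ref{prop:PBW-global-basis} to obtain $\widetilde G(\eta,\mathbf a)\in\overline{E(\mathbf a)}+\sum_{\mathbf b\prec_{\mathrm r}\mathbf a}q^{-1}\ZZ[q^{-1}]\,\overline{E(\mathbf b)}$, which is Qin's Kazhdan--Lusztig condition.
\end{enumerate}
Uniqueness of such bar-invariant unitriangular bases finishes the proof. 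This route needs neither initial-seed triangularity nor pointedness of simple classes.

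For the localized assertion you should also justify that the frozen modules strongly commute with every simple object of $\mathscr C_{\DD'}(\eta)$. This follows from \cite[Lemma~5.12]{kashiwara2025monoidal} together with Lemma~\ref{lem:commuting-with-PBW-heads}.
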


\begin{proof}
\noindent\emph{Quantum conventions.}
We use Qin's parameter \(q_{\mathrm Q}=q=t^{-1}\)
\cite{qin2024analogs}. For his positive-word seed, the dictionary is
\begin{equation}\label{eq:Qin-word-quantum-dictionary}
 B^{\mathrm Q}=-\widetilde B_\eta,
 \qquad
 \Lambda^{\mathrm Q}=-\widetilde\Lambda_\eta,
 \qquad q_{\mathrm Q}^{1/2}=q^{1/2}=t^{-1/2}.
\end{equation}
This is \eqref{eq:Qin-quantum-convention-translation} for the word
seed.  The exchange-matrix identity follows entry by entry from
\cite[Lemma~6.3]{qin2024analogs} and \eqref{eq:B-beta}; the
compatible-form convention is that of
\cite[Section~4.1]{qin2024analogs}.  We choose the quantum word
seed of \cite[Section~6.2]{qin2024analogs} with this compatible
form; its standard-basis and Kazhdan--Lusztig statements apply to
this quantization.  Thus the toric products agree:
\(q_{\mathrm Q}^{\Lambda^{\mathrm Q}_{uv}/2}
 =t^{\widetilde\lambda_{uv}/2}\).
Initial variables are identified by their positions in \(\eta\).
This gives an identification of quantum word algebras and their
mutation maps, with the dominance-order translation of
Section~\ref{sec:cluster}.

\smallskip
\noindent\emph{Interval variables.}
For every \(\eta\)-box \([a,b]\), let \(W[a,b]\) denote Qin's
interval variable \cite[Definition~8.3]{qin2024analogs}. We claim
that
\begin{equation}\label{eq:Qin-interval-bosonic-comparison}
\begin{aligned}
    \Psi_{\DD'}^{q,\eta}\bigl(W[a,b]\bigr)
    =
    \bigl[M^{\DD',\eta}[a,b]\bigr]_t,\qquad
    \Xi_{\DD'}^\eta\bigl(W[a,b]\bigr)
    =
    D^\eta[a,b].
\end{aligned}
\end{equation}

Recall that
\[
    1(i_k)^+
    =
    \min\{s\leq k\mid i_s=i_k\}.
\]
Under the seed identification fixed above, Qin's initial interval
labeling gives
\[
    \widetilde X_k
    =
    W[1(i_k)^+,k].
\]
By the definition of
\(\Psi_{\DD'}^{q,\eta}\), we therefore have
\[
    \Psi_{\DD'}^{q,\eta}
       \bigl(W[1(i_k)^+,k]\bigr)
    =
    \bigl[
       M^{\DD',\eta}[1(i_k)^+,k]
    \bigr]_t.
\]
Thus the first identity in
\eqref{eq:Qin-interval-bosonic-comparison} holds for the initial
interval variables.

By
\cite[Definition~8.3 and Proposition~8.6]{qin2024analogs},
all the remaining variables \(W[a,b]\) are obtained from these
initial variables by successive box mutations. On the categorical
side, \cite[Lemma~8.12]{kashiwara2025monoidal} identifies the same
box moves with monoidal mutations whose new variables are the
normalized classes of the corresponding affine determinantial
modules \(M^{\DD',\eta}[a,b]\). These mutations are admissible by
Theorem~\ref{thm:Lambda-admissible-seed} and
\cite[Corollary~8.13]{kashiwara2025monoidal}.

Since \(\Psi_{\DD'}^{q,\eta}\) identifies the initial quantum seeds
and intertwines quantum mutation, induction along the box-mutation
sequence gives
\[
    \Psi_{\DD'}^{q,\eta}\bigl(W[a,b]\bigr)
    =
    \bigl[M^{\DD',\eta}[a,b]\bigr]_t
\]
for every \(\eta\)-box \([a,b]\). The equality is exact, rather
than only up to a power of \(q\), because
\eqref{eq:Qin-word-quantum-dictionary} identifies both the initial
toric frames and their compatible forms.

Applying \(\Phi_{\DD'}\) and using
\eqref{eq:determinantial-module-minor}, we obtain
\[
\begin{aligned}
    \Xi_{\DD'}^\eta\bigl(W[a,b]\bigr)
    =
    \Phi_{\DD'}\left(
       \Psi_{\DD'}^{q,\eta}\bigl(W[a,b]\bigr)
    \right)
    =
    \Phi_{\DD'}\left(
       \bigl[M^{\DD',\eta}[a,b]\bigr]_t
    \right)
    =
    D^\eta[a,b].
\end{aligned}
\]
This proves \eqref{eq:Qin-interval-bosonic-comparison}.
In particular, putting \(W_k:=W[k,k]\), we have
\[
    M^{\DD',\eta}[k,k]=C_k^{\DD',\eta},
    \qquad
    D^\eta[k,k]
    =
    \widetilde G(\eta,E_k)
    =
    \widetilde P_k^\eta.
\]
Hence
\begin{equation}\label{eq:Qin-singleton-bosonic-comparison}
\begin{aligned}
    \Psi_{\DD'}^{q,\eta}(W_k)
    =[C_k^{\DD',\eta}]_t,\qquad
    \Xi_{\DD'}^\eta(W_k)
    =\widetilde P_k^\eta.
\end{aligned}
\end{equation}

\smallskip
\noindent\emph{Standard monomials and bar normalization.}
We now compare normalizations.  Let \(f_k\) be the initial toric
degree vectors, set \(f_{-\infty}=0\), and write
\[
 \beta_k:=f_k-f_{k^-},\qquad
 g(\mathbf a):=\sum_{k=1}^r a_k\beta_k.
\]
By \cite[Lemma~8.4]{qin2024analogs}, \(\deg W_k=\beta_k\).
With \(\alpha_k^\eta=s_{i_1}\cdots s_{i_{k-1}}\alpha_{i_k}\),
the word-form formula \eqref{eq:Lambda-beta-word} gives
\begin{equation}\label{eq:Qin-root-degree-form}
 \Lambda^{\mathrm Q}(\beta_k,\beta_l)
 =-(\alpha_k^\eta,\alpha_l^\eta)\qquad(k<l).
\end{equation}
One can obtain this identity by taking the successive same-color
differences in both arguments of \eqref{eq:Lambda-beta-word},
using
\(\varpi_{i_k}-w_{\leq k}^\eta\varpi_{i_k}
 =\sum_{s\leq k,\ i_s=i_k}\alpha_s^\eta\), and then applying
\eqref{eq:Qin-word-quantum-dictionary}.

For \(\mathbf a\in\ZZ_{\geq0}^r\) put
\[
 s(\mathbf a):=\sum_{k<l}a_ka_l
                         (\alpha_k^\eta,\alpha_l^\eta),
 \qquad
 \nu(\mathbf a):=\operatorname{wt}P^\eta(\mathbf a),
 \qquad
 E(\mathbf a):=q^{-(\nu(\mathbf a),\nu(\mathbf a))/4}
                         P^\eta(\mathbf a).
\]
The normalized standard monomial of
\cite[Section~8.3]{qin2024analogs} is
\(\mathbf M(\mathbf a)=[W_1^{a_1}\cdots W_r^{a_r}]\), where the
brackets normalize its leading toric coefficient to one.
Equations \eqref{eq:Qin-singleton-bosonic-comparison} and
\eqref{eq:Qin-root-degree-form} imply the exact identity
\begin{equation}\label{eq:Qin-standard-bar-PBW}
 \Xi_{\DD'}^\eta\bigl(\mathbf M(\mathbf a)\bigr)
 =q^{s(\mathbf a)/2}
       (\widetilde P_1^\eta)^{a_1}\cdots
       (\widetilde P_r^\eta)^{a_r}
 =\overline{E(\mathbf a)}.
\end{equation}
For the last equality, the squared length of every PBW root is two,
so \((\nu(\mathbf a),\nu(\mathbf a))
 =2\sum_k a_k^2+2s(\mathbf a)\).
Equation~\eqref{eq:ordered-normalized-PBW-product} consequently gives
\(E(\mathbf a)=q^{-s(\mathbf a)/2}
 (\widetilde P_r^\eta)^{a_r}\cdots
 (\widetilde P_1^\eta)^{a_1}\).
Applying the bar anti-involution proves the equality, including its
scalar.

\smallskip
\noindent\emph{Triangular-basis uniqueness and localization.}
All terms in the PBW expansion of \(G(\eta,\mathbf a)\) have the
same weight.  Thus Proposition~\ref{prop:PBW-global-basis} and
\eqref{eq:normalized-PBW-global-basis-conversion} imply
\[
 \widetilde G(\eta,\mathbf a)
 \in E(\mathbf a)+
    \sum_{\mathbf b\prec_{\mathrm r}\mathbf a}q\ZZ[q]E(\mathbf b).
\]
Since \(\widetilde G(\eta,\mathbf a)\) is bar-invariant, applying
bar yields
\begin{equation}\label{eq:Qin-KL-from-PBW}
 \widetilde G(\eta,\mathbf a)
 \in\overline{E(\mathbf a)}+
    \sum_{\mathbf b\prec_{\mathrm r}\mathbf a}
       q^{-1}\ZZ[q^{-1}]\,\overline{E(\mathbf b)}.
\end{equation}
The right lexicographic order is Qin's reverse lexicographic order.
Theorem~8.10(4) of \cite{qin2024analogs}, using its
Kazhdan--Lusztig characterization with
\(\mathbf m=q^{-1/2}\ZZ[q^{-1/2}]\), now applies to
\eqref{eq:Qin-standard-bar-PBW} and \eqref{eq:Qin-KL-from-PBW}.
Here \(\mathbf m\) is the negative-power ideal defined in
\cite[Definition~2.6]{qin2024analogs} and used in the proof of that
theorem.  Uniqueness of the bar-invariant unitriangular basis gives
\[
 \Xi_{\DD'}^\eta
   \bigl(\mathbf L_{g(\mathbf a)}\bigr)
 =\widetilde G(\eta,\mathbf a).
\]
These are all basis elements by
\cite[Lemma~8.11]{qin2024analogs}, proving
\eqref{eq:word-common-triangular-global-basis}.

Finally \(\Phi_{\DD'}\) identifies normalized simple classes with
the normalized global basis by Theorem~\ref{thm:isoKt}.
This proves the categorical assertion.  By \cite[Lemma~5.12]{kashiwara2025monoidal} and
Lemma~\ref{lem:commuting-with-PBW-heads}, frozen variables
correspond to real simple objects strongly commuting with every
simple object of this word category.  The isomorphisms therefore extend across their Ore
localizations.  Multiplying the two bases by normalized frozen
Laurent monomials and identifying duplicate elements proves the
localized assertion.
\end{proof}

\begin{remark}
The categorical comparison above takes place in the fixed
category \(\mathscr C_{\DD'}(\eta)\).
At \(t=1\), the ascending products of the cuspidal classes in
\eqref{eq:Qin-singleton-bosonic-comparison} equal the classes of the
descending PBW standard modules, since the ordinary Grothendieck
ring is commutative.  Hence these two families of standards within
\(\mathscr C_{\DD'}(\eta)\) have exactly the same simple
composition factors, with multiplicities.  Their common Serre
closure is \(\mathscr C_{\DD'}(\eta)\) by
Proposition~\ref{pro:lusztig-parameter-modules}.
\end{remark}

\begin{lemma}[Garside base case]
\label{lem:Garside-base-case}
Under the assumption~\ref{assumption}, let
\[
    \boldsymbol{\Delta}=(j_1,\ldots,j_\ell),
    \qquad
    \ell=\ell(w_0),
\]
be a reduced word for the positive lift of \(w_0\), and let
\(\kappa=(i_1,\ldots,i_r)\) be a positive braid word. Set
\begin{equation}\label{eq:Delta-shifted-duality-datum}
    \DD^\Delta
    :=
    \mathscr S_{j_\ell}\cdots
    \mathscr S_{j_1}(\DD).
\end{equation}
The composition in \eqref{eq:Delta-shifted-duality-datum} is read
from right to left: first apply \(\mathscr S_{j_1}\), then
\(\mathscr S_{j_2}\), and so on.

Then, under the identification of the \(k\)-th position of
\(\kappa\) with the position \(\ell+k\) of
\(\boldsymbol{\Delta}\kappa\), one has
\begin{equation}\label{eq:Delta-kappa-base-identification}
    \overrightarrow{\mathbf s}_t
    (\boldsymbol{\Delta}\kappa)
    \simeq
    \mathbf s_t(\kappa),
    \qquad
    \mathscr D_{\DD}(\boldsymbol{\Delta}\kappa)
    =
    \mathscr C_{\DD^\Delta}(\kappa).
\end{equation}
The seed identification includes the exchange matrix, the compatible
form, and the frozen vertex set. Moreover,
\(\mathscr D_{\DD}(\boldsymbol{\Delta}\kappa)\) is a based
categorification of
\(
    \mathcal U_t\left(
        \overrightarrow{\mathbf s}_t
        (\boldsymbol{\Delta}\kappa)
    \right).
\)
More precisely, its normalized localized simple classes correspond
bijectively to the common triangular basis.
\end{lemma}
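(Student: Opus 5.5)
The plan has three parts: prove the seed identification, then the category identification, then transport the based categorification of the word algebra for \(\kappa\) along the resulting isomorphisms.

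\emph{Seed identification.} Since \(\boldsymbol\Delta\) is reduced for \(w_0\), every letter of the \(\kappa\)-block of \(\boldsymbol\Delta\kappa\) is solid, and the leftmost reduced subexpression of \(\boldsymbol\Delta\kappa\) is \(\boldsymbol\Delta\) itself. Hence \(V_{\boldsymbol\Delta\kappa}\) is in canonical bijection with \([r]\), via \(v_{\ell+k}\leftrightarrow k\). Lemma~\ref{lem:eta-Delta-beta-predecessor} gives
\[
\eta_{v_{\ell+k}}^{\boldsymbol\Delta\kappa}=E_{\ell+k}+\eta_{v^-}^{\boldsymbol\Delta\kappa},
\]
so by induction \(\eta_{v_{\ell+k}}^{\boldsymbol\Delta\kappa}=\iota_{\boldsymbol\Delta}(\{1,k]_\kappa)\). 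By Proposition~\ref{pro:Lusztig-parameter-right-weave-variable} and Lemma~\ref{lem:global-basis-consecutive-block}, this gives \(A_{v_{\ell+k}}=T_{w_0}(D_k^\kappa)\).

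On the categorical side, the recursive definition of the datum \(\DD^{(k)}_{\boldsymbol\Delta\kappa}\) gives \(C_{\ell+k}^{\DD,\boldsymbol\Delta\kappa}=C_k^{\DD^\Delta,\kappa}\). Proposition~\ref{pro:LusztigparaBv} then identifies \(B_{v_{\ell+k}}\) with the affine determinantial module \(M^{\DD^\Delta,\kappa}\{1,k]\), which is exactly the \(k\)-th module of \(S^{\DD^\Delta}(\kappa)\). The exchange matrices should agree by the standard comparison \(\overrightarrow{\mathbf s}(\boldsymbol\Delta\kappa)\cong\mathbf s(\kappa)\) recalled at the start of Section~\ref{sec:quantization-braid-varieties}, combined with Lemma~\ref{lem:right-inductive-seed-restriction}. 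Frozen vertices match because a Lusztig cycle reaches the southern boundary iff its vertex is the last of its color (Lemma~\ref{lem:eta-Delta-beta-predecessor}). The compatible forms agree because both equal the \(R\)-matrix invariants \(\LambdaR\) of the same modules, by Theorem~\ref{thm:monoidal-seed-beta} and Theorem~\ref{thm:word-seed-monoidal-categorification}.

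\emph{Category identification.} The category \(\mathscr C_\DD(\boldsymbol\Delta\kappa)\) is generated by \(C_1,\dots,C_\ell\) together with the \(C_{\ell+k}\). Under \(\Phi_\DD\) the generators \(C_{\ell+k}\) span \(T_{w_0}\widehat{\cA}(\kappa)\), and the Remark after~\eqref{eq:embedded-beta-coordinate-after-Delta} gives \(T_{w_0}\widehat{\cA}(\kappa)=\widehat{\cA}(\boldsymbol\Delta\kappa)\cap\widehat{\cA}_{\geq1}\). The same based-intersection argument used for~\eqref{eq:interval-compatibility-Phi} then gives
\[
\Phi_\DD\bigl(\mathcal K_t(\mathscr D_\DD(\boldsymbol\Delta\kappa))\bigr)=\Phi_\DD\bigl(\mathcal K_t(\mathscr C_{\DD^\Delta}(\kappa))\bigr).
\]
Since \(\Phi_\DD\) is based and both sides are Serre subcategories spanned by simples, equality of the sets of simple objects follows, and hence equality of the categories. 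I expect this step to be the main obstacle: one must check that \(\mathscr C_{\DD}[1,\infty)\) matches \(T_{w_0}\widehat{\cA}_{\geq0}=\widehat{\cA}_{\geq1}\) under \(\Phi_\DD\). I would invoke \cite[Lemma~4.4]{oh2025pbw}, and use \(\DD^\Delta=\mathcal D\DD\) up to the involution \(*\) (the \(\mathscr S\)-action of \(\Delta\) is the duality shift), so that \(\mathscr C_{\DD^\Delta}(\kappa)\subseteq\mathscr C_\DD[1,\infty)\).

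\emph{Based categorification.} Apply Lemma~\ref{lem:Qin-positive-braid-category-comparison} with \(\DD'=\DD^\Delta\) and \(\eta=\kappa\). Note that \(\DD^\Delta\) again arises from a \(Q\)-datum, being the shifted datum \(\mathcal D\DD_{\mathcal Q}\) twisted by \(*\). That lemma identifies the normalized simple classes of \(\mathscr C_{\DD^\Delta}(\kappa)\), localized at the frozen classes, with the common triangular basis of the word algebra. Frozen modules strongly commute with all simples by Lemma~\ref{lem:commuting-with-PBW-heads}, which justifies the Ore localization.

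It remains to pass from \(\mathcal A_t\) to \(\mathcal U_t\). Theorem~\ref{thm:quantum-A-U-braid-seeds} gives \(\mathcal A_t=\mathcal U_t\), and the localized quantum Grothendieck ring equals the localized \(\overline{\mathcal A}_t(\mathbf s_t(\kappa))\) by Theorem~\ref{thm:word-seed-monoidal-categorification}. Transporting everything along the seed identification yields the based categorification of \(\mathcal U_t(\overrightarrow{\mathbf s}_t(\boldsymbol\Delta\kappa))\). The completely \(\Lambda\)-admissible quantizable seed required in the definition is supplied by Theorem~\ref{thm:Lambda-admissible-seed} applied to \(S^{\DD^\Delta}(\kappa)\).
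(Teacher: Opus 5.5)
Your proposal is correct and follows the paper's route. You identify $C_{\ell+k}^{\DD,\boldsymbol\Delta\kappa}\simeq C_k^{\DD^\Delta,\kappa}$ via $\DD^\Delta\simeq\mathcal D\DD$ up to $*$, match the seeds and the categories, then apply Lemma~\ref{lem:Qin-positive-braid-category-comparison} to $(\DD^\Delta,\kappa)$ and transport. The differences are minor: you identify $B_{v_{\ell+k}}\simeq M^{\DD^\Delta,\kappa}\{1,k]$ through the $\eta$-vectors and Proposition~\ref{pro:LusztigparaBv}, and you get the category equality through $\Phi_{\DD}$ and based intersection rather than PBW uniqueness with respect to levels. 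Also, Lemma~\ref{lem:commuting-with-PBW-heads} needs strong commutation of each frozen $M^{\DD^\Delta,\kappa}[a_i,b_i]$ with every $C_k^{\DD^\Delta,\kappa}$, which you should cite from \cite[Lemma~5.12]{kashiwara2025monoidal} as the paper does.
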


\begin{proof}
The longest-element identity for reflected duality data gives
\begin{equation}\label{eq:longest-element-duality-shift}
    L_i^{\DD^\Delta}
    \simeq
    \mathcal D L_{i^*}^{\DD}
    \qquad(i\in I),
\end{equation}
where \(w_0(\alpha_i)=-\alpha_{i^*}\). Hence
\(\DD^\Delta\) is, up to the relabeling \(i\mapsto i^*\), the
complete strong duality datum associated with the one-level shift of
the \(Q\)-datum \(\mathcal Q\).
Thus the categorical operation in
\eqref{eq:longest-element-duality-shift} has the following explicit
order: relabel the color by the Dynkin involution
\(i\mapsto i^*\), and then apply the right dual \(\mathcal D\),
which raises the level by one.

By the recursive definition of the reflected duality data, for
\(1\leq k\leq r\) one has
\[
    \DD_{\boldsymbol{\Delta}\kappa}^{(\ell+k-1)}
    =
    (\DD^\Delta)_\kappa^{(k-1)}.
\]
Consequently,
\begin{equation}\label{eq:Delta-kappa-root-module-identification}
    C_{\ell+k}^{\DD,\boldsymbol{\Delta}\kappa}
    \simeq
    C_k^{\DD^\Delta,\kappa}.
\end{equation}
Likewise, the affine determinantial modules attached to corresponding
boxes satisfy
\begin{equation}\label{eq:Delta-kappa-determinantial-identification}
    M^{\DD,\boldsymbol{\Delta}\kappa}
        [\ell+a,\ell+b]
    \simeq
    M^{\DD^\Delta,\kappa}[a,b].
\end{equation}

The root modules arising from the reduced
\(\boldsymbol{\Delta}\)-block form the level-zero PBW block, whereas
the modules in
\eqref{eq:Delta-kappa-root-module-identification} belong to
\(\mathscr C_{\DD}[1,\infty)\). By uniqueness of the PBW
factorization with respect to the level decomposition, a simple
object of
\(\mathscr C_{\DD}(\boldsymbol{\Delta}\kappa)\) belongs to
\(\mathscr C_{\DD}[1,\infty)\) if and only if its PBW parameters at
the positions of the \(\boldsymbol{\Delta}\)-block vanish. Therefore
\[
    \mathscr D_{\DD}(\boldsymbol{\Delta}\kappa)
    =
    \mathscr C_{\DD}(\boldsymbol{\Delta}\kappa)
      \cap\mathscr C_{\DD}[1,\infty)
    =
    \mathscr C_{\DD^\Delta}(\kappa).
\]

Since \(\boldsymbol{\Delta}\) is reduced, none of its letters produces
a trivalent vertex in the right-inductive weave. Thus all trivalent
vertices of
\(\overrightarrow{\mathfrak W}
(\boldsymbol{\Delta}\kappa)\)
arise from the \(\kappa\)-block. By
\cite[Corollary~4.45]{casals2025cluster}, the corresponding ice
quiver is the word quiver of \(\kappa\). Hence the exchange matrices
and frozen vertex sets agree.

Equations
\eqref{eq:Delta-kappa-root-module-identification} and
\eqref{eq:Delta-kappa-determinantial-identification} identify the
cluster-variable modules attached to the corresponding vertices.
Their renormalized \(R\)-matrix degrees are equal, so their
quasi-commutation matrices agree. This proves the seed identification
in \eqref{eq:Delta-kappa-base-identification}, including the
compatible forms.

We next verify the strong-commutation condition for the frozen
objects. For each color \(i\) occurring in \(\kappa\), set
\[
    a_i:=\min\{k\mid i_k=i\},
    \qquad
    b_i:=\max\{k\mid i_k=i\}.
\]
The corresponding frozen module is
\[
    F_i=M^{\DD^\Delta,\kappa}[a_i,b_i].
\]
In the bi-infinite extension defining predecessor and successor
positions, one has
\[
    a_i^-<k<b_i^+
    \qquad(1\leq k\leq r).
\]
It follows from
\cite[Lemma~5.12]{kashiwara2025monoidal} that
\[
    \mathfrak d
    \bigl(C_k^{\DD^\Delta,\kappa},F_i\bigr)
    =
    0
    \qquad(1\leq k\leq r).
\]
Thus \(F_i\) strongly commutes with every affine cuspidal module.
Proposition~\ref{pro:lusztig-parameter-modules} and
Lemma~\ref{lem:commuting-with-PBW-heads} then imply that \(F_i\)
strongly commutes with every simple object of
\(\mathscr C_{\DD^\Delta}(\kappa)\).

Apply Lemma~\ref{lem:Qin-positive-braid-category-comparison}
to \(\DD'=\DD^\Delta\) and \(\eta=\kappa\).
Its identification \eqref{eq:word-common-triangular-global-basis}
shows that the normalized simple classes of the fixed category
\(\mathscr C_{\DD^\Delta}(\kappa)\) form the common triangular
basis of its word algebra. After inverting the frozen variables,
Qin's quantum \(\mathcal A=\mathcal U\) theorem for word seeds
\cite[Theorem~8.10(3)]{qin2024analogs} gives the based isomorphism
\[
 \mathcal K_t\bigl(\mathscr C_{\DD^\Delta}(\kappa)\bigr)_{\mathrm{loc}}
 \xrightarrow{\sim}
 \mathcal U_t\bigl(\mathbf s_t(\kappa)\bigr).
\]
This uses the standard-basis comparison in the fixed bosonic
realization and requires no equality with a category defined using
another spectral-parameter convention.

Transporting this isomorphism through
\eqref{eq:Delta-kappa-base-identification} yields
\[
    \mathcal K_t\bigl(
        \mathscr D_{\DD}(\boldsymbol{\Delta}\kappa)
    \bigr)_{\mathrm{loc}}
    \xrightarrow{\sim}
    \mathcal U_t\left(
        \overrightarrow{\mathbf s}_t
        (\boldsymbol{\Delta}\kappa)
    \right).
\]
Together with the preceding strong-commutation statement for the
frozen modules and
Theorems~\ref{thm:Lambda-admissible-seed} and
\ref{thm:word-seed-monoidal-categorification}, which supply the
completely \(\Lambda\)-admissible quantizable monoidal seed, this
proves that
\(\mathscr D_{\DD}(\boldsymbol{\Delta}\kappa)\) is a based
categorification of the stated upper quantum cluster algebra.
\end{proof}

\subsubsection{Main theorem}
We can now state the main monoidal categorification theorem. It
identifies the quantum cluster algebra of a right-inductive seed with
both a localized quantum Grothendieck ring and a localized subalgebra
of the bosonic extension algebra. We write
\(\mathbf A(\beta)_{\mathrm{loc}}\) for the localization of
\(\mathbf A(\beta)\) at the global basis elements corresponding to the
frozen variables of \(\overrightarrow{\mathbf s}_t(\beta)\).

\begin{theorem}\label{thm:main-monoidal-categorification}
Let \(\beta\) be a positive braid word. Under the assumption~\ref{assumption}, there is a canonical algebra isomorphism
\begin{equation}\label{eq:isoAt}
    \mathcal A_t\bigl(\overrightarrow{\mathbf s}_t(\beta)\bigr)
    \xrightarrow{\ \sim\ }
    \mathcal{K}_t\bigl(\mathscr D_{\DD}(\beta)\bigr)_{\mathrm{loc}}.
\end{equation}
Under this isomorphism, the category \(\mathscr D_\DD(\beta)\) is a
based categorification of
\(\mathcal A_t(\overrightarrow{\mathbf s}_t(\beta))\). In particular,
every quantum cluster monomial with nonnegative
frozen exponents corresponds to the normalized \((q,t)\)-character of a
real simple object of \(\mathscr D_{\DD}(\beta)\). 
\end{theorem}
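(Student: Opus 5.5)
The plan follows the three-step strategy announced at the start of the subsection: reduce an arbitrary positive braid word \(\beta\) to a word of the form \(\boldsymbol{\Delta}\kappa\), use the Garside base case there, and descend. Choose a reduced word \(\gamma\) of \(\delta(\beta)^{-1}w_0\). Then \(\delta(\beta\gamma)=w_0\), and by Lemma~\ref{lem:global-basis-extension-by-zero} the cluster data of \(\beta\) sit unchanged inside those of \(\beta\gamma\). More usefully, \(\beta\gamma\) contains a reduced subword for \(w_0\). Since \(\Delta\) is central up to the involution \(*\) and absorbs left divisors, \(\Delta\cdot\beta\gamma\) factors in \(\Br^+\) as \(\beta\gamma\,\kappa'\) for some positive braid. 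I will therefore work with the positive braid \(b^+:=\Delta\,\kappa\), where \(\kappa\) is any positive word representing \(\beta\) with the \(*\)-twist, chosen so that \(b^+\) also has an expression of the form \(\beta\,\eta\) for a positive word \(\eta\). Concretely, using \(\sigma_{i^*}\Delta=\Delta\sigma_i\), I take \(\kappa:=\beta^*\gamma'\), where \(\gamma'\) is a reduced word for \(w_0\) read so that \(\beta\,\gamma'^{*}\) absorbs a copy of \(\Delta\) on the right. The bookkeeping is exactly that of Example~\ref{ex:delta-braid-i-box}.

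The steps run as follows. (1) By Lemma~\ref{lem:Garside-base-case}, \(\mathscr D_{\DD}(\boldsymbol{\Delta}\kappa)\) is a based categorification of \(\mathcal U_t(\overrightarrow{\mathbf s}_t(\boldsymbol{\Delta}\kappa))\). (2) By Proposition~\ref{lem:mutation-right-inductive-seed}, iterated along a chain of braid moves from \(\boldsymbol{\Delta}\kappa\) to the word \(\beta\eta\) representing the same element, the category \(\mathscr D_{\DD}\) is unchanged. At each move the seeds agree up to relabeling or a single mutation, the cluster-variable modules correspond, and the compatible forms match. A based categorification of \(\mathcal U_t\) is a property of the mutation class, because the common triangular basis is common to all seeds, so it transfers to \(\beta\eta\). (3) Peel off the letters of \(\eta\) from the right, one at a time, using Proposition~\ref{pro:induction-upper-cluster-categorification}. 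That proposition covers both the length-increasing case and the Demazure-descent case, so each step passes the based categorification from \(\beta\eta_{\le s}\) to \(\beta\eta_{\le s-1}\). Ending at \(\beta\) gives a based isomorphism \(\mathcal K_t(\mathscr D_{\DD}(\beta))_{\mathrm{loc}}\cong\mathcal U_t(\overrightarrow{\mathbf s}_t(\beta))\). (4) Theorem~\ref{thm:quantum-A-U-braid-seeds} gives \(\mathcal A_t=\mathcal U_t\), and the inclusion of Corollary~\ref{cor:right-weave-cluster-variables-level-one} supplies the canonical map. Composing yields \eqref{eq:isoAt}, which sends \(X_v\mapsto[B_v]_t\) by Theorem~\ref{thm:monoidal-seed-beta}. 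The statement about cluster monomials follows because cluster variables in one cluster are pairwise strongly commuting real simple modules, so their tensor products are real simple and lie in \(\mathscr D_{\DD}(\beta)\).

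I expect the main obstacle to be step (2), together with the existence of the factorization in the first paragraph. One must check that the braid-move chain passes only through words whose right-inductive seeds stay in a single mutation class. One must also check that the basedness (the frozen strong-commutation condition and the bijection with the common triangular basis) is preserved when a braid move changes the seed by a mutation. For the latter I would use that the frozen modules are identified by Proposition~\ref{lem:mutation-right-inductive-seed}, since frozen vertices are never the mutation vertex, and that the common triangular basis is defined independently of the seed within the mutation class. For the factorization I would argue in \(\Br^+\) via the Garside property that \(\Delta\) is divisible on the left by any positive braid whose underlying permutation is reduced, applied to suitable prefixes. If that becomes awkward, I would instead start from the word \(\boldsymbol{\Delta}\beta^{*}\)-type presentation and move the \(\boldsymbol{\Delta}\) block rightward through \(\beta\) letter by letter by braid moves.
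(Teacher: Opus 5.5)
Your proof is correct and follows the paper's route. You use the Garside base case (Lemma~\ref{lem:Garside-base-case}), transport along braid moves by Proposition~\ref{lem:mutation-right-inductive-seed}, and descend letter by letter with Proposition~\ref{pro:induction-upper-cluster-categorification}. Theorem~\ref{thm:quantum-A-U-braid-seeds} then turns $\mathcal U_t$ into $\mathcal A_t$, and the map is pinned down by $X_v\mapsto[B_v]_t$.

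The only real difference is the auxiliary braid. The paper cites Garside completion \cite[Corollary~7.3]{oh2025pbw} to write $\beta\gamma=\Delta^m$, and takes $\kappa$ to be a word for $\Delta^{m-1}$. Your fallback instead uses $\sigma_i\Delta=\Delta\sigma_{i^*}$. With $\beta^*:=(i_1^*,\ldots,i_r^*)$, this shows that $\boldsymbol\Delta\beta^*$ and $\beta\boldsymbol\Delta'$ represent the same element of $\Br^+$, for some reduced word $\boldsymbol\Delta'$ of $w_0$. So you can take $\kappa=\beta^*$ and descend along $\boldsymbol\Delta'$ in exactly $\ell(w_0)$ steps. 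This is more elementary than the paper's choice and needs no divisibility theorem, so the factorization you flag as the ``main obstacle'' is not one.

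Drop the first paragraph's claim that $\Delta\cdot\beta\gamma$ factors as $\beta\gamma\,\kappa'$; it is false in general. Take type $A_2$ with $\beta\gamma$ representing $\Delta\sigma_1^2$. After left cancellation the claim requires $\sigma_2\sigma_1^3$ to be left divisible by $\sigma_1$. That word has no subword $121$ or $212$, so it admits no braid moves and is not divisible. The completion $\gamma$ of $\delta(\beta)^{-1}w_0$ is not needed either.
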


\begin{proof}
Recall that \(\Delta\in\Br^+\) is the positive lift of the longest Weyl
group element \(w_0\). Its positive powers are the Garside elements used
below; the completion property says that every positive braid is a left
divisor of some \(\Delta^m\).

By \cite[Corollary~7.3]{oh2025pbw}, there exist a positive braid word
\(\gamma\) and an integer \(m>0\) such that \(\beta\gamma\) represents
the Garside power \(\Delta^m\). Fix a reduced word
\(\boldsymbol{\Delta}\) for \(\Delta\), and choose a positive word
\(\kappa\) representing \(\Delta^{m-1}\). Thus,
\(\boldsymbol{\Delta}\kappa\) and \(\beta\gamma\) represent the same
element of \(\Br^+\).

Lemma~\ref{lem:Garside-base-case} supplies the compatible seed,
category, frozen-localization, and basis identifications
\eqref{eq:Delta-kappa-base-identification}. In particular, it yields
the based categorification
\begin{equation}\label{eq:categorification-Delta-kappa}
    \mathcal A_t\bigl(
        \overrightarrow{\mathbf s}_t
        (\boldsymbol{\Delta}\kappa)
    \bigr)
    \xrightarrow{\ \sim\ }
    \mathcal{K}_t\bigl(
        \mathscr D_{\DD}(\boldsymbol{\Delta}\kappa)
    \bigr)_{\mathrm{loc}}.
\end{equation}
In this case, every frozen cluster-variable module strongly commutes
with every simple object of the category.

The words \(\boldsymbol{\Delta}\kappa\) and \(\beta\gamma\) are
related by a sequence of braid moves. Proposition
\ref{lem:mutation-right-inductive-seed}, together with the
braid-move invariance of the categorical PBW construction and the
corresponding based quantum Grothendieck rings
\cite{kashiwara2025monoidal}, transports
\eqref{eq:categorification-Delta-kappa} to a based isomorphism
\begin{equation}\label{eq:categorification-beta-gamma}
    \mathcal A_t\bigl(\overrightarrow{\mathbf s}_t(\beta\gamma)\bigr)
    \xrightarrow{\ \sim\ }
    \mathcal{K}_t\bigl(
        \mathscr D_{\DD}(\beta\gamma)
    \bigr)_{\mathrm{loc}}.
\end{equation}
The strong-commutation property of the frozen modules is also
preserved under these braid moves.

Write \(\gamma=(j_1,\ldots,j_s)\), and set
\(\beta_k:=\beta j_1\cdots j_k\) for \(0\leq k\leq s\). Starting with
\(\beta_s=\beta\gamma\), we apply Proposition
\ref{pro:induction-upper-cluster-categorification} successively to
the pairs \((\beta_{k-1},j_k)\), for \(k=s,s-1,\ldots,1\). At each
step, Theorem~\ref{thm:quantum-A-U-braid-seeds} identifies the quantum
cluster algebra with the corresponding upper quantum cluster algebra. By
descending induction, the proposition therefore gives
based isomorphisms
\begin{equation*}
    \mathcal A_t\bigl(\overrightarrow{\mathbf s}_t(\beta_k)\bigr)
    \xrightarrow{\ \sim\ }
    \mathcal{K}_t\bigl(\mathscr D_{\DD}(\beta_k)\bigr)_{\mathrm{loc}}
    \qquad (0\leq k\leq s).
\end{equation*}
Taking \(k=0\) proves \eqref{eq:isoAt}.

Although the argument used a Garside completion, a reduced expression
of \(\Delta\), and a braid-move path, the resulting isomorphism is
independent of these auxiliary choices. Indeed, every step above is
performed in the common based realization supplied by
\(\Phi_{\DD}\), and the final map is uniquely characterized by
\[
    X_v
    \longmapsto
    [B_v^\beta]_t
    \qquad(v\in V_\beta).
\]
The braid-move calculation in
Proposition~\ref{lem:mutation-right-inductive-seed} shows that this
assignment is unchanged under each elementary change of expression.
Thus the isomorphism is canonical relative to the fixed \(Q\)-datum,
right-inductive seed, and normalization stated in the theorem.

The categorification in
\eqref{eq:categorification-Delta-kappa} is based, and this property
is preserved under braid moves and at every application of
Proposition~\ref{pro:induction-upper-cluster-categorification}.
Hence every quantum cluster monomial with nonnegative frozen exponents
corresponds to the normalized \((q,t)\)-character of a real simple object
of \(\mathscr D_{\DD}(\beta)\); negative frozen exponents give the
localized simple classes described above.
\end{proof}

The preceding categorification yields the following realization of
the quantum cluster algebra associated with the braid variety inside
the bosonic extension algebra.

\begin{theorem}\label{thm:quantization-braid-variety}
In type \(ADE\), for any positive braid word $\beta$, there is a canonical algebra isomorphism
\begin{equation}\label{eq:braid-variety-quantization}
    \mathcal A_t\bigl(\overrightarrow{\mathbf s}_t(\beta)\bigr)
    \xrightarrow{\ \sim\ }
    \mathbf A(\beta)_{\mathrm{loc}}.
\end{equation}
Under this isomorphism, every quantum cluster monomial with nonnegative
frozen exponents corresponds to a normalized global-basis element of
\(\mathbf A(\beta)\).
\end{theorem}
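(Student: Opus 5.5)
The plan is to obtain \eqref{eq:braid-variety-quantization} by composing the isomorphism of Theorem~\ref{thm:main-monoidal-categorification} with the based isomorphism \(\Phi_{\DD}\) of Theorem~\ref{thm:isoKt}, after localizing both sides at the frozen classes. The first step is to transport the level filtration. By \eqref{eq:Phi-D-level-one-intersection}, which was established in the proof of Corollary~\ref{cor:right-weave-cluster-variables-level-one}, \(\Phi_{\DD}\) restricts to a based isomorphism
\[
 \mathcal K_t\bigl(\mathscr D_{\DD}(\beta)\bigr)\xrightarrow{\sim}\mathbf A(\beta).
\]
This uses the based intersection property \eqref{eq:based-intersection-property} with \(\mathscr C_1=\mathscr C_{\DD}(\beta)\) and \(\mathscr C_2=\mathscr C_{\DD}[1,\infty)\). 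One also needs \(\Phi_{\DD}(\mathcal K_t(\mathscr C_{\DD}[1,\infty)))=\widehat{\cA}_{\geq1}\cap\widehat{\cA}_{\ZZ[q^{\pm1/2}]}\). This should follow from the level compatibility of the ambient isomorphism, since it sends \(\mathcal D^k L_i^{\DD}\) to \(f_{i,k}\) up to normalization. It can also be recovered from \(T_{w_0}\widehat{\cA}_{\geq0}=\widehat{\cA}_{\geq1}\) together with \eqref{eq:image-level-subcategory} for \(w=w_0\).

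Second, I would pass to localizations. The frozen classes \([B_v]_t\) with \(v\in V_{\beta,\mathrm{fr}}\) form an Ore set, because Theorem~\ref{thm:main-monoidal-categorification} gives a based categorification and each frozen module therefore strongly commutes with all simples of \(\mathscr D_{\DD}(\beta)\). Their images under \(\Phi_{\DD}\) are exactly the global basis elements \(A_v\) of Proposition~\ref{pro:LusztigparaBv}, and \(\mathbf A(\beta)_{\mathrm{loc}}\) is by definition the localization at these elements. An algebra isomorphism carrying one Ore set onto the other extends uniquely to the Ore localizations. Composing with \eqref{eq:isoAt} then yields \eqref{eq:braid-variety-quantization}, with \(X_v\mapsto A_v=\widetilde G(\beta,\eta_v^\beta)\). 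Canonicity is inherited from the canonicity already established in Theorem~\ref{thm:main-monoidal-categorification}, since \(\Phi_{\DD}\) is fixed.

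Third, I would treat the basis statement. A quantum cluster monomial with nonnegative frozen exponents corresponds, by Theorem~\ref{thm:main-monoidal-categorification}, to the normalized class \([S]_t\) of a real simple object \(S\in\mathscr D_{\DD}(\beta)\). Since \(\Phi_{\DD}\) is based, \(\Phi_{\DD}([S]_t)\) is a normalized global basis element \(\widetilde G(\mathbf b)\) of \(\widehat{\cA}\), and it lies in \(\mathbf A(\beta)\) by the first step. To obtain the intro's formulation ``up to a power of \(q^{1/2}\)'' for the \(c\)-invariant \(G\), I would apply \eqref{eq:normalized-global-basis-conversion}.

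I expect the main obstacle to be the first step, namely the precise equality \(\Phi_{\DD}(\mathcal K_t(\mathscr D_{\DD}(\beta)))=\mathbf A(\beta)\) over \(\ZZ[q^{\pm1/2}]\), including integrality. I would handle it through basedness: both sides are free on the subsets of normalized global basis elements whose simple objects lie in \(\mathscr C_{\DD}[1,\infty)\), respectively in \(\widehat{\cA}_{\geq1}\). The remaining task is to check that membership in \(\widehat{\cA}_{\geq1}\) is detected by the extended-crystal components \(b_k\) with \(k\leq0\) being trivial. That holds because of the ordered factorization \eqref{eq:ordered-factorization} and the triangularity in Theorem~\ref{theo:globalbasis}.
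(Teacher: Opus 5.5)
Your proposal is correct and follows essentially the same route as the paper. Both arguments transport the level filtration through the based isomorphism $\Phi_{\DD}$ to get $\mathcal K_t(\mathscr D_{\DD}(\beta))\simeq\mathbf A(\beta)$, extend this across the frozen Ore localizations, compose with \eqref{eq:isoAt}, and read off the basis statement from the basedness of $\Phi_{\DD}$. Your extra check of the level-one equality via Theorem~\ref{theo:globalbasis} and \eqref{eq:ordered-factorization} only makes explicit a step the paper attributes to the level compatibility of $\Phi_{\DD}$.
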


\begin{proof}
The compatibility of \(\Phi_{\DD}\) with the level filtration gives
a based isomorphism
\begin{equation*}\mathcal{K}_t(\mathscr D_{\DD}(\beta))\simeq\mathbf A(\beta).\end{equation*}
Moreover, \(\Phi_{\DD}\) sends the class of each frozen
cluster-variable module to the corresponding frozen global basis
element. It therefore extends uniquely to an isomorphism
\begin{equation}\label{eq:localized-Phi-D}
    \mathcal{K}_t\bigl(\mathscr D_{\DD}(\beta)\bigr)_{\mathrm{loc}}
    \xrightarrow{\ \sim\ }
    \mathbf A(\beta)_{\mathrm{loc}}.
\end{equation}
Composing \eqref{eq:localized-Phi-D} with the isomorphism
\eqref{eq:isoAt} proves \eqref{eq:braid-variety-quantization}.
Finally, Theorem~\ref{thm:main-monoidal-categorification} identifies a
cluster monomial with nonnegative frozen exponents with the normalized
\((q,t)\)-character of a real simple object, while \(\Phi_{\DD}\) maps
this character to the corresponding normalized global-basis element.
Allowing negative frozen exponents produces precisely the stated localized
classes. This proves the final assertion.
\end{proof}

\begin{corollary}
\label{cor:classical-specialization-braid-variety}
In type \(ADE\), let $\beta$ be a positive braid word.
Let \(R:=\ZZ[q^{\pm1/2}]\), and regard \(\CC\) as an \(R\)-algebra
via \(q^{1/2}\mapsto1\). Then
\(\mathbf A(\beta)_{\mathrm{loc}}\) is free, and hence flat, over
\(R\). Moreover, specialization at
\(q^{1/2}=1\) induces a canonical \(\CC\)-algebra isomorphism
\begin{equation}
\label{eq:classical-specialization-braid-variety}
    \operatorname{sp}_\beta\colon
    \mathbf A(\beta)_{\mathrm{loc}}
    \otimes_R\CC
    \xrightarrow{\ \sim\ }
    \CC[X(\beta)].
\end{equation}
Consequently, \(\mathbf A(\beta)_{\mathrm{loc}}\), or equivalently
\(\mathcal A_t(\overrightarrow{\mathbf s}_t(\beta))\) under
\(t^{1/2}=q^{-1/2}\), is a flat quantum deformation of
\(\CC[X(\beta)]\). If \(\mathbf L_\beta\) is its common triangular
basis, then
\(
\{\operatorname{sp}_\beta(L)\mid L\in\mathbf L_\beta\}
\)
is a \(\CC\)-basis of \(\CC[X(\beta)]\).
\end{corollary}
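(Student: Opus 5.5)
The plan is to transport everything to the quantum cluster side through the isomorphism of Theorem~\ref{thm:quantization-braid-variety}, and then apply Lemma~\ref{lem:specialization-cluster-variables}.

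\emph{Step 1: freeness.} Write \(\Theta\colon\mathcal A_t(\overrightarrow{\mathbf s}_t(\beta))\xrightarrow{\sim}\mathbf A(\beta)_{\mathrm{loc}}\) for the isomorphism \eqref{eq:braid-variety-quantization}, under \(t^{1/2}=q^{-1/2}\). Both rings are \(R\)-algebras, with \(R=\ZZ[q^{\pm1/2}]=\ZZ[t^{\pm1/2}]\). Theorem~\ref{thm:quantum-A-U-braid-seeds} gives \(\mathcal A_t=\mathcal U_t\) together with a common triangular basis \(\mathbf L_\beta\), and this algebra is the free \(R\)-module \(\bigoplus_{L\in\mathbf L_\beta}RL\). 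Transporting through \(\Theta\), the set \(\Theta(\mathbf L_\beta)\) is an \(R\)-basis of \(\mathbf A(\beta)_{\mathrm{loc}}\). By basedness, this basis is the set of normalized localized global-basis elements. Hence \(\mathbf A(\beta)_{\mathrm{loc}}\) is free over \(R\), and a free module is flat.

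\emph{Step 2: the specialization map.} Define
\[
\operatorname{sp}_\beta:=\operatorname{sp}^{\mathrm{cl}}_\beta\circ(\Theta^{-1}\otimes_R\CC).
\]
Since \(\Theta\) is \(R\)-linear and an algebra isomorphism, \(\Theta^{-1}\otimes\CC\) is an isomorphism of \(\CC\)-algebras. Lemma~\ref{lem:specialization-cluster-variables} then says that \(\operatorname{sp}^{\mathrm{cl}}_\beta\) is an isomorphism onto \(\mathcal A_{\CC}(\overrightarrow{\mathbf s}(\beta))\cong\CC[X(\beta)]\), and the latter identification is \eqref{eq:intro-classical-cluster}. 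The composite is therefore the required isomorphism. Because the basis \(\mathbf L_\beta\) maps to a \(\CC\)-basis of the tensor product, its images \(\operatorname{sp}_\beta(L)\) form a \(\CC\)-basis of \(\CC[X(\beta)]\); this is also exactly the linear-independence argument inside that lemma.

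\emph{Step 3: canonicity.} \(\Theta\) is characterized by \(X_v\mapsto A_v\) (Theorem~\ref{thm:main-monoidal-categorification}), and \(\operatorname{sp}^{\mathrm{cl}}_\beta\) sends each cluster variable to its classical counterpart. So \(\operatorname{sp}_\beta\) is determined by \(A_v\otimes1\mapsto x_v\) and by the frozen inverses. Consequently it does not depend on the auxiliary choices made in the proof of the main theorem.

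\emph{Main obstacle.} The delicate point is checking that localization commutes with \(\otimes_R\CC\): the localized rings must specialize to the classical localized cluster algebra \(\mathcal A_{\CC}\), not to some other ring. First I would invoke the fact that Ore localization at the frozen set is flat and commutes with base change. Second, I would note that Lemma~\ref{lem:specialization-cluster-variables} is stated directly for the localized algebra \(\mathcal A_t\). Since the basis of Step 1 already lives in the localized algebra, no further argument is needed beyond this compatibility.
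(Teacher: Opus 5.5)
Your proposal is correct and follows essentially the same route as the paper: you obtain freeness from the common triangular basis supplied by Theorems~\ref{thm:quantum-A-U-braid-seeds} and \ref{thm:quantization-braid-variety}, and you define \(\operatorname{sp}_\beta\) as \(\operatorname{sp}^{\mathrm{cl}}_\beta\circ(\Theta\otimes_R\CC)^{-1}\), with Lemma~\ref{lem:specialization-cluster-variables} supplying both the isomorphism and the basis statement. Your extra remarks on canonicity and on localization commuting with base change are harmless but unnecessary, since that lemma is already formulated for the localized algebra \(\mathcal A_t\).
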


\begin{proof}
By Theorems~\ref{thm:quantum-A-U-braid-seeds} and
\ref{thm:quantization-braid-variety},
\(\mathbf A(\beta)_{\mathrm{loc}}\) admits a common triangular basis
\(\mathbf L\) over \(R\). Therefore
\[
    \mathbf A(\beta)_{\mathrm{loc}}
    =
    \bigoplus_{L\in\mathbf L}RL,
\]
so it is free, and hence flat, over \(R\). After tensoring with
\(\CC\), the images of the elements of \(\mathbf L\) remain a
basis. The specialization isomorphism constructed below therefore
sends them to a \(\CC\)-basis of \(\CC[X(\beta)]\).

Let
\[
    \Theta_\beta\colon
    \mathcal A_t\bigl(
        \overrightarrow{\mathbf s}_t(\beta)
    \bigr)
    \xrightarrow{\sim}
    \mathbf A(\beta)_{\mathrm{loc}}
\]
be the isomorphism in
\eqref{eq:braid-variety-quantization}. Define
\[
    \operatorname{sp}_\beta
    :=
    \operatorname{sp}^{\mathrm{cl}}_\beta
    \circ
    \left(
        \Theta_\beta\otimes_R\CC
    \right)^{-1}.
\]
By Lemma~\ref{lem:specialization-cluster-variables},
\[
    \operatorname{sp}^{\mathrm{cl}}_\beta\colon
    \mathcal A_t\bigl(
        \overrightarrow{\mathbf s}_t(\beta)
    \bigr)\otimes_R\CC
    \xrightarrow{\sim}
    \CC[X(\beta)]
\]
is an isomorphism. Since
\(\Theta_\beta\otimes_R\CC\) is also an isomorphism,
their composition \(\operatorname{sp}_\beta\) is an isomorphism.
Moreover, it sends \(A_v\otimes1\), the bosonic realization of a
quantum cluster variable, to the corresponding classical cluster
variable \(x_v\). This proves
\eqref{eq:classical-specialization-braid-variety} and the final
assertion.
\end{proof}

\subsection{Generalized \texorpdfstring{\(T\)}{T}-systems}\label{sec:T-system}
We  establish a generalized \(T\)-system in this subsection. It extends the
determinantial \(T\)-systems studied in
\cite{qin2024analogs,kashiwara2025monoidal,bi2025cluster}.

\begin{theorem}[Generalized quantum \(T\)-system]
\label{thm:generalized-T-system}
Assume that the Cartan datum is of type \(ADE\). Let
\(\ddot{\mathbf s}\) and \(\ddot{\mathbf s}'\) be related by the
solid--special interchange
\[
    \ddot{\mathbf s}
    =(\boldsymbol\rho,p\mathrm L,j\mathrm R,\boldsymbol\tau),
    \qquad
    \ddot{\mathbf s}'
    =(\boldsymbol\rho,j\mathrm R,p\mathrm L,\boldsymbol\tau),
\]
where both displayed steps are solid and
\[
    \ell\bigl(s_p\delta(\beta_{\boldsymbol\rho})s_j\bigr)
    =\ell\bigl(\delta(\beta_{\boldsymbol\rho})\bigr).
\]
Let \(1\leq c\leq r-1\) be the affected depth, and let \(\beta\)
be the common final positive braid word. Suppressing the ambient
superscript \(\beta\), there exist \(A,B\in\frac12\ZZ\) such that
\begin{equation}\label{eq:generalized-T-system}
    D_{c,j}^{\ddot{\mathbf s}}D_{c,j}^{\ddot{\mathbf s}'}
    =q^A
      D_{c-1,j}^{\ddot{\mathbf s}}
      D_{c+1,j}^{\ddot{\mathbf s}}
     +q^B
      \bigodot_{i\neq j}
      \left(D_{c,i}^{\ddot{\mathbf s}}\right)^{\odot(-c_{ji})}.
\end{equation}
\end{theorem}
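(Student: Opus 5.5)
The proof lifts the classical grid-minor exchange identity to the quantum setting, using the based embedding into $\widehat{\cA}(\beta)$ and positivity of structure constants. It has three stages: a classical identity, a quantum expansion, and a matching of the two at $q^{1/2}=1$.

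\emph{Classical identity.} The identity
\[
\Delta_{c,j}^{\ddot{\mathbf s}}\Delta_{c,j}^{\ddot{\mathbf s}'}
=\Delta_{c-1,j}^{\ddot{\mathbf s}}\Delta_{c+1,j}^{\ddot{\mathbf s}}
+\prod_{i\neq j}\bigl(\Delta_{c,i}^{\ddot{\mathbf s}}\bigr)^{-c_{ji}}
\]
holds in $\CC[X(\beta)]$. It comes from the grid-minor exchange relation of Galashin--Lam--Sherman-Bennett--Speyer, or equivalently of Casals et al., under the solid--special move. By Lemma~\ref{lem:grid-minor-equation} and the weave description, it becomes a statement about the two $H$-factors $h_c^+$ at the affected depth. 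These factors differ by the Chevalley identity in $\SL_2$ for the colour $j$. Pairing with $\varpi_j$ then yields the standard determinantal identity $\Delta^2=\Delta_-\Delta_++\prod\Delta_i^{-c_{ji}}$. I will quote this as known classical input, translated through Definition~\ref{def:grid-minors} and Lemma~\ref{lem:grid-minors}.

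\emph{Quantum expansion.} Each of $D_{c,j}^{\ddot{\mathbf s}}$, $D_{c,j}^{\ddot{\mathbf s}'}$, $D_{c\pm1,j}^{\ddot{\mathbf s}}$ and the normalized products $\bigodot(D_{c,i}^{\ddot{\mathbf s}})^{\odot(-c_{ji})}$ is a normalized global-basis element of $\mathbf A(\beta)$. For the grid minors this is Theorem~\ref{thm:quantum-grid-minor-factorization} together with the fact that they are cluster monomials in a common cluster. The grid minors $D_{c,i}^{\ddot{\mathbf s}}$ at a fixed depth all lie in the cluster of $\mathfrak W(\ddot{\mathbf s})$, so their normalized product is again a global-basis element. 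Next I expand $D_{c,j}^{\ddot{\mathbf s}}D_{c,j}^{\ddot{\mathbf s}'}$ in the normalized global basis. By Theorem~\ref{thm:main-monoidal-categorification} the corresponding modules are real simple objects of $\mathscr D_\DD(\beta)$. The structure constants of normalized $(q,t)$-characters of simple modules in type $ADE$ lie in $\ZZ_{\geq0}[q^{\pm1/2}]$; this is the global-basis positivity of Kashiwara--Kim--Oh--Park, transported via $\Phi_\DD$. Hence
\[
D_{c,j}^{\ddot{\mathbf s}}D_{c,j}^{\ddot{\mathbf s}'}=\sum_{\mathbf c}g_{\mathbf c}(q)\,\widetilde G(\beta,\mathbf c),\qquad g_{\mathbf c}\in\ZZ_{\geq0}[q^{\pm1/2}].
\]

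\emph{Matching at $q^{1/2}=1$.} Specializing both sides via $\operatorname{sp}_\beta$ of Corollary~\ref{cor:classical-specialization-braid-variety} gives
\[
\sum_{\mathbf c}g_{\mathbf c}(1)\operatorname{sp}_\beta(\widetilde G(\beta,\mathbf c))
=\operatorname{sp}_\beta\bigl(D_{c-1,j}D_{c+1,j}\bigr)+\operatorname{sp}_\beta\Bigl(\bigodot(D_{c,i})^{\odot(-c_{ji})}\Bigr).
\]
Definition~\ref{def:quantum-grid-minors} and Lemma~\ref{lem:specialization-cluster-variables} identify the right-hand side with the classical identity above. The product $D_{c-1,j}D_{c+1,j}$ is also a single normalized global-basis element up to $q^{A}$: the two minors sit in a common cluster, since both are monomials in the cluster of $\mathfrak W(\ddot{\mathbf s})$. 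So the right side is a sum of two specialized common-triangular-basis elements. Those elements are linearly independent, by Lemma~\ref{lem:specialization-cluster-variables} and Corollary~\ref{cor:classical-specialization-braid-variety}. Comparing coefficients gives $\sum_{\mathbf c}g_{\mathbf c}(1)=2$, concentrated on these two basis elements, or on one with value $2$ if they coincide. Nonnegativity then forces each surviving $g_{\mathbf c}$ to be a single power $q^A$ or $q^B$, and all other $g_{\mathbf c}$ to vanish. The coincidence case is ruled out by Lusztig parameters: by Lemma~\ref{lem:global-basis-product-triangularity} and Theorem~\ref{thm:quantum-grid-minor-factorization}, the two terms have parameters $\boldsymbol\delta_{c-1,j}+\boldsymbol\delta_{c+1,j}$ and $\sum_{i\neq j}(-c_{ji})\boldsymbol\delta_{c,i}$. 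These differ because the classical monomials differ in the weave cluster; alternatively, the exchange-relation argument of Lemma~\ref{lem:Lusztig-parameter-mutation} applies.

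\emph{Main obstacle.} The hardest step is the classical identity: importing the grid-minor exchange relation precisely, with the depth conventions and the $c-1$, $c+1$ indexing for the special move, and checking that both sides are expressed in the same cluster torus. A secondary point is verifying that $D_{c-1,j}D_{c+1,j}$ is a cluster monomial, which is needed to make its specialization a single basis element. If the two minors did not lie in a common cluster, I would instead expand that product positively and absorb its terms into the same coefficient count, which still yields total mass $2$.
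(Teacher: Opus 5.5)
Your proposal is correct and follows essentially the same route as the paper. You take the classical grid-minor exchange identity as input, expand $D_{c,j}^{\ddot{\mathbf s}}D_{c,j}^{\ddot{\mathbf s}'}$ positively in the normalized global basis, and use linear independence of the specialized basis together with nonnegativity to force two single powers of $q^{1/2}$. You should state explicitly why the specialization step is legitimate: because the product lies in $\mathbf A(\beta)$, only global-basis elements of $\mathbf A(\beta)$ occur in the expansion, and these correspond to elements of $\mathbf L_\beta$.

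The only real difference is how you separate $M_+$ from $M_-$. The paper notes that the parameter of $D_{c+1,j}^{\ddot{\mathbf s}}$ has coefficient $1$ at the position of the $j\mathrm R$ insertion, while every depth-$\leq c$ vector vanishes there. Your first justification (``the classical monomials differ'') is unsupported as written. Your alternative works: by Lemma~\ref{lem:global-basis-product-triangularity}, the leading coefficient of the product is a single power of $q^{1/2}$, so the full specialized mass $2$ cannot sit on one basis element.
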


\begin{proof}
Put \(w=\delta(\beta_{\boldsymbol\rho})\). The solid--special
condition gives \(s_pw=ws_j<w\). The generalized-minor identity
used in the proof of \cite[Proposition~4.6]{galashin2026braidII}
depends only on this local relation; it therefore applies here
for general \(\delta(\beta)\). In our notation, the classical grid
minors satisfy
\begin{equation}
\label{eq:classical-grid-minor-T-system}
    \Delta_{c,j}^{\ddot{\mathbf s}}
    \Delta_{c,j}^{\ddot{\mathbf s}'}
    =
    \Delta_{c-1,j}^{\ddot{\mathbf s}}
    \Delta_{c+1,j}^{\ddot{\mathbf s}}
    +
    \prod_{i\neq j}
    \left(
        \Delta_{c,i}^{\ddot{\mathbf s}}
    \right)^{-c_{ji}}.
\end{equation}

Set
\[
    M_+
    :=
    D_{c-1,j}^{\ddot{\mathbf s}}
    \odot
    D_{c+1,j}^{\ddot{\mathbf s}},
    \qquad
    M_-
    :=
    \bigodot_{i\neq j}
    \left(
        D_{c,i}^{\ddot{\mathbf s}}
    \right)^{\odot(-c_{ji})}.
\]
By Definition~\ref{def:quantum-grid-minors} and Corollary~\ref{cor:right-weave-cluster-variables-level-one}, all the
quantum grid minors appearing here are normalized global-basis elements in \(\mathbf A(\beta)\).
In particular,
\[
    D_{c,j}^{\ddot{\mathbf s}},
    \quad
    D_{c,j}^{\ddot{\mathbf s}'},
    \quad
    M_+,
    \quad
    M_-
\]
belong to the based subalgebra \(\mathbf A(\beta)\), and \(M_+\) and
\(M_-\) are normalized global-basis elements.

We first check that \(M_+\neq M_-\). Let \(e\) be the position
in the final word \(\beta\) of the right insertion \(jR\) at
depth \(c+1\) of \(\ddot{\mathbf s}\). The degree formula in
Theorem~\ref{thm:quantum-grid-minor-factorization} gives coefficient
\(1\) at \(e\) for
\(\mathbf a^\beta(D_{c+1,j}^{\ddot{\mathbf s}})\): this is the
last, solid position of its truncated word and
\(\langle\varpi_j,\alpha_j^\vee\rangle=1\).
Every degree vector coming from depth at most \(c\) is supported
in the earlier consecutive block and has coefficient \(0\) at
\(e\). Additivity of parameters for normalized cluster products
therefore gives
\begin{equation}\label{eq:T-system-distinct-terms}
    \bigl(\mathbf a^\beta(M_+)\bigr)_e=1,
    \qquad
    \bigl(\mathbf a^\beta(M_-)\bigr)_e=0.
\end{equation}

The normalized global basis has positive structure constants by
\cite[Corollary~6.6]{kashiwara2025monoidal}.
This follows from the quantum Grothendieck-ring positivity theorem
of \cite[Theorem~4.3]{varagnolo2003perverse}. Thus
\begin{equation}\label{eq:T-system-global-basis-expansion}
    D_{c,j}^{\ddot{\mathbf s}}
    D_{c,j}^{\ddot{\mathbf s}'}
    =
    \sum_{\bb\in\widehat B(\infty)}
        c_{\bb}(q)\widetilde G(\bb),
    \qquad
    c_{\bb}(q)\in
    \ZZ_{\geq0}[q^{\pm1/2}],
\end{equation}
with only finitely many nonzero coefficients.

The left-hand side belongs to \(\mathbf A(\beta)\). It follows that
\[
    c_{\bb}(q)\neq0
    \quad\Longrightarrow\quad
    \widetilde G(\bb)\in\mathbf A(\beta).
\]
By Theorems~\ref{thm:main-monoidal-categorification} and
\ref{thm:quantization-braid-variety}, the normalized global-basis
elements belonging to \(\mathbf A(\beta)\) map to distinct
elements of the common triangular basis \(\mathbf L_\beta\) of
the localization. Only this based inclusion is needed here; no
characterization of arbitrary basis elements by frozen exponents
is used. Thus
\eqref{eq:T-system-global-basis-expansion} may be rewritten as
\begin{equation}\label{eq:T-system-triangular-basis-expansion}
    D_{c,j}^{\ddot{\mathbf s}}
    D_{c,j}^{\ddot{\mathbf s}'}
    =
    \sum_{L\in\mathbf L_\beta}
        c_L(q)L,
    \qquad
    c_L(q)\in\ZZ_{\geq0}[q^{\pm1/2}],
\end{equation}
where the sum is finite. In particular, both \(M_+\) and \(M_-\)
belong to \(\mathbf L_\beta\).

Apply the specialization isomorphism
\(\operatorname{sp}_\beta\) of
Corollary~\ref{cor:classical-specialization-braid-variety}.
Theorem~\ref{thm:quantum-grid-minor-factorization} gives
\[
\begin{aligned}
    \operatorname{sp}_\beta(M_+)
    &=
    \Delta_{c-1,j}^{\ddot{\mathbf s}}
    \Delta_{c+1,j}^{\ddot{\mathbf s}},\\
    \operatorname{sp}_\beta(M_-)
    &=
    \prod_{i\neq j}
    \left(
        \Delta_{c,i}^{\ddot{\mathbf s}}
    \right)^{-c_{ji}}.
\end{aligned}
\]
Hence, by
\eqref{eq:classical-grid-minor-T-system}, the specialization of
\eqref{eq:T-system-triangular-basis-expansion} is
\[
    \sum_{L\in\mathbf L_\beta}
        c_L(1)\operatorname{sp}_\beta(L)
    =
    \operatorname{sp}_\beta(M_+)
    +
    \operatorname{sp}_\beta(M_-).
\]

The elements
\(
    \left\{
        \operatorname{sp}_\beta(L)
        \,\middle|\,
        L\in\mathbf L_\beta
    \right\}
\)
are linearly independent by
Corollary~\ref{cor:classical-specialization-braid-variety}.
Together with \(M_+\neq M_-\), proved in
\eqref{eq:T-system-distinct-terms}, this implies
\[
    c_{M_+}(1)=c_{M_-}(1)=1
\]
and
\[
    c_L(1)=0
    \qquad
    \left(
        L\in\mathbf L_\beta
        \setminus\{M_+,M_-\}
    \right).
\]

Since every \(c_L(q)\) has nonnegative coefficients, the equality
\(c_L(1)=0\) implies \(c_L(q)=0\). Likewise, a Laurent polynomial in
\(\ZZ_{\geq0}[q^{\pm1/2}]\) whose value at \(q=1\) is \(1\) must be a
single Laurent monomial. Therefore, there exist
\(a_+,a_-\in\frac12\ZZ\) such that
\[
    c_{M_+}(q)=q^{a_+},
    \qquad
    c_{M_-}(q)=q^{a_-}.
\]
Thus the quantum identity is obtained from specialization only after
using positivity and the linear independence of the specialized
basis; it is not a formal consequence of setting \(q=1\).
It follows that
\[
    D_{c,j}^{\ddot{\mathbf s}}
    D_{c,j}^{\ddot{\mathbf s}'}
    =
    q^{a_+}M_+
    +
    q^{a_-}M_-.
\]

Finally, the normalized product \(M_+\) differs from the ordered
product
\(
    D_{c-1,j}^{\ddot{\mathbf s}}
    D_{c+1,j}^{\ddot{\mathbf s}}
\)
by a power of \(q^{1/2}\). Absorbing this power into \(q^{a_+}\) and
setting \(B:=a_-\) gives
\eqref{eq:generalized-T-system}.
\end{proof}

\begin{remark}[Computing the two coefficients]
\label{rem:T-system-coefficients-PBW}
The proof of Theorem~\ref{thm:generalized-T-system} also gives
a finite way to recover its exponents from PBW expansions. Write
\(F=D_{c,j}^{\ddot{\mathbf s}}D_{c,j}^{\ddot{\mathbf s}'}\)
and retain the normalized products \(M_+,M_-\) used there.
The distinguished coordinate in
\eqref{eq:T-system-distinct-terms} is their rightmost possible
nonzero coordinate; hence
\(\mathbf a^\beta(M_-)\prec_{\mathrm r}\mathbf a^\beta(M_+)\).
Using a common PBW basis, divide the leading coefficient of
\(F\) by that of \(M_+\) to obtain \(q^{a_+}\). Subtract
\(q^{a_+}M_+\) from \(F\); the ratio of the remaining leading
coefficient to that of \(M_-\) is \(q^{a_-}\).
These ratios are Laurent monomials by the theorem. Finally,
write
\(D_{c-1,j}^{\ddot{\mathbf s}}D_{c+1,j}^{\ddot{\mathbf s}}
=q^\rho M_+\), where \(\rho\) is obtained from
\eqref{eq:quantum-monomial-product}. Then
\(A=a_+-\rho\) and \(B=a_-\).
This computation uses the finite PBW expansions
of the minors, as well as the commutation form for the final
normalization; the commutation matrix alone is not asserted to
determine both coefficients.
\end{remark}

\begin{example}[A braid without a left \(\Delta\)-factor]
\label{ex:T-system-without-Delta-factor}
Work in type \(A_2\) and take \(\beta=(1,1,1,2)\). Since
\(\delta(\beta)=s_1s_2\neq w_0\), the positive braid represented
by \(\beta\) is not left divisible by \(\Delta\). Consider
\[
\begin{aligned}
 \ddot{\mathbf s}
   &=(1R^+,1L,1R,2R^+),\\
 \ddot{\mathbf s}'
   &=(1R^+,1R,1L,2R^+).
\end{aligned}
\]
Both strings have final word \(\beta\). Their middle two steps
give a solid--special interchange at depth \(c=2\), because
\(s_1\delta(1)s_1=s_1\).

Set
\[
 X:=\widetilde G(\beta,E_2),\qquad
 Y:=\widetilde G(\beta,E_3),\qquad
 Z:=\widetilde G(\beta,E_2+E_3).
\]
At depth \(2\), the truncated word is \((1,1)\) for both
strings. Its nonzero parameter is its second unit vector. The
later left insertion in \(\ddot{\mathbf s}'\) shifts this
coordinate to position \(3\) of the final word, whereas no such
shift occurs for \(\ddot{\mathbf s}\). At depth \(3\), the
truncated word is \((1,1,1)\), and its grid-minor parameter is
\(E_2+E_3\). Consequently,
\[
\begin{array}{c|c|c}
 \text{grid minor}&\text{value}&\beta\text{-Lusztig parameter}\\ \hline
 D_{1,1}^{\ddot{\mathbf s}}&1&0\\
 D_{2,2}^{\ddot{\mathbf s}}&1&0\\
 D_{2,1}^{\ddot{\mathbf s}}&X&E_2\\
 D_{2,1}^{\ddot{\mathbf s}'}&Y&E_3\\
 D_{3,1}^{\ddot{\mathbf s}}&Z&E_2+E_3
\end{array}
\]
by Theorem~\ref{thm:quantum-grid-minor-factorization}.

The scalar coefficients can be checked directly in the bosonic
algebra. The first three PBW root vectors are
\[
 P_1^\beta=q^{1/2}f_{1,0},\qquad
 P_2^\beta=q^{1/2}f_{1,1},\qquad
 P_3^\beta=q^{1/2}f_{1,2}.
\]
Thus \(X=f_{1,1}\) and \(Y=f_{1,2}\). Moreover,
\[
 Z=q\bigl(f_{1,2}f_{1,1}-1\bigr).
\]
Indeed, the expression on the right is bar invariant, has weight
zero, and differs from
\(P^\beta(E_2+E_3)=qf_{1,2}f_{1,1}\) by \(-q\).
The triangular characterization in
Proposition~\ref{prop:PBW-global-basis} therefore identifies it
with \(\widetilde G(\beta,E_2+E_3)\).
The bosonic relation now gives
\[
 \boxed{XY=qZ+1},\qquad
 YX=q^{-1}Z+1,\qquad XZ=q^2ZX.
\]
In the seed \((X,Z)\), the variable \(X\) is mutable and
\(Z\) is frozen. Its exchange and commutation matrices are
\[
 B_\beta=\begin{pmatrix}0\\1\end{pmatrix},\qquad
 \Lambda_\beta=\begin{pmatrix}0&-2\\2&0\end{pmatrix},
 \qquad B_\beta^T\Lambda_\beta=(2,0),
\]
where the toric frame uses \(t=q^{-1}\). Thus the same relation
is the quantum seed mutation at \(X\).
This is \eqref{eq:generalized-T-system} with \(A=1\) and
\(B=0\) for a braid outside the \(\Delta\)-left-divisible case.
It is a nontrivial rank-one cluster exchange; at \(q=1\), it
becomes \(xy=z+1\).
\end{example}

\begin{example}[A mixed-color exchange without a left \(\Delta\)-factor]
\label{ex:mixed-color-T-system}
Work in type \(A_3\), and write \(w_J=s_1s_2s_1=s_2s_1s_2\)
for the longest element of the parabolic subgroup on \(J=\{1,2\}\).
Take
\[
 \beta=(2,1,2,1,2,1,2,3).
\]
Its Demazure product is \(w_Js_3\), of length \(4\), whereas
\(\ell(w_0)=6\). Thus \(\beta\) has no left full \(A_3\)
\(\Delta\)-factor. All three colors occur in \(\beta\).

The first three steps of its right inductive weave are hollow,
positions \(4,5,6,7\) are solid, and the last step \(3R^+\)
is hollow. On colors \(1,2\), the reduced prefix \((2,1,2)\)
is the parabolic Garside word. Applying the word-seed construction
\eqref{eq:right-weave-variable-initial-minor} within this
\(A_2\) subsystem gives the four variables of the word
\(\eta=(1,2,1,2)\). Appending the final color-\(3\) strand
requires no further weave move or trivalent vertex and preserves
all cycle intersections and the frozen set. The resulting
classical seed is therefore the word seed of \(\eta\).
The consecutive-block embedding of the tail algebra into
\(\widehat{\cA}(\beta)\) is given by \(T_{w_J}\). It supplies
the quantum realization, whose compatible form is checked below.
This uses the Garside construction in the subsystem \(J\), without
assuming a full \(A_3\) Garside prefix.

Put
\[
 x:=\widetilde P_4^\beta,\quad
 y:=\widetilde P_5^\beta,\quad
 z:=\widetilde P_6^\beta,\quad
 u:=\widetilde P_7^\beta.
\]
Equivalently,
\[
 x=T_{w_J}(f_{1,0}),\qquad
 y=T_{w_J}T_1(f_{2,0}),\qquad
 z=T_{w_J}(f_{2,0}),\qquad
 u=T_{w_J}(f_{1,1}).
\]
The initial seed and its mutation in the first direction are
\[
\begin{array}{c|c|c}
 \text{variable}&\text{PBW expression}&\beta\text{-Lusztig parameter}
 \\ \hline
 X_1&x&E_4\\
 X_2&y&E_5\\
 X_3&q^{1/2}zx-qy&E_4+E_6\\
 X_4&q^{1/2}uy-qz&E_5+E_7\\
 Y&z&E_6
\end{array}
\]
Here each row is a normalized global-basis element. For example,
\[
 G(\beta,E_4+E_6)=P_6^\beta P_4^\beta-qP_5^\beta,
 \qquad
 G(\beta,E_5+E_7)=P_7^\beta P_5^\beta-qP_6^\beta.
\]
The lower terms are strictly smaller in the right lexicographic order.
By the braid symmetry formulas, the normalized expressions are bar
invariant. Proposition~\ref{prop:PBW-global-basis} therefore gives
the displayed identities.

With the variable order \((X_1,X_2,X_3,X_4)\), the mutable
indices are \(1,2\), and the frozen indices are \(3,4\).
Equations~\eqref{eq:B-beta} and \eqref{eq:Lambda-beta-word},
or a direct calculation with the displayed PBW expressions, give
\[
 B=
 \begin{pmatrix}
 0&1\\-1&0\\1&-1\\0&1
 \end{pmatrix},\qquad
 \Lambda=
 \begin{pmatrix}
 0&1&-1&-1\\
 -1&0&0&-1\\
 1&0&0&-1\\
 1&1&1&0
 \end{pmatrix},\qquad
 B^{\mathsf T}\Lambda=
 \begin{pmatrix}2&0&0&0\\0&2&0&0\end{pmatrix}.
\]
The convention is \(X_iX_j=t^{\lambda_{ij}}X_jX_i\), with
\(t=q^{-1}\). In particular, mutation at \(X_1\) yields
\begin{equation}\label{eq:mixed-color-quantum-exchange}
 \boxed{X_1Y=q^{1/2}X_3+q^{-1/2}X_2}.
\end{equation}
Indeed, \eqref{eq:braid-symmetry-T} gives
\[
 y=\frac{q^{1/2}zx-q^{-1/2}xz}{q-q^{-1}},
\]
which proves \eqref{eq:mixed-color-quantum-exchange} directly.
In the reverse order, \(YX_1=q^{-1/2}X_3+q^{1/2}X_2\).

To realize this exchange as one simultaneous grid-minor relation,
consider the double strings
\[
\begin{aligned}
 \ddot{\mathbf s}
  &=(1R^+,2R^+,1R^+,2R,2L,1R,2R,3R^+),\\
 \ddot{\mathbf s}'
  &=(1R^+,2R^+,1R^+,2R,1R,2L,2R,3R^+).
\end{aligned}
\]
Both have the literal final word \(\beta\). Their fifth and sixth
steps form the required solid--special interchange, because
\(s_2w_Js_1=w_J\); the affected depth is \(c=5\) and
\(j=1\). At the four relevant corners, the truncated words are
\[
\begin{array}{c|c}
 (\ddot{\mathbf s},4)&(1,2,1,2)\\
 (\ddot{\mathbf s},5)&(2,1,2,1,2)\\
 (\ddot{\mathbf s}',5)&(1,2,1,2,1)\\
 (\ddot{\mathbf s},6)&(2,1,2,1,2,1)
\end{array}
\]
The reduced prefix in each row has length \(3\). Reading the
remaining solid positions and inserting the zero coordinates of
later steps gives
\[
\begin{array}{c|c|c}
 \text{grid minor}&\text{value}&\beta\text{-Lusztig parameter}
 \\ \hline
 D_{4,1}^{\ddot{\mathbf s}}&1&0\\
 D_{5,1}^{\ddot{\mathbf s}}&X_1&E_4\\
 D_{5,1}^{\ddot{\mathbf s}'}&Y&E_6\\
 D_{6,1}^{\ddot{\mathbf s}}&X_3&E_4+E_6\\
 D_{5,2}^{\ddot{\mathbf s}}&X_2&E_5
\end{array}
\]
by Definition~\ref{def:delta-double-string} and
Theorem~\ref{thm:quantum-grid-minor-factorization}.
Consequently, \eqref{eq:mixed-color-quantum-exchange} is exactly
Theorem~\ref{thm:generalized-T-system} with
\(A=\tfrac12\) and \(B=-\tfrac12\). Its neighboring-color
term is the nonunit \(D_{5,2}^{\ddot{\mathbf s}}=X_2\).
The classical specialization has both coefficients equal to one.
For the coefficient extraction of Remark~\ref{rem:T-system-coefficients-PBW},
the leading coefficients of \(xz\) and \(X_3\) relative to the
descending monomial \(zx\) are \(q\) and \(q^{1/2}\). Their
ratio gives \(q^A=q^{1/2}\); subtracting \(q^{1/2}X_3\)
leaves \(q^{-1/2}X_2\), giving \(q^B=q^{-1/2}\).

The same exchange is visible in a genuine \(3\)-braid move:
\[
 \beta=(2,1,2,\underline{1,2,1},2,3)
 \longleftrightarrow
 \beta'=(2,1,2,\underline{2,1,2},2,3).
\]
Let \(E'_a\) denote the coordinate vectors for \(\beta'\).
Applying \eqref{eq:braid-move-PBW-transition} in positions
\(4,5,6\) gives
\[
\begin{array}{c|c|c}
 &\beta\text{-parameter}&\beta'\text{-parameter}\\ \hline
 Y&E_6&E'_4\\
 X_3&E_4+E_6&E'_5\\
 X_2&E_5&E'_4+E'_6\\
 X_4&E_5+E_7&E'_4+E'_6+E'_7
\end{array}
\]
These are precisely the initial interval parameters of the tail
word \((2,1,2,2)\). Hence its seed is
\((Y,X_3,X_2,X_4)\), obtained from the original seed by mutation
at \(X_1\) and interchange of the second and third labels;
its mutable indices are \(1,3\).
This also verifies the parameter transport for a mixed-color
mutation in an example with two mutable vertices.
\end{example}

\subsubsection{Determinantial
\texorpdfstring{\(T\)}{T}-systems for
\texorpdfstring{\(\boldsymbol\Delta\beta\)}{Delta beta}}
\label{sec:determinantial-T-system}

Let \(\beta=(i_1,\ldots,i_r)\), and fix a nontrivial
\(i\)-box \([a,b]\), so that \(a<b\) and \(i_a=i_b=i\).
Let \(a^+\) be the next occurrence of \(i\) after \(a\), and
let \(b^-\) be the previous occurrence before \(b\). For
\(j\neq i\), let \(a(j)^+\) and \(b(j)^-\) be the first and
last occurrences of \(j\) in \([a,b]\), respectively. An empty
interval contributes the element \(1\); in particular,
\(D^\beta[a^+,b^-]=1\) when \(a^+>b^-\).

We give one simultaneous grid-minor realization of all the terms
in the determinantial relation. Choose an auxiliary reduced word
\[
 \mathbf d=(u_1,\ldots,u_{\ell-1},i)
\]
for \(w_0\), where \(\ell=\ell(w_0)\), and set
\(\mathbf d^L=(i^*,u_1,\ldots,u_{\ell-1})\).
The relation \(s_{i^*}w_0=w_0s_i\) shows that
\(\mathbf d^L\) is also a reduced word for \(w_0\).
Crucially, one has the literal word identity
\[
 (i^*)\mathbf d=\mathbf d^L(i).
\]
Define
\[
\begin{aligned}
 \boldsymbol\rho
   &=(u_1R^+,\ldots,u_{\ell-1}R^+,iR^+,
        i_{a+1}R,\ldots,i_{b-1}R),\\
 \boldsymbol\tau
   &=(i_{a-1}^*L,\ldots,i_1^*L,
        i_{b+1}R,\ldots,i_rR),\\
 \ddot{\mathbf s}
   &=(\boldsymbol\rho,i^*L,iR,\boldsymbol\tau),\\
 \ddot{\mathbf s}'
   &=(\boldsymbol\rho,iR,i^*L,\boldsymbol\tau),
 \qquad c:=\ell+b-a,
\end{aligned}
\]
omitting empty blocks. Here the local colors are \(p=i^*\)
and \(j=i\) in Theorem~\ref{thm:generalized-T-system}.
All insertions after the initial \(\mathbf d\)-block are solid,
and
\[
 s_{i^*}\delta(\beta_{\boldsymbol\rho})s_i
   =s_{i^*}w_0s_i=w_0.
\]
Hence these strings satisfy precisely the hypotheses of that
theorem.

Write \(\gamma=(i_1^*,\ldots,i_{a-1}^*)\). The common actual
final word is
\[
 \omega=\gamma\mathbf d^L(i_a,\ldots,i_r).
\]
It represents the same positive braid as
\(\boldsymbol\Delta\beta\). In particular, the prefix
\(\gamma\mathbf d^L\) can be changed to
\(\boldsymbol\Delta(i_1,\ldots,i_{a-1})\) by braid moves
within its \(\ell+a-1\) positions.

For an interval \([h,k]\subseteq[a,b]\), put
\[
 \mathbf e_j[h,k]
   :=\sum_{\substack{h\leq t\leq k\\ i_t=j}}E_{\ell+t},
\]
with value zero for an empty interval. The truncated words at the
four relevant corners are
\[
\begin{array}{c|c}
 (\text{string},\text{depth})&\text{truncated word}\\ \hline
 (\ddot{\mathbf s},c-1)&\mathbf d(i_{a+1},\ldots,i_{b-1})\\
 (\ddot{\mathbf s},c)&\mathbf d^L(i_a,\ldots,i_{b-1})\\
 (\ddot{\mathbf s}',c)&\mathbf d(i_{a+1},\ldots,i_b)\\
 (\ddot{\mathbf s},c+1)&\mathbf d^L(i_a,\ldots,i_b)
\end{array}
\]
Each has a reduced prefix of length \(\ell\), followed only by
solid positions. After inserting the zero coordinates of its later
left and right blocks, Definition~\ref{def:delta-double-string}
therefore gives
\[
\begin{aligned}
 \boldsymbol\delta_{c-1,i}^{\ddot{\mathbf s}}
   &=\mathbf e_i[a+1,b-1],&
 \boldsymbol\delta_{c,i}^{\ddot{\mathbf s}}
   &=\mathbf e_i[a,b-1],\\
 \boldsymbol\delta_{c,i}^{\ddot{\mathbf s}'}
   &=\mathbf e_i[a+1,b],&
 \boldsymbol\delta_{c+1,i}^{\ddot{\mathbf s}}
   &=\mathbf e_i[a,b],\\
 \boldsymbol\delta_{c,j}^{\ddot{\mathbf s}}
   &=\mathbf e_j[a,b]
   &&(j\neq i).
\end{aligned}
\]
All these vectors vanish on the prefix \(\gamma\mathbf d^L\).
Consequently, the PBW transition to
\(\boldsymbol\Delta\beta\) fixes the displayed vectors, just
as in \eqref{eq:i-box-PBW-transition}.
Theorem~\ref{thm:quantum-grid-minor-factorization} and
Lemma~\ref{lem:global-basis-consecutive-block} now identify the
minors simultaneously:
\[
\begin{aligned}
 D_{c-1,i}^{\omega,\ddot{\mathbf s}}
   &=T_{w_0}\bigl(D^\beta[a^+,b^-]\bigr),\\
 D_{c,i}^{\omega,\ddot{\mathbf s}}
   &=T_{w_0}\bigl(D^\beta[a,b^-]\bigr),\\
 D_{c,i}^{\omega,\ddot{\mathbf s}'}
   &=T_{w_0}\bigl(D^\beta[a^+,b]\bigr),\\
 D_{c+1,i}^{\omega,\ddot{\mathbf s}}
   &=T_{w_0}\bigl(D^\beta[a,b]\bigr),\\
 D_{c,j}^{\omega,\ddot{\mathbf s}}
   &=T_{w_0}\bigl(D^\beta[a(j)^+,b(j)^-]\bigr)
     \qquad(j\neq i).
\end{aligned}
\]
This specifies one pair of double strings for the entire relation,
rather than a separate realization for each factor.

Apply Theorem~\ref{thm:generalized-T-system} to this pair. Its
left-hand side has the factors corresponding to \([a,b^-]\)
and \([a^+,b]\), in that order. Apply the bar anti-involution to
the relation, which reverses each ordered product and fixes every
normalized grid minor and normalized cluster monomial. Applying
\(T_{w_0}^{-1}\) then gives
\begin{equation}\label{eq:determinantial-T-system}
\begin{aligned}
 D^\beta[a^+,b]D^\beta[a,b^-]
  &=q^A D^\beta[a,b]D^\beta[a^+,b^-]\\
  &\quad+q^B
   \bigodot_{j\neq i}
   \left(D^\beta[a(j)^+,b(j)^-]\right)^{\odot(-c_{ij})},
 \qquad A,B\in\tfrac12\ZZ.
\end{aligned}
\end{equation}
Here \(A,B\) are the negatives of the exponents before applying
bar. This recovers the determinantial \(T\)-system of
\cite[Theorems~3.25 and~3.26]{bi2025cluster} as a special case,
with the word changes, common realization, and multiplication
order specified explicitly.\\

\noindent
\textbf{Declaration on the Use of Generative AI.}
The author used OpenAI's ChatGPT to improve the language and
presentation of the manuscript, to check normalization and logical
consistency, and to assist in formulating revisions to selected
proofs. The author is responsible for verifying all mathematical
arguments, statements, and references and for the final content of
the paper.

\end{document}